\newtheorem{theorem}{Theorem}[chapter]
\newtheorem{lemma}[theorem]{Lemma}
\newtheorem{proposition}[theorem]{Proposition}
\newtheorem{corollary}[theorem]{Corollary}
\newtheorem{problem}[theorem]{Problem}
\newtheorem{question}[theorem]{Question}
\theoremstyle{definition}
\newtheorem{definition}[theorem]{Definition}
\newtheorem{example}[theorem]{Example}
\theoremstyle{remark}
\newtheorem{remark}[theorem]{Remark}
\numberwithin{section}{chapter} \numberwithin{equation}{chapter}
\begin{document}
\frontmatter

\title[Matrix Functions of Bounded Type]
  {Matrix Functions of Bounded Type: An Interplay
  \\  Between Function Theory and Operator Theory}

\author{\Large Ra{\'u}l\ E.\ Curto}

\author{\Large In Sung Hwang}

\author{\Large Woo Young Lee}

\maketitle

%
%
%
%

\setcounter{page}{4}\tableofcontents

\chapter*{Abstract}

In this paper, we study matrix functions of bounded type from the
viewpoint of describing an interplay between function theory and
operator theory. \ We first establish a criterion on the
coprime-ness of two singular inner functions and obtain several
properties of the Douglas-Shapiro-Shields factorizations of matrix
functions of bounded type. \ We propose a new notion of
tensored-scalar singularity, and then answer questions on Hankel
operators with matrix-valued bounded type symbols. \ We also examine
an interpolation problem related to a certain functional equation on
matrix functions of bounded type; this can be seen as an extension
of the classical Hermite-Fej\' er Interpolation Problem for matrix
rational functions. \ We then extend the $H^\infty$-functional
calculus to an $\overline{H^\infty}+H^\infty$-functional calculus
for the compressions of the shift. \ Next, we consider the
subnormality of Toeplitz operators with matrix-valued bounded type
symbols and, in particular, the matrix-valued version of Halmos's
Problem 5; we then establish a matrix-valued version of Abrahamse's
Theorem. \ We also solve a subnormal Toeplitz completion problem of
$2\times 2$ partial block Toeplitz matrices. \ Further, we establish
a characterization of hyponormal Toeplitz pairs with matrix-valued
bounded type symbols, and then derive rank formulae for the
self-commutators of hyponormal Toeplitz pairs.


\renewcommand{\thefootnote}{}
\footnote{ 2010 \textit{Mathematics Subject Classification.} Primary
30J05, 30H10, 30H15, 47A13, 47A56, 47B20, 47B35; Secondary 30J10, 30J15, 30H35, 47A20, 47A57\\
\smallskip
\indent\textit{Key words.} Functions of bounded type, matrix
functions of bounded type, coprime inner functions,
Douglas-Shapiro-Shields factorizations, tensored-scalar singularity,
compressions of the shifts, $H^\infty$-functional calculus,
interpolation problems, Toeplitz operators, Hankel operators,
subnormal, hyponormal, jointly hyponormal, Halmos' Problem 5,
Abrahamse's Theorem, Toeplitz pairs.\\
\smallskip
\indent The work of the first named author was partially supported
by NSF Grant DMS-1302666. \ The work of the second named author was
supported by NRF(Korea) grant No. 2016R1A2B4012378. \
The work of the third named author was supported by NRF(Korea) grant No. 2015R1D1A1A01058036.\\
\smallskip
\indent
Affiliations:\\
\indent Ra{\'u}l\ E.\ Curto: Department of Mathematics, University
of Iowa, Iowa City, IA 52242, U.S.A.
email: raul-curto@uiowa.edu\\
\indent In Sung Hwang: Department of Mathematics, Sungkyunkwan
University, Suwon 440-746, Korea,
email: ihwang@skku.edu\\
\indent Woo Young Lee: Department of Mathematics, Seoul National
University, Seoul 151-742, Korea, email: wylee@snu.ac.kr

}

\bigskip

%
%
%
%
%
%

\mainmatter


\maketitle

%
%
%
%

\chapter{Introduction}

\medskip

A function $\varphi \in L^\infty$ is said to be of bounded type or
in the Nevanlinna class if $\varphi$ can be written as the quotient
of two functions in $H^\infty$. \ This class of functions has been
extensively studied in the literature. \ However, it seems to be
quite difficult to determine whether the given function $\varphi$ is
of bounded type if we only look at its Fourier series expansion. \
The well known criterion for detecting ``bounded type" employs
Hankel operators - the function $\varphi$ is of bounded type if and
only if the kernel of the Hankel operator $H_\varphi$ with symbol
$\varphi$ is nonzero (cf. \cite{Ab}). \ The class of functions of
bounded type plays an important role in the study of function theory
and operator theory. \ Indeed, for functions of bounded type, there
is a nice connection between function theory and operator theory. \
In 1970, P.R. Halmos posed the following problem, listed as Problem
5 in his lectures ``Ten problems in Hilbert space" \cite{Hal1},
\cite{Hal2}: Is every subnormal Toeplitz operator  either normal or
analytic\,? \ 

Many authors have given partial answers to Halmos'
Problem 5. \ In 1984, Halmos' Problem 5 was answered in the negative
by C. Cowen and  J. Long \cite{CoL} - they found a symbol not of
bounded type which is non-analytic and induces a non-normal
subnormal Toeplitz operator. \ To date, researchers have been
unable to characterize subnormal Toeplitz operators in terms of
their symbols. \ The most interesting partial answer to Halmos's
Problem 5 was given by M.B. Abrahamse \cite{Ab} - every subnormal
Toeplitz operator $T_\varphi$ whose symbol $\varphi$ is such that
$\varphi$ or $\overline\varphi$ is of bounded type is either normal
or analytic. \ Besides that, there are several fruitful interplays
between function theory and operator theory, for functions of
bounded type. \

In the present paper we explore matrix functions of bounded type in
$L^\infty_{M_n}$ (the space of $n\times n$ matrix-valued bounded
measurable functions on the unit circle); that is, matrix functions
whose entries are  of bounded type. \ We concentrate on the
connections between function theory and operator theory. \

For the function-theoretic aspects, we focus on coprime inner
functions, the Douglas-Shapiro-Shields factorization, tensored-scalar
singularity, an interpolation problem, and a functional calculus. \
First of all,  we consider the following intrinsic question: How
does one determine the coprime-ness of two inner functions in
$H^\infty$? \ This question is easy for Blaschke products. \ Thus
we are interested in the following question: When are two singular
inner functions coprime? \ Naturally, a measure-theoretic problem
arises at once since singular inner functions correspond to their
singular measures. \ We answer this question in Chapter 3. \ As we
may expect, we will show that two singular inner functions are
coprime if and only if the corresponding singular measures are
mutually singular (cf. Theorem \ref{thmcoprime3}). \ To prove this,
we use a notion of infimum of finite positive Borel measures on the
unit circle $\mathbb T$, and more generally, on a locally compact
Hausdorff space. \ The key point of this argument is to decide when
is the infimum of two finite positive Borel measures nonzero (cf.
Theorem \ref{thminfmeasure8}). \

To properly understand matrix functions of bounded
type, we need to factorize those functions into coprime products of
matrix inner functions and the adjoints of matrix
$H^\infty$-functions. \ In general, every matrix function of bounded
type can be represented by a left or a right coprime factorization -
the so-called Douglas-Shapiro-Shields factorization (cf. Remark
\ref{DSS}). \ In particular, this factorization is very helpful and
somewhat inevitable for the study of Hankel and Toeplitz operators
with such symbols. \ In Chapter 4, we consider several properties of
left or right coprime factorizations for matrix functions of bounded
type. \ First of all, we consider the following question: If $A$ is
a matrix $H^\infty$-function and $\Theta$ is a matrix inner
function, does it follow that $A$ and $\Theta$ are left coprime if
and only if $A$ and $\Theta$ are right coprime? \ In other words,
do the notions of ``left" coprime-ness and ``right" coprime-ness
coincide for $A$ and $\Theta$? \ The answer to this question is
negative in general. \ However, we can show that the answer is
affirmative if $\Theta$ is diagonal-constant, that is, $\Theta$ is a
diagonal inner function, constant along the diagonal (cf. Theorem
\ref{lem33.9}). \ We also show that if $\Phi$ is a matrix
$H^\infty$-function such that $\Phi^*$ is of bounded type and its
determinant is  not identically zero then the degree of the inner
part of its inner-outer factorization is less than or equal to the
degree of the inner part of its left coprime factorization (cf.
Corollary \ref{thmcodimgh}). \ In fact, the degree of the inner part
of the left coprime factorization is equal to the degree of the
inner part of the right coprime factorization (cf.  Lemma
\ref{lem2.9}). \ 

On the other hand, it is well known that the
composition of two inner functions is again an inner function. \ But
we cannot guarantee that the composition of two Blaschke products is
again a Blaschke product. \ Thus, we are interested in the question:
If $\theta$ and $\delta$ are coprime finite Blaschke products and
$\omega$ is an inner function, are the compositions $\theta\circ
\omega$ and $\delta\circ \omega$ coprime? \ We prove that the
answer is affirmative when the common zeros of $\theta$ and $\delta$
lie in some ``range set" of $\omega$ at its singularity - almost
everywhere in the open unit disk (cf. Theorem \ref{bcothm}). \
Moreover, we show that if $\Phi$ is a matrix rational function whose
determinant is not identically zero, then for a finite Blaschke
product $\omega$ the inner part of the right coprime factorization
of the composition $\Phi\circ \omega$ is exactly the composition of
the inner part of $\Phi$ and $\omega$ (cf. Theorem
\ref{factorization6879}); this is still true for matrix functions of
bounded type if $\omega$ is a Blaschke factor (cf. Theorem
\ref{lem5.17}). \

We next ask the question: How does one define a singularity for
matrix functions of bounded type? \ Conventionally, the singularity
(or the existence of a pole) of matrix $L^\infty$-functions is
defined by a singularity (or a pole) of some entry of the matrix
function (cf. \cite{BGR}, \cite{BR}). \ However, we propose another
notion of singularity which is more suitable for our study of Hankel
and Toeplitz operators. \ In Chapter 5, we give a new notion, that of
``tensored-scalar singularity." \ This new definition uses the
Hankel operator as a characterization of functions of bounded type
via the kernel of the Hankel operator. \ This notion provides an
answer to the question: Under which conditions does it follow that
if the product of two Hankel operators with matrix-valued bounded
type symbols is zero then either of them is zero?  \ It is well
known that the answer to this question for scalar-valued cases is
affirmative, but is negative for matrix-valued cases unless certain
assumptions are made about the symbols. \ Here we show that if
either of the two symbols has a tensored-scalar singularity then the
answer is affirmative (cf. Theorem \ref{lem33.5}). \ We also
consider the question: If $\Phi$ and $\Psi$ are matrix functions of
bounded type, when is $H_\Phi^* H_\Phi=H_\Psi^* H_\Psi$ (where
$H_\Phi, H_\Psi$ are Hankel operators)? \ We answer this question as follows: If $\Phi$ or $\Psi$ has a tensored-scalar singularity
then two products are equal only when the co-analytic parts of
$\Phi$ and $\Psi$ coincide up to a unitary constant left factor (cf.
Theorem \ref{lem3.13}). \

As transitional aspects from function theory to operator theory, in
Chapter 6 we consider an interpolation problem for matrix functions
of bounded type and a functional calculus for the compressions of
the shift operator.  \ We consider an interpolation problem
involving the following matrix-valued functional equation: When is  
$\Phi-K\Phi^*$ a matrix $H^\infty$-function (where $\Phi$ is a
matrix $L^\infty$-function and $K$ is an unknown matrix
$H^\infty$-function)? \ In other words, when does there exist a matrix $H^\infty$-function $K$ such that $\Phi-K\Phi^*$ is a matrix $H^\infty$-function? \ If $\Phi$ is a matrix-valued
{\it rational} function, this interpolation problem reduces to the
classical Hermite-Fej\' er Interpolation Problem. \ We here examine
this interpolation problem for the matrix functions of bounded type
(cf. Theorems \ref{cor32.378} and \ref{thm32.378}). \ On the other
hand, it is well known that the functional calculus for polynomials
of the compressions of the shift results in the  Hermite-Fej\' er
matrix via  the classical Hermite-Fej\' er Interpolation Problem. \
We also extend the polynomial calculus to the $H^\infty$-functional
calculus (the so-called Sz.-Nagy-Foia\c s functional calculus) via
the Triangularization Theorem, and then extend it to a
$\overline{H^\infty}+H^\infty$-functional calculus for the
compressions of the shift. \

Chapters 7 - 9 are devoted to operator-theoretic aspects. \ In
Chapter 7 we consider the subnormality of Toeplitz operators with
matrix-valued symbols and, in particular, the matrix-valued version
of Halmos's Problem 5 (\cite{Hal1}, \cite{Hal2}): Which subnormal
Toeplitz operators with matrix-valued symbols are either normal or
analytic\,? \ In 1976, M.B. Abrahamse \cite{Ab} showed that if
$\varphi\in L^\infty$ is such that $\varphi$ or $\overline \varphi$
is of bounded type, if $T_\varphi$ is hyponormal, and if the kernel
of the self-commutator of $T_\varphi$ is invariant under $T_\varphi$
then $T_\varphi$ is either normal or analytic. \ The aim of this
chapter is to establish a matrix-valued version of Abrahamse's
Theorem. \ In fact, a straightforward matrix-valued version of
Abrahamse's Theorem may fail. \ Recently, it was shown in
\cite{CHL1} that if $\Phi$ and $\Phi^*$ are matrix functions of
bounded type with the constraint that the inner part of the right
coprime factorization of the co-analytic part $\Phi_-$ is
diagonal-constant, then a matrix-valued version of Abrahamse's
Theorem holds for $T_\Phi$. \ Also, it was shown in \cite{CHKL} that
if $\Phi$ is a matrix-valued {\it rational} function then the above
``diagonal-constant" condition can be weakened to the condition of
``having a nonconstant diagonal-constant inner divisor." \ From the results of Chapter 5 (cf.~Lemma \ref{lem55.2-1}), we
can see that those conditions of \cite{CHL1} and \cite{CHKL} are
special cases of the condition of ``having a tensored-scalar
singularity." \ Indeed, in Chapter 7, we will show that if the
symbol has a tensored-scalar singularity then we get a full-fledged
matrix-valued version of Abrahamse's Theorem (cf.~Theorem
\ref{thm2.2}). \ In particular, if the symbol is scalar-valued then
it vacuously has a tensored-scalar singularity, so that the
matrix-valued version reduces to the original Abrahamse's Theorem. \

Given a partially specified operator matrix with some known entries,
the problem of finding suitable operators to complete the given
partial operator matrix so that the resulting matrix satisfies
certain given properties is called a {\it completion problem}. \ A
{subnormal completion} of a partial operator matrix is a particular
specification of the unspecified entries resulting in a subnormal
operator. \ In Chapter 8,  we solve the following ``subnormal
Toeplitz completion" problem: find the unspecified Toeplitz entries
of the partial block Toeplitz matrix
$$
A:=\begin{pmatrix} T_{\overline b_\alpha} & ? \\ ?& T_{\overline
b_\beta}\end{pmatrix}\quad\hbox{ ($\alpha,\beta\in\mathbb D$)}
$$
so that $A$ becomes subnormal, where $b_\lambda$ is a Blaschke
factor whose zero is $\lambda$. \ We can here show that the
unspecified entries $?$ have symbols that are matrix functions of
bounded type. \ Thus this problem reduces to a problem on the
subnormality of ``bounded type" Toeplitz operators. \ Recently, in
\cite{CHL2}, we have considered this completion problem for the
cases $\alpha=\beta=0$. \ The solution given in \cite[Theorem
5.1]{CHL2} relies upon very intricate and long computations using
the symbol involved. \ However our solution in this chapter provides
a shorter and more insightful proof by employing the results of the
previous chapter. \ Our solution also shows that 2-hyponormality,
subnormality and normality coincide for this completion, except in a
special case (cf. Theorem \ref{thm4.2}). \

On the other hand, normal Toeplitz operators were characterized by a
property of their symbols in the early 1960's by A. Brown and P.R.
Halmos \cite{BH}. \ The exact nature of the relationship between the
symbol $\varphi\in L^\infty$ and the hyponormality of the Toeplitz
operator $T_{\varphi}$ was understood in 1988 via Cowen's Theorem
\cite{Co2} -- this elegant and useful theorem has been used in the
works \cite{CuL1}, \cite{CuL2}, \cite{FL}, \cite{Gu1}, \cite{Gu2},
\cite{GS}, \cite{HKL1}, \cite{HKL2}, \cite{HL1}, \cite{HL2},
\cite{HL3}, \cite{Le}, \cite{NT}, \cite{Zhu}, and others; these works have been
devoted to the study of hyponormality for Toeplitz operators on
$H^2$. \  Particular attention has been paid to Toeplitz operators
with polynomial symbols or rational symbols \cite{HL1}, \cite{HL2},
\cite{HL3}. \ However, the case of arbitrary symbol $\varphi$,
though solved in principle by Cowen's theorem, is in practice very
complicated. \ Indeed, it may not even be possible to find tractable
necessary and sufficient condition for the hyponormality of
$T_\varphi$ in terms of the Fourier coefficients of the symbol
$\varphi$ unless certain assumptions are made about it. \  To date,
tractable criteria for the cases of trigonometric polynomial symbols
and rational symbols have been derived from a Carath\' eodory-Schur
interpolation problem \cite{Zhu} and a tangential Hermite-Fej\' er
interpolation problem \cite{Gu1} or the classical Hermite-Fej\' er
interpolation problem \cite{HL3}. \ Recently, C. Gu, J. Hendricks
and D. Rutherford \cite{GHR} have considered the hyponormality of
Toeplitz operators with matrix-valued symbols and characterized it
in terms of their symbols. \  In particular they showed that if
$T_\Phi$ is a hyponormal Toeplitz operator with matrix-valued symbol
$\Phi$, then its symbol $\Phi$ is normal, i.e.,
$\Phi^*\Phi=\Phi\Phi^*$. \  Their characterization resembles Cowen's
Theorem except for an additional condition -- the normality of the
symbol. \

In 1988, two important developments took place in the field of
operator theory. The first one was the introduction of the notion of
``joint hyponormality" for $n$-tuples of operators and the second
one was the characterization of hyponormality of Toeplitz operators
in terms of their symbols (via Cowen's Theorem) as we have remarked
just above. \ Since then, it has become natural to consider joint
hyponormality for tuples of Toeplitz operators. \ The notion of
joint hyponormality was first formally introduced by A. Athavale
\cite{At}. \ He conceived joint hyponormality as a notion at least
as strong as requiring that the linear span of the operator
coordinates consist of hyponormal operators, the latter notion being
called weak joint hyponormality. \  Joint hyponormality and weak
joint hyponormality have been studied by A. Athavale \cite{At}, J.
Conway and W. Szymanski \cite{CS}, R. Curto \cite{Cu}, R. Curto and
W.Y. Lee \cite{CuL1}, R. Curto, P. Muhly, and J. Xia \cite{CMX}, R.
Douglas, V. Paulsen and K. Yan \cite{DPY}, R. Douglas and K. Yan
\cite{DY}, D. Farenick and R. McEachin \cite{FM}, C. Gu \cite{Gu2},
S. McCullough and V. Paulsen \cite{McCP1},\cite{McCP2}, D. Xia
\cite{Xi}, and others. \  Joint hyponormality originated from
questions about commuting normal extensions of commuting operators,
and it has also been considered with an aim at understanding the gap
between hyponormality and subnormality for single operators. \ The
study of jointly hyponormal Toeplitz tuples started with D. \
Farenick and R. McEachin \cite{FM}. \  They studied operators that
form jointly hyponormal pairs in the presence of the unilateral
shift: precisely, they showed that if $U$ is the unilateral shift on
the Hardy space $H^2$, then the joint hyponormality of the pair
$(U,T)$ implies that $T$ is necessarily a Toeplitz operator. \ This
result invites us to consider the joint hyponormality for pairs of
Toeplitz operators. \ In Chapter 9, we consider (jointly) hyponormal
Toeplitz pairs with matrix-valued bounded type symbols. \ In their
research monograph \cite{CuL1}, the authors studied hyponormality of
pairs of Toeplitz operators (called Toeplitz pairs) when both
symbols are trigonometric polynomials. \ The core of the main result
of \cite{CuL1} is that the  hyponormality of $\mathbf{T}\equiv
(T_\varphi, T_\psi)$ ($\varphi, \psi$ trigonometric polynomials)
forces that the co-analytic parts of $\varphi$ and $\psi$
necessarily coincide up to a constant multiple, i.e.,
\begin{equation}\label{5.1}
\varphi-\beta\psi\in H^2\ \ \hbox{for some $\beta\in\mathbb{C}$}.
\end{equation}
In \cite{HL4}, (\ref{5.1*}) was extended for Toeplitz pairs whose symbols are
rational functions with some constraint. \
As a result, the following question arises at once: Does (\ref{5.1})
still hold for Toeplitz pairs whose symbols are {\it matrix-valued}
trigonometric polynomials or rational functions? \

Chapter 9 is concerned with this question. \ More generally, we give
a characterization of hyponormal
 Toeplitz pairs with bounded type symbols  by using the theory established
in the previous chapters. \ Consequently, we will answer the above
question (cf. Corollary \ref{cor5.23}). \ Indeed, (\ref{5.1}) is
still true for matrix-valued trigonometric polynomials under some
invertibility and commutativity assumptions on the Fourier
coefficients of the symbols (those assumptions always hold vacuously
for scalar-valued cases). \ In fact, this follows from our core idea
(Lemma \ref{thm5.19}) that if $\Phi$ and $\Psi$ are matrix functions
of bounded type whose inner parts of right coprime factorizations of
analytic parts commute and whose co-analytic parts have a common
tensored-scalar pole then the hyponormality of the Toeplitz pair
$(T_\Phi, T_\Psi)$ implies that the common tensored-scalar pole has
the same order. \ Consequently, we give a characterization of the
(joint) hyponormality of Toeplitz pairs with bounded type symbols
(cf. Theorem \ref{thm5.22-1}). \ We also consider the
self-commutators of the Toeplitz pairs with matrix-valued rational
symbols and derive rank formulae for them (cf. Theorem
\ref{thm5.34}). \

Chapter 10 is devoted to concluding remarks and open questions.

\bigskip

{\it Acknowledgments.} The authors are deeply grateful to the
referee for many helpful suggestions, which significantly improved both the
content and the presentation. \ In particular, we are indebted to the referee for supplying the present version of the proof
of Lemma \ref{lem1.4}, which is more transparent than the original proof.

%
%
%
%
%

\chapter{Preliminaries}

The main ingredients of this paper are functions of bounded type,
Hankel operators, Toeplitz operators, and hyponormality. \ First of
all, we review the notion of functions of bounded type and a few
essential facts about Hankel and Toeplitz operators, and for that we
will use \cite{Ab}, \cite{BS}, \cite{Do1}, \cite{GGK}, \cite{MAR},
\cite{Ni2} and \cite{Pe}. \ Let $\mathcal{H}$ and $\mathcal{K}$ be
complex Hilbert spaces, let $\mathcal{B(H,K)}\label{B(H,K)}$ be the
set of bounded linear operators from $\mathcal{H}$ to $\mathcal{K}$,
and write $\mathcal{B(H)}\label{B(H)}:=\mathcal{B(H,H)}$. \ For
$A,B\in\mathcal{B(H)}$, we let $[A,B]\label{[A,B]}:=AB-BA$. \  An
operator $T\in\mathcal{B(H)}$ is said to be normal if
$[T^*,T]\label{[T^*,T]}=0$, hyponormal if $[T^*,T]\ge 0$. \ For an
operator $T\in \mathcal{B(H)}$, we write $\hbox{ker}\,T\label{ker}$
and $\hbox{ran}\, T\label{ran}$ for the kernel and the range of $T$,
respectively. \ For a subset $\mathcal M$ of a Hilbert space
$\mathcal H$, $\hbox{cl}\, \mathcal M\label{clM}$ and $\mathcal
M^\perp\label{Mperp}$ denote the closure and the orthogonal
complement of $\mathcal M$, respectively. \ Also, let
$\mathbb{T}\label{T}\equiv \partial\mathbb{D}$ be the unit circle
(where $\mathbb D$ denotes the open unit disk in the complex plane
$\mathbb C$). \ Recall that $L^\infty\equiv L^\infty(\mathbb T)$ is
the set of bounded measurable functions on $\mathbb T$, that the
Hilbert space $L^2\equiv L^2({\mathbb T})\label{L^2T}$ has a
canonical orthonormal basis given by the trigonometric functions
$e_n(z)=z^n$, for all $n\in {\mathbb Z}$, and that the Hardy space
$H^2\equiv H^2({\mathbb T})\label{H^2T}$ is the closed linear span
of $\{e_n: n \geq 0 \}$. \  An element $f\in L^2$ is said to be
analytic if $f\in H^2$. \ Let $H^\infty\label{H^infty} :=L^\infty
\cap H^2$, i.e., $H^\infty$ is the set of bounded analytic functions
on $\mathbb{D}$. \ Given a function $\varphi\in
L^\infty\label{L^infty}$, the Toeplitz operator
$T_\varphi\label{Tvarphi}$ and
 the Hankel operator $H_\varphi\label{Hvarphi}$ with symbol $\varphi$
on $H^2$ are defined by
\begin{equation}\label{1.000}
T_\varphi g:=P(\varphi g) \quad\hbox{and}\quad H_\varphi
g:=JP^\perp(\varphi g) \qquad (g\in H^2),
\end{equation}
where $P\label{P}$ and $P^\perp\label{Pperp}$ denote the orthogonal
projections from $L^2$ onto $H^2$ and $(H^2)^\perp$, respectively,
and $J$ denotes the unitary operator from $L^2$ onto $L^2$ defined
by $J\label{J}(f)(z)=\overline z f(\overline z)$ for $f\in L^2$. \

For $\varphi\in L^\infty$, we write
$$
\varphi_+\equiv P \varphi\in H^2\quad\text{and}\quad \varphi_-\equiv
\overline{P^\perp \varphi}\in zH^2.
$$
Thus we may write $\varphi=\overline{\varphi_-}+\varphi_+$. \

We recall that a function $\varphi\in L^\infty$ is said to be of
{\it bounded type} (or in the Nevanlinna class $\mathcal N$) if
there are functions $\psi_1,\psi_2\in H^\infty$  such that
$$
\varphi(z)=\frac{\psi_1(z)}{\psi_2(z)}\quad\hbox{for almost all
$z\in \mathbb{T}$.}
$$
Let $BMO\label{BMO}$ denote the set of functions of bounded mean
oscillation in $L^1$. \ It is well-known that $L^\infty\subseteq
BMO\subseteq L^2$ and that if $f\in L^2$, then $H_f$ is bounded on
$H^2$ if and only if $P^\perp f\in BMO$ (where $P^{\perp}$ is the
orthogonal projection of $L^2$ onto $(H^2)^{\perp}$) (cf.
\cite{Pe}). \ Thus if $\varphi\in L^\infty$, then
$\overline{\varphi_-}, \ \overline{\varphi_+}\in BMO$, so that
$H_{\overline{\varphi_-}}$ and $H_{\overline{\varphi_+}}$ are well
understood. \ We recall \cite[Lemma 3]{Ab} that if $\varphi\in
L^\infty$ then
\begin{equation}\label{1.1}
\hbox{$\varphi$ is of bounded type}\ \Longleftrightarrow\
\hbox{ker}\, H_\varphi\ne \{0\}\,.
\end{equation}
Assume now that both $\varphi$ and $\overline\varphi$ are of bounded
type. \  Then from the Beurling's Theorem, $\text{ker}\,
H_{\overline{\varphi_-}}=\theta_0 H^2$ and $\text{ker}\,
H_{\overline{\varphi_+}}=\theta_+ H^2$ for some inner functions
$\theta_0, \theta_+$. \ We thus have
$b:={\overline{\varphi_-}}\theta_0 \in H^2$, and hence we can write
\begin{equation}\label{2.2}
\varphi_-=\theta_0\overline{b}  \ \ \text{~and similarly~} \varphi_+
=\theta_+\overline{a} \text{~for some~} a \in H^2.
\end{equation}
By Kronecker's Lemma \cite[p. 217]{Ni2}, if $f\in H^\infty$ then
$\overline f$ is a rational function if and only if $\hbox{rank}\,
H_{\overline f}<\infty$, which implies that
\begin{equation}\label{2.4}
\hbox{$\overline{f}$ is rational} \ \Longleftrightarrow\
f=\theta\overline b\ \ \hbox{with a finite Blaschke product
$\theta$}.
\end{equation}
If $T_\varphi$ is hyponormal then since $T_{\varphi\psi}-T_\varphi
T_\psi=H_{\overline\varphi}^*H_\psi$ ($\varphi, \psi\in L^\infty$)
and hence,
\begin{equation}\label{2.3}
[T_\varphi^*, T_\varphi]=H_{\overline\varphi}^*
H_{\overline\varphi}-H_\varphi^* H_\varphi=
H_{\overline{\varphi_+}}^*
H_{\overline{\varphi_+}}-H_{\overline{\varphi_-}}^*
H_{\overline{\varphi_-}},
\end{equation}
it follows that $||H_{\overline{\varphi_+}} f||\ge
||H_{\overline{\varphi_-}} f||$ for all $f\in H^2$, and hence
$$
\theta_+ H^2= \text{ker}\, H_{\overline{\varphi_+}}\subseteq
\text{ker}\, H_{\overline{\varphi_-}}=\theta_0 H^2,
$$
which implies that $\theta_0$ divides $\theta_+$, i.e.,
$\theta_+=\theta_0\theta_1$ for some inner function $\theta_1$. \
For an inner function $\theta$, we write
$$
\mathcal H(\theta):=H^2\ominus \theta H^2.
$$
Note that if $f=\theta \overline a \in L^2$, then $f\in H^2$ if and
only if $a\in \mathcal H(z\theta)$; in particular, if $f(0)=0$ then
$a\in \mathcal H(\theta)$. \  Thus, if
$\varphi\equiv\overline{\varphi_-}+\varphi_+\in L^\infty$ is such
that $\varphi$ and $\overline\varphi$ are of bounded type such that
$\varphi_+(0)=0$ and $T_\varphi$ is hyponormal, then we can write
$$
\varphi_+=\theta_0\theta_1\bar a\quad\text{and}\quad
\varphi_-=\theta_0 \bar b, \qquad\text{where $a\in
\mathcal{H}(\theta_0\theta_1)$ and $b\in\mathcal{H}(\theta_0)$.}
$$

We turn our attention to the case of matrix functions. \

Let $M_{n\times r}\label{M_nr}$ denote the set of all $n\times r$
complex matrices and write $M_n\label{M_n}:=M_{n\times n}$. \  For
$\mathcal X$ a Hilbert space, let $L^2_{\mathcal X}\label{L2X}\equiv
L^2_{\mathcal X}(\mathbb T)$ be the Hilbert space of $\mathcal
X$-valued norm square-integrable measurable functions on
$\mathbb{T}$ and let $L^{\infty}_{\mathcal X}\label{LiX}\equiv
L^{\infty}_{\mathcal X}(\mathbb T)$ be the set of $\mathcal
X$-valued bounded measurable functions on $\mathbb{T}$. \ We also
let $H^2_{\mathcal X}\label{H2X}\equiv H^2_{\mathcal X}(\mathbb T)$
be the corresponding Hardy space and $H^{\infty}_{\mathcal
X}\label{HiX}\equiv H^{\infty}_{\mathcal X}(\mathbb T)
=L^\infty_{\mathcal X}\cap H^2_{\mathcal X}$. \  We observe that
$L^2_{\mathbb{C}^n}= L^2\otimes \mathbb{C}^n$ and
$H^2_{\mathbb{C}^n}= H^2\otimes \mathbb{C}^n$. \

\begin{definition}
For a matrix-valued function $\Phi \equiv \begin{pmatrix}
\varphi_{ij}\end{pmatrix}\in L^\infty_{M_{n}}$, we say that $\Phi$
is of {\it bounded type} if each entry $\varphi_{ij}$ is of bounded
type, and we say that $\Phi$ is {\it rational} if each entry
$\varphi_{ij}$ is a rational function. \
\end{definition}

\medskip

Let $\Phi\equiv \begin{pmatrix} \varphi_{ij}\end{pmatrix} \in
L^\infty_{M_n}$ be such that $\Phi^*$ is of bounded type. \ Then
each $\overline\varphi_{ij}$ is of bounded type. \ Thus in view of
(\ref{2.2}), we may write
$\varphi_{ij}=\theta_{ij}\overline{b}_{ij}$, where $\theta_{ij}$ is
inner and $\theta_{ij}$ and $b_{ij}$ are coprime, in other words,
there does not exist a nonconstant inner divisor of $\theta_{ij}$
and $b_{ij}$. \ Thus if $\theta$ is the least common multiple of
$\{\theta_{ij}:i,j=1,2, \cdots, n \}$, then we may write
\begin{equation}\label{2.6}
\Phi=\begin{pmatrix}\varphi_{ij}\end{pmatrix}=
\begin{pmatrix}\theta_{ij}\overline{b}_{ij}\end{pmatrix} =
\begin{pmatrix}\theta \overline{a}_{ij} \end{pmatrix} \equiv \theta
A^* \quad \hbox{(where $A\equiv \begin{pmatrix}a_{ji} \end{pmatrix}
\in H^{2}_{M_n}$)}.
\end{equation}
In particular, $A(\alpha)$ is nonzero whenever $\theta(\alpha)=0$
and $|\alpha|<1$. \

\medskip

For $\Phi\in L^\infty_{M_n}$, we write
$$
\Phi_+\label{Phi_+}:=P_n(\Phi)\in H^2_{M_n} \quad\hbox{and}\quad
\Phi_-\label{Phi_-}:=\bigl[P_n^\perp (\Phi)\bigr]^* \in H^2_{M_n}.
$$
Thus we may write $\Phi=\Phi_-^*+\Phi_+\,$. \ However, it will often
be convenient to allow the constant term in $\Phi_-$. \ Hence, if
there is no confusion we may assume that $\Phi_-$ shares the
constant term with $\Phi_+$: in this case, $\Phi(0) = \Phi_+(0) +
\Phi_-(0)^*$. \ If $\Phi=\Phi_-^*+\Phi_+\in L^\infty_{M_n}$ is such
that $\Phi$ and $\Phi^*$ are of bounded type, then in view of
(\ref{2.6}), we may write
\begin{equation}\label{2.6-1}
\Phi_+= \theta_1 A^* \quad\hbox{and}\quad \Phi_-= \theta_2 B^*,
\end{equation}
where $\theta_1$ and $\theta_2$ are inner functions and $A,B\in
H^{2}_{M_n}$. \  In particular, if $\Phi\in L^\infty_{M_n}$ is
rational then the $\theta_i$ can be chosen as finite Blaschke
products, as we observed in (\ref{2.4}). \

\medskip

We now introduce the notion of Hankel operators and Toeplitz
operators with matrix-valued symbols. \ If $\Phi$ is a matrix-valued
function in $L^\infty_{M_n}$, then $T_\Phi: H^2_{\mathbb{C}^n}\to
H^2_{\mathbb{C}^n}$ denotes Toeplitz operator with symbol $\Phi$
defined by
$$
T_\Phi\label{T_Phi} f:=P_n\label{P_n}(\Phi f)\quad \hbox{for}\ f\in
H^2_{\mathbb{C}^n},
$$
where $P_n$ is the orthogonal projection of $L^2_{\mathbb{C}^n}$
onto $H^2_{\mathbb{C}^n}$. \  A Hankel operator with symbol $\Phi\in
L^\infty_{M_n}$ is an operator $H_\Phi: H^2_{\mathbb{C}^n}\to
H^2_{\mathbb{C}^n}$ defined by
$$
H_\Phi\label{H_Phi} f := J_n P_n^\perp (\Phi f)\quad \hbox{for}\
f\in H^2_{\mathbb{C}^n},
$$
where $P_n^\perp$ is the orthogonal projection of
$L^2_{\mathbb{C}^n}$ onto $(H^2_{\mathbb{C}^n})^\perp$ and $J_n$
denotes the unitary operator from $L^2_{\mathbb{C}^n}$ onto
$L^2_{\mathbb{C}^n}$ given by $J_n(f)(z):= \overline{z}
f(\overline{z})$ for $f \in L^2_{\mathbb{C}^n}$.  \ For $\Phi\in
L^\infty_{M_{n\times m}}$, write
$$
\widetilde\Phi\label{widetildePhi} (z):=\Phi^*(\overline z). \
$$
A matrix-valued function $\Theta\label{Theta}\in
H^\infty_{M_{n\times m}}$ is called {\it inner} if
$\Theta^*\Theta=I_m$ almost everywhere on $\mathbb{T}$, where $I_m$
denotes the $m\times m$ identity matrix. \ If there is no confusion
we write simply $I$ for $I_m$. \ The following basic relations can
be easily derived:
\begin{align}
&T_\Phi^*=T_{\Phi^*},\ \  H_\Phi^*= H_{\widetilde \Phi} \quad
(\Phi\in
L^\infty_{M_n});\label{1.2}\\
&T_{\Phi\Psi}-T_\Phi T_\Psi = H_{\Phi^*}^*H_\Psi \quad
(\Phi,\Psi\in L^\infty_{M_n});\label{1.3}\\
&H_\Phi T_\Psi = H_{\Phi\Psi},\ \
H_{\Psi\Phi}=T_{\widetilde{\Psi}}^*H_\Phi \quad (\Phi\in
L^\infty_{M_n}, \Psi\in H^\infty_{M_n});\label{1.4}\\
&H_\Phi^* H_\Phi - H_{\Theta \Phi}^* H_{\Theta\Phi} =H_\Phi^*
H_{\Theta^*}H_{\Theta^*}^*H_\Phi \quad (\Theta\in H^\infty_{M_n}
 \ \hbox{inner,}  \ \Phi\in L^\infty_{M_n}).\label{1.5}
\end{align}
A matrix-valued trigonometric polynomial $\Phi \in
L^{\infty}_{M_{n\times m}}$ is of the form
\begin{equation*}
\Phi(z)=\sum_{j=-m}^{N} A_j z^j \quad (A_j \in M_{n\times m}),
\label{outer}
\end{equation*}
where $A_N$ and $A_{-m}$ are called the \textit{outer} coefficients
of $\Phi$. \ For matrix-valued functions $A:=\sum_{j=-\infty}^\infty
A_j z^j\in L^2_{M_{n\times m}}$ and $B:=\sum_{j=-\infty}^\infty B_j
z^j\in L^2_{M_{n\times m}}$, we define the inner product of $A$ and
$B$ by
$$
\langle A,B\rangle \label{AB}:=\int_{\mathbb T} \hbox{tr}\,(B^*A)\,
d\mu=\sum_{j=-\infty}^\infty \hbox{tr}\,(B_j^*A_j)\,,
$$
where $\hbox{tr}\,(\cdot)\label{tr}$ denotes the trace of a matrix
and define $||A||_2\label{A_2}:=\langle A,A\rangle^{\frac{1}{2}}$. \
We also define, for $A\in L^\infty_{M_{n\times m}}$,
$$
||A||_\infty\label{A_infty}:=\hbox{ess sup}_{t\in\mathbb T}
||A(t)||\quad \hbox{($||\cdot||$ denotes the spectral norm of a
matrix)}. \
$$
Finally, the {\it shift} operator $S$ on $H^2_{\mathbb C^n}$ is
defined by $S\label{S}:=T_{zI}$. \

\medskip

The following fundamental result will be useful in the sequel. \

\medskip

\noindent {\bf The Beurling-Lax-Halmos Theorem.}\label{beur}
(\cite{FF}, \cite{Ni2}) \ \ {\it A nonzero subspace $M$ of
$H^2_{\mathbb C^n}$ is invariant for the shift operator $S$ on
$H^2_{\mathbb C^n}$ if and only if $M=\Theta H^2_{\mathbb C^m}$,
where $\Theta$ is an inner matrix function in
$H^{\infty}_{M_{n\times m}}\label{H^inftyMnm}$ \hbox{\rm ($m\le
n$)}. \  Furthermore, $\Theta$ is unique up to a unitary constant
right factor; that is, if $M=\Delta H^2_{\mathbb{C}^r}$ {\rm (}for
$\Delta$ an inner function in $H^{\infty}_{M_{n\times r}}${\rm )},
then $m=r$ and $\Theta=\Delta W$, where $W$ is a unitary matrix
mapping $\mathbb C^m$ onto $\mathbb C^m$. \  }

\bigskip

As customarily done, we say that two matrix-valued functions $A$ and
$B$ are {\it equal} if they are equal up to a unitary constant right
factor. \  Observe that, by (\ref{1.4}), for $\Phi\in
L^\infty_{M_n}$,
$$
H_\Phi S=H_{\Phi\cdot zI} =H_{zI\cdot\Phi}=S^*H_\Phi,
$$
which implies that the kernel of a Hankel operator $H_\Phi$ is an
invariant subspace of the shift operator on $H^2_{\mathbb C^n}$. \
Thus, if $\hbox{ker}\,{H_{\Phi}} \ne \{0\}$, then by the
Beurling-Lax-Halmos Theorem,
$$
\hbox{ker}\, H_\Phi=\Theta H^2_{\mathbb{C}^m}
$$
for some inner matrix function $\Theta$. \  We note that $\Theta$
need not be a square matrix.\label{square} \

\medskip

However, we have:

\begin{lemma}\label{gu2} \cite[Theorem 2.2]{GHR}\
For $\Phi\in L^\infty_{M_n}$, the following are equivalent:
\begin{itemize}
\item[(a)] $\Phi$ is of bounded type;
\item[(b)] $\ker\, H_\Phi=\Theta H^2_{\mathbb{C}^n}$ for
some square inner matrix function $\Theta$.
\end{itemize}
\end{lemma}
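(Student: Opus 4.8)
The plan is to combine the Beurling--Lax--Halmos Theorem with the scalar criterion \eqref{1.1}. First I would record the elementary fact that $H_\Phi$ acts on a column vector $v=(v_1,\dots,v_n)^T\in H^2_{\mathbb{C}^n}$ through the scalar Hankel operators of the entries: since $P_n^\perp$ and $J_n$ act coordinatewise, the $i$-th component of $H_\Phi v$ is $\sum_{j=1}^n H_{\varphi_{ij}}v_j$. In particular, taking $v=\psi e_k$ to be the $k$-th column of the matrix function $\psi I_n$ for a scalar $\psi\in H^2$, we have $H_\Phi v=0$ if and only if $\psi\in\bigcap_{i=1}^n\ker H_{\varphi_{ik}}$. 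With this observation in hand I would treat the two implications separately.

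For \emph{(a)$\Rightarrow$(b)}: assuming $\Phi$ of bounded type, \eqref{1.1} together with Beurling's theorem gives $\ker H_{\varphi_{ij}}=\psi_{ij}H^2\neq\{0\}$ for inner functions $\psi_{ij}$, and I would set $\psi:=\prod_{i,j}\psi_{ij}$, an inner function divisible by every $\psi_{ij}$. Then for each $k$ the $k$-th column $\psi e_k$ of $\psi I_n$ lies in $\ker H_\Phi$ (since $\psi\in\psi_{ik}H^2=\ker H_{\varphi_{ik}}$ for every $i$), so $\psi H^2_{\mathbb{C}^n}\subseteq\ker H_\Phi$; in particular $\ker H_\Phi\neq\{0\}$. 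The Beurling--Lax--Halmos Theorem then gives $\ker H_\Phi=\Theta H^2_{\mathbb{C}^m}$ for some inner $\Theta\in H^\infty_{M_{n\times m}}$ with $m\le n$, and the remaining task is to show $m=n$. Here I would write each column $\psi e_k$ of $\psi I_n$ as $\Theta\omega_k$ with $\omega_k\in H^2_{\mathbb{C}^m}$, assemble $\Omega:=[\,\omega_1\ \cdots\ \omega_n\,]\in H^2_{M_{m\times n}}$ so that $\psi I_n=\Theta\Omega$, and then use $\Theta^*\Theta=I_m$ and $|\psi|=1$ a.e.\ on $\mathbb{T}$ to get $\Omega^*\Omega=(\Theta\Omega)^*(\Theta\Omega)=(\psi I_n)^*(\psi I_n)=I_n$ a.e.; thus $\Omega(z)$ has rank $n$ for a.e.\ $z$, which forces $n\le m$ and hence $m=n$.

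For \emph{(b)$\Rightarrow$(a)}: given $\ker H_\Phi=\Theta H^2_{\mathbb{C}^n}$ with $\Theta$ a square inner matrix function, I would note that $\det\Theta\in H^\infty$ (a finite sum of products of the bounded entries of $\Theta$) and $|\det\Theta|^2=\det(\Theta^*\Theta)=1$ a.e.\ on $\mathbb{T}$, so $\det\Theta$ is a scalar inner function; in particular $\det\Theta\not\equiv 0$. Since the adjugate matrix $\operatorname{adj}(\Theta)$ lies in $H^\infty_{M_n}$ (its entries are polynomials in those of $\Theta$) and $\Theta\operatorname{adj}(\Theta)=(\det\Theta)I_n$, reading off the $k$-th columns shows $(\det\Theta)e_k\in\Theta H^2_{\mathbb{C}^n}=\ker H_\Phi$. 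Applying the entrywise description of $H_\Phi$ to $H_\Phi\bigl((\det\Theta)e_k\bigr)=0$ then yields $H_{\varphi_{ik}}(\det\Theta)=0$ for all $i,k$, so $\det\Theta$ is a nonzero element of $\ker H_{\varphi_{ik}}$; by \eqref{1.1}, each $\varphi_{ik}$ is of bounded type, i.e., $\Phi$ is of bounded type.

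The entrywise formula for $H_\Phi$, the choice of the common inner multiple $\psi$, and the adjugate identity are all routine. The one step I expect to need care is the multiplicity count in \emph{(a)$\Rightarrow$(b)}: the a priori bound $m\le n$ supplied by Beurling--Lax--Halmos has to be matched against the reverse inequality $n\le m$, which I would extract from the factorization $\psi I_n=\Theta\Omega$ and the fact that $\Omega$ is, pointwise almost everywhere, an isometry of $\mathbb{C}^n$ into $\mathbb{C}^m$. That is the main (if mild) obstacle.
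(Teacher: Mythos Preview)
The paper does not give its own proof of this lemma; it is quoted verbatim as \cite[Theorem 2.2]{GHR} and used as a black box. So there is nothing in the paper to compare your argument against.

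That said, your proof is correct and self-contained. The entrywise description of $H_\Phi$, the use of a common inner multiple $\psi$ to produce a full-rank subspace of $\ker H_\Phi$, and the rank argument via $\Omega^*\Omega=I_n$ a.e.\ to force $m=n$ are exactly the natural ingredients; the adjugate trick for (b)$\Rightarrow$(a) is likewise standard and clean. One cosmetic remark: in (a)$\Rightarrow$(b) you could replace the product $\prod_{i,j}\psi_{ij}$ by $\operatorname{l.c.m.}\{\psi_{ij}\}$, which is how the paper packages the same idea elsewhere (see \eqref{2.6}), but this changes nothing logically.
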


\medskip



We recall that an operator $T\in\mathcal{B(H)}$ is said to be
subnormal if $T$ has a normal extension, i.e.,
$T=N\vert_{\mathcal{H}}$, where $N$ is a normal operator on some
Hilbert space $\mathcal{K}\supseteq \mathcal{H}$ such that $\mathcal
H$ is invariant for $N$. \ The Bram-Halmos criterion for
subnormality (\cite{Br}, \cite{Con}) states that an operator $T\in
\mathcal{B(H)}$ is subnormal if and only if $\sum_{i,j}(T^ix_j, T^j
x_i)\ge 0$ for all finite collections
$x_0,x_1,\cdots,x_k\in\mathcal{H}$. \  It is easy to see that this
is equivalent to the following positivity test:
\begin{equation}\label{1.6}
\begin{pmatrix}
\hbox{$[T^*,T]$}& \hbox{$[T^{*2},T]$}& \hdots & \hbox{$[T^{*k},T]$}\\
\hbox{$[T^{*}, T^2]$}& \hbox{$[T^{*2},T^2]$} & \hdots & \hbox{$[T^{*k},T^2]$}\\
\vdots & \vdots & \ddots & \vdots\\
\hbox{$[T^*, T^k]$} & \hbox{$[T^{*2}, T^k]$} & \hdots &
\hbox{$[T^{*k},T^k]$}
\end{pmatrix}\ge 0
\qquad\text{(all $k\ge 1$)}\,.
\end{equation}
Condition (\ref{1.6}) provides a measure of the gap between
hyponormality and subnormality. \  In fact the positivity condition
(\ref{1.6}) for $k=1$ is equivalent to the hyponormality of $T$,
while subnormality requires the validity of (\ref{1.6}) for all $k$.
\ For $k\ge 1$, an operator $T$ is said to be {\it
$k$-hyponormal}\label{khypo} if $T$ satisfies the positivity
condition (\ref{1.6}) for a fixed $k$. \  Thus the Bram-Halmos
criterion can be stated as: $T$ is subnormal if and only if $T$ is
$k$-hyponormal for all $k\ge 1$. \ The notion of $k$-hyponormality
has been considered by many authors aiming at understanding  the
bridge between hyponormality and subnormality. \ In view of
(\ref{1.6}), between hyponormality and subnormality there exists a
whole slew of increasingly stricter conditions, each expressible in
terms of the joint hyponormality of the tuples $(I,T,T
^2,\hdots,T^k)$. \  Given an $n$-tuple $\mathbf T=(T_1,\hdots, T_n)$
of operators on $\mathcal{H}$, we let $[\mathbf T^*,\mathbf
T]\label{bfT^*bfT} \in\mathcal{B(H\oplus\cdots\oplus H)}$ denote the
{\it self-commutator} of $\mathbf T$, defined by
$$
[\mathbf T^*, \mathbf T]:=
\begin{pmatrix}
\hbox{$[T_1^*, T_1]$}& \hbox{$[T_2 ^*, T_1]$}& \hdots & \hbox{$[T_n^*,T_1]$}\\
\hbox{$[T_1^*, T_2]$}& \hbox{$[T_2 ^*, T_2]$}& \hdots &\hbox{$[T_n^*,T_2]$}\\
\vdots & \vdots & \ddots & \vdots\\
\hbox{$[T_1^*,T_n]$} & \hbox{$[T_2^*,T_n]$} & \hdots &
\hbox{$[T_n^*,T_n]$}
\end{pmatrix}.
$$
By analogy with the case $n=1$, we shall say (\cite{At}, \cite{CMX})
that $\mathbf T$ is {\it jointly hyponormal} (or simply, {\it
hyponormal}) if $[\mathbf T^ *,\mathbf T]$ is a positive operator on
$\mathcal{H} \oplus \cdots \oplus \mathcal{H}$. \  $\mathbf T$ is
said to be {\it normal} if $\mathbf T$ is commuting and every $T_i$
is a normal operator, and {\it subnormal} if $\mathbf T$ is the
restriction of a normal $n$-tuple to a common invariant subspace. \
Clearly, the normality, subnormality or hyponormality of an
$n$-tuple requires as a necessary condition that every coordinate in
the tuple be normal, subnormal or hyponormal, respectively. \
Normality and subnormality require that the coordinates commute, but
hyponormality does not. \

In 1988, the hyponormality of the Toeplitz operators $T_{\varphi}$
was characterized in terms of their symbols $\varphi$  via Cowen's
Theorem \cite{Co2}, which follows.

\medskip

\noindent {\bf Cowen's Theorem.} (\cite{Co2},
\cite{NT})\label{cowen} \ \ {\it For each $\varphi\in L^\infty$, let
$$
\mathcal{E}(\varphi)\label{Ephi}\equiv \Bigl\{k\in H^\infty:\
||k||_\infty\le 1\ \hbox{and}\ \varphi-k\overline\varphi\in
H^\infty\Bigr\}. \
$$
Then $T_\varphi$ is hyponormal if and only if $\mathcal{E}(\varphi)$
is nonempty. \  }

\medskip

To study hyponormality (resp. normality and subnormality) of the
Toeplitz operator $T_\varphi$ with symbol $\varphi$ we may, without
loss of generality, assume that $\varphi(0)=0$; this is because
hyponormality (resp. normality and subnormality) is invariant under
translations by scalars. \

In 2006, C. Gu, J. Hendricks and D. Rutherford \cite{GHR} have
considered the hyponormality of Toeplitz operators with
matrix-valued symbols and characterized it in terms of their
symbols. \ Their characterization resembles Cowen's Theorem except
for an additional condition -- the normality of the symbol. \

\medskip

\begin{lemma}\label{gu}
{\rm (Hyponormality of Block Toeplitz Operators) (\cite{GHR})} \ For
each $\Phi\in L^\infty_{M_n}$, let
$$
\mathcal{E}(\Phi)\label{Ebigphi}:=\Bigl\{K\in H^\infty_{M_n}:\
||K||_\infty \le 1\ \ \hbox{and}\ \ \Phi-K \Phi^*\in
H^\infty_{M_n}\Bigr\}. \
$$
Then $T_\Phi$ is hyponormal if and only if $\Phi$ is normal and
$\mathcal{E}(\Phi)$ is nonempty. \end{lemma}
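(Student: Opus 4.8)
The plan is to translate the hyponormality of $T_\Phi$ into an operator inequality between the Hankel operators $H_\Phi$ and $H_{\Phi^*}$, read off from it that $\Phi$ must be normal, and then extract the symbol $K$ by a commutant lifting argument. Combining (\ref{1.2}) and (\ref{1.3}) one gets, with $\Psi:=\Phi^*\Phi-\Phi\Phi^*$,
\[
[T_\Phi^*,T_\Phi]=T_\Psi+H_{\Phi^*}^*H_{\Phi^*}-H_\Phi^*H_\Phi,
\]
so when $\Phi$ is normal (hence $\Psi=0$) hyponormality of $T_\Phi$ is equivalent to $H_\Phi^*H_\Phi\le H_{\Phi^*}^*H_{\Phi^*}$, i.e.\ $\|H_\Phi f\|\le\|H_{\Phi^*}f\|$ for all $f\in H^2_{\mathbb C^n}$. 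This settles sufficiency at once: if $\Phi$ is normal and $K\in\mathcal E(\Phi)$, then $H_{\Phi-K\Phi^*}=0$, so by (\ref{1.4}) $H_\Phi=H_{K\Phi^*}=T_{\widetilde{K}}^*H_{\Phi^*}$, whence $H_\Phi^*H_\Phi\le\|\widetilde{K}\|_\infty^2\,H_{\Phi^*}^*H_{\Phi^*}\le H_{\Phi^*}^*H_{\Phi^*}$ (as $\|\widetilde{K}\|_\infty=\|K\|_\infty\le1$) and $[T_\Phi^*,T_\Phi]\ge0$.

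For necessity, assume $T_\Phi$ hyponormal; I would first prove $\Phi$ is normal. From $H_\Phi S=S^*H_\Phi$ (and the analogue for $\Phi^*$) one has $H_\Phi S^N=S^{*N}H_\Phi\to0$ and $H_{\Phi^*}S^N\to0$ strongly as $N\to\infty$. Testing $[T_\Phi^*,T_\Phi]\ge0$ against $f=z^{N+m}\mathbf c+z^{N}\mathbf d$ (for fixed $\mathbf c,\mathbf d\in\mathbb C^n$ and $m\ge1$) and letting $N\to\infty$, the Hankel contributions vanish in the limit, while $\langle T_\Psi f,f\rangle\to \mathbf c^*\widehat\Psi(0)\mathbf c+\mathbf d^*\widehat\Psi(0)\mathbf d+2\operatorname{Re}\!\big(\mathbf d^*\widehat\Psi(m)\mathbf c\big)$, where $\widehat\Psi(m):=\int_{\mathbb T}t^m\Psi\,d\mu$. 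Taking $\mathbf d=0$ gives $\widehat\Psi(0)\ge0$, and since $\operatorname{tr}\widehat\Psi(0)=\int_{\mathbb T}\operatorname{tr}(\Phi^*\Phi-\Phi\Phi^*)\,d\mu=0$ we conclude $\widehat\Psi(0)=0$; then varying the phase and scale of $\mathbf c$ and $\mathbf d$ in the surviving inequality forces $\widehat\Psi(m)=0$ for every $m$. Hence $\Psi=0$ a.e., i.e.\ $\Phi$ is normal.

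It remains to produce $K\in\mathcal E(\Phi)$ from $\|H_\Phi f\|\le\|H_{\Phi^*}f\|$. By (\ref{1.4}), $\Phi-K\Phi^*\in H^\infty_{M_n}$ is equivalent to $H_\Phi=T_{\widetilde{K}}^*H_{\Phi^*}$, so it suffices to find $L\in H^\infty_{M_n}$ with $\|L\|_\infty\le1$ and $H_\Phi=T_L^*H_{\Phi^*}$ (then $K:=\widetilde L$ works, since $\widetilde{\widetilde L}=L$ and $\|\widetilde L\|_\infty=\|L\|_\infty$). Douglas's factorization lemma (\cite{Do1}) turns the norm inequality into a contraction $Y_0:\mathcal M\to H^2_{\mathbb C^n}$, where $\mathcal M$ is the closure of $\operatorname{ran} H_{\Phi^*}$, with $H_\Phi=Y_0H_{\Phi^*}$; and using $H_\Phi S=S^*H_\Phi$, $H_{\Phi^*}S=S^*H_{\Phi^*}$ one checks that $Y_0(S^*|_{\mathcal M})=S^*Y_0$ on $\mathcal M$. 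Since $\mathcal M$ is coinvariant for $S$ (a model space $\mathcal H(\Theta)$ by the Beurling-Lax-Halmos theorem), the compression $P_{\mathcal M}S|_{\mathcal M}$ has a minimal isometric dilation realized inside $H^2_{\mathbb C^n}$, and the commutant lifting theorem (\cite{FF}) applied to the intertwining map $Y_0^*$ yields $Z\in\mathcal B(H^2_{\mathbb C^n})$ with $ZS=SZ$, $\|Z\|\le1$, and $Z^*|_{\mathcal M}=Y_0$. Then $Z=T_L$ for some $L\in H^\infty_{M_n}$ with $\|L\|_\infty=\|Z\|\le1$ (the commutant of the shift consists of analytic Toeplitz operators), and $T_L^*H_{\Phi^*}=Z^*H_{\Phi^*}=Y_0H_{\Phi^*}=H_\Phi$; hence $K:=\widetilde L\in\mathcal E(\Phi)$.

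The main obstacle is this last step. The plain Douglas factorization only bounds $\|K\|_\infty$ by some unspecified constant, whereas $\mathcal E(\Phi)$ requires $\|K\|_\infty\le1$; it is precisely the commutant lifting theorem that supplies a contractive lift which is moreover analytic (of the form $T_L^*$). Setting up its hypotheses correctly — the interplay of $S$ and $S^*$, the coinvariant subspace $\mathcal M$, and realizing the isometric dilation of the compressed shift concretely inside $H^2_{\mathbb C^n}$ so that $Z$ lands back in $\mathcal B(H^2_{\mathbb C^n})$ — is the delicate point. (The passage from $[T_\Phi^*,T_\Phi]\ge0$ to the normality of $\Phi$ is elementary by comparison, but still relies on pushing the test vectors out to infinity to annihilate the Hankel terms.)
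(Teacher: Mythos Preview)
The paper does not prove this lemma; it is simply quoted from \cite{GHR} (and the key operator inequality $H_{\Phi^*}^*H_{\Phi^*}\ge H_\Phi^*H_\Phi \Leftrightarrow \mathcal E(\Phi)\ne\emptyset$ is later recalled separately as (\ref{21}), again with a citation). So there is nothing to compare against here: you have supplied a proof where the paper supplies only a reference.

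Your argument is essentially the standard one (and is in substance the proof given in \cite{GHR}, which in turn extends Cowen's scalar argument). The sufficiency and the normality-of-$\Phi$ step are fine as written. For the existence of $K$, your strategy---Douglas factorization followed by commutant lifting to promote the partially defined contraction to an analytic Toeplitz operator---is the right one, but the setup deserves a little more care. The minimal isometric dilation of $P_{\mathcal M}S|_{\mathcal M}$ is $S$ restricted to $\bigvee_{k\ge0}S^k\mathcal M$, which need not be all of $H^2_{\mathbb C^n}$ (for instance when $\ker H_{\Phi^*}=\Theta H^2_{\mathbb C^m}$ with $m<n$); one then lifts into that invariant subspace and extends the resulting analytic symbol by zero on the complementary coordinates, or alternatively one first extends $Y_0$ by $0$ on $\mathcal M^\perp$ (using that $\mathcal M^\perp=\ker H_{\Phi^*}^*$ is $S$-invariant) so that the intertwining $S^*Y_0=Y_0S^*$ holds on all of $H^2_{\mathbb C^n}$ before dualizing and lifting. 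Either route closes the gap; the idea is correct.
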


\medskip


M. Abrahamse \cite[Lemma 6]{Ab} showed that if $T_\varphi$ is
hyponormal, if $\varphi\notin H^\infty$, and if $\varphi$ or
$\overline{\varphi}$ is of bounded type then both $\varphi$ and
$\overline{\varphi}$ are of bounded type. \ However, in contrast to
the scalar-valued case, $\Phi^*$ may not be of bounded type even
though $T_\Phi$ is hyponormal, $\Phi\notin H^\infty_{M_n}$ and
$\Phi$ is of bounded type. \ But we have a one-way implication:
\begin{equation}\label{gu3}
\hbox{$T_\Phi$ is hyponormal and $\Phi^*$ is of bounded type}\
\Longrightarrow \ \hbox{$\Phi$ is of bounded type}
\end{equation}
(see \cite [Corollary 3.5 and Remark 3.6]{GHR}). \ Thus, whenever we
deal with hyponormal Toeplitz operators $T_\Phi$ with symbols $\Phi$
satisfying that both $\Phi$ and $\Phi^*$ are of bounded type (e.g.,
$\Phi$ is a matrix-valued rational function), it suffices to assume
that only $\Phi^*$ is of bounded type. \  In spite of this fact, for
convenience, we will assume that $\Phi$ and $\Phi^*$ are of bounded
type whenever we deal with bounded type symbols. \

\bigskip


\newpage

\noindent{\Large \bf Notations}\label{notation}

\bigskip

\begin{itemize}
\item Let $\theta$ be an inner function in $H^\infty$. Then

\smallskip

$I_\theta:\label{Itheta}=\theta I\equiv \left(\begin{smallmatrix}
\theta\\ &\ddots\\ &&\theta\end{smallmatrix}\right)$ (where $I$ is
the identity matrix)

$\mathcal{Z}(\theta)\label{ztheta}:=\hbox{the set of all zeros of
$\theta$}$

\medskip

\item $b_\lambda (z)\label{blambdaz}:=\frac{z-\lambda}{1-\overline\lambda z}$ ($\lambda\in\mathbb D$), a Blaschke factor

\medskip

\item $H^2_0\label{h20}\equiv (H^2_{M_n})_0:=zH^2_{M_n}$

\medskip

\item Let $\Theta\in H^\infty_{M_{n\times m}}$ be an inner matrix function. Then

\smallskip

$\mathcal H(\Theta)\label{HTheta}:= H^2_{\mathbb C_n}\ominus \Theta
H^2_{\mathbb C_m}$

$\mathcal H_{\Theta}\label{H_Theta}:= H^2_{M_{n \times m}}\ominus
\Theta H^2_{M_m}$

$\mathcal K_{\Theta}\label{K_Theta}:= H^2_{M_{n \times m}}\ominus
H^2_{M_n}\Theta$

\smallskip
\noindent If $\Theta=I_{\theta}$ for an inner function $\theta$,
then $\mathcal{H}_\Theta=\mathcal{K}_\Theta$. \ If there is no
confusion then we write, for brevity, $\mathcal H_\theta$,
$\mathcal{K}_\theta$ for $\mathcal H_{I_\theta}$,
$\mathcal{K}_{I_\theta}$. \

\medskip

\item For $\mathcal X$ a closed subspace of $H^2_{M_n}$,
$P_{\mathcal X}$ denotes the orthogonal projection from $H^2_{M_n}$
onto $\mathcal{X}$.

\medskip

\item
If $\Phi\in L^\infty_{M_n}$ and $\Delta_1$ and $\Delta_2$ are inner
matrix functions in $H^\infty_{M_n}$, we write
$$
\Phi_{\Delta_1, \Delta_2}\label{PhiLow}:=
P_{(H^{2}_{M_n})^\perp}(\Phi_-^* \Delta_1) +
P_{H_0^2}(\Delta_2^*\Phi_+);
$$
$$
\Phi^{\Delta_1, \Delta_2}\label{Phiupper}:= P_{(H^{2}_{M_n})^\perp}
(\Delta_1\Phi_-^* ) + P_{H_0^2}(\Phi_+\Delta_2^*),
$$
and abbreviate
$$
\Phi_{\Delta}\label{Phi_delta}\equiv \Phi_{\Delta,
\Delta}\quad\hbox{and}\quad \Phi^{\Delta}\label{Phi^delta}\equiv
\Phi^{\Delta, \Delta}.\label{qw}
$$
If $\Delta_i:=I_{\delta_i}$ for some inner functions $\delta_i \
(i=1,2)$, then $\Phi_{\Delta_1, \Delta_2}=\Phi^{\Delta_1,
\Delta_2}$. \ If there is no confusion then we write, for brevity,
$$
\Phi_{I_{\delta_1},\Delta_2}\equiv \Phi_{\delta_1,\Delta_2},\ \
\Phi_{\Delta_1, I_{\delta_2}}\equiv \Phi_{\Delta_1,\delta_2},\ \
\Phi^{I_{\delta}}\equiv \Phi^\delta\ \ \hbox{and etc}
$$
(i.e., we write $\delta$ for $I_{\delta}$ in this representation).

\medskip

\item
For an inner function $\theta$, $U_\theta$ denotes the compression
of the shift operator $U\equiv T_z$ : i.e.,
$$
U_\theta\label{U_theta}= P_{\mathcal H (\theta)} U \vert_{\mathcal H
(\theta)}.
$$
More generally, for $A \in L^{\infty}_{M_n}$ and an inner function
$\Theta \in H^{\infty}_{M_n}$, we write
$$
(T_A)_{\Theta}\label{compress}=P_{\mathcal H(\Theta)} T_A|_{\mathcal
H(\Theta)},
$$
which is called the compression of $T_A$ to $\mathcal{H}(\Theta)$.

\end{itemize}

%
%
%
%
%
%

\newpage


%
%
%
%

\chapter{Coprime inner functions}

\medskip

To understand functions of bounded type, we need to factorize those
functions into a coprime product of an inner function and the
complex conjugate of an $H^2$-function. \ Thus we are interested in
the following question: When are two inner functions in $H^\infty$
coprime\,? \ Naturally, a measure-theoretic problem arises at once,
since singular inner functions correspond to their singular measures. \
In this chapter, we answer this question. \

A nonzero sequence $\{\alpha_j\}$ in $\mathbb D$ satisfying
$\sum_{j=1}^\infty (1-|\alpha_j|)<\infty$ is called a Blaschke
sequence. \ If $\{\alpha_j\}$ is a Blaschke sequence and $k$ is an
integer, $k\ge 0$, then the function
$$
b(z):=z^k\prod_{j=1}^\infty \frac{|\alpha_j|}{\alpha_j}
\left(\frac{z-\alpha_j}{1-\overline\alpha_j z}\right)
$$
is called a Blaschke product. \ The factor $z^k$ in the definition
of the Blaschke product is to allow $b$ to have a zero at $0$. \ If
$\{\alpha_j\}$ is a finite sequence then $b$ is called a finite
Blaschke product. \

Recall that an inner function $\theta\in H^\infty$ can be written as
$$
\theta (z)= c\,  b(z) \,  \hbox{exp}\left(-\int_{\mathbb
T}\frac{t+z}{t-z}d\mu(t)\right) \quad(z \in \mathbb D),
$$
where $c$ is a constant of modulus $1$, $b$ is a Blaschke product,
and $\mu$ is a finite positive Borel measure on $\mathbb T$ which is
singular with respect to Lebesgue measure. \ It is evident that if
$b_1$ and $b_2$ are Blaschke products then
\begin{equation}\label{Blaco}
\hbox{$b_1$ and $b_2$ are not coprime}\ \Longleftrightarrow \
\mathcal Z(b_1)\cap \mathcal{Z}(b_2)\ne \emptyset\,.
\end{equation}
Thus the difficulty in determining coprime-ness of two inner
functions is caused by an inner functions having no zeros in
$\mathbb D$, which is called {\it a singular inner function}:
$$
\theta(z)\equiv \hbox{exp}\left(-\int_{\mathbb
T}\frac{t+z}{t-z}d\mu(t)\right),
$$
where $\mu$ will be called the {\it singular measure} of $\theta$. \
Thus we are interested in the following question:
\begin{equation}\label{cosing}
\hbox{When are two singular inner functions coprime?}
\end{equation}
Question (\ref{cosing}) seems to be well known to experts; however,
we have not been able to find an answer in the literature. \ Thus,
below we give an answer to Question (\ref{cosing}). \

\medskip

Let $\mu$ be a Borel measure on a locally compact Hausdorff space
$X$. \ Recall that the support of $\mu$ is defined as the set:
$$
\hbox{supp}(\mu):=\Bigl\{ x \in X: \mu(G)>0 \ \hbox{for every open
neighborhood} \ G\ \hbox{of}\ x\Bigr\}.
$$
An equivalent definition of support of $\mu$ is as the largest
closed set $S \subseteq X$ such that for every open subset $U$ of
$X$ for which $S\cap U\ne \emptyset$,
\begin{equation}\label{supp}
\mu(S^c)=0\ \ \hbox{and}\ \ \mu (U \cap S) > 0.
\end{equation}
On the other hand, if there is a Borel set $A$ such that
$\mu(E)=\mu(A\cap E)$ for every Borel set $E$, we say that $\mu$ is
{\it concentrated on $A$} (cf. \cite{Ru}). \ This is equivalent to
the condition that $\mu(E)=0$ whenever $E\cap A=\emptyset$. \ Also
we say that two Borel measures $\mu$ and $\nu$ are {\it mutually
singular} (and write $\mu\perp\nu$) if there are disjoint Borel sets
$A$ and $B$ such that $\mu$ is concentrated on $A$ and $\nu$ is
concentrated on $B$. \

We note that if $\hbox{supp}(\mu)\cap\hbox{supp}(\nu)=\emptyset$,
then $\mu\perp \nu$. \ However the converse is not true: for
example, if $m$ is the Lebesgue measure on $[0,1]$ and  $\delta_1$
is the Dirac measure at $1$, then $m\perp \delta_{1}$, but
$\hbox{supp}(m)\cap\hbox{supp}(\delta_1)=\{1\}$. \

\medskip

How does one define the infimum of a family of finite positive Borel
measures? \ \ Let $\mathcal S\equiv \{\mu_1,\mu_2,\cdots\}$ be a
countable family of finite positive Borel measures on a locally
compact Hausdorff space $X$. \ For any Borel set $E$, define $\mu
(E)$ by
\begin{equation}\label{infmea}
\mu (E):=\inf \sum_{k} \mu_k (E_k),
\end{equation}
where the $\mu_k$ runs through the family $\mathcal S$ and where
$\{E_1, E_2, \cdots\}$ runs through all partitions of $E$ into Borel
sets (cf. \cite[p.84]{Ga}). \ Then we can show that $\mu$ is a
positive Borel measure satisfying the following:
\begin{itemize}
\item[(i)] \ $\mu \leq \mu_k$ for all $k=1,2,\cdots$;

\item[(ii)] \ $\mu$ is the maximum positive Borel measure on $X$  satisfying (i)
in the sense that: if $\lambda$ is a positive Borel measure on $X$
satisfying $\lambda\leq \mu_k$ for all $k=1,2,\cdots$, then
$\lambda\leq \mu$.
\end{itemize}
The measure $\mu$ of (\ref{infmea}) is called the {\it infimum} of
the family $\mathcal S$ of measures, and we write
$$
\mu:=\inf_k\,(\mu_k).
$$
We note that the infimum of two nonzero measures may be zero. \ For
example, let $X=[0,1]$ and suppose $m$ is the Lebesgue measure on
$X$ and $\delta_1$ is the Dirac measure at $1$. \ Then $\mu(X)\equiv
\inf\,(m,\delta_1)(X)\le m(\{1\}) + \delta_1 ([0,1))=0$.

\bigskip

\begin{theorem}\label{thminfmeasure2} \  Let $\{\mu_k\}$ be a countable
family of  finite positive Borel measures on a locally compact
Hausdorff space $X$. \ If $\mu_k$ is concentrated on $A_k$  for each
$k \in \mathbb Z_+$, then $\mu\equiv\inf_k\,(\mu_k)$ is concentrated
on $\bigcap_n A_n$.
\end{theorem}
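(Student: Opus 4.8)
The plan is to reduce everything to the single inequality $\mu\le\mu_k$ recorded as property (i) of the infimum measure, together with countable subadditivity. First I would observe that, since each $A_k$ is a Borel set, so is $B:=\bigcap_n A_n$, and recall the elementary fact that a positive Borel measure $\nu$ is concentrated on a Borel set $B$ if and only if $\nu(X\setminus B)=0$: if $\nu(X\setminus B)=0$ then $\nu(E)=\nu(E\cap B)+\nu(E\cap B^c)=\nu(E\cap B)$ for every Borel $E$, while conversely, taking $E=X\setminus B$ in the defining identity $\nu(E)=\nu(E\cap B)$ forces $\nu(X\setminus B)=\nu(\emptyset)=0$. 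Hence it suffices to prove $\mu(X\setminus B)=0$.

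Next I would write $X\setminus B=X\setminus\bigcap_n A_n=\bigcup_n(X\setminus A_n)$ and apply countable subadditivity of $\mu$ — legitimate because $\mu$ is a genuine positive Borel measure, as established in the discussion preceding the theorem — to get $\mu(X\setminus B)\le\sum_n\mu(X\setminus A_n)$. The crux is then immediate: by property (i) we have $\mu\le\mu_n$ for every $n$, so $\mu(X\setminus A_n)\le\mu_n(X\setminus A_n)$; and since $\mu_n$ is concentrated on the Borel set $A_n$, the right-hand side equals $0$. Therefore every summand vanishes, $\mu(X\setminus B)=0$, and so $\mu$ is concentrated on $B=\bigcap_n A_n$.

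There is no serious obstacle here: the whole argument rests on property (i) and countable subadditivity, so the only points requiring care are the equivalence ``$\nu$ concentrated on $B$ $\Longleftrightarrow$ $\nu(B^c)=0$'' for a Borel set $B$, and the routine bookkeeping that the sets involved ($A_n$, $X\setminus A_n$, and $\bigcap_n A_n$) are all Borel. (One could instead argue directly from the defining formula $\mu(E)=\inf\sum_k\mu_k(E_k)$, but having property (i) already in hand makes that unnecessary.)
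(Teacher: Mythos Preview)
Your proof is correct and is in fact cleaner than the paper's own argument. The paper works directly with the defining formula $\mu(E)=\inf\sum_k\mu_k(E_k)$: given a Borel set $E$ and any partition $\{E_n\}$ of $E$, it refines the partition using the pieces $F_n=E_n\cap A$ and $A_n^{(k)}=E_n\setminus A_k$ (where $A=\bigcap_n A_n$), observes that $\mu_k(A_n^{(k)})=0$, and concludes $\mu(E)\le\inf\sum_k\mu_k(E_k\cap A)=\mu(E\cap A)$. In contrast, you bypass the explicit partition bookkeeping entirely by invoking property~(i), $\mu\le\mu_k$, together with countable subadditivity. Your approach is shorter and makes transparent exactly which properties of the infimum measure are being used; the paper's approach has the minor advantage of not needing to appeal to the already-established fact that $\mu$ is a genuine measure (though the paper does state this just before the theorem). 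You even anticipated the paper's strategy in your closing parenthetical remark.
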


\begin{proof} \ Write
$$
A\equiv\bigcap_k A_k \quad\hbox{and}\quad \mu \equiv \inf_k(\mu_k).
$$
Let $E$ be a Borel set and suppose $\{E_n\}$ is a partition of $E$
into Borel sets. \  \ Let
$$
F_n:=E_n \cap A \quad\hbox{and}\quad  A_n^{(k)}:=E_n \setminus A_k.
$$
Then we can see that
$$
E=\Bigl(\cup_n F_n\Bigr) \bigcup \Bigl(\cup_{n,k} A_n^{(k)} \Bigr).
$$
Let $\{C_n\}$ be the collection consisting of $\{F_n,
A_n^{(k)}\}_{n,k}$. \ We then have
\begin{equation}\label{aaa}
\aligned \mu(E)
& \leq \inf \sum_k \mu_k (C_k)\\
& \leq \inf \sum_k \mu_k (F_k)\\
& =\inf \sum_k \mu_k (E_k \cap A).
\endaligned
\end{equation}
Since $E\cap A=\bigcup_n(E_n \cap A )$, it follows from (\ref{aaa})
that
$$
\mu(E) \leq \mu(E \cap A ).
$$
This completes the proof.
\end{proof}

\begin{corollary}\label{corinfmeasure2} \
Let $\{\mu_k\}$ be a countable family of  finite positive Borel
measures on a locally compact Hausdorff space $X$. \ If $\mu_i \perp
\mu_j$ for some $i,j$, then $\inf_k\,(\mu_k)=0$.
\end{corollary}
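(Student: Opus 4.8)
The plan is to derive this directly from Theorem \ref{thminfmeasure2} by feeding it a suitably chosen family of concentration sets. First I would dispose of the degenerate possibility $i=j$: if $\mu_i\perp\mu_i$, then there are disjoint Borel sets $A,B$ with $\mu_i$ concentrated on $A$ and on $B$, whence $\mu_i(E)=\mu_i(E\cap A)=\mu_i(E\cap A\cap B)=\mu_i(\emptyset)=0$ for every Borel set $E$; since the infimum satisfies $\inf_k(\mu_k)\le\mu_i$ by property (i) in the definition of the infimum, we get $\inf_k(\mu_k)=0$ and we are done. So from now on we may assume $i\ne j$.

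Next, using the hypothesis $\mu_i\perp\mu_j$, choose disjoint Borel sets $A,B\subseteq X$ with $\mu_i$ concentrated on $A$ and $\mu_j$ concentrated on $B$. I would then manufacture concentration sets for the entire sequence by setting $A_i:=A$, $A_j:=B$, and $A_k:=X$ for every index $k\notin\{i,j\}$. Since every finite positive Borel measure is (trivially) concentrated on the whole space $X$, we have that $\mu_k$ is concentrated on $A_k$ for each $k\in\mathbb Z_+$. Theorem \ref{thminfmeasure2} now applies verbatim and shows that $\mu\equiv\inf_k(\mu_k)$ is concentrated on $\bigcap_n A_n$.

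Finally I would observe that $\bigcap_n A_n\subseteq A_i\cap A_j=A\cap B=\emptyset$, so $\mu$ is concentrated on the empty set; hence, for any Borel set $E$, $\mu(E)=\mu\bigl(E\cap\bigcap_n A_n\bigr)=\mu(\emptyset)=0$, i.e.\ $\inf_k(\mu_k)=0$. There is essentially no obstacle here beyond the bookkeeping: the substantive work is already carried by Theorem \ref{thminfmeasure2}, and the only points requiring a moment's care are the degenerate case $i=j$ and the (elementary) remark that a positive measure concentrated on $\emptyset$ must vanish identically.
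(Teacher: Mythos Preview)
Your proof is correct and is essentially the same as the paper's: the paper simply writes ``Immediate from Theorem~\ref{thminfmeasure2},'' and your argument is exactly the natural unpacking of that one line (choosing $A_i=A$, $A_j=B$, $A_k=X$ otherwise, so that the intersection is empty). Your handling of the degenerate case $i=j$ and the explicit remark that a measure concentrated on $\emptyset$ vanishes are careful touches the paper omits, but the approach is identical.
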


\begin{proof} \
Immediate from Theorem \ref{thminfmeasure2}.
\end{proof}

\medskip

\begin{theorem}\label{thminfmeasure8} \
Let $\mu_1$ and $\mu_2$ be finite positive Borel measures on a Borel
$\sigma$-algebra $\mathfrak B$ in a locally compact Hausdorff space
$X$. \ Then
$$
\mu_1 \perp \mu_2 \Longleftrightarrow \inf\, (\mu_1, \mu_2)=0.
$$
\end{theorem}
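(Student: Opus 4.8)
The direction $\mu_1 \perp \mu_2 \Rightarrow \inf(\mu_1,\mu_2)=0$ is already in hand: it is exactly Corollary \ref{corinfmeasure2} applied to the two-element family $\{\mu_1,\mu_2\}$. So the content is the reverse implication, for which I would prove the contrapositive: assuming $\mu_1$ and $\mu_2$ are not mutually singular, I would produce a nonzero positive Borel measure $\lambda$ with $\lambda \le \mu_1$ and $\lambda \le \mu_2$, whence $\lambda \le \inf(\mu_1,\mu_2)$ by the maximality property (ii) of the infimum, forcing $\inf(\mu_1,\mu_2) \ne 0$.

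To build such a $\lambda$, the natural tool is the Lebesgue decomposition of $\mu_1$ with respect to $\mu_2$: write $\mu_1 = \mu_1^{a} + \mu_1^{s}$, where $\mu_1^{a} \ll \mu_2$ and $\mu_1^{s} \perp \mu_2$. The hypothesis that $\mu_1 \not\perp \mu_2$ should force $\mu_1^{a} \ne 0$ — indeed if $\mu_1^{a}=0$ then $\mu_1 = \mu_1^{s} \perp \mu_2$, a contradiction. By Radon–Nikodym, $d\mu_1^{a} = f\, d\mu_2$ for some $f \in L^1(\mu_2)$, $f \ge 0$, and $f$ is not a.e.\ zero. Then for a suitable level set $B := \{f \ge \varepsilon\} \cap \{f \le M\}$ with $0<\varepsilon<M$ chosen so that $\mu_2(B)>0$ (possible since $f$ is positive on a set of positive $\mu_2$-measure), I would set
$$
\lambda(E) := \varepsilon\, \mu_2(E \cap B).
$$
This is a nonzero finite positive Borel measure, and $\lambda \le \varepsilon \mu_2 \le \mu_2$ trivially; also $\lambda(E) = \varepsilon \mu_2(E\cap B) \le \int_{E\cap B} f\, d\mu_2 \le \mu_1^{a}(E) \le \mu_1(E)$, so $\lambda \le \mu_1$. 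Applying property (ii) gives $\lambda \le \inf(\mu_1,\mu_2)$, and since $\lambda \ne 0$ we conclude $\inf(\mu_1,\mu_2) \ne 0$.

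The main obstacle I anticipate is not the analytic estimate — which is routine — but the measure-theoretic bookkeeping in the generality stated: $X$ is only a locally compact Hausdorff space with a Borel $\sigma$-algebra $\mathfrak B$, and the Lebesgue decomposition / Radon–Nikodym theorem requires $\sigma$-finiteness. Since $\mu_1,\mu_2$ are assumed finite, this is fine, but one must be careful that "Borel measure" here is not assumed regular and that the decomposition is taken purely $\sigma$-algebraically on $(X,\mathfrak B)$; no topology is actually needed for this implication. An alternative, if one prefers to avoid Radon–Nikodym, is to argue directly: if $\inf(\mu_1,\mu_2)=0$, then for every $\delta>0$ there is a partition $\{E_k\}$ of $X$ with $\sum_k \mu_{\sigma(k)}(E_k) < \delta$ (where $\sigma(k)\in\{1,2\}$), and a diagonal/exhaustion argument over $\delta = 1/m$ assembles a single Borel set $A$ with $\mu_1(A^c)=0$ and $\mu_2(A)=0$, exhibiting $\mu_1\perp\mu_2$; this is more hands-on but keeps everything inside $\mathfrak B$. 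I would present the Radon–Nikodym version for transparency and remark that the direct version is available.
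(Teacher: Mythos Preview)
Your approach is correct and essentially the same as the paper's: both prove the nontrivial direction by contrapositive via the Lebesgue decomposition $\mu_1=\mu_a+\mu_s$ with respect to $\mu_2$, take the Radon--Nikodym derivative $h=d\mu_a/d\mu_2$, and build a nonzero $\lambda\le\mu_1,\mu_2$ supported on a level set of $h$. Your construction $\lambda=\varepsilon\,\mu_2|_{B}$ (just make explicit that $\varepsilon<1$ so that $\lambda\le\mu_2$ holds) is a minor streamlining of the paper's version, which instead defines $\lambda(E)=\int_E h\chi_H\,d\mu_2$ on $H=\{0<h\le 1\}$ and splits into a second case $\lambda(E)=\frac1N\int_E h\chi_{H_N}\,d\mu_2$ on $H_N=\{1<h\le N\}$ when $\mu_2(H)=0$; your single formula handles both cases at once.
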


\begin{proof} \
($\Rightarrow$) \ This follows from Theorem \ref{thminfmeasure2}.

($\Leftarrow$)  \ Suppose $\mu_1$ and $\mu_2$ are finite positive
Borel measures. \ By the Lebesgue decomposition of $\mu_1$ relative
to $\mu_2$, there exists a unique pair $\{\mu_a, \mu_s\}$ of finite
positive measures on $\mathfrak B$ such that
$$
\mu_1=\mu_a+\mu_s, \quad \mu_a \ll\mu_2, \quad \mu_s \perp \mu_2.
$$
Let $h\in L^1(\mu_2)$ be the Radon-Nikodym derivative of $\mu_a$
with respect to $\mu_2$: that is,
\begin{equation}\label{sharp}
\mu_a(E)=\int_E h\, d\mu_2\quad (E\in\mathfrak B).
\end{equation}
Note that $h$ is a nonnegative measurable function. \ Assume that
$\mu_1$ and $\mu_2$ are not mutually singular. \ Then $h \neq 0$
$[\mu_2]$. \ Define
$$
H:=\Bigl\{x \in X: 0<h(x)\leq 1\Bigr\}.
$$
There are two cases to consider.

\medskip Case 1 ($\mu_2(H)\neq 0$): \
Define
$$
\lambda(E):=\int_{E} h \cdot \chi_{H} \,d\mu_2 \quad (E\in\mathfrak
B).
$$
Since $ h \cdot \chi_{H}$ is a nonnegative measurable function, it
follows that $\lambda$ is a positive measure. \ Observe that
$$
\lambda(H)=\int_{H} h  d\mu_2>0.
$$
For each $E\in\mathfrak B$, we have, by (\ref{sharp}),
$$
\lambda(E)=\int_{E} h \cdot \chi_{H} d\mu_2 \leq \int_{E} h
d\mu_2=\mu_a(E)\leq \mu_1(E)
$$
and by definition of $H$,
$$
\lambda(E)=\int_{E} h \cdot \chi_{H} d\mu_2 \leq \int_{E}
d\mu_2=\mu_2(E).
$$
But since $\lambda(H)>0$, it follows that $0 \neq \lambda \leq
\inf\, (\mu_1, \mu_2)$, which implies $\inf\, (\mu_1, \mu_2)\neq 0$.

\medskip

Cases 2 ($\mu_2(H)= 0$): \ For $m=2,3,4,\cdots$, define
$$
H_m:=\Bigl\{x \in X: 1<h(x)\leq m\Bigr\}.
$$
Since $h \neq 0 \ [\mu_2]$, there exists $N\in\{2,3,\cdots\}$ such
that $\mu_2(H_N)\neq 0$. \ Define
$$
\lambda(E):=\frac{1}{N}\int_{E} h \cdot \chi_{H_N} d\mu_2 \quad
(E\in \mathfrak B).
$$
Then $\lambda$ is a positive measure. \ Observe that
$$
\lambda(H_N)=\frac{1}{N}\int_{H_N} h \, d\mu_2 > \frac{1}{N}
\int_{H_N}  d\mu_2 =\frac{1}{N}\mu_2(H_N) >0.
$$
For each $E\in\mathfrak B$,
$$
\lambda(E)=\frac{1}{N}\int_{E} h \cdot \chi_{H_N} d\mu_2 \leq
\frac{1}{N}\int_{E} h d\mu_2 \leq \int_{E} h  d\mu_2=\mu_a(E)\leq
\mu_1(E),
$$
and by definition of $H_N$,
$$
\lambda(E)=\frac{1}{N}\int_{E} h \cdot \chi_{H_N} d\mu_2 \leq
\int_{E}  d\mu_2=\mu_2(E).
$$
Since $\lambda (H_N)>0$, it follows  that $0 \neq \lambda \leq
\hbox{inf}\, (\mu_1, \mu_2)$, which implies $\hbox{inf}\, (\mu_1,
\mu_2)\neq 0$. \ This completes the proof.
\end{proof}

\begin{corollary}\label{thminfmeasure88} \
Let $\{\mu_1, \mu_2, \cdots, \mu_m\}$ be a finite collection of
finite positive Borel measures on  a locally compact Hausdorff space
$X$. \ Then the followings are equivalent:
\begin{itemize}

\item[(i)] \ $\mu\equiv \inf(\mu_1, \mu_2, \cdots, \mu_m) \neq 0$

\item[(ii)] \ If $\mu_k$ is concentrated on $A_k$  for each $k=1,2 \cdots, m$,
then $\bigcap_{k=1}^m A_k \neq \emptyset$
\end{itemize}
\end{corollary}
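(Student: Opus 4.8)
The plan is to establish the equivalence by proving the contrapositive of each implication, using Theorems \ref{thminfmeasure2} and \ref{thminfmeasure8} as the two workhorses (the case $m=1$ being trivial, so I assume $m\ge 2$). The implication (i) $\Rightarrow$ (ii) is the easy half: if there exist Borel sets $A_1,\dots,A_m$ with each $\mu_k$ concentrated on $A_k$ yet $\bigcap_{k=1}^m A_k=\emptyset$, then Theorem \ref{thminfmeasure2} says $\mu\equiv\inf(\mu_1,\dots,\mu_m)$ is concentrated on $\bigcap_{k=1}^m A_k=\emptyset$, and a positive measure concentrated on $\emptyset$ is the zero measure (for any Borel $E$, $\mu(E)=\mu(E\cap\emptyset)=0$), so $\mu=0$. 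Hence the real content is (ii) $\Rightarrow$ (i).

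For (ii) $\Rightarrow$ (i), in contrapositive form, the task is: given $\mu\equiv\inf(\mu_1,\dots,\mu_m)=0$, to \emph{construct} Borel sets $A_k$ with $\mu_k$ concentrated on $A_k$ and $\bigcap_{k=1}^m A_k=\emptyset$. I would do this by induction on $m$. The base case $m=2$ is exactly Theorem \ref{thminfmeasure8}: $\inf(\mu_1,\mu_2)=0$ forces $\mu_1\perp\mu_2$, hence there are disjoint Borel sets on which $\mu_1,\mu_2$ are respectively concentrated. For the inductive step ($m\ge 3$) put $\nu:=\inf(\mu_2,\dots,\mu_m)$. If $\nu=0$, apply the induction hypothesis to $\mu_2,\dots,\mu_m$ to obtain $A_2,\dots,A_m$ with $\bigcap_{k=2}^m A_k=\emptyset$, and set $A_1:=X$. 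If $\nu\ne 0$, note first that $\inf(\mu_1,\nu)\le\mu_j$ for every $j$ (since $\inf(\mu_1,\nu)\le\mu_1$ and $\inf(\mu_1,\nu)\le\nu\le\mu_j$ for $j\ge 2$), so by the maximality property of the infimum $\inf(\mu_1,\nu)\le\mu=0$; thus $\inf(\mu_1,\nu)=0$ and Theorem \ref{thminfmeasure8} produces disjoint Borel sets $B$ and $C$ with $\mu_1$ concentrated on $B$ and $\nu$ concentrated on $C$.

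The heart of the step is then to localize the remaining measures off $C$. Let $\mu_k'$ denote the restriction of $\mu_k$ to $X\setminus C$ for $k=2,\dots,m$. Then $\rho:=\inf(\mu_2',\dots,\mu_m')$ is concentrated on $X\setminus C$ (being $\le\mu_2'$) and satisfies $\rho\le\mu_j'\le\mu_j$ for $j\ge 2$, hence $\rho\le\nu$ by maximality; since $\nu$ is concentrated on $C$ while $\rho$ is concentrated on the complementary set $X\setminus C$, this forces $\rho=0$. Applying the induction hypothesis to $\mu_2',\dots,\mu_m'$ gives Borel sets $D_2,\dots,D_m$ with $\mu_k'$ concentrated on $D_k$ and $\bigcap_{k=2}^m D_k=\emptyset$. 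Finally, put $A_1:=B$ and $A_k:=D_k\cup C$ for $k=2,\dots,m$: each $\mu_k$ is concentrated on $A_k$ (because $\mu_k(D_k^c\cap C^c)=\mu_k'(D_k^c)=0$), and $\bigcap_{k=1}^m A_k=B\cap\bigl(C\cup\bigcap_{k=2}^m D_k\bigr)=B\cap C=\emptyset$, completing the induction.

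The main obstacle is conceptual rather than computational: Theorem \ref{thminfmeasure2} only guarantees that $\mu$ is concentrated on the $\mu$-null set $\bigcap_k A_k$, which is not the same as that set being empty, so the reverse implication cannot be obtained by formally reversing the forward one — it genuinely requires the constructive induction, and the ``restriction to $X\setminus C$'' device is precisely what upgrades a $\mu$-null intersection to an empty one. The only somewhat delicate routine point is the maximality argument showing that restricting the remaining measures to $X\setminus C$ annihilates their infimum.
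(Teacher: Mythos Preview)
Your proof is correct and fleshes out what the paper leaves implicit: the paper's entire proof is the one-line assertion ``This follows at once from Theorems \ref{thminfmeasure2} and \ref{thminfmeasure8},'' and your induction is a legitimate way to make that claim precise using exactly those two ingredients. In particular, the restriction device $\mu_k' := \mu_k|_{X\setminus C}$ together with the maximality of the infimum is the right mechanism, and your verification that $\bigcap_{k=1}^m A_k = B\cap\bigl(C\cup\bigcap_{k\ge 2}D_k\bigr)=B\cap C=\emptyset$ is clean.

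One remark worth knowing: there is also a more direct, non-inductive argument for (ii) $\Rightarrow$ (i) that bypasses Theorem \ref{thminfmeasure8} entirely and uses only the defining formula (\ref{infmea}). If $\mu(X)=0$, choose for each $n$ a partition $F_1^{(n)},\dots,F_m^{(n)}$ of $X$ with $\sum_j \mu_j(F_j^{(n)})<2^{-n}$, and set $A_j := X\setminus\limsup_n F_j^{(n)}$. Borel--Cantelli gives $\mu_j(A_j^c)=0$, so $\mu_j$ is concentrated on $A_j$; and for any $x\in X$, pigeonhole on the index $j_n$ with $x\in F_{j_n}^{(n)}$ forces $x\in\limsup_n F_j^{(n)}$ for some $j$, hence $x\notin A_j$, so $\bigcap_j A_j=\emptyset$. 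This is arguably what makes the finiteness of $m$ essential (cf.\ the Remark following the corollary), whereas your inductive proof makes the dependence on Theorem \ref{thminfmeasure8} explicit, which is presumably what the authors intended.
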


\begin{proof} \
This follows at once from Theorems \ref{thminfmeasure2} and
\ref{thminfmeasure8}.
\end{proof}

\medskip

\begin{remark}
Corollary \ref{thminfmeasure88} may fail for a {\it countable}
collection of finite positive Borel measures. For example, let
$\lambda$ be a nonzero finite positive Borel measure on a locally
compact Hausdorff space $X$ and let
$$
\mu_k:=\frac{\lambda}{k} \quad(k \in \mathbb Z_+).
$$
Then clearly, $\mu\equiv \inf_k(\mu_k) =0$. \ \ Suppose that $\mu_k$
is concentrated on $A_k$  for each $k \in \mathbb Z_+$. \ Then
$\lambda$ is concentrated on $A_k$ for each $k \in \mathbb Z_+$. \
Write
$$
E:= \bigcap_{k=1}^{\infty}A_k \quad \hbox{and} \quad
E_n:=\bigcap_{k=1}^n A_k.
$$
Then
$$
E_1 \supseteq E_2 \supseteq E_3 \supseteq \cdots \quad \hbox{and}
\quad E=\bigcap_{n=1}^{\infty}E_n.
$$
We now claim  that
\begin{equation}\label{zx}
\lambda(E_n)=\lambda(X)  \quad  \hbox{for each} \  n \in \mathbb
Z_+.
\end{equation}
To prove (\ref{zx}), we use mathematical induction. \ Since
$\lambda$ is concentrated on $A_1=E_1$, it follows that
$\lambda(E_1)=\lambda(E_1 \cap X)=\lambda(X)$. \ Suppose
$\lambda(E_k)=\lambda(X)$ ($k \geq 1$). \ Since $\lambda$ is
concentrated on $A_{k+1}$, we have
$$
\lambda(E_{k+1})=\lambda(E_k\cap A_{k+1})=\lambda(E_k)=\lambda(X),
$$
which proves (\ref{zx}). \ Since $\lambda$ is a finite measure,
$\lambda(X)=\lambda(E_n)$ converges to $\lambda(E)$, so that
$\lambda(E)=\lambda(X)\neq 0$, and hence $E\ne \emptyset$.
\hfill$\square$
\end{remark}

\medskip

\begin{theorem}\label{nonzero4} \ Let  $\mu_1, \  \mu_2$ be finite
positive regular Borel measures on a locally compact Hausdorff space
$X$ such that
$$
S \equiv \hbox{\rm supp}(\mu_1) \cap \hbox{\rm supp}(\mu_2)  \neq
\phi.
$$
If there exists $x \in S$ and an open neighborhood $N$ of $x$ such
that
$$
 m \leq \left\{\frac{\mu_2(N_x)}{\mu_1(N_x)}: x \in N_x, \ \hbox{an open subset of} \  N\right\} \leq M
$$
for some $m, M>0$, then $\mu\equiv \inf\,(\mu_1, \mu_2)\neq 0$.
\end{theorem}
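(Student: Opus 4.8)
The plan is to build a nonzero positive measure $\lambda$ that is dominated by both $\mu_1$ and $\mu_2$, and then invoke property (ii) of the infimum (maximality) to conclude $\mu=\inf(\mu_1,\mu_2)\geq\lambda\neq 0$. The candidate for $\lambda$ is the restriction of $\mu_1$ (equivalently $\mu_2$) to the open neighborhood $N$ where the two measures are comparable. First I would fix the point $x\in S$ and the open neighborhood $N$ guaranteed by the hypothesis, and set $\lambda(E):=\mu_1(E\cap N)$ for every Borel set $E$. Then $\lambda$ is trivially a positive Borel measure with $\lambda\leq\mu_1$. To see $\lambda\neq 0$: since $x\in\operatorname{supp}(\mu_1)$ and $N$ is an open neighborhood of $x$, by the definition of support we have $\mu_1(N)>0$, hence $\lambda(N)=\mu_1(N)>0$.

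The crux is showing $\lambda\leq\mu_2$, i.e.\ $\mu_1(E\cap N)\leq\tfrac{1}{m}\,\mu_2(E\cap N)$ for all Borel $E$ — so in fact I will take $\lambda(E):=m^{-1}\mu_1(E\cap N)$ to make the inequality come out as $\lambda\leq\mu_2$ directly (this still gives $\lambda(N)=m^{-1}\mu_1(N)>0$). The comparability hypothesis only gives the ratio bound $m\leq\mu_2(N_x)/\mu_1(N_x)\leq M$ on \emph{open} subsets $N_x\subseteq N$, so the main technical step is to upgrade the inequality $m\,\mu_1(U)\leq\mu_2(U)$ from open sets $U\subseteq N$ to arbitrary Borel sets $E\subseteq N$. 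Here is where regularity of $\mu_1$ and $\mu_2$ enters. For a Borel set $E\subseteq N$: by outer regularity of $\mu_1$, choose open sets $U_k$ with $E\subseteq U_k$ and $\mu_1(U_k)\to\mu_1(E)$; replacing $U_k$ by $U_k\cap N$ (still open, still containing $E$) we may assume $U_k\subseteq N$, so $m\,\mu_1(U_k)\leq\mu_2(U_k)$. Passing to the limit would give $m\,\mu_1(E)\leq\liminf\mu_2(U_k)$, but I need the right-hand side to be $\mu_2(E)$, which requires controlling $\mu_2$ from above along the $U_k$ — so instead I would run the approximation the other way, or intersect: take $V_k=\bigcap_{j\le k}(U_j\cap N)$, a decreasing sequence of open subsets of $N$ containing $E$ with $\mu_1(V_k)\downarrow\mu_1(E)$ (using finiteness of $\mu_1$ and $E\subseteq V_k$), and since $\mu_2$ is finite, $\mu_2(V_k)\downarrow\mu_2(\bigcap_k V_k)\geq\mu_2(E)$; wait, that inequality goes the wrong way too. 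The clean route: use \emph{inner} regularity instead. For open $U\subseteq N$, inner regularity of $\mu_2$ gives compact $K\subseteq U$ with $\mu_2(K)$ close to $\mu_2(U)$; but this still compares $\mu_2$ on $K$ to $\mu_1$ on $U$, not what I want.

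The honest fix, and what I expect to be the real content of the argument, is this: define the finite signed measure $\nu:=\mu_2\lfloor N - m\,\mu_1\lfloor N$ on the Borel subsets of $X$. The hypothesis says $\nu(U)\geq 0$ for every \emph{open} $U\subseteq N$. I claim this forces $\nu\geq 0$ as a measure on all Borel subsets of $N$. Indeed, by the Hahn decomposition write $N=P\sqcup Q$ (Borel) with $\nu\geq 0$ on subsets of $P$ and $\nu\leq 0$ on subsets of $Q$; I must show the negative part $\mu_{\nu}^-$, concentrated on $Q$, vanishes. Since $\mu_\nu^-$ is a finite positive measure and both $\mu_1,\mu_2$ are regular (hence $\nu$, being a difference of finite regular measures restricted to $N$, is regular), $\mu_\nu^-$ is inner regular: if $\mu_\nu^-(Q)>0$ there is a compact $K\subseteq Q$ with $\mu_\nu^-(K)>0$, i.e.\ $\nu(K)<0$. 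By outer regularity of the positive part $\mu_\nu^+$ and of $\mu_\nu^-$ we can find an open $W$ with $K\subseteq W\subseteq N$ and $\mu_\nu^+(W\setminus K)+\mu_\nu^-(W\setminus K)$ as small as we like, so $\nu(W)=\nu(K)+\nu(W\setminus K)<0$, contradicting $\nu(W)\geq 0$. Hence $\mu_\nu^-=0$, so $m\,\mu_1(E)\leq\mu_2(E)$ for every Borel $E\subseteq N$, giving $\lambda:=m^{-1}\mu_1\lfloor N\leq\mu_2$ and $\lambda\leq\mu_1$ with $\lambda(N)>0$. By the maximality property (ii) of $\inf(\mu_1,\mu_2)$, we get $\inf(\mu_1,\mu_2)\geq\lambda\neq 0$, which completes the proof. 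The main obstacle, as indicated, is precisely this open-sets-to-Borel-sets upgrade, and the Hahn-decomposition-plus-regularity argument is the way I would push it through; the upper bound $M$ in the hypothesis turns out to be unnecessary for this direction.
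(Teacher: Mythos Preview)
Your proposal has a genuine gap: the hypothesis bounds the ratio $\mu_2(N_x)/\mu_1(N_x)$ only for open subsets $N_x\subseteq N$ \emph{containing the point $x$}, not for arbitrary open subsets of $N$. You silently drop this constraint when you assert that $\nu=\mu_2\lfloor N - m\,\mu_1\lfloor N$ satisfies $\nu(U)\ge 0$ for every open $U\subseteq N$, and your Hahn-decomposition step needs precisely that unrestricted version (the open $W$ you build around the compact $K\subseteq Q$ has no reason to contain $x$). The gap is fatal for this route: take $X=N=(-1,1)$, $x=0$, $\mu_2=\delta_0$, and $\mu_1=\delta_0+\ell$ with $\ell$ Lebesgue measure. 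Every open $N_x\ni 0$ has $\mu_2(N_x)/\mu_1(N_x)=1/(1+\ell(N_x))\in[\tfrac13,1)$, so the hypothesis holds with $m=\tfrac13$, $M=1$; yet for $E=(0,1)$ one gets $m\,\mu_1(E)=\tfrac13>0=\mu_2(E)$, so no positive multiple of $\mu_1\lfloor N$ is dominated by $\mu_2$. (The theorem's conclusion is fine here, since $\delta_0\le\mu_1,\mu_2$; it is your particular $\lambda$ that fails.) This also refutes your closing remark that $M$ is superfluous: with $\mu_1=\ell$ and $\mu_2=\delta_0$ on $(-1,1)$ the lower bound $m=\tfrac12$ holds (the ratio is $1/\ell(N_x)\ge\tfrac12$) while no finite $M$ exists, and here $\mu_1\perp\mu_2$, hence $\inf(\mu_1,\mu_2)=0$.

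The paper's proof sidesteps all of this by working directly with the partition description $\inf(\mu_1,\mu_2)(N)=\inf_G\bigl\{\mu_1(N\cap G)+\mu_2(N\cap G^c)\bigr\}$ over Borel $G$, and splitting on whether $x\in G$ or $x\in G^c$. In each case an open outer approximation (to $N\cap G$, respectively $N\cap G^c$) contains $x$, so the ratio hypothesis legitimately applies; the case $x\in G$ uses the \emph{upper} bound $M$ (giving $\mu_1\ge M^{-1}\mu_2$ on that open set) and the case $x\in G^c$ uses the lower bound $m$, yielding $\mu(N)\ge\min\bigl(M^{-1}\mu_2(N),\,m\,\mu_1(N)\bigr)>0$. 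Both $m$ and $M$ enter essentially. (A minor aside: your scaling $\lambda=m^{-1}\mu_1\lfloor N$ is backwards---you would need $\min(m,1)\,\mu_1\lfloor N$ to keep $\lambda\le\mu_1$---but this is secondary to the main gap.)
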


\noindent {\it Remark}. Since $x \in S$, $\mu_i(N_x)\neq 0$ for each
$i=1,2$.

\begin{proof}[Proof of Theorem \ref{nonzero4}] \
Suppose that there exists $x \in S$ and an open neighborhood $N$ of
$x$ such that
$$
 m \leq \left\{\frac{\mu_2(N_x)}{\mu_1(N_x)}: x \in N_x, \ \hbox{an open subset of} \  N\right\} \leq M,
$$
for some $0<m<1<M<\infty$. \ We will show that
\begin{equation}\label{nonzero64}
\mu(N) \neq 0.
\end{equation}
Note that $\mu_1$ and $\mu_2$ are regular Borel measures. \ Let $G$
be a Borel set and $\epsilon>0$. \ Suppose $ x \in G$. \ Then there
exists an open set $V\supseteq G$ such that
$$ \mu_1(N \cap G)+\epsilon > \mu_1(N \cap V) \geq
\frac{1}{M}\mu_2(N \cap V).
$$
Thus
$$
\aligned \mu_1(N \cap G)+\mu_2(N\cap G^c)+\epsilon
&\geq \frac{1}{M}\mu_2(N \cap V)+\mu_2(N\cap G^c)\\
&\geq \frac{1}{M}\Bigl(\mu_2(N \cap G)+\mu_2(N\cap G^c)\Bigr)\\
&\geq \frac{1}{M}\mu_2(N),
\endaligned
$$
which gives
$$
\mu_1(N \cap G)+\mu_2(N\cap G^c)\geq \frac{1}{M}\mu_2(N).
$$
If $x \notin G$, then similarly, we have
$$
\mu_1(N \cap G)+\mu_2(N\cap G^c) \geq m\,\mu_1(N).
$$
Thus, it follows that
$$
\mu(N) = \inf\,(\mu_1,\mu_2)(N)\geq
\hbox{min}\left(\frac{1}{M}\mu_2(N), m \mu_1(N) \right)>0,
$$
which proves (\ref{nonzero64}).
\end{proof}


We now have:
\medskip

\begin{theorem}\label{thmcoprime3}  \
Let $\theta_1, \theta_2 \in H^{\infty}$ be singular inner functions
with singular measures $\mu_1$ and $\mu_2$, respectively. \ Then
$\theta_1$ and $\theta_2$ are coprime if and only if $\mu_1\perp
\mu_2$.
\end{theorem}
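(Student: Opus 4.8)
The plan is to translate the divisibility relation between singular inner functions into an order relation between their singular measures, and then to invoke Theorem~\ref{thminfmeasure8}. The dictionary I would set up is the standard one: writing $\theta_\mu(z)=\exp\bigl(-\int_{\mathbb T}\frac{t+z}{t-z}\,d\mu(t)\bigr)$ for a finite positive Borel measure $\mu$ on $\mathbb T$ that is singular with respect to Lebesgue measure, one has $\theta_\mu\theta_\nu=\theta_{\mu+\nu}$ and, crucially, an inner function $\psi$ divides $\theta_\mu$ in $H^\infty$ (that is, $\theta_\mu/\psi\in H^\infty$) if and only if $\psi$ coincides, up to a unimodular constant, with $\theta_\nu$ for some singular measure $\nu\le\mu$. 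To justify this: since $\theta_\mu$ has no zeros in $\mathbb D$, neither does $\psi$, so by the canonical factorization $\psi$ is a unimodular constant times a singular inner function $\theta_\nu$; then $\theta_\mu=\theta_\nu\cdot(\theta_\mu/\psi)$ forces $\theta_\mu/\psi$ to be zero-free inner as well, hence equal to $c\,\theta_\sigma$, and comparing canonical factorizations gives $\mu=\nu+\sigma$ with $\sigma\ge0$, i.e. $\nu\le\mu$; the converse is immediate from $\theta_\mu=\theta_\nu\theta_{\mu-\nu}$. Since $\mathbb T$ is compact, hence locally compact Hausdorff, the infimum $\inf(\mu_1,\mu_2)$ of the two singular measures is defined as in \eqref{infmea}, and by the maximality property (ii) of the infimum it is precisely the singular measure attached to the greatest common inner divisor of $\theta_1$ and $\theta_2$.

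With this dictionary both implications become short. For ($\Leftarrow$), assume $\mu_1\perp\mu_2$ and let $\psi$ be any common inner divisor of $\theta_1$ and $\theta_2$; then $\psi$ is, up to a unimodular constant, $\theta_\nu$ for a singular measure $\nu$ with $\nu\le\mu_1$ and $\nu\le\mu_2$. By property (ii) of the infimum, $\nu\le\inf(\mu_1,\mu_2)$, and $\inf(\mu_1,\mu_2)=0$ by Theorem~\ref{thminfmeasure8}; hence $\nu=0$, so $\theta_\nu$ (and thus $\psi$) is constant, and $\theta_1,\theta_2$ are coprime. For ($\Rightarrow$), assume $\theta_1,\theta_2$ are coprime and put $\nu:=\inf(\mu_1,\mu_2)$; then $\nu\le\mu_i$ gives $\mu_i-\nu\ge0$, so $\theta_\nu$ divides $\theta_i=\theta_{\mu_i}$ for $i=1,2$. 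Coprime-ness forces $\theta_\nu$ to be constant, i.e. $\nu=0$, and Theorem~\ref{thminfmeasure8} then yields $\mu_1\perp\mu_2$.

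The genuinely non-routine ingredient, namely $\inf(\mu_1,\mu_2)=0$ if and only if $\mu_1\perp\mu_2$, is exactly Theorem~\ref{thminfmeasure8} and is already available, so the remaining work is only bookkeeping with the canonical factorization. Consequently I expect the only real obstacle to be laying out the divisibility/order dictionary carefully — in particular the points that an inner divisor of a zero-free inner function is again zero-free (hence a singular inner function) and that matching canonical factorizations to read off $\mu=\nu+\sigma$ is legitimate — rather than anything in the argument for the theorem proper.
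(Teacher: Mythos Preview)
Your proof is correct and follows essentially the same approach as the paper: both directions reduce to Theorem~\ref{thminfmeasure8} via the dictionary between divisibility of singular inner functions and the order relation on their singular measures. The only point the paper makes slightly more explicit is that $\nu:=\inf(\mu_1,\mu_2)$ is itself singular with respect to Lebesgue measure (since $\nu\le\mu_1$ forces $\nu\ll\mu_1$), which is needed for $\theta_\nu$ to be inner; you should state this when invoking your dictionary in the $(\Rightarrow)$ direction.
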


\begin{proof} \
($\Rightarrow$) \ For $i=1,2$, write
$$
\theta_i(z)= \hbox{exp}\left(-\int_{\mathbb
T}\frac{t+z}{t-z}d\mu_i(t)\right) \quad(z \in \mathbb D),
$$
where each  $\mu_i$ is a finite positive Borel measure on $\mathbb
T$ which is singular with respect to Lebesgue measure. \ Assume that
$\mu_1$ and $\mu_2$ are not mutually singular. \ Thus, by Theorem
\ref{thminfmeasure8}, $\mu\equiv \inf (\mu_1, \mu_2)\ne 0$. \ Since
\begin{equation}\label{absolutely}
\mu(E)\leq \mu_1(E) \quad \hbox{and} \quad \mu(E)\leq \mu_2(E) \quad
\hbox{for each Borel set} \ E\subseteq \mathbb T,
\end{equation}
it follows that $\mu$ is absolutely continuous with respect to both
$\mu_1$ and $\mu_2$. \ In particular, $\mu$ is a finite positive
Borel measure on $\mathbb T$. \ Also, since $\mu_1$ is singular with
respect to Lebesgue measure, $\mu$ is singular with respect to
Lebesgue measure. \ Thus, it follows from (\ref{absolutely}) that
$\mu_i^{\prime}:=\mu_i-\mu \ (i=1,2)$ is a finite positive Borel
measure which is singular with respect to Lebesgue measure. \ Since
$\mu_i^{\prime}(\mathbb T) \leq \mu_i(\mathbb T)<\infty$, we can see
that $\mu_i^\prime$ is regular for each $i=1,2$. \ Observe that
$$
\hbox{exp}\left(-\int_{\mathbb T}\frac{t+z}{t-z}d\mu_1(t)\right)
=\hbox{exp}\left(-\int_{\mathbb T}\frac{t+z}{t-z}d\mu(t)\right)
\cdot \hbox{exp}\left(-\int_{\mathbb T}\frac{t+z}{t-z}d\mu_1^{\prime}(t)\right)\\
$$
Thus,
$$
\theta(z):=\hbox{exp}\left(-\int_{\mathbb
T}\frac{t+z}{t-z}d\mu(t)\right) \qquad(z \in \mathbb D).
$$
is a nonconstant inner divisor of $\theta_1$. \ Similarly we can
show that $\theta$ is also a nonconstant inner divisor of
$\theta_2$. \ Hence $\theta_1$ and $\theta_2$ are not coprime.

($\Leftarrow$) Assume that $\theta_1$ and $\theta_2$ are not
coprime. \ Thus there exists a nonconstant common inner divisor
$\omega$ of $\theta_1$ and $\theta_2$: i.e.,
\begin{equation}\label{333}
\theta_1=\omega\theta_1^\prime\quad\hbox{and}\quad
\theta_2=\omega\theta_2^\prime.
\end{equation}
Note that $\omega$ is also a singular inner function, so that we may
write
$$
\omega(z)=\hbox{exp}\left(-\int_{\mathbb
T}\frac{t+z}{t-z}d\mu(t)\right),
$$
where $\mu$ is a nonzero finite positive Borel measure on $\mathbb
T$ which is singular with respect to Lebesgue measure. \ If
$\mu_1^\prime$ and $\mu_2^\prime$ are singular measures
corresponding to $\theta_1^\prime$ and $\theta_2^\prime$,
respectively, then it follows from (\ref{333}) that
$$
d\mu_i=d(\mu+\mu_i^\prime)\quad (i=1,2).
$$
Thus we can see that $\mu(E)\le \mu_i(E)$ for every Borel set $E$ of
$\mathbb T$. \
Since $\mu$ is nonzero, it follows from Theorem \ref{thminfmeasure2}
that
$\mu_1$ and $\mu_2$ are not mutually singular.
\end{proof}

\begin{remark}
By a similar argument as in the proof of Theorem \ref{thmcoprime3},
we can show that if $\theta_1$ and $\theta_2$ are singular inner
functions with singular measures $\mu_1$ and $\mu_2$, respectively
and if $\mu:=\inf\,(\mu_1, \mu_2)$, then
$$
\theta(z)=\hbox{exp}\left(-\int_{\mathbb
T}\frac{t+z}{t-z}d\mu(t)\right).
$$
is the greatest common inner divisor of $\theta_1$ and $\theta_2$
\end{remark}

\bigskip

%
%
%
%
%
%

\newpage

\chapter{Douglas-Shapiro-Shields factorizations}

\medskip

To understand matrix functions of bounded type, we need to factor
those functions into a coprime product of matrix inner functions and
the adjoints of matrix $H^\infty$-functions; this is the so-called
Douglas-Shapiro-Shields factorization. \ This factorization is very
helpful and somewhat unavoidable for the study of Hankel and
Toeplitz operators with bounded type symbols. \ In this chapter, we
consider several properties of the Douglas-Shapiro-Shields
factorization for matrix functions of bounded type. \

For a matrix-valued function $\Phi\in H^2_{M_{n\times r}}$, we say
that $\Delta\in H^2_{M_{n\times m}}$ is a {\it left inner divisor}
of $\Phi$ if $\Delta$ is an inner matrix function such that
$\Phi=\Delta A$ for some $A \in H^{2}_{M_{m\times r}}$ ($m\le n$).
We also say that two matrix functions $\Phi\in H^2_{M_{n\times r}}$
and $\Psi\in H^2_{M_{n\times m}}$ are {\it left coprime} if the only
common left inner divisor of both $\Phi$ and $\Psi$ is a unitary
constant, and that $\Phi\in H^2_{M_{n\times r}}$ and $\Psi\in
H^2_{M_{m\times r}}$ are {\it right coprime} if $\widetilde\Phi$ and
 $\widetilde\Psi$ are left coprime.
Two matrix functions $\Phi$ and $\Psi$ in $H^2_{M_n}$ are said to be
{\it coprime} if they are both left and right coprime. \ We note
that if $\Phi\in H^2_{M_n}$ is such that $\hbox{det}\,\Phi\ne 0$,
 then any left inner divisor $\Delta$ of $\Phi$ is
square, i.e., $\Delta\in H^2_{M_n}$\label{square2}: indeed, if
$\Phi=\Delta A$ with $\Delta\in H^2_{M_{n\times r}}$ ($r<n$) then
$\hbox{rank}\,\Phi(z)\le \hbox{rank}\,\Delta(z)\le r <n$, so that
$\hbox{det}\,\Phi(z)=0$. \
If $\Phi\in H^2_{M_n}$ is such that $\hbox{det}\,\Phi\ne 0$, then we
say that $\Delta\in H^2_{M_{n}}$ is a {\it right inner divisor} of
$\Phi$ if $\widetilde{\Delta}$ is a left inner divisor of
$\widetilde{\Phi}$.

Let $\{\Theta_i\in H^\infty_{M_n}: i\in J\}$ be a family of inner
matrix functions. \  The greatest common left inner divisor
$\Theta_d\label{thetad}$ and the least common left inner multiple
$\Theta_m\label{thetam}$ of the family $\{\Theta_i\in
H^\infty_{M_n}: i\in J\}$ are the inner functions defined by
$$
\Theta_d H^2_{\mathbb C^p}=\bigvee_{i \in J}\Theta_{i}H^2_{\mathbb
C^n} \quad \hbox{and} \quad \Theta_m H^2_{\mathbb C^q}=\bigcap_{i\in
J}\Theta_{i}H^2_{\mathbb C^n}.
$$
Similarly, the greatest common right inner divisor
$\Theta_d^{\prime}$ and the least common right inner multiple
$\Theta_m^{\prime}$ of the family $\{\Theta_i\in H^\infty_{M_n}:
i\in J\}$ are the inner functions defined by
$$
\widetilde{\Theta}_d^{\prime} H^2_{\mathbb C^r}=\bigvee_{i \in
J}\widetilde{\Theta}_{i} H^2_{\mathbb C^n} \quad \hbox{and} \quad
\widetilde{\Theta}_m^{\prime} H^2_{\mathbb C^s}=\bigcap_{i \in J}
\widetilde{\Theta}_{i}H^2_{\mathbb C^n}.
$$
The Beurling-Lax-Halmos Theorem guarantees that $\Theta_d$ and
$\Theta_m$ exist and are unique up to a unitary constant right
factor, and $\Theta_d^{\prime}$ and $\Theta_m^{\prime}$ are unique
up to a unitary constant left factor. \  We write
$$
\begin{aligned}
&\Theta_d =\hbox{left-g.c.d.}\label{lgcd}\,\{\Theta_i: i\in
J\},\quad
\Theta_m=\hbox{left-l.c.m.}\label{llcm}\,\{\Theta_i: i\in J\},\\
&\Theta_d^\prime=\hbox{right-g.c.d.}\label{rgcd}\,\{\Theta_i: i\in
J\},\quad
\Theta_m^\prime=\hbox{right-l.c.m.}\label{rlcm}\,\{\Theta_i: i\in
J\}.
\end{aligned}
$$
If $n=1$, then
$\hbox{left-g.c.d.}\,\{\cdot\}=\hbox{right-g.c.d.}\,\{\cdot\}$
(simply denoted $\hbox{g.c.d.}\label{GCD}\,\{\cdot\}$) and
$\hbox{left-l.c.m.}\,\{\cdot\}=\hbox{right-l.c.m.}\,\{\cdot\}$
(simply denoted $\hbox{l.c.m.}\label{LCM}\,\{\cdot\}$). \ In
general, it is not true that
$\hbox{left-g.c.d.}\,\{\cdot\}=\hbox{right-g.c.d.}\,\{\cdot\}$ and
$\hbox{left-l.c.m.}\,\{\cdot\}=\hbox{right-l.c.m.}\,\{\cdot\}$.

\medskip

However, we have:

\begin{lemma}\label{lem2.1}
Let $\Theta_i:=I_{\theta_i}$ for an inner function $\theta_i \ (i
\in J)$. \
\begin{enumerate}
\item[(a)] $\hbox{\rm left-g.c.d.}\,\{\Theta_i: i\in J\}=\hbox{\rm right-g.c.d.}\,\{\Theta_i: i\in J\}
=I_{\theta_d}$, where $\theta_d=\text{\rm g.c.d.}\,\{\theta_i : i
\in J \}$.
\item[(b)] $\hbox{\rm left-l.c.m.}\,\{\Theta_i: i\in J\}=\hbox{\rm right-l.c.m.}\,\{\Theta_i: i\in J\}
=I_{\theta_m}$, where $\theta_m=\text{\rm l.c.m.}\,\{\theta_i : i
\in J \}$.
\end{enumerate}
\end{lemma}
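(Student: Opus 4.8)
The plan is to reduce both parts to the scalar Beurling theorem, using the two elementary identities $I_\theta H^2_{\mathbb C^n}=\theta H^2_{\mathbb C^n}$ and $\widetilde{I_\theta}=I_{\widetilde\theta}$, together with the fact that closed spans and intersections of subspaces of the special form $\psi H^2_{\mathbb C^n}$ (with $\psi$ a scalar inner function) are computed coordinate-wise.

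First I would record the preliminary facts. For a scalar inner function $\theta$ the operator of multiplication by $\theta$ on $H^2_{\mathbb C^n}$ is exactly $I_\theta=\theta I$, so $I_\theta H^2_{\mathbb C^n}=\theta H^2_{\mathbb C^n}$; moreover $\widetilde{I_\theta}(z)=(I_\theta)^*(\overline z)=\overline{\theta(\overline z)}\,I=I_{\widetilde\theta}(z)$, and $\widetilde\theta$ is again scalar inner. Since $\widetilde{(\cdot)}$ is multiplicative on inner functions, the map $\theta\mapsto\widetilde\theta$ is a divisibility-preserving involution of the lattice of inner functions modulo unimodular constants; in particular it carries $\mathrm{g.c.d.}$ to $\mathrm{g.c.d.}$ and $\mathrm{l.c.m.}$ to $\mathrm{l.c.m.}$, so $\mathrm{g.c.d.}\{\widetilde\theta_i\}=\widetilde{\theta_d}$ and $\mathrm{l.c.m.}\{\widetilde\theta_i\}=\widetilde{\theta_m}$.

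Next comes the core scalar computation. Fixing an orthonormal basis of $\mathbb C^n$ identifies $H^2_{\mathbb C^n}$ with $(H^2)^{\oplus n}$ and each $\psi_i H^2_{\mathbb C^n}$ with $(\psi_i H^2)^{\oplus n}$; taking closed spans and intersections commutes with a finite direct sum, so $\bigvee_i \psi_i H^2_{\mathbb C^n}=\bigl(\bigvee_i \psi_i H^2\bigr)^{\oplus n}$ and $\bigcap_i \psi_i H^2_{\mathbb C^n}=\bigl(\bigcap_i \psi_i H^2\bigr)^{\oplus n}$. Both $\bigvee_i \psi_i H^2$ and $\bigcap_i \psi_i H^2$ are shift-invariant, hence (scalar Beurling) of the form $\eta H^2$ and $\zeta H^2$ for inner $\eta,\zeta$, the intersection possibly degenerating to $\{0\}$. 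A short divisor chase — $\psi_i H^2\subseteq\eta H^2$ forces $\eta\mid\psi_i$, while $\psi_d\mid\psi_i$ forces $\psi_i H^2\subseteq\psi_d H^2$ — identifies $\eta=\mathrm{g.c.d.}\{\psi_i\}$, and symmetrically $\zeta=\mathrm{l.c.m.}\{\psi_i\}$ whenever the intersection is nonzero. Hence $\bigvee_i I_{\psi_i}H^2_{\mathbb C^n}=I_{\psi_d}H^2_{\mathbb C^n}$ and $\bigcap_i I_{\psi_i}H^2_{\mathbb C^n}=I_{\psi_m}H^2_{\mathbb C^n}$.

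Finally I would feed this into the four defining equations. For (a): $\Theta_d H^2_{\mathbb C^p}=\bigvee_i I_{\theta_i}H^2_{\mathbb C^n}=I_{\theta_d}H^2_{\mathbb C^n}$, so by Beurling--Lax--Halmos uniqueness $p=n$ and $\Theta_d=I_{\theta_d}$; and $\widetilde{\Theta_d'}H^2_{\mathbb C^r}=\bigvee_i I_{\widetilde\theta_i}H^2_{\mathbb C^n}=I_{\widetilde{\theta_d}}H^2_{\mathbb C^n}$, so $\widetilde{\Theta_d'}=I_{\widetilde{\theta_d}}$ and therefore $\Theta_d'=\widetilde{I_{\widetilde{\theta_d}}}=I_{\theta_d}$. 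Part (b) is the same argument verbatim with $\bigvee$ replaced by $\bigcap$ and $\theta_d$ by $\theta_m$. The only points needing care are the ``up to a unitary constant factor'' bookkeeping in applying Beurling--Lax--Halmos, and the mild caveat that when $J$ is infinite the intersection $\bigcap_i\theta_i H^2$ may be $\{0\}$, in which case neither $\Theta_m$ nor $\theta_m$ exists and both sides of (b) are simultaneously undefined; otherwise everything is a direct translation of the scalar Beurling theorem through the $(\cdot)^{\oplus n}$ identification, so I expect no substantive obstacle.
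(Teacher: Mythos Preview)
Your argument is correct and complete; the only thing to note is that the paper does not actually prove this lemma but simply cites \cite[Lemma 2.1]{CHL2}, so there is no in-paper proof to compare against. Your route --- reducing to the scalar Beurling theorem via the coordinatewise identification $I_\theta H^2_{\mathbb C^n}=(\theta H^2)^{\oplus n}$ and handling the right versions through $\widetilde{I_\theta}=I_{\widetilde\theta}$ --- is the natural one and almost certainly what the cited reference does as well; your explicit treatment of the degenerate case $\bigcap_i\theta_i H^2=\{0\}$ for infinite $J$ is a nice point of care.
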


\begin{proof}
See \cite[Lemma 2.1]{CHL2}.
\end{proof}

\medskip

In view of Lemma \ref{lem2.1}, if $\Theta_i=I_{\theta_i}$ for an
inner function $\theta_i$ ($i\in J)$, we can define the greatest
common inner divisor $\Theta_d$ and the least common inner multiple
$\Theta_m$ of the $\Theta_i$ by
$$
\Theta_d\equiv\hbox{g.c.d.}\,\{\Theta_i:i\in
J\}:=I_{\theta_d},\quad\hbox{where} \ \theta_d=\hbox{\rm
g.c.d.}\,\{\theta_i : i \in J \}
$$
and
$$
\Theta_m\equiv\hbox{l.c.m.}\,\{\Theta_i:i\in
J\}:=I_{\theta_m},\quad\hbox{where} \ \theta_m=\hbox{\rm
l.c.m.}\,\{\theta_i : i \in J \}.
$$
Both $\Theta_d$ and $\Theta_m$ are {\it diagonal-constant} inner
functions, i.e., diagonal inner functions, and constant along the
diagonal. \

\bigskip

\begin{remark}\label{DSS}
By contrast with scalar-valued functions, in (\ref{2.6}), $I_\theta$
and $A$ need not be (right) coprime. \ If
$\Omega=\hbox{left-g.c.d.}\,\{I_\theta, A\}$ in the representation
(\ref{2.6}), that is,
$$
\Phi=\theta A^*\,,
$$
then $I_\theta =\Omega \Omega_{\ell}$ and $A=\Omega A_{\ell}$ for
some inner matrix $\Omega_{\ell}$ (where $\Omega_{\ell}\in
H^2_{M_n}$ because $\hbox{det}\,(I_\theta)\neq 0$) and some $A_l \in
H^{2}_{M_n}$. \  Therefore if $\Phi^*\in L^\infty_{M_n}$ is of
bounded type then we can write
\begin{equation}\label{2.8}
\Phi={A_{\ell}}^*\Omega_{\ell},\quad\hbox{where $A_{\ell}$ and
$\Omega_{\ell}$ are left coprime.}
\end{equation}
In this case, $A_{\ell}^*\Omega_{\ell}$ is called the {\it left
coprime factorization} of $\Phi$ and write, briefly,
$$
\Phi=A_{\ell}^*\Omega_{\ell}\ \ \hbox{(left coprime).}
$$
Similarly, we can write
\begin{equation}\label{2.9}
\Phi=\Omega_r A_r^*, \quad\hbox{where $A_r$ and $\Omega_r$ are right
coprime.}
\end{equation}
In this case, $\Omega_r A_r^*$ is called the {\it right coprime
factorization} of $\Phi$ and we write, succinctly,
$$
\Phi=\Omega_r A_r^*\ \ \hbox{(right coprime).}
$$
We often say that (\ref{2.9}) is the {\it Douglas-Shapiro-Shields
factorization} of $\Phi$ and (\ref{2.8}) is  the {\it left
Douglas-Shapiro-Shields factorization} of $\Phi$ (cf. \cite{DSS},
\cite{Fu}). \ We also say that $\Omega_\ell$ and $\Omega_r$ are
called the {\it inner parts} of those factorizations.
\end{remark}

\begin{remark} (\cite[Corollary 2.5]{GHR}; \cite[Remark 2.2]{CHL2}) \
As a consequence of the Beurling-Lax-Halmos Theorem, we can see that
\begin{equation}\label{RCD}
\Phi=\Omega_r A_r^*\ \hbox{(right coprime)}\ \Longleftrightarrow
\hbox{ker}\,H_{\Phi^*}=\Omega_r H_{\mathbb C^n}^2.
\end{equation}
\end{remark}

\bigskip

If $M$ is a nonzero closed subspace of $\mathbb C^n$ then the matrix
function
$$
b_\lambda P_M+(I-P_M)\quad \hbox{($P_M:=$ the orthogonal projection
of $\mathbb C^n$ onto $M$)}
$$
is called a {\it Blaschke-Potapov factor}\,; an $n\times n$ matrix
function $D$ is called {\it a finite Blaschke-Potapov product} if
$D$ is of the form
$$
D \label{Dz}=\nu \prod_{m=1}^M \Bigl(b_m P_m + (I-P_m)\Bigr),
$$
where $\nu$ is an $n\times n$  unitary constant matrix, $b_m$ is a
Blaschke factor, and $P_m$ is an orthogonal projection in $\mathbb
C^n$ for each $m=1,\cdots, M$. \  In particular, a scalar-valued
function $D$ reduces to a finite Blaschke product $D=\nu
\prod_{m=1}^M b_m$. \  It is known (cf. \cite{Po}) that an $n\times
n$ matrix function $D$ is rational and inner if and only if it can
be represented as a finite Blaschke-Potapov product. \ If $\Phi\in
L^\infty_{M_n}$ is rational then $\Omega_\ell$ and $\Omega_r$ in
(\ref{2.8}) and (\ref{2.9}) can be chosen as finite Blaschke-Potapov
products. \ We can see (cf. \cite{CHL2}) that every inner divisor of
$I_{b_{\lambda}} \in H_{M_n}^\infty$ is a Blaschke-Potapov factor.

\medskip

In what follows, we examine the left and right coprime
factorizations of matrix $H^2$-functions. \ We begin with:

\begin{lemma}\label{lem2.4}
Let $\Theta\in H^\infty_{M_{n\times m}}$ be an inner matrix function
and $A\in H^2_{M_{n\times m}}$. \  Then the following hold:
\begin{enumerate}
\item[(a)] $A \in \mathcal K_{\Theta} \Longleftrightarrow \Theta A^* \in H^2_0$;
\item[(b)] $A \in \mathcal H_{\Theta} \Longleftrightarrow A^* \Theta  \in H^2_0$;
\item[(c)] $P_{H_0^2}(\Theta A^*)=\Theta\Bigl(P_{\mathcal K_{\Theta}}A\Bigr)^*$.
\end{enumerate}
\end{lemma}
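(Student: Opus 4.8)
The plan is to work directly with the defining relations for the spaces $\mathcal{H}_\Theta = H^2_{M_{n\times m}}\ominus\Theta H^2_{M_m}$ and $\mathcal{K}_\Theta = H^2_{M_{n\times m}}\ominus H^2_{M_n}\Theta$, using the trace inner product $\langle A,B\rangle=\int_{\mathbb T}\operatorname{tr}(B^*A)\,d\mu$ introduced earlier, together with the inner condition $\Theta^*\Theta=I_m$ a.e. on $\mathbb T$. For part (a), the statement $A\in\mathcal{K}_\Theta$ means $\langle A, G\Theta\rangle=0$ for every $G\in H^2_{M_n}$, i.e. $\int_{\mathbb T}\operatorname{tr}(\Theta^*G^*A)\,d\mu=0$; since $\operatorname{tr}(\Theta^*G^*A)=\operatorname{tr}(G^*A\Theta^*)$ and $G$ ranges over all of $H^2_{M_n}$, this is equivalent to all Fourier coefficients of $A\Theta^*$ of nonnegative index vanishing, which is precisely $A\Theta^*\in H^2_0=zH^2_{M_n}$ — equivalently, by taking adjoints, $\Theta A^*\in (H^2_{M_n})^\perp$ together with vanishing constant term, i.e. $\Theta A^*\in H^2_0$. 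I would spell out the ``take adjoints'' step carefully: $A\Theta^*\in zH^2_{M_n}$ iff $(A\Theta^*)^*=\Theta A^*\in \overline{zH^2_{M_n}} = P^\perp$-part with no constant term, which is the definition of $H^2_0$ as used in the paper (note $H^2_0$ here denotes $zH^2_{M_n}$ per the Notations section — I should double-check the intended reading, since $\Theta A^*$ is anti-analytic; the cleanest route is to phrase (a) as ``$A\Theta^*\in H^2_0$'' after noting $(\Theta A^*)$ and $A\Theta^*$ carry the same information).

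Part (b) is the mirror image: $A\in\mathcal{H}_\Theta$ means $\langle A,\Theta G\rangle=0$ for all $G\in H^2_{M_m}$, i.e. $\int_{\mathbb T}\operatorname{tr}(G^*\Theta^*A)\,d\mu=0$ for all such $G$, which says that all nonnegative-index Fourier coefficients of $\Theta^*A$ vanish, i.e. $\Theta^*A\in zH^2_{M_m}$; taking adjoints gives $A^*\Theta\in H^2_0$. This is genuinely just the transpose of the argument in (a), so I would present (a) in full and then say (b) follows ``by the same argument applied on the other side,'' or by replacing $A,\Theta$ with $\widetilde A,\widetilde\Theta$ and using $\mathcal{H}_{\widetilde\Theta}$ versus $\mathcal{K}_\Theta$.

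For part (c), I would decompose $A = P_{\mathcal{K}_\Theta}A + (A - P_{\mathcal{K}_\Theta}A)$, where by definition of $\mathcal{K}_\Theta$ the second summand lies in $H^2_{M_n}\Theta$, say $A - P_{\mathcal{K}_\Theta}A = G\Theta$ with $G\in H^2_{M_n}$. Then $\Theta A^* = \Theta(P_{\mathcal{K}_\Theta}A)^* + \Theta\Theta^*G^*$. The point is that $\Theta(P_{\mathcal{K}_\Theta}A)^*\in H^2_0$ by part (a), while $\Theta\Theta^*G^* = G^*$ (using $\Theta\Theta^*$ — careful: $\Theta$ is $n\times m$ inner, so $\Theta^*\Theta=I_m$ but $\Theta\Theta^*$ need not be $I_n$; however $G\Theta\cdot\Theta^* $ is what appears, and $(G\Theta)\Theta^* = G\Theta\Theta^*$, whose adjoint is $\Theta\Theta^*G^*$ — I need $G\Theta\Theta^*=G$, which is false in general). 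So the correct bookkeeping is: $\Theta A^* = \Theta(P_{\mathcal{K}_\Theta}A)^* + \Theta\Theta^* G^*$, and I must show $\Theta\Theta^*G^*\in H^2_{M_n}$ (not $H^2_0$, just analytic with possible constant term) so that applying $P_{H^2_0}$ kills nothing from the first term and all of the second; more precisely $P_{H^2_0}$ annihilates $H^2_{M_n}$ but the relevant fact is that $\Theta\Theta^*G^*$ has its $H^2_0$-component controlled. The cleanest fix: since $A-P_{\mathcal{K}_\Theta}A\in H^2_{M_n}\Theta$, its adjoint $\Theta^*(G^*)$... — actually the transparent way is to write $A^* = (P_{\mathcal{K}_\Theta}A)^* + \Theta^*G^*$, so $\Theta A^* = \Theta(P_{\mathcal{K}_\Theta}A)^* + \Theta\Theta^*G^*$, and observe $\Theta\Theta^*G^* = (G\Theta\Theta^*)^*$ where $G\Theta\Theta^*\in H^2_{M_n}$ since $\Theta\Theta^*$ is bounded analytic? — no, $\Theta^*$ is anti-analytic.

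The genuine obstacle, and where I would spend the most care, is exactly this last point: establishing that $P_{H^2_0}(\Theta\Theta^*G^*) = 0$, i.e. that $\Theta$ times an arbitrary $H^2_0$-function of the form $\Theta^*G^*$ lands in the orthogonal complement of $H^2_0$. The right statement is: $\Theta^*G^*\in$ (anti-analytic functions, since $G^*$ is and $\Theta^*$ is), so $\Theta\Theta^* G^*$ — I claim its inner product with any $zF$, $F\in H^2_{M_n}$, is $\langle \Theta\Theta^*G^*, zF\rangle = \langle \Theta^*G^*, \Theta^* (zF)\rangle$... this is getting circular. I think the honest approach for (c) is: for $F\in H^2_{M_n}$, compute $\langle \Theta A^* - \Theta(P_{\mathcal{K}_\Theta}A)^*,\, zF\rangle = \langle \Theta(A-P_{\mathcal{K}_\Theta}A)^*, zF\rangle = \langle (G\Theta)^*$-related $\rangle$, and use $A - P_{\mathcal{K}_\Theta}A\in H^2_{M_n}\Theta$ plus the inner property to show this vanishes, hence $P_{H^2_0}(\Theta A^*) = P_{H^2_0}(\Theta(P_{\mathcal{K}_\Theta}A)^*) = \Theta(P_{\mathcal{K}_\Theta}A)^*$, the last equality by part (a). So the main work is the orthogonality computation $\langle \Theta(G\Theta)^*, zF\rangle = \langle \Theta\Theta^*G^*, zF\rangle=\langle G^*, zF\rangle$ — wait, that needs $\Theta^*\Theta$ on the correct side; here $(G\Theta)^* = \Theta^*G^*$ and we pair $\Theta\Theta^*G^*$ with $zF$, and since $\Theta\Theta^*$ is a projection-valued-ish object this requires genuine care. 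I would resolve it by instead directly verifying $\Theta(P_{\mathcal{K}_\Theta}A)^* \in H^2_0$ (part (a)) and $\Theta A^* - \Theta(P_{\mathcal K_\Theta}A)^* \perp H^2_0$, the latter by noting it equals $\Theta(\Theta^* G^*) = (I\text{-component})$ and pairing against $zF\in H^2_0$: $\langle\Theta\Theta^*G^*,zF\rangle = \int\operatorname{tr}((zF)^*\Theta\Theta^*G^*) = \int\operatorname{tr}(\overline z F^*\Theta\Theta^* G^*)$, and since $F^*\Theta$ is... — I would push this through using that $\Theta^*G^*\in zH^2_{M_m}$-adjoint territory, i.e. $G\Theta\in H^2_{M_n}$ so $G\Theta$ paired appropriately. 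The cleanest rigorous line: $\langle \Theta A^*, zF\rangle = \langle A^*, \Theta^* zF\rangle$ is wrong since $\Theta$ not unitary; correct is $\langle \Theta A^*, zF\rangle = \int\operatorname{tr}((zF)^*\Theta A^*)=\int\operatorname{tr}(A^*(zF)^*\Theta) = \langle \Theta^* zF, A\rangle^{-}$... I will settle this in the write-up by expanding $A = P_{\mathcal K_\Theta}A + G\Theta$ and checking $P_{H^2_0}(\Theta(G\Theta)^*) = P_{H^2_0}(\Theta\Theta^*G^*)$; since $G^*\in \overline{H^2_{M_n}}$ and $\Theta\Theta^*$ maps into... — ultimately the fact used is that $\Theta\Theta^*G^*$ differs from $G^*$ by $\Theta\Theta^*G^* - G^* = (\Theta\Theta^*-I)G^*$, and one shows $(\Theta\Theta^*-I)G^*\perp H^2_0$ because $(I-\Theta\Theta^*)$ is the projection onto $\ker\Theta^*\subseteq(\operatorname{ran}\Theta)^\perp$. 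I expect this is exactly Lemma 2.4(c)'s content and the author likely proves it via a short direct inner-product computation; I would do the same, isolating the identity $P_{H^2_0}(\Theta H^2_{M_n}\Theta)^*\text{-part}) = 0$ as the single nontrivial lemma-step and verifying it by testing against $zF$, $F\in H^2_{M_n}$, using $\Theta^*\Theta = I_m$ to collapse the middle.
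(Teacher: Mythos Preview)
Your arguments for (a) and (b) are correct in substance: the orthogonality $A\perp H^2_{M_n}\Theta$ unwinds via the trace inner product to $A\Theta^*\perp H^2_{M_n}$, i.e.\ $A\Theta^*\in(H^2_{M_n})^\perp$, and taking adjoints gives $\Theta A^*\in zH^2_{M_n}=H^2_0$; (b) is the mirror computation on the other side. That is precisely the ``direct calculation'' the paper has in mind, and it only needs to be written cleanly.

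For (c), your decomposition $A=P_{\mathcal K_\Theta}A+G\Theta$ with $G\in H^2_{M_n}$ is the right move, and you correctly isolate the only nontrivial step as showing $P_{H^2_0}\bigl(\Theta(G\Theta)^*\bigr)=0$. The issue you keep circling---whether $\Theta\Theta^*=I$---is exactly the crux, and the missing ingredient is simply this: for a \emph{square} inner $\Theta$, the boundary values are unitary a.e.\ on $\mathbb T$, so $\Theta\Theta^*=I_n$ a.e. Then $\Theta(G\Theta)^*=\Theta\Theta^*G^*=G^*\in\overline{H^2_{M_n}}$, whence $P_{H^2_0}(G^*)=0$; combined with $\Theta(P_{\mathcal K_\Theta}A)^*\in H^2_0$ from (a), this finishes (c) in one line. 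Once you invoke $\Theta\Theta^*=I_n$, all of the inner-product gymnastics you attempt become unnecessary.

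Your hesitation is in fact warranted in the genuinely non-square case: for $m<n$ part (c) can fail. With $n=2$, $m=1$, $\Theta=\tfrac{1}{\sqrt2}\left(\begin{smallmatrix}z\\1\end{smallmatrix}\right)$ and $A=\tfrac{1}{\sqrt2}\left(\begin{smallmatrix}1\\0\end{smallmatrix}\right)=G\Theta$ for $G=\left(\begin{smallmatrix}0&1\\0&0\end{smallmatrix}\right)\in H^2_{M_2}$, one has $P_{\mathcal K_\Theta}A=0$ but $P_{H^2_0}(\Theta A^*)=\tfrac12\left(\begin{smallmatrix}z&0\\0&0\end{smallmatrix}\right)\ne0$. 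Every application of (c) in the paper is with square $\Theta$, so nothing downstream is affected; just state and prove (c) for $n=m$.
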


\begin{proof}
Immediate from a direct calculation. \
\end{proof}

\begin{lemma}\label{lem2.5}
Let $\Theta\in H^\infty_{M_{n\times m}}$ be an inner matrix function
and $B\in H^{2}_{M_{n \times m}}$. \  Then the following hold:
\begin{enumerate}
\item[(a)] $P_{\mathcal K_{\Theta}}\label{PKTheta} B=
\widetilde{P_{\mathcal
H_{\widetilde{\Theta}}}\label{PHtheta}\widetilde{B}}$ \  if $n=m$;
\item[(b)] $P_{\mathcal K_{\Theta}}(\Lambda B)
=\Lambda (P_{\mathcal K_{\Theta}}B)$ for any constant matrix
$\Lambda \in M_n$;
\item[(c)] $P_{\mathcal H_{\Theta}}(B\Lambda )=(P_{\mathcal H_{\Theta}}B)\Lambda $ for any constant matrix
$\Lambda \in M_m$. \
\end{enumerate}
In particular, if $n=m$ and $\Theta= I_\theta$ for an inner function
$\theta$ and $\Lambda$ commutes with $B$ then
$$
\Lambda(P_{\mathcal K_{\Theta}}B) =(P_{\mathcal
K_{\Theta}}B)\Lambda.
$$
\end{lemma}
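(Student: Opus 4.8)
The three assertions in Lemma \ref{lem2.5} are all of the same shape — an orthogonal projection onto $\mathcal K_\Theta$ or $\mathcal H_\Theta$ ``commutes'' with a left- or right-multiplication by a constant matrix — and the concluding ``in particular'' will be a purely formal consequence of (b) and (c). So the plan is: prove (b) by an invariant-subspace argument, prove (c) by the mirror-image argument, prove (a) via the conjugate-linear involution $X\mapsto\widetilde X$, and then combine.

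For (b), I would first record that left multiplication $L_\Lambda:X\mapsto\Lambda X$ by a constant $\Lambda\in M_n$ is a bounded operator on $L^2_{M_{n\times m}}$ whose adjoint is $L_{\Lambda^*}$, since $\langle\Lambda X,Y\rangle=\int_{\mathbb T}\mathrm{tr}(Y^*\Lambda X)\,d\mu=\langle X,\Lambda^*Y\rangle$. Now $H^2_{M_{n\times m}}=\mathcal K_\Theta\oplus\mathcal K_\Theta^{\perp}$, where $\mathcal K_\Theta^{\perp}$ (complement taken inside $H^2_{M_{n\times m}}$) is, by definition of $\mathcal K_\Theta$, the closure of $H^2_{M_n}\Theta$. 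The operator $L_\Lambda$ maps $H^2_{M_n}\Theta$ into itself since $\Lambda(F\Theta)=(\Lambda F)\Theta$, hence it maps $\mathcal K_\Theta^{\perp}$ into itself; and it maps $\mathcal K_\Theta$ into itself, because for $G\in\mathcal K_\Theta$ and $F\in H^2_{M_n}$ one has $\langle\Lambda G,F\Theta\rangle=\langle G,(\Lambda^*F)\Theta\rangle=0$. Writing $B=B_1+B_2$ with $B_1=P_{\mathcal K_\Theta}B\in\mathcal K_\Theta$ and $B_2\in\mathcal K_\Theta^{\perp}$ and noting that $L_\Lambda$ preserves this orthogonal splitting, we get $P_{\mathcal K_\Theta}(\Lambda B)=\Lambda B_1=\Lambda(P_{\mathcal K_\Theta}B)$. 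Part (c) is proved identically with right multiplication $R_\Lambda$ by $\Lambda\in M_m$, whose adjoint is $R_{\Lambda^*}$ because $\langle X\Lambda,Y\rangle=\int_{\mathbb T}\mathrm{tr}(\Lambda Y^*X)\,d\mu=\langle X,Y\Lambda^*\rangle$: $R_\Lambda$ preserves $\Theta H^2_{M_m}$, and it preserves $\mathcal H_\Theta$ since $\langle G\Lambda,\Theta F\rangle=\langle G,\Theta(F\Lambda^*)\rangle=0$ for $G\in\mathcal H_\Theta$ and $F\in H^2_{M_m}$.

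For (a), where $n=m$, I would use the conjugate-linear involution $U:X\mapsto\widetilde X$ on $L^2_{M_n}$. A direct Fourier-coefficient computation (sending $X=\sum A_k z^k$ to $\sum A_k^* z^k$) shows $U$ is anti-unitary, i.e. $\langle\widetilde X,\widetilde Y\rangle=\langle Y,X\rangle$; in particular $U$ preserves orthogonality, carries closed subspaces to closed subspaces, and carries orthogonal complements to orthogonal complements. Since $\widetilde{XY}=\widetilde Y\,\widetilde X$ and $U$ is a bijection of $H^2_{M_n}$ onto itself, $U$ carries $H^2_{M_n}\Theta$ onto $\widetilde\Theta\,H^2_{M_n}$; moreover $\widetilde\Theta$ is again square inner because $\Theta(\overline z)$ is unitary for a.e.\ $z\in\mathbb T$. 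Hence $U$ carries $\mathcal K_\Theta=H^2_{M_n}\ominus\mathrm{cl}(H^2_{M_n}\Theta)$ onto $\mathcal H_{\widetilde\Theta}=H^2_{M_n}\ominus\mathrm{cl}(\widetilde\Theta H^2_{M_n})$, so $U P_{\mathcal K_\Theta}=P_{\mathcal H_{\widetilde\Theta}}U$; that is, $\widetilde{P_{\mathcal K_\Theta}B}=P_{\mathcal H_{\widetilde\Theta}}\widetilde B$, and applying $U$ once more (it is an involution) yields $P_{\mathcal K_\Theta}B=\widetilde{P_{\mathcal H_{\widetilde\Theta}}\widetilde B}$, which is (a).

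Finally, for the ``in particular'' clause, recall from the Notations that $\mathcal H_{I_\theta}=\mathcal K_{I_\theta}$; write $\mathcal K_\theta$ for this common subspace. Applying (b) and (c) with $\Theta=I_\theta$ gives $P_{\mathcal K_\theta}(\Lambda B)=\Lambda(P_{\mathcal K_\theta}B)$ and $P_{\mathcal K_\theta}(B\Lambda)=(P_{\mathcal K_\theta}B)\Lambda$; if $\Lambda B=B\Lambda$ the two left-hand sides coincide, yielding $\Lambda(P_{\mathcal K_\theta}B)=(P_{\mathcal K_\theta}B)\Lambda$. I do not anticipate a genuine obstacle; the only delicate points are the left/right bookkeeping — in particular verifying that $U$ intertwines $\mathcal K_\Theta$ with $\mathcal H_{\widetilde\Theta}$ (and not with $\mathcal K_{\widetilde\Theta}$) and that $\widetilde\Theta$ remains inner — both of which fall out immediately once the anti-unitarity of $U$ and the anti-multiplicativity $\widetilde{XY}=\widetilde Y\widetilde X$ are in hand.
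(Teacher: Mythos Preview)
Your proof is correct. The overall structure matches the paper's—decompose $B$ along $\mathcal K_\Theta\oplus\mathcal K_\Theta^{\perp}$ and show each piece lands where it should—but the tools differ in a few places. The paper proves (b) by invoking Lemma~\ref{lem2.4}(a) (the criterion $A\in\mathcal K_\Theta\iff\Theta A^*\in H^2_0$) to check that $\Lambda B_1\in\mathcal K_\Theta$, whereas you argue directly via the adjoint $L_\Lambda^*=L_{\Lambda^*}$ and orthogonality. For (a), the paper again uses Lemma~\ref{lem2.4} to verify $\widetilde{B}_1\in\mathcal H_{\widetilde\Theta}$ by hand, while you package the same computation as the statement that the anti-unitary involution $X\mapsto\widetilde X$ carries $\mathcal K_\Theta$ onto $\mathcal H_{\widetilde\Theta}$ and hence intertwines the projections. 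Finally, the paper deduces (c) from (a) and (b), while you give the direct mirror argument. Your route is slightly more structural and avoids citing Lemma~\ref{lem2.4}; the paper's route keeps everything anchored to the membership criterion just proved. Both are equally valid and of comparable length.
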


\begin{proof} Write $B:=B_1+B_2$, where $B_1:=P_{\mathcal K_{\Theta}}B$ and
$B_2:=B_3 \Theta$ for some $B_3 \in H_{M_n}^2$. \  Then $
\widetilde{B}=\widetilde{B_1}+\widetilde{\Theta}\widetilde{B_3}$. \
If $n=m$, then $\widetilde{\Theta}$ is an inner matrix function.  \
Since $B_1 \in \mathcal K_{\Theta}$, by Lemma \ref{lem2.4} (a), we
have $\Theta B_1^* \in H^2_0$, so that ${\widetilde{B}_1}^*
\widetilde{\Theta}=\widetilde{\Theta B_1^*} \in H^2_0$. \ Thus it
follows from Lemma \ref{lem2.4} (b) that $\widetilde{B}_1 \in
\mathcal H_{\widetilde{\Theta}}$, and hence
$$
P_{\mathcal H_{\widetilde{\Theta}}}\widetilde{B}=\widetilde{B}_1
=\widetilde{P_{\mathcal K_{\Theta}}B},
$$
giving (a). \  Observe that
$$
P_{\mathcal K_{\Theta}}(\Lambda B )=P_{\mathcal
K_{\Theta}}(\Lambda(B_1+B_3\Theta) ) =P_{\mathcal
K_{\Theta}}(\Lambda B_1 ).
$$
Since $B_1 \in \mathcal K_{\Theta}$, it follows from Lemma
\ref{lem2.4} (a) that $\Theta B_1^* \in H^2_0$. \  Thus
$$
\Theta(\Lambda B_1 )^*=\Theta  B_1^* \Lambda^* \in H^2_0,
$$
which implies $\Lambda B_1 \in \mathcal K_{\Theta}$, and hence
$P_{\mathcal K_{\Theta}}(\Lambda B )=\Lambda B_1
=\Lambda(P_{\mathcal K_{\Theta}}B) $. \  This proves (b). \  The
statement (c) follows from (a) and (b). \  The last assertion
follows at once from (b) and (c) because $\mathcal
H_{\Theta}=\mathcal K_{\Theta}$ if $\Theta=I_\theta$. \
\end{proof}

\begin{lemma}\label{lem22.55}
Let $\Theta\in H^\infty_{M_{n}}$ and $\Delta \in
H^{\infty}_{M_{n\times m}}$ be inner matrix functions. Then the
following hold:
\begin{enumerate}
\item[(a)] $\mathcal K_{\Theta \Delta}=\mathcal K_{\Delta} \bigoplus \mathcal K_{\Theta}\Delta$;
\item[(b)]  $\mathcal H_{\Theta \Delta }=\mathcal H_{\Delta} \bigoplus \Delta \mathcal
H_{\Theta}$.
\end{enumerate}
\end{lemma}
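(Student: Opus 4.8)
The statement asserts two orthogonal-decomposition identities for model spaces of products of inner matrix functions, namely $\mathcal K_{\Theta\Delta}=\mathcal K_\Delta\oplus \mathcal K_\Theta\Delta$ and $\mathcal H_{\Theta\Delta}=\mathcal H_\Delta\oplus\Delta\mathcal H_\Theta$. These are the matrix-valued analogues of the classical scalar identity $\mathcal H(\theta\delta)=\mathcal H(\delta)\oplus\delta\mathcal H(\theta)$, and the plan is to mimic the scalar proof with the correct bookkeeping of left/right multiplication. I will prove (b) in detail and obtain (a) from it by transposing, using Lemma \ref{lem2.5}(a), which relates $\mathcal K_\Theta$ to $\mathcal H_{\widetilde\Theta}$ via the map $B\mapsto\widetilde B$; indeed $\widetilde{\Theta\Delta}=\widetilde\Delta\,\widetilde\Theta$, so applying (b) with $\widetilde\Delta$ in the role of $\Theta$ and $\widetilde\Theta$ in the role of $\Delta$ and then taking tildes back converts the direct sum $\mathcal H_{\widetilde\Delta}\oplus\widetilde\Delta\,\mathcal H_{\widetilde\Theta}$ into $\mathcal K_\Delta\oplus\mathcal K_\Theta\Delta$. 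So the real work is (b).

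For (b), first I would establish the orthogonal splitting at the level of the ambient space. Since $\Theta$ is inner in $H^\infty_{M_n}$, we have $H^2_{M_n}=\mathcal H_\Theta\oplus\Theta H^2_{M_n}$, and right-multiplying by the inner function $\Delta\in H^\infty_{M_{n\times m}}$ (which is an isometry from $H^2_{M_m}$ into $H^2_{M_n}$, acting on the right) gives $H^2_{M_{n\times m}}\Delta$... — more carefully, I would start from $H^2_{M_{n\times m}}=\mathcal H_\Delta\oplus\Delta H^2_{M_m}$, then decompose the second summand using $H^2_{M_m}=\mathcal H_\Theta\oplus\Theta H^2_{M_m}$, yielding $\Delta H^2_{M_m}=\Delta\mathcal H_\Theta\oplus\Delta\Theta H^2_{M_m}$. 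Putting these together, $H^2_{M_{n\times m}}=\mathcal H_\Delta\oplus\Delta\mathcal H_\Theta\oplus\Theta\Delta H^2_{M_m}$, while by definition $H^2_{M_{n\times m}}=\mathcal H_{\Theta\Delta}\oplus\Theta\Delta H^2_{M_m}$. Comparing orthogonal complements of the common summand $\Theta\Delta H^2_{M_m}$ gives $\mathcal H_{\Theta\Delta}=\mathcal H_\Delta\oplus\Delta\mathcal H_\Theta$, which is exactly (b).

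The one point that needs genuine care — and which I expect to be the main obstacle — is the orthogonality $\mathcal H_\Delta\perp\Delta\mathcal H_\Theta$ and, more fundamentally, that $\Delta\mathcal H_\Theta\subseteq H^2_{M_{n\times m}}$ sits inside $\Delta H^2_{M_m}$ as the orthocomplement of $\Delta\Theta H^2_{M_m}$ within it; this uses that left multiplication by the matrix inner function $\Delta$, viewed as the operator $F\mapsto \Delta F$ on $H^2_{M_m}$-valued (matrix) functions, is an isometry with respect to the trace inner product $\langle A,B\rangle=\int\mathrm{tr}(B^*A)$, because $\Delta^*\Delta=I$ a.e. Granting that isometry, $\Delta$ carries the orthogonal decomposition $H^2_{M_m}=\mathcal H_\Theta\oplus\Theta H^2_{M_m}$ to the orthogonal decomposition $\Delta H^2_{M_m}=\Delta\mathcal H_\Theta\oplus\Delta\Theta H^2_{M_m}$, and orthogonality of $\mathcal H_\Delta$ to all of $\Delta H^2_{M_m}$ (hence to $\Delta\mathcal H_\Theta$) is immediate from the definition $\mathcal H_\Delta=H^2_{M_{n\times m}}\ominus\Delta H^2_{M_m}$. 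Thus once the trace-inner-product isometry of left multiplication by an inner matrix function is recorded, everything assembles by the telescoping of orthogonal complements described above, and (a) follows formally by the tilde argument.
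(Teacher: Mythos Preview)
Your telescoping plan is the same mechanism the paper uses, just packaged globally rather than element-by-element: the paper proves (a) by checking the two inclusions directly via the characterization $A\in\mathcal K_\Omega\iff\Omega A^*\in H^2_0$ of Lemma~\ref{lem2.4}(a), taking $M\in\mathcal K_{\Theta\Delta}$, splitting off $M_1=P_{\mathcal K_\Delta}M$, and showing the remainder lies in $\mathcal K_\Theta\Delta$; it then says (b) is similar. Your isometry-of-left-multiplication observation is exactly what underlies that computation.

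There is, however, a genuine gap in your execution of (b). Your chain correctly yields
\[
H^2_{M_n}=\mathcal H_\Delta\oplus\Delta\mathcal H_\Theta\oplus\Delta\Theta\,H^2_{M_n},
\]
and you then silently replace the last summand by $\Theta\Delta\,H^2_{M_n}$ to match $\mathcal H_{\Theta\Delta}$. For non-commuting matrix inner functions this swap is unjustified, and in fact the identity you are trying to prove fails: with $n=m=2$, $\Theta=\left(\begin{smallmatrix}z&0\\0&1\end{smallmatrix}\right)$ and $\Delta=\left(\begin{smallmatrix}0&1\\z&0\end{smallmatrix}\right)$, one has $\Theta\Delta=z\left(\begin{smallmatrix}0&1\\1&0\end{smallmatrix}\right)$, so $\mathcal H_{\Theta\Delta}=M_2$ (the constant matrices), while $\mathcal H_\Delta\oplus\Delta\mathcal H_\Theta$ consists entirely of matrices with zero first row. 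What your telescoping actually establishes is $\mathcal H_{\Delta\Theta}=\mathcal H_\Delta\oplus\Delta\mathcal H_\Theta$, or equivalently $\mathcal H_{\Theta\Delta}=\mathcal H_\Theta\oplus\Theta\mathcal H_\Delta$; the printed right-hand side of (b) has $\Theta$ and $\Delta$ interchanged (and, relatedly, the expression $\Delta\mathcal H_\Theta$ is not even dimensionally well-defined when $m\ne n$, since $\Delta$ is $n\times m$ and $\mathcal H_\Theta\subseteq H^2_{M_n}$). Once (b) is read in this corrected form, both the paper's ``similar'' argument via Lemma~\ref{lem2.4}(b) and your tilde reduction to (a) go through cleanly.
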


\begin{proof} \
The inclusion $\mathcal K_{\Delta} \subseteq \mathcal K_{\Theta
\Delta}$ is obvious; it also follows from Lemma \ref{lem2.4}(a) that
$\mathcal K_{\Theta}\Delta \subseteq \mathcal K_{\Theta \Delta}$. \
Thus
$$
\mathcal K_{\Delta} \bigoplus \mathcal K_{\Theta}\Delta \subseteq
\mathcal K_{\Theta \Delta}.
$$
For the reverse inclusion, suppose $M \in \mathcal K_{\Theta
\Delta}$. \
 \ Write $M_1:= P_{\mathcal K_{\Delta}}M$. \
 Then $M-M_1=Q \Delta $ for some $Q \in H^2_{M_{n\times m}}$. \
Since $M_1 \in \mathcal K_{\Delta} \subseteq  \mathcal K_{\Theta
\Delta} $, we have $Q \Delta=M-M_1\in \mathcal K_{\Theta \Delta}$.
\ Thus, by Lemma \ref{lem2.4}(a), $\Theta Q^*=(\Theta \Delta)(Q
\Delta)^*
 \in H^2_0$, and hence, again by Lemma
\ref{lem2.4}(a), $Q \in \mathcal K_{\Theta}$, which gives (a). \ The
argument for (b) is similar.
\end{proof}

\begin{lemma}\label{lem22.88} \
Suppose $A,B,C\in H^\infty_{M_n}$ with $AB=BA$.  \ If $A$ and $B$
are left coprime and $A$ and $C$ are left coprime then $A$ and $BC$
are left coprime.
\end{lemma}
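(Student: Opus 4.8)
The plan is to recast ``left coprime'' as a statement about the closed invariant subspace of $H^2_{\mathbb{C}^n}$ spanned by the ranges of the relevant multiplication operators, and then to use the hypothesis $AB=BA$ by multiplying the coprimeness of $A$ and $C$ through by $B$.

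First I would record the reformulation: for $X,Y\in H^\infty_{M_n}$,
$$
X\ \hbox{and}\ Y\ \hbox{are left coprime}\ \Longleftrightarrow\ \hbox{cl}\,\bigl(XH^2_{\mathbb{C}^n}+YH^2_{\mathbb{C}^n}\bigr)=H^2_{\mathbb{C}^n}.
$$
Here $\hbox{cl}\,(XH^2_{\mathbb{C}^n}+YH^2_{\mathbb{C}^n})$ is invariant under the shift $S$, so by the Beurling--Lax--Halmos Theorem it equals $\Theta H^2_{\mathbb{C}^m}$ for some inner $\Theta\in H^\infty_{M_{n\times m}}$ (which is nonzero whenever $X$ and $Y$ are left coprime). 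For ``$\Rightarrow$'': every column of $X$ lies in $\Theta H^2_{\mathbb{C}^m}$, so $\Theta\Theta^*X=X$ a.e.\ and $\Theta^*X\in H^2_{M_{m\times n}}$, i.e.\ $X=\Theta(\Theta^*X)$; likewise $Y=\Theta(\Theta^*Y)$, so $\Theta$ is a common left inner divisor of $X$ and $Y$, hence by hypothesis a unitary constant, forcing $m=n$ and $\Theta H^2_{\mathbb{C}^n}=H^2_{\mathbb{C}^n}$. For ``$\Leftarrow$'': any common left inner divisor $\Delta\in H^\infty_{M_{n\times k}}$ of $X,Y$ satisfies $XH^2_{\mathbb{C}^n}+YH^2_{\mathbb{C}^n}\subseteq\Delta H^2_{\mathbb{C}^k}$, whence $H^2_{\mathbb{C}^n}=\hbox{cl}\,(XH^2_{\mathbb{C}^n}+YH^2_{\mathbb{C}^n})\subseteq\Delta H^2_{\mathbb{C}^k}\subseteq H^2_{\mathbb{C}^n}$, and the uniqueness clause of Beurling--Lax--Halmos (comparing with $I_nH^2_{\mathbb{C}^n}$) forces $k=n$ and $\Delta$ a unitary constant.

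Granting this, the hypotheses read $\hbox{cl}\,(AH^2_{\mathbb{C}^n}+BH^2_{\mathbb{C}^n})=H^2_{\mathbb{C}^n}$ and $\hbox{cl}\,(AH^2_{\mathbb{C}^n}+CH^2_{\mathbb{C}^n})=H^2_{\mathbb{C}^n}$, and I must show $\hbox{cl}\,(AH^2_{\mathbb{C}^n}+BCH^2_{\mathbb{C}^n})=H^2_{\mathbb{C}^n}$. The crux is to apply the bounded operator ``multiplication by $B$'' on $H^2_{\mathbb{C}^n}$ to the second identity; since a bounded operator carries the closure of a set into the closure of its image, and using $BA=AB$,
$$
\hbox{cl}\,\bigl(BH^2_{\mathbb{C}^n}\bigr)=\hbox{cl}\,\bigl(B\bigl(AH^2_{\mathbb{C}^n}+CH^2_{\mathbb{C}^n}\bigr)\bigr)=\hbox{cl}\,\bigl(ABH^2_{\mathbb{C}^n}+BCH^2_{\mathbb{C}^n}\bigr)\subseteq\hbox{cl}\,\bigl(AH^2_{\mathbb{C}^n}+BCH^2_{\mathbb{C}^n}\bigr),
$$
the last inclusion because $ABH^2_{\mathbb{C}^n}\subseteq AH^2_{\mathbb{C}^n}$. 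Thus the closed subspace $\hbox{cl}\,(AH^2_{\mathbb{C}^n}+BCH^2_{\mathbb{C}^n})$ contains $AH^2_{\mathbb{C}^n}$ and contains $BH^2_{\mathbb{C}^n}$, hence it contains $\hbox{cl}\,(AH^2_{\mathbb{C}^n}+BH^2_{\mathbb{C}^n})=H^2_{\mathbb{C}^n}$ and therefore equals $H^2_{\mathbb{C}^n}$. By the reformulation, $A$ and $BC$ are left coprime.

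I do not expect a real obstacle: once it is set up this way the lemma is three lines. The only point that needs care is the reformulation --- ensuring that the equivalence between ``the only common left inner divisor is a unitary constant'' and ``the spanned invariant subspace is all of $H^2_{\mathbb{C}^n}$'' is valid even when $A$, $B$, $C$, or $BC$ has identically vanishing determinant, so that a priori their left inner divisors could be rectangular. With that handled, commutativity $AB=BA$ is used exactly once, in the displayed chain of equalities, consistent with its being a genuine hypothesis.
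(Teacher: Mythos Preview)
Your proof is correct and follows essentially the same approach as the paper's. The paper cites \cite[p.~242]{FF} for the equivalence ``$X,Y$ left coprime $\Longleftrightarrow$ $XH^2_{\mathbb{C}^n}\vee YH^2_{\mathbb{C}^n}=H^2_{\mathbb{C}^n}$'' (which you prove in full from Beurling--Lax--Halmos), and then says only ``the limit argument together with the commutativity of $A$ and $B$ shows that $H^2_{\mathbb{C}^n}=AH^2_{\mathbb{C}^n}\vee BCH^2_{\mathbb{C}^n}$,'' which is exactly your displayed chain using boundedness of multiplication by $B$ and $BA=AB$.
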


\begin{proof} \ If $A$ and $B$ are left coprime
and $A$ and $C$ are left coprime then it follows  (cf.
\cite[p.242]{FF}) that
$$
H^2_{\mathbb C^n}=AH^2_{\mathbb{C}^n}\bigvee BH^2_{\mathbb{C}^n}
=AH^2_{\mathbb{C}^n}\bigvee CH^2_{\mathbb{C}^n}.
$$
Then the limit argument together with the commutativity of $A$ and
$B$ shows that
$$
H^2_{\mathbb C^n}=AH^2_{\mathbb{C}^n}\bigvee BCH^2_{\mathbb{C}^n},
$$
which gives the result.
\end{proof}

\begin{corollary}\label{lem2.3}
Let $A, B\in H^\infty_{M_n}$ and $\theta$ be an inner function. \ If
$A$ and $B$ are left coprime, and $A$ and $I_\theta$ are left
coprime, then $A$ and $\theta B$ are left coprime. \
\end{corollary}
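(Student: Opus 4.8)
The plan is to deduce this directly from Lemma \ref{lem22.88}. The only observation needed is that $I_\theta = \theta I$ is a scalar multiple of the identity matrix, hence central: for every $X \in H^\infty_{M_n}$ we have $X I_\theta = I_\theta X$, and moreover $I_\theta X = \theta X$. In particular $\theta B = I_\theta B$, and $A I_\theta = I_\theta A$ automatically. So I would apply Lemma \ref{lem22.88} with $A$, $I_\theta$, $B$ playing the roles of $A$, $B$, $C$: the commutativity hypothesis $A I_\theta = I_\theta A$ holds trivially, the hypothesis that $A$ and $I_\theta$ are left coprime is one of our assumptions, and the hypothesis that $A$ and $B$ are left coprime is the other. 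The conclusion of Lemma \ref{lem22.88} is then that $A$ and $I_\theta B = \theta B$ are left coprime, which is exactly what is claimed.

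Should one wish to avoid quoting Lemma \ref{lem22.88}, the same argument can be made ad hoc. By \cite[p.242]{FF}, two functions $X, Y \in H^\infty_{M_n}$ are left coprime if and only if $X H^2_{\mathbb C^n} \bigvee Y H^2_{\mathbb C^n} = H^2_{\mathbb C^n}$. Thus our hypotheses give
\[
A H^2_{\mathbb C^n} \bigvee B H^2_{\mathbb C^n} = H^2_{\mathbb C^n} = A H^2_{\mathbb C^n} \bigvee I_\theta H^2_{\mathbb C^n}.
\]
Because $I_\theta$ commutes with $A$ (and with everything), the limiting argument used in the proof of Lemma \ref{lem22.88} carries over verbatim to show $A H^2_{\mathbb C^n} \bigvee \theta B H^2_{\mathbb C^n} = H^2_{\mathbb C^n}$, i.e. $A$ and $\theta B$ are left coprime.

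I do not expect any genuine obstacle here: the statement is a corollary precisely because $I_\theta$ is central, so the commutativity hypothesis of Lemma \ref{lem22.88} is free. The only point that calls for (very minor) care is the bookkeeping identification $\theta B = I_\theta B$ and keeping straight which of $B, C$ in Lemma \ref{lem22.88} is being specialized to $I_\theta$.
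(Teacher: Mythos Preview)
Your proposal is correct and matches the paper's proof exactly: the paper simply says ``Immediate from Lemma \ref{lem22.88}'', and your first paragraph spells out precisely the specialization (with $I_\theta$ in the role of $B$, using that $I_\theta$ is central so the commutativity hypothesis is free) that makes that immediate.
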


\begin{proof} Immediate from Lemma \ref{lem22.88}.
\end{proof}

\begin{corollary}\label{cor22.99}  \
Suppose that $A,B,C\in H^\infty_{M_n}$ with $AB=BA$.  \ If $A$ and
$B$ are right coprime and $A$ and $C$ are right coprime then $A$ and
$CB$ are right coprime.
\end{corollary}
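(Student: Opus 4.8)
The plan is to dualize the statement through the tilde operation $\widetilde{\Phi}(z)=\Phi^*(\bar z)$ and reduce directly to Lemma~\ref{lem22.88}. By the very definition of right coprime-ness, $A$ and $CB$ are right coprime if and only if $\widetilde{A}$ and $\widetilde{CB}$ are left coprime, so it suffices to check the hypotheses of Lemma~\ref{lem22.88} for the triple $\widetilde{A},\widetilde{B},\widetilde{C}$ and then translate the conclusion back.

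First I would record the two elementary facts about the tilde operation that make this work. Since $\widetilde{\Phi\Psi}(z)=(\Phi\Psi)^*(\bar z)=\Psi^*(\bar z)\Phi^*(\bar z)=\widetilde{\Psi}(z)\,\widetilde{\Phi}(z)$, the tilde operation reverses products; in particular $\widetilde{CB}=\widetilde{B}\,\widetilde{C}$, and applying the same identity to the hypothesis $AB=BA$ gives $\widetilde{B}\,\widetilde{A}=\widetilde{A}\,\widetilde{B}$, so that $\widetilde{A}$ and $\widetilde{B}$ commute. Moreover, the hypotheses that $A,B$ are right coprime and $A,C$ are right coprime say precisely that $\widetilde{A},\widetilde{B}$ are left coprime and $\widetilde{A},\widetilde{C}$ are left coprime.

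Now I would apply Lemma~\ref{lem22.88} with $\widetilde{A}$ in the role of $A$, the commuting factor $\widetilde{B}$ in the role of $B$, and $\widetilde{C}$ in the role of $C$: all three hypotheses ($\widetilde{A}\widetilde{B}=\widetilde{B}\widetilde{A}$; $\widetilde{A},\widetilde{B}$ left coprime; $\widetilde{A},\widetilde{C}$ left coprime) hold, so the lemma yields that $\widetilde{A}$ and $\widetilde{B}\,\widetilde{C}$ are left coprime. Since $\widetilde{B}\,\widetilde{C}=\widetilde{CB}$, this is exactly the statement that $\widetilde{A}$ and $\widetilde{CB}$ are left coprime, i.e., that $A$ and $CB$ are right coprime.

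There is essentially no obstacle here; the one point that must be handled correctly is the product-reversal built into the tilde operation, which is precisely why the conclusion of Lemma~\ref{lem22.88} (``$A$ and $BC$ left coprime'') comes out as ``$A$ and $CB$ right coprime'' after dualizing — hence the order $CB$ rather than $BC$ in the statement of this corollary.
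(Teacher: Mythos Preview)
Your proof is correct and takes essentially the same approach as the paper: dualize via the tilde operation and apply Lemma~\ref{lem22.88}. You have in fact spelled out more carefully than the paper does why the product reversal under $\widetilde{\ \cdot\ }$ turns the conclusion ``$\widetilde{A}$ and $\widetilde{B}\,\widetilde{C}$ left coprime'' into ``$A$ and $CB$ right coprime''.
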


\begin{proof} \
This follows from Lemma \ref{lem22.88} together with the fact that
$A$ and $B$ are right coprime if and only if $\widetilde{A}$ and
$\widetilde{C}$ are left coprime.
\end{proof}

\bigskip

We recall the inner-outer factorization of vector-valued functions.
\ Let $D$ and $E$ be Hilbert spaces. \ If $F$ is a function with
values in $\mathcal{B}(E,D)$ such that $F(\cdot)e\in H^2_{D}$ for
each $e\in E$, then $F$ is called a strong $H^2$-function. \ The
strong $H^2$-function $F$ is called an {\it inner} function if
$F(\cdot)$ is an isometric  operator from $D$ into $E$. \ Write
$\mathcal{P}_{E}\label{pe}$ for the set of all polynomials with
values in $E$, i.e., $p(\zeta)=\sum_{k=0}^n \widehat{p}(k)\zeta^k$,
$\widehat{p}(k)\in E$. \ Then the function $Fp=\sum_{k=0}^n
F\widehat{p}(k) z^k$ belongs to $H^2_{D}$. \ The strong
$H^2$-function $F$ is called {\it outer} if $\hbox{cl}\,
F\cdot\mathcal{P}_E=H^2_{D}$. \ Note that every $F\in H^2_{M_n}$ is
a strong $H^2$-function. \ We then have an analogue of the scalar
Factorization Theorem. \

\bigskip

\noindent{\bf Inner-Outer Factorization.} (cf. \cite{Ni})\quad Every
strong $H^2$-function $F$ with values in $\mathcal{B}(E, D)$ can be
expressed in the form
$$
F=F_iF_e,
$$
where $F_e$ is an outer function with values in $\mathcal{B}(E,
D^\prime)$ and $F_i$ is an inner function with values in
$\mathcal{B}(D^\prime,D)$, for some Hilbert space $D^\prime$.

\bigskip

If $\dim D=\dim E<\infty$, then outer operator functions can be
detected by outer scalar functions.

\medskip

\noindent {\bf Helson-Lowdenslager Theorem.}\label{HLT} (cf.
\cite{Ni2})\quad If $\hbox{dim}\, D <\infty$ and $F$ is a strong
$H^2$-function with values in $\mathcal B(D)$, then $F$ is outer if
and only if $\det F$ is outer.

\bigskip

We then have:

\begin{lemma}\label{lem0.3}
Let $\Phi\in H^2_{M_n}$. \ Then $\det\Phi\ne 0$ if and only if every
left inner divisor of $\Phi$ is square.
\end{lemma}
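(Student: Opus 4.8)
The plan is to prove the two implications separately, using the Inner--Outer Factorization and the Helson--Lowdenslager Theorem recorded above. The forward implication requires no new work: it is precisely the observation made just before the statement (the remark labelled ``square2''). Indeed, if $\Delta\in H^2_{M_{n\times r}}$ with $r<n$ is a left inner divisor of $\Phi$, say $\Phi=\Delta A$, then $\hbox{rank}\,\Phi(z)\le\hbox{rank}\,\Delta(z)\le r<n$ for every $z\in\mathbb D$, forcing $\det\Phi\equiv 0$; contrapositively, $\det\Phi\ne 0$ forces every left inner divisor of $\Phi$ to be square. So for $(\Rightarrow)$ I would simply cite that remark.

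For $(\Leftarrow)$, assume every left inner divisor of $\Phi$ is square; we may assume $\Phi\not\equiv 0$ (otherwise, when $n\ge 2$, $\Phi$ has a constant non-square left inner divisor, contrary to the hypothesis). Since every $F\in H^2_{M_n}$ is a strong $H^2$-function, I would apply the Inner--Outer Factorization to $\Phi$, viewed as a strong $H^2$-function with values in $\mathcal B(\mathbb C^n)$: write $\Phi=\Phi_i\Phi_e$, where $\Phi_e$ is outer with values in $\mathcal B(\mathbb C^n, D')$ and $\Phi_i$ is inner with values in $\mathcal B(D',\mathbb C^n)$ for some Hilbert space $D'$. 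Since $\Phi_i(\zeta)$ is an isometry of $D'$ into $\mathbb C^n$ almost everywhere, $D'$ is finite-dimensional with $r:=\dim D'\le n$; identifying $D'$ with $\mathbb C^r$, an inner function has norm $1$ a.e.\ on $\mathbb T$, so $\Phi_i\in H^\infty_{M_{n\times r}}$ and $\Phi_e\in H^2_{M_{r\times n}}$. Thus $\Phi_i$ is a left inner divisor of $\Phi$ in the sense of this chapter, so by hypothesis it is square, i.e.\ $r=n$.

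It remains to see that $\det\Phi\ne 0$. With $r=n$, both $\Phi_i$ and $\Phi_e$ lie in $H^2_{M_n}$, and $\det\Phi=(\det\Phi_i)(\det\Phi_e)$ pointwise on $\mathbb D$. From $\Phi_i^*\Phi_i=I_n$ a.e.\ on $\mathbb T$ we get $|\det\Phi_i|=1$ a.e.\ on $\mathbb T$, so $\det\Phi_i\not\equiv 0$. By the Helson--Lowdenslager Theorem applied to the $n\times n$ outer function $\Phi_e$, the scalar function $\det\Phi_e$ is outer, hence $\det\Phi_e\not\equiv 0$. A product of two analytic functions on the connected disc, neither identically zero, is not identically zero; therefore $\det\Phi\not\equiv 0$, as asserted.

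The only substantive inputs are the Inner--Outer Factorization of matrix $H^2$-functions and the Helson--Lowdenslager characterization of outerness via the determinant; everything else is bookkeeping with ranks and isometries. I expect the step needing the most care is the passage from the abstract output of the Inner--Outer Factorization to a genuine ``left inner divisor'' as defined here --- that is, verifying $\dim D'\le n$ and that the outer factor $\Phi_e$ really belongs to $H^2_{M_{r\times n}}$ --- since the whole argument hinges on feeding $\Phi_i$ into the standing hypothesis.
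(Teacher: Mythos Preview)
Your proof is correct and takes essentially the same approach as the paper: both use the inner--outer factorization to obtain $\Phi=\Phi_i\Phi_e$, invoke the hypothesis to force $\Phi_i$ square, and then appeal to the Helson--Lowdenslager Theorem to see that $\det\Phi_e$ is outer (hence nonzero). The only difference is cosmetic: the paper argues the $(\Leftarrow)$ direction by contradiction (assuming $\det\Phi=0$ and deducing that $\det\Phi_e=0$, which cannot be outer), whereas you argue directly; you also make explicit the step $|\det\Phi_i|=1$ a.e., which the paper uses implicitly.
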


\begin{proof}
We assume to the contrary that every left inner divisor of $\Phi$ is
square, but $\det\, \Phi=0$.   \ Since $\Phi$ is a strong
$H^2$-function, we have an inner-outer factorization of $\Phi$ of
the form
$$
\Phi=\Phi_i\, \Phi_e\,,
$$
where  $\Phi_i\in H^2_{M_{n\times r}}$ is inner and $\Phi_e\in
H^2_{M_{r \times n}}$ is outer. \ By assumption, $r=n$. \ We thus
have $0=\hbox{det}\,\Phi=\hbox{det}\,\Phi_i\, \hbox{det}\,\Phi_e$,
which implies that $\hbox{det}\,\Phi_e=0$. \ By the
Helson-Lowdenslager Theorem, $0$ should be an outer function, a
contradiction. \ The converse was noticed in p.\pageref{square}.
\end{proof}

\bigskip

We introduce a notion of the ``characteristic scalar inner" function
of a matrix inner function (cf. \cite[p. 81]{He}, \cite{SFBK}).

\medskip

\begin{definition}\label{def5.1} \
Let $\Delta\in H^\infty_{M_n}$ be inner. \ We say that {\it $\Delta$
has a scalar inner multiple $\theta\in H^\infty$} if there exists
$\Delta_0\in H^\infty_{M_n}$ such that
$$
\Delta\Delta_0=\Delta_0\Delta = I_\theta.
$$
We define
$$
m_{\Delta}\label{ddelta}:= \hbox{g.c.d.}\,\Bigl\{\theta :\ \theta \
\hbox{is a  scalar inner multiple of $\Delta$} \Bigr\}\,.
$$
In view of Lemma \ref{lem2.1}, $m_\Delta$ is the minimal $\theta$ so
that $\Delta\Delta_0=\Delta_0\Delta = I_\theta$ for some
$\Delta_0\in H^\infty_{M_n}$. \ We say that $m_{\Delta}$ is the {\it
characteristic scalar inner function} of $\Delta$.
\end{definition}

\medskip

{\it Remark.} \ In Definition \ref{def5.1}, it is enough to assume $\Delta\Delta_0 = I_\theta$.  For, given two matrices $A$ and $B$ such that $AB=\lambda I$, with $\lambda \ne 0$, it is straightforward to verify that $BA=AB$.

\medskip

It is well known (cf. \cite[Proposition 5.6.1]{SFBK}) that if $\Delta\in
H^\infty_{M_n}$ is inner then $\Delta$ has a scalar inner multiple.
\ Thus $m_\Delta$ is well-defined. \ We would like to remark that
the notion of $m_\Delta$ arises in the Sz.-Nagy--Foia\c s theory of
contraction operators $T$ of class $C_0$ (completely nonunitary
contractions $T$ for which there exists a nonzero function $u\in
H^\infty$ such that $u(T)=0$): the {\it minimal function} $m_T$ of
the $C_0$-contraction operator $T$ amounts to our $m_{\Theta_T}$,
where $\Theta_T$ is the characteristic function of $T$ (cf.
\cite{Be}, \cite{SFBK}).

\bigskip

If $\Delta\equiv\bigl(\delta_{ij}\bigr)\in H^\infty_{M_n}$ is inner,
we may ask, how do we find $m_{\Delta}$\,? \ The following lemma
gives an answer. \ To see this, we first observe that if
$\Delta\equiv \bigl(\delta_{ij}\bigr)\in H^\infty_{M_n}$ is inner
then $\ker H_{\Delta^*}=\Delta H^2_{\mathbb C^n}$ and hence, by
Lemma \ref{gu2}, $\Delta^*$ is of bounded type, and we may write
$\delta_{ij}=\theta_{ij}\overline{b}_{ij}$ ($\theta_{ij}$ inner;
$\theta_{ij}$ and $b_{ij}\in H^\infty$ coprime). \

\medskip

\begin{lemma}\label{lem3.5} \
Let $\Delta\in H^\infty_{M_n}$ be inner. \ Thus we may write
$\Delta\equiv \bigl(\theta_{ij}\overline{b}_{ij}\bigr)$, where
$\theta_{ij}$ is an inner function and $\theta_{ij}$ and $b_{ij}\in
H^\infty$ are coprime for each $i,j=1,2,\cdots,n$. \ Then
$$
m_\Delta=\hbox{\rm l.c.m.}\,\Bigl\{\theta_{ij}:i,j=1,2,\cdots,
n\Bigr\}.
$$
\end{lemma}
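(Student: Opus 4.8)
The plan is to describe all scalar inner multiples of $\Delta$ explicitly and then pick out the minimal one. First I would invoke the remark following Definition \ref{def5.1}: $\theta$ is a scalar inner multiple of $\Delta$ exactly when $\Delta\Delta_0=I_\theta$ for some $\Delta_0\in H^\infty_{M_n}$. Since $\Delta$ is a \emph{square} inner matrix function, it is unitary-valued a.e.\ on $\mathbb T$, so $\Delta\Delta^*=I_n$ a.e.; left-multiplying $\Delta\Delta_0=\theta I_n$ by $\Delta^*$ forces $\Delta_0=\theta\Delta^*$. Hence $\theta$ is a scalar inner multiple of $\Delta$ if and only if $\theta\Delta^*\in H^\infty_{M_n}$, i.e.\ if and only if $\theta\,\overline{\delta_{ji}}=\theta\,\overline{\theta_{ji}}\,b_{ji}\in H^\infty$ for all $i,j$. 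This reduces the lemma to the scalar claim
\begin{equation}\label{lem3.5ast}
\theta\,\overline{u}\,b\in H^\infty\ \Longleftrightarrow\ u\mid\theta
\end{equation}
for $u$ inner and $b\in H^\infty$ with $u$ and $b$ coprime: granting \eqref{lem3.5ast}, the scalar inner multiples of $\Delta$ are precisely the inner functions divisible by every $\theta_{ij}$, and, by Definition \ref{def5.1}, $m_\Delta$ is the minimal such inner function, which is exactly $\hbox{l.c.m.}\{\theta_{ij}:i,j=1,\dots,n\}$.

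To prove \eqref{lem3.5ast}, the implication $(\Leftarrow)$ is a one-line cancellation: if $\theta=u\theta'$ then $\theta\,\overline{u}\,b=\theta' b\in H^\infty$. For $(\Rightarrow)$, I would set $g:=\theta\,\overline{u}\,b\in H^\infty$, multiply by $u$, and use $\overline{u}u=1$ a.e.\ to obtain the $H^\infty$-identity $gu=\theta b$, so that $u\mid\theta b$ in $H^\infty$. Taking the inner--outer factorization $b=\omega b_e$ ($\omega$ inner, $b_e$ outer) and comparing the inner parts in $\theta b=\theta\omega\,b_e=u\,(\theta b/u)$, I would conclude $u\mid\theta\omega$; moreover $u$ and $\omega$ are coprime, since any nonconstant common inner divisor of $u$ and $\omega$ would divide both $u$ and $b$. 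So everything comes down to the cancellation law for inner functions: if $u\mid\theta\omega$ and $u,\omega$ are coprime, then $u\mid\theta$.

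Finally I would prove this cancellation law by splitting $u$, $\theta$, $\omega$ into Blaschke parts and singular inner parts. On the Blaschke side, $u\mid\theta\omega$ forces the zeros of $u$ (counted with multiplicity) to lie in the union of those of $\theta$ and $\omega$; since $u$ and $\omega$ share no zero, the zeros of $u$ lie among those of $\theta$, so the Blaschke part of $u$ divides that of $\theta$. On the singular side, $u\mid\theta\omega$ gives $\mu_u\le\mu_\theta+\mu_\omega$ for the associated singular measures, while Theorem \ref{thmcoprime3} gives $\mu_u\perp\mu_\omega$; choosing a Borel set $E$ on which $\mu_u$ is concentrated with $\mu_\omega(E)=0$, I get $\mu_u(A)=\mu_u(A\cap E)\le\mu_\theta(A\cap E)\le\mu_\theta(A)$ for every Borel $A$, so $\mu_u\le\mu_\theta$ and the singular part of $u$ divides that of $\theta$. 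Hence $u\mid\theta$, which completes \eqref{lem3.5ast} and the lemma. The main obstacle is exactly this last step: one cannot pass from $\mu_u\le\mu_\theta+\mu_\omega$ to $\mu_u\le\mu_\theta$ without the mutual singularity $\mu_u\perp\mu_\omega$, so Theorem \ref{thmcoprime3} is essential here; the rest is routine bookkeeping with the unitary-valuedness of $\Delta$ on $\mathbb T$ and the uniqueness of inner--outer factorization.
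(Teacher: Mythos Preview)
Your proof is correct and takes a somewhat different route from the paper's. The paper argues in two inequalities: writing $\Delta=\theta A^*$ with $\theta:=\hbox{l.c.m.}\{\theta_{ij}\}$ and $A\in H^\infty_{M_n}$, one has $A^*A=I$ (since $\Delta$ is inner), so $\Delta A=I_\theta$ and hence $m_\Delta\mid\theta$; conversely, from $I_{m_\Delta}=\Delta\Delta_0$ one gets $\Delta=m_\Delta\Delta_0^*$, and comparing this entrywise with the coprime representation $\delta_{ij}=\theta_{ij}\overline{b}_{ij}$ forces each $\theta_{ij}\mid m_\Delta$, hence $\theta\mid m_\Delta$. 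The implication ``$\delta_{ij}=m_\Delta\overline{c}_{ij}$ with $c_{ij}\in H^\infty$ and $\theta_{ij},b_{ij}$ coprime $\Rightarrow\theta_{ij}\mid m_\Delta$'' is treated as routine (it is immediate from $\ker H_{\overline{\delta}_{ij}}=\theta_{ij}H^2$, the standard fact that the coprime factorization realizes the minimal inner denominator).

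Your approach instead characterizes \emph{all} scalar inner multiples at once via $\theta\Delta^*\in H^\infty_{M_n}$, reduces to the scalar criterion \eqref{lem3.5ast}, and then proves \eqref{lem3.5ast} from scratch---including the cancellation law $u\mid\theta\omega,\ \gcd(u,\omega)=1\Rightarrow u\mid\theta$---by splitting into Blaschke and singular parts and invoking Theorem~\ref{thmcoprime3} for the latter. What you gain is a fully self-contained argument that never appeals to Hankel kernels or Beurling's theorem; what the paper gains is brevity, since it leans on that standard minimality fact and avoids the measure-theoretic detour through Theorem~\ref{thmcoprime3}. Both are clean; the paper's is shorter, yours is more elementary.
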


\begin{proof}
Observe that
\begin{equation}
\Delta=\bigl(\theta_{ij}\overline{b}_{ij}\bigr)=\bigl(\theta
\overline{a}_{ij}\bigr)= \theta A^* \quad (A \in H^{2}_{M_n}).
\end{equation}
Since $\Delta $ is inner, it follows that $A^*A=I$, so that $\Delta
A=I_\theta$. \ This says that $m_\Delta$ is an inner divisor of
$\theta$.  \ Thus, we may write
\begin{equation}\label{1.4-1}
\zeta m_\Delta=\theta\quad \hbox{($\zeta$ scalar-inner)}.
\end{equation}
We now want to show that $\zeta$ is constant. \ Let
$I_{m_\Delta}=\Delta\Delta_0$ for some inner function $\Delta_0$.  \
It follows from (\ref{1.4-1}) that $\zeta \Delta \Delta_0=I_\theta
=\Delta A$, and hence $\zeta \Delta_0 =A$. \ Since by (\ref{1.4-1}),
$\zeta$ is an inner divisor of $\theta$, it follows that
$$
\bigl(\theta_{ij}\overline{b}_{ij}\bigr)=\bigl(\theta
\overline{a}_{ij}\bigr)= (\theta\overline{\zeta})\Delta_0^*.
$$
But since $\theta_{ij}$ and $b_{ij}$ are coprime, $\theta_{ij}$ is
inner divisor of $\theta \overline{\zeta}$ for each $i,j=1,2,
\cdots, n$. \ Thus $\theta$ is an inner divisor of $\theta
\overline{\zeta}$, and therefore $\zeta$ is constant. \ This
completes the proof.
\end{proof}

\bigskip

We observe:

\begin{lemma}\label{lem1.4}
If $\theta\in H^2$ is a nonconstant inner function, then
$$
\bigcap_{n=1}^{\infty}{\theta}^nH^2=\{0\}.
$$
\end{lemma}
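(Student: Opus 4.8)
The plan is to take an arbitrary $f\in\bigcap_{n=1}^{\infty}\theta^nH^2$ and show directly that $f$ vanishes at every point of $\mathbb D$, hence $f\equiv 0$; this avoids having to analyze the Blaschke and singular parts of $\theta$ separately.

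First I would record two routine facts. Since $\theta$ is inner, $|\theta|=1$ a.e.\ on $\mathbb T$, so multiplication by $\theta^n$ is an isometry on $H^2$; thus, writing $f=\theta^n g_n$ with $g_n\in H^2$ (legitimate for every $n$ by hypothesis), we get $\|g_n\|_2=\|\theta^ng_n\|_2=\|f\|_2$ for all $n$. Next, I would use the standard pointwise bound $|g(\alpha)|\le\|g\|_2\,(1-|\alpha|^2)^{-1/2}$, valid for $g\in H^2$ and $\alpha\in\mathbb D$ (Cauchy--Schwarz applied to the Taylor coefficients, equivalently the reproducing-kernel inequality). Combining the two gives $|g_n(\alpha)|\le\|f\|_2\,(1-|\alpha|^2)^{-1/2}$, a bound independent of $n$.

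The crux is that, $\theta$ being a \emph{nonconstant} inner function, the maximum modulus principle forces $|\theta(\alpha)|<1$ for every $\alpha\in\mathbb D$ (equality at an interior point would make $\theta$ constant). Hence, for each fixed $\alpha$,
$$
|f(\alpha)|=|\theta(\alpha)|^{\,n}\,|g_n(\alpha)|\le|\theta(\alpha)|^{\,n}\,\frac{\|f\|_2}{\sqrt{1-|\alpha|^2}}\ \longrightarrow\ 0\qquad(n\to\infty),
$$
so $f(\alpha)=0$. Since $\alpha\in\mathbb D$ was arbitrary, $f\equiv 0$, and as $f$ was an arbitrary element of the intersection, $\bigcap_{n}\theta^nH^2=\{0\}$.

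I do not expect a genuine obstacle here; the only point requiring any care is the \emph{strict} inequality $|\theta(\alpha)|<1$ on $\mathbb D$ (as opposed to merely $\le 1$), which is exactly where nonconstancy of $\theta$ enters. An alternative, slightly less self-contained, route would go through the inner--outer factorization $f=f_if_e$: since the outer factor $f_e$ has no nonconstant inner divisor, $\theta^n$ must divide the single inner function $f_i$ for every $n$, which is impossible unless $\theta$ is constant (it would force a zero of infinite order or an unbounded singular measure). I would present the first argument as the main proof, since it appeals only to elementary $H^2$ estimates and the maximum principle.
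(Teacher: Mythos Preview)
Your proof is correct and complete. The key ingredients --- the isometric nature of multiplication by an inner function on $H^2$, the reproducing-kernel pointwise bound, and the strict inequality $|\theta(\alpha)|<1$ on $\mathbb D$ via the maximum modulus principle --- are all used correctly.

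The paper, however, takes a different route: it invokes the Wold-type orthogonal decomposition $H^2=\bigoplus_{j\ge 0}\theta^j\mathcal H(\theta)$ coming from the wandering-subspace construction, writes $f=\sum_j\theta^jf_j$, observes that $f\in\theta^nH^2$ forces $f_j=0$ for $j<n$, and concludes. Your argument is more function-theoretic and arguably more elementary, relying only on pointwise estimates rather than any structural decomposition; it would be accessible to a reader who has not seen the Wold decomposition. The paper's approach, on the other hand, is purely Hilbert-space-theoretic and makes no appeal to point evaluations or the maximum principle; it also generalizes verbatim to the vector-valued setting $H^2_{\mathbb C^n}$ with $\theta$ replaced by a nonconstant inner matrix function (where your pointwise bound would need more care). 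For the scalar lemma as stated, both are perfectly acceptable.
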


\begin{proof} \
If $\theta$ is a nonconstant inner function, then the wandering
subspace construction from the Halmos proof of the
Beurling-Lax-Halmos theorem implies that (cf. \cite[p. 204]{SFBK})
$$
H^2=\bigoplus_{j=0}^\infty \theta^j \mathcal{H}(\theta)
$$
and more generally,
$$
\theta^n H^2=\bigoplus_{j=n}^\infty \theta^j \mathcal{H}(\theta).
$$
Thus any $f\in H^2$ has an orthogonal decomposition as
$f=\sum_{j=0}^\infty \theta^j f_j$ with $f_j\in
\mathcal{H}(\theta)$. \ If $f\in \theta^n H^2$, then $f_j=0$ for
$0\le j<n$. \
Hence, if $f\in \cap_{n=0}^\infty \theta^n H^2$, then $f_j=0$
for $j=0,1,2,\hdots$, or $f=0$.
\end{proof}

\medskip

\begin{lemma}\label{lem3.8} \rm{(The Local Rank)}\quad
Let $E$ and $D$ be Hilbert spaces, $\hbox{dim}E <\infty$, and let
$\Phi$ be a strong $H^2$-function with values in $\mathcal B(D, E)$.
Denote $\hbox{Rank}\,\Phi:=\hbox{max}_{\zeta \in \mathbb
D}\,\hbox{rank}\Phi(\zeta)$, where $\hbox{rank}\Phi(\zeta):=\dim
\Phi(\zeta)(D)$. \ Then $\bigl\{ f\in H^2_D:
\Phi(\zeta)f(\zeta)\equiv 0\bigr\}=\vartheta H^2_{D^{\prime}}$
($\vartheta$ inner) for some Hilbert space $D^\prime$. \ In this
case, $\hbox{dim}\,D=\hbox{dim}\,D^{\prime}+\hbox{Rank}\,\Phi$.
\end{lemma}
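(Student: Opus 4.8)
The plan is to recognize the set $M:=\{f\in H^2_D:\Phi(\zeta)f(\zeta)\equiv 0\ \text{on}\ \mathbb D\}$ as a shift-invariant subspace, apply the Beurling--Lax--Halmos Theorem, and then compute the multiplicity of the resulting inner function by a pointwise rank count at a generic point of $\mathbb D$, using the inner--outer factorization of $\Phi$ to relate $\mathrm{Rank}\,\Phi$ to the size of the kernel of $\Phi$. First I would verify that $M$ is a closed subspace of $H^2_D$: point evaluations are bounded on $H^2_D$, and a strong $H^2$-function is locally bounded on $\mathbb D$ (closed graph theorem), so $\zeta\mapsto\Phi(\zeta)f(\zeta)$ is analytic for each $f\in H^2_D$ and each condition $\Phi(\zeta)f(\zeta)=0$ defines a closed subspace, whence $M$ is closed. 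Moreover $M$ is invariant under $S=T_{zI}$ because $\Phi(\zeta)(zf)(\zeta)=\zeta\,\Phi(\zeta)f(\zeta)$. Hence, by the Beurling--Lax--Halmos Theorem, $M=\vartheta H^2_{D'}$ for some inner $\vartheta\in H^\infty_{\mathcal B(D',D)}$ and Hilbert space $D'$; since $\{f(\zeta):f\in M\}=\vartheta(\zeta)(D')$ for every $\zeta\in\mathbb D$, the statement reduces to $\dim D'=\dim D-\mathrm{Rank}\,\Phi$.

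Next I would invoke the Inner--Outer Factorization to write $\Phi=\Phi_i\Phi_e$ with $\Phi_e$ outer valued in $\mathcal B(D,D'')$ and $\Phi_i$ inner valued in $\mathcal B(D'',E)$; since $\Phi_i(\zeta)$ is isometric a.e.\ on $\mathbb T$ and $\dim E<\infty$, we have $k:=\dim D''\le\dim E<\infty$. Two facts then carry the argument. First, $\mathrm{Rank}\,\Phi=k$: one always has $\mathrm{rank}\,\Phi(\zeta)\le k$, while equality holds wherever both $\Phi_i(\zeta)$ and $\Phi_e(\zeta)$ have rank $k$; the set where $\Phi_i$ drops rank is the common zero-set of its $k\times k$ minors, which cannot all vanish identically (otherwise, passing to radial boundary values, $\Phi_i^*\Phi_i=I$ a.e.\ on $\mathbb T$ would fail), and likewise the rank-drop set of $\Phi_e$ cannot be all of $\mathbb D$ (otherwise $\Phi_e\mathcal{P}_D$ could not approximate $k$ linearly independent constants, contradicting outerness) --- so these loci are thin and such points are dense. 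Second, $M=\{f\in H^2_D:\Phi_e f\equiv 0\}$: one inclusion is obvious, and for the other, $\Phi_i(\zeta)\Phi_e(\zeta)f(\zeta)=0$ forces $\Phi_e(\zeta)f(\zeta)=0$ on the dense set where $\Phi_i(\zeta)$ is injective, so the identity theorem gives $\Phi_e f\equiv 0$.

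Because $\Phi_e\vartheta\equiv 0$, we get $\mathrm{ran}\,\vartheta(\zeta)\subseteq\ker\Phi_e(\zeta)$ pointwise, and evaluating at a generic $\zeta_0$ --- where $\Phi_e(\zeta_0)$ has full rank $k$, and where $\vartheta(\zeta_0)$ is injective when $\dim D<\infty$ --- yields $\dim D'\le\dim\ker\Phi_e(\zeta_0)=\dim D-k$ (this inequality being automatic when $\dim D=\infty$). For the reverse inequality I would produce enough $H^2$-solutions of $\Phi_e f\equiv 0$. Fix a $k$-dimensional subspace $L\subseteq D$ with $d(\zeta):=\det\bigl(\Phi_e(\zeta)|_L\bigr)\not\equiv 0$, and, off the discrete zero-set of $d$, set $R(\zeta):=\bigl(\Phi_e(\zeta)|_L\bigr)^{-1}$ (regarded as a map into $D$) and $P(\zeta):=I-R(\zeta)\Phi_e(\zeta)$; then $\Phi_e(\zeta)P(\zeta)=0$ and $P(\zeta)$ is the identity on $\ker\Phi_e(\zeta)$. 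The entries of $P$ lie in the Nevanlinna class, so there is a scalar $u\in H^\infty$, $u\not\equiv 0$, with $uP$ having bounded entries; hence for each constant $v\in D$ the function $f_v:=u\,P(\cdot)v$ lies in $H^\infty_D\subseteq H^2_D$, satisfies $\Phi_e f_v\equiv 0$, and so $f_v\in M$. Choosing $\zeta_0$ outside the zero-sets of $d$ and $u$, outside the rank-degeneracy loci of $\Phi_e$ (and, when $\dim D<\infty$, of $\vartheta$), and letting $v$ run over a basis of $\ker\Phi_e(\zeta_0)$, the vectors $f_v(\zeta_0)=u(\zeta_0)v$ give $\dim D-k$ linearly independent elements of $\{f(\zeta_0):f\in M\}=\vartheta(\zeta_0)(D')$, so $\dim D'\ge\dim D-k$. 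Combining the two bounds gives $\dim D=\dim D'+\mathrm{Rank}\,\Phi$.

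I expect the main obstacle to be this last construction of global $H^2_D$-solutions with prescribed values at $\zeta_0$: the identity $M=\vartheta H^2_{D'}$ says $M$ is large but not that the evaluation map $M\to\ker\Phi_e(\zeta_0)$ is surjective, and the natural candidate $P(\zeta)=I-R(\zeta)\Phi_e(\zeta)$ is only meromorphic/Nevanlinna because $d$ need not be outer --- so one has to clear denominators by an $H^\infty$-multiplier and then choose the evaluation point to avoid the resulting discrete exceptional set as well as the rank-degeneracy loci of $\Phi_e$, $\Phi_i$, and $\vartheta$. The secondary technical point is the genericity bookkeeping underlying $\mathrm{Rank}\,\Phi=k$, especially the passage to radial boundary values for the inner factor $\Phi_i$.
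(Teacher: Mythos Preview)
The paper does not prove this lemma; its entire proof reads ``See \cite[p.44]{Ni}.'' So there is no argument in the paper to compare against---the result is simply quoted from Nikolski\u{\i}'s \textit{Treatise on the Shift Operator}.

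Your self-contained argument is along the expected lines and is essentially correct. A couple of remarks that may sharpen it. First, once $\Phi_e$ is outer with values in $\mathcal{B}(D,D'')$ and $\dim D''=k<\infty$, the operator $\Phi_e(\zeta)$ is in fact surjective for \emph{every} $\zeta\in\mathbb D$: point evaluation $H^2_{D''}\to D''$ is bounded and onto, so the dense set $\Phi_e\,\mathcal{P}_D$ has image dense in $D''$, but that image sits inside the finite-dimensional subspace $\Phi_e(\zeta)(D)$. Thus the ``rank-drop locus of $\Phi_e$'' you worry about is empty, not merely thin, and $\ker\Phi_e(\zeta)$ always has the correct dimension $\dim D-k$. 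Second, your Nevanlinna-class / denominator-clearing step is fine because the functions $r_{ij}=m_{ij}/d$ are finitely many and do not depend on $v$: write each as a quotient of $H^\infty$ functions and take $u$ to be the product of the (finitely many) denominators. Then for every $v\in D$ one has $u\,P(\cdot)v = u\,v - \sum_{i,j}(u\,r_{ij})\,c_j^{(v)}\,\ell_i$, which lies in $H^2_D$ since each $u\,r_{ij}\in H^\infty$ multiplies the $H^2$ function $c_j^{(v)}=\langle\Phi_e(\cdot)v,e_j\rangle$. This single $u$ therefore works simultaneously for all $v$, handling the case $\dim D=\infty$ as well. The injectivity of $\vartheta(\zeta_0)$ is only needed when $\dim D<\infty$ (as you note), and in that case $\det\vartheta\not\equiv 0$ supplies generic injectivity; when $\dim D=\infty$ both sides of the dimension identity are infinite and the lower bound already forces $\dim D'=\infty$.
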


\begin{proof}
See \cite[p.44]{Ni}.
\end{proof}

\begin{lemma}\label{lem1.7}
Let $A \in H^{\infty}_{M_{n}}$ and $\Theta=I_\theta$ for some
nonconstant inner function $\theta$. \ If $\Theta$  and $A$ are left
{\rm (}or right{\rm )} coprime, then $\det\,A\ne 0$.
\end{lemma}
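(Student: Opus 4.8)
The plan is a proof by contradiction, together with a reduction of the right‑coprime case to the left‑coprime one: if $I_\theta$ and $A$ are right coprime then, by definition, $\widetilde A$ and $\widetilde{I_\theta}=I_{\widetilde\theta}$ are left coprime, $\widetilde\theta$ is again a nonconstant inner function, and $\det\widetilde A(z)=\overline{\det A(\overline z)}$, so $\det A\equiv0\Leftrightarrow\det\widetilde A\equiv0$; hence the left case applied to $\widetilde A,\,I_{\widetilde\theta}$ gives the right case. So assume $A$ and $I_\theta$ are left coprime but $\det A\equiv0$. First I would pass to a ``thin'' inner function: by the Inner--Outer Factorization write $A=A_iA_e$ with $A_i\in H^2_{M_{n\times r}}$ inner, $A_e\in H^2_{M_{r\times n}}$ outer and $r=\hbox{Rank}\,A$ (Lemma~\ref{lem3.8}). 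Since $\|A_e(\zeta)\|=\|A(\zeta)\|$ a.e.\ on $\mathbb T$ (as $A_i^*A_i=I_r$ there), $A_e\in H^\infty_{M_{r\times n}}$, so $AH^2_{\mathbb C^n}\subseteq A_iH^2_{\mathbb C^r}$. If $r=n$, then $A_i$ is a square inner function and $A_e$ a square outer function, whence $\det A=\det A_i\cdot\det A_e$ with $\det A_i$ a nonzero scalar inner function and $\det A_e$ outer (Helson--Lowdenslager Theorem); this forces $\det A\not\equiv0$, a contradiction. So $r<n$, and left coprimeness yields $H^2_{\mathbb C^n}=AH^2_{\mathbb C^n}\vee\theta H^2_{\mathbb C^n}\subseteq A_iH^2_{\mathbb C^r}\vee\theta H^2_{\mathbb C^n}$, i.e.\ $A_iH^2_{\mathbb C^r}\vee\theta H^2_{\mathbb C^n}=H^2_{\mathbb C^n}$.

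Next I would transfer this to the model operator. Let $\mathcal H(I_\theta)=H^2_{\mathbb C^n}\ominus\theta H^2_{\mathbb C^n}$ $(=\mathcal H(\theta)\otimes\mathbb C^n)$ and let $T$ be the compression $(T_{zI})_{I_\theta}=U_\theta\otimes I_n$ of the shift to $\mathcal H(I_\theta)$. Define $V:H^2_{\mathbb C^r}\to\mathcal H(I_\theta)$ by $Vf=P_{\mathcal H(I_\theta)}(A_if)$. A direct computation gives $VS=TV$, where $S$ is the shift on $H^2_{\mathbb C^r}$ (using $zg-zP_{\mathcal H(I_\theta)}g\in\theta H^2_{\mathbb C^n}$ for $g\in H^2_{\mathbb C^n}$), and, under the canonical identification $H^2_{\mathbb C^n}/\theta H^2_{\mathbb C^n}\cong\mathcal H(I_\theta)$, the subspace $A_iH^2_{\mathbb C^r}$ is carried onto $\hbox{ran}\,V$; hence the identity $A_iH^2_{\mathbb C^r}\vee\theta H^2_{\mathbb C^n}=H^2_{\mathbb C^n}$ says exactly that $V$ has dense range. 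Dense range together with the intertwining $VS=TV$ then forces the $r$ vectors $Ve_1,\dots,Ve_r$ (the images of the constant functions $e_1,\dots,e_r\in\mathbb C^r\subset H^2_{\mathbb C^r}$) to generate $\mathcal H(I_\theta)$ under $T$; so the cyclic multiplicity of $T$ (the least number of vectors generating the space under $T$) is at most $r$.

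Finally I would contradict this via a lower bound of $n$ for that multiplicity. For each $\mu\in\mathbb D$ the reproducing kernel $k^\theta_\mu(z)=\dfrac{1-\overline{\theta(\mu)}\,\theta(z)}{1-\overline\mu z}$ is a nonzero element of $\mathcal H(\theta)$ (nonzero because $|\theta(\mu)|<1$, $\theta$ being nonconstant inner) with $U_\theta^*k^\theta_\mu=\overline\mu\,k^\theta_\mu$, so $U_\theta-\mu$ is not of dense range; tensoring with $\mathbb C^n$ gives $\dim\bigl(\mathcal H(I_\theta)\ominus\overline{\hbox{ran}\,(T-\mu)}\bigr)\ge n$. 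Since for any operator one has (cyclic multiplicity)\,$\ge\dim\bigl(\mathcal H\ominus\overline{\hbox{ran}\,(T-\mu)}\bigr)$ --- in the quotient by $\overline{\hbox{ran}\,(T-\mu)}$ the operator acts as $\mu$, so any generating family must span this quotient --- the cyclic multiplicity of $T$ is at least $n$. Thus $n\le r<n$, the desired contradiction, and therefore $\det A\ne0$.

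The step I expect to be the crux is the passage, in the middle paragraph, from ``$A_iH^2_{\mathbb C^r}\vee\theta H^2_{\mathbb C^n}=H^2_{\mathbb C^n}$'' to a bona fide cyclicity statement for $T$: it is exactly this that makes the argument work for an \emph{arbitrary} nonconstant inner $\theta$. When $\theta$ is a finite Blaschke product, $\mathcal H(\theta)$ is finite‑dimensional and a crude dimension count already gives the result; but for a singular inner $\theta$ there is no such shortcut, and one genuinely needs the intertwining/dense‑range bookkeeping above together with the eigenvector observation in the last paragraph (which is available at every $\mu\in\mathbb D$, regardless of whether $\theta$ vanishes there).
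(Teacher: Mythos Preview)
Your reduction to the left-coprime case, the inner--outer factorization step, and the intertwining/dense-range argument showing that the cyclic multiplicity of $T=U_\theta\otimes I_n$ is at most $r$ are all correct. The gap is in the lower bound for the cyclic multiplicity.

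The claim $U_\theta^*k^\theta_\mu=\overline\mu\,k^\theta_\mu$ is false unless $\theta(\mu)=0$. Since $\mathcal H(\theta)$ is $S^*$-invariant, $U_\theta^*=S^*|_{\mathcal H(\theta)}$; hence $\overline\mu$ is an eigenvalue of $U_\theta^*$ exactly when the Szeg\H{o} kernel $k_\mu$ itself lies in $\mathcal H(\theta)$, i.e.\ when $\theta(\mu)=0$. A direct computation confirms this: for $\theta(\mu)\ne0$ one gets
\[
S^*k^\theta_\mu-\overline\mu\,k^\theta_\mu=-\overline{\theta(\mu)}\cdot\frac{\theta(z)-\theta(0)}{z}\ne0.
\]
Consequently, for a \emph{singular} inner function $\theta$ (no zeros in $\mathbb D$) the operator $U_\theta-\mu$ is in fact invertible for every $\mu\in\mathbb D$ (this is the standard description $\sigma(U_\theta)\cap\mathbb D=\mathcal Z(\theta)$), so $\dim\bigl(\mathcal H(I_\theta)\ominus\overline{\hbox{ran}\,(T-\mu)}\bigr)=0$, not $\ge n$. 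Your last paragraph is therefore exactly backwards: the eigenvector observation is \emph{not} available at every $\mu\in\mathbb D$, and it is precisely in the singular case---the one you single out as handled---that it fails completely.

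The overall strategy can be rescued (the cyclic multiplicity of $U_\theta\otimes I_n$ really is $n$ for any nonconstant inner $\theta$, e.g.\ via the $C_0$ theory), but not with the argument you give. The paper takes a more direct route that avoids multiplicity altogether: it produces, via Lemma~\ref{lem3.8}, a nonzero $\Omega\in H^2_{M_{(n-m)\times n}}$ with $\Omega A=0$, then uses left-coprimeness to get $\Omega\Theta H^2_{\mathbb C^n}=\Omega H^2_{\mathbb C^n}$, i.e.\ $\theta\,\Omega H^2_{\mathbb C^n}=\Omega H^2_{\mathbb C^n}$. Iterating shows each entry $\omega_{ij}$ lies in $\bigcap_p\theta^pH^2$, which is $\{0\}$ by Lemma~\ref{lem1.4}---a statement that holds uniformly for Blaschke products and singular inner functions alike.
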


\begin{proof} Suppose $\Theta$ and $A$ are left coprime. \
Then by the Beurling-Lax-Halmos Theorem, we have
\begin{equation}\label{1-5}
{\Theta} H^2_{\mathbb C^n} \bigvee A H^2_{\mathbb C^n} =
H^2_{\mathbb C^n}\quad \hbox{(cf. \cite[p.242]{FF})}.
\end{equation}
Assume to the contrary that $\det A=0$. \ Then $\det
\widetilde{A}=\widetilde{\det A}=0$, and hence
$\hbox{Rank}\,\widetilde{A}=m<n$. \ Since $\widetilde{A} \in
H^{\infty}_{M_{n}}$ is a strong $H^2$-function, it follows from
Lemma \ref{lem3.8} that
$$
\hbox{ker}\widetilde{A}\equiv \Bigl\{ f\in H^2_{\mathbb C^n}:
\widetilde{A}(\zeta)f(\zeta)\equiv 0\Bigr\}=\vartheta
H^2_{D^{\prime}},
$$
where $\vartheta$ is inner and $\hbox{dim}\,D^{\prime}=n-m$. \ Let
$U:H^2_{\mathbb C^{n-m}} \to H^2_{D^{\prime}}$ be unitary and put
$\widetilde{\Omega}:=\vartheta U \in H^2_{M_{n \times (n-m)}}$. \
Then $\hbox{ker}\widetilde{A}=\widetilde{\Omega} H^2_{\mathbb
C^{n-m}}$, so that $\widetilde{A}\widetilde{\Omega}=0$, and hence
$\Omega A=0$. \ It thus follows from (\ref{1-5}) that
\begin{equation}\label{1-6}
\Omega \Theta H^2_{\mathbb C^n}= \Omega\Bigl( \Theta H^2_{\mathbb
C^n} \bigvee A H^2_{\mathbb C^n}\Bigr) =\Omega H^2_{\mathbb C^n}.
\end{equation}
Write $\Omega:=\bigl(\omega_{ij}\bigr)\in H^2_{M_{(n-m)\times n}}$.
\ Since $\Theta=I_\theta$, it follows from (\ref{1-6}) that for each
$p=1,2,\cdots$, $(\overline{\theta}^p\omega_{ij})H^2_{\mathbb
C^n}=\Theta^{*p}\Omega H^2_{\mathbb C^n}=\Omega H^2_{\mathbb C^n}
\in H^2_{\mathbb C^{n-m}}$, which implies that for each $1\leq i\leq
n-m$ and $1\leq j\leq n$,
$$
\omega_{ij}\in \bigcap_{p=1}^{\infty} \theta^p H^2.
$$
Since $\theta$ is not constant, it follows from Lemma \ref{lem1.4}
that $\omega_{ij}=0$ for all $i,j$, and hence $\Omega=0$, a
contradiction. \ If $\Theta$ and $A$ are right coprime then
$\widetilde\Theta$ and $\widetilde A$ are left coprime. \ Thus by
what we showed just before, $\det \widetilde A\ne 0$, and hence
$\det A\ne 0$. This completes the proof.
\end{proof}

\bigskip

The following theorem plays a key role in the theory of coprime
factorizations. \

\medskip

\begin{theorem}\label{lem33.9}
For $A \in H_{M_n}^2$  and $\Theta:=I_\theta$ for some scalar inner
function $\theta$, then $\Theta$ and $A$ are right {\rm (}or
left{\rm )} coprime if and only if $\theta$ and $\det A$ are
coprime.
\end{theorem}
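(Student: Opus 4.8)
The plan is to prove the two implications separately, exploiting throughout the relationship between coprime-ness and the Beurling-Lax-Halmos decomposition $\Theta H^2_{\mathbb C^n}\bigvee A H^2_{\mathbb C^n}=H^2_{\mathbb C^n}$ (for left coprime-ness; cf.~\cite[p.242]{FF}), together with the characteristic-scalar-inner machinery of Lemma \ref{lem3.5}. Since $I_\theta$ is diagonal-constant, left and right coprime-ness of $\Theta$ and $A$ should in fact coincide, and I would first record this reduction: passing from $A$ to $\widetilde A$ converts a left coprime pair $(\Theta,A)$ into a left coprime pair $(\widetilde\Theta,\widetilde A)=(I_{\widetilde\theta},\widetilde A)$, and $\det \widetilde A=\widetilde{\det A}$ is coprime to $\widetilde\theta$ iff $\det A$ is coprime to $\theta$; so it suffices to treat (say) the left coprime case. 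Note also that Lemma \ref{lem1.7} already gives $\det A\ne 0$ whenever $\theta$ is nonconstant and $\Theta,A$ are coprime, so the statement has content only in that case and we may freely speak of $\det A$ as a nonzero inner-times-outer function.

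For the direction ``$\theta$ and $\det A$ coprime $\Longrightarrow$ $\Theta$ and $A$ left coprime'': suppose $\Omega$ is a common left inner divisor of $I_\theta$ and $A$, so $I_\theta=\Omega\,\Omega_1$ and $A=\Omega\,A_1$ with $\Omega\in H^2_{M_n}$ (square, since $\det I_\theta\ne 0$). Taking determinants, $\det\Omega$ divides $\theta^n$ and $\det\Omega$ divides $\det A$; hence $\det\Omega$ divides $\mathrm{g.c.d.}(\theta^n,\det A)$. Now if $\theta$ and $\det A$ are coprime then $\theta^n$ and $\det A$ are coprime (a scalar inner function has no nonconstant common divisor with a coprime one, raised to any power), so $\det\Omega$ is constant, and then $\Omega$ is a unitary constant — because an inner matrix function with constant determinant is unitary-constant (its inner-outer structure forces trivial inner part, or directly: $\Omega^*\Omega=I$ a.e.\ and $\det\Omega$ constant of modulus one force $\Omega$ constant). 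Therefore $I_\theta$ and $A$ are left coprime.

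For the converse, ``$\Theta$ and $A$ left coprime $\Longrightarrow$ $\theta$ and $\det A$ coprime'': I would argue contrapositively. Suppose $\delta$ is a nonconstant common scalar inner divisor of $\theta$ and $\det A$; I want to manufacture a nonconstant common left inner divisor of $I_\theta$ and $A$. By the Helson-Lowdenslager Theorem and the inner-outer factorization, write $A=A_i A_e$ with $A_i$ inner and $\det A_e$ outer; then $\det A_i$ agrees with $\det A$ up to an outer factor, so $\delta$ divides $\det A_i$. The key point is that $\delta\,|\,\det A_i$ forces $A_i$ (equivalently $A$) to have a nonconstant left inner divisor $\Omega$ with $\det\Omega$ sharing a divisor with $\delta$: concretely, one takes $\Omega:=\mathrm{left\text{-}g.c.d.}\{I_\theta,A\}$ and must show it is nonconstant. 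For this, write $A=\theta_A A_e'$-style, or better, use Lemma \ref{lem3.5}: since $A$ is an $H^\infty$ matrix whose columns lie in $H^2$, and $\Theta^*=\overline\theta I$ is of bounded type with characteristic scalar inner function $\theta$, the condition that $I_\theta$ and $A$ be left coprime should translate into $\mathrm{l.c.m.}$ of certain scalar inner functions attached to $\Theta^* A$ being coprime to — this is where I expect to lean hardest on the lemmas. Cleanest is probably: $I_\theta$ and $A$ left coprime $\iff$ $\mathcal{H}(I_\theta)$ and $\mathrm{ran}\,$ of multiplication by $A^*$ interact trivially, i.e. $A^* H^2_{\mathbb C^n}\bigvee \overline\theta H^2_{\mathbb C^n}$-type condition, which by duality is equivalent to $\widetilde A$ and $I_\theta$ having no common right divisor, circling back. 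So the honest route for the converse is: $\delta\,|\,\det A$ and $\delta\,|\,\theta$ $\Rightarrow$ by Cramer's rule the adjugate $\mathrm{adj}(A)$ satisfies $A\,\mathrm{adj}(A)=(\det A)I$, hence $I_\theta$ and $A$ share the left inner divisor coming from the scalar inner divisor $\delta$ of $\det A$ together with $\delta\,|\,\theta$ — make this precise by exhibiting $g\in H^2_{\mathbb C^n}\setminus\{0\}$ with $g\perp I_\theta H^2_{\mathbb C^n}$ and $g\perp A H^2_{\mathbb C^n}$, which would contradict the Beurling-Lax-Halmos span condition.

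The main obstacle, and the step I would spend the most care on, is this converse: turning a scalar common divisor $\delta$ of $\theta$ and $\det A$ into an honest nonconstant common \emph{left inner matrix} divisor of $I_\theta$ and $A$. The determinant-taking direction is easy; the reverse passage from $\det$ to the matrix requires either the adjugate/Cramer identity $A\cdot\mathrm{adj}\,A=(\det A)\,I$ (so that $\delta$ divides $\det A$ lets one factor a copy of $\delta$, hence of $I_\delta$, out of $A$ on the left after an inner-outer cleanup) or a direct Beurling-Lax-Halmos span argument showing $I_\theta H^2_{\mathbb C^n}\bigvee A H^2_{\mathbb C^n}\ne H^2_{\mathbb C^n}$; I would present whichever of these the earlier lemmas (especially Lemmas \ref{lem0.3}, \ref{lem1.4}, \ref{lem1.7} and the inner-outer/Helson-Lowdenslager apparatus) make shortest, most likely the adjugate route combined with Lemma \ref{lem1.4} to kill the leftover intersection $\bigcap_p \delta^p H^2$.
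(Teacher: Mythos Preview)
Your easy direction and the structural reductions (left $\leftrightarrow$ right via the tilde, and the passage from general $A$ to inner $A$ via inner--outer factorization and the Helson--Lowdenslager Theorem) match the paper's approach exactly. The claim that a square inner matrix function with constant determinant must be a unitary constant is correct; in the paper it is justified via the Fredholm index identity $\dim\mathcal{H}(G)=-\hbox{ind}\,T_G=-\hbox{ind}\,T_{\det G}$, which you should invoke rather than the hand-wave ``$\Omega^*\Omega=I$ a.e.\ and $\det\Omega$ constant force $\Omega$ constant.''

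The genuine gap is in the hard direction, which you correctly flag as the main obstacle but do not close. Neither proposed route works as stated. The adjugate identity $A\cdot\hbox{adj}(A)=(\det A)\,I$ only shows that $I_\delta$ left-divides the \emph{product} $A\cdot\hbox{adj}(A)$, not $A$ itself: with $A=\left(\begin{smallmatrix} z & 0\\ 0 & 1\end{smallmatrix}\right)$ one has $\delta=z\mid\det A$, yet $I_z$ is not a left inner divisor of $A$ (the actual common divisor with $I_z$ is $A$ itself, which Cramer's rule does not produce). Your invocation of Lemma~\ref{lem1.4} is also not attached to any concrete step here.

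The paper's argument for this direction (in the inner case $A=\Delta$) is different and uses an idea you are missing: the characteristic scalar inner function $m_\Delta=\delta$ of Definition~\ref{def5.1}, so that $\Delta\Delta_0=I_\delta$ for some inner $\Delta_0$. Taking determinants gives $\det\Delta\mid\delta^n$, so a nonconstant common divisor of $\theta$ and $\det\Delta$ forces $\omega:=\hbox{g.c.d.}(\theta,\delta)$ to be nonconstant. One then shows $\Delta$ and $I_\omega$ are not right coprime by contradiction: if they were, (\ref{RCD}) would give $\ker H_{\overline\omega\Delta}=\omega H^2_{\mathbb C^n}$, and the identity $0=H_{\overline\omega\Delta\Delta_0}=H_{\overline\omega\Delta}T_{\Delta_0}$ then forces $\Delta_0 H^2_{\mathbb C^n}\subseteq\omega H^2_{\mathbb C^n}$, i.e.\ $\overline\omega\Delta_0$ is inner, whence $I_{\delta\overline\omega}=\Delta(\overline\omega\Delta_0)$ contradicts the minimality of $m_\Delta=\delta$. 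This $m_\Delta$/Hankel-kernel mechanism is the key missing ingredient.
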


\begin{proof}
We first prove the theorem when $A\equiv \Delta\in H^\infty_{M_n}$
is inner. \

If $\Delta$ is a diagonal-constant inner function then this is
trivial. \ Thus we may assume that $\Delta$ is not
diagonal-constant. \ Suppose that $\theta$ and $\det \Delta$ are not
coprime.
 Write $m_\Delta:=\delta$ and
$I_\delta=\Delta\Delta_0=\Delta_0\Delta$ for a nonconstant inner
function $\Delta_0$. \ Thus $(\det \Delta)(\det \Delta_0)=\delta^n$,
and hence $\theta$ and $\delta^n$ (and hence $\delta$) are not
coprime. \ Put $\omega:=\hbox{g.c.d.}\,(\theta,\, \delta)$. \ Then
we can write $\Theta\equiv I_\theta =I_{\omega}I_{\theta_1}$ and
\begin{equation}\label{5.3}
I_\delta= \Delta\Delta_0 =I_\omega I_{\delta_1}
\end{equation}
for some inner functions $\theta_1$ and $\delta_1$. \ If
$\delta=\omega$, then $I_\delta$ is an inner divisor of $I_\theta$,
so that evidently, $\Theta$ and $\Delta$ are not right coprime. \ We
now suppose $\delta\ne \omega$. \ We then claim that
\begin{equation}\label{5.4}
\hbox{$\Delta$ and $I_\omega$ are not right coprime.}
\end{equation}
Towards a proof of (\ref{5.4}), we assume to the contrary that
$\Delta$ and $I_\omega$ are right coprime. \ Since by (\ref{5.3}),
$\overline\omega\Delta\Delta_0=I_{\delta_1} \in H^2_{M_n}$, it
follows from (\ref{1.4}) that
\begin{equation}\label{5.4-1}
0=H_{\overline\omega \Delta \Delta_0}=H_{\overline\omega\Delta}
T_{\Delta_0}.
\end{equation}
But since $\Delta$ and $I_\omega$ are right coprime, it follows from
(\ref{RCD}) together with (\ref{5.4-1}) that
$$
\Delta_0H^2_{\mathbb C^n}=\hbox{ran} \, T_{\Delta_0} \subseteq
\hbox{ker}H_{\overline\omega\Delta}=\omega H^2_{\mathbb C^n},
$$
which implies that $I_\omega$ is a left inner divisor of $\Delta_0$
(cf. \cite[Corollary IX.2.2]{FF}), so that $\overline\omega
\Delta_0$ should be inner. \ Consequently,
$I_{\delta_1}=I_{\delta\overline\omega}=\Delta(\overline\omega
\Delta_0)$,
 which contradicts the
definition of $m_\Delta$. \ This proves (\ref{5.4}). \ Since
$\Theta=I_{\theta_1} I_\omega$, it follows that $\Theta$ and
$\Delta$ are not right coprime. \

Conversely, suppose that $\Theta$ and $\Delta$ are not right
coprime. \ Thus $\Theta=\Theta_1 G$ and $\Delta=\Delta_1 G$, where
$G \in H^{\infty}_{M_n}$ is not unitary constant. \ Thus $\det G$ is
a common inner divisor of $\det \Theta$ and $\det \Delta$. \ Since $
G$ is not constant, it follows from the Fredholm theory of block
Toeplitz operators (cf. \cite{Do1}) that
$$
0<\dim \mathcal H(G)=\dim \hbox{ker}\,T_{G^*}=-\hbox{ind}\,
T_{G}=-\hbox{ind}\, T_{\det G},
$$
(where $\hbox{ind}\,(\cdot)$ denotes the Fredholm index) which
implies that $\det G$ is not constant, and hence $\det \Delta$ and
$\theta$ are not coprime. \

Now we prove the general case of $A\in H^2_{M_n}$. \ In view of
Lemma \ref{lem1.7}, we may assume that $\hbox{det}\, A\ne 0$. \ Then
by Lemma \ref{lem0.3}, the left and the right inner divisors of $A$
are square. \ Thus $A$ has the following inner-outer factorizations
of the form:
$$
A=A_e\, A_i,
$$
where  $A_i\in H^\infty_{M_n}$ is inner and $A_e\in H^2_{M_n}$ is
outer. \ Hence by what we proved just before and Helson-Lowdenslager
Theorem (p.\pageref{HLT}), we can see that $\Theta$ and $A$ are
right coprime if and only if $\Theta$ and $A_i$ are right coprime if
and only if $\theta$ and $\det A_i$ are coprime if and only if
$\theta$ and $\det A$ are coprime. \

For the case of left coprime-ness, we use
$\widetilde{\Phi}(z):=\Phi^*(\overline z)$. \ By the case of the
right coprime-ness and the fact that $\widetilde{\det A}= \det
\widetilde{A}$, it follows that $\theta$ and $\det A$ are coprime if
and only if $\widetilde\theta$ and $\det \widetilde{A}$ are coprime
if and only if $\widetilde{\Theta}$ and $\widetilde{A}$ are right
coprime if and only if $\Theta$ and $A$ are left coprime. \
This completes the proof.
\end{proof}

\bigskip

Theorem \ref{lem33.9} shows that
if $A\in H^2_{M_n}$ and $\Theta:=I_\theta$ for some scalar inner function $\theta$,
then ``left" coprime-ness and ``right" coprime-ness coincide for $A$ and $\Theta$.
Thus if $\theta$ is an inner function
then we shall say that $A\in H^2_{M_n}$ and $I_\theta$ are {\it
coprime} whenever they are right or left coprime. \ Hence if in the
representations (\ref{2.8}) and (\ref{2.9}), $\Omega_\ell$ or
$\Omega_r$ is of the form $I_\theta$ with an inner function
$\theta$, then we shall write
\begin{equation}\label{2.2-R}
\Phi=\theta A^*\ \ \hbox{(coprime)}.
\end{equation}

\medskip

\begin{lemma}\label{lem544444.2}
Let $\Omega, \Delta\in H^\infty_{M_n}$ be inner and $\Theta\equiv
I_\theta$ for an inner function $\theta$. \ If $\Omega$ and $\Theta$
are coprime and $\Delta$ and $\Theta$ are coprime, then $\hbox{\rm
left-l.c.m.}\,(\Omega, \Delta)$ and $\Theta$ are coprime.
\end{lemma}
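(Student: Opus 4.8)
The plan is to run everything through the determinant criterion of Theorem~\ref{lem33.9}, which collapses the whole statement to an elementary divisibility fact about scalar inner functions. Set $\Lambda:=\hbox{left-l.c.m.}\,(\Omega,\Delta)$. Since $\Omega$ and $\Delta$ are square inner matrix functions, $\det\Omega$ and $\det\Delta$ are scalar inner functions (their moduli equal $1$ a.e.\ on $\mathbb{T}$). By Theorem~\ref{lem33.9} applied to $\Omega$, to $\Delta$, and to $\Lambda$, the hypotheses say exactly that $\theta$ is coprime to $\det\Omega$ and to $\det\Delta$, while the conclusion says exactly that $\theta$ is coprime to $\det\Lambda$. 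Hence the lemma reduces to showing: if $\theta$ is coprime to $\det\Omega$ and to $\det\Delta$, then $\theta$ is coprime to $\det\Lambda$. (That $\Lambda$ is itself a square inner matrix function, so that Theorem~\ref{lem33.9} is applicable to it, will come out of the argument below via Lemma~\ref{lem0.3}.)

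The crucial observation is that $\Omega$ is a left inner divisor of the scalar matrix $I_{\det\Omega}$. Indeed, if $\hbox{adj}\,(\Omega)$ denotes the classical adjoint (adjugate) of $\Omega$, then $\Omega\cdot\hbox{adj}\,(\Omega)=(\det\Omega)I_n=I_{\det\Omega}$; here $\hbox{adj}\,(\Omega)\in H^\infty_{M_n}$ because its entries are polynomials in the entries of $\Omega$, and $\hbox{adj}\,(\Omega)$ is inner because on $\mathbb{T}$ the matrix $\Omega$ is unitary and $|\det\Omega|=1$, so $\hbox{adj}\,(\Omega)=(\det\Omega)\Omega^*$ is unitary a.e. Likewise $\Delta$ is a left inner divisor of $I_{\det\Delta}$. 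Since scalar inner functions are central, the scalar matrix $I_\sigma$ with $\sigma:=(\det\Omega)(\det\Delta)$ is then a common left inner multiple of both $\Omega$ and $\Delta$; consequently $I_\sigma H^2_{\mathbb{C}^n}\subseteq\Omega H^2_{\mathbb{C}^n}\cap\Delta H^2_{\mathbb{C}^n}=\Lambda H^2_{\mathbb{C}^m}$, and the Beurling-Lax-Halmos Theorem then shows that $\Lambda$ is a left inner divisor of $I_\sigma$. Because $\det I_\sigma=\sigma^n\not\equiv 0$, Lemma~\ref{lem0.3} forces every left inner divisor of $I_\sigma$ — in particular $\Lambda$ — to be square; writing $I_\sigma=\Lambda Z$ with $Z$ an inner matrix function and taking determinants shows that $\det\Lambda$ divides $\sigma^n$.

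To conclude, recall that an inner function coprime to each of two inner functions is coprime to their product, and hence (by iteration) to every power of that product; this is Corollary~\ref{lem2.3} in the scalar case $n=1$ (take $A=\theta$, $B=\det\Omega$, and let the inner function of the corollary be $\det\Delta$). Applying it, $\theta$ is coprime to $\sigma=(\det\Omega)(\det\Delta)$ and therefore to $\sigma^n$, hence to the divisor $\det\Lambda$ of $\sigma^n$. By the reduction in the first paragraph, $\Lambda=\hbox{left-l.c.m.}\,(\Omega,\Delta)$ and $\Theta$ are coprime.

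I expect the real content to lie in the second paragraph. The naive choice of a common multiple of $\Omega$ and $\Delta$ would be $\Omega\Delta$, but $\Omega$ and $\Delta$ need not commute, so $\Omega\Delta$ is a left multiple of $\Omega$ and not of $\Delta$; the point is to replace it by the central scalar matrix $I_{\det\Omega\det\Delta}$, which is a genuine common left multiple of both. Once that is in place, the remaining ingredients — Theorem~\ref{lem33.9}, Lemma~\ref{lem0.3}, Corollary~\ref{lem2.3}, and the Beurling-Lax-Halmos Theorem — enter only in a routine way.
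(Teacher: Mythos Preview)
Your proof is correct. Both arguments rest on the same key observation—that a square inner matrix $\Omega$ left-divides a diagonal-constant inner matrix $I_\tau$—but you and the paper choose different $\tau$ and finish differently. The paper takes $\tau=m_\Omega$, the characteristic scalar inner function of Definition~\ref{def5.1} (so $I_{m_\Omega}=\Omega\Omega_0$), deduces that $\Lambda H^2_{\mathbb C^n}\supseteq\hbox{l.c.m.}(m_\Omega,m_\Delta)H^2_{\mathbb C^n}$, and then concludes directly at the level of invariant subspaces: since $\hbox{l.c.m.}(m_\Omega,m_\Delta)H^2_{\mathbb C^n}\vee\theta H^2_{\mathbb C^n}=H^2_{\mathbb C^n}$, the larger subspace $\Lambda H^2_{\mathbb C^n}$ must also span together with $\theta H^2_{\mathbb C^n}$. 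You instead take $\tau=\det\Omega$ via the adjugate identity $\Omega\cdot\hbox{adj}\,\Omega=I_{\det\Omega}$, push the whole argument through determinants, and close the loop by reapplying Theorem~\ref{lem33.9} to $\Lambda$. Your route avoids the machinery of $m_\Delta$ and is completely self-contained given Theorem~\ref{lem33.9}; the paper's route is shorter at the end because it never leaves the lattice of invariant subspaces. A small bonus of your version is that the squareness of $\Lambda$ is established explicitly via Lemma~\ref{lem0.3}, whereas the paper writes $\Lambda H^2_{\mathbb C^n}$ without comment.
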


\begin{proof} \ Suppose that $\Omega$ and $\Theta$
are coprime and $\Delta$ and $\Theta$ are coprime. \
Then by Theorem \ref{lem33.9},  $m_\Omega\equiv \omega$ and ${\theta}$ are
coprime and $m_{\Delta}\equiv \delta$ and ${\theta}$ are
coprime, so that by Corollary \ref{lem2.3},
$\hbox{l.c.m.}\,({\omega}, {\delta})$ and ${\theta}$ are coprime. \
Thus
\begin{equation}\label{coprime}
\hbox{l.c.m.}\,({\omega}, {\delta}) H^2_{\mathbb C^n} \bigvee
{\theta} H^2_{\mathbb C^n}=H^2_{\mathbb C^n}.
\end{equation}
Observe that
$$
\hbox{left-l.c.m.}\,(\Omega, \Delta) H^2_{\mathbb C^n}=\Omega
H^2_{\mathbb C^n} \cap \Delta H^2_{\mathbb C^n} \supseteq
{\omega}H^2_{\mathbb C^n} \cap {\delta} H^2_{\mathbb C^n}
=\hbox{l.c.m.}\,({\omega}, {\delta}) H^2_{\mathbb C^n}.
$$
It thus follows from (\ref{coprime}) that
$$
\hbox{left-l.c.m.}\,(\Omega, \Delta) H^2_{\mathbb C^n} \bigvee
\Theta H^2_{\mathbb C^n}\supseteq \hbox{l.c.m.}\,({\omega},
{\delta}) H^2_{\mathbb C^n} \bigvee {\theta} H^2_{\mathbb
C^n}=H^2_{\mathbb C^n},
$$
which implies that $\hbox{left-l.c.m.}\,(\Omega, \Delta)$ and
$\Theta$ are left coprime and hence coprime. \
\end{proof}

\begin{lemma}\label{lem544444.2-1} \
Let $\Delta_1, \Delta_2\in H^\infty_{M_n}$ be inner. \ Then there
exist inner matrix functions $\Omega_1, \Omega_2 \in H^\infty_{M_n}$
such that $\hbox{\rm left-l.c.m.}\,(\Delta_1, \Delta_2) =\Delta_1
\Omega_1=\Delta_2 \Omega_2$.
\end{lemma}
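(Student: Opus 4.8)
The plan is to realize $\hbox{left-l.c.m.}\,(\Delta_1,\Delta_2)$ concretely as the inner function whose range is the intersection $M:=\Delta_1 H^2_{\mathbb C^n}\cap\Delta_2 H^2_{\mathbb C^n}$, and then to show that this inner function is square. First I would note that $M$ is a closed subspace of $H^2_{\mathbb C^n}$ invariant under the shift $S$, being the intersection of two such subspaces. Hence, once we know $M\ne\{0\}$, the Beurling-Lax-Halmos Theorem yields $M=\Theta_m H^2_{\mathbb C^q}$ for some inner $\Theta_m\in H^\infty_{M_{n\times q}}$ with $q\le n$; by the very definition of the least common left inner multiple, $\Theta_m=\hbox{left-l.c.m.}\,(\Delta_1,\Delta_2)$.

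The key point — and the main obstacle — is to prove that $q=n$, i.e.\ that $\Theta_m$ is square. For this I would invoke the fact (recalled after Definition~\ref{def5.1}, from \cite[Proposition 5.6.1]{SFBK}) that each inner $\Delta_i$ has a scalar inner multiple $\theta_i\in H^\infty$, say $\Delta_i\Delta_i'=I_{\theta_i}$ with $\Delta_i'\in H^\infty_{M_n}$. Setting $\theta:=\hbox{l.c.m.}\,(\theta_1,\theta_2)$, each $\theta_i$ divides $\theta$, so $I_\theta H^2_{\mathbb C^n}\subseteq I_{\theta_i}H^2_{\mathbb C^n}=\Delta_i\Delta_i' H^2_{\mathbb C^n}\subseteq\Delta_i H^2_{\mathbb C^n}$ for $i=1,2$, and therefore $I_\theta H^2_{\mathbb C^n}\subseteq M=\Theta_m H^2_{\mathbb C^q}$; in particular $M\ne\{0\}$. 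Writing this inclusion out on the constant vectors $e_1,\dots,e_n$ gives a factorization $I_\theta=\Theta_m C$ with $C\in H^2_{M_{q\times n}}$. Evaluating at any point $z\in\mathbb D$ with $\theta(z)\ne 0$ (such $z$ exists since $|\theta|=1$ a.e.\ on $\mathbb T$) makes $I_\theta(z)=\theta(z)I_n$ invertible, so $n=\hbox{rank}\,(\Theta_m(z)C(z))\le\hbox{rank}\,\Theta_m(z)\le q\le n$, forcing $q=n$ and hence $\Theta_m\in H^\infty_{M_n}$.

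Once $\Theta_m$ is known to be an $n\times n$ inner matrix function, the conclusion is routine. From $\Theta_m H^2_{\mathbb C^n}=M\subseteq\Delta_i H^2_{\mathbb C^n}$ one obtains $\Theta_m=\Delta_i\Omega_i$ with $\Omega_i\in H^2_{M_n}$ ($i=1,2$); comparing $\Theta_m^*\Theta_m=I_n$ with $\Delta_i^*\Delta_i=I_n$ a.e.\ on $\mathbb T$ yields $\Omega_i^*\Omega_i=\Omega_i^*\Delta_i^*\Delta_i\Omega_i=I_n$ a.e., so each $\Omega_i$ is inner and, being bounded, lies in $H^\infty_{M_n}$. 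This gives $\hbox{left-l.c.m.}\,(\Delta_1,\Delta_2)=\Theta_m=\Delta_1\Omega_1=\Delta_2\Omega_2$, as claimed. I expect essentially all the difficulty to be concentrated in the squareness argument of the middle paragraph; the passage through scalar inner multiples is what makes it work, and the remainder is bookkeeping with Beurling-Lax-Halmos.
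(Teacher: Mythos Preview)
Your proof is correct and follows the same basic route as the paper: both identify $\hbox{left-l.c.m.}\,(\Delta_1,\Delta_2)$ via the intersection $\Delta_1 H^2_{\mathbb C^n}\cap\Delta_2 H^2_{\mathbb C^n}$ and then extract the factorizations $\Theta_m=\Delta_i\Omega_i$ from the inclusions $\Theta_m H^2\subseteq\Delta_i H^2$. The paper's own proof is a two-line affair: it simply records the inclusion and cites \cite[Corollary IX.2.2]{FF} for the divisibility conclusion, writing $\hbox{left-l.c.m.}\,(\Delta_1,\Delta_2)H^2_{\mathbb C^n}$ without further comment and thereby taking squareness ($q=n$) for granted. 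Your argument is more self-contained: you explicitly establish $q=n$ via the scalar-inner-multiple trick (embedding $I_\theta H^2_{\mathbb C^n}$ into $M$ and reading off the rank), and you verify directly that the resulting $\Omega_i$ are inner. So the approaches coincide in outline, but you supply the details the paper delegates to the reference.
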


\begin{proof} \ Observe that
$$
\hbox{left-l.c.m.}\,(\Delta_1, \Delta_2) H^2_{\mathbb C^n}= \Delta_1
H^2_{\mathbb C^n} \cap \Delta_2 H^2_{\mathbb C^n} \subseteq \Delta_i
H^2_{\mathbb C^n} \  (i=1,2),
$$
which gives the result (cf. \cite[Corollary IX.2.2]{FF}). \
\end{proof}

\begin{proposition}\label{pro2.6}
Let $\Phi \in H_{M_n}^{2}$ be of the form $\Phi=\Theta A^* $ {\rm
(right coprime)}. \ If $\Delta$ is an inner matrix function in
$H^2_{M_n}$, put $\Phi_{\Delta}:=P_{H^2_0}(\Delta^*\Phi)$ {\rm (cf.
p.\pageref{notation})}. Then the following hold:
\begin{itemize}
\item[(a)]
If $\Delta$ is a left inner divisor of $\Theta$, then
$$
\Phi_{\Delta}=\Theta_1 A_1^*\quad\hbox{\rm  (right coprime)},
$$
where $\Theta_1= \Delta^*\Theta$  and $A_1:=P_{\mathcal
K_{\Theta_1}}(A)$.
\item[(b)]
If $\Omega:=\hbox{\rm left-g.c.d.}(\Delta, \Theta)$ and
$\Omega^*\Delta=I_{\delta_1}$ for an inner function $\delta_1$, then
$$
\Phi_{\Delta}=\Theta_1 A_1^*\quad\hbox{\rm (right coprime)},
$$
where $\Theta_1= \Omega^*\Theta$  and $A_1:=P_{\mathcal
K_{\Theta_1}}(\delta_1 A)$;
\item[(c)] \ If $\Omega:=\hbox{\rm left-g.c.d.} (\Delta, \Theta)$ and
$\Omega^*\Theta=I_{\theta_1}$ for an inner function $\theta_1$, then
$$
\Phi_{\Delta}={\theta_1}A_1^*\quad\hbox{\rm (coprime)},
$$
where $A_1:=P_{\mathcal K_{{\theta_1}}}(A \Delta_1)$ with
$\Delta_1=\Omega^*\Delta$.
\end{itemize}
\end{proposition}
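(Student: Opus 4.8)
The plan is to prove all three parts uniformly from the criterion (\ref{RCD}), using a single computation of $\ker H_{(\Phi_\Delta)^*}$ and one divisibility lemma, with Lemma \ref{lem2.4}(c) applied to an explicit expression for $\Delta^*\Phi$ to pin down the co-analytic factor in each case.

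\emph{Setup and the master formula.} Since $\Phi=\Theta A^*$ is right coprime, (\ref{RCD}) gives $\ker H_{\Phi^*}=\Theta H^2_{\mathbb C^n}$. For any inner $\Delta\in H^\infty_{M_n}$ one has $(\Phi_\Delta)^*=P_{(H^2_{M_n})^\perp}(\Phi^*\Delta)$, so $\Phi^*\Delta-(\Phi_\Delta)^*\in H^2_{M_n}$ and hence $H_{(\Phi_\Delta)^*}=H_{\Phi^*\Delta}=H_{\Phi^*}T_\Delta$ by (\ref{1.4}). Since $T_\Delta$ is injective and $T_\Delta f=\Delta f$ on $H^2_{\mathbb C^n}$, this yields the master formula
$$
\ker H_{(\Phi_\Delta)^*}=\bigl\{f\in H^2_{\mathbb C^n}:\ \Delta f\in\Theta H^2_{\mathbb C^n}\bigr\}.
$$
In each case I would compute this set, produce the factorization $\Phi_\Delta=\Theta_1 A_1^*$ (resp.\ $\theta_1 A_1^*$) from Lemma \ref{lem2.4}(c), and then invoke (\ref{RCD}): since $\Phi_\Delta$ already has a left inner factor equal to the inner function whose model space is $\ker H_{(\Phi_\Delta)^*}$, cancelling that factor (using $\Theta_1^*\Theta_1=I$) forces the remaining factorization to be the right coprime one.

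\emph{The divisibility lemma (the crux).} I would establish: if $\eta\in H^\infty$ is a scalar inner function and $\Xi\in H^\infty_{M_n}$ is inner with $\eta$ and $\det\Xi$ coprime, then
$$
\bigl\{f\in H^2_{\mathbb C^n}:\ \eta f\in\Xi H^2_{\mathbb C^n}\bigr\}=\Xi H^2_{\mathbb C^n}
\quad\hbox{and}\quad
\bigl\{f\in H^2_{\mathbb C^n}:\ \Xi f\in\eta H^2_{\mathbb C^n}\bigr\}=\eta H^2_{\mathbb C^n}.
$$
The inclusions ``$\supseteq$'' are immediate. For ``$\subseteq$'', from $\eta f=\Xi g$ (resp.\ $\Xi f=\eta g$) multiply on the left by $\hbox{adj}(\Xi)$, whose entries lie in $H^\infty$, to get $\eta\,\hbox{adj}(\Xi)f=(\det\Xi)g$ (resp.\ $(\det\Xi)f=\eta\,\hbox{adj}(\Xi)g$); reading this componentwise and using inner--outer factorization of the numerator, the coprimeness of $\eta$ with $\det\Xi$ lets one strip off the factor $\eta$, so each scalar quotient by $\det\Xi$ lies in $H^2$, which reassembles to $\Xi^{-1}f\in H^2_{\mathbb C^n}$ (resp.\ $\eta^{-1}f\in H^2_{\mathbb C^n}$). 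Alternatively, observe the left-hand set is $\Xi'H^2_{\mathbb C^n}$ with $\Xi'$ a left inner divisor of $\Xi$, and a determinant count with the coprimeness forces the complementary factor to have unimodular constant determinant, hence to be a unitary constant.

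\emph{The three cases and the main obstacle.} In (a), $\Theta=\Delta\Theta_1$ with $\Theta_1=\Delta^*\Theta$; cancelling the a.e.\ isometry $\Delta$ in the master formula gives $\ker H_{(\Phi_\Delta)^*}=\Theta_1 H^2_{\mathbb C^n}$, while $\Delta^*\Phi=\Theta_1 A^*$ and Lemma \ref{lem2.4}(c) give $\Phi_\Delta=\Theta_1\bigl(P_{\mathcal K_{\Theta_1}}A\bigr)^*$, so $A_1=P_{\mathcal K_{\Theta_1}}A$. In (b), writing $\Delta=\Omega I_{\delta_1}=\delta_1\Omega$ and $\Theta=\Omega\Theta_1$, dividing out $\Omega$ in $\Omega H^2_{\mathbb C^n}=\Theta H^2_{\mathbb C^n}\vee\Delta H^2_{\mathbb C^n}$ shows $\Theta_1$ and $I_{\delta_1}$ are left coprime, hence by Theorem \ref{lem33.9} $\delta_1$ and $\det\Theta_1$ are coprime; the first form of the divisibility lemma then turns the master formula (with $\Omega$ cancelled) into $\ker H_{(\Phi_\Delta)^*}=\Theta_1 H^2_{\mathbb C^n}$, and $\Delta^*\Phi=\Theta_1(\delta_1 A)^*$ together with Lemma \ref{lem2.4}(c) gives $A_1=P_{\mathcal K_{\Theta_1}}(\delta_1 A)$. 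In (c), writing $\Theta=\Omega I_{\theta_1}=\theta_1\Omega$ and $\Delta=\Omega\Delta_1$, the same left-g.c.d.\ argument shows $\Delta_1$ and $I_{\theta_1}$ are left coprime, hence $\theta_1$ and $\det\Delta_1$ are coprime; the second form of the divisibility lemma gives $\ker H_{(\Phi_\Delta)^*}=\theta_1 H^2_{\mathbb C^n}=I_{\theta_1}H^2_{\mathbb C^n}$, and $\Delta^*\Phi=I_{\theta_1}(A\Delta_1)^*$ with Lemma \ref{lem2.4}(c) gives $A_1=P_{\mathcal K_{\theta_1}}(A\Delta_1)$; since the inner part $I_{\theta_1}$ is diagonal-constant, Theorem \ref{lem33.9} upgrades ``right coprime'' to ``coprime''. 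The one genuinely delicate point is the divisibility lemma: the coprimeness naturally supplied by the left-g.c.d.\ construction is \emph{left} coprimeness, and it must be converted (via Theorem \ref{lem33.9}, the coincidence of left and right coprimeness against a diagonal-constant inner function) into coprimeness of the scalar $\det\Theta_1$ or $\det\Delta_1$ with a scalar inner function, after which one still has to justify removing a scalar inner factor from a quotient of inner matrix functions.
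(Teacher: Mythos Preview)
Your proof is correct but follows a genuinely different route from the paper's. The paper proves each part by directly establishing right coprimeness: for (a) it argues by contradiction (a common right inner divisor of $\Theta_1$ and $A_1=P_{\mathcal K_{\Theta_1}}A$ would, via $A-A_1\in H^2_{M_n}\Theta_1$, also be a common right inner divisor of $\Theta$ and $A$); for (b) and (c) it uses the product lemmas already set up (Lemma \ref{lem22.88} and Corollary \ref{cor22.99}) together with Theorem \ref{lem33.9} to show that $\Theta_1$ is right coprime with $\delta_1 A$ (resp.\ $I_{\theta_1}$ with $A\Delta_1$), knowing it is coprime with each factor separately. Your approach instead computes $\ker H_{(\Phi_\Delta)^*}$ in one stroke via the master formula $\ker H_{(\Phi_\Delta)^*}=\{f:\Delta f\in\Theta H^2_{\mathbb C^n}\}$ and the divisibility lemma, then invokes (\ref{RCD}) to read off coprimeness from the matching of the inner part with the kernel. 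This is more uniform and makes the Hankel-operator characterization do all the work; the price is that you must prove the divisibility lemma, which is not stated in the paper (your adjugate argument and the Beurling--Lax--Halmos alternative both work, the latter needing the observation that a square inner function with constant determinant is a unitary constant). The paper's approach is more modular, reusing Lemmas \ref{lem22.88}--\ref{cor22.99} rather than introducing a new auxiliary result.
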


\begin{proof}  Let $\Theta=\Delta \Theta_1 $ for an inner
function $\Theta_1$. \  Then it follows from Lemma \ref{lem2.4} that
$$
\Phi_{\Delta}=P_{H_0^{2}}(\Delta^*\Phi) =P_{H_0^2}(\Delta^*\Theta
A^*) =\Theta_1\Bigl(P_{\mathcal{K}_{\Theta_1}}(A)\Bigr)^*.
$$
Suppose $\Theta_1$ and $A_1:=P_{\mathcal K_{\Theta_1}}(A)$ are not
right coprime. \ Put $\Theta_2:=\hbox{right-g.c.d.} (\Theta_1,
A_1)$. \ Then $\Theta_2$ is not unitary constant and we may write
\begin{equation}\label{22.12}
\Theta_1=\Theta_3 \Theta_2 \ \hbox{and} \ A_1=A_2\Theta_2
\quad\hbox{(for some $\Theta_3, A_2 \in H^2_{M_n}$)}.
\end{equation}
Thus $\Theta_2$ is a common right inner divisor of $A$ and $\Theta$.
\ This is a contradiction. \ This proves (a).

For (b), we write $\Omega\equiv \hbox{left-g.c.d.}\,(\Delta,
\Theta)$. \ Then by assumption, we may write
$\Delta=\delta_1\Omega$. \ Put $\Theta_1:= \Omega^*\Theta$. \ Then
$I_{\delta_1}$ and $\Theta_1$ are (left) coprime. \ It follows from
Lemma \ref{lem2.4} that
$$
\aligned \Phi_{\Delta}&=P_{H_0^{2}}(\Delta^*\Phi)
=P_{H_0^2}(I_{\delta_1}^*\Theta_1 A^*)\\
&=P_{H_0^2}\bigl(\Theta_1
({\delta_1}A)^*\bigr)=\Theta_1\Bigl(P_{\mathcal{K}_{\Theta_1}}({\delta_1}A)\Bigr)^*.
\endaligned
$$
Note that $\Theta_1$ and $P_{\mathcal{K}_{\Theta_1}}({\delta_1}A)$
are right coprime if and only if $\Theta_1$ and ${\delta_1}A$ are
right coprime. \ Now we will show that $\Theta_1$ and ${\delta_1}A$
are right coprime. \ Since $\Theta_1$ and $I_{\delta_1}$ are left
coprime, it follows from Theorem \ref{lem33.9} that $\Theta_1$ and
$I_{\delta_1}$ are right coprime. \ Since $\Theta$ and $A$ are right
coprime, it follows that  $\Theta_1$ and $A$ are right coprime. \
Thus it follows from Corollary \ref{cor22.99} that $\Theta_1$ and
${\delta_1}A$ are right coprime. \ This proves (b). \ For (c), write
$\Omega\equiv \hbox{left-g.c.d.} (\Delta, \Theta)$. \ Then by
assumption, we may write
$$
\Delta=\Omega \Delta_1 \quad \hbox{and} \quad \Theta=\theta_1
\Omega,
$$
where $\Delta_1$ and $I_{\theta_1}$ is (left) coprime. \ Then it
follows from Lemma \ref{lem2.4} that
$$
\aligned \Phi_{\Delta}&=P_{H_0^{2}}(\Delta^*\Phi)
=P_{H_0^2}(\Delta_1^* I_{\theta_1} A^*)\\
&=P_{H_0^2}(I_{\theta_1} (A
\Delta_1)^*)=I_{\theta_1}\Bigl(P_{\mathcal{K}_{{\theta_1}}}(A
\Delta_1)\Bigr)^*.
\endaligned
$$
Since $I_{\theta_1}$ and $\Delta_1$ are coprime, and $I_{\theta_1}$
and $A$ are coprime, it follows from Lemma  \ref{lem22.88} that
$I_{\theta_1}$ and $A \Delta_1$ are coprime so that $I_{\theta_1}$
and $P_{\mathcal K_{{\theta_1}}}(A \Delta_1)$ are coprime.
 \ This completes the proof. \
\end{proof}

\begin{corollary}\label{cor2.9}
Let $\theta$ and $\delta$ be finite Blaschke products. \ If $\Phi
\in H_{M_n}^{2}$ is of the form
$$
\Phi=\theta A^* \ \ \hbox{\rm (coprime)}\,,
$$
then
\begin{equation}\label{112.13}
\Phi_{\delta} \equiv P_{H_0^2} (\overline\delta \Phi) =\theta_1
A_1^* \ \ \hbox{\rm (coprime)}\,,
\end{equation} where
$\theta_1=\overline{\hbox{\rm g.c.d.}(\delta, \theta)}\,\theta$ and
$A_1:=P_{\mathcal K_{\theta_1}}(\delta_1 A)$ with
$\delta_1=\overline{\hbox{\rm g.c.d.}(\delta, \theta)}\,\delta$. \
Moreover, $A_1(\alpha)$ is invertible for each $\alpha \in \mathcal
Z(\theta_1)$.
\end{corollary}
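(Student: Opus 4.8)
The plan is to obtain Corollary \ref{cor2.9} as the specialization of Proposition \ref{pro2.6}(c) to scalar inner functions, using Lemma \ref{lem2.1} to translate the matrix greatest common divisor into the scalar one, and then to extract the invertibility statement from Theorem \ref{lem33.9}.

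First I would set $\eta:=\hbox{g.c.d.}\,(\delta,\theta)$; by Lemma \ref{lem2.1}, $\Omega:=\hbox{left-g.c.d.}\,(I_\delta,I_\theta)=I_\eta$. Since $\eta$ divides both $\delta$ and $\theta$, the functions $\theta_1:=\overline\eta\,\theta$ and $\delta_1:=\overline\eta\,\delta$ are inner---indeed finite Blaschke products---and $\Omega^*I_\theta=I_{\theta_1}$, $\Omega^*I_\delta=I_{\delta_1}$. Because $\theta$ is scalar inner, the hypothesis ``$\Phi=\theta A^*$ (coprime)'' is precisely the statement that $\Phi=I_\theta A^*$ is right coprime (the convention fixed after Theorem \ref{lem33.9}), so Proposition \ref{pro2.6}(c) applies with $\Theta=I_\theta$, $\Delta=I_\delta$ and $\Delta_1:=\Omega^*\Delta=I_{\delta_1}$. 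It yields
$$
\Phi_\delta=P_{H_0^2}(\overline\delta\,\Phi)=I_{\theta_1}\Bigl(P_{\mathcal K_{\theta_1}}(A\,I_{\delta_1})\Bigr)^*=\theta_1 A_1^*\qquad\hbox{(coprime)},
$$
where $A_1=P_{\mathcal K_{\theta_1}}(A\,I_{\delta_1})=P_{\mathcal K_{\theta_1}}(\delta_1 A)$ since $I_{\delta_1}=\delta_1 I$. This gives (\ref{112.13}) and the coprime-ness.

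For the remaining assertion, if $\theta_1$ is a unitary constant then $\mathcal Z(\theta_1)=\emptyset$ and there is nothing to prove, so assume $\theta_1$ is nonconstant. Since $\theta_1$ is a finite Blaschke product, $\mathcal K_{\theta_1}=\mathcal H_{\theta_1}$ is a finite-dimensional space of rational matrix functions with all poles outside $\overline{\mathbb D}$; hence $A_1\in H^\infty_{M_n}$ is analytic across $\mathbb T$ and $A_1(\alpha)$ is meaningful for each $\alpha\in\mathcal Z(\theta_1)$. By Proposition \ref{pro2.6}(c), $I_{\theta_1}$ and $A_1$ are coprime, so Theorem \ref{lem33.9} gives that $\theta_1$ and $\det A_1$ are coprime; in particular $\det A_1\not\equiv 0$ (Lemma \ref{lem1.7}), so $\det A_1$ is a nonzero bounded rational function whose inner part is a finite Blaschke product. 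Coprime-ness of $\theta_1$ and $\det A_1$ then forces, via (\ref{Blaco}), that $\mathcal Z(\theta_1)\cap\mathcal Z(\det A_1)=\emptyset$; that is, $\det A_1(\alpha)\ne 0$, equivalently $A_1(\alpha)$ is invertible, for every $\alpha\in\mathcal Z(\theta_1)$.

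The one point that merits care is the passage to pointwise values: one must use that for a finite Blaschke product $\theta_1$ the model space $\mathcal K_{\theta_1}$ consists of bounded rational functions analytic in a neighborhood of $\overline{\mathbb D}$, so that $A_1(\alpha)$ and the zero set $\mathcal Z(\det A_1)$ literally make sense and the Blaschke-product coprime-ness dictionary (\ref{Blaco}) can be applied. Everything else is a direct matching of the hypotheses of Proposition \ref{pro2.6}(c) to the present scalar setting via Lemma \ref{lem2.1}.
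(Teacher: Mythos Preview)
Your proof is correct. The derivation of the coprime factorization (\ref{112.13}) from Proposition~\ref{pro2.6} matches the paper.

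For the invertibility of $A_1(\alpha)$, you take a genuinely different route. The paper computes $A_1(\alpha)$ explicitly via the reproducing kernel of $\mathcal H(\theta_1)$: since $\frac{1}{1-\overline\alpha z}\in\mathcal H(\theta_1)$ for $\alpha\in\mathcal Z(\theta_1)$, one has entrywise $\bigl(P_{\mathcal H(\theta_1)}(\delta_1 a_{ij})\bigr)(\alpha)=\bigl\langle\delta_1 a_{ij},\,\frac{1}{1-\overline\alpha z}\bigr\rangle=\delta_1(\alpha)\,a_{ij}(\alpha)$, so that $A_1(\alpha)=\delta_1(\alpha)\,A(\alpha)$; invertibility then follows from the \emph{original} data ($\theta_1,\delta_1$ coprime gives $\delta_1(\alpha)\ne 0$; $I_\theta,A$ coprime plus Theorem~\ref{lem33.9} gives $A(\alpha)$ invertible). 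You instead apply Theorem~\ref{lem33.9} directly to the pair $I_{\theta_1},A_1$, whose coprime-ness is already furnished by Proposition~\ref{pro2.6}(c), and conclude that $\theta_1$ and $\det A_1$ share no zero. Your argument is shorter and avoids the reproducing-kernel computation; the paper's yields the additional explicit formula $A_1(\alpha)=\delta_1(\alpha)\,A(\alpha)$. One minor remark: your appeal to (\ref{Blaco}) is slightly loose since $\det A_1$ is not itself a Blaschke product---what you are really using is that a zero of $\det A_1$ in $\mathbb D$ contributes a Blaschke factor to its inner part, which would then be a common inner divisor with $\theta_1$---but this is a matter of phrasing, not a gap.
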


\begin{proof} \  The assertion (\ref{112.13}) follows from Proposition \ref{pro2.6}.  \
Put $A=(a_{ij})_{ij=1}^n$. \ Since $\theta$ is a finite Blaschke
product, $\theta_1$ is also a finite Blaschke product. \ For each
$\alpha\in \mathcal Z(\theta_1)$, we have
$\frac{1}{1-\overline{\alpha}z}\in \mathcal H (\theta_1)$, and hence
\begin{equation}\label{112.133}
\aligned A_1(\alpha)
&=\bigl(P_{\mathcal{K}_{\theta_1}}(\delta_1 A)\bigr)(\alpha)\\
&=\Bigl((P_{\mathcal{H}(\theta_1)}(\delta_1 a_{ij}))(\alpha)\Bigr)_{i,j=1}^n\\
&=\Biggl(\Bigl \langle P_{\mathcal{H}(\theta_1)}(\delta_1 a_{ij}),
   \ \frac{1}{1-\overline{\alpha}z}\Bigr \rangle \Biggr )_{i,j=1}^n\\
&=\Biggl(\Bigl \langle \delta_1 a_{ij}, \
   \frac{1}{1-\overline{\alpha}z}\Bigr \rangle \Biggr )_{i,j=1}^n\\
&=\bigl (~\delta_1(\alpha) a_{ij}(\alpha)~\bigr)_{i,j=1}^n\\
&=\delta_1(\alpha)A(\alpha).
\endaligned
\end{equation}
Since $\theta_1$ and $\delta_1$ are coprime and $I_\theta$ and $A$
are coprime, it follows from Theorem \ref{lem33.9} that
$\delta_1(\alpha) \neq 0$ and $A(\alpha)$ is invertible. \ Thus, by
(\ref{112.133}), $A_1(\alpha)$ is invertible. \
\end{proof}

\medskip

\begin{corollary}\label{cor22.6}
Let $\Phi \in H_{M_n}^{\infty}$ be of the form $\Phi=B^* \Theta$
\hbox{\rm (left coprime)}. \ If $\Omega$ is a right inner divisor of
$\Theta$ and $\Phi^\Omega:=P_{H_0^2} (\Phi\Omega^*)$, then
$$
\Phi^{\Omega}=B_1^* \Delta_1\ \ \hbox{\rm (left coprime)},
$$
where $\Delta_1= \Theta \Omega^*$  and $B_1:=P_{\mathcal
H_{\Delta_1}}B$.
\end{corollary}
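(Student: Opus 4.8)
The statement is the left-sided mirror of Proposition \ref{pro2.6}(a), and the plan is to prove it by a direct computation in the spirit of that proof rather than by a formal $\widetilde{\,\cdot\,}$-transfer; the latter is delicate here because conjugation intertwines $P_{H^2_0}$ with $P_{(H^2_{M_n})^\perp}$ rather than with itself, so the naive transfer from Proposition \ref{pro2.6}(a) does not land on $\Phi^\Omega$ on the nose. First I would record the structural facts hidden in the phrase ``$\Phi=B^*\Theta$ (left coprime)'': by the convention of Remark \ref{DSS}, $\Theta\in H^\infty_{M_n}$ is a square inner function with $\det\Theta\ne 0$, $B\in H^2_{M_n}$, and $B,\Theta$ are left coprime. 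Since $\Omega$ is a right inner divisor of $\Theta$, $\widetilde\Omega$ is a left inner divisor of $\widetilde\Theta$, so $\widetilde\Theta=\widetilde\Omega\,\Psi$ for an inner $\Psi\in H^2_{M_n}$; applying $\widetilde{\,\cdot\,}$ gives $\Theta=\Theta'\Omega$ with $\Theta':=\widetilde\Psi$ inner. Consequently $\Delta_1=\Theta\Omega^*=\Theta'\Omega\Omega^*=\Theta'$ is genuinely inner (using $\Omega\Omega^*=\Omega^*\Omega=I$ a.e., valid because $\Omega$ is square inner), and $\Phi\Omega^*=B^*\Theta\Omega^*=B^*\Delta_1$.

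Next I would compute $\Phi^\Omega=P_{H^2_0}(\Phi\Omega^*)=P_{H^2_0}(B^*\Delta_1)$. Write $B=B_1+\Delta_1 B_2$ with $B_1:=P_{\mathcal H_{\Delta_1}}B$ and $B_2\in H^2_{M_n}$ (here $\mathcal H_{\Delta_1}=H^2_{M_n}\ominus\Delta_1 H^2_{M_n}$). Then $B^*\Delta_1=B_1^*\Delta_1+B_2^*\Delta_1^*\Delta_1=B_1^*\Delta_1+B_2^*$. Since $B_1\in\mathcal H_{\Delta_1}$, Lemma \ref{lem2.4}(b) gives $B_1^*\Delta_1\in H^2_0$, so $P_{H^2_0}(B_1^*\Delta_1)=B_1^*\Delta_1$; and $B_2^*\in\overline{H^2_{M_n}}\subseteq (H^2_0)^\perp$, so $P_{H^2_0}(B_2^*)=0$. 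Hence $\Phi^\Omega=B_1^*\Delta_1$, which is exactly the asserted formula with $\Delta_1=\Theta\Omega^*$ and $B_1=P_{\mathcal H_{\Delta_1}}B$.

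It remains to show that $B_1$ and $\Delta_1$ are left coprime, and this is the step I expect to need the most care; it parallels the contradiction argument closing the proof of Proposition \ref{pro2.6}(a). Let $G$ be a common left inner divisor of $B_1$ and $\Delta_1$; it is automatically square, since $\det\Delta_1\ne 0$ forces every left inner divisor of $\Delta_1$ to be square by Lemma \ref{lem0.3}. Write $B_1=GB_1'$ and $\Delta_1=G\Delta_1'$. Then $B=B_1+\Delta_1 B_2=G(B_1'+\Delta_1'B_2)$ shows $G$ is a left inner divisor of $B$, and $\Theta=\Delta_1\Omega=G(\Delta_1'\Omega)$ shows $G$ is a left inner divisor of $\Theta$. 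So $G$ is a common left inner divisor of $B$ and $\Theta$, hence a unitary constant by the left coprimeness of $\Phi=B^*\Theta$. Therefore $\Phi^\Omega=B_1^*\Delta_1$ is a left coprime factorization, completing the proof.
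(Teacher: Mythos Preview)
Your proof is correct, but it takes a different route from the paper's. The paper proves the corollary precisely by the $\widetilde{\,\cdot\,}$-transfer you set aside: it applies Proposition~\ref{pro2.6}(a) to $\widetilde\Phi=\widetilde\Theta\,\widetilde B^*$ (right coprime) with the left inner divisor $\widetilde\Omega$ of $\widetilde\Theta$, then uses Lemma~\ref{lem2.5}(a) to rewrite $P_{\mathcal K_{\Theta_1}}\widetilde B$ as $\widetilde{P_{\mathcal H_{\widetilde\Theta_1}}B}$, and finally undoes the tilde. Your worry that the transfer ``does not land on $\Phi^\Omega$ on the nose'' is unfounded: since $\widetilde F(z)=\sum_j F_j^* z^j$ whenever $F(z)=\sum_j F_j z^j$, the map $\widetilde{\,\cdot\,}$ commutes with $P_{H^2_0}$, and together with $\widetilde{\Phi\Omega^*}=\widetilde\Omega^*\widetilde\Phi$ this gives $\widetilde{\Phi^\Omega}=\widetilde\Phi_{\widetilde\Omega}$ exactly.

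That said, your direct argument is clean and fully valid: the decomposition $B=B_1+\Delta_1 B_2$, the use of Lemma~\ref{lem2.4}(b) to identify $P_{H^2_0}(B^*\Delta_1)=B_1^*\Delta_1$, and the left-coprimeness step (lifting a common left inner divisor of $B_1,\Delta_1$ to one of $B,\Theta$) are all correct. Compared with the paper's approach, yours avoids the detour through Lemma~\ref{lem2.5}(a) at the cost of repeating the contradiction argument from Proposition~\ref{pro2.6}(a); the paper's approach buys you that coprimeness for free but requires tracking the tilde carefully. Either is fine.
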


\begin{proof} \ Since
$\widetilde{\Phi}=\widetilde{\Theta} \widetilde{B}^*$ (right
coprime) and $\widetilde{\Omega}$ is a left inner divisor of
$\widetilde{\Theta}$, it follows from Proposition \ref{pro2.6} that
\begin{equation*}\label{12.13}
\widetilde{\Phi}_{\widetilde{\Omega}}:=P_{H^2_0}
(\widetilde\Omega^*\widetilde\Phi)=\Theta_1 A_1^* \quad(\hbox{right
coprime}),
\end{equation*}
where $\Theta_1= \widetilde{\Omega}^*\widetilde{\Theta}$  and
$A_1:=P_{\mathcal K_{{\Theta_1}}}\widetilde{B}$. \ It follows from
Lemma \ref{lem2.5} that
\begin{equation}\label{22.13}
\widetilde{\Phi}_{\widetilde{\Omega}}
=\Theta_1\left(\widetilde{P_{\mathcal
H_{\widetilde{\Theta}_1}}B}\right)^*.
\end{equation}
Since $\widetilde{\Phi}_{\widetilde{\Omega}}
=\widetilde{P_{H^2_0}(\Phi \Omega^*)}=\widetilde{\Phi^{\Omega}}$, it
follows from (\ref{22.13}) that
$$
\Phi^{\Omega}=\left(P_{\mathcal H_{\widetilde{\Theta}_1}}B\right)^*
\widetilde{\Theta}_1=B_1^*\Delta_1 \quad (\hbox{right coprime}),
$$
where $B_1=P_{\mathcal H_{\Delta_1}}B$ and
$\Delta_1=\widetilde{\Theta}_1=\Theta \Omega^*$. \ This completes
the proof.
\end{proof}

\bigskip


Recall that the composition of two inner functions is again an inner
function. \ In general, we cannot guarantee that the composition of
two Blaschke products is again a Blaschke product. \ However, by
Frostman's Theorem (\cite{Ga}, \cite{MAR}), if $\theta$ is an inner
function in $H^\infty$, then for almost all $\alpha \in \mathbb D$
(with respect to area measure on $\mathbb D$), the function
$b_{\alpha} \circ \theta$ is a Blaschke product. \

\medskip

We are now interested in the following:

\begin{question}\label{q2-27}
Let $\theta, \delta$ and $\omega$ be inner functions in
$H^{\infty}$. \ Suppose $\theta$ and $\delta$ are coprime. \
\begin{itemize}
\item[(a)] If $\omega$ is a finite Blaschke product, are
$\omega \circ \theta$ and $\omega \circ \delta$ coprime?
\item[(b)] If $\theta$ and $\delta$ are finite Blaschke products,
are $\theta \circ \omega$ and $\delta \circ \omega$ coprime?
\end{itemize}
\end{question}

\bigskip

The following example shows that the answer to Question \ref{q2-27}
(a) is negative.

\begin{example}\label{e2-28} \
Let $\alpha_1, \cdots, \alpha_m, \beta_1, \cdots, \beta_n  \in
\mathbb D$ satisfy the following properties
\begin{itemize}
\item[(a)] $\alpha_i \neq \beta_j$ for each $i,j$;
\item[(b)] $\eta:=\prod_{i=1}^m \alpha_i=\prod_{j=1}^n\beta_j$.
\end{itemize}
Put
$$
\theta:=\prod_{i=1}^m b_{-\alpha_i}, \quad \delta:=\prod_{j=1}^n
b_{-\beta_j}, \quad \hbox{and} \quad \omega:=b_{\eta}.
$$
Then $\theta$ and $\delta$ are coprime. \ However, we have
$$
(\omega \circ \theta)(0)=\omega(\eta)=0=(\omega \circ \delta)(0),
$$
which implies that $\omega \circ \theta$ and $\omega \circ \delta$
are not coprime. \hfil$\square$
\end{example}

\bigskip

To examine Question \ref{q2-27} (b), we recall the notion of
``capacity of zero" for a set in $\mathbb D$. \ Recall (cf.
\cite[p.78]{Ga}) that a set $L\subseteq \mathbb D$ is said to have
{\it positive capacity} if for some compact subset $K\subseteq L$,
there exists a nonzero positive measure $\mu$ on $K$ such that
$$
G_\mu (z)\label{gmu}\equiv \int_K \log\, \frac{1}{|b_\alpha(z)|}
d\mu(\alpha)\ \hbox{is bounded  on $\mathbb D$.}
$$
The function $G_\mu (z)$ is called the {\it Green's potential}. \ It
is known that every set of positive area has positive capacity, so
that every set of capacity zero in $\mathbb D$ has area measure
zero. \

On the other hand, if $\theta \in H^{\infty}$ is an inner function,
then $z_0 \in \mathbb T$ is called a {\it singularity} of $\theta$
if $\theta$ does not extend analytically from $\mathbb D$ to $z_0$.
\ Write
$$
\theta\equiv b \cdot s,
$$
where $b$ is a Blaschke product and $s$ is a singular inner function
with singular measure $\mu$. \ It is known (\cite[Theorems 6.1 and
6.2]{Ga}) that $z_0$ is a singularity of $\theta$ if and only if
$z_0$ is an accumulation point of $\mathcal Z(b)$ or $z_0 \in
\hbox{supp}(\mu)$. \

\medskip

Now let $f \in H^{\infty}$ and let $z_0 \in \mathbb T$.  \ Then the
{\it range set}  of $f$ at $z_0$ is defined as
$$
\mathcal R(f, z_0)\label{rfz0} := \bigcap_{r>0}f\bigl(\mathbb D \cap
\Delta(z_0,r)\bigr),
$$
where $\Delta(z_0, r):=\{z:\ |z-z_0|<r\}$. \ Thus we can see that
the range set is the set of values assumed infinitely often in each
neighborhood of $z_0$. \ If $f$ is analytic across  $z_0$, and
non-constant then $\mathcal R(f, z_0)=\emptyset$.

We recall:

\medskip

\begin{lemma}{\rm (}\cite[Theorem 6.6]{Ga}{\rm )}\label{garnett77} \
Let $\theta$ be an inner function and let $z_0$ be a singularity of
$\theta$. \ Then
$$
\mathcal R(\theta, z_0)=\mathbb D \setminus L,
$$
where $L$ is a set of capacity zero. \ Moreover, if $\alpha \in
\mathcal R(\theta, z_0)$, then $b_{\alpha} \circ \theta$ is a
Blaschke product.
\end{lemma}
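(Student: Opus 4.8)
The plan is to deduce both assertions from Frostman's theorem together with the characterization of the singularities of an inner function recalled just above (cf.\ \cite[Theorems 6.1 and 6.2]{Ga}). For $\alpha\in\mathbb D$ put $g_\alpha:=b_\alpha\circ\theta$. Since $b_\alpha$ is a conformal automorphism of $\mathbb D$, the function $\theta$ extends analytically across a boundary point of $\mathbb D$ exactly when $g_\alpha$ does; in particular $g_\alpha$ is inner and $z_0$ is a singularity of $g_\alpha$ as well, while $\mathcal Z(g_\alpha)=\{z\in\mathbb D:\theta(z)=\alpha\}$, so that $\alpha\in\mathcal R(\theta,z_0)$ if and only if the zeros of $g_\alpha$ accumulate at $z_0$. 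Now suppose $g_\alpha$ is a Blaschke product; then its singular measure is trivial, so by the cited characterization the singularity $z_0$ of $g_\alpha$ must be an accumulation point of $\mathcal Z(g_\alpha)$, i.e.\ $\theta$ takes the value $\alpha$ at points of $\mathbb D$ arbitrarily close to $z_0$; hence $\alpha\in\theta(\mathbb D\cap\Delta(z_0,r))$ for every $r>0$, that is, $\alpha\in\mathcal R(\theta,z_0)$. Thus $\{\alpha\in\mathbb D:g_\alpha\text{ is a Blaschke product}\}\subseteq\mathcal R(\theta,z_0)$. By the sharp form of Frostman's theorem the set of $\alpha$ for which $g_\alpha$ is not a Blaschke product has capacity zero (see \cite{Ga}), so $L:=\mathbb D\setminus\mathcal R(\theta,z_0)$ is contained in a set of capacity zero and therefore has capacity zero, which proves the first assertion.

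The ``moreover'' amounts to the reverse inclusion $\mathcal R(\theta,z_0)\subseteq\{\alpha\in\mathbb D:g_\alpha\text{ is a Blaschke product}\}$, and this is where the real work lies. Fix $\alpha\in\mathcal R(\theta,z_0)$ and write $g_\alpha=B_\alpha S_\alpha$ with $B_\alpha$ a Blaschke product and $S_\alpha$ a singular inner function with singular measure $\nu_\alpha\ge 0$; by the above the zeros of $B_\alpha$ accumulate at $z_0$, and one must show $\nu_\alpha=0$. The approach is potential-theoretic: using the identity $-\log|g_\alpha(z)|=G_{\mathbb D}(\theta(z),\alpha)$, where $G_{\mathbb D}(\cdot,\alpha)$ is the Green function of $\mathbb D$ with pole at $\alpha$, together with the Riesz decomposition $-\log|g_\alpha(z)|=\sum_{\theta(a)=\alpha}G_{\mathbb D}(z,a)+\int_{\mathbb T}\tfrac{1-|z|^{2}}{|\zeta-z|^{2}}\,d\nu_\alpha(\zeta)$, and then localizing to a region $\mathbb D\cap\Delta(z_0,r)$ on whose bounding arc $g_\alpha$ has unimodular boundary values, one estimates by Jensen's formula and a capacity argument to conclude that the accumulation of $\alpha$-points at the singularity $z_0$ already exhausts the Green mass of $-\log|g_\alpha|$, forcing $\nu_\alpha=0$. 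I would carry out this step following \cite[Theorem 6.6]{Ga}.

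The step I expect to be the main obstacle is precisely the last one, i.e.\ the implication ``$\theta$ assumes the value $\alpha$ infinitely often in every neighborhood of the singularity $z_0$'' $\Longrightarrow$ ``$b_\alpha\circ\theta$ has no singular inner factor'', equivalently the equality $\mathcal R(\theta,z_0)=\{\alpha\in\mathbb D:b_\alpha\circ\theta\text{ is a Blaschke product}\}$. Everything else — the conformal invariance of singularities, the reduction to the singularity description for inner functions, and the bookkeeping with the sharp form of Frostman's theorem — is routine; the delicate content is the potential-theoretic localization at $z_0$ that underlies the ``moreover''.
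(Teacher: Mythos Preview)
The paper does not give its own proof of this lemma; it is quoted verbatim from \cite[Theorem 6.6]{Ga} and used as a black box, so there is nothing in the paper to compare your argument against. Your outline is a faithful sketch of the standard approach in Garnett: the easy direction (if $b_\alpha\circ\theta$ is a Blaschke product then $\alpha\in\mathcal R(\theta,z_0)$, via the singularity characterization) together with the sharp Frostman theorem already yields the capacity-zero statement, and you correctly isolate the ``moreover'' as the substantive part, whose potential-theoretic localization you rightly defer to \cite{Ga}.
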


\medskip

As we have remarked, a set of capacity zero has area measure zero,
so that Lemma \ref{garnett77} shows that $\mathbb D$ is the range
set of an inner function $\theta$ at its singularity, except
possibly for a set of measure zero. \

\medskip

We then have:

\begin{theorem}\label{bcothm} \
Suppose that $\theta$ and $\delta$ are finite Blaschke products. \
If $\omega$ is an inner function satisfying one of the following:
\begin{itemize}
\item[(i)]$\omega$ is a finite Blaschke
product;
\item[(ii)] $\mathcal{Z}(\theta)\bigcup
\mathcal{Z}(\delta)\subseteq \bigcup \Bigl\{\mathcal R(\omega, z_0):
z_0 \ \hbox{is a singularity of} \  \omega
 \Bigr\}$,
\end{itemize}
then we have
$$
\hbox{$\theta$ and $\delta$ are coprime}\ \Longleftrightarrow\
\hbox{$\theta \circ \omega$ and $\delta \circ \omega$ are coprime}.
$$
\end{theorem}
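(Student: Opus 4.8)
The plan is to reduce coprimeness of the compositions to disjointness of their zero sets, exploiting that under either hypothesis $\theta\circ\omega$ and $\delta\circ\omega$ turn out to be Blaschke products, and then to apply (\ref{Blaco}). First I would dispose of trivial cases: if $\theta$ or $\delta$ is a unimodular constant the equivalence holds vacuously, so I may assume $\theta,\delta$ are nonconstant; then $\omega$ is nonconstant as well --- under (i) by assumption, and under (ii) because a nonconstant $\theta$ forces $\mathcal Z(\theta)\cup\mathcal Z(\delta)\ne\emptyset$, hence $\omega$ must have a singularity and so cannot be constant. Throughout I will use (\ref{Blaco}): two Blaschke products are coprime precisely when their zero sets are disjoint.

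For the implication ``$\theta,\delta$ coprime $\Rightarrow$ $\theta\circ\omega,\delta\circ\omega$ coprime,'' the key step is to show the two compositions are Blaschke products. Under (i) this is immediate, since a composition of finite Blaschke products is a finite Blaschke product. Under (ii), writing $\theta=c\prod_i b_{\alpha_i}$, each zero $\alpha_i$ of $\theta$ lies in $\mathcal R(\omega,z_0)$ for some singularity $z_0$ of $\omega$, so Lemma \ref{garnett77} gives that $b_{\alpha_i}\circ\omega$ is a Blaschke product; hence $\theta\circ\omega=c\prod_i(b_{\alpha_i}\circ\omega)$ is a (possibly infinite) Blaschke product, and similarly for $\delta\circ\omega$. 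Since $\mathcal Z(\theta\circ\omega)=\omega^{-1}(\mathcal Z(\theta))$ and $\mathcal Z(\delta\circ\omega)=\omega^{-1}(\mathcal Z(\delta))$, the hypothesis $\mathcal Z(\theta)\cap\mathcal Z(\delta)=\emptyset$ (equivalent, by (\ref{Blaco}), to coprimeness of $\theta,\delta$) yields $\mathcal Z(\theta\circ\omega)\cap\mathcal Z(\delta\circ\omega)=\emptyset$, and (\ref{Blaco}) again gives coprimeness of the compositions.

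For the reverse implication I would argue the contrapositive: if $\theta,\delta$ are not coprime, then by (\ref{Blaco}) they share a zero $\alpha\in\mathbb D$; writing $\theta=b_\alpha\theta_1$ and $\delta=b_\alpha\delta_1$ with $\theta_1,\delta_1$ inner, the inner function $b_\alpha\circ\omega$ is then a common inner divisor of $\theta\circ\omega$ and $\delta\circ\omega$, and it is nonconstant because $\omega$ is nonconstant and $b_\alpha$ is a nonconstant automorphism of $\mathbb D$. Hence $\theta\circ\omega$ and $\delta\circ\omega$ are not coprime; this half uses only the nonconstancy of $\omega$.

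The main obstacle is precisely the step where the hypotheses enter --- proving the compositions are Blaschke products --- which rests on Garnett's range-set theorem (Lemma \ref{garnett77}). I would remark that a hypothesis of this flavor is genuinely needed for this particular line of argument, since for arbitrary inner $\omega$ the composition $\theta\circ\omega$ may acquire a singular inner factor and then the reduction ``coprime $\Leftrightarrow$ disjoint zero sets'' is no longer available; one could instead obtain the forward implication for every nonconstant inner $\omega$ by a B\'ezout argument, using that coprime finite Blaschke products satisfy $f\theta+g\delta=1$ for some $f,g\in H^\infty$ and composing this identity with $\omega$ to force any common inner divisor of $\theta\circ\omega$ and $\delta\circ\omega$ to be constant.
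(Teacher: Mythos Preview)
Your proof is correct and the forward implication matches the paper's argument exactly: both show (via Lemma~\ref{garnett77} under (ii), trivially under (i)) that $\theta\circ\omega$ and $\delta\circ\omega$ are Blaschke products, then reduce to disjointness of zero sets via~(\ref{Blaco}). The reverse implication is where you diverge. The paper handles it symmetrically with the forward direction, using the observation $\omega(\mathbb D)\supseteq\mathcal Z(\theta)\cap\mathcal Z(\delta)$ together with the identity $\mathcal Z(\theta\circ\omega)\cap\mathcal Z(\delta\circ\omega)=\omega^{-1}\bigl(\mathcal Z(\theta)\cap\mathcal Z(\delta)\bigr)$ and~(\ref{Blaco}) applied to the compositions (which it has already shown are Blaschke products). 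Your contrapositive argument, factoring out $b_\alpha$ from both $\theta$ and $\delta$ and exhibiting $b_\alpha\circ\omega$ as a nonconstant common inner divisor, is more direct and does not rely on the compositions being Blaschke products at all---it works for any nonconstant inner $\omega$. This is a genuine, if small, gain in generality for that half of the equivalence; the paper's symmetric treatment is a bit tidier to write down but needs both hypotheses for both directions.
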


\begin{proof} \ If $\omega$ is a finite Blaschke product,
then obviously, $\theta \circ \omega$ and $\delta \circ \omega$ are
also finite Blaschke products. \ Suppose that (ii) holds. \ It then
follows from Lemma \ref{garnett77} that $\theta \circ \omega$ and
$\delta \circ \omega$ are  Blaschke products. \ Observe that
\begin{equation}\label{blaschke1}
\mathcal Z(\theta \circ \omega) \cap \mathcal Z(\delta \circ
\omega)=\Bigl\{\alpha \in \mathbb D:\omega(\alpha) \in \mathcal
Z(\theta) \cap \mathcal Z(\delta)\Bigr\}.
\end{equation}
If (i) or (ii) holds, then it is clear that $ \omega(\mathbb
D)\supseteq \mathcal{Z}(\theta)\cap \mathcal{Z}(\delta). $ \ It thus
follows from (\ref{blaschke1}) that
$$
\aligned
\theta \ \hbox{and} \ \delta\  \hbox{are coprime} &\Longleftrightarrow \mathcal Z(\theta) \cap \mathcal Z(\delta)=\emptyset\\
& \Longleftrightarrow \mathcal Z(\theta \circ \omega) \cap \mathcal Z(\delta \circ \omega)=\emptyset\\
&\Longleftrightarrow \theta \circ \omega \ \hbox{and} \ \delta \circ
\omega\ \hbox{are coprime},
\endaligned
$$
which gives the result.
\end{proof}

\begin{corollary}\label{corbcothm} \
Suppose that $\{\theta_1, \theta_2,\cdots, \theta_N\}$ is the set of
finite Blaschke products. \ If $\omega$ is an inner function
satisfying one of the following:
\begin{itemize}
\item[(i)] $\omega$ is a finite Blaschke
product;
\item[(ii)]
$\bigcup_{i=1}^N \mathcal{Z}(\theta_i) \subseteq \bigcup
\Bigl\{\mathcal R(\omega, z_0): z_0 \ \hbox{is a singularity of} \
\omega
 \Bigr\}$,
\end{itemize}
then
\begin{equation}\label{blaschke21-1}
\hbox{\rm g.c.d.}\{\theta_k : 1\le k \le N \}=1 \
\Longleftrightarrow \ \hbox{\rm g.c.d.}\{ \theta_k \circ \omega :
1\le k\le N \}=1.
\end{equation}
\end{corollary}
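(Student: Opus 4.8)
The plan is to run the proof of Theorem \ref{bcothm} with the intersection of two zero sets replaced by the intersection of $N$ of them; iterating Theorem \ref{bcothm} itself is awkward, since $\hbox{\rm g.c.d.}\{\theta_k:1\le k\le N\}=1$ is strictly weaker than pairwise coprimeness, so a direct argument is cleaner. The one fact I would isolate at the outset is that for any Blaschke products $\psi_1,\dots,\psi_N$ one has $\hbox{\rm g.c.d.}\{\psi_k:1\le k\le N\}=1$ if and only if $\bigcap_{k=1}^N \mathcal Z(\psi_k)=\emptyset$: a Blaschke product is determined up to a unimodular constant by its zeros counted with multiplicity, and its nonconstant inner divisors are exactly the Blaschke products built from subfamilies of those zeros, so the g.c.d.\ is trivial precisely when no point is a zero of every $\psi_k$.

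First I would check that, under (i) or (ii), each composition $\theta_k\circ\omega$ is again a Blaschke product. If $\omega$ is a finite Blaschke product this is immediate. If (ii) holds, then every zero $\alpha$ of every $\theta_k$ lies in $\mathcal R(\omega,z_0)$ for some singularity $z_0$ of $\omega$, so Lemma \ref{garnett77} gives that each factor $b_\alpha\circ\omega$ is a Blaschke product, and hence so is the finite product $\theta_k\circ\omega$. This is where Lemma \ref{garnett77} is used, and it is precisely what legitimizes applying the criterion ``g.c.d.\ is trivial iff the common zero set is empty'' to the composed functions; without it a common singular inner factor could in principle survive. I expect this bookkeeping to be the only point that needs real care.

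Next I would record the $N$-fold analogue of (\ref{blaschke1}),
$$
\bigcap_{k=1}^N \mathcal Z(\theta_k\circ\omega)=\Bigl\{\alpha\in\mathbb D:\ \omega(\alpha)\in\textstyle\bigcap_{k=1}^N \mathcal Z(\theta_k)\Bigr\},
$$
which is immediate since $(\theta_k\circ\omega)(\alpha)=0$ for all $k$ exactly when $\omega(\alpha)$ is a common zero of $\theta_1,\dots,\theta_N$; and I would note that in either case $\omega(\mathbb D)\supseteq\bigcup_{i=1}^N\mathcal Z(\theta_i)\supseteq\bigcap_{i=1}^N\mathcal Z(\theta_i)$ (under (i), $\omega(\mathbb D)=\mathbb D$; under (ii), each $\mathcal R(\omega,z_0)\subseteq\omega(\mathbb D)$). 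Then the equivalences chain together: if $\hbox{\rm g.c.d.}\{\theta_k\}=1$ then $\bigcap_k\mathcal Z(\theta_k)=\emptyset$, so the displayed identity forces $\bigcap_k\mathcal Z(\theta_k\circ\omega)=\emptyset$, whence $\hbox{\rm g.c.d.}\{\theta_k\circ\omega\}=1$ since each $\theta_k\circ\omega$ is a Blaschke product; conversely, if some $\beta\in\bigcap_k\mathcal Z(\theta_k)$, then $\beta=\omega(\alpha)$ for some $\alpha\in\mathbb D$ by the containment above, and the identity puts $\alpha$ in $\bigcap_k\mathcal Z(\theta_k\circ\omega)$, so by contraposition $\hbox{\rm g.c.d.}\{\theta_k\circ\omega\}=1$ yields $\hbox{\rm g.c.d.}\{\theta_k\}=1$. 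This gives (\ref{blaschke21-1}).
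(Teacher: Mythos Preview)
Your proof is correct and is essentially what the paper intends by ``Immediate from Theorem \ref{bcothm}'': you rerun the argument of Theorem \ref{bcothm} with $N$ zero sets in place of two, using the same ingredients (Lemma \ref{garnett77} to ensure the compositions are Blaschke products, the identity for the common zero set of the compositions, and the containment $\omega(\mathbb D)\supseteq\bigcap_k\mathcal Z(\theta_k)$). Your observation that applying Theorem \ref{bcothm} as a black box is awkward---since $\hbox{g.c.d.}=1$ is weaker than pairwise coprimeness---is well taken; the paper's ``immediate'' should be read as ``the same proof goes through,'' which is exactly what you wrote out.
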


\begin{proof} \
Immediate from Theorem \ref{bcothm}.
\end{proof}

\medskip

\begin{theorem} \label{lemou} \ If $f \in H^2$  is an outer function
and $\omega \in H^{\infty}$ is an inner function, then $f \circ
\omega$ is an outer function.
\end{theorem}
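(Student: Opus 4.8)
The plan is to characterize outer functions as cyclic vectors for the shift and to transport this property through the composition operator $C_\omega\colon g\mapsto g\circ\omega$.

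First I would record the background needed. The relevant reformulation of outerness is: \emph{for $g\in H^2$, $g$ is outer if and only if $1\in\operatorname{cl}_{H^2}(gH^\infty)$.} Indeed, by definition $g$ is outer iff $\operatorname{cl}_{H^2}\{gp:p\ \text{a polynomial}\}=H^2$, and a routine approximation (via the dilations $h_r(z)=h(rz)$, which converge to $h$ boundedly and a.e., so that $gh_r\to gh$ in $H^2$ by dominated convergence) identifies this closure with $\operatorname{cl}_{H^2}(gH^\infty)$; on the other hand $\operatorname{cl}_{H^2}(gH^\infty)$ is a closed subspace of $H^2$ that is invariant under multiplication by every function in $H^\infty$, so if it contains $1$ it contains all of $H^\infty$ and hence its $H^2$-closure, which is $H^2$. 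I also need that $C_\omega$ is a bounded operator on $H^2$; this is a consequence of Littlewood's subordination theorem, valid for any analytic self-map of $\mathbb D$ and in particular for inner $\omega$. (When $\omega$ is a unimodular constant the theorem is trivial, since $f\circ\omega$ is then a nonzero constant --- recall that an outer function is zero-free in $\mathbb D$ --- so one may assume $\omega$ nonconstant.) Finally, $C_\omega(1)=1$, and $C_\omega(gh)=(g\circ\omega)(h\circ\omega)$ as holomorphic functions on $\mathbb D$ for $g\in H^2$ and $h\in H^\infty$.

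Granting this, the argument is short. Since $f$ is outer, by definition there are polynomials $p_n$ with $p_nf\to 1$ in $H^2$. Applying the bounded operator $C_\omega$ and using $C_\omega(1)=1$ yields
\[
(p_n\circ\omega)(f\circ\omega)=C_\omega(p_nf)\ \longrightarrow\ 1\qquad\text{in }H^2 .
\]
Since each $p_n\circ\omega$ lies in $H^\infty$ and $f\circ\omega=C_\omega f\in H^2$, this says precisely that $1\in\operatorname{cl}_{H^2}\big((f\circ\omega)H^\infty\big)$, and the reformulation above shows that $f\circ\omega$ is outer. (In particular $f\circ\omega\not\equiv 0$, so no degeneracy occurs.)

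I do not expect a genuine obstacle; the only mildly technical ingredients are the two background facts --- the $H^\infty$-submodule structure of $\operatorname{cl}_{H^2}(gH^\infty)$ together with the polynomial-density step, and the boundedness of $C_\omega$ on $H^2$ --- both classical. An alternative, more computational route would combine the Jensen-type criterion that $g\in H^2\setminus\{0\}$ is outer iff $\log|g(0)|=\frac{1}{2\pi}\int_{\mathbb T}\log|g|\,dm$ with the transport of harmonic measure under an inner function, $\frac{1}{2\pi}\int_{\mathbb T}\psi(\omega^*)\,dm=\frac{1}{2\pi}\int_{\mathbb T}\psi\,P_{\omega(0)}\,dm$, applied to $\psi=\log|f^*|$; that route additionally requires the a.e.\ boundary-value identity $(f\circ\omega)^*=f^*\circ\omega^*$, which is its one delicate point, so I prefer the cyclic-vector argument outlined above.
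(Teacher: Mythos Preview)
Your argument is correct and shares the paper's core idea: take polynomials $p_n$ with $p_nf\to 1$ in $H^2$, compose with $\omega$, and conclude that $f\circ\omega$ is cyclic. The paper stays within the polynomial-density definition of outerness throughout, approximating each $p_n\circ\omega$ by a further sequence of polynomials $q_n^{(m)}$ and arguing via a double limit that $1\in\operatorname{cl}\big((f\circ\omega)\cdot\mathcal P_{\mathbb C}\big)$. Your route is a bit tighter: by invoking Littlewood's subordination theorem you make the boundedness of $C_\omega$ on $H^2$ explicit (the paper uses it tacitly in passing from $fp_n\to 1$ to $(fp_n)\circ\omega\to 1$), and by switching to the equivalent criterion $1\in\operatorname{cl}_{H^2}\big((f\circ\omega)H^\infty\big)$ you dispense with the second polynomial-approximation layer altogether, since $p_n\circ\omega$ already lies in $H^\infty$.
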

\begin{proof} \ Let $f$ be an outer function and $\omega$ be an inner function. \
Since $f$ is outer, there exists a sequence of polynomials $p_n$
such that $f \cdot p_n \to 1$ as  $n \to \infty$. \ Since $p_n \circ
\omega \in H^2$, there exists a sequence of polynomials $q_n^{(m)}$
such that
$$
q_n^{(m)} \to p_n \circ \omega \quad \hbox{as} \ m \to \infty \quad
\hbox{(for each $n \in \mathbb N$)}.
$$
We thus have
$$
(f\circ \omega)\cdot q_n^{(m)} \longrightarrow (f \cdot p_n)\circ
\omega  \quad \hbox{as} \ m \to \infty.
$$
On the other hand, since
$$
(f \cdot p_n)\circ \omega \longrightarrow 1\circ \omega =1 \quad
\hbox{as} \ n \to \infty,
$$
it follows that $ 1 \in \hbox{cl}\, (f \circ \omega)
\cdot\mathcal{P}_{\mathbb C}. $ \ Thus we can conclude that $f\circ
\omega$ is  outer. \ This completes the proof. \ \end{proof}

\bigskip

\begin{corollary} \label{lemou234} \
Suppose that $\varphi \in H^{\infty}$. \ Then we can write
$$
\varphi=\varphi_i  \varphi_e \quad \hbox{\rm (inner-outer
factorization)},
$$
then, for an inner function $\omega$, we have
$$
\varphi \circ \omega =(\varphi_i \circ \omega) \cdot (\varphi_e
\circ \omega) \quad \hbox{\rm (inner-outer factorization)}.
$$
\end{corollary}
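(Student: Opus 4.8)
The plan is to reduce the statement to three facts already available: that the composition of two inner functions is again inner, Theorem~\ref{lemou} (the composition of an outer $H^2$-function with an inner function is outer), and the uniqueness of the inner-outer factorization of an $H^2$-function.

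First I would record that $\varphi_e\in H^\infty$. Indeed, since $\varphi_i$ is inner we have $|\varphi_i|=1$ a.e.\ on $\mathbb T$, so $|\varphi_e|=|\varphi|\le\|\varphi\|_\infty$ a.e.\ on $\mathbb T$; thus $\varphi_e\in H^\infty\subseteq H^2$. In particular $\varphi_i\circ\omega$ and $\varphi_e\circ\omega$ are bounded analytic functions on $\mathbb D$, and from the pointwise identity $\varphi=\varphi_i\varphi_e$ we immediately obtain
$$
\varphi\circ\omega=(\varphi_i\circ\omega)\,(\varphi_e\circ\omega)\in H^\infty.
$$

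Next I would observe that $\varphi_i\circ\omega$ is inner, because the composition of two inner functions is inner, and that $\varphi_e\circ\omega$ is outer by Theorem~\ref{lemou} applied to $f=\varphi_e\in H^2$. Hence the displayed identity exhibits $\varphi\circ\omega$ as a product of an inner function and an outer function; by the uniqueness of the inner-outer factorization of $\varphi\circ\omega$, this is exactly its inner-outer factorization, which is the assertion.

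I do not anticipate any real obstacle here; the only places needing a line of justification are the claim $\varphi_e\in H^\infty$ (so that $\varphi_e\circ\omega$ is defined and lies in $H^2$, which is what Theorem~\ref{lemou} requires) and the appeal to uniqueness to conclude that the product obtained is "the" inner-outer factorization. The real content is already packaged in Theorem~\ref{lemou} together with the stability of innerness under composition.
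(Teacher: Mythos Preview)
Your proposal is correct and follows essentially the same approach as the paper, which simply notes that $\varphi_i\circ\omega$ is inner and then invokes Theorem~\ref{lemou}. You supply more detail (the verification that $\varphi_e\in H^\infty$ and the appeal to uniqueness of the inner-outer factorization), but the argument is the same.
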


\begin{proof} \ Since $\varphi_i \circ \omega$ is an inner function, it follow from Theorem
\ref{lemou}.
\end{proof}

\bigskip

On the other hand, we may ask what is the relation between the inner
parts of the inner-outer factorization and the coprime factorization
for bounded analytic functions whose conjugates are of bounded type.
\ Let us consider a simple example: take $\varphi(z)=z-2$. \ Then
$$
\varphi(z)=
\begin{cases}
1\cdot(z-2) \quad(\hbox{inner-outer factoriztion})\\
z\cdot (\overline{1-2z}) \quad (\hbox{coprime}),
\end{cases}
$$
which shows that the degree of the inner part of the inner-outer
factorization is less than the degree of the inner part of the
coprime factorization. \ Indeed, this is not unexpected.  \ In what
follows, we will show that this phenomenon holds even for
matrix-valued cases.

\bigskip

\begin{lemma}\label{lem0.3lem} $($\cite[p. 21]{Ni}$)$ \
Let $F\in H^2_{M_{m\times n}}$ $(n\geq m)$. \ The following re
equivalent:
\begin{itemize}
\item[(a)]  $F$ is an outer function;
\item[(b)] $\hbox{\rm g.c.d.}\Bigl\{\bigl(\det (F|_{\mathbb C^m})\bigr)_i\Bigr\}=1$,
\end{itemize}
where $F|_{\mathbb C^m}$ run through all minors of $F$ in
$H^2_{M_m}$ with respect to the canonical bases.
\end{lemma}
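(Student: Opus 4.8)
This is a classical fact (cf.~\cite[p.~21]{Ni}); I sketch how I would organize a proof. First reduce to the case in which $F$ has full rank $m$ almost everywhere on $\mathbb T$: otherwise every $m\times m$ minor of $F$ vanishes identically, while the Beurling--Lax--Halmos Theorem gives $\hbox{cl}\,(F\cdot\mathcal P_{\mathbb C^n})=\Theta H^2_{\mathbb C^k}$ with $k=\hbox{Rank}\,F<m$, a proper subspace of $H^2_{\mathbb C^m}$; so (a) and (b) both fail. Assuming full rank a.e., let $\Theta\in H^\infty_{M_m}$ be the \emph{square} inner function with $\hbox{cl}\,(F\cdot\mathcal P_{\mathbb C^n})=\Theta H^2_{\mathbb C^m}$ furnished by the Beurling--Lax--Halmos Theorem ($\Theta$ is square precisely because $F$ attains rank $m$). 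The plan is to prove
$$
\hbox{g.c.d.}\Bigl\{\bigl(\det (F|_{\mathbb C^m})\bigr)_i\Bigr\}=\det\Theta\qquad\hbox{(up to a unimodular constant)}.
$$
This suffices: for a square inner $\Theta$ the scalar inner function $\det\Theta$ is a unimodular constant if and only if $\Theta$ is a unitary constant --- if $\det\Theta$ is constant, then $\Theta^{-1}=(\det\Theta)^{-1}\hbox{adj}\,\Theta\in H^\infty_{M_m}$ coincides with $\Theta^*$ a.e.\ on $\mathbb T$, forcing $\Theta$ to be constant --- and $\Theta$ is a unitary constant exactly when $F$ is outer.

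The inclusion ``$\det\Theta$ divides the g.c.d.'' is routine. For an $m\times m$ submatrix $G=F|_{\mathbb C^m}$, the columns of $G$ lie in $\Theta H^2_{\mathbb C^m}$, so $\Theta^*G\in H^2_{M_m}$; taking determinants and using $\det\Theta^*=\overline{\det\Theta}$ a.e.\ on $\mathbb T$ yields $\overline{\det\Theta}\,\det G\in H^{2/m}$, i.e., the inner function $\det\Theta$ divides $\det G$. Hence $\det\Theta$ divides the g.c.d.\ of all such $\det G$.

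For the reverse inclusion it suffices to prove that an \emph{outer} $F$ has the g.c.d.\ of its $m\times m$ minors equal to $1$; the general case then follows by applying this to $\Theta^*F$, which is again outer (since $\Theta^*$ carries $\Theta H^2_{\mathbb C^m}$ isometrically onto $H^2_{\mathbb C^m}$) and whose minors are $\overline{\det\Theta}\,\det G$. So assume $F$ is outer and a nonconstant inner function $\vartheta$ divides every minor $\det G$. Using outerness, choose $n\times m$ matrix polynomials $P_\ell$ with $FP_\ell\to I_m$ in $L^2_{M_m}$. By the Cauchy--Binet formula, $\det(FP_\ell)=\sum_S \det G_S\cdot\det (P_\ell)_S$, where $G_S$ is the submatrix of $F$ on the columns in $S$ and $(P_\ell)_S$ the submatrix of $P_\ell$ on the rows in $S$; since $\vartheta$ divides each $\det G_S$, this reads $\det(FP_\ell)=\vartheta\,q_\ell$ with $q_\ell\in H^{2/m}$. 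On the other hand $\det(FP_\ell)\to\det I_m=1$ in $L^{2/m}$; multiplying by $\overline\vartheta$ (an isometry of $L^{2/m}$) gives $q_\ell\to\overline\vartheta$ in $L^{2/m}$, so $\overline\vartheta\in H^{2/m}$, as $H^{2/m}$ is closed in $L^{2/m}$. But $\overline\vartheta\in H^{2/m}$ together with $\vartheta\in H^\infty$ and $\vartheta\overline\vartheta\equiv1$ on $\mathbb T$ forces $\vartheta^{-1}$ to be inner; since a product of two inner functions equal to $1$ consists of unimodular constants, $\vartheta$ is a unimodular constant --- contradiction. This proves the reverse inclusion, and hence the displayed identity.

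The step I expect to be the main obstacle is the reverse inclusion: while $\det\Theta\mid\det G$ is a one-line computation, excluding a proper common inner divisor of the minors of an outer function genuinely needs the approximation argument above, together with the fact that $H^p$ is a closed subspace of $L^p$ (equivalently, that $N^+\cap L^p=H^p$); some care with Hardy spaces $H^p$ for the exponent $p=2/m<1$ is also required.
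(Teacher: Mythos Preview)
The paper does not prove this lemma: it is stated with a citation to \cite[p.~21]{Ni} and used as a black box. So there is no proof in the paper to compare against; you have supplied an argument where the authors simply invoke the literature.

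Your argument is essentially correct and follows the standard route: identify $\hbox{cl}(F\cdot\mathcal P_{\mathbb C^n})=\Theta H^2_{\mathbb C^m}$ via Beurling--Lax--Halmos, and show the g.c.d.\ of the maximal minors equals $\det\Theta$. One step deserves more care than you give it: the claim that $FP_\ell\to I_m$ in $L^2_{M_m}$ implies $\det(FP_\ell)\to 1$ in $L^{2/m}$ is not automatic, since the determinant is a polynomial of degree $m$ in the entries and products are not continuous on $L^2$. It does hold, though: writing $(FP_\ell)_{ij}=\delta_{ij}+\varepsilon_{ij}^{(\ell)}$ with $\varepsilon_{ij}^{(\ell)}\to 0$ in $L^2$, each monomial in the expansion of $\det(FP_\ell)-1$ contains at least one factor $\varepsilon^{(\ell)}$, and by the generalized H\"older inequality a product of $k\le m$ such factors tends to $0$ in $L^{2/k}\subset L^{2/m}$ (using that $\mathbb T$ carries a probability measure). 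With that gap filled, the rest of your argument (Cauchy--Binet, closedness of $H^{2/m}$ in $L^{2/m}$, and $\overline\vartheta\in H^{2/m}\cap L^\infty=H^\infty$ forcing $\vartheta$ constant) goes through.
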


\medskip

\begin{theorem} \label{thmouter567} \ Suppose $F \in H^{2}_{m \times n}$ $(n\geq m)$
is a rational outer function. \ Then, for a finite Blaschke product
$\omega \in H^{\infty}$, $F \circ \omega$ is an outer function.

\end{theorem}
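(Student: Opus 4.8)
The plan is to reduce the theorem to the scalar statement on compositions and coprime-ness (Corollary~\ref{corbcothm}) by way of the determinantal criterion for outerness in Lemma~\ref{lem0.3lem}. As a preliminary, I would note that a rational function in $H^2$ has no poles on $\overline{\mathbb D}$ and is therefore bounded, so $F\in H^\infty_{M_{m\times n}}$; and since $\omega$ is a finite Blaschke product, it is continuous on $\overline{\mathbb D}$ with $\omega(\overline{\mathbb D})\subseteq\overline{\mathbb D}$, so $F\circ\omega$ is again a rational matrix function with no poles in $\overline{\mathbb D}$. Hence $F\circ\omega\in H^\infty_{M_{m\times n}}$ is a strong $H^2$-function and Lemma~\ref{lem0.3lem} is applicable to it.

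Next I would describe the maximal minors of $F\circ\omega$. For each choice $S$ of $m$ of the $n$ columns, the submatrix $(F\circ\omega)|_S$ equals $(F|_S)\circ\omega$, composition being applied entrywise; and since $\det$ is a polynomial in the matrix entries, $\det\bigl((F\circ\omega)|_S\bigr)=g_S\circ\omega$, where $g_S:=\det(F|_S)\in H^\infty$ is a scalar rational function. By Corollary~\ref{lemou234}, the inner part of $g_S\circ\omega$ is $(g_S)_i\circ\omega$, and since $g_S$ is rational, $(g_S)_i$ is a finite Blaschke product; hence so is $(g_S)_i\circ\omega$. (If $g_S\equiv 0$ then $g_S\circ\omega\equiv 0$ as well, as $\omega$ is a nonconstant inner function, so such $S$ are irrelevant to either g.c.d.)

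Finally I would bring in the hypothesis. Since $F$ is outer, Lemma~\ref{lem0.3lem} gives $\hbox{\rm g.c.d.}\,\{(g_S)_i\}=1$, the g.c.d.\ being taken over the finitely many $S$ with $g_S\not\equiv 0$. Applying Corollary~\ref{corbcothm}(i) to this finite family of finite Blaschke products and to the finite Blaschke product $\omega$ yields $\hbox{\rm g.c.d.}\,\{(g_S)_i\circ\omega\}=1$. Since $\{(g_S)_i\circ\omega\}$ is exactly the family of inner parts of the nonzero maximal minors of $F\circ\omega$, Lemma~\ref{lem0.3lem} (the implication $(b)\Rightarrow(a)$) shows that $F\circ\omega$ is outer.

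The only substantive ingredient is the scalar fact that composing with a finite Blaschke product preserves triviality of the g.c.d.\ of a finite family of finite Blaschke products, which is Corollary~\ref{corbcothm}; the remaining steps are bookkeeping about how minors and inner-outer factorizations behave under composition. The subtlety worth flagging is that rationality of $F$ is used twice — once to place each minor $g_S$ in $H^\infty$, so that Corollary~\ref{lemou234} (stated for $H^\infty$ symbols) applies, and once to guarantee that the inner parts $(g_S)_i$ are finite Blaschke products, so that Corollary~\ref{corbcothm} applies — which is why one should not expect the identical argument to work verbatim for general bounded type $F$.
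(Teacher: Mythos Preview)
Your proof is correct and follows essentially the same approach as the paper: apply Lemma~\ref{lem0.3lem} to reduce to the scalar minors $g_S=\det(F|_S)$, use Corollary~\ref{lemou234} to identify the inner part of $g_S\circ\omega$ as $(g_S)_i\circ\omega$, and then invoke Corollary~\ref{corbcothm} to pass the coprimeness of the $(g_S)_i$ through composition with $\omega$. Your added remarks on well-definedness, the identically-zero minors, and the precise role of rationality are helpful clarifications not spelled out in the paper's proof, but the argument is the same.
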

\begin{proof} \
Suppose $F$ is a rational outer function. \ Then by Lemma
\ref{lem0.3lem},
\begin{equation}\label{outer6789}
\hbox{\rm g.c.d.}\Bigl\{\Bigl(\det (F|_{\mathbb
C^m})\Bigr)_i\Bigr\}=1.
\end{equation}
Since $ \det \bigl((F \circ \omega)|_{\mathbb C^m}\bigr)=\det
\bigl((F|_{\mathbb C^m}) \circ \omega\bigr)=\Bigl(\det
\bigl(F|_{\mathbb C^m}\bigr)\Bigr) \circ \omega$, it follows that
\begin{equation}\label{outer33}
\Bigl(\det \bigl((F \circ \omega)|_{\mathbb
C^m}\bigr)\Bigr)_i=\Bigl(\bigl(\det (F|_{\mathbb C^m})\bigr) \circ
\omega\Bigr)_i.
\end{equation}
Thus it follows from Corollary \ref{lemou234} and (\ref{outer33})
that
\begin{equation}\label{outer3353}
\Bigl(\det \bigl((F \circ \omega)|_{\mathbb
C^m}\bigr)\Bigr)_i=\Bigl(\det \bigl(F|_{\mathbb C^m}\bigr)\Bigr)_i
\circ \omega.
\end{equation}
Since $F$ is rational, $\det \bigl(F|_{\mathbb C^m}\bigr)$ is also
rational. \ It thus follows from (\ref{outer6789}),
(\ref{outer3353}) and Corollary \ref{corbcothm} that
$$
\hbox{\rm g.c.d.}\Bigl\{\Bigl(\det \bigl(F \circ \omega|_{\mathbb
C^m}\bigr)\Bigr)_i\Bigr\} =\hbox{\rm g.c.d.}\Bigl\{\Bigl(\det
\bigl(F|_{\mathbb C^m}\bigr)\Bigr)_i \circ \omega \Bigr\}=1.
$$
Thus by Lemma \ref{lem0.3lem}, $F \circ \omega$ is an outer
function. \
\end{proof}

\bigskip

Suppose $\Phi \in H_{M_n}^2$ is such that $\Phi^*$ is of bounded
type. \  Then in view of (\ref{2.9}), we may write $\Phi=\Theta A^*$
(right coprime). \  Now we define the degree of $\Phi$ by
\begin{equation}\label{2.18}
\hbox{deg}\, (\Phi):=\hbox{dim}\, \mathcal H (\Theta)\,.
\end{equation}
Since by the Beurling-Lax-Halmos Theorem, the right coprime
factorization  is unique (up to a unitary constant right factor), it
follows that $\hbox{deg}\,(\Phi)\label{degphi}$ is well-defined. \
Moreover, by using the well-known Fredholm theory of block Toeplitz
operators \cite{Do2} we can see that
\begin{equation}
\hbox{deg}\, (\Phi)=\hbox{\rm dim}\,\mathcal{H}(\Theta)=\hbox{\rm
deg\,(det\,$\Theta$\label{dettheta})}.
\end{equation}
We observe that if $\Theta \in H_{M_n}^2$ is an inner matrix
function then
\begin{equation}\label{2.18-8}
\hbox{\rm deg}\, (\Theta)< \infty \Longleftrightarrow \hbox{$\Theta$
is a finite Blaschke-Potapov product.}
\end{equation}
Thus if $\Phi \in H_{M_n}^2$, then $\Phi$ is rational if and only if
$\hbox{\rm deg}\, (\Phi) < \infty$.

On the other hand, the following lemma shows that the notion of
degree given in (\ref{2.18}) is also well defined in the sense that
it is independent of the left or the right coprime factorization. \

\begin{lemma}\label{lem2.9}
Let $\Phi\in H^2_{M_n}$ be such that $\Phi^*$ is of bounded type. \
If
$$
\Phi =\Theta_r A_r^*\ \ \hbox{\rm (right coprime)}
=A_{l}^*\Theta_{l}\ \ \hbox{\rm(left coprime)}\,,
$$
then $\hbox{\rm dim}\, \mathcal{H}(\Theta_r)=\hbox{\rm
dim}\,\mathcal{H}(\Theta_l)$. \
\end{lemma}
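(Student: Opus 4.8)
The plan is to show that both dimensions equal $\operatorname{deg}(\det\Theta_r)=\operatorname{deg}(\det\Theta_l)$, which is a scalar statement that is far easier to handle than the matrix one. First I would pass from $\Phi$ to $\widetilde\Phi$. Since $\widetilde\Phi(z)=\Phi^*(\overline z)$ and $\Phi^*$ is of bounded type, $\widetilde\Phi\in H^2_{M_n}$, and from the given factorizations we read off $\widetilde\Phi=\widetilde{A_r^*}\,\widetilde{\Theta_r}=\widetilde{A_r}^{\,*}\widetilde{\Theta_r}$ and $\widetilde\Phi=\widetilde{\Theta_l}\,\widetilde{A_l}^{\,*}$; moreover, by definition of right/left coprimeness (Chapter~4), $A_r,\Theta_r$ right coprime means $\widetilde{A_r},\widetilde{\Theta_r}$ left coprime, and similarly $\widetilde{\Theta_l},\widetilde{A_l}$ are right coprime. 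So it suffices to prove: if $\Psi\in H^2_{M_n}$ has $\Psi=B^*\Delta$ (left coprime) $=\Gamma C^*$ (right coprime), then $\dim\mathcal H(\Delta)=\dim\mathcal H(\Gamma)$ — i.e., the claim is symmetric in ``left'' and ``right'' once one also invokes it for $\widetilde\Psi$, so there is really only one assertion to prove.

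Next I would use the Hankel-operator characterization. By (\ref{RCD}), $\Phi=\Theta_r A_r^*$ (right coprime) is equivalent to $\ker H_{\Phi^*}=\Theta_r H^2_{\mathbb C^n}$, hence $\dim\mathcal H(\Theta_r)=\dim\bigl(H^2_{\mathbb C^n}\ominus\ker H_{\Phi^*}\bigr)=\dim\overline{\operatorname{ran}}H_{\Phi^*}^*=\dim\overline{\operatorname{ran}}H_{\widetilde{\Phi^*}}=\dim\overline{\operatorname{ran}}H_{\Phi}$, using $H_\Phi^*=H_{\widetilde\Phi}$ from (\ref{1.2}) and $\widetilde{\Phi^*}=\Phi$ (up to the obvious identification, since $\widetilde{\Phi^*}(z)=\Phi(\overline z)^{**}=\Phi(\overline z)^{\mathsf T}$... actually more carefully: apply (\ref{RCD}) directly). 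For the left factorization: $\Phi=A_l^*\Theta_l$ (left coprime) is by definition $\widetilde\Phi=\widetilde{\Theta_l}\,\widetilde{A_l}^*$ (right coprime), so again by (\ref{RCD}), $\ker H_{\widetilde\Phi^*}=\widetilde{\Theta_l}H^2_{\mathbb C^n}$, giving $\dim\mathcal H(\Theta_l)=\dim\mathcal H(\widetilde{\Theta_l})=\dim\overline{\operatorname{ran}}H_{\widetilde\Phi^*}^*=\dim\overline{\operatorname{ran}}H_{\widetilde{\widetilde\Phi^*}}=\dim\overline{\operatorname{ran}}H_{\Phi^{*}}^{\ }$... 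The point is that both $\dim\mathcal H(\Theta_r)$ and $\dim\mathcal H(\Theta_l)$ are expressed as the rank (Hilbert-space dimension of the closed range) of a Hankel operator built from $\Phi$ or from $\widetilde\Phi$, and $H_{\widetilde\Phi}=H_\Phi^*$ has the same range-dimension as $H_\Phi$. So both equal the common quantity $\dim\overline{\operatorname{ran}}H_\Phi = \dim\overline{\operatorname{ran}}H_{\widetilde\Phi}$.

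The main obstacle — and the step needing the most care — is bookkeeping the several layers of tildes and adjoints so that $\dim\mathcal H(\Theta_r)$ and $\dim\mathcal H(\Theta_l)$ are genuinely identified with the \emph{same} Hankel operator's range dimension rather than with two a priori different ones; the identity $\ker H_\Phi = \Theta H^2_{\mathbb C^n}$ (Lemma~\ref{gu2}, valid because $\Phi^*$ of bounded type forces $\Phi$ of bounded type here by (\ref{2.6})) together with $\overline{\operatorname{ran}}H_\Phi^* = (\ker H_\Phi)^\perp = \mathcal H(\Theta)$ is the crucial bridge, and one must check it applies with the square inner functions $\Theta_r,\Theta_l$. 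An alternative, perhaps cleaner, route is purely via the scalar degree: by the Fredholm theory of block Toeplitz operators quoted after (\ref{2.18}), $\dim\mathcal H(\Theta_r)=\operatorname{deg}(\det\Theta_r)=-\operatorname{ind}T_{\det\Theta_r}$ and likewise for $\Theta_l$, and then observe $\det\Theta_r$ and $\det\Theta_l$ are scalar inner functions with $\Phi=\Theta_r A_r^*=A_l^*\Theta_l$ forcing $\det\Theta_r\,\overline{\det A_r}=\overline{\det A_l}\,\det\Theta_l$; matching inner parts of the two sides (using that $\det A_r$ is coprime to $\det\Theta_r$ by Theorem~\ref{lem33.9}, after reducing $\Theta_r$ to $I_{m_{\Theta_r}}$-type considerations) yields $\det\Theta_r = \det\Theta_l$ up to a unimodular constant, hence equal degrees. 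I would present the Hankel-operator argument as the main line since it avoids the coprimeness-of-determinants subtlety and is the shortest.
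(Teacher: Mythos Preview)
Your main line is correct and is essentially the paper's proof: the paper writes the single chain
\[
\dim\mathcal H(\Theta_r)=\dim\bigl(\ker H_{A_r\Theta_r^*}\bigr)^\perp=\operatorname{rank}H_{A_r\Theta_r^*}=\operatorname{rank}H_{\Theta_l^*A_l}=\dim\bigl(\ker H_{\widetilde A_l\widetilde\Theta_l^*}\bigr)^\perp=\dim\mathcal H(\widetilde\Theta_l)=\dim\mathcal H(\Theta_l),
\]
using (\ref{RCD}) on each end, $H_\Psi^*=H_{\widetilde\Psi}$ in the middle, and (\ref{dettheta}) for the last step. The one simplification you missed that dissolves your tilde bookkeeping worry is that $A_r\Theta_r^*$ and $\Theta_l^*A_l$ are literally the \emph{same} function $\Phi^*$, so there is only one Hankel operator in play from the start.
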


\begin{proof} Observe that
$$
\begin{aligned}
\hbox{dim}\,\mathcal{H}(\Theta_r)
&=\hbox{dim}\,\Bigl(\hbox{ker}\,H_{A_r\Theta_r^*}\Bigr)^\perp\\
    &=\hbox{rank}\,H_{A_r\Theta_r^*}=\hbox{rank}\,H_{\Theta_l^* A_l}\\
    &
       =\hbox{dim}\,\Bigl(\hbox{ker}\,H_{\widetilde A_l \widetilde{\Theta}_l^*}\Bigr)^\perp\\
    &=\hbox{dim}\,\mathcal{H}(\widetilde\Theta_l)=\hbox{dim}\,\mathcal{H}(\Theta_l)\,,
\end{aligned}
$$
where the last equality follows from (\ref{dettheta}). \
\end{proof}

\medskip

We should not expect that $\hbox{deg}\,(\Phi)=\hbox{deg}\,
\bigl(\hbox{det}\,\Phi\bigr)$ for a rational function $\Phi\in
H^2_{M_n}$. \  To see this, let
$$
\Phi:=\begin{pmatrix}z&-b_{\alpha}z\\0&1\end{pmatrix} \quad
(b_{\alpha}(z):=\frac{z-\alpha}{1-\overline{\alpha}z}).
$$
Then $\hbox{det}\, \Phi=z$, so that $\hbox{deg}\, ( \hbox{det}\,
\Phi)=1$. \  But $\Phi$ can be written as
$$
\Phi=\begin{pmatrix} b_{\alpha}z&0\\0&1\end{pmatrix}
\begin{pmatrix} b_{\alpha}&0\\-1&1\end{pmatrix}^* \equiv \Theta A^*
\quad (\hbox{right coprime})\,,
$$
so that $\hbox{deg}\, (\Phi)=\hbox{deg}\, ( \hbox{det}\,
\Theta)=\hbox{deg}\,(b_\alpha z)=2$. \

\bigskip

\bigskip

We recall that if $\Phi\in H^2_{M_n}$, define the {\it local rank}
of $\Phi$ by
$$
\hbox{Rank}\,\Phi:=\hbox{max}_{\zeta \in \mathbb
D}\,\hbox{rank}\,\Phi(\zeta),
$$
where $\hbox{rank}\,\Phi(\zeta):=\dim \Phi(\zeta)(\mathbb C^n)$.

\bigskip

The following lemma provides an information on the local rank.

\begin{lemma}\label{localrank}{\rm (}cf. \cite[p.44]{Ni}{\rm )} \
Suppose $\Phi\in H^2_{M_n}$ has the inner-outer factorization
$$
\Phi=\Phi_i\Phi_e.
$$
Then $\hbox{\rm cl}\, \Phi \mathcal P_{\mathbb C^n}=\Phi_i
H^2_{\mathbb C^m}$, where $m=\hbox{\rm Rank}\, \Phi=\hbox{\rm
Rank}\, \Phi_i.$
\end{lemma}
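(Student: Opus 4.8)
Suppose $\Phi\in H^2_{M_n}$ has the inner-outer factorization $\Phi=\Phi_i\Phi_e$. Then $\operatorname{cl}\Phi\mathcal P_{\mathbb C^n}=\Phi_i H^2_{\mathbb C^m}$, where $m=\operatorname{Rank}\Phi=\operatorname{Rank}\Phi_i$.

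The plan is to read off everything directly from the inner-outer factorization of the strong $H^2$-function $\Phi$ together with Lemma~\ref{lem3.8} (the Local Rank lemma), so that no independent computation is really needed. First I would recall that by the Inner-Outer Factorization theorem (p.\pageref{HLT}) applied to the strong $H^2$-function $\Phi$ with values in $\mathcal B(\mathbb C^n)$, we may write $\Phi=\Phi_i\Phi_e$ where $\Phi_e$ is outer with values in $\mathcal B(\mathbb C^n,\mathbb C^m)$ for some $m\le n$ and $\Phi_i$ is inner with values in $\mathcal B(\mathbb C^m,\mathbb C^n)$, i.e. $\Phi_i\in H^\infty_{M_{n\times m}}$ inner and $\Phi_e\in H^2_{M_{m\times n}}$ outer. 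By definition of outer, $\operatorname{cl}\Phi_e\mathcal P_{\mathbb C^n}=H^2_{\mathbb C^m}$. Since $\Phi_i$ is a bounded operator (multiplication by an inner function is an isometry of $H^2_{\mathbb C^m}$ into $H^2_{\mathbb C^n}$), it is in particular continuous, hence
$$
\operatorname{cl}\Phi\mathcal P_{\mathbb C^n}=\operatorname{cl}\Phi_i\Phi_e\mathcal P_{\mathbb C^n}=\Phi_i\bigl(\operatorname{cl}\Phi_e\mathcal P_{\mathbb C^n}\bigr)=\Phi_i H^2_{\mathbb C^m},
$$
using that the isometry $\Phi_i$ has closed range and therefore commutes with taking closures. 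This gives the displayed identity once the index $m$ is identified correctly.

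Next I would identify $m$ as the local rank. Apply Lemma~\ref{lem3.8} with $E=\mathbb C^n$, $D=\mathbb C^n$, and the strong $H^2$-function $\Phi$: the set $\{f\in H^2_{\mathbb C^n}:\Phi(\zeta)f(\zeta)\equiv 0\}$ equals $\vartheta H^2_{D'}$ for an inner $\vartheta$ and some Hilbert space $D'$, with $\dim\mathbb C^n=\dim D'+\operatorname{Rank}\Phi$. On the other hand, from $\Phi=\Phi_i\Phi_e$ with $\Phi_i$ inner (so $\Phi_i(\zeta)$ is injective for a.e. $\zeta$, indeed everywhere in $\mathbb D$ since an inner matrix function is pointwise isometric on $\mathbb T$ and hence has pointwise trivial kernel in $\mathbb D$ by the maximum principle) one sees $\{f:\Phi(\zeta)f(\zeta)\equiv 0\}=\{f:\Phi_e(\zeta)f(\zeta)\equiv 0\}$; applying Lemma~\ref{lem3.8} to $\Phi_e$ gives $\dim\mathbb C^n=\dim D''+\operatorname{Rank}\Phi_e$ for the corresponding kernel space. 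But $\Phi_e$ is outer, so $\operatorname{cl}\Phi_e\mathcal P_{\mathbb C^n}=H^2_{\mathbb C^m}$ forces $\operatorname{Rank}\Phi_e=m$ (the generic rank of an outer function equals the dimension of its target space; equivalently its kernel space is $\{0\}$, so $\dim D''=0$ and $\operatorname{Rank}\Phi_e=n-\dim D''$—wait, more carefully, $\Phi_e$ maps $\mathbb C^n\to\mathbb C^m$, so one applies the local rank lemma in the form appropriate to non-square $\Phi_e$ to get $\operatorname{Rank}\Phi_e=m$). Since $\Phi_i$ is pointwise injective, $\operatorname{rank}\Phi(\zeta)=\operatorname{rank}\Phi_e(\zeta)$ for every $\zeta$, whence $\operatorname{Rank}\Phi=\operatorname{Rank}\Phi_e=m$; and since the range of $\Phi_i(\zeta)$ has dimension $m$ for every $\zeta\in\mathbb D$ (it is an isometry on $\mathbb C^m$), $\operatorname{Rank}\Phi_i=m$ as well. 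This gives $m=\operatorname{Rank}\Phi=\operatorname{Rank}\Phi_i$.

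Combining the two parts completes the proof. The main obstacle I anticipate is not in the closure argument — which is routine once one notes $\Phi_i$ is an isometry with closed range — but in the bookkeeping of the local rank for the non-square outer factor $\Phi_e\in H^2_{M_{m\times n}}$: one must be careful that Lemma~\ref{lem3.8} is being invoked with the correct domain/codomain dimensions so that "outer" translates into "full local rank $m$," and that the pointwise injectivity of the inner factor $\Phi_i$ (so that multiplying by it does not drop rank) is justified. Since the excerpt explicitly states "See \cite[p.44]{Ni}" for this lemma, the cleanest route is in fact to cite Nikolskii directly; the argument above is the sketch of what that reference contains, and I would present it in the condensed form: by the inner-outer factorization and continuity of $\Phi_i$, $\operatorname{cl}\Phi\mathcal P_{\mathbb C^n}=\Phi_i\operatorname{cl}\Phi_e\mathcal P_{\mathbb C^n}=\Phi_i H^2_{\mathbb C^m}$ with $m=\operatorname{Rank}\Phi$ by Lemma~\ref{lem3.8} and $\operatorname{Rank}\Phi_i=m$ by pointwise injectivity of the inner factor.
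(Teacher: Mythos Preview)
The paper does not give its own proof of this lemma; it simply cites \cite[p.44]{Ni}. Your sketch is therefore not competing with anything in the paper, and your closure argument is exactly the right one: since $\Phi_e$ is outer, $\hbox{cl}\,\Phi_e\mathcal P_{\mathbb C^n}=H^2_{\mathbb C^m}$, and since multiplication by the inner $\Phi_i$ is an isometry (hence has closed range), $\hbox{cl}\,\Phi\mathcal P_{\mathbb C^n}=\Phi_i H^2_{\mathbb C^m}$.

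There is one genuine slip in your rank identification. You assert that $\Phi_i(\zeta)$ is injective for every $\zeta\in\mathbb D$ ``by the maximum principle,'' but this is false in general: for instance $\Phi_i(z)=\begin{pmatrix} z\\0\end{pmatrix}\in H^\infty_{M_{2\times 1}}$ is inner, yet $\Phi_i(0)=0$. The maximum principle controls interior maxima of $\|\Phi_i(\zeta)v\|$, not interior zeros. The correct (and equally short) argument is that some $m\times m$ minor of $\Phi_i$ is not identically zero (on $\mathbb T$ the columns of $\Phi_i$ are orthonormal), so that minor is a nonzero holomorphic function and $\hbox{rank}\,\Phi_i(\zeta)=m$ off a discrete set; hence $\hbox{Rank}\,\Phi_i=m$. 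The same reasoning applied to $\Phi=\Phi_i\Phi_e$ (pick $\zeta$ where both $\Phi_i(\zeta)$ has rank $m$ and $\Phi_e(\zeta)$ has rank $m$) gives $\hbox{Rank}\,\Phi=m$. With that fix, your argument is complete and matches what one finds in Nikolskii.
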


\medskip

We then have:

\begin{theorem}\label{thmcodimgh2.41}
Suppose $\Phi \in H^{\infty}_{M_n}$. Then we have:
\begin{itemize}
\item[(a)]  \ If $\det \Phi =0$, then $\hbox{\rm codim}\bigl(\hbox{\rm ran}\, T_{\Phi}\bigr)=\infty$;
\item[(b)]  \ If $\det \Phi \neq 0$, then $\hbox{\rm codim}\bigl(\hbox{\rm ran}\, T_{\Phi}\bigr) \leq \hbox{\rm
rank}\,H_{\Phi^*}$.
\end{itemize}
\end{theorem}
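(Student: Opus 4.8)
The plan is to reduce both statements to the computation of $\dim \ker T_{\Phi^*}$. Since $\Phi \in H^\infty_{M_n}$ acts by multiplication on $H^2_{\mathbb C^n}$, we have $\hbox{ran}\, T_\Phi = \Phi H^2_{\mathbb C^n}$, and passing to orthogonal complements gives $(\hbox{ran}\, T_\Phi)^\perp = \ker T_\Phi^* = \ker T_{\Phi^*}$; thus $\hbox{codim}(\hbox{ran}\, T_\Phi) = \dim \ker T_{\Phi^*}$. Everything then reduces to estimating this dimension in each of the two regimes.

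For (a), I would first take the inner-outer factorization $\Phi = \Phi_i \Phi_e$ with $\Phi_i \in H^2_{M_{n\times m}}$ inner and $m = \hbox{Rank}\, \Phi$. Since $\det \Phi = 0$, we have $\hbox{rank}\, \Phi(\zeta) \le n-1$ for every $\zeta \in \mathbb D$, hence $m \le n-1$. By Lemma \ref{localrank}, $\hbox{cl}(\hbox{ran}\, T_\Phi) = \Phi_i H^2_{\mathbb C^m}$, so $\ker T_{\Phi^*} = \mathcal H(\Phi_i)$, and it remains to show that a non-square inner function has an infinite-dimensional model space, i.e. $\dim \mathcal H(\Theta) = \infty$ whenever $\Theta \in H^2_{M_{n\times m}}$ is inner with $m < n$. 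For this I would use reproducing kernels: given any $N$ distinct points $z_1, \dots, z_N \in \mathbb D$, the matrix $\Theta(z_j)^* \colon \mathbb C^n \to \mathbb C^m$ has nontrivial kernel (as $m<n$), so one may pick $0 \ne v_j \in \ker \Theta(z_j)^*$; a one-line computation with the reproducing kernel of $H^2_{\mathbb C^n}$ shows that $\frac{v_j}{1 - \overline{z_j}z} \in \mathcal H(\Theta)$, and these $N$ functions are linearly independent. Hence $\dim \mathcal H(\Phi_i) \ge N$ for all $N$, proving (a).

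For (b), the point is that $H_{\Phi^*}$ is injective on $\ker T_{\Phi^*}$. Indeed, if $f \in \ker T_{\Phi^*}$ then $P_n(\Phi^* f) = 0$, i.e. $\Phi^* f \in (H^2_{\mathbb C^n})^\perp$, so $H_{\Phi^*} f = J_n P_n^\perp(\Phi^* f) = J_n(\Phi^* f)$; were this zero we would get $\Phi^* f = 0$ a.e. because $J_n$ is unitary. But $\det \Phi \ne 0$ forces the boundary function $\det \Phi$ to be nonzero a.e. on $\mathbb T$, so $\Phi(z)$, hence $\Phi(z)^*$, is invertible for a.e. $z$; therefore $f = 0$. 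Consequently $H_{\Phi^*}$ carries $\ker T_{\Phi^*}$ injectively into $\hbox{ran}\, H_{\Phi^*}$, which gives $\hbox{rank}\, H_{\Phi^*} \ge \dim \ker T_{\Phi^*} = \hbox{codim}(\hbox{ran}\, T_\Phi)$ — the inequality being automatic if the rank is infinite.

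I expect part (a) to be the delicate one: it is where the hypothesis $\det \Phi = 0$ must be converted into an honest non-square left inner divisor of $\Phi$ (this is precisely what Lemma \ref{localrank} supplies, through $m = \hbox{Rank}\, \Phi$), and where one then needs the clean fact that non-square inner functions have infinite-dimensional model spaces. Part (b) is comparatively short once one notices that $J_n$ converts $H_{\Phi^*}|_{\ker T_{\Phi^*}}$ into the restriction of multiplication by the a.e.-invertible matrix $\Phi^*$.
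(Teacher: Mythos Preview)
Your proof is correct. Part (a) follows the paper's route exactly (inner--outer factorization plus Lemma~\ref{localrank}); the paper simply asserts $\mathcal H(\Phi_i)\cong H^2_{\mathbb C^{n-m}}$ to get the infinite dimension, whereas your reproducing-kernel argument spells this out concretely and is a nice self-contained justification.

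Part (b) is where you genuinely diverge. The paper argues by dichotomy: if $\Phi$ is not rational then $\hbox{rank}\,H_{\Phi^*}=\infty$ by Kronecker's lemma and there is nothing to prove; if $\Phi$ is rational it takes the left coprime factorization $\Phi=A^*\Theta$, uses injectivity of $T_A$ (from $\det A\ne 0$) to embed $\ker T_\Phi^*$ into $\mathcal H(\Theta)$, and then identifies $\dim\mathcal H(\Theta)$ with $\hbox{rank}\,H_{\Phi^*}$. Your argument bypasses both the case split and the coprime machinery: you observe directly that on $\ker T_{\Phi^*}$ the Hankel operator reduces to $f\mapsto J_n(\Phi^* f)$, which is injective because $\det\Phi\ne 0$ forces $\Phi(z)^*$ to be invertible a.e.\ on $\mathbb T$. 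This is shorter and more elementary, and it handles the finite- and infinite-rank cases uniformly. The paper's route, on the other hand, ties the bound explicitly to the inner part $\Theta$ of the Douglas--Shapiro--Shields factorization, which is exactly what is needed for the application in Corollary~\ref{thmcodimgh}; your argument gives the inequality but not that structural identification.
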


\begin{proof} \ Suppose that $\Phi$ has the following inner-outer factorization.
$$
\Phi=\Phi_i\Phi_e \qquad(\Phi_i\in H^{\infty}_{M_{n\times m}}, \
\Phi_e\in H^{\infty}_{M_{m\times n}} \ (m \leq n))
$$
Since $\Phi_e$ is an outer function, it follows from Lemma
\ref{localrank} that
\begin{equation}\label{dkfgkmjhg}
\hbox{cl ran}\, T_{\Phi}=\hbox{cl}\,\Phi H^2_{\mathbb
C^n}=\hbox{cl}\,\Phi \mathcal P_{\mathbb C^n}= \Phi_iH^2_{\mathbb
C^m} \quad(\hbox{Rank}\, \Phi=m).
\end{equation}
If $\det \Phi = 0$, then by Lemma \ref{localrank}, $\hbox{Rank}\,
\Phi_i=\hbox{Rank}\, \Phi=m<n$, so that $\Phi_i \in
H^{\infty}_{M_{n\times m}}$. \ Thus $\mathcal H(\Phi_i)\cong
H^2_{\mathbb C^{n-m}}$, so that by (\ref{dkfgkmjhg}), $\hbox{\rm
codim}\bigl(\hbox{\rm ran}\, T_{\Phi}\bigr)=\dim\,\mathcal
H(\Phi_i)=\infty$, which gives (a). \ Towards (b),  suppose that
$\hbox{det}\, \Phi \neq 0$. \ If $\Phi$ is not rational, then
$\hbox{\rm rank}\,H_{\Phi^*}=\infty$, which gives the result. \  If
instead $\Phi \in H^{\infty}_{M_n}$ is rational, then we may write
$$
\Phi=A^*\Theta \quad \ (\hbox{left coprime}),
$$
where $\Theta \in H^{\infty}_{M_n}$.  \ Then $\det A
\neq 0$, and hence $T_A$ is injective. \ Thus
$$
\aligned \ker T_{\Phi}^*&=\ker T_{\Theta^*}T_A=\{f\in H^2_{\mathbb
C^n}: T_{\Theta^*}T_Af=0\}\\
&\cong T_A\Bigl\{f:T_{\Theta^*}T_Af=0\Bigr\}\\
&=(\hbox{\rm ran}\, T_A) \bigcap (\ker T_{\Theta^*})\subseteq
\mathcal H(\Theta).
\endaligned
$$
Therefore
$$
\hbox{\rm codim}\bigl(\hbox{\rm ran}\,T_{\Phi}\bigr) =\dim \ker
T_{\Phi}^*\le\dim \mathcal H(\Theta) =\dim\,\bigl(\ker
H_{\Phi^*}^*\bigr)^{\perp}=\hbox{\rm rank}\,H_{\Phi^*},
$$
which gives the result.   \
\end{proof}

\bigskip

\begin{corollary}\label{thmcodimgh}
Suppose $\Phi \in H^{\infty}_{M_n}$ be such that $\Phi^*$ is of
bounded type. \ Thus we may write
$$
\Phi=
\begin{cases}
\, \Phi_i\Phi_e\  &\hbox{\rm (inner-outer factorization)}\\
\, A^*\Theta  &\hbox{\rm (left coprime)}.
\end{cases}
$$
If  $\det \Phi \neq 0$, then $\deg(\Phi_i)\le \deg(\Theta)$. \ In
particular, if $\Phi$ is a rational function, then $\Phi_i$ is a
finite Blaschke-Potapov product.
\end{corollary}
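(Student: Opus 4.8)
The plan is to read off the corollary from Theorem~\ref{thmcodimgh2.41}(b) by identifying its two sides with $\deg(\Phi_i)$ and $\deg(\Theta)$ respectively; here $\deg$ of an inner matrix function means, as usual, the dimension of the associated model space.

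First I would observe that the hypothesis $\det\Phi\ne 0$ makes both inner factors square: if $\Phi_i\in H^\infty_{M_{n\times m}}$ with $m<n$ then $\mathrm{rank}\,\Phi(\zeta)\le \mathrm{rank}\,\Phi_i(\zeta)\le m<n$ for every $\zeta$, forcing $\det\Phi\equiv 0$; hence $m=n$, i.e.\ $\Phi_i\in H^\infty_{M_n}$, and likewise $\Theta\in H^\infty_{M_n}$ by Lemma~\ref{lem0.3}. Next, since $\Phi_e$ is outer, equation (\ref{dkfgkmjhg}) in the proof of Theorem~\ref{thmcodimgh2.41} gives, with $m=n$,
$$\mathrm{cl}\,\bigl(\mathrm{ran}\,T_\Phi\bigr)=\Phi_i H^2_{\mathbb C^n}.$$
As $\mathrm{ran}\,T_\Phi\subseteq \Phi_i H^2_{\mathbb C^n}$, there is a surjection $H^2_{\mathbb C^n}/\mathrm{ran}\,T_\Phi \twoheadrightarrow H^2_{\mathbb C^n}/\Phi_i H^2_{\mathbb C^n}\cong \mathcal H(\Phi_i)$, so that
$$\deg(\Phi_i)=\dim\mathcal H(\Phi_i)\le \mathrm{codim}\,\bigl(\mathrm{ran}\,T_\Phi\bigr).$$

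Then I would invoke Theorem~\ref{thmcodimgh2.41}(b) to get $\mathrm{codim}\,(\mathrm{ran}\,T_\Phi)\le \mathrm{rank}\,H_{\Phi^*}$, and identify the right-hand side with $\deg(\Theta)$: writing the right coprime factorization $\Phi=\Theta_r A_r^*$, relation (\ref{RCD}) gives $\ker H_{\Phi^*}=\Theta_r H^2_{\mathbb C^n}$, hence $\mathrm{rank}\,H_{\Phi^*}=\dim(\ker H_{\Phi^*})^{\perp}=\dim\mathcal H(\Theta_r)$, and by Lemma~\ref{lem2.9} this equals $\dim\mathcal H(\Theta)=\deg(\Theta)$, since $\Theta$ is the inner part of the left coprime factorization of the same $\Phi$. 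Chaining the three estimates yields $\deg(\Phi_i)\le\deg(\Theta)$.

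For the rational case, if $\Phi$ is rational then $\Phi^*$ is automatically of bounded type and, as remarked after Remark~\ref{DSS}, the inner part $\Theta$ of the left coprime factorization may be chosen to be a finite Blaschke--Potapov product; thus $\deg(\Theta)<\infty$ by (\ref{2.18-8}), whence $\deg(\Phi_i)<\infty$, and since $\Phi_i$ is a (square) inner matrix function, (\ref{2.18-8}) applied again shows that $\Phi_i$ is a finite Blaschke--Potapov product. I expect no genuine obstacle here; the one place to be careful is the chain $\deg(\Phi_i)\le\mathrm{codim}\,(\mathrm{ran}\,T_\Phi)\le\mathrm{rank}\,H_{\Phi^*}=\deg(\Theta)$, specifically keeping straight the passage between range-rank and kernel-codimension and between the left and right coprime factorizations via Lemma~\ref{lem2.9}.
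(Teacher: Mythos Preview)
Your proof is correct and follows essentially the same route as the paper's: both chain $\deg(\Phi_i)$ through $\mathrm{codim}(\mathrm{ran}\,T_\Phi)$ using (\ref{dkfgkmjhg}), then apply Theorem~\ref{thmcodimgh2.41}(b), and finally identify $\mathrm{rank}\,H_{\Phi^*}$ with $\deg(\Theta)$. The only cosmetic differences are that the paper writes the first step as an equality $\deg(\Phi_i)=\mathrm{codim}(\mathrm{ran}\,T_\Phi)$ (interpreting codimension as $\dim\ker T_\Phi^*$) rather than your inequality via the quotient surjection, and that the paper passes directly through $\dim\mathcal H(\widetilde\Theta)$ where you invoke the right coprime factorization and Lemma~\ref{lem2.9}; both routes are equivalent.
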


\begin{proof}   \ Since $\det \Phi \neq 0$,
it follows from Theorem \ref{thmcodimgh2.41} that
$$
\hbox{deg}\, (\Phi_i)=\hbox{\rm codim}\bigl(\hbox{\rm ran}\,
T_{\Phi}\bigr) \leq \hbox{\rm rank}\,H_{\Phi^*}=\dim \mathcal H
(\widetilde{\Theta})=\hbox{deg}\, (\Theta).
$$
The last assertion is obvious. \
\end{proof}

\medskip

The following example illustrates Corollary \ref{thmcodimgh}. \

\begin{example}
(i) Let
$$
\Phi:=\begin{pmatrix} z&z\\ 0&0
\end{pmatrix}.
$$
Then $\Phi$ has the following inner-outer factorization:
$$
\Phi=\begin{pmatrix} z\\0\end{pmatrix} \begin{pmatrix}
1&1\end{pmatrix}\equiv \Phi_{i}\Phi_e.
$$
Thus we have
$$
\deg\,(\Phi_i)=\dim \begin{pmatrix} zH^2\\
0\end{pmatrix}^\perp =\infty,
$$
On the other hand,
$$
\Phi=\Biggl(\frac{1}{\sqrt{2}}\begin{pmatrix}2&0\\0&0\end{pmatrix}\Biggr)^*
\Biggl(\frac{1}{\sqrt{2}}\begin{pmatrix}
z&z\\1&-1\end{pmatrix}\Biggr)\equiv A^* \Theta \quad(\hbox{left
coprime}).
$$
Thus $\hbox{deg}\ (\Theta)=1< \hbox{deg}\, (\Phi_i)$. \ Note that
$\det \Phi=0$.

\medskip

(ii) Let
$$
\Phi:=\begin{pmatrix} z&0\\ 0&z+2\end{pmatrix}.
$$
Then we have
$$
\Phi=\begin{cases}
\begin{pmatrix} z&0\\ 0&1\end{pmatrix}\begin{pmatrix} 1&0\\ 0&z+2\end{pmatrix}\equiv \Phi_i\Phi_e\ \ \hbox{(inner-outer factorization)}\\
\begin{pmatrix} 1&0\\ 0&1+2z\end{pmatrix}^*\begin{pmatrix} z&0\\0&z\end{pmatrix}\equiv A^*\Theta\ \ \hbox{(left coprime)}.
\end{cases}
$$
Thus by (\ref{dettheta}), we have $\deg
(\Theta)=\deg(\det\Theta)=\deg (z^2)=2$, but $\deg(\Phi_i)=\deg
(\det \Phi_i)=\deg (z)=1$.
\end{example}

\bigskip

\begin{corollary} \label{lemou234567} \
Suppose $\Phi \in H^{\infty}_{M_n}$  is a rational function with
$\det \Phi \neq 0$. \ Then for a finite Blaschke product $\omega$,
we have
$$
\Phi \circ \omega =(\Phi_i \circ \omega) \cdot (\Phi_e \circ \omega)
\quad (\hbox{\rm inner-outer factorization}).
$$
\end{corollary}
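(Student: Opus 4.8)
The plan is to reduce the statement to facts already established in this chapter: for a rational $\Phi$ with $\det\Phi\neq0$ the inner factor $\Phi_i$ is a (square) finite Blaschke--Potapov product and the outer factor $\Phi_e$ is a rational outer function, together with the two preservation properties ``composition with a finite Blaschke product preserves inner-ness'' and ``composition with a finite Blaschke product preserves outer-ness.''

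First I would record the shape of the inner--outer factorization $\Phi=\Phi_i\Phi_e$. Since $\det\Phi\neq0$, Lemma \ref{lem0.3} shows that every left inner divisor of $\Phi$ is square, so $\Phi_i\in H^\infty_{M_n}$ and $\Phi_e\in H^2_{M_n}$; and since $\Phi$ is rational, Corollary \ref{thmcodimgh} gives that $\Phi_i$ is a finite Blaschke--Potapov product. Because $\det\Phi_i\neq0$, the inverse $\Phi_i^{-1}$ exists as a rational matrix function, so $\Phi_e=\Phi_i^{-1}\Phi$ is a rational matrix function, and being outer it is a rational outer function in $H^2_{M_n}$.

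Next, composition is functorial, so trivially $\Phi\circ\omega=(\Phi_i\circ\omega)(\Phi_e\circ\omega)$, and it remains to identify the two factors on the right as inner and outer, respectively. For $\Phi_i\circ\omega$: the finite Blaschke product $\omega$ extends continuously to $\overline{\mathbb D}$ with $\omega(\mathbb T)\subseteq\mathbb T$, while $\Phi_i$, being a finite Blaschke--Potapov product, extends continuously to $\overline{\mathbb D}$ and is unitary-valued on $\mathbb T$; hence $\Phi_i\circ\omega$ is a rational matrix function, analytic on $\mathbb D$ and unitary-valued on $\mathbb T$, i.e.\ a rational inner function, equivalently a finite Blaschke--Potapov product by $(\ref{2.18-8})$. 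For $\Phi_e\circ\omega$: apply Theorem \ref{thmouter567} with $F=\Phi_e\in H^2_{M_n}$ (the case $m=n$), which yields that $\Phi_e\circ\omega$ is outer.

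Combining, $(\Phi_i\circ\omega)(\Phi_e\circ\omega)$ exhibits $\Phi\circ\omega$ as an inner function times an outer function, hence is the inner--outer factorization of $\Phi\circ\omega$. The only step requiring a little care is the identification of $\Phi_e$ as a rational outer function so that Theorem \ref{thmouter567} is applicable; I expect this to be the main (though still routine) obstacle, everything else being bookkeeping built on Corollary \ref{thmcodimgh}, $(\ref{2.18-8})$, and Theorem \ref{thmouter567}.
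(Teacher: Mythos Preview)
Your proof is correct and follows essentially the same approach as the paper's: invoke Corollary \ref{thmcodimgh} to see that $\Phi_i$ is a finite Blaschke--Potapov product (hence $\Phi_i\circ\omega$ is inner), then apply Theorem \ref{thmouter567} to conclude $\Phi_e\circ\omega$ is outer. You have simply filled in more detail than the paper does, in particular the justification that $\Phi_e$ is itself rational so that Theorem \ref{thmouter567} applies.
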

\begin{proof} \ Suppose that $\Phi$ is a rational function and $\det \Phi \neq 0$.  \
Then by Theorem \ref{thmcodimgh}, $\Phi_i$ is a finite
Blaschke-Potapov product. Thus $\Phi_i \circ \omega$ is a finite
Blaschke-Potapov product, so that by Theorem \ref{thmouter567} we
have
$$
\Phi \circ \omega =(\Phi_i \circ \omega) \cdot (\Phi_e \circ \omega)
\quad (\hbox{inner-outer factorization}).
$$
This completes the proof. \ \end{proof}

To proceed, we need:


\begin{lemma}\label{asdlem2.1}
If $\Theta_1, \Theta_2\in H^\infty_{M_n}$ are rational inner
functions then the following are equivalent:
\begin{itemize}
\item[(a)] $\Theta_1$ and $\Theta_2$ are right coprime;
\item[(b)] $\ker\Theta_1(\alpha)\cap \ker \Theta_2(\alpha)=\{0\}$ for any $\alpha\in\mathbb D$.
\end{itemize}
\end{lemma}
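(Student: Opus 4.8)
The plan is to prove both implications by contraposition, using two standard facts from the excerpt: that a rational inner $n\times n$ matrix function is exactly a finite Blaschke--Potapov product (so in particular it has no pole in $\overline{\mathbb D}$ and its determinant is a finite Blaschke product), and the dictionary whereby $B$ is a right inner divisor of $\Theta$ iff $\Theta=KB$ with $K,B$ inner, iff $\widetilde B$ is a left inner divisor of $\widetilde\Theta$. Thus (a) fails precisely when $\Theta_1,\Theta_2$ admit a common right inner divisor that is not a unitary constant.

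First I would show $\lnot$(b)$\Rightarrow\lnot$(a). Given $\alpha\in\mathbb D$ and a unit vector $v\in\mathbb C^n$ with $\Theta_1(\alpha)v=\Theta_2(\alpha)v=0$, let $P$ be the orthogonal projection of $\mathbb C^n$ onto $\mathbb C v$ and form the Blaschke--Potapov factor $B:=b_\alpha P+(I-P)$, a nonconstant inner function in $H^\infty_{M_n}$. For $z\ne\alpha$ one has $B(z)^{-1}=b_\alpha(z)^{-1}P+(I-P)$, so $\Theta_i(z)B(z)^{-1}=b_\alpha(z)^{-1}\bigl(\langle\,\cdot\,,v\rangle\,\Theta_i(z)v\bigr)+\Theta_i(z)(I-P)$. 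The key point is that $K_i:=\Theta_iB^{-1}$ is in fact a bounded analytic matrix function: since $\Theta_i$ is rational inner it has no pole in $\overline{\mathbb D}$, and at $\alpha$ the potential pole of $b_\alpha^{-1}\Theta_i(\cdot)v$ is cancelled because $\Theta_i(\alpha)v=0$ annihilates the simple zero of $b_\alpha$ there; hence $K_i\in H^\infty_{M_n}$, and on $\mathbb T$ one has $K_i=\Theta_iB^*$ with $\Theta_i,B$ unitary-valued, so $K_i$ is unitary a.e.\ on $\mathbb T$, i.e.\ inner. Then $\Theta_i=K_iB$ for $i=1,2$, so $\widetilde B$ is a common left inner divisor of $\widetilde\Theta_1,\widetilde\Theta_2$ which is not a unitary constant, i.e.\ $\Theta_1,\Theta_2$ are not right coprime.

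For the converse, $\lnot$(a)$\Rightarrow\lnot$(b): if $\Theta_1,\Theta_2$ are not right coprime, then $\widetilde\Theta_1,\widetilde\Theta_2$ have a common left inner divisor $\widetilde G$ that is not a unitary constant, equivalently $\Theta_i=K_iG$ ($i=1,2$) with $K_i\in H^\infty_{M_n}$ inner and $G$ not a unitary constant. Since $\Theta_1$ is inner, $\det\Theta_1$ is a scalar inner function, and being rational it is a finite Blaschke product; from $\det\Theta_1=(\det K_1)(\det G)$ the scalar inner function $\det G$ is then a finite Blaschke product. Because $G$ is not a unitary constant, $\mathcal H(G)\ne\{0\}$ by the Beurling--Lax--Halmos Theorem, so $\deg(\det G)=\dim\mathcal H(G)\ge 1$ by $(\ref{dettheta})$; hence $\det G$ has a zero $\alpha\in\mathbb D$, so $\ker G(\alpha)\ne\{0\}$, and any nonzero $v\in\ker G(\alpha)$ satisfies $\Theta_i(\alpha)v=K_i(\alpha)G(\alpha)v=0$ for $i=1,2$, so (b) fails.

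The main obstacle is the verification in the first implication that $\Theta_iB^{-1}$ is inner; this is the only place the hypotheses do real work — ``$\Theta_i$ rational'' guarantees no pole in $\overline{\mathbb D}$, and ``$\Theta_i(\alpha)v=0$'' together with the simplicity of the zero of $b_\alpha$ at $\alpha$ removes the only candidate pole inside $\mathbb D$, while unitarity on $\mathbb T$ is automatic from $\Theta_i$ and $B$ being inner. The remainder is bookkeeping with the $\widetilde{\,\cdot\,}$ operation and the degree identity already recorded in the excerpt.
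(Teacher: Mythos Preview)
Your argument is correct. Note that the paper does not supply its own proof here: it simply cites \cite[Corollary 3.2]{CHKL}. Your proposal therefore provides a self-contained argument where the paper defers to external literature. The two directions you give are essentially the standard ones: building a Blaschke--Potapov factor from a common kernel vector to produce a nontrivial common right inner divisor, and conversely extracting a zero of $\det G$ from a nonconstant common right inner divisor $G$. One small remark on presentation: your appeal to (\ref{dettheta}) for the inequality $\dim\mathcal H(G)=\deg(\det G)\ge 1$ is slightly off in that (\ref{dettheta}) is stated for the inner part of a right coprime factorization; the fact you actually use, $\dim\mathcal H(G)=-\operatorname{ind}T_G=-\operatorname{ind}T_{\det G}$, is invoked verbatim in the proof of Theorem~\ref{lem33.9}, so you could cite that instead. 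Also, you implicitly use that the common right inner divisor $G$ is square; this is justified since $\det\widetilde\Theta_1\ne 0$ forces any left inner divisor of $\widetilde\Theta_1$ to be square (p.~\pageref{square2}).
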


\begin{proof}
Immediate from \cite[Corollary 3.2]{CHKL}.
\end{proof}

\begin{theorem}\label{factorization6879}
Let $\Phi \in H_{M_n}^{\infty}$ be a matrix rational function with
$\det \Phi\ne 0$. \ Thus we may write
$$
\Phi= A^*\Theta  \ \ (\hbox{\rm left coprime}),
$$
where $\Theta$ is a finite Blaschke-Potapov product. \ Then, for a
finite Blaschke product $\omega$, we have
$$
\Phi\circ \omega=\bigl(A \circ \omega\bigr)^*\bigl(\Theta \circ
\omega\bigr) \ \ \hbox{\rm(left coprime)}.
$$
\end{theorem}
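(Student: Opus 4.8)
The plan is to first record the elementary identity $\Phi\circ\omega=(A\circ\omega)^*(\Theta\circ\omega)$ and then show that left coprimeness survives composition with $\omega$. Since a finite Blaschke product maps $\mathbb T$ into $\mathbb T$ almost everywhere, we have $(A^*)\circ\omega=(A\circ\omega)^*$ on $\mathbb T$, so $\Phi\circ\omega=(A\circ\omega)^*(\Theta\circ\omega)$; here $A\circ\omega\in H^\infty_{M_n}$, and $\Theta\circ\omega$ is again a finite Blaschke--Potapov product because $b_\lambda\circ\omega$ is a finite Blaschke product and, writing $b_\lambda\circ\omega=c\prod_{j}b_{\mu_j}$, the factor $(b_\lambda\circ\omega)P+(I-P)$ splits (using $P^2=P$) as $\bigl(cP+(I-P)\bigr)\prod_j\bigl(b_{\mu_j}P+(I-P)\bigr)$, i.e. a unitary constant times a finite product of Blaschke--Potapov factors. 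From $\overline{\det A}\cdot\det\Theta=\det\Phi\not\equiv 0$ we get $\det A\ne 0$, hence $\det(A\circ\omega)=(\det A)\circ\omega\not\equiv 0$. Thus it remains only to prove that $A\circ\omega$ and $\Theta\circ\omega$ are left coprime.

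The next step is to reduce to the case of rational inner functions. Let $A=A_iA_e$ be the inner--outer factorization; since $A$ is rational with $\det A\ne 0$, Corollary \ref{thmcodimgh} shows $A_i$ is a finite Blaschke--Potapov product, and by Lemma \ref{lem0.3} $A_i$ (and hence $A_e$) is square, with $A_e$ rational outer. By Corollary \ref{lemou234567}, $A\circ\omega=(A_i\circ\omega)(A_e\circ\omega)$ is the inner--outer factorization of $A\circ\omega$, with $A_i\circ\omega$ a finite Blaschke--Potapov product and $A_e\circ\omega$ outer by Theorem \ref{thmouter567}. Using Lemma \ref{localrank} (so that $\operatorname{cl}\,F H^2_{\mathbb C^n}=F_iH^2_{\mathbb C^n}$ whenever $\det F\ne 0$) together with \cite[Corollary IX.2.2]{FF}, the left inner divisors of such an $F$ coincide with those of its inner part $F_i$. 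Consequently $A,\Theta$ are left coprime if and only if $A_i,\Theta$ are, and $A\circ\omega,\Theta\circ\omega$ are left coprime if and only if $A_i\circ\omega,\Theta\circ\omega$ are. So it suffices to prove: if $\Delta_1,\Delta_2\in H^\infty_{M_n}$ are rational inner and left coprime, then $\Delta_1\circ\omega,\Delta_2\circ\omega$ are left coprime.

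For this last step I would first establish a local criterion for left coprimeness: rational inner $\Delta_1,\Delta_2\in H^\infty_{M_n}$ are left coprime if and only if $\operatorname{ran}\,\Delta_1(\alpha)+\operatorname{ran}\,\Delta_2(\alpha)=\mathbb C^n$ for every $\alpha\in\mathbb D$. Indeed, left coprimeness of $\Delta_1,\Delta_2$ is by definition right coprimeness of $\widetilde{\Delta_1},\widetilde{\Delta_2}$, so by Lemma \ref{asdlem2.1} it is equivalent to $\ker\widetilde{\Delta_1}(\alpha)\cap\ker\widetilde{\Delta_2}(\alpha)=\{0\}$ for all $\alpha\in\mathbb D$; since $\widetilde{\Delta_j}(\alpha)=\Delta_j(\overline\alpha)^*$, one has $\ker\widetilde{\Delta_j}(\alpha)=(\operatorname{ran}\,\Delta_j(\overline\alpha))^\perp$, whence $\ker\widetilde{\Delta_1}(\alpha)\cap\ker\widetilde{\Delta_2}(\alpha)=\bigl(\operatorname{ran}\,\Delta_1(\overline\alpha)+\operatorname{ran}\,\Delta_2(\overline\alpha)\bigr)^\perp$, and letting $\alpha$ range over $\mathbb D$ gives the criterion. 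Granting this, the conclusion is immediate: $(\Delta_j\circ\omega)(\alpha)=\Delta_j(\omega(\alpha))$ with $\omega(\alpha)\in\mathbb D$, so the range condition holding for $\Delta_1,\Delta_2$ at every point of $\mathbb D$ forces the range condition for $\Delta_1\circ\omega,\Delta_2\circ\omega$, i.e. these are left coprime. (The converse, which would use surjectivity of the finite Blaschke product $\omega$ on $\mathbb D$, is not needed here.) The main obstacle is organizational — carefully keeping track of left- versus right-coprimeness and of the passage to inner parts so that Lemma \ref{asdlem2.1} applies; once the local criterion is set up, the composition step is trivial precisely because $\omega(\mathbb D)\subseteq\mathbb D$.
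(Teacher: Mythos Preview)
Your proof is correct and follows essentially the same route as the paper: reduce to the inner part of $A$ via the inner--outer factorization and Corollary \ref{lemou234567}, then use Lemma \ref{asdlem2.1} (passing through the tilde) to check that left coprimeness of rational inner functions is preserved under composition with $\omega$. The paper applies Lemma \ref{asdlem2.1} directly to $\widetilde\Theta\circ\widetilde\omega$ and $\widetilde{A_i}\circ\widetilde\omega$ via the kernel condition, whereas you first translate this into the equivalent range condition $\operatorname{ran}\Delta_1(\alpha)+\operatorname{ran}\Delta_2(\alpha)=\mathbb C^n$; this is a cosmetic repackaging of the same argument, and your version is arguably cleaner in that it makes the pointwise nature of the criterion explicit.
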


\begin{proof} \
Note that $A$ is rational and $\det A\ne 0$. \ Write
$$
A=A_iA_e\quad \hbox{(inner-outer factorization)}.
$$
By Corollary \ref{lemou234567}, $A\circ \omega$ can be written as:
\begin{equation}\label{2.43-1}
A \circ \omega =(A_i \circ \omega) \cdot (A_e \circ \omega) \quad
(\hbox{inner-outer factorization}).
\end{equation}
Since $\Theta$ and $A_i$ are left coprime and hence
$\widetilde\Theta$ and $\widetilde{A_i}$ are right coprime it
follows from Lemma \ref{asdlem2.1} that $\widetilde{\Theta\circ
\omega}=\widetilde\Theta\circ\widetilde{\omega}$ and
$\widetilde{A_i\circ \omega}=\widetilde{A_i}\circ\widetilde{\omega}$
are also right coprime. \ Thus $\Theta\circ\omega$ and
$A_i\circ\omega$ are left coprime. \ It thus follows from
(\ref{2.43-1}) that $\Theta\circ \omega$ and $A\circ \omega$ are
also left coprime. \ This completes the proof.
\end{proof}

\begin{corollary}
Let $\Phi \in H_{M_n}^{\infty}$ be a matrix rational function of the
form
$$
\Phi=\theta A^* \ \ (\hbox{\rm coprime}),
$$
where $\theta$ is a finite Blaschke product. \ Then, for a finite
Blaschke product $\omega$, we have
$$
\Phi\circ \omega=\bigl(\theta \circ \omega\bigr)\cdot\bigl(A \circ
\omega\bigr)^*\ \ \hbox{\rm(coprime)}.
$$
\end{corollary}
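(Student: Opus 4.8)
The plan is to reduce everything to Theorem \ref{factorization6879} by rewriting the given factorization so that the scalar inner function sits on the right. Since $I_\theta=\theta I$ is a scalar multiple of the identity, it commutes with every matrix function; in particular $\Phi=\theta A^*=A^*I_\theta$, and the hypothesis that $\theta A^*$ is coprime means precisely (by the convention adopted just after Theorem \ref{lem33.9}) that $A$ and $I_\theta$ are coprime, hence in particular left coprime. Thus $\Phi=A^*I_\theta$ is a left coprime factorization of $\Phi$.

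Next I would verify the hypotheses of Theorem \ref{factorization6879}. If $\theta$ is a unimodular constant the claim is trivial, so assume $\theta$ is nonconstant. Then $I_\theta$ is a finite Blaschke--Potapov product: each Blaschke factor $b_m$ of $\theta$ contributes the Blaschke--Potapov factor $b_m P_{\mathbb C^n}+(I-P_{\mathbb C^n})$ with $P_{\mathbb C^n}=I$. Moreover, by Lemma \ref{lem1.7}, coprimeness of $A$ and $I_\theta$ forces $\det A\ne 0$, whence $\det\Phi=\theta^n\,\overline{\det A}$ is not identically zero. Hence Theorem \ref{factorization6879} applies to the left coprime factorization $\Phi=A^*I_\theta$ and yields
$$
\Phi\circ\omega=(A\circ\omega)^*\,(I_\theta\circ\omega)\qquad\hbox{(left coprime)}.
$$

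It then remains to identify the right-hand side. Composition with $\omega$ acts entrywise, so $I_\theta\circ\omega=(\theta\circ\omega)I=I_{\theta\circ\omega}$, and $\theta\circ\omega$ is a finite Blaschke product, being a composition of two finite Blaschke products. Because $I_{\theta\circ\omega}$ is again of the form $I_{\tilde\theta}$ with $\tilde\theta=\theta\circ\omega$ a scalar inner function, Theorem \ref{lem33.9} shows that left coprimeness and right coprimeness of the pair $(A\circ\omega,\,I_{\theta\circ\omega})$ coincide; hence $A\circ\omega$ and $I_{\theta\circ\omega}$ are coprime. Finally, using once more that $I_{\theta\circ\omega}$ commutes with everything, we move it to the left:
$$
\Phi\circ\omega=(A\circ\omega)^*I_{\theta\circ\omega}=(\theta\circ\omega)\,(A\circ\omega)^*\qquad\hbox{(coprime)},
$$
which is the desired conclusion.

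The argument is essentially bookkeeping; the only point that needs attention — and it is not a genuine obstacle — is keeping careful track of the side on which the scalar inner factor sits, and invoking Theorem \ref{lem33.9} to pass freely between ``left'' and ``right'' coprimeness both before and after composing with $\omega$.
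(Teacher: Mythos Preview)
Your proof is correct and follows essentially the same approach as the paper: both use Lemma \ref{lem1.7} to get $\det A\ne 0$ (hence $\det\Phi\ne 0$) and then invoke Theorem \ref{factorization6879}. The paper's proof is simply terser, leaving implicit the commutation $\theta A^*=A^*I_\theta$ and the left/right coprimeness conversion via Theorem \ref{lem33.9} that you spell out.
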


\begin{proof} \
Since $I_\theta$ and $A$ are coprime, we have, by Lemma
\ref{lem1.7}, $\det A\ne 0$, and hence, $\det\Phi\ne 0$. Thus the
result follows at once from Theorem \ref{factorization6879}. \
\end{proof}

\begin{theorem}\label{lem5.17}
Let $\Phi \in H_{M_n}^{2}$ be such that $\Phi^*$ is of bounded type.
\ Thus we may write
$$
\Phi=\Theta A^* \ \ (\hbox{\rm right coprime}).
$$
Then, for a Blaschke factor $\theta$, we have
$$
\Phi\circ \theta=\bigl(\Theta \circ \theta\bigr)\bigl(A \circ
\theta\bigr)^*\ \ \hbox{\rm(right coprime)}.
$$
\end{theorem}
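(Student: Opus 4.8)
The plan is to rewrite the desired conclusion, via the Beurling--Lax--Halmos characterization \eqref{RCD}, as the single kernel identity
\[
\ker H_{(\Phi\circ\theta)^*}=(\Theta\circ\theta)\,H^2_{\mathbb{C}^n},
\]
and then to exploit the fact that a Blaschke factor is an automorphism of $\mathbb D$. I would write $\theta=b_\lambda$ and put $\eta:=b_{-\lambda}$; then $\eta$ is again a Blaschke factor and $\theta\circ\eta=\eta\circ\theta$ is the identity map on $\mathbb D$. Since $\theta$ and $\eta$ are automorphisms of $\mathbb D$ and $|b_{\pm\lambda}'|$ is bounded above and below on $\mathbb T$, the composition maps $f\mapsto f\circ\theta$ and $f\mapsto f\circ\eta$ are mutually inverse bijections of $H^p_{\mathbb C^n}$ (and of $H^p_{M_n}$) onto itself for $p=1,2$. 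I would also record the boundary identities $(\Phi\circ\theta)^*=\Phi^*\circ\theta$ and $A^*\circ\theta=(A\circ\theta)^*$ on $\mathbb T$ (valid because $\theta$ maps $\mathbb T$ into $\mathbb T$), so that from $\Phi=\Theta A^*$ one gets $\Phi\circ\theta=(\Theta\circ\theta)(A\circ\theta)^*$, where $\Theta\circ\theta$ is inner (a composition of inner functions), $A\circ\theta\in H^2_{M_n}$, and $(\Phi\circ\theta)^*$ is again of bounded type (each entry of $\Phi^*$ is a quotient $p/q$ with $p,q\in H^\infty$, and $p\circ\theta,q\circ\theta\in H^\infty$); the last point ensures that \eqref{RCD} together with Lemma \ref{gu2} applies to $\Phi\circ\theta$ with a square inner function.

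To establish the kernel identity I would argue by two inclusions. For ``$\supseteq$'': for $g\in H^2_{\mathbb C^n}$ one computes $(\Phi\circ\theta)^*(\Theta\circ\theta)g=(A\circ\theta)(\Theta\circ\theta)^*(\Theta\circ\theta)g=(A\circ\theta)g$, and the latter is a product of analytic functions, hence has vanishing anti-analytic part, so that $H_{(\Phi\circ\theta)^*}\bigl((\Theta\circ\theta)g\bigr)=J_nP_n^\perp\bigl((A\circ\theta)g\bigr)=0$. For ``$\subseteq$'': let $f\in H^2_{\mathbb C^n}$ with $f\in\ker H_{(\Phi\circ\theta)^*}$, i.e.\ $(\Phi\circ\theta)^*f$ has no anti-analytic part. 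Composing with $\eta$ and using the boundary identities, $\bigl((\Phi\circ\theta)^*f\bigr)\circ\eta=\Phi^*(f\circ\eta)$; since composition with $\eta$ preserves the class of functions with vanishing anti-analytic part, $\Phi^*(f\circ\eta)$ has no anti-analytic part, i.e.\ $f\circ\eta\in\ker H_{\Phi^*}$. By the hypothesis on $\Phi$ and \eqref{RCD}, $\ker H_{\Phi^*}=\Theta H^2_{\mathbb C^n}$, so $f\circ\eta=\Theta h$ for some $h\in H^2_{\mathbb C^n}$; composing with $\theta$ yields $f=(\Theta\circ\theta)(h\circ\theta)$ with $h\circ\theta\in H^2_{\mathbb C^n}$, hence $f\in(\Theta\circ\theta)H^2_{\mathbb C^n}$. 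Combining the two inclusions and appealing once more to \eqref{RCD} gives $\Phi\circ\theta=(\Theta\circ\theta)(A\circ\theta)^*$ (right coprime).

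The points requiring care — rather than any genuine obstacle — are the routine verifications that composition with the automorphism $\theta$ (and its inverse $\eta$) preserves the relevant Hardy classes in both directions, the precise interpretation of membership in $\ker H_{(\Phi\circ\theta)^*}$ when the symbol is only of bounded type, and the confirmation that $(\Phi\circ\theta)^*$ is of bounded type so that the factorization machinery behind \eqref{RCD} is available with a square inner function. The essential structural input is simply that a Blaschke factor is invertible as a self-map of $\mathbb D$; this is precisely why the present argument does not apply verbatim when $\theta$ is replaced by a higher-degree finite Blaschke product (compare the rational case, Theorem \ref{factorization6879}).
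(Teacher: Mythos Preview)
Your argument is correct, but the paper's proof is considerably more direct. Rather than passing through the kernel characterization \eqref{RCD}, the paper argues by contradiction at the level of the \emph{definition} of right coprimeness: if $\Theta\circ\theta$ and $A\circ\theta$ had a nonconstant common right inner divisor $\Delta$, say $\Theta\circ\theta=\Theta_2\Delta$ and $A\circ\theta=A_2\Delta$, then composing with $\eta=b_{-\lambda}=\theta^{-1}$ gives $\Theta=(\Theta_2\circ\eta)(\Delta\circ\eta)$ and $A=(A_2\circ\eta)(\Delta\circ\eta)$, so $\Delta\circ\eta$ would be a nonconstant common right inner divisor of $\Theta$ and $A$, contradicting the hypothesis.

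Both proofs rest on the same structural fact---that a Blaschke factor is invertible on $\mathbb D$ with inverse again a Blaschke factor---but they deploy it differently. The paper transports a hypothetical common inner divisor back through $\eta$; you transport membership in the Hankel kernel. Your route buys a direct link to the operator-theoretic characterization \eqref{RCD}, but at the cost of the technical verifications you flag (composition operators preserving $H^1$ and $H^2$ in both directions, the meaning of $\ker H_{(\Phi\circ\theta)^*}$ when the symbol is only in $L^2$). The paper's three-line argument sidesteps all of this by never leaving the algebraic definition of coprimeness.
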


\begin{proof} \ Let $\theta\equiv b_{\alpha}$ be arbitrary Blaschke factor. \ Suppose that $\Theta$ and $A$ are right coprime, and
assume, to the contrary, that $\Theta \circ \theta$ and $A \circ
\theta$ are not right coprime. \ Write
$$
\Theta_1:= \Theta \circ \theta \quad \hbox{and} \quad A_1:= A \circ
\theta.
$$
Since $\Theta_1$ and $A_1$ are not right coprime, there exists a
nonconstant inner matrix function $\Delta \in H^{\infty}_{M_n}$ such
that
$$
\Theta_1=\Theta_2 \Delta \quad \hbox{and} \quad A_1=A_2 \Delta
\qquad(\Theta_2, A_2 \in H^{\infty}_{M_n}).
$$
Note that $\theta^{-1}=b_{-\alpha}$. \ Put $\Omega:=\Delta \circ
b_{-\alpha}$. \ Then $\Omega$ is a nonconstant inner matrix
function, and
$$
\Theta=\Theta_1\circ b_{-\alpha}=\left(\Theta_2 \circ b_{-\alpha}
\right) \Omega
$$
and
$$
A=A_1\circ b_{-\alpha}=\left(A_2 \circ b_{-\alpha} \right) \Omega.
$$
Since $\Theta_2\circ b_{-\alpha}$ and $A_2\circ b_{-\alpha}$ are in
$H^{\infty}_{M_n}$, $\Omega$ is a common right inner divisor of
$\Theta$ and $A$, so that $\Theta$ and $A$ are not right coprime, a
contradiction. \
\end{proof}


%
%
%
%

\newpage

\chapter{Tensored-scalar singularity}

\medskip

In this chapter we ask the following question: How does one define a
singularity for a matrix function of bounded type? \ Conventionally,
the singularity (or the pole) of matrix $L^\infty$-functions is
defined by a singularity (or a pole) of some entry of the matrix
functions (cf. \cite{BGR}, \cite{BR}). \ However, when the
singularity corresponds to a pole, we shall use the opposite
convention and say that the matrix $L^\infty$-function has a pole of
order $m$ at the point $\alpha\in\mathbb D$ if every nonzero entry
has a pole at $\alpha$ of order at least $m$ and some entry has a
pole of order exactly $m$. \ This notion of singularity (for the
pole case) is more suitable for our study of Toeplitz and Hankel
operators. \ 

In this chapter, we give a new notion of
``tensored-scalar singularity." \ This new definition takes
advantage of the Hankel operator, as if we were using it to
characterize functions of bounded type (cf.~(\ref{1.1})). \ This
notion contributes to give an answer to the question: Under what
conditions does it follow that if the product of two Hankel
operators with matrix-valued bounded type symbols is zero then
either of the operators is zero? \

\medskip

By our conventions to be defined here,
$\Phi:=\left(\begin{smallmatrix} \frac{1}{z}&0\\ 0&1 \end{smallmatrix}\right)$
does not have a singularity at all but
$\Psi:=\left(\begin{smallmatrix} \frac{1}{z}&0\\ 0&\frac{1}{z^2} \end{smallmatrix}\right)$
has a pole of order $1$ at zero. \

\medskip

To motivate our new idea,
let us carefully consider the scalar-valued case. \

If $\varphi\in L^\infty$ is of bounded type then we may write, in
view of (\ref{2.2}),
\begin{equation}\label{111}
\varphi_-=\omega \overline{a} \quad \hbox{(coprime)}.
\end{equation}
Since $\varphi=\frac{a}{\omega}+\varphi_+$, the singularities of
$\varphi$ come from $\omega$. \ Thus if $\theta$ is a nonconstant
inner divisor of $\omega$ then $\theta$ leads to singularities of
$\varphi$. \ Consequently, we can say that $\varphi$ has a
singularity (with respect to $\theta$) if and only if
 $\theta$ is an inner divisor of $\omega$ if and only if $\omega H^2\subseteq \theta H^2$;
 in other words,
\begin{equation}\label{scalar-singularity}
\ker H_\varphi =\omega H^2\subseteq \theta H^2.
\end{equation}
As a new notion of a singularity for a matrix function, we now adopt the
matrix-valued version of the scalar-valued case
(\ref{scalar-singularity}).

\bigskip

\begin{definition}\label{df4.5}
Let $\Phi\in L^\infty_{M_n}$ be of bounded type. \ We say that
$\Phi$ has a {\it tensored-scalar singularity {\rm (}with respect to $\theta${\rm )}} if
there exists a nonconstant inner function $\theta$ such that
$$
\ker H_\Phi\subseteq \theta H^2_{\mathbb C^n}\,.
$$
\end{definition}

\bigskip

If $\varphi$ is a rational function, then $\omega$ in
(\ref{scalar-singularity}) is a finite Blaschke product, so a tensored-scalar
singularity reduces to a matrix pole (cf. \cite[Definition
3.5]{CHKL}) -- hereafter (to avoid confusion with the convention),
we shall call it
a {\it tensored-scalar pole}. \
In particular, we say that $\Phi$ has a {\it tensored-scalar singularity
of order $p$} ($p \in \mathbb{Z}_+$)
if $p$ is the degree of $\theta_0$, where
$$
\theta_0:=\hbox{l.c.m.} \{\theta:\ \Phi\ \hbox{has a  tensored-scalar singularity with
respect to $\theta$}\}.
$$
Moreover, if $\theta_0=b_\alpha^p$, then we say that $\Phi$ has a tensored-scalar pole of
order $p$ at $\alpha$. \

On the other hand, we note that every non-analytic bounded type
function $\varphi\in L^\infty$ has, trivially, a tensored-scalar singularity;
for, if $\varphi\notin H^\infty$, then by Beurling's Theorem, $\ker
H_\varphi=\theta H^2$ for a nonconstant inner function $\theta\in
H^\infty$. \

\bigskip

We also observe:

\begin{lemma}\label{lem55.2-1}
Let $\Phi \in L^{\infty}_{M_n}$ be of bounded type, and write
$\Phi=A\Theta^*$ \hbox{\rm (right coprime)}. Then $\Phi$ has a
tensored-scalar singularity if and only if $\Theta$ has a nonconstant
diagonal-constant inner divisor.
\end{lemma}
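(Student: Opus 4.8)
The plan is to pass back and forth between the Hankel-kernel description of "tensored-scalar singularity" and the right coprime factorization $\Phi = A\Theta^*$, using the identity (\ref{RCD}). Recall that $\Phi$ of bounded type and $\Phi = A\Theta^*$ (right coprime) means $\ker H_{\Phi^*} = \Theta H^2_{\mathbb C^n}$; correspondingly, writing $\widetilde\Phi = \widetilde A \, \widetilde\Theta^*$ will not be what I want, so instead I will note that $\Phi$ itself is of bounded type so that $\ker H_\Phi = \Omega H^2_{\mathbb C^n}$ for a (square, by Lemma \ref{gu2}) inner function $\Omega$. The key observation is that $\Phi$ has a tensored-scalar singularity with respect to $\theta$ exactly when $\Omega H^2_{\mathbb C^n} \subseteq \theta H^2_{\mathbb C^n} = I_\theta H^2_{\mathbb C^n}$, i.e.\ when $I_\theta$ is a left inner divisor of $\Omega$. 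So the statement reduces to: $\Omega$ has a nonconstant diagonal-constant left inner divisor if and only if $\Theta$ does. The bridge between $\Omega$ (kernel of $H_\Phi$) and $\Theta$ (kernel of $H_{\Phi^*}$) is the relation $\widetilde\Theta \sim \Omega$; more precisely, since $\Phi = A\Theta^*$ right coprime is equivalent to $\widetilde\Phi = \widetilde\Theta \widetilde A^*$ and $\widetilde\Theta, \widetilde A$ left coprime, applying (\ref{RCD}) to $\widetilde\Phi$ in place of $\Phi$ gives $\ker H_\Phi = \ker H_{(\widetilde\Phi)^*} = \widetilde\Theta H^2_{\mathbb C^n}$, so $\Omega = \widetilde\Theta$ up to a unitary constant right factor.

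With $\Omega = \widetilde\Theta$ in hand, the two directions run as follows. First suppose $\Theta$ has a nonconstant diagonal-constant inner divisor, say $\Theta = I_\delta \Theta'$ with $\delta$ nonconstant inner (note $I_\delta$ as a \emph{left} divisor; if it is a right divisor, pass to $\widetilde\Theta$, using that $\widetilde{I_\delta} = I_{\widetilde\delta}$ is again diagonal-constant). Then $\widetilde\Theta$ has $I_{\widetilde\delta}$ as a left inner divisor — here one uses $\widetilde{I_\delta \Theta'} = \widetilde{\Theta'}\, I_{\widetilde\delta}$, so $I_{\widetilde\delta}$ appears as a \emph{right} divisor of $\widetilde\Theta$; I will be careful about the side, and in fact the cleanest route is to invoke Theorem \ref{lem33.9}: for the scalar inner $\theta$, left and right coprimeness of $I_\theta$ with any $H^2_{M_n}$-function coincide, so "$I_\delta$ divides $\Theta$ on either side" amounts to "$\det\Theta$ and $\delta$ are not coprime," a side-free statement. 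Taking $\theta := \gcd(\widetilde\delta,\det\widetilde\Theta)$ (which is nonconstant), Theorem \ref{lem33.9} gives that $I_\theta$ and $\widetilde\Theta$ are not right (hence not left) coprime, and since $\theta \mid \widetilde\delta \mid \det(I_{\widetilde\delta})$ is scalar, a standard Beurling--Lax--Halmos argument (as in the proof of Proposition \ref{pro2.6}(b), or directly: a scalar inner common divisor of $\Theta_1,\Theta_2$ implies $I_\theta$ divides $\widetilde\Theta$ on the left) shows $I_\theta$ is a left inner divisor of $\widetilde\Theta = \Omega$, giving $\ker H_\Phi = \Omega H^2_{\mathbb C^n} \subseteq \theta H^2_{\mathbb C^n}$, a tensored-scalar singularity.

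Conversely, suppose $\Phi$ has a tensored-scalar singularity with respect to $\theta$, so $\Omega H^2_{\mathbb C^n} \subseteq I_\theta H^2_{\mathbb C^n}$, i.e.\ $I_\theta$ is a left inner divisor of $\Omega = \widetilde\Theta$. Passing through the tilde (using $\widetilde{I_\theta} = I_{\widetilde\theta}$), $I_{\widetilde\theta}$ is a right inner divisor of $\Theta$, and $\widetilde\theta$ is nonconstant; the inner function $I_{\widetilde\theta}$ is diagonal-constant, so $\Theta$ has a nonconstant diagonal-constant inner divisor. That closes the loop. The main obstacle I anticipate is purely bookkeeping — keeping the left/right divisor sides and the tilde operation consistent throughout, since $\widetilde{\cdot}$ swaps left and right divisors and the diagonal-constant condition is preserved by $\widetilde{\cdot}$ only because $\widetilde{I_\delta} = I_{\widetilde\delta}$. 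To avoid errors I will, wherever possible, reduce divisibility of a scalar $I_\theta$ into the side-independent scalar statement "$\theta$ and $\det\Theta$ are not coprime" via Theorem \ref{lem33.9}, which makes both implications symmetric and short.
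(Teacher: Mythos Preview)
Your detour through $\widetilde\Phi$ is both unnecessary and contains genuine errors; the direct route is a one-liner that you missed. The factorization $\Phi = A\Theta^*$ (right coprime), with $\Theta$ inner, is exactly the statement that $\Phi^* = \Theta A^*$ is a right coprime factorization of $\Phi^*$ in the sense of (\ref{2.9}). Applying (\ref{RCD}) to $\Phi^*$ in place of $\Phi$ gives immediately
\[
\ker H_\Phi \;=\; \ker H_{(\Phi^*)^*} \;=\; \Theta H^2_{\mathbb C^n}.
\]
From here the equivalence is trivial: $\ker H_\Phi \subseteq \theta H^2_{\mathbb C^n}$ for some nonconstant inner $\theta$ if and only if $\Theta H^2_{\mathbb C^n}\subseteq I_\theta H^2_{\mathbb C^n}$, i.e.\ $I_\theta$ is a left inner divisor of $\Theta$; since $I_\theta$ is central this is the same as a right divisor, and $I_\theta$ is diagonal-constant. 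This is precisely the paper's proof.

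Your argument goes wrong at two concrete places. First, your opening claim that $\Phi=A\Theta^*$ (right coprime) gives $\ker H_{\Phi^*}=\Theta H^2_{\mathbb C^n}$ misreads (\ref{RCD}): in (\ref{RCD}) the inner factor sits on the \emph{left}, so the identity applies to $\Phi^*=\Theta A^*$, yielding $\ker H_\Phi=\Theta H^2_{\mathbb C^n}$, not a statement about $\ker H_{\Phi^*}$. Second, in your tilde computation, the tilde reverses products, so
\[
\widetilde\Phi \;=\; \widetilde{A\Theta^*} \;=\; \widetilde{\Theta^*}\,\widetilde A \;=\; (\widetilde\Theta)^*\,\widetilde A,
\]
which is not of the form $\widetilde\Theta\,\widetilde A^*$ and is not in the shape required by (\ref{RCD}). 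Moreover, the asserted identity $\ker H_\Phi = \ker H_{(\widetilde\Phi)^*}$ is false in general: since $(\widetilde\Phi)^* = \widetilde{\Phi^*}$ and $H_{\widetilde\Psi}=H_\Psi^*$ by (\ref{1.2}), one has $H_{(\widetilde\Phi)^*}=H_{\Phi^*}^*$, whose kernel is $(\hbox{cl\,ran}\,H_{\Phi^*})^\perp$, not $\ker H_\Phi$. Hence your conclusion $\Omega=\widetilde\Theta$ is unjustified (the correct identity is $\Omega=\Theta$), and the subsequent appeal to Theorem~\ref{lem33.9} with all the left/right divisor bookkeeping is both built on this error and, once $\ker H_\Phi=\Theta H^2_{\mathbb C^n}$ is known, entirely superfluous.
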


\begin{proof}
Observe that $\ker H_\Phi=\Theta H^2_{\mathbb C^n}$, so that $\ker
H_\Phi \subseteq \theta H^2_{\mathbb C^n}$ ($\theta\in H^\infty$ is
inner) if and only if $\Theta H^2_{\mathbb C^n}\subseteq \theta
H^2_{\mathbb C^n}$ if and only if $I_\theta$ is an inner divisor of
$\Theta$.
\end{proof}

In view of Lemma \ref{lem55.2-1}, if $\Phi\in L^\infty_{M_n}$ has
the following coprime factorization:
$$
\Phi_-=\theta B^*\ \ \hbox{\rm (coprime)};
$$
then, clearly, $\Phi$ has a tensored-scalar singularity. \

\medskip

On the other hand, it is well-known that if $\varphi,\psi\in
L^\infty$, then
\begin{equation}\label{5.14}
H_{\varphi}H_{\psi}=0\ \Longrightarrow\ H_\varphi=0\ \ \hbox{or}\ \
H_\psi=0.
\end{equation}
But (\ref{5.14}) may fail for matrix-valued functions. \ For
example, if we take
\begin{equation}\label{3.5}
\Phi:=\begin{pmatrix}1&0\\0&\overline{z}
\end{pmatrix}\quad\hbox{and}\quad
\Psi:=\begin{pmatrix}\overline{z}&0\\0&1
\end{pmatrix},
\end{equation}
then $H_{\Phi}H_{\Psi}=0$, but $H_\Phi\ne 0$ and $H_\Psi\ne 0$.

\medskip

We will now try to find a general condition for (\ref{5.14}) to hold
in the matrix-valued case. \ To do so, we need:

\begin{proposition}\label{pro55.10}
Let $\Phi \in L^{\infty}_{M_n}$ be of bounded type. \ Then $\Phi$
has a tensored-scalar singularity with respect to $\theta$ if and only if
$\widetilde{\Phi}$ has a tensored-scalar singularity with respect to
$\widetilde{\theta}$.
\end{proposition}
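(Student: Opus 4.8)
The plan is to transport the kernel condition through $\Phi\mapsto\widetilde\Phi$ by passing to co-analytic parts and using the Douglas--Shapiro--Shields theory together with the symmetry of $\widetilde{\cdot}$. First, $\widetilde\Phi$ is again of bounded type (the map $f\mapsto\widetilde f$ sends $H^\infty$ into $H^\infty$, hence the Nevanlinna class into itself, and it acts entrywise), and $\widetilde\theta$ is again inner; moreover $\widetilde{\widetilde\Phi}=\Phi$ and $\widetilde{\widetilde\theta}=\theta$, so it is enough to prove one implication. Since $H_\Phi=H_{\Phi_-^*}$, (\ref{RCD}) shows that $\Phi_-=\Theta A^*$ is the right coprime factorization of $\Phi_-$, where $\ker H_\Phi=\Theta H^2_{\mathbb C^n}$; write also $\Phi_-=A_\ell^*\Theta_\ell$ for its left coprime factorization. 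A short Fourier-series computation gives $(\widetilde\Phi)_-=\widetilde{\Phi_-}$, and applying $\widetilde{\cdot}$ interchanges left and right coprimeness (because $\widetilde{FG}=\widetilde G\,\widetilde F$, and by definition $F,G$ are right coprime precisely when $\widetilde F,\widetilde G$ are left coprime); hence $\widetilde{\Phi_-}=\widetilde{\Theta_\ell}\,(\widetilde{A_\ell})^*$ is the right coprime factorization of $(\widetilde\Phi)_-$, and therefore $\ker H_{\widetilde\Phi}=\widetilde{\Theta_\ell}\,H^2_{\mathbb C^n}$.

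By (the proof of) Lemma~\ref{lem55.2-1}, $\Phi$ has a tensored-scalar singularity with respect to $\theta$ exactly when $I_\theta$ is an inner divisor of $\Theta$, and $\widetilde\Phi$ has one with respect to $\widetilde\theta$ exactly when $I_{\widetilde\theta}$ is an inner divisor of $\widetilde{\Theta_\ell}$. For the diagonal-constant function $I_\theta$, being a left inner divisor of a matrix inner function $M$ is the same as being a right inner divisor of $M$, and both simply say that $\theta$ divides every entry of $M$; this property is preserved by $\widetilde{\cdot}$, since $\widetilde{I_\theta M'}=I_{\widetilde\theta}\,\widetilde{M'}$. Consequently $I_{\widetilde\theta}$ divides $\widetilde{\Theta_\ell}$ if and only if $I_\theta$ divides $\Theta_\ell$, and the Proposition reduces to showing that the right and left coprime inner parts of $\Phi_-$ have exactly the same diagonal-constant inner divisors: $I_\theta\mid\Theta$ if and only if $I_\theta\mid\Theta_\ell$.

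This last equivalence is the crux, and by the $\widetilde{\cdot}$-symmetry of the two coprime factorizations it suffices to prove $I_\theta\mid\Theta_\ell\Rightarrow I_\theta\mid\Theta$. Here $\Phi_-=A_\ell^*\Theta_\ell$ gives, for $f\in H^2_{\mathbb C^n}$, that $f\in\ker H_\Phi$ iff $\Theta_\ell^*A_\ell f$ is analytic, i.e.\ iff $A_\ell f\in\Theta_\ell H^2_{\mathbb C^n}$; so if $I_\theta\mid\Theta_\ell$ then $A_\ell f\in\theta H^2_{\mathbb C^n}$ for every $f\in\ker H_\Phi$. Now $I_\theta$ and $A_\ell$ are left coprime — a common left inner divisor of $I_\theta$ and $A_\ell$ would divide $\Theta_\ell$ as well — so, $\theta$ being nonconstant, Theorem~\ref{lem33.9} shows that $\theta$ and $\det A_\ell$ are coprime (in particular $\det A_\ell\not\equiv0$). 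Multiplying $A_\ell f\in\theta H^2_{\mathbb C^n}$ on the left by the adjugate matrix of $A_\ell$ gives $(\det A_\ell)\,f\in\theta H^2_{\mathbb C^n}$, and comparing inner parts componentwise, together with $\gcd(\theta,\det A_\ell)=1$, forces $f\in\theta H^2_{\mathbb C^n}$. Thus $\ker H_\Phi\subseteq\theta H^2_{\mathbb C^n}$, i.e.\ $I_\theta\mid\Theta$, as required. The main obstacle is to make the adjugate step fully rigorous when $A_\ell$ is only a strong $H^2$-function rather than a bounded one; this is handled by the inner--outer factorization of $A_\ell$ — which is square by Lemma~\ref{lem0.3} since $\det A_\ell\not\equiv0$ — together with the Helson--Lowdenslager Theorem, so that only the inner factor of $A_\ell$, whose determinant is coprime to $\theta$, is relevant to divisibility by $\theta$.
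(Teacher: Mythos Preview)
Your argument is correct and follows a genuinely different route from the paper's. You pass to $\Phi_-$, introduce its \emph{left} coprime factorization $\Phi_-=A_\ell^*\Theta_\ell$, observe that the $\widetilde{\;\cdot\;}$-operation converts this into the right coprime factorization of $(\widetilde\Phi)_-$, and thereby reduce the Proposition to the statement that the right inner part $\Theta$ and left inner part $\Theta_\ell$ of $\Phi_-$ share the same diagonal-constant inner divisors. The paper instead works with the full symbol $\Phi=A\Theta^*$ (so that $A=\Phi\Theta\in H^\infty_{M_n}$ automatically, avoiding your boundedness issue), and argues in one stroke: since $A$ and $I_\theta$ are right coprime, Theorem~\ref{lem33.9} makes them left coprime, hence $\widetilde A$ and $I_{\widetilde\theta}$ are right coprime; then writing $\widetilde\Phi=\widetilde{\Theta_1}^*\,\widetilde A\,\overline{\widetilde\theta}$ gives $\ker H_{\widetilde\Phi}\subseteq\ker H_{\widetilde A\,\overline{\widetilde\theta}}=\widetilde\theta\,H^2_{\mathbb C^n}$ directly from (\ref{RCD}). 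This bypasses the left factorization and the adjugate entirely. What your approach buys is an attractive structural byproduct---that the left and right DSS inner parts have identical diagonal-constant divisors---at the cost of a longer argument.

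One gap remains in your workaround: after handling the inner factor $(A_\ell)_i$ via its (bounded) adjugate, you still need that a matrix \emph{outer} function $(A_\ell)_e$ preserves divisibility by the scalar inner $\theta$, i.e.\ $(A_\ell)_e f\in\theta H^2_{\mathbb C^n}\Rightarrow f\in\theta H^2_{\mathbb C^n}$. Helson--Lowdenslager alone does not give this. The clean fix is Smirnov's theorem: writing $(A_\ell)_e f=\theta k$, since $\det(A_\ell)_e$ is outer it is nonvanishing on $\mathbb D$, so $(A_\ell)_e^{-1}k=\bigl(\det(A_\ell)_e\bigr)^{-1}\mathrm{adj}\bigl((A_\ell)_e\bigr)k$ has entries in the Smirnov class $N^+$; on $\mathbb T$ these entries coincide with those of $\overline\theta f\in L^2_{\mathbb C^n}$, and $N^+\cap L^2=H^2$ yields $\overline\theta f\in H^2_{\mathbb C^n}$. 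With this addition your proof is complete.
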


\begin{proof} \ Write
$$
\Phi= A \Theta^*\ \ (\hbox{right coprime}).
$$
In view of Lemma \ref{lem55.2-1}, we may write $\Theta=I_\theta
\Theta_1$ ($\Theta_1$ inner). \
Since $A$ and $I_\theta$ are right coprime, by Theorem \ref{lem33.9}
$A$ and $I_\theta$ are left coprime, so that $\widetilde{A}$ and
$I_{\widetilde{\theta}}$ are right coprime. \ It thus follows from
(\ref{RCD}) that
$$
\ker H_{\widetilde{\Phi}}=\ker H_{\widetilde{\Theta}_1^*
\widetilde{A}\overline{I_{\widetilde{\theta}}}}\subseteq \ker
H_{\widetilde{A}\overline{I_{\widetilde{\theta}}}}=\widetilde{\theta}
H^2_{\mathbb C^n},
$$
which implies that $\widetilde{\Phi}$ has a tensored-scalar singularity with
respect to $\widetilde{\theta}$. \ For the converse, we use the fact
$\Phi=\widetilde{\widetilde{\Phi}}$ and
$\theta=\widetilde{\widetilde{\theta}}$. \
This completes the proof.
\end{proof}

\bigskip

We now have:

\begin{theorem}\label{lem33.5}
Let $\Phi,\Psi\in L^\infty_{M_n}$ be of bounded type.  \ If $\Phi$
or $\Psi$ has a tensored-scalar singularity then
$$
H_\Psi H_\Phi=0\ \Longrightarrow \ H_\Phi=0 \ \hbox{or} \ H_\Psi=0.
$$
\end{theorem}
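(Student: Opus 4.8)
The plan is to reduce the statement to a rigidity fact about \emph{scalar} inner functions, which is where the tensored-scalar hypothesis earns its keep. First, a reduction to a single case: if instead $\Psi$ carries the tensored-scalar singularity, take adjoints in $H_\Psi H_\Phi=0$ and use $H_X^*=H_{\widetilde X}$ (see (\ref{1.2})) to obtain $H_{\widetilde\Phi}H_{\widetilde\Psi}=0$; by Proposition \ref{pro55.10}, $\widetilde\Psi$ again has a tensored-scalar singularity, so this case follows from the one in which the \emph{right-hand} factor has the singularity. Thus I assume $\Phi$ has a tensored-scalar singularity with respect to a nonconstant inner function $\theta$, i.e.\ $\ker H_\Phi\subseteq\theta H^2_{\mathbb C^n}$, and prove the (formally stronger) conclusion $H_\Psi=0$.

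The first key step is to locate $\hbox{ran}\,H_\Phi$. Since $H_\Phi^*=H_{\widetilde\Phi}$ we have $\overline{\hbox{ran}\,H_\Phi}=(\ker H_{\widetilde\Phi})^\perp$, while Proposition \ref{pro55.10} gives $\ker H_{\widetilde\Phi}\subseteq I_{\widetilde\theta}H^2_{\mathbb C^n}$; hence
$$
\overline{\hbox{ran}\,H_\Phi}\ \supseteq\ \bigl(I_{\widetilde\theta}H^2_{\mathbb C^n}\bigr)^\perp=\mathcal H(I_{\widetilde\theta})=\mathcal H(\widetilde\theta)\otimes\mathbb C^n .
$$
This is precisely where being ``tensored-scalar'' matters: the range fills up a whole tensored subspace, not merely some model space. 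Since $H_\Psi H_\Phi=0$ forces $\overline{\hbox{ran}\,H_\Phi}\subseteq\ker H_\Psi$, and since $\Psi$ is of bounded type so that $\ker H_\Psi=\Delta H^2_{\mathbb C^n}$ for a square inner $\Delta\in H^\infty_{M_n}$ by Lemma \ref{gu2}, we get $\mathcal H(\widetilde\theta)\otimes\mathbb C^n\subseteq\Delta H^2_{\mathbb C^n}$.

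Now I unwind this entrywise. Because $\Delta$ is square inner, $\Delta^*(\Delta H^2_{\mathbb C^n})\subseteq H^2_{\mathbb C^n}$, so writing $\Delta=(\delta_{ij})$ with $\delta_{ij}=\theta_{ij}\overline b_{ij}$ ($\theta_{ij}$ inner, $\theta_{ij}$ and $b_{ij}$ coprime) as in (\ref{2.6}), and reading off the entries of $\Delta^*(g\otimes\xi)$ for $\xi$ in the standard basis of $\mathbb C^n$, we find $g\,\overline{\delta_{ki}}\in H^2$, hence $g\in\ker H_{\overline{\delta_{ki}}}=\theta_{ki}H^2$, for every $g\in\mathcal H(\widetilde\theta)$ and all $i,k$ (the last equality uses the coprimeness of $\theta_{ki},b_{ki}$, exactly as in the scalar case (\ref{scalar-singularity})). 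By Lemma \ref{lem3.5} this gives $\mathcal H(\widetilde\theta)\subseteq\bigcap_{i,k}\theta_{ki}H^2=m_\Delta H^2$. Here $\widetilde\theta$ and $m_\Delta$ are \emph{scalar} inner functions, $\mathcal H(\widetilde\theta)\ne\{0\}$ is invariant under the backward shift, and $m_\Delta H^2$ is invariant under the shift; hence $m_\Delta H^2\supseteq\bigvee_{k\ge0}z^k\mathcal H(\widetilde\theta)$, and the orthogonal complement of the latter is a backward-shift-invariant subspace of $\widetilde\theta H^2$ that must be $\{0\}$ (an iteration of the backward shift shows any element $f$ lies in $\bigcap_k z^kH^2$, so $f=0$ by Lemma \ref{lem1.4}). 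Therefore $m_\Delta H^2=H^2$, so $m_\Delta$ is constant, all $\theta_{ki}$ are constant (Lemma \ref{lem3.5} again), $\Delta$ is a unitary constant, $\ker H_\Psi=H^2_{\mathbb C^n}$, and $H_\Psi=0$.

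I expect the last step to be the real obstacle. The containment of a nonzero backward-shift-invariant subspace inside a proper shift-invariant subspace is perfectly possible in $H^2_{\mathbb C^n}$ — for instance $\{0\}\oplus H^2\subseteq\mathrm{diag}(b_\alpha,1)\,H^2_{\mathbb C^2}$ — so no soft argument can close the proof; one must first collapse the matrix data to the scalar inner functions $\widetilde\theta$ and $m_\Delta$, and this collapse is exactly what Proposition \ref{pro55.10} and Lemma \ref{lem3.5} provide under the tensored-scalar hypothesis. Everything else is routine bookkeeping with the identities (\ref{1.2})--(\ref{1.5}) and the standard structure of model spaces.
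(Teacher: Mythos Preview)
Your overall strategy coincides with the paper's: reduce to the case where $\Phi$ carries the tensored-scalar singularity via adjoints and Proposition~\ref{pro55.10}, show $\overline{\hbox{ran}\,H_\Phi}\supseteq\mathcal H(I_{\widetilde\theta})$, and then deduce that the square inner function $\Delta$ giving $\ker H_\Psi=\Delta H^2_{\mathbb C^n}$ must be a unitary constant. The paper arrives at the same inclusion $\mathcal H(I_{\widetilde\delta})\subseteq\Theta H^2_{\mathbb C^n}$ (its $\Theta$ is your $\Delta$).

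The divergence, and the one genuine gap, is in the endgame. The paper does \emph{not} pass through $m_\Delta$ or Lemma~\ref{lem3.5}; instead it picks a single invertible outer function $\xi\in\mathcal H(\widetilde\delta)$ (such a $\xi$ always exists, e.g.\ $\xi=1-\overline{\widetilde\delta(0)}\,\widetilde\delta$), observes that $\xi\otimes e_i\in\Theta H^2_{\mathbb C^n}$ forces $\overline{\theta_{ij}}\xi\in H^2$ for all $i,j$, and concludes $\overline{\theta_{ij}}\in\frac{1}{\xi}H^2\subseteq H^2$, so each entry $\theta_{ij}$ of $\Theta$ is constant. This is shorter and avoids the shift-invariance bookkeeping.

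Your route through $m_\Delta$ is legitimate, but the justification you give for $\bigvee_{k\ge0}z^k\mathcal H(\widetilde\theta)=H^2$ is wrong: from the backward-shift invariance of the orthocomplement $V$ and $V\subseteq\widetilde\theta H^2$ you only get $U^{*k}f\in\widetilde\theta H^2$ for all $k$, not $f\in\bigcap_k z^kH^2$, so Lemma~\ref{lem1.4} does not apply as stated. The correct fix is one line: since $\mathcal H(\widetilde\theta)$ is $U^*$-invariant, the span $\bigvee_{k\ge0}z^k\mathcal H(\widetilde\theta)$ is both $U$- and $U^*$-invariant, hence reducing for the shift, hence equal to $H^2$ (it is nonzero because $\widetilde\theta$ is nonconstant). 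With that repair your argument is complete and essentially equivalent to the paper's, just slightly longer.
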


\begin{proof} \  Write
$$
\Phi= \Delta^* B\ \ (\hbox{left coprime})\quad\hbox{and}\quad \Psi=A
\Theta^* \ \ (\hbox{right coprime}).
$$
Suppose that $\Phi$ has a tensored-scalar singularity. \ By Lemma
\ref{lem55.2-1}, there exists a nonconstant inner function $\delta$
such that
$$
\Delta= \delta \Delta_1 \quad(\Delta_1\ \hbox{inner}).
$$
Suppose $H_\Psi H_\Phi=0$. \ Since $\widetilde{\Delta}$ and
$\widetilde{B}$ are right coprime, it follows from (\ref{RCD}) that
\begin{equation}\label{33.19}
\mathcal{H} ({I_{\widetilde{\delta}}})
    \subseteq\mathcal{H} ({\widetilde{\Delta}})
=\Bigl(\hbox{ker}\,H_\Phi^*\Bigr)^\perp
        \subseteq \hbox{ker}\, H_\Psi
           =\Theta H^2_{\mathbb C^n}.
\end{equation}
Write $\Theta\equiv\begin{pmatrix}
\theta_{ij}\end{pmatrix}_{i,j=1}^n$. \ Since $\widetilde{\delta}$ is
not constant, $\mathcal{H}_{\widetilde{\delta}}$ has at least an
outer function $\xi$ that is invertible in $H^\infty$ (cf.
\cite[Lemma 3.4]{CHL1}). \ It thus follows from (\ref{33.19}) that
$\Theta^*(\xi,0,0,\cdots)^t\in H^2_{\mathbb C^n}$, which implies
$\overline\theta_{1j}\xi\in H^2$ for each $j=1,\cdots,n$. \
Similarly, we can show that
$$
\overline{\theta}_{ij} \,\xi \in H^2\quad\hbox{for each
$i,j=1,2,\cdots,n$},
$$
so that $\overline{\theta}_{ij}\in \frac{1}{\xi} H^2\subseteq H^2$
for each $i,j=1,2,\cdots,n$. \ Therefore each $\theta_{ij}$ is
constant and hence, $\Theta$ is a constant unitary. \ Therefore
 $\Psi \in H^2_{M_n}$, which gives $H_{\Psi}=0$.

If instead $\Psi$ has a tensored-scalar singularity, then by Proposition
\ref{pro55.10}, $\widetilde{\Psi}$ has a tensored-scalar singularity. \ Thus
if $H_\Psi H_\Phi=0$, then
$H_{\widetilde{\Phi}}H_{\widetilde{\Psi}}=0$, so that
$H_{\Phi}=H_{\widetilde{\Phi}}^*=0$ by what we proved just above.
\end{proof}

\medskip

We observe that if $\Phi\in L^\infty_{M_n}$ is normal then by
(\ref{1.3}),
$$
[T_\Phi^*, T_\Phi]=H_{\Phi^*}^* H_{\Phi^*} - H_\Phi^* H_\Phi.
$$
On the other hand, we recall (\cite[Theorem 3.3]{GHR},
\cite[Corollary 2]{Gu2}) that if $\Phi\in L^\infty_{M_n}$ then
\begin{equation}\label{21}
H_{\Phi^*}^* H_{\Phi^*} \ge H_\Phi^* H_\Phi\ \Longleftrightarrow
\exists \, K\in H^\infty_{M_n}\ \hbox{such that}\ ||K||_\infty\le 1\
\hbox{and}\ \Phi-K\Phi^*\in H^\infty_{M_n}.
\end{equation}

\bigskip

The following theorem is of independent interest. \

\medskip

\begin{theorem}\label{lem3.13}
Let $\Phi, \Psi\in L^\infty_{M_n}$ be of bounded type. \ If $\Phi$
or $\Psi$ has a tensored-scalar singularity then
$$
H_{\Phi}^*H_{\Phi}=H_{\Psi}^*H_{\Psi} \Longleftrightarrow \Phi-U\Psi
\in H^{\infty}_{M_n}
$$
for some unitary constant matrix $U \in M_n$.
\end{theorem}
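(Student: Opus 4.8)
The plan is to prove the two implications separately. The implication ($\Leftarrow$) is a short computation; ($\Rightarrow$) is the substantial one, where the tensored-scalar hypothesis is used precisely to force the relevant unitary to be a \emph{constant} matrix.

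\textbf{($\Leftarrow$).} If $G:=\Phi-U\Psi\in H^\infty_{M_n}$ with $U\in M_n$ unitary, then $H_\Phi=H_{U\Psi}+H_G=H_{U\Psi}$. Applying (\ref{1.4}) with the (constant, hence $H^\infty$) factor $U$ gives $H_{U\Psi}=T_{\widetilde U}^{\,*}H_\Psi$, and since $\widetilde U=U^{*}$ is constant, (\ref{1.2}) yields $T_{\widetilde U}^{\,*}=T_{U}$; thus $H_\Phi=T_U H_\Psi$. As $U$ is a constant unitary, $T_U^{*}T_U=T_{U^{*}U}=I$, so $H_\Phi^{*}H_\Phi=H_\Psi^{*}T_U^{*}T_U H_\Psi=H_\Psi^{*}H_\Psi$.

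\textbf{($\Rightarrow$).} Assume $H_\Phi^{*}H_\Phi=H_\Psi^{*}H_\Psi$. Then $\|H_\Phi f\|=\|H_\Psi f\|$ for all $f$, so $\ker H_\Phi=\ker H_\Psi$; by Lemma \ref{gu2} this common kernel is $\Theta H^2_{\mathbb C^n}$ for a \emph{square} inner function $\Theta$. Since $H_\Phi=H_{\Phi_-^{*}}$ and $H_\Phi T_\Theta=H_{\Phi\Theta}=0$ (as $\operatorname{ran}T_\Theta=\Theta H^2_{\mathbb C^n}\subseteq\ker H_\Phi$), a short argument using Lemma \ref{lem2.4} and the identity $\Theta\Theta^{*}=I$ (valid because $\Theta$ is square inner) shows that $A:=\Phi_-^{*}\Theta$ and $B:=\Psi_-^{*}\Theta$ lie in $\mathcal K_\Theta$, that
$$
\Phi_-^{*}=A\Theta^{*},\qquad \Psi_-^{*}=B\Theta^{*}\qquad(\text{right coprime}),
$$
and that the desired conclusion is \emph{equivalent} to the assertion $A=UB$ for some unitary $U\in M_n$ (indeed $\Phi-U\Psi\in H^\infty_{M_n}$ iff $\Phi_-^{*}=U\Psi_-^{*}$ iff $A=UB$). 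Moreover, since $\ker H_\Phi=\ker H_\Psi$, the tensored-scalar hypothesis on $\Phi$ or $\Psi$ passes to both, so by Lemma \ref{lem55.2-1} we may write $\Theta=I_\theta\Theta_1$ for a \emph{nonconstant scalar} inner function $\theta$ and an inner $\Theta_1$.

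The next step is to rewrite the hypothesis as an identity of operators on the model space $\mathcal H(\Theta)$. Using (\ref{1.4}), Lemma \ref{lem2.4}(c), Lemma \ref{lem2.5}, and the fact that $H_{\Theta^{*}}$ is a partial isometry with initial space $\mathcal H(\Theta)$, one checks that $H_\Phi^{*}H_\Phi$ is supported on $\mathcal H(\Theta)$ and agrees there with $(T_A)_\Theta^{*}(T_A)_\Theta$, and likewise $H_\Psi^{*}H_\Psi$ with $(T_B)_\Theta^{*}(T_B)_\Theta$; thus the analytic compressions $(T_A)_\Theta$ and $(T_B)_\Theta$ have equal moduli. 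Right coprimeness of $A$ with $\Theta$ (and of $B$ with $\Theta$), via Theorem \ref{lem33.9} and a kernel computation, forces $(T_A)_\Theta$ and $(T_B)_\Theta$ to be injective, so by polar decomposition $(T_A)_\Theta=W(T_B)_\Theta$ for a unitary $W$ on $\mathcal H(\Theta)$ lying in the (commutative) algebra generated by the compressed shift $(T_{zI})_\Theta$. The crux — and the step I expect to be the main obstacle — is to upgrade $W$ to a \emph{constant} unitary matrix; without a tensored-scalar singularity $W$ could genuinely be a nonconstant inner multiplier. Here $\Theta=I_\theta\Theta_1$ enters: $\mathcal H(\Theta)$ contains $\mathcal H(I_\theta)=\mathcal H(\theta)\otimes\mathbb C^n$, which contains the test vectors $(\xi,0,\dots,0)^{t},\dots,(0,\dots,0,\xi)^{t}$ for an outer function $\xi$, invertible in $H^\infty$, in the model space of $\widetilde\theta$ (cf. \cite[Lemma 3.4]{CHL1}, as already exploited in the proof of Theorem \ref{lem33.5}). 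Evaluating the identity $\langle H_\Phi f,H_\Phi g\rangle=\langle H_\Psi f,H_\Psi g\rangle$ on such test vectors, dividing out $\xi$ by invertibility, and running the computation entrywise should force $I_\theta$ to left-divide $A-UB$ for a constant unitary $U$; since $A,B\in\mathcal K_\Theta$ while $\Phi_-,\Psi_-$ carry no constant term (so that $\theta H^2\cap\mathcal H(z\theta)=\mathbb C\theta$ is killed), this improves to $A=UB$, with $U$ unitary because $A^{*}A=B^{*}B$ on $\mathbb T$. Then $\Phi_-^{*}=U\Psi_-^{*}$, i.e. $\Phi-U\Psi\in H^\infty_{M_n}$, completing the proof.
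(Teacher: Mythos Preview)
Your $(\Leftarrow)$ is fine. The $(\Rightarrow)$ direction has a genuine gap at the point you yourself flag as ``the crux.'' Two concrete problems:

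First, the assertion that the polar-decomposition unitary $W$ satisfying $(T_A)_\Theta=W(T_B)_\Theta$ lies ``in the (commutative) algebra generated by the compressed shift $(T_{zI})_\Theta$'' is unjustified and, as stated, points in the wrong direction. A constant unitary $U\in M_n$ acts on $H^2\otimes\mathbb C^n$ through the $\mathbb C^n$ factor, whereas the compressed shift acts through the $H^2$ factor; these generate different (commuting) pieces of the algebra. Polar decomposition alone gives you an abstract unitary on $\mathcal H(\Theta)$ with no a priori algebraic structure. Second, your extraction of $U$ from the test-vector computation is only a sketch (``should force $I_\theta$ to left-divide $A-UB$''): you never say where $U$ comes from before you test against it, and the passage from an inner-product identity on finitely many test vectors to a global relation $A=UB$ is not filled in.

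The paper's argument avoids the model-space detour entirely. From $H_{\Phi_-^{*}}^{*}H_{\Phi_-^{*}}=H_{\Psi_-^{*}}^{*}H_{\Psi_-^{*}}$ it applies (\ref{21}) in both directions to produce contractive $U,U'\in H^\infty_{M_n}$ with $\Phi_-^{*}-U\Psi_-^{*}\in H^2_{M_n}$ and $\Psi_-^{*}-U'\Phi_-^{*}\in H^2_{M_n}$; combining these gives $(I-T_{\widetilde{UU'}}^{*})H_\Phi=0$, hence $\mathcal H(I_{\widetilde\theta})\subseteq\operatorname{cl\,ran}H_\Phi\subseteq\ker(I-T_{\widetilde{UU'}}^{*})$. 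Now the tensored-scalar hypothesis is used exactly once: since $\|\widetilde{UU'}\|_\infty\le1$, the equality $F=T_{\widetilde{UU'}}^{*}F$ on the nontrivial space $\mathcal H(I_{\widetilde\theta})$ forces $UU'=I_n$, so $U$ is a constant unitary and $\Phi-U\Psi\in H^\infty_{M_n}$. The contractivity constraint $\|U\|_\infty,\|U'\|_\infty\le1$ coming from (\ref{21}) is what does the work your polar-decomposition argument lacks.
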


\begin{proof}
Write
$$
\Phi=A\Theta^* \quad(\hbox{right coprime}).
$$
If $\Phi$ has a tensored-scalar singularity then by Lemma \ref{lem55.2-1},
$\Theta$ has an inner divisor of the form $I_\theta$, with $\theta$
a nonconstant inner function. \ Thus we may write $\Theta=\theta
\Theta_1$ for an inner matrix function $\Theta_1\in
H^{\infty}_{M_n}$. \ It follows that $A$ and $I_\theta$ are right
coprime, so that by Theorem \ref{lem33.9}, $A$ and $I_\theta$ are
left coprime, and hence $\widetilde{A}$ and $I_{\widetilde{\theta}}$
are right coprime. \ Thus
\begin{equation}\label{35-1}
\hbox{ker}H_{\Phi}^*=\hbox{ker}H_{\widetilde{\Phi}}
=\hbox{ker}H_{\overline{\widetilde{\theta}}\widetilde{\Theta}_1^*\widetilde{A}}
\subseteq\hbox{ker}H_{\overline{\widetilde{\theta}}\widetilde{A}}=\widetilde{\theta}
H^2_{\mathbb C^n}.
\end{equation}
If $ H_{\Phi}^*H_{\Phi}=H_{\Psi}^*H_{\Psi}$, then
$\hbox{ker}H_{\Psi}=\hbox{ker}H_{\Phi}\subseteq \theta H^2_{\mathbb
C^n}$, which implies that $\Psi$ also has a tensored-scalar singularity.  \
Therefore without loss of generality we may assume that $\Phi$ has a
tensored-scalar singularity.  \ Since
$H_{\Phi}^*H_{\Phi}=H_{\Psi}^*H_{\Psi}$, we have
$H_{\Phi_-^*}^*H_{\Phi_-^*}=H_{\Psi_-^*}^*H_{\Psi_-^*}$. \ Then by
(\ref{21}), there exist $U,U^\prime\in H^\infty_{M_n}$ with
$||U||_\infty\le 1$ and $||U^\prime||_\infty\le 1$ such that
\begin{equation}\label{36-1}
\Phi_-^*-U\Psi_-^*\in H_{M_n}^2 \quad \hbox{and} \quad
\Psi_-^*-U^{\prime}\Phi_-^*\in H_{M_n}^2.
\end{equation}
It follows from (\ref{36-1}) that $\Phi_-^*-UU^{\prime}\Phi_-^*\in
H_{M_n}^2$, so that $H_{\Phi_-^*}-H_{UU^{\prime}\Phi_-^*}=0$, and
hence $(I-T_{\widetilde {UU^{\prime}}}^*) H_{\Phi}=0$, which
together with (\ref{35-1}) implies
\begin{equation}\label{39-9}
\mathcal H({I_{\widetilde{\theta}}}) \subseteq \hbox{ran}\,H_{\Phi}
\subseteq \hbox{ker}\, (I-T_{\widetilde{ UU^{\prime}}}^*).
\end{equation}
Thus, we have $ F=T_{\widetilde{ UU^{\prime}}}^*F$  for each $F\in
\mathcal H ({I_{\widetilde{\theta}}})$. \ But since
$||\widetilde{UU^{\prime}}^*||_{\infty} = ||UU^{\prime}||_{\infty}
\leq 1$, we should have $\widetilde{ UU^{\prime}}^*F \in
H^2_{\mathbb C^n}$, so that $F=\widetilde{ UU^{\prime}}^*F$ for each
$F\in \mathcal H ({I_{\widetilde{\theta}}})$. \
Thus $UU^\prime$ is constant, and actually $UU^\prime=I_n$. \
Therefore $U$ is a unitary constant and by (\ref{36-1}),
$\Phi-U\Psi\in H^\infty_{M_n}$. \ The converse is evident.
\end{proof}

\medskip

For $A \in L^2_{M_n}$ and an inner matrix function $\Theta \in
H^2_{M_n}$, we define
$$
\left(T_A\right)_{\Theta} \label{TAT}:=P_{\mathcal H (\Theta)}T_A
\vert_{\mathcal H (\Theta)},
$$
that is, $\left(T_A\right)_\Theta f := P_{\mathcal{H}(\Theta)} Af$
for each $f\in\mathcal{H}(\Theta)$. \ Note that $T_A$ is densely
defined and possibly unbounded. \ However $(T_A)_\Theta$ may be
bounded under certain conditions. \ To see this, let
$$
\overline{(BMO)_{M_n}}\label{bmomn}:= \Bigl\{\Phi\equiv
(\varphi_{ij})\in L^2_{M_n}: \ \overline{\varphi_{ij}}\in
BMO\Bigr\}.
$$
We then have:

\begin{lemma}\label{BMOmn}
Let $A\in H^2_{M_n}$ and $\Theta\in H^\infty_{M_n}$ be an inner
matrix function. \ If $G\equiv A^*\Theta\in H^2_{M_n}\cap
\overline{(BMO)_{M_n}}$, then $(T_A)_\Theta$ is bounded.
\end{lemma}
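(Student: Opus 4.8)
The plan is to reduce the boundedness of $(T_A)_{\Theta}$ to that of an ordinary Hankel operator. Since $\Theta\in H^{\infty}_{M_n}$ is (square) inner, $\Theta^{*}\Theta=\Theta\Theta^{*}=I_n$ a.e.\ on $\mathbb T$, so the hypothesis $G=A^{*}\Theta$ gives
$$
A=\Theta G^{*}\qquad\text{and}\qquad \Theta^{*}A=G^{*}.
$$
Because $G\in\overline{(BMO)_{M_n}}$, the entries of $G^{*}$ are the functions $\overline{g_{ji}}$ with $g_{ji}\in H^{2}$ and $\overline{g_{ji}}\in BMO$; hence $P^{\perp}\bigl(\overline{g_{ji}}\bigr)=\overline{g_{ji}}-\overline{g_{ji}(0)}\in BMO$. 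Writing $H_{G^{*}}$ entrywise as the matrix of the scalar Hankel operators $H_{\overline{g_{ji}}}$ (indeed $(H_{\Phi}f)_i=\sum_j H_{\varphi_{ij}}f_j$), the boundedness criterion for Hankel operators recalled in the Preliminaries shows that $H_{G^{*}}$ is a bounded operator on $H^{2}_{\mathbb C^n}$. I also use that $BMO\subseteq L^{2}$, so $G^{*}\in L^{2}_{M_n}$.

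Next I would prove, on a dense subspace of $\mathcal H(\Theta)$, the identity $(T_A)_{\Theta}f=\Theta\,J_n H_{G^{*}}f$. Let $\mathcal D:=\mathcal H(\Theta)\cap H^{\infty}_{\mathbb C^n}$; this is dense in $\mathcal H(\Theta)$ because it contains $P_{\mathcal H(\Theta)}p=p-\Theta\,T_{\Theta^{*}}p$ for every vector polynomial $p$, and $T_{\Theta^{*}}p$ is again a vector polynomial since $\Theta^{*}$ has no positive Fourier coefficients, so that $\Theta\,T_{\Theta^{*}}p\in H^{\infty}_{\mathbb C^n}$. Fix $f\in\mathcal D$. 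Then $Af\in H^{2}_{\mathbb C^n}$ (as $A\in H^{2}_{M_n}$ and $f\in H^{\infty}_{\mathbb C^n}$), so $f$ lies in the domain of $T_A$ and $T_Af=Af$; also the entries of $G^{*}f$ lie in $L^{2}\cdot L^{\infty}\subseteq L^{2}$, so $G^{*}f\in L^{2}_{\mathbb C^n}$ and $P_n(G^{*}f),P_n^{\perp}(G^{*}f)$ make sense. Using the standard formula $P_{\mathcal H(\Theta)}g=g-\Theta P_n(\Theta^{*}g)$ for $g\in H^{2}_{\mathbb C^n}$ together with $\Theta^{*}A=G^{*}$ and $A=\Theta G^{*}$,
$$
(T_A)_{\Theta}f=P_{\mathcal H(\Theta)}(Af)=Af-\Theta P_n(\Theta^{*}Af)=\Theta G^{*}f-\Theta P_n(G^{*}f)=\Theta\,P_n^{\perp}(G^{*}f)=\Theta\,J_n H_{G^{*}}f,
$$
the last step using $J_n^{2}=I$ and the definition $H_{G^{*}}f=J_n P_n^{\perp}(G^{*}f)$.

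Finally, since $\Theta$ is unitary-valued a.e.\ and $J_n$ is unitary, $\|(T_A)_{\Theta}f\|=\|H_{G^{*}}f\|\le\|H_{G^{*}}\|\,\|f\|$ for all $f\in\mathcal D$; as $\mathcal D$ is dense, $(T_A)_{\Theta}$ is bounded, with $\|(T_A)_{\Theta}\|\le\|H_{G^{*}}\|$. The step I expect to need the most care is the bookkeeping for the densely defined, possibly unbounded operator $T_A$: one must verify that bounded elements of $\mathcal H(\Theta)$ do lie in its domain and that the displayed chain of equalities is legitimate there — this is precisely where $G\in H^{2}_{M_n}$ and $BMO\subseteq L^{2}$ enter — and then justify passing from a uniform estimate on the dense subspace $\mathcal D$ to boundedness of $(T_A)_{\Theta}$ on all of $\mathcal H(\Theta)$. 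The rest is routine once $A=\Theta G^{*}$ and the boundedness of $H_{G^{*}}$ are in hand.
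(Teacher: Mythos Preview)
Your argument is correct and complete: the density of $\mathcal D$, the identity $(T_A)_\Theta f=\Theta\,J_n H_{G^*}f$ on $\mathcal D$, and the boundedness of $H_{G^*}$ via the entrywise Nehari--Fefferman criterion are all verified carefully.

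The paper takes a different route. Instead of expressing $(T_A)_\Theta$ through a Hankel operator, it uses the fact (cited from Peller) that any function in $BMO$ differs from an $L^\infty$ function by an $H^2$ function: given $G^*\in(BMO)_{M_n}$, choose $F\in H^2_{M_n}$ with $\Phi:=G^*+F\in L^\infty_{M_n}$; then $\Theta\Phi\in L^\infty_{M_n}$, and since $\Theta\Phi=A+\Theta F$ with $\Theta F\in\Theta H^2_{M_n}$, one has $P_{\mathcal H(\Theta)}T_{\Theta\Phi}|_{\mathcal H(\Theta)}=P_{\mathcal H(\Theta)}T_A|_{\mathcal H(\Theta)}=(T_A)_\Theta$, which is therefore bounded. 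Your approach trades the $BMO=L^\infty+H^2$ decomposition for the equivalent Hankel-boundedness characterization of $BMO$; it is a little longer because of the density bookkeeping, but it yields the explicit operator identity $(T_A)_\Theta=\Theta\,J_n H_{G^*}|_{\mathcal H(\Theta)}$ and the concrete bound $\|(T_A)_\Theta\|\le\|H_{G^*}\|$, which the paper's argument does not provide directly.
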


\begin{proof}
Since $G^*\in (BMO)_{M_n}$, we can find a matrix function $F\in
H^2_{M_n}$ such that
$$
\Phi\equiv G^*+F\in L^\infty_{M_n}.
$$
(cf. \cite[Theorem 1.3]{Pe}). \ Thus $\Theta\Phi\in L^\infty_{M_n}$.
\ Observe that
$$
\begin{aligned}
P_{\mathcal H(\Theta)} T_{\Theta\Phi}\vert_{\mathcal H(\Theta)}
  &=P_{\mathcal{H}(\Theta)} T_{\Theta(\Theta^*A+F)}\vert_{\mathcal H(\Theta)}\\
    &=P_{\mathcal{H}(\Theta)} T_{A+\Theta F}\vert_{\mathcal H(\Theta)}\\
      &=P_{\mathcal{H}(\Theta)} T_{A}\vert_{\mathcal H(\Theta)}\\
         &=(T_A)_\Theta,
\end{aligned}
$$
which implies  $(T_A)_\Theta$ is bounded.
\end{proof}

\medskip

In view of Lemma \ref{BMOmn}, $(T_A)_\Theta$ is understood in the
sense that the compression $(T_A)_\Theta$ is bounded even though
$T_A$ is possibly unbounded if $A^*\Theta\in H^2_{M_n}\cap
\overline{(BMO)_{M_n}}$. \ (In particular, we are interested in the
case $A^*\Theta\equiv \Phi_-$ for a matrix function $\Phi\in
L^\infty_{M_n}$.) \

\medskip

Here we pause to ask, when is $\left(T_A\right)_\Theta$ injective. \
First of all,  we take a look at possible cases when $n=2$.

\medskip

\begin{itemize}
\item[(i)]
Let
$$
A:=\begin{pmatrix}0&1\\0&0\end{pmatrix} \quad \hbox{and} \quad
\Theta:=\begin{pmatrix}z&0\\0&1\end{pmatrix}.
$$
Put $f:=\begin{pmatrix} 1&0\end{pmatrix}^t$. \ Then $f \in \mathcal
H (\Theta)$, $Af=0$, and $\left(T_A\right)_{\Theta}f=0$. \ Thus
$\left(T_A\right)_{\Theta}$ is not injective. \

\medskip

\item[(ii)]
Let
$$
A:=\begin{pmatrix}0&1\\0&0\end{pmatrix} \quad \hbox{and} \quad
\Theta:=\begin{pmatrix}1&0\\0&z\end{pmatrix}.
$$
Put $f:=\begin{pmatrix} 0& 1\end{pmatrix}^t$. \ Then $f \in \mathcal
H (\Theta)$, $Af=\begin{pmatrix} 1&0\end{pmatrix}^t\ne 0$, and
$\left(T_A\right)_{\Theta}f=0$. \ Thus $\left(T_A\right)_{\Theta}$
is not injective. \

\medskip

\item[(iii)]
Let
$$
A:=\begin{pmatrix}z \delta&1\\0&1\end{pmatrix} \ \ \hbox{and} \ \
\Theta:=\begin{pmatrix}b_{\alpha}z&0\\0&z\end{pmatrix} \ \
(\hbox{where}\
\delta(z):=\frac{\sqrt{1-|\alpha|^2}}{1-\overline{\alpha}z}).
$$
Put $f:=\begin{pmatrix} b_\alpha& 0\end{pmatrix}^t$. \ Then $f \in
\mathcal H (\Theta)$, $Af=\begin{pmatrix} zb_\alpha\delta&
0\end{pmatrix}^t\ne 0$, and $\left(T_A\right)_{\Theta}f=0$. \ Thus
$\left(T_A\right)_{\Theta}$ is not injective. \ Note that $\Theta$
has a tensored-scalar singularity and
$$
\Theta A^*=\begin{pmatrix}
b_{\alpha}\overline{\delta}&0\\z&z\end{pmatrix}\in H^2_{M_2}.
$$
Note that $A$ and $\Theta$ are not right coprime.
\end{itemize}

\medskip

We observe:

\begin{lemma} \label{23.2222}
Let $A \in H^{2}_{M_n}$ and let $\Theta \in H^{\infty}_{M_n}$ be an
inner function such that
$$
\Phi\equiv A^* \Theta = \Delta B^*\in H^2_{M_n}\cap
\overline{(BMO)_{M_n}},
$$
where $\Delta$ and $B$ are right coprime. \ Then $ \ker \,
(T_A)_{\Theta}\label{TPhiTheta}=\mathcal H(\Theta) \cap \Delta
H^2_{\mathbb C^n}$. \ In particular, if $\Phi^*$  has a tensored-scalar
singularity with respect to $\theta$, then $ \ker \,
(T_A)_{\Theta}\label{TPhiTheta2}\subseteq I_{\theta} \mathcal
H(\Theta_1)$ with $\Theta_1:=\overline{\theta}\Theta$.
\end{lemma}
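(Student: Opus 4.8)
The plan is to convert the computation of $\ker(T_A)_\Theta$ into a statement about the kernel of a single Hankel operator. First I would invoke (the proof of) Lemma~\ref{BMOmn}: there is $F\in H^2_{M_n}$ with $\Psi_0:=\Phi^*+F=\Theta^*A+F\in L^\infty_{M_n}$, so that $A':=A+\Theta F=\Theta\Psi_0$ lies in $H^\infty_{M_n}$ and $(T_A)_\Theta=P_{\mathcal H(\Theta)}T_{A'}|_{\mathcal H(\Theta)}$. For $f\in\mathcal H(\Theta)$ we then have $A'f\in H^2_{\mathbb C^n}$; since $\Theta$ is square inner (so $\Theta\Theta^*=I$ a.e.) and $H^2_{\mathbb C^n}=\mathcal H(\Theta)\oplus\Theta H^2_{\mathbb C^n}$, the four conditions $(T_A)_\Theta f=0$, $A'f\in\Theta H^2_{\mathbb C^n}$, $\Theta^*A'f=\Psi_0 f\in H^2_{\mathbb C^n}$ and $H_{\Psi_0}f=0$ are all equivalent. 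As $F\in H^2_{M_n}$ forces $H_Fg=0$ for every $g\in H^2_{\mathbb C^n}$, we get $H_{\Psi_0}=H_{\Phi^*}$, hence $\ker(T_A)_\Theta=\mathcal H(\Theta)\cap\ker H_{\Phi^*}$. Since $\Phi=\Delta B^*$ is right coprime, (\ref{RCD}) gives $\ker H_{\Phi^*}=\Delta H^2_{\mathbb C^n}$, and the identity $\ker(T_A)_\Theta=\mathcal H(\Theta)\cap\Delta H^2_{\mathbb C^n}$ follows.

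For the ``in particular'' part, a tensored-scalar singularity of $\Phi^*$ with respect to $\theta$ means, by Definition~\ref{df4.5} together with $\ker H_{\Phi^*}=\Delta H^2_{\mathbb C^n}$, that $\Delta H^2_{\mathbb C^n}\subseteq\theta H^2_{\mathbb C^n}$; hence $I_\theta$ is an inner divisor of $\Delta$, say $\Delta=I_\theta\Delta_1$ with $\Delta_1$ inner. I would then argue that the same divisibility is inherited by the inner part $\Theta$ of $\Phi=A^*\Theta$, i.e.\ $\Theta=I_\theta\Theta_1$ for an inner $\Theta_1$, so that $\Theta_1=\overline\theta\,\Theta$ is indeed inner: this is because $I_\theta=\theta I$ is central among matrix functions, and $\Theta$ agrees with $\Delta$ up to a common inner left factor and a unitary constant (cf.\ Lemma~\ref{lem2.9} and the uniqueness in the Douglas--Shapiro--Shields factorization), so a central divisor of one is a divisor of the other. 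Granting this, the decomposition $\mathcal H(\Theta)=\mathcal H(I_\theta)\oplus I_\theta\mathcal H(\Theta_1)$ (cf.\ Lemma~\ref{lem22.55}) together with $\mathcal H(I_\theta)\cap\theta H^2_{\mathbb C^n}=\{0\}$ gives $\mathcal H(\Theta)\cap\theta H^2_{\mathbb C^n}=I_\theta\mathcal H(\Theta_1)$, and since $\Delta H^2_{\mathbb C^n}\subseteq\theta H^2_{\mathbb C^n}$ we conclude
$$
\ker(T_A)_\Theta=\mathcal H(\Theta)\cap\Delta H^2_{\mathbb C^n}\subseteq\mathcal H(\Theta)\cap\theta H^2_{\mathbb C^n}=I_\theta\mathcal H(\Theta_1).
$$

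Two points will need care. Since $T_A$ is only densely defined and possibly unbounded, one must run the whole argument with the bounded replacement $A'\in H^\infty_{M_n}$ supplied by Lemma~\ref{BMOmn}, not with $A$ itself --- this is exactly what legitimizes writing ``$A'f\in\Theta H^2_{\mathbb C^n}$'' and the subsequent manipulations. The genuinely delicate step is the one flagged above: a priori the singularity hypothesis only yields $I_\theta\mid\Delta$, and one must promote this to $I_\theta\mid\Theta$ (otherwise $\overline\theta\,\Theta$ need not be inner and the stated inclusion would be empty of content); the key is the centrality of $I_\theta$ together with the coincidence of the left- and right-coprime inner parts.
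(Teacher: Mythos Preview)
Your argument for the identity $\ker(T_A)_\Theta=\mathcal H(\Theta)\cap\Delta H^2_{\mathbb C^n}$ is correct and follows the same route as the paper's: both reduce to showing, for $f\in\mathcal H(\Theta)$, that $(T_A)_\Theta f=0$ is equivalent to $\Phi^*f\in H^2_{\mathbb C^n}$, i.e.\ $f\in\ker H_{\Phi^*}=\Delta H^2_{\mathbb C^n}$. The paper does this in one line via $\Theta^*Af=B\Delta^*f$; your detour through Lemma~\ref{BMOmn} and the bounded surrogate $A'=A+\Theta F$ is more scrupulous about the unboundedness of $T_A$, but lands in exactly the same place.

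The gap is in your justification of $I_\theta\mid\Theta$ for the ``in particular'' clause. From the hypothesis you correctly extract $I_\theta\mid\Delta$ (via Definition~\ref{df4.5} and the right-coprime factorization $\Phi^*=B\Delta^*$). But your claim that this divisibility transfers to $\Theta$ does not hold up. Lemma~\ref{lem2.9} only asserts that the \emph{dimensions} $\dim\mathcal H(\Theta_r)$ and $\dim\mathcal H(\Theta_\ell)$ of the inner parts in left- and right-coprime factorizations agree; it says nothing about one dividing the other, and certainly not that they ``agree up to a common inner left factor and a unitary constant''. More fundamentally, $\Phi=A^*\Theta$ is \emph{not assumed to be a coprime factorization}, so neither Lemma~\ref{lem2.9} nor the uniqueness in the Douglas--Shapiro--Shields factorization applies to $\Theta$ at all. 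The centrality of $I_\theta$ by itself does not bridge the two factorizations either: $\Delta$ sits on the left of $\Phi$ while $\Theta$ sits on the right, and the examples in the Remark following this lemma already show that $\Delta$ and $\Theta$ need not be comparable (e.g.\ $\Theta=\bigl(\begin{smallmatrix}z&0\\0&1\end{smallmatrix}\bigr)$ with $\Delta=\bigl(\begin{smallmatrix}1&0\\0&z\end{smallmatrix}\bigr)$). The paper's own proof simply asserts $\Theta=I_\theta\Theta_1$ alongside $\Delta=I_\theta\Delta_1$, citing Lemma~\ref{lem55.2-1}; you were right to isolate this as the delicate step, but the resolution you propose is not valid.
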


\begin{proof}  \
Let $f \in \mathcal H(\Theta)$. \ Then we have
$$
\begin{aligned}
(T_A)_\Theta f=0
   &\Longleftrightarrow\ B\Delta^* f=\Theta^* Af\in H^2_{\mathbb C^n}\\
     &\Longleftrightarrow\ f\in \ker H_{B\Delta^*}=\Delta H^2_{\mathbb  C^n},
\end{aligned}
$$
which implies $\ker \, (T_A)_{\Theta}\label{TPhiTheta3}=\mathcal
H(\Theta) \cap \Delta H^2_{\mathbb C^n}$. \ Suppose that $\Phi^*$
has a tensored-scalar singularity with respect to $\theta$. \
Then, by Lemma
\ref{lem55.2-1}, we may write
$$
\Theta=I_\theta  \Theta_1 \ \hbox{and} \ \Delta=I_{\theta}\Delta_1
\quad \hbox{(where $\Theta_1$ and $\Delta_1$ are inner)}.
$$
We thus have
$$
\aligned\ker \, (T_A)_{\Theta}\label{TPhiTheta4}&=\mathcal H(\Theta)
\cap \Delta H^2_{\mathbb C^n}\subseteq \mathcal H(\Theta)
\cap I_{\theta} H^2_{\mathbb C^n}\\
&=\left[\mathcal H(I_{\theta})\bigoplus I_{\theta} \mathcal
H(\Theta_1)\right]
\cap I_{\theta} H^2_{\mathbb C^n}\\
&=I_{\theta} \mathcal H(\Theta_1),
\endaligned
$$
which gives the result. \
\end{proof}

\medskip

\begin{corollary}\label{cor2.7} Let $A \in H^{2}_{M_n}$ and
$\Theta\equiv I_\theta \in H^{\infty}_{M_n}$ for an inner function
$\theta$ such that $A^*\Theta\in H^2_{M_n}\cap
\overline{(BMO)_{M_n}}$. \ If $A$ and $\Theta$ are coprime then
$(T_A)_{\Theta}\label{TPhiTheta5}$ is injective. \ In particular, if
$\theta$ is a finite Blaschke product, then $(T_A)_\Theta$ is
invertible.
\end{corollary}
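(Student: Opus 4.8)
The plan is to extract the kernel of $(T_A)_\Theta$ from Lemma~\ref{23.2222} and show it collapses to $\{0\}$. First I would note that Lemma~\ref{BMOmn} applies, since $A^*\Theta\in H^2_{M_n}\cap\overline{(BMO)_{M_n}}$, so that $(T_A)_\Theta$ is a bounded operator mapping $\mathcal H(\Theta)$ into itself and the statement is meaningful. The key observation is that $\Theta=I_\theta$ is $\theta$ times the identity matrix, so the scalar $\theta$ commutes past $A^*$ and hence
$$
\Phi:=A^*\Theta=\theta A^*=\Theta A^*.
$$
Because $A$ and $\Theta$ are coprime by hypothesis, $\Theta$ and $A$ are right coprime, so this last display is already a right coprime factorization of $\Phi$ of the type $\Phi=\Delta B^*$ demanded by Lemma~\ref{23.2222}, with $\Delta=\Theta$ and $B=A$.

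Applying Lemma~\ref{23.2222} with this choice of $\Delta$ and $B$ then gives
$$
\ker\,(T_A)_\Theta=\mathcal H(\Theta)\cap\Delta H^2_{\mathbb C^n}=\mathcal H(\Theta)\cap\Theta H^2_{\mathbb C^n}=\{0\},
$$
the last equality being immediate from $\mathcal H(\Theta)=H^2_{\mathbb C^n}\ominus\Theta H^2_{\mathbb C^n}$. This proves injectivity. For the final assertion, when $\theta$ is a finite Blaschke product we have $\dim\mathcal H(\Theta)=\dim\mathcal H(I_\theta)=n\deg\theta<\infty$, so the bounded injective operator $(T_A)_\Theta$ on the finite-dimensional space $\mathcal H(\Theta)$ is automatically surjective, hence invertible.

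The argument is short and there is no serious obstacle; the only point needing care is the bookkeeping in the second step — recognizing that because $\theta$ is scalar the conjugate transpose and $\Theta$ commute, so $A^*\Theta$ is literally already presented as a right coprime product $\Theta A^*$, whence the inner factor $\Delta$ produced by Lemma~\ref{23.2222} is exactly $\Theta$ and not some proper left divisor of it. If one prefers not to match $\Delta$ by inspection, uniqueness of the right coprime factorization up to a unitary constant right factor, via (\ref{RCD}), forces $\Delta H^2_{\mathbb C^n}=\Theta H^2_{\mathbb C^n}$ directly, and the computation goes through unchanged.
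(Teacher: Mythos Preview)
Your proof is correct and follows essentially the same approach as the paper: recognize that $A^*\Theta=\Theta A^*$ is already a right coprime factorization with $\Delta=\Theta$, apply Lemma~\ref{23.2222} to get $\ker(T_A)_\Theta=\mathcal H(\Theta)\cap\Theta H^2_{\mathbb C^n}=\{0\}$, and invoke finite-dimensionality for the last assertion. Your version is slightly more detailed (invoking Lemma~\ref{BMOmn} explicitly for boundedness and computing $\dim\mathcal H(\Theta)$), but the argument is the same.
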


\begin{proof} \ Since $A$ and $\Theta\equiv I_\theta$ are coprime
it follows that
$$
A^* \Theta = \Theta A^* \quad (\hbox{coprime}).
$$
It thus follows from Lemma \ref{23.2222} that $ \ker
(T_A)_{\Theta}\label{TPhiTheta6}=\mathcal H(\Theta) \cap \Theta
H^2_{\mathbb C^n}=\{0\}, $ which implies that $(T_A)_{\Theta}$ is
injective. \ If $\theta$ is a finite Blaschke product, then
$(T_A)_\Theta$ is a finite dimensional operator, and it follows that
$(T_A)_\Theta$ is invertible.
\end{proof}

\begin{theorem}\label{thm82.7}  \ Let $A \in H^{2}_{M_n}$ and let
$\Theta=I_{\theta} \in H^{\infty}_{M_n}$ for an inner function
$\theta$ such that $A^*\Theta\in H^2_{M_n}\cap
\overline{(BMO)_{M_n}}$. \ If $A$ and $\Theta$ are  coprime, then
$(T_A)_\Theta$ has a linear inverse (possibly unbounded).
\end{theorem}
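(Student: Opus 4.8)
The plan is to strengthen the injectivity already provided by Corollary \ref{cor2.7} to the statement that $(T_A)_\Theta$ is injective \emph{with dense range}; this is exactly what produces a two-sided linear inverse, namely the set-theoretic inverse $(T_A)_\Theta^{-1}\colon\operatorname{ran}(T_A)_\Theta\to\mathcal H(\Theta)$, which is densely defined and closed (its graph is the flip of the graph of the bounded operator $(T_A)_\Theta$, boundedness coming from Lemma \ref{BMOmn}), but need not be bounded unless $\operatorname{ran}(T_A)_\Theta$ happens to be closed. In particular, when $\theta$ is a finite Blaschke product the range is automatically closed, recovering the genuine invertibility asserted in Corollary \ref{cor2.7}.

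So the real content is that $\operatorname{ran}(T_A)_\Theta$ is dense, equivalently that the Hilbert-space adjoint $\bigl((T_A)_\Theta\bigr)^{*}=(T_{A^{*}})_\Theta$ is injective. The symbol $A^{*}$ is co-analytic, so Corollary \ref{cor2.7} does not apply to it directly, and the plan is to transport the adjoint to a compression of the same form. Using the identity $H_\Phi^{*}=H_{\widetilde\Phi}$ from (\ref{1.2}), the flip operator $J_n$, and the natural unitary identification of the model space $\mathcal H(I_\theta)$ with $\mathcal H(I_{\widetilde\theta})$ (which intertwines $S_\theta^{*}$ with $S_{\widetilde\theta}$), I would show, entrywise in the block-operator representation $(T_A)_\Theta=\bigl(a_{ij}(S_\theta)\bigr)_{i,j}$, that $\bigl((T_A)_\Theta\bigr)^{*}$ is unitarily equivalent to $(T_{\widetilde A})_{\widetilde\Theta}$, where $\widetilde\Theta=I_{\widetilde\theta}$ and $\widetilde A\in H^{2}_{M_n}$.

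It then remains to check that the hypotheses of Corollary \ref{cor2.7} hold for the pair $\widetilde A,\ \widetilde\Theta$. First, $\widetilde A$ and $I_{\widetilde\theta}$ are coprime: by Theorem \ref{lem33.9}, coprimeness of $A$ and $I_\theta$ is equivalent to coprimeness of $\theta$ and $\det A$ (with $\det A\ne 0$ by Lemma \ref{lem1.7}), and this is stable under $\widetilde{\;\cdot\;}$ since $\widetilde{\det A}=\det\widetilde A$ and the tilde operation preserves coprimeness of inner functions, whence $\widetilde\theta$ and $\det\widetilde A$ are coprime, i.e. $\widetilde A$ and $I_{\widetilde\theta}$ are coprime. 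Second, $\widetilde A^{*}\widetilde\Theta=\widetilde{A^{*}\Theta}$, and the tilde operation carries $H^{2}_{M_n}\cap\overline{(BMO)_{M_n}}$ into itself, so $\widetilde A^{*}\widetilde\Theta\in H^{2}_{M_n}\cap\overline{(BMO)_{M_n}}$. Hence Corollary \ref{cor2.7} yields that $(T_{\widetilde A})_{\widetilde\Theta}$ is injective, so $\bigl((T_A)_\Theta\bigr)^{*}$ is injective, so $(T_A)_\Theta$ has dense range, and combining with the first paragraph completes the proof.

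The step I expect to be the main obstacle is the identification in the second paragraph: realizing $\bigl((T_A)_\Theta\bigr)^{*}$ as $(T_{\widetilde A})_{\widetilde\Theta}$ requires simultaneously bookkeeping the flip $z\mapsto\bar z$, complex conjugation, and matrix transposition (the $(i,j)$ block of the adjoint involves $a_{ji}$, which is exactly what the entries of $\widetilde A$ supply), and it must be carried out carefully because $T_A$ and $T_{A^{*}}$ are only densely defined; one verifies the equivalence first on the dense set of polynomial vectors in $\mathcal H(\Theta)$ and then extends it using the boundedness of the compressions furnished by Lemma \ref{BMOmn}. A possible alternative to this step, of comparable difficulty, is to bypass the adjoint entirely and instead establish a co-analytic analogue of Lemma \ref{23.2222}, computing $\ker(T_{A^{*}})_\Theta$ directly and showing it is trivial when $A$ and $I_\theta$ are coprime.
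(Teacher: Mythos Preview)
Your proposal is correct, and both you and the paper reduce the problem to showing that the adjoint $(T_A)_\Theta^{*}=(T_{A^{*}})_\Theta$ is injective. The difference is in how that injectivity is established. You transport the adjoint via a unitary equivalence to $(T_{\widetilde A})_{\widetilde\Theta}$ and then reapply Corollary~\ref{cor2.7}; this works, but as you yourself flag, the bookkeeping for the unitary equivalence is the main obstacle. The paper instead takes exactly the route you list as your alternative in the final sentence, and it turns out to be a two-line computation: if $f\in\mathcal H(\Theta)$ satisfies $P_{\mathcal H(\Theta)}(A^{*}f)=0$, then $A^{*}f\in\Theta H^{2}_{\mathbb C^{n}}$; since $\Theta=I_{\theta}$ commutes with $A^{*}$ and $\Theta^{*}f\in(H^{2}_{\mathbb C^{n}})^{\perp}$, one has $\Theta^{*}(A^{*}f)=A^{*}(\Theta^{*}f)\in H^{2}_{\mathbb C^{n}}\cap(H^{2}_{\mathbb C^{n}})^{\perp}=\{0\}$, so $A^{*}f=0$; and then $\det A\neq 0$ (Lemma~\ref{lem1.7}) forces $f=0$. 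The only property used beyond Corollary~\ref{cor2.7} is that $I_{\theta}$ is scalar and hence commutes with $A^{*}$, which makes the direct kernel computation trivial and avoids the tilde machinery entirely.
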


\begin{proof} \ By Corollary \ref{cor2.7}, $(T_A)_\Theta$ is injective.  \
Thus it suffices to show that $(T_A)_\Theta^*$ is injective. \
Observe that for $U,V \in \mathcal H(\Theta)$,
$$
\bigl \langle (T_{A})_{\Theta}\, U, \ V \bigr\rangle
 =\bigl\langle A U, \ V \bigr\rangle
 = \int_{\mathbb T}\hbox{tr}\,\Bigl((A^*V)^*U\Bigr)\,d\mu
   =\bigl\langle U, A^*V\bigr\rangle,
$$
which implies
\begin{equation}\label{***}
(T_{A})^*_{\Theta} = (T_{A^*})_\Theta.
\end{equation}
On the contrary, assume that $(T_{A})^*_\Theta$ is not injective.  \
Then there exists a nonzero vector $f \in \mathcal H(\Theta)$ such
that
$$
0=(T_{A})^*_{\Theta} f =P_{\mathcal H(\Theta)}(A^*f),
$$
which gives $A^*f \in \Theta H^2_{\mathbb C^n}$. \ Since
$\Theta=I_{\theta}$ and $f \in \mathcal H(\Theta)$, it follows that
$$
\Theta^*(A^* f)=A^*(\Theta^* f) \in H^2_{\mathbb C^n} \cap
(H^2_{\mathbb C^n})^{\perp}=\{0\},
$$
which implies that $A^* f=0$. \ Since $A$ and $\Theta$ are coprime
it follows from Lemma \ref{lem1.7} that $A^*$ is invertible and
hence $f=0$, a contradiction. \ This proves $(T_{A})^*_\Theta$ is
injective.
\end{proof}

\medskip

\begin{remark} \
From Lemma \ref{23.2222}, we can see that if $A \in H^{2}_{M_n}$ and
$\Theta \in H^{\infty}_{M_n}$ is an inner matrix function such that
$$
A^* \Theta = \Delta B^*\in H^2_{M_n}\cap \overline{(BMO)_{M_n}},
$$
where $\Delta$ and $B$ are right coprime, then
\begin{equation}\label{oneone}
\Theta=\Delta\ \Longrightarrow\ (T_A)_\Theta\ \hbox{is injective.}
\end{equation}
But (\ref{oneone}) does not hold in general if $\Theta\ne \Delta$. \
For example, let
$$
A:=\begin{pmatrix}0&1\\0&0\end{pmatrix} \quad \hbox{and} \quad
\Theta:=\begin{pmatrix}z&0\\0&1\end{pmatrix}.
$$
Then
$$
A^* \Theta=\begin{pmatrix}
0&0\\z&0\end{pmatrix}=\begin{pmatrix}1&0\\0&z
\end{pmatrix}\begin{pmatrix} 0&1\\0&0 \end{pmatrix}^*\equiv \Delta B^*.
$$
Note that $\Delta$ and $B$ are right coprime. \ It thus follows from
Lemma \ref{23.2222} that
$$
\ker (T_A)_{\Theta}\label{TPhiTheta7}=\mathcal H(\Theta) \cap \Delta
H^2_{\mathbb C^n}= \mathbb C \oplus\{0\} \neq \{0\},
$$
which implies that  $(T_A)_{\Theta}$ is not injective. \ Here we
note that $\Theta$ and $A$ are not right coprime and $\hbox{det}\,A
= 0$. \ Thus we may expect that if
\begin{enumerate}
\item[(1)] \ $\Theta$ and $A$ are right coprime;
\item[(2)] \ $\hbox{det}\, A \neq 0$,
\end{enumerate}
then $(T_A)_{\Theta}$ is injective although $\Theta\ne \Delta$. \
However this is not the case. \ For example, let
$$
A:=\begin{pmatrix}0&1\\1&0\end{pmatrix} \quad \hbox{and} \quad
\Theta:=\begin{pmatrix}z&0\\0&1\end{pmatrix}.
$$
Then
$$
A^* \Theta=\begin{pmatrix}
0&1\\z&0\end{pmatrix}=\begin{pmatrix}1&0\\0&z
\end{pmatrix}\begin{pmatrix} 0&1\\1&0 \end{pmatrix}^*\equiv \Delta A^*.
$$
Note that $\Delta$ and $A$ are right coprime. \ It thus follows from
Lemma \ref{23.2222} that
$$
\ker (T_A)_{\Theta}\label{TPhiTheta8}=\mathcal H(\Theta) \cap \Delta
H^2_{\mathbb C^n}=\mathbb C\oplus \{0\} \neq \{0\},
$$
which implies that  $(T_A)_{\Theta}$ is not injective. \ Note that
$\Theta$ and $A$ are right coprime, and  $\hbox{det}\, A \neq 0$.

\end{remark}

%
%
%
%
%
%

\chapter{An interpolation problem and a functional calculus}

\medskip

In this chapter, we consider both an interpolation problem for
matrix functions of bounded type and a functional calculus for
compressions of the shift operator. \

We first review the classical Hermite-Fej\' er interpolation
problem, following \cite{FF}. \ Let $\theta$ be a finite Blaschke
product of degree $d$:
$$
\theta=e^{i\xi}\prod_{i=1}^N (\widetilde{b}_i)^{m_i} \quad
\left(\widetilde b_i(z):=\frac{z-\alpha_i}{1-\overline{\alpha}_i
z},\ \hbox{where}\ \alpha_i\in\mathbb D\right),
$$
where $d=\sum_{i=1}^N m_i$. \  For  our purposes, rewrite $\theta$
in the form
$$
\theta=e^{i\xi}\prod_{j=1}^d b_j,
$$
where
$$
b_j:=\widetilde{b}_k\quad\hbox{if}\ \sum_{l=0}^{k-1} m_l<j\le
\sum_{l=0}^k m_l
$$
and, for notational convenience, set $m_0:=0$. \  Let
\begin{equation}\label{2.10}
\varphi_j:=\frac{q_j}{1-\overline{\alpha}_j z} b_{j-1}b_{j-2}\cdots
b_1\quad (1\le j\le d),
\end{equation}
where $\varphi_1:=q_1 (1-\overline{\alpha}_1 z)^{-1}$ and
$q_j:=(1-|\alpha_j|^2)^{\frac{1}{2}}$ ($1\le j\le d$). \  It is well
known (cf. \cite{Ta}) that $\{\varphi_j\}_{j=1}^d$ is an orthonormal
basis for $\mathcal{H}(\theta)$. \

For our purposes we concentrate on the data given by sequences of
$n\times n$ complex matrices. \  Given the sequence $\{K_{ij}:\ 1\le
i\le N,\ 0\le j<m_i\}$ of $n\times n$ complex matrices and a set of
distinct complex numbers $\alpha_1,\hdots,\alpha_N$ in $\mathbb{D}$,
the classical Hermite-Fej\' er interpolation problem entails finding
necessary and sufficient conditions for the existence of a
contractive analytic matrix function $K$ in $H^{\infty}_{M_n}$
satisfying
\begin{equation} \label{2.11}
\frac{K^{(j)}(\alpha_i)}{j!} = K_{i,j} \qquad (1 \leq i \leq N, \ 0
\leq j < m_i).
\end{equation}
To construct a matrix polynomial $K(z)\equiv P(z)$ satisfying
(\ref{2.11}), let $p_i(z)$ be the polynomial of order $d-m_i$
defined by
$$
p_i(z):=\prod_{k=1,\\ k \neq i}^{N} \Bigl(\frac{z-
\alpha_k}{\alpha_i - \alpha_k}\Bigr)^{m_k}.
$$
Consider the matrix polynomial $P(z)$ of degree $d-1$ defined by
\begin{equation} \label{2.12}
P(z):= \sum_{i=1}^N \Biggl(K_{i,0}^{\prime} +K_{i,1}^{\prime}(z-
\alpha_i)+K_{i,2}^{\prime}(z- \alpha_i)^2+\cdots
+K_{i,m_i-1}^{\prime} (z-\alpha_i)^{m_i-1}\Biggr)\,p_i(z),
\end{equation}
where the $K_{i,j}^{\prime}$ are obtained by the following
equations:
$$
K_{i,j}^\prime=K_{i,j}-\sum_{k=0}^{j-1} \frac{K_{i,k}^\prime\,
p_i^{(j-k)}(\alpha_i)}{(j-k)!} \ \ (1\le i\le N;\ 0\le j<m_i)
$$
and $K_{i,0}^\prime=K_{i,0}$ ($1\le i\le N$). \  Then $P(z)$
satisfies (\ref{2.11}). \

Let $W\label{W}$ be the unitary operator from $\bigoplus_1^d \mathbb
C^n $ onto $\mathcal H (I_\theta)$ defined by
\begin{equation} \label{2.13}
W:=(I_{\varphi_1}, I_{\varphi_2}, \cdots , I_{\varphi_d}),
\end{equation}
where the $\varphi_j$ are the functions in (\ref{2.10}). \  It is
known \cite[Theorem X.1.5]{FF} that if $\theta$ is the finite
Blaschke product of order $d$, then $U_\theta$ is unitarily
equivalent to the lower triangular matrix $M\label{M}$ on
$\mathbb{C}^d$ defined by
\bigskip
\begin{equation} \label{2.14}
M:={\small \begin{pmatrix} \alpha_1&0&0&0&\cdots&\cdots\\
q_1 q_2&\alpha_2&0&0&\cdots&\cdots\\
-q_1 \overline{\alpha}_1 q_3&q_2 q_3&\alpha_3&0&\cdots&\cdots\\
q_1 \overline{\alpha}_2 \overline{\alpha}_3q_4&-q_2
\overline{\alpha}_3q_4&q_3 q_4&\alpha_4&\cdots&\cdots\\
-q_1 \overline{\alpha}_2 \overline{\alpha}_3
\overline{\alpha}_4q_5&q_2 \overline{\alpha}_3
\overline{\alpha}_4q_5&-q_3 \overline{\alpha}_4q_5&q_4
q_5&\alpha_5&\ddots\\
\vdots&\vdots&\vdots&\ddots&\ddots&\ddots
\end{pmatrix}.}
\end{equation}
Now let $P(z) \in H^{\infty}_{M_n}$ be a matrix polynomial of degree
$k$. \  Then the matrix $P(M)$ on $\mathbb C^{n \times d}$ is
defined by
\begin{equation} \label{2.15}
P(M):=\sum_{i=0}^{k} P_i \otimes M^i, \quad \text{where}\ P(z)=
\sum_{i=0}^{k} P_i z^i.
\end{equation}

If $M$ is given by (\ref{2.14}) and $P$ is the matrix polynomial
defined by (\ref{2.12}) then the matrix $P(M)$ is called the {\it
Hermite-Fej\' er matrix} determined by (\ref{2.15}). \  In
particular, if $(T_P)_{\Theta}:=P_{\mathcal H(\Theta)}
T_P\vert_{\mathcal H (\Theta)}$ is the compression of $T_P$ to
$\mathcal H(\Theta)$ (where $\Theta \equiv :=I_\theta$ for an inner
function $\theta$) (cf. p.\pageref{compress}), then it is known
\cite[Theorem X.5.6]{FF} that
\begin{equation}\label{2.17}
W^* (T_P)_{\Theta} W=P(M),
\end{equation}
which says that $P(M)$ is a matrix representation for
$(T_P)_{\Theta}\label{TPTheta}$. \

\bigskip

We now consider an interpolation problem for matrix functions of
bounded type. \ Our interpolation problem involves a certain
matrix-valued functional equation: $\Phi-K\Phi^*\in H^\infty_{M_n}$
(where $\Phi\in L^\infty_{M_n}$ and $K\in H^\infty_{M_n}$ is
unknown). \ We may ask, when does there exist such a matrix
$H^\infty$-function $K$? If such a function $K$ exists, how do we
find it? \ If $\Phi$ is a matrix-valued {\it rational} function,
this interpolation problem reduces to the classical Hermite-Fej\' er
Interpolation Problem. \

\medskip

More concretely, we consider the following question: For $\Phi\in
L^\infty_{M_n}$,
$$
\hbox{when does there exist a function $K\in H_{M_n}^\infty$
satisfying $\Phi-K\Phi^*\in H_{M_n}^\infty$\,?}
$$
For notational convenience we write, for $\Phi\in L^\infty_{M_n}$,
$$
\mathcal C(\Phi)\label{cphi}:=\Bigl\{K \in H^{\infty}_{M_n}:\
\Phi-K\Phi^*\in H^{\infty}_{M_n}\Bigr\}.
$$
Thus question can be rewritten as: For $\Phi\in L^\infty_{M_n}$,
\begin{equation}\label{interpo}
\hbox{when is $\mathcal{C}(\Phi)$ nonempty\,?}
\end{equation}
Question (\ref{interpo}) resembles an interpolation problem, as we
will see below. \ In this chapter we consider Question
(\ref{interpo}) for matrix functions of bounded type. \

\medskip

If $C(\Phi) \neq \emptyset$, i.e., there exists a function $K\in
H_{M_n}^\infty\ \hbox{such that}\ \Phi_-^*-K\Phi_+^*\in H_{M_n}^2$,
then $H_{\Phi_-^*}=T_{\widetilde K}^*H_{\Phi_+^*}$ for some $K\in
H_{M_n}^\infty$, which implies $\hbox{ker}\,H_{\Phi_+^*} \subseteq
\hbox{ker}\,H_{\Phi_-^*}$.\ Thus we have
\begin{equation}\label{necessary}
\mathcal C (\Phi)\ \neq\ \emptyset \Longrightarrow
\hbox{ker}\,H_{\Phi_+^*} \subseteq \hbox{ker}\,H_{\Phi_-^*}.
\end{equation}
Let $\Phi\equiv \Phi_-^*+\Phi_+\in L^\infty_{M_n}$ be such that
$\Phi$ and $\Phi^*$ are of bounded type. \ In view of (\ref{2.9}),
we may write
\begin{equation}\label{2.666}
\Phi_+= \Theta_1 A^*\quad \hbox{and}\quad  \Phi_- =\Theta_2 B^* \ \
\hbox{(right coprime)}.
\end{equation}
Thus, if $\mathcal C(\Phi)\ne \emptyset$, then by (\ref{necessary})
and (\ref{RCD}),  $\Theta_1 H^2_{\mathbb C^n}\subseteq \Theta_2
H^2_{\mathbb C^n}$. \ It then follows (cf. \cite[Corollary
IX.2.2]{FF}) that $\Theta_2$ is a left inner divisor of $\Theta_1$.
\ Therefore, whenever we consider the interpolation problem
(\ref{interpo}) for a function $\Phi\in L^\infty_{M_n}$ such that
$\Phi$ and $\Phi^*$ are of bounded type, we may assume that
$\Phi\equiv \Phi_-^* + \Phi_+\in L^{\infty}_{M_n}$ is of the form
\begin{equation}\label{3.222}
\Phi_+= \Theta_0 \Theta_1 A^*\quad \hbox{and}\quad \Phi_-=\Theta_0
B^*\quad\hbox{(right coprime)}.
\end{equation}

On the other hand, if $\Phi\equiv \Phi_-^*+\Phi_+\in L^\infty_{M_n}$
is such that $\Phi$ and $\Phi^*$ are of bounded type, then in view
of (\ref{2.6-1}), we may also write
\begin{equation}\label{2.666-6}
\Phi_+= \theta_1 A^*\quad \hbox{and}\quad \Phi_- =\theta_2 B^*,
\end{equation}
where $\theta_1, \theta_2\in H^\infty$ are inner functions. \ If
$\mathcal C (\Phi)\ne \emptyset$, then we can also show that
$\theta_2$ is an inner divisor of $\theta_1$ even though the
factorizations in (\ref{2.666-6}) are not right coprime (cf. \cite
[Proposition 3.2]{CHL2}). \ Thus if $\Phi\equiv \Phi_-^* + \Phi_+\in
L^{\infty}_{M_n}$ is such that $\Phi$ and $\Phi^*$ are of bounded
type then we may,  without loss of generality, assume that $\Phi$ is
of the form
\begin{equation}\label{3.222-1}
\Phi_+= \theta_0 \theta_1 A^*\quad \hbox{and}\quad \Phi_-=\theta_0
B^*,
\end{equation}
where $\theta_0, \theta_1\in H^\infty_{M_n}$ are inner functions and
$A,B\in H^2_{M_n}$. \ In view of Lemma \ref{lem55.2-1}, we may also
assume that the pairs $\{I_{\overline \theta_0\overline\theta_1},
A^*\}$  and  $\{I_{\overline\theta_0}, B^*\}$ have no common tensored-scalar
singularity.

\bigskip

First of all, we consider the case of matrix-valued rational
functions $\Phi\in L^\infty_{M_n}$. \ In this case we may write
$$
\Phi_+= \theta_1\theta_0 A^*\quad\hbox{and}\quad \Phi_-= \theta_1
B^*,
$$
where $\theta_0, \theta_1\in H^\infty $ are finite Blaschke
products. \ Observe that
\begin{equation}\label{3.17}
K \in \mathcal C (\Phi)\ \Longleftrightarrow\ \Phi-K\Phi^*\in
H^{\infty}_{M_n} \ \Longleftrightarrow\
 \theta_0B-KA \in \theta_1\theta_0H^{2}_{M_n}.
\end{equation}
Write
$$
(\theta_1\theta_0)(z)=\prod_{i=1}^N
\left(\frac{z-\alpha_i}{1-\overline{\alpha}_i z}\right)^{m_i} \qquad
(d:=\sum_{i=1}^N m_i),
$$
i.e., $\theta_1\theta_0$ is a finite Blaschke product of degree $d$.
\ Then the last assertion in (\ref{3.17}) holds if and only if the
following equations hold: for each $i=1,\hdots, N$,
\medskip
\begin{equation}\label{3.18}
\begin{pmatrix} B_{i,0}\\
B_{i,1}\\
B_{i,2}\\
\vdots\\
B_{i,m_i -1}
\end{pmatrix}
= \begin{pmatrix}
K_{i,0}&0&0&\cdots&0\\
K_{i,1}&K_{i,0}&0&\cdots&0\\
K_{i,2}&K_{i,1}&K_{i,0}&\cdots&0\\
\vdots&\ddots&\ddots&\ddots&\vdots \\
K_{i,m_i -1}&K_{i,m_i -2}&\hdots&K_{i,1}&K_{i,0}
\end{pmatrix}
\begin{pmatrix}
A_{i,0}\\
A_{i,1}\\
A_{i,2}\\
\vdots\\
A_{i,m_i -1}
\end{pmatrix},
\end{equation}
where
$$
K_{i,j}:= \frac{K^{(j)}(\alpha_i)}{j!},\quad A_{i,j}:=
\frac{A^{(j)}(\alpha_i)}{j!} \quad \text{and} \quad
B_{i,j}:=\frac{(\theta_0B)^{(j)}(\alpha_i)}{j!}.
$$
Thus $K$ is a function in $H^{\infty}_{M_n}$ for which
\begin{equation}\label{3.19}
\frac{K^{(j)}(\alpha_i)}{j!} = K_{i,j} \qquad (1 \leq i \leq N, \ 0
\leq j < m_i),
\end{equation}
where the $K_{i,j}$ are determined by the equation (\ref{3.18}). \
This is exactly the classical Hermite-Fej\' er interpolation
problem. \ Therefore, the solution (\ref{2.12}) for the classical
Hermite-Fej\' er interpolation problem provides a polynomial $K\in
\mathcal C (\Phi)$. \

\bigskip

We turn our attention to the case of matrix functions of bounded
type. \

To proceed, we need:

\medskip

\begin{proposition}\label{pro3.13-1}
Let $\Phi\equiv \Phi_-^*+\Phi_+\in L^\infty_{M_n}$ be such that
$\Phi$ and $\Phi^*$ are of bounded type. \ In view of {\rm
(\ref{3.222})}, we may write
$$
\Phi_+=\Theta_0\Theta_1 A^*\quad\hbox{and}\quad \Phi_-=\Theta_0 B^*
\ \ \hbox{\rm (right coprime)}.
$$
Suppose $\Theta_1 A^* = A_1^*\Theta$ {\rm (}where $A_1$ and $\Theta$ are
left coprime{\rm )}. \ Then the following hold:
\begin{itemize}
\item[(a)] If $ K \in \mathcal C (\Phi)$, then $K=K^{\prime}\Theta $ for some
$K^{\prime} \in H^{\infty}_{M_n}$;
\item[(b)] If $I_{\omega}$ is an inner divisor of $\Theta_1$, then
$I_{\omega}$ is an inner divisor of $\Theta$ and
\begin{equation}\label{21.44}
\mathcal{C}(\Phi^{1, \omega})=\bigl\{\overline\omega K : K \in
\mathcal C( \Phi)\bigr\},
\end{equation}
\end{itemize}
where $\Phi^{1,\omega}:=\Phi_-^*+P_{H_0^2}(\overline\omega\Phi_+)$
{\rm (cf. p.\pageref{qw})}.
\end{proposition}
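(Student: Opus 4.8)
The plan is to establish the two parts in order, using the hypothesis $\Theta_1 A^* = A_1^*\Theta$ (left coprime) to transfer information between the two coprime factorizations of $\Phi_+$. First I would unwind the definition of $\mathcal C(\Phi)$: a function $K\in H^\infty_{M_n}$ lies in $\mathcal C(\Phi)$ precisely when $\Phi-K\Phi^*\in H^\infty_{M_n}$, equivalently (looking at co-analytic parts and using $\Phi_\pm$ notation) when $\Phi_-^* - K\Phi_+^*\in H^2_{M_n}$, i.e. $\Theta_0 B^* - K A\Theta_0^*\Theta_1^* \in H^2_{M_n}$. Multiplying through by $\Theta_0$ and recalling $\Theta_1 A^* = A_1^* \Theta$, this reads $B^* - K A_1^* \Theta \Theta_0^* \cdot(\text{something analytic})\dots$; more cleanly, the condition becomes $H_{\Phi_-^*} = T_{\widetilde K}^* H_{\Phi_+^*}$, so $\mathrm{ran}\,H_{\Phi_+^*} \subseteq \ker$ of nothing — rather, we read off that $K$ intertwines the relevant Hankel operators.

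\textbf{Part (a).} Here the key observation is that since $\Phi_+=\Theta_0\Theta_1 A^*$ is right coprime and $\Theta_1 A^* = A_1^*\Theta$ with $A_1,\Theta$ left coprime, the inner function $\Theta$ captures the ``analytic obstruction'' coming from $\Phi_+$. Concretely, $K\in\mathcal C(\Phi)$ forces $\Theta_0 B^* - K A \Theta_0^*\Theta_1^*\in H^2_{M_n}$; rearranging and using $A^*\Theta_1^* = \Theta^* A_1$ (the adjoint form of the hypothesis), one gets that $K\Theta^* A_1 \Theta_0^* $ differs from $\Theta_0 B^*$ by an $H^2_{M_n}$ function. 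Taking adjoints and pairing against $\mathcal H(\Theta)$ — using that $A_1$ and $\Theta$ are left coprime so that the range of $T_{A_1}$ together with $\Theta H^2_{\mathbb C^n}$ spans $H^2_{\mathbb C^n}$ — I would deduce $K^* \Theta \in H^2_{M_n}$ in the appropriate sense, hence $\Theta^* K^* \cdot(\dots)$; the upshot is $\widetilde\Theta$ is a left inner divisor of $\widetilde K$, i.e. $K = K'\Theta$ for some $K'\in H^\infty_{M_n}$. (The boundedness of $K'$ follows because $K$ is bounded and $\Theta$ is inner.) Alternatively, and perhaps more transparently, one uses (\ref{RCD}) and (\ref{1.4}): from $H_{\Phi_-^*} = T_{\widetilde K}^* H_{\Phi_+^*}$ and $\ker H_{\Phi_+^*} = \Theta_0\Theta_1 H^2$, one shows $\mathrm{ran}\,H_{\Phi_+^*}$ contains $\mathcal H(\widetilde{A_1}\dots)$ forcing $T_{\widetilde K}$ to annihilate $\mathcal H(\widetilde\Theta)$, which gives the divisibility.

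\textbf{Part (b).} First, if $I_\omega$ is an inner divisor of $\Theta_1$, then since $\Theta_1 A^* = A_1^*\Theta$ with $A_1,\Theta$ left coprime, I would argue $I_\omega$ must divide $\Theta$: write $\Theta_1 = I_\omega \Theta_1'$, so $I_\omega\Theta_1' A^* = A_1^*\Theta$, whence $H_{\overline\omega \Theta_1' A^* \cdot(\dots)}=0$ type relations, or more directly, $\Theta_1' A^* = \overline\omega A_1^*\Theta \in H^2_{M_n}$, which by Lemma \ref{lem2.4}-style reasoning (applied to $\widetilde{A_1},\widetilde\Theta$ right coprime) forces $I_{\widetilde\omega}$ to be a right inner divisor of $\widetilde\Theta$, i.e. $I_\omega$ a left divisor of $\Theta$. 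Then for the identity (\ref{21.44}): by Proposition \ref{pro2.6}(a) applied to $\Phi_+ = (\Theta_0\Theta_1)A^*$ with left inner divisor $I_\omega$ inside $\Theta_1$, the function $P_{H_0^2}(\overline\omega\Phi_+)$ has right coprime factorization $\Theta_0\Theta_1'' (A'')^*$ (with $\Theta_1''=\overline\omega\Theta_1$), while $\Phi_-^*$ is unchanged. So $\Phi^{1,\omega}_+ = \Theta_0(\overline\omega\Theta_1)(A'')^*$ and $\Phi^{1,\omega}_- = \Theta_0 B^*$. Now chase the definitions: $L\in\mathcal C(\Phi^{1,\omega})$ iff $\Theta_0 B^* - L\cdot(\text{co-analytic part of }\Phi^{1,\omega}{}^*)\in H^2$, and one checks that $L = \overline\omega K$ with $K\in\mathcal C(\Phi)$ satisfies exactly this, using part (a) (which guarantees $K=K'\Theta$, and since $I_\omega$ divides $\Theta$, $\overline\omega K\in H^\infty_{M_n}$). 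Conversely every $L\in\mathcal C(\Phi^{1,\omega})$ satisfies $\omega L \in\mathcal C(\Phi)$ by reversing the computation.

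\textbf{Main obstacle.} The hard part will be Part (a): carefully justifying that the hypothesis $\Theta_1 A^* = A_1^*\Theta$ with $A_1,\Theta$ \emph{left} coprime is exactly what forces $\Theta$ (rather than some smaller inner function) to be a right divisor of $K$ — this requires cleanly converting between the $H^2_{M_n}$-membership condition $\Phi-K\Phi^*\in H^\infty_{M_n}$ and a statement about left inner divisors, and invoking the left coprimeness of $A_1$ and $\Theta$ at precisely the right point (e.g. via $A_1 H^2_{\mathbb C^n}\bigvee \Theta H^2_{\mathbb C^n} = H^2_{\mathbb C^n}$, cf. \cite[p.242]{FF}). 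A secondary subtlety is verifying in (b) that $\overline\omega K$ is genuinely in $H^\infty_{M_n}$ and that the correspondence $K\mapsto\overline\omega K$ is a bijection between $\mathcal C(\Phi)$ and $\mathcal C(\Phi^{1,\omega})$; both directions need the divisibility $I_\omega \mid \Theta$ together with part (a).
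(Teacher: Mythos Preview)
Your plan is in the right neighborhood but has a real gap at the decisive step of Part (a), and a mistaken claim in Part (b).

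\textbf{Part (a).} Your ``alternative'' route via $H_{\Phi_-^*}=T_{\widetilde K}^{\,*}H_{\Phi_+^*}$ only yields that $T_{\widetilde K}^{\,*}$ maps $\overline{\mathrm{ran}}\,H_{\Phi_+^*}=\mathcal H(\widetilde{\Theta_0\Theta_1})$ into $\overline{\mathrm{ran}}\,H_{\Phi_-^*}=\mathcal H(\widetilde{\Theta_0})$; it does \emph{not} give annihilation, so you cannot conclude $\widetilde\Theta\mid\widetilde K$ from this. The paper's key observation, which you are missing, is that the condition $K\in\mathcal C(\Phi)$ actually forces something stronger than the Hankel intertwining: rewriting $B\Theta_0^*-KA\Theta_1^*\Theta_0^*\in H^2_{M_n}$ as $B\Theta_1-KA\in H^2_{M_n}\Theta_0\Theta_1$ (multiply on the right by $\Theta_0\Theta_1$) and noting $B\Theta_1\in H^2_{M_n}\Theta_1$, one gets $KA\in H^2_{M_n}\Theta_1$, i.e.\ $KA\Theta_1^*\in H^2_{M_n}$. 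Since $A\Theta_1^*=\Theta^*A_1$, this says $H_{K\Theta^*A_1}=T_{\widetilde K}^{\,*}H_{\Theta^*A_1}=0$. Now left coprimeness of $A_1,\Theta$ (equivalently right coprimeness of $\widetilde{A_1},\widetilde\Theta$) gives $\overline{\mathrm{ran}}\,H_{\Theta^*A_1}=\mathcal H(\widetilde\Theta)$, whence $T_{\widetilde K}^{\,*}$ vanishes on $\mathcal H(\widetilde\Theta)$ and so $\widetilde K=\widetilde\Theta\,\widetilde{K'}$, i.e.\ $K=K'\Theta$. The point is that the $\Theta_0$-part of the obstruction is already absorbed by $B\Theta_0^*$, and what remains is a genuine $H^2$-membership (a Hankel operator equal to zero), not merely an intertwining relation.

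\textbf{Part (b).} Your argument writes $\Theta_1=I_\omega\Theta_1'$ and asserts $\Theta_1'A^*=\overline\omega A_1^*\Theta\in H^2_{M_n}$; but neither side lies in $H^2_{M_n}$ in general (indeed $\Theta_1A^*=\Theta_0^*\Phi_+$ need not be analytic when $\Theta_0$ is nontrivial), so the inference to $I_{\widetilde\omega}\mid\widetilde\Theta$ does not go through as stated. The paper instead writes $\Theta_1=\Omega_1I_\omega$ and uses the identity $A\,\overline\omega=\Theta^*A_1\Omega_1$ (check: $\Theta^*A_1=A\Theta_1^*$, so $\Theta^*A_1\Omega_1=A\Theta_1^*\Omega_1=A\,\overline\omega$). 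Since $A$ and $I_\omega$ are right coprime (as $I_\omega$ divides $\Theta_1$ and $A,\Theta_0\Theta_1$ are right coprime), $\ker H_{\Theta^*A_1\Omega_1}=\omega H^2_{\mathbb C^n}$; writing the resulting coprime factorization $\Theta^*A_1\Omega_1=D\,\overline\omega$ then forces $I_\omega\mid\Theta$. Once $\Theta=\Omega I_\omega$ is in hand, your plan for the bijection $K\mapsto\overline\omega K$ is correct and matches the paper: by (a), $K=K'\Theta=K'\Omega\,\omega$, so $\overline\omega K=K'\Omega\in H^\infty_{M_n}$, and one verifies directly that $\overline\omega K\in\mathcal C(\Phi^{1,\omega})$; the converse is the same computation run backwards.
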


\begin{proof}
Observe that
$$
K \in \mathcal C (\Phi) \Longleftrightarrow
   B\Theta_0^* -  K A \Theta_1^* \Theta_0^*\in H_{M_n}^2
 \Longleftrightarrow B \Theta_1 -KA  \in H_{M_n}^2  \Theta_0\Theta_1\,,
$$
which implies that $KA\in  H_{M_n}^2 \Theta_1$, and hence $ KA
\Theta_1^*\in  H_{M_n}^2$. \ Let $\Theta_1 A^* = A_1^*\Theta$, where
$A_1$ and $\Theta$ are left coprime. \ Then
\begin{equation}\label{22.41}
0=H_{ KA \Theta_1^*}=H_{ K \Theta^*
A_1}=T_{\widetilde{K}^*}H_{\Theta^* A_1}.
\end{equation}
Since $A_1$ and $\Theta$ are left coprime, $\widetilde{A}_1$ and
$\widetilde{\Theta}$ are right coprime, so that
$\hbox{ran}H_{\Theta^* A_1}
=\mathcal H(\widetilde{\Theta})$. \ Thus by (\ref{22.41}),
$\widetilde{K}^*\mathcal H(\widetilde{\Theta}) \subseteq
(H^2_{\mathbb C^n})^{\perp} $, so that
$\widetilde{K}=\widetilde{\Theta}\widetilde{K^{\prime}}$ for some
$K^{\prime} \in H^{\infty}_{M_n}$, which implies
$K=K^{\prime}\Theta$. This proves (a).

For (b), suppose $I_{\omega}$ is an inner divisor of $\Theta_1$. \
Thus we may write $\Theta_1=\Omega_1 I_{\omega}$ for some inner
function $\Omega_1$. \ Now we will show that $I_\omega$ is an inner
divisor of $\Theta$. \ Since $\Theta_1 A^*=A_1^* \Theta$, we have
$H_{A {I_{\omega}}^*}=H_{\Theta^* A_1\Omega_1}$, and hence $\ker
H_{\Theta^* A_1\Omega_1}=\omega H^2_{\mathbb C^n}$. \  Thus we may
write
$$
\Theta^* A_1\Omega_1 =D (I_{\omega})^*= (I_{\omega})^* D
\quad(\hbox{coprime}),
$$
which implies that $I_{\omega}$ is an inner divisor of $\Theta$.
Write $\Theta=\Omega I_{\omega}$. \ Thus if $K \in \mathcal C
(\Phi)$, then by (a), $\Phi_-^* - K^{\prime}\Theta \Phi_+^* \in
H_{M_n}^2$,
which implies
$$
(\Phi^{1,\omega})_-^* -( K^{\prime}\Omega)(\Phi^{1,\omega})_+^* \in
H_{M_n}^2.
$$
Thus we have $\overline\omega K=K^{\prime}\Omega \in \mathcal C
(\Phi^{1,\omega})$, which implies that
$$
 \Bigl\{\overline\omega K: K \in
\mathcal C( \Phi)\Bigr\}  \subseteq \mathcal{C}(\Phi^{1, \omega}).
$$
The above argument is reversible, and this proves (\ref{21.44}).
\end{proof}

\begin{corollary}\label{cor33.13-1}
Let $\Phi \in L_{M_n}^{\infty}$ be of bounded type of the form
$$
\Phi_+=\theta_0\theta_1 A^*\quad\hbox{and}\quad \Phi_-=\theta_0 B^*
\  \ \hbox{\rm (coprime)},
$$
where $\theta_0$  and $\theta_1$ are finite Blaschke products and
$A,B\in H^2_{M_n}$. \  If $\omega$ is an inner divisor of
$\theta_1$, then
\begin{equation}\label{3.23}
\mathcal C(\Phi)=\Bigl\{\omega K : K \in \mathcal C( \Phi^{1,
\omega})\Bigr\}.
\end{equation}
\end{corollary}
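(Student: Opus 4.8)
The plan is to obtain (\ref{3.23}) as the ``inverse'' form of Proposition \ref{pro3.13-1}(b), applied in the special case where the inner parts occurring there are scalar-inner. First I would observe that, since $\theta_0$ and $\theta_1$ are finite Blaschke products, the hypotheses $\Phi_+=\theta_0\theta_1A^*$ and $\Phi_-=\theta_0B^*$ (coprime) say precisely that $\Phi_+=I_{\theta_0}I_{\theta_1}A^*$ and $\Phi_-=I_{\theta_0}B^*$ are right coprime factorizations in the sense of (\ref{3.222}); here one uses Theorem \ref{lem33.9}, by which left- and right-coprimeness coincide for a scalar-inner factor. Setting $\Theta_0:=I_{\theta_0}$ and $\Theta_1:=I_{\theta_1}$, one then takes the left coprime factorization $\theta_1A^*=A_1^*\Theta$ furnished by the Douglas--Shapiro--Shields theory (Remark \ref{DSS}), so that all the standing hypotheses of Proposition \ref{pro3.13-1} are in force.

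Since $\omega$ is an inner divisor of $\theta_1$, Lemma \ref{lem2.1} gives that $I_\omega$ is an inner divisor of $\Theta_1=I_{\theta_1}$, so Proposition \ref{pro3.13-1}(b) applies and yields the set identity
\[
\mathcal C(\Phi^{1,\omega})=\bigl\{\overline\omega K:\ K\in\mathcal C(\Phi)\bigr\}.
\]
To convert this into (\ref{3.23}) I would simply multiply each member by $\omega$ and use $\omega\overline\omega=1$ a.e.\ on $\mathbb T$, obtaining
\[
\bigl\{\omega L:\ L\in\mathcal C(\Phi^{1,\omega})\bigr\}=\bigl\{\omega\overline\omega K:\ K\in\mathcal C(\Phi)\bigr\}=\mathcal C(\Phi),
\]
which is the claimed equality. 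Two bookkeeping points must be checked: that $\omega L\in H^\infty_{M_n}$ whenever $L\in H^\infty_{M_n}$, and that $\overline\omega K\in H^\infty_{M_n}$ whenever $K\in\mathcal C(\Phi)$. The first is immediate, $\omega$ being inner. For the second, Proposition \ref{pro3.13-1}(a) gives $K=K'\Theta$ with $K'\in H^\infty_{M_n}$, while the proof of part (b) shows $I_\omega$ is a left inner divisor of $\Theta$; writing $\Theta=\omega\Omega$ we get $\overline\omega K=K'\Omega\in H^\infty_{M_n}$.

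I do not anticipate a genuine obstacle, as this is a corollary: the substantive work lies entirely in Proposition \ref{pro3.13-1}. The only care required is in confirming that the standing hypotheses of that proposition are inherited in the present rational/scalar-inner setting and in the elementary verification that $\overline\omega K$ is analytic, so that the set-theoretic inversion above is legitimate.
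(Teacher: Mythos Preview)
Your proposal is correct and follows essentially the same route as the paper, which simply records ``Immediate from Proposition \ref{pro3.13-1}.'' Your explicit inversion of the set identity in part (b) and the bookkeeping checks (in particular that $\overline\omega K\in H^\infty_{M_n}$ via part (a) and $\Theta=\omega\Omega$) are exactly the content hidden behind that one-line proof.
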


\begin{proof}
Immediate from Proposition \ref{pro3.13-1}.
\end{proof}

\medskip

Now let $\Theta \in H^{\infty}_{M_n}$ be an inner matrix function
and suppose
$$
\Theta :=\prod_{i=1}^N \Delta_i \quad(\Delta_i\ \hbox{is an inner
matrix function}).
$$
Write
$$
\Omega_j:=\prod_{i=1}^j \Delta_i\quad (j=1,\cdots, N-1).
$$
Then by  Lemma \ref{lem22.55} (a), we have
$$
\aligned \mathcal K_{\Theta}
&=\mathcal K_{\Delta_N}\oplus  \mathcal K_{\Omega_{N-1}}\Delta_N\\
&=\mathcal K_{\Delta_N}\oplus\bigr(\mathcal K_{\Delta_{N-1}}
\oplus\mathcal K_{\Omega_{N-2}}\Delta_{N-1}\bigl)\Delta_N \\
&=\mathcal K_{\Delta_N}\oplus\mathcal K_{\Delta_{N-1}}\Delta_N\oplus
\bigr(\mathcal K_{\Delta_{N-2}}\oplus \mathcal K_{\Omega_{N-3}}\Delta_{N-2}\bigl)\Delta_{N-1}\Delta_N\\
& \qquad \vdots\\
&=\mathcal K_{\Delta_N}\oplus\mathcal
K_{\Delta_{N-1}}(\Delta_N)\oplus \mathcal
K_{\Delta_{N-2}}(\Delta_{N-1}\Delta_N)+\cdots +\mathcal
K_{\Delta_1}(\Delta_2 \cdots \Delta_N ),
\endaligned
$$
Thus if $B\in \mathcal K_{\Theta}$, then we may write
\begin{equation}\label{123.28}
B=B_1+\sum_{i=2}^{N} B_i \left( \prod_{j=N+2-i}^{N} \Delta_j\right)
\quad (B_i \in \mathcal K_{\Delta_{N+1-i}}).
\end{equation}
Let $\Phi\equiv \Phi_-^*+\Phi_+\in L^\infty_{M_n}$ be such that
$\Phi$ and $\Phi^*$ are of bounded type. \ To consider the
interpolation problem $\mathcal C (\Phi)\ne \emptyset$, we may
write, in view of (\ref{3.222-1}),
$$
\Phi_+= \theta_0 \theta_1 A^*\quad \hbox{and}\quad \Phi_-=\theta_0
B^*,
$$
where  $\theta_0, \theta_1$ are inner functions and $A,B\in
H^2_{M_n}$. \ We may assume that $\Phi_\pm(0)=0$. \ Thus, in view of
Lemma \ref{lem2.4}, $A\in \mathcal K_{{\theta_0\theta_1}}$ and $B\in
\mathcal K_{{\theta_0}}$. \ Let
$$
\theta_0:=\prod_{i=1}^N \delta_i \qquad(\delta_i \in H^{\infty}\
\hbox{is inner}).
$$
Then by (\ref{123.28}) and  Lemma \ref{lem22.55} (a), we may write
\begin{equation}\label{123.2888}
\begin{cases}
A=A_1+\sum_{i=2}^{N} \left( \prod_{j=N+2-i}^{N}
{\delta_j}\right) A_i + \theta_0 A_{N+1};\\
B=B_1+\sum_{i=2}^{N} \left( \prod_{j=N+2-i}^{N} {\delta_j}\right)
B_i,
\end{cases}
\end{equation}
where $A_i, B_i \in  \mathcal K_{{\delta_{N+1-i}}} (i=1,2,\cdots,
N)$ and $A_{N+1} \in \mathcal K_{{\theta_1}}$.

\bigskip

We then have:

\begin{lemma}\label{lem32.377} \
Let $\Phi\equiv \Phi_-^*+\Phi_+\in L^\infty_{M_n}$ be such that
$\Phi$ and $\Phi^*$ are of bounded type. \ Then in view of
$(\ref{3.222-1})$, we may write
$$
\Phi_+={\theta_0\theta_1} A^* \quad \hbox{and} \quad
\Phi_-={\theta_0} B^*.
$$
Put $\theta_0:=\prod_{i=1}^N \delta_i$ ($\delta_i \ \hbox{inner})$.
\ Then in view of {\rm (\ref{123.2888})}, we may write
$$
A=\sum_{i=1}^{N} \lambda_i A_i + \theta_0 A_{N+1} \quad \hbox{and}
\quad B=\sum_{i=1}^{N} \lambda_i B_i,
$$
where $\lambda_1=1$, $\lambda_i\equiv \prod_{j=N+2-i}^{N} \delta_j \
(i=2,3,\cdots N)$, $A_{N+1}\in \mathcal K_{{\theta_1}}$ and $\ A_i,
B_i \in \mathcal K_{{\delta_{N+1-i}}}$. \ Suppose that $A$ and
$I_{\theta_1}$ are coprime. \  Then for $K \in H^{\infty}_{M_n}$,
$$
K \in \mathcal C(\Phi) \Longleftrightarrow P_{\mathcal
K_{{\theta_0\theta_1}}}K ={\theta_1} \left(\sum_{i=1}^{N} \lambda_i
K_i \right),
$$
where $K_i \in \mathcal K_{{\delta_{N+1-i}}} \ (i=1,2,\cdots, N)$
satisfies the equation
\begin{equation}\label{30}
\sum_{i=1}^{N} \lambda_i B_i-\left(\sum_{i=1}^{N} \lambda_i K_i
\right) \left(\sum_{i=1}^{N} \lambda_i A_i \right) \in \theta_0
H^2_{M_n}.
\end{equation}
\end{lemma}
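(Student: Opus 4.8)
The plan is to reduce the statement $K\in\mathcal C(\Phi)$, i.e. $\Phi-K\Phi^*\in H^\infty_{M_n}$, to a divisibility condition modulo $\theta_0\theta_1 H^2_{M_n}$, and then to use the structural decompositions of $A$ and $B$ given in (\ref{123.2888}) together with the coprimeness hypothesis on $A$ and $I_{\theta_1}$ to pin down the relevant portion of $K$. First I would record, exactly as in (\ref{3.17}), the chain of equivalences
\[
K\in\mathcal C(\Phi)\ \Longleftrightarrow\ \theta_0 B-KA\in\theta_0\theta_1 H^2_{M_n}\ \Longleftrightarrow\ B-\overline{\theta_1}\,(KA)\overline{\theta_1}^{\,-1}\cdots
\]
— more precisely, since multiplication by the unimodular scalar $\theta_0$ is invertible on $L^2_{M_n}$, $K\in\mathcal C(\Phi)$ iff $B-KA\overline{\theta_1}\in\theta_1 H^2_{M_n}$, i.e. $B\theta_1-KA\in\theta_0\theta_1 H^2_{M_n}$ after multiplying back by $\theta_0$; the cleanest normalization is $\theta_0 B-KA\in\theta_0\theta_1H^2_{M_n}$. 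In particular this forces $KA\in H^2_{M_n}\theta_1$ (scalar $\theta_1$ commutes), hence $KA\overline{\theta_1}\in H^2_{M_n}$.

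**Locating $K$ on $\mathcal K_{\theta_0\theta_1}$.** The next step is to show that the condition $KA\overline{\theta_1}\in H^2_{M_n}$, under the hypothesis that $A$ and $I_{\theta_1}$ are coprime, forces $P_{\mathcal K_{\theta_0\theta_1}}K$ to be left-divisible by $\theta_1$. Here I would invoke Proposition \ref{pro3.13-1}(a) (applied with $\Theta_0\equiv\theta_0$, $\Theta_1\equiv\theta_1$, and $\Theta=I_{\theta_1}$, which is legitimate because $\theta_1 A^*=A^*I_{\theta_1}$ with $A$ and $I_{\theta_1}$ coprime, so $A^*$ and $I_{\theta_1}$ are left coprime) to conclude that any $K\in\mathcal C(\Phi)$ satisfies $K=K'I_{\theta_1}=\theta_1 K'$ for some $K'\in H^\infty_{M_n}$; equivalently, on the orthogonal complement $\mathcal K_{\theta_0\theta_1}$ of $H^2_{M_n}\theta_0\theta_1$, the compression $P_{\mathcal K_{\theta_0\theta_1}}K$ is of the form $\theta_1\bigl(P_{\mathcal K_{\theta_0}}K'\bigr)$ — and $P_{\mathcal K_{\theta_0}}K'$, being an element of $\mathcal K_{\theta_0}$, can be written as $\sum_{i=1}^N\lambda_i K_i$ with $K_i\in\mathcal K_{\delta_{N+1-i}}$ via (\ref{123.28}) and Lemma \ref{lem22.55}(a). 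Only $P_{\mathcal K_{\theta_0\theta_1}}K$ matters for membership in $\mathcal C(\Phi)$ because $\Phi_\pm(0)=0$ means $A,B$ lie in the relevant $\mathcal K$-spaces and the defect condition $\theta_0 B-KA\in\theta_0\theta_1H^2_{M_n}$ is insensitive to the part of $K$ lying in $H^2_{M_n}\theta_0\theta_1$.

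**Translating to the equation (\ref{30}).** With $K$ replaced by $\theta_1\sum\lambda_i K_i$ and $A$ replaced by $\sum\lambda_i A_i+\theta_0 A_{N+1}$, the condition $\theta_0 B-KA\in\theta_0\theta_1H^2_{M_n}$ becomes
\[
\theta_0\Bigl(\sum_i\lambda_i B_i\Bigr)-\theta_1\Bigl(\sum_i\lambda_i K_i\Bigr)\Bigl(\sum_i\lambda_i A_i+\theta_0 A_{N+1}\Bigr)\in\theta_0\theta_1 H^2_{M_n}.
\]
The term $\theta_1\bigl(\sum\lambda_i K_i\bigr)\theta_0 A_{N+1}$ is visibly in $\theta_0\theta_1 H^2_{M_n}$ and drops out; dividing the remaining relation by $\theta_0$ (allowed since $\theta_1\bigl(\sum\lambda_i K_i\bigr)\sum\lambda_i A_i$ is already forced to be divisible by $\theta_1$ by the previous paragraph, and the scalar $\theta_1$ factors out cleanly) leaves exactly
\[
\sum_i\lambda_i B_i-\Bigl(\sum_i\lambda_i K_i\Bigr)\Bigl(\sum_i\lambda_i A_i\Bigr)\in\theta_0 H^2_{M_n},
\]
which is (\ref{30}). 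Conversely, given $K_i\in\mathcal K_{\delta_{N+1-i}}$ satisfying (\ref{30}), setting $P_{\mathcal K_{\theta_0\theta_1}}K=\theta_1\sum\lambda_i K_i$ (and $K$ arbitrary elsewhere) reverses every step, so the equivalence holds in both directions.

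**Main obstacle.** I expect the delicate point to be the ``$\Rightarrow$'' direction of the divisibility claim for $P_{\mathcal K_{\theta_0\theta_1}}K$ — specifically, carefully justifying that $KA\overline{\theta_1}\in H^2_{M_n}$ together with coprimeness of $A$ and $I_{\theta_1}$ forces the $\theta_1$-divisibility \emph{of the compression}, not merely of some representative, and bookkeeping the various $\mathcal K$- versus $\mathcal H$-spaces and the left/right placements of $\theta_1$ correctly (here scalar-ness of $\theta_0,\theta_1$ is essential, since it lets the inner scalars commute past matrix factors). Everything else is a direct computation once Proposition \ref{pro3.13-1} and Lemma \ref{lem22.55} are in hand; the coprimeness hypothesis on $A$ and $I_{\theta_1}$ is exactly what Theorem \ref{lem33.9} and (\ref{RCD}) are designed to exploit in the step $H_{\Theta^*A_1}$ has range $\mathcal H(\widetilde\Theta)$.
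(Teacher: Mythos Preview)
Your approach is essentially the paper's: reduce via Proposition~\ref{pro3.13-1} (the paper invokes part~(b), you invoke part~(a), but these amount to the same thing since $\Theta=I_{\theta_1}$ here), then decompose $P_{\mathcal K_{\theta_0}}K'$ via Lemma~\ref{lem22.55} and compute. However, there is a genuine slip that propagates.

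The error is in your basic equivalence. With $\Phi_+=\theta_0\theta_1A^*$ and $\Phi_-=\theta_0B^*$ (so $\theta_0$, not $\theta_1$, is the inner part of $\Phi_-$), the computation $\Phi_-^*-K\Phi_+^*=\overline{\theta_0}B-K\overline{\theta_0\theta_1}A\in H^2_{M_n}$ gives, after multiplying by the unimodular scalar $\theta_0\theta_1$,
\[
\theta_1 B-KA\in\theta_0\theta_1 H^2_{M_n},
\]
\emph{not} $\theta_0B-KA\in\theta_0\theta_1H^2_{M_n}$. You actually write the correct expression $B\theta_1-KA$ once, but then revert to ``the cleanest normalization is $\theta_0B-KA$'' and use that throughout the ``Translating'' paragraph. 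With the wrong formula you are left with $\theta_0\bigl(\sum\lambda_iB_i\bigr)-\theta_1\bigl(\sum\lambda_iK_i\bigr)\bigl(\sum\lambda_iA_i\bigr)\in\theta_0\theta_1H^2_{M_n}$, from which you cannot simply ``divide by $\theta_0$'' to reach~(\ref{30}) --- your parenthetical justification talks about divisibility by $\theta_1$, not $\theta_0$, and does not close the gap.

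With the correct $\theta_1B-KA\in\theta_0\theta_1H^2_{M_n}$ and $K=\theta_1K'$, the scalar $\theta_1$ factors out immediately to give $B-K'A\in\theta_0H^2_{M_n}$; the $\theta_0A_{N+1}$ term drops, only $P_{\mathcal K_{\theta_0}}K'=\sum\lambda_iK_i$ matters modulo $\theta_0H^2_{M_n}$, and~(\ref{30}) follows. This is exactly the paper's chain~(\ref{123.29}). So the fix is a one-symbol correction, after which your argument and the paper's coincide.
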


\begin{proof} \ Suppose that $A$ and
$I_{\theta_1}$ are coprime. \ Let $K \in \mathcal C({\Phi})$. \ If
we put $\Psi:=\Phi^{1,\theta_1}$, then by Proposition
\ref{pro3.13-1}(b) we can show that $K=\theta_1K^{\prime}$ for some
$K^{\prime} \in \mathcal C(\Psi)$. \ It thus suffices to show that
\begin{equation}\label{55.22}
K^{\prime} \in \mathcal C(\Psi) \Longleftrightarrow P_{\mathcal
K_{{\theta_0}}}K^{\prime} =\sum_{i=1}^{N} \lambda_i K_i,
\end{equation}
where $K_i \in \mathcal K_{{\delta_{N+1-i}}} \ (i=1,2,\cdots, N-1)$
satisfies the equation (\ref{30}). \ By Lemma \ref{lem2.4}, we have
$$
\Psi_+=P_{H_0^2}(\overline{\theta}_1 \Phi_+) =
\theta_0\bigl(P_{\mathcal K_{{\theta_0}}}A\bigr)^* =\theta_0
\left(\sum_{i=1}^{N} \lambda_i A_i\right)^*.
$$
Then
\begin{equation}\label{123.29}
\aligned \Psi_-^*-K^{\prime}\Psi_+^* \in H^2_{M_n}
&\Longleftrightarrow
\sum_{i=1}^{N} \lambda_i B_i - K^{\prime} \sum_{i=1}^{N} \lambda_i A_i \in \theta_0H^2_{M_n}\\
 &\Longleftrightarrow
 P_{\mathcal K_{{\theta_0}}} \left(\sum_{i=1}^{N} \lambda_i B_i \right)-P_{\mathcal K_{{\theta_0}}} \left(K^{\prime}
\sum_{i=1}^{N} \lambda_i A_i \right) =0\\
 &\Longleftrightarrow
\sum_{i=1}^{N} \lambda_i B_i - P_{\mathcal K_{{\theta_0}}}\left(
\left(P_{\mathcal K_{{\theta_0}}}K^{\prime}
\right)\left(\sum_{i=1}^{N} \lambda_i A_i\right)\right) =0.
\endaligned
\end{equation}
In  view of {\rm (\ref{123.2888})}, we may write
$$
P_{\mathcal K_{{\theta_0}}} K^{\prime}\equiv\sum_{i=1}^{N} \lambda_i
K_i,
$$
where $\lambda_1=1, \ \lambda_i:= \prod_{j=N+2-i}^{N} \delta_j, \ \
K_i\in \mathcal K_{{\delta_{N+1-i}}}$. \ It thus follows from
(\ref{123.29}) that $K^{\prime} \in \mathcal C(\Psi)$  if and only
if
$$
\sum_{i=1}^{N} \lambda_i B_i -\left(\sum_{i=1}^{N} \lambda_i K_i
\right) \left(\sum_{i=1}^{N} \lambda_i A_i \right) \in \theta_0
H^2_{M_n},
$$
which gives (\ref{55.22}). \ This completes the proof.
\end{proof}

\begin{theorem}\label{cor32.378} \
Let $\Phi\equiv \Phi_-^*+\Phi_+ \in L^{\infty}_{M_n}$ be of bounded
type of the form
$$
\Phi_+={\theta}^N A^* \quad \hbox{and} \quad \Phi_-= {\theta}^m B^*
\quad(N \geq m)\,,
$$
where $\theta$ is inner. \ Then in view of $(\ref{123.2888})$, we
may write
$$
A=A_0+\sum_{i=1}^{m-1} \theta^i A_i + \theta^m A_{m}\quad \hbox{and}
\quad B=B_0 +\sum_{i=1}^{m-1} \theta^i B_i,
$$
where $A_m\in \mathcal K_{{\theta^{N-m}}}$ and $\ A_i, B_i \in
\mathcal K_{{\theta}}$ $(i=0,1,\cdots, m-1)$. \ Suppose that $A$ and
$I_{\theta}$ are coprime.  \  Then for $K \in H^{\infty}_{M_n}$,
$$
K \in \mathcal C(\Phi) \Longleftrightarrow P_{\mathcal
K_{{\theta^N}}}K ={\theta}^{N-m}\left(K_0 +\sum_{i=1}^{m-1} \theta^i
K_i \right),
$$
where the $K_i\in \mathcal K_{{\theta}}$ satisfy the equation
\begin{equation}\label{333330}
\begin{pmatrix} B_0\\B_1 \\ \vdots\\B_{m-1}\end{pmatrix}
=\begin{pmatrix}K_0&0&\cdots &0\\K_1
&K_0&\cdots&0\\ \vdots&\vdots&\ddots&\vdots\\
K_{m-1}&K_{m-2}&\cdots & K_0
\end{pmatrix}
\begin{pmatrix} A_0\\A_1 \\ \vdots\\A_{m-1}\end{pmatrix}+\begin{pmatrix} G_0\\G_1 \\
\vdots\\G_{m-1}\end{pmatrix},
\end{equation}
where
\begin{equation}\label{G_k}
G_k=P_{H^2_{M_n}}\left(\overline{\theta}\sum_{j=0}^{k-1} K_j
A_{k-j}\right)-P_{\theta H^2_{M_n}}\left(\sum_{j=0}^k K_j
A_{k-j}\right) \quad (k=0,1,\cdots, m-1).
\end{equation}

\end{theorem}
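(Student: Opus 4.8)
The plan is to deduce this from Lemma \ref{lem32.377} and then to rewrite, in matrix form, the single functional identity that lemma produces. First I would apply Lemma \ref{lem32.377} with the specialization $\delta_1=\cdots=\delta_m=\theta$ (so that the $\theta_0$ of that lemma equals $\theta^m$) and $\theta_1=\theta^{N-m}$; this is legitimate since $N\ge m$ and, because $A$ and $I_\theta$ are coprime, Theorem \ref{lem33.9} shows that $\theta$ and $\det A$ are coprime, hence so are $\theta^{N-m}$ and $\det A$, hence $A$ and $I_{\theta^{N-m}}=I_{\theta_1}$ are coprime, as the lemma requires. With these choices the weights become $\lambda_i=\theta^{i-1}$, and Lemma \ref{lem32.377} yields that
$$
K\in\mathcal C(\Phi)\ \Longleftrightarrow\ P_{\mathcal K_{\theta^N}}K=\theta^{N-m}\Bigl(K_0+\sum_{i=1}^{m-1}\theta^iK_i\Bigr),
$$
where $K_i\in\mathcal K_\theta$ and, abbreviating $L:=\sum_{i=0}^{m-1}\theta^iK_i$, $A':=\sum_{i=0}^{m-1}\theta^iA_i$ and $B':=\sum_{i=0}^{m-1}\theta^iB_i$, the $K_i$ are constrained by
$$
B'-LA'\in\theta^m H^2_{M_n}. \qquad (\star)
$$
(The tail $\theta^mA_m$ of $A$ contributes only to $\theta^mH^2_{M_n}$, hence is invisible modulo $\theta^m$.) It then remains to prove that $(\star)$ is equivalent to the triangular system (\ref{333330})--(\ref{G_k}).

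For that I would argue $\theta$-adically. Iterating Lemma \ref{lem22.55}(a) gives the orthogonal decomposition $H^2_{M_n}=\bigoplus_{k=0}^{m-1}\theta^k\mathcal K_\theta\oplus\theta^m H^2_{M_n}$, so $(\star)$ holds precisely when, for each $k=0,1,\ldots,m-1$, the $\mathcal K_\theta$-component at level $\theta^k$ of $B'-LA'$ vanishes. Expanding $LA'=\sum_{i,j}\theta^{i+j}K_iA_j$ and discarding the terms with $i+j\ge m$, the contribution of the product to level $\theta^k$ is $C_k:=\sum_{j=0}^kK_jA_{k-j}$. The crucial elementary fact is $\mathcal K_\theta\cdot\mathcal K_\theta\subseteq\mathcal K_{\theta^2}$: if $F,G\in\mathcal K_\theta$ then $\theta F^*,\theta G^*\in H^2_0$ by Lemma \ref{lem2.4}(a), so $\theta^2(FG)^*=(\theta G^*)(\theta F^*)\in H^2_0$, i.e. $FG\in\mathcal K_{\theta^2}$. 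Hence each $C_k$ lies in $\mathcal K_{\theta^2}=\mathcal K_\theta\oplus\theta\mathcal K_\theta$, so $C_k=P_{\mathcal K_\theta}C_k+\theta\,P_{H^2_{M_n}}(\overline{\theta}C_k)$ with \emph{both} summands already in $\mathcal K_\theta$. Consequently, when $\sum_k\theta^kC_k$ is normalized into the direct sum above by pushing the $\theta$-overflow up one level at a time, every carry stays inside $\mathcal K_\theta$; and since $P_{H^2_{M_n}}(\overline{\theta}\,\cdot\,)$ annihilates $\mathcal K_\theta$, the carries do not cascade. Therefore the level-$\theta^k$ $\mathcal K_\theta$-component of $\sum_k\theta^kC_k$ is exactly $P_{\mathcal K_\theta}(C_k)+P_{H^2_{M_n}}(\overline{\theta}C_{k-1})$ (with $C_{-1}:=0$). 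Since each $B_k\in\mathcal K_\theta$ and $P_{\mathcal K_\theta}(C_k)=C_k-P_{\theta H^2_{M_n}}(C_k)$, equating the levels in $(\star)$ gives, for $0\le k\le m-1$, the identity $B_k=\sum_{j=0}^kK_jA_{k-j}+G_k$, where the carry-minus-reduction correction $G_k$ is precisely the term recorded in (\ref{G_k}); this is exactly the $k$-th row of (\ref{333330}). Each step is reversible, so $(\star)$ is equivalent to (\ref{333330})--(\ref{G_k}), which finishes the argument.

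The step I expect to be the main obstacle is the bookkeeping of the carries in the second paragraph: one must verify, using $\mathcal K_\theta\cdot\mathcal K_\theta\subseteq\mathcal K_{\theta^2}$, that the successive overflows remain in $\mathcal K_\theta$ and therefore collapse into the single correction $G_k$ of (\ref{G_k}) rather than into a longer sum of iterated $P_{H^2_{M_n}}(\overline{\theta}^{\,\ell}\,\cdot\,)$ terms. A secondary, more technical point is locating the products $K_jA_i$ in the right space (the $K_i$ being pieces of the reduced function $P_{\mathcal K_{\theta^N}}K$ rather than genuinely bounded functions); here one leans on $K\in H^\infty_{M_n}$, and on the fact that, since $A$ and $I_\theta$ are coprime, $A$ is invertible at the zeros of $\theta$ (cf.\ Corollary \ref{cor2.9}), so that the triangular system for $K_0,\ldots,K_{m-1}$ is actually solvable and the characterization is not vacuous.
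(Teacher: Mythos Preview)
Your proposal is correct and follows essentially the same route as the paper's proof: reduce to Lemma~\ref{lem32.377} with $\delta_1=\cdots=\delta_m=\theta$ and $\theta_1=\theta^{N-m}$, then unravel the single congruence $(\star)$ $\theta$-adically using $\mathcal K_\theta\cdot\mathcal K_\theta\subseteq\mathcal K_{\theta^2}$ to obtain the triangular system with the correction terms $G_k$. Your explicit verification that $A$ and $I_{\theta^{N-m}}$ are coprime (via Theorem~\ref{lem33.9}) and your justification that the carries do not cascade (because $P_{H^2_{M_n}}(\overline\theta\,\cdot)$ kills $\mathcal K_\theta$) are points the paper leaves implicit, so your write-up is in fact slightly more careful than the original.
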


\begin{proof} \ Suppose that $A$ and
$I_{\theta}$ are coprime. \ It suffices to show that (\ref{30}) hods
if and only if (\ref{333330}) holds. \ Suppose that there exists
$K_i \in \mathcal K_{{\theta}}$  ($i=0,1,\cdots, m-1$) such that
$$
B_0+\sum_{i=1}^{m-1}\theta^i B_i - \left(K_0+\sum_{i=1}^{m-1}
\theta^i K_i\right) \left(A_0+\sum_{i=1}^{m-1} \theta^i A_i
\right)\in {\theta}^m H^2_{M_n},
$$
or equivalently,
\begin{equation}\label{semi}
\sum_{k=0}^{m-1} \theta^k \left(B_k-\sum_{j=0}^k K_j A_{k-j} \right)
\in {\theta}^{m}H^2_{M_n}.
\end{equation}
Put $D_k:=\sum_{j=0}^k K_j A_{k-j}$ ($k=0,1,\cdots m-1$). \ Note
that for each $i,j=0,1,\cdots,m-1$, we have $K_i, A_j \in \mathcal
K_{{\theta}}$, and hence $K_iA_j\in \mathcal
K_{{{\theta^2}}}=\mathcal K_{{\theta}} \oplus \theta K_{{\theta}}$.
\ Thus we may write, for $k=0,1,\cdots m-1$,
$$
D_k=D_k^{\prime}+\theta D_k^{\prime \prime}\quad (D_k^{\prime},
D_k^{\prime \prime} \in \mathcal K_{{\theta}}).
$$
It thus follows from (\ref{semi}) that
$$
\begin{cases}
B_0&=K_0A_0-\theta D_0^{\prime\prime}=K_0A_0-P_{\theta H^2_{M_n}}(D_0)\\
B_k&=\sum_{j=0}^k K_j A_{k-j} +\Bigl(P_{
H^2_{M_n}}(\overline{\theta}D_{k-1})-P_{\theta
H^2_{M_n}}(D_k)\Bigr)\ \ (k=1,\cdots,m-1),
\end{cases}
$$
which gives
$$
\begin{pmatrix} B_0\\B_1 \\ \vdots\\B_{m-1}\end{pmatrix}
=\begin{pmatrix}K_0&0&\cdots &0\\K_1
&K_0&\cdots&0\\ \vdots&\vdots&\ddots&\vdots\\
K_{m-1}&K_{m-2}&\cdots & K_0
\end{pmatrix}
\begin{pmatrix} A_0\\A_1 \\ \vdots\\A_{m-1}\end{pmatrix}+\begin{pmatrix} G_0\\G_1 \\
\vdots\\G_{m-1}\end{pmatrix},
$$
where $G_0=-P_{\theta H^2_{M_n}}(D_0)$ and $G_k=P_{
H^2_{M_n}}(\overline{\theta}D_{k-1})-P_{\theta H^2_{M_n}}(D_k)$. \
This argument is reversible. \ This completes the proof. \
\end{proof}

\medskip

\begin{theorem}\label{thm32.378} \ Let $\Phi\equiv \Phi_-^*+\Phi_+ \in L^{\infty}_{M_n}$
be normal of the form
$$
\Phi_+= {\theta}^N A^* \quad \hbox{and} \quad \Phi_- = {\theta}^m
B^* \quad(N \geq m),
$$
where $\theta$ is inner and $I_{\theta}$ and $B$ are coprime. \ If
$\mathcal C(\Phi)\ne \emptyset$, then we may write
$$
A=A_0+\sum_{i=1}^{m-1} \theta^i A_i + \theta^m A_{m} \quad
\hbox{and} \quad B=B_0 +\sum_{i=1}^{m-1} \theta^i B_i,
$$
where $A_0$ and $B_0$ are invertible a.e. on $\mathbb T$, $A_m\in
\mathcal K_{{\theta^{N-m}}}$ and $\ A_i, B_i \in \mathcal
K_{{\theta}} \ (i=0,1,\cdots, m-1)$. \ If $K_i \in \mathcal
K_{\theta}$ and $\sum_{j=0}^i K_j A_{i-j}\in \mathcal K_{\theta}$
for all $i=0,1,\cdots m-1$, put
\begin{equation}\label{thmm3.31}
\begin{pmatrix} \widetilde{K}_0\\\widetilde{K}_1 \\
\vdots\\\widetilde{K}_{m-1}\end{pmatrix}=\begin{pmatrix}\widetilde{A}_0&0&\cdots
&0\\\widetilde{A}_1
&\widetilde{A}_0&\cdots&0\\ \vdots&\vdots&\ddots&\vdots\\
\widetilde{A}_{m-1}&\widetilde{A}_{m-2}&\cdots & \widetilde{A}_0
\end{pmatrix}^{-1}\begin{pmatrix}\widetilde{B}_0\\\widetilde{B}_1 \\ \vdots\\\widetilde{B}_{m-1}\end{pmatrix}.
\end{equation}
Then for $K \in H^{\infty}_{M_n}$,
\begin{equation}\label{C(Phi)}
K\in \mathcal C(\Phi) \Longleftrightarrow P_{\mathcal
K_{{\theta^{N}}}}K= {\theta}^{N-m}\left(K_0 +\sum_{i=1}^{m-1}
\theta^i K_i \right),
\end{equation}

\end{theorem}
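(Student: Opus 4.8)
The plan is to derive the statement from the characterization already obtained in Theorem \ref{cor32.378} by solving, in the present more restrictive setting, the associated system (\ref{333330}) explicitly. After the standard normalization of the representation (cf. the discussion following (\ref{3.222-1})) we may assume that $A$ and $I_\theta$ are coprime, so Theorem \ref{cor32.378} applies: for $K\in H^\infty_{M_n}$,
$$
K\in\mathcal C(\Phi)\ \Longleftrightarrow\ P_{\mathcal K_{\theta^N}}K=\theta^{N-m}\Bigl(K_0+\sum_{i=1}^{m-1}\theta^iK_i\Bigr),
$$
where the $K_i\in\mathcal K_\theta$ solve (\ref{333330}) with correction terms $G_k$ as in (\ref{G_k}). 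It therefore remains only to show that (\ref{333330}) is, under the present hypotheses, equivalent to (\ref{thmm3.31}).

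First I would establish that the leading blocks $A_0$ and $B_0$ in the decomposition (\ref{123.2888}) are invertible a.e.\ on $\mathbb T$. Since $I_\theta$ and $B$ are coprime, Theorem \ref{lem33.9} yields that $\theta$ and $\det B$ are coprime, whence $\det B\not\equiv0$ by Lemma \ref{lem1.7}; combining this and the coprimeness of $A$ and $I_\theta$ with the normality of $\Phi$ (which imposes structural identities on $A$ and $B$, in particular $AB=BA$ a.e.\ on $\mathbb T$) gives $\det A_0\not\equiv0$ and $\det B_0\not\equiv0$. Since $\det A_0$ and $\det B_0$ are of bounded type and a nonzero function of bounded type cannot vanish on a subset of $\mathbb T$ of positive measure, $A_0$ and $B_0$ are invertible a.e.\ on $\mathbb T$.

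The heart of the argument is the collapse of (\ref{333330}). Put $D_k:=\sum_{j=0}^kK_jA_{k-j}$. Since $K_j,A_{k-j}\in\mathcal K_\theta$, each $D_k$ lies in $\mathcal K_{\theta^2}=\mathcal K_\theta\oplus\theta\mathcal K_\theta$; writing $D_k=D_k'+\theta D_k''$ and comparing the coefficients of $\theta^k$ in (\ref{30}) via the orthogonal splitting $\mathcal K_{\theta^m}=\bigoplus_{i=0}^{m-1}\theta^i\mathcal K_\theta$ reproduces (\ref{333330}) with $G_k=P_{H^2_{M_n}}(\overline\theta D_{k-1})-P_{\theta H^2_{M_n}}(D_k)$. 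Under the standing hypothesis $D_k\in\mathcal K_\theta$ for all $k$ (i.e.\ $D_k''=0$), one has $D_k\perp\theta H^2_{M_n}$ and $\overline\theta D_k\in(H^2_{M_n})^\perp$, because $\overline\theta\,\mathcal K_\theta\subseteq(H^2_{M_n})^\perp$; hence $G_k=0$ for every $k$ and (\ref{333330}) reduces to $B_k=\sum_{j=0}^kK_jA_{k-j}$ $(0\le k\le m-1)$. Applying $F\mapsto\widetilde F$, which reverses the order of products, turns this into $\widetilde B_k=\sum_{j=0}^k\widetilde A_{k-j}\widetilde K_j$, a lower-triangular block Toeplitz system whose diagonal block is $\widetilde A_0$. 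As $A_0$, and hence $\widetilde A_0$, is invertible a.e., the coefficient matrix is invertible, and solving the system gives precisely (\ref{thmm3.31}); conversely, the $K_i$ defined by (\ref{thmm3.31}) solve the clean system, and the assumption $D_k\in\mathcal K_\theta$ ensures $G_k=0$, so they also solve (\ref{333330}). Feeding this back into the displayed equivalence from Theorem \ref{cor32.378} yields (\ref{C(Phi)}).

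I expect the main obstacle to be the invertibility a.e.\ on $\mathbb T$ of $A_0$ and $B_0$ — this is the step where the normality of $\Phi$ (rather than only $\mathcal C(\Phi)\ne\emptyset$ and the coprimeness of $I_\theta$ and $B$, which by itself controls only $B$ at the zeros of $\theta$) must be used in an essential way, and one has to convert ``$\Phi$ is normal'' into the nonvanishing of $\det A_0$ and $\det B_0$. A secondary delicate point is the bookkeeping of the orthogonal decomposition $\mathcal K_{\theta^m}=\bigoplus_{i=0}^{m-1}\theta^i\mathcal K_\theta$, which is what makes ``comparing coefficients of $\theta^k$'' in (\ref{30}) rigorous and identifies the correction terms with the $G_k$ of (\ref{G_k}).
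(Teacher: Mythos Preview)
Your overall architecture—reduce to Theorem \ref{cor32.378}, show the correction terms $G_k$ vanish under the hypothesis $\sum_{j=0}^k K_jA_{k-j}\in\mathcal K_\theta$, and solve the resulting clean block-Toeplitz system via the tilde map to obtain (\ref{thmm3.31})—is exactly the paper's route, and that part of your write-up is fine (indeed more explicit than the paper's ``a direct calculation shows that $G_k=0$'').

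The genuine gap is the invertibility of $A_0$ a.e.\ on $\mathbb T$. You propose to extract this from the normality of $\Phi$, asserting that normality forces $AB=BA$ and that this, together with coprimeness, yields $\det A_0\not\equiv 0$. Neither step is justified: the identity $\Phi^*\Phi=\Phi\Phi^*$ on $\mathbb T$ does not separate into $AB=BA$ (the $\theta$–powers mix with the $z$–dependence of $A,B$), and even granting $AB=BA$ there is no mechanism linking this to the nonvanishing of $\det(P_{\mathcal K_\theta}A)$. In the scalar case $n=1$ normality is vacuous, yet the theorem still needs $A_0\ne 0$; this already shows normality cannot be the operative hypothesis. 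The paper in fact never uses normality for this step. Instead it argues from $\mathcal C(\Phi)\ne\emptyset$: by Proposition \ref{pro3.13-1}(b) one gets $\mathcal C(\Phi^{1,\theta^{N-m}})\ne\emptyset$, and then (\ref{necessary}) gives
\[
\theta^m H^2_{\mathbb C^n}=\ker H_{\Phi_-^*}\ \supseteq\ \ker H_{(\Phi^{1,\theta^{N-m}})_+^*}.
\]
Since $(\Phi^{1,\theta^{N-m}})_+=\theta^m\bigl(P_{\mathcal K_{\theta^m}}A\bigr)^*$, the reverse inclusion is automatic, so the kernel equals $\theta^m H^2_{\mathbb C^n}$; by (\ref{RCD}) this says $I_{\theta^m}$ and $P_{\mathcal K_{\theta^m}}A$ are right coprime, hence $I_\theta$ and $A_0$ are coprime, and Lemma \ref{lem1.7} gives $\det A_0\ne 0$. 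This same conclusion, incidentally, supplies the coprimeness of $A$ and $I_\theta$ that you need in order to invoke Theorem \ref{cor32.378}—it is not a ``standard normalization'' one may simply assume, since the statement does not posit $\Phi_+=\theta^N A^*$ to be coprime.
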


\begin{proof} Suppose that $I_{\theta}$ and $B$ are
coprime and $\mathcal C(\Phi)\ne \emptyset$. \ Then $I_\theta$ and
$B_0$ are coprime, so that by Lemma \ref{lem1.7}, $B_0$ is
invertible a.e. on $\mathbb T$. \ Now we will show that
\begin{equation}\label{inv}
{A}_0 \ \hbox{is invertible a.e. on $\mathbb T$}.
\end{equation}
Since  $I_{\theta^m}$ and $B$ are coprime, it follows that $\ker
H_{\Phi_-^*}={\theta^m}H^2_{\mathbb C^n}$. \ Since $\mathcal
C(\Phi)\ne \emptyset$, it follows from Proposition
\ref{pro3.13-1}(b) that $\mathcal C(\Phi^{1, \theta^{N-m}})\ne
\emptyset$. Thus we have
\begin{equation}\label{93.31}
{\theta^m}H^2_{\mathbb C^n}=\ker H_{\Phi_-^*} \supseteq \ker
H_{(\Phi^{1, \theta^{N-m}})_+^*}.
\end{equation}
Observe that
$$
(\Phi^{1, \theta^{N-m}})_+ = {\theta}^m(P_{\mathcal
K_{{\theta^m}}}A)^* = {\theta^m}\left(A_0+\sum_{i=1}^{m-1} \theta^i
A_i \right)^*\,.
$$
It thus follows from (\ref{93.31}) that
$A_0$ and $I_{\theta}$ are coprime. \ Thus  by Lemma \ref{lem1.7},
$\hbox{det}\, A_0 \neq 0$. \ This proves (\ref{inv}). \ Suppose
(\ref{thmm3.31}) holds. \ Then a direct calculation shows that $G_k$
in (\ref{G_k}) should be zero for each $k=0,1,\cdots, m-1$, so that
(\ref{C(Phi)}) follows from Theorem \ref{cor32.378}.
\end{proof}

\medskip

We now turn our attention to a functional calculus for compressions
of the shift operator. \

It is well known that the functional calculus for polynomials of
compressions of the shift results in the  Hermite-Fej\' er matrix
via  the classical Hermite-Fej\' er Interpolation Problem. \ We now
extend the polynomial calculus to an $H^\infty$-functional calculus
(so called the Sz.-Nagy-Foia\c s functional calculus) via the
triangularization theorem, and then extend it further to an
$\overline{H^\infty}+H^\infty$-functional calculus for compressions
of the shift operator. \

First of all, we extend the representation (\ref{2.17}) to the case
of matrix $H^{\infty}$-functions.  \  We refer to \cite{AC} and
\cite{Ni} for details on this representation. \  For an explicit
criterion, we need to introduce the Triangularization Theorem
concretely. \ There are three cases to consider. \

\medskip

\noindent {\it Case 1} : Let $B$ be a Blaschke product and let
$\Lambda:=\{\lambda_n : n \geq 1\}$ be the sequence of zeros of $B$
counted with their multiplicities. \  Write
$$
\beta_1:=1, \quad
\beta_k:=\prod_{n=1}^{k-1}\frac{\lambda_n-z}{1-\overline{\lambda}_n
z}\cdot \frac{|\lambda_n|}{\lambda_n}\qquad (k\geq 2),
$$
and let
$$
\delta_j\label{deltaj}:=\frac{d_j}{1-\overline{\lambda}_j z}\beta_j
\qquad (j \geq 1),
$$
where $d_j\label{dj}:=(1-|\lambda_j|^2)^{\frac{1}{2}}$. \  Let
$\mu_B\label{mub}$ be the measure on $\mathbb N$ given by
$\mu_B(\{n\}):=\frac{1}{2}d_n^2, (n \in \mathbb N). \ $ Then the map
$V_B\label{VB}: L^2(\mu_B)\to \mathcal H(B)$ defined by
\begin{equation}\label{13.2}
V_B(c):=\frac{1}{\sqrt{2}} \sum_{n \geq 1} c(n)d_n \delta_n, \quad c
\equiv \{c(n)\}_{n \geq 1},
\end{equation}
is unitary and $U_B$ is mapped onto the operator
\begin{equation}\label{13.3}
V_B^*U_B V_B=(I-J_B)M_B,
\end{equation}
where $(M_B\label{MB} c)(n):=\lambda_n c(n)$ ($n\in \mathbb N$) is a
multiplication operator and
$$
(J_B\label{JB} c)(n):=\sum_{k=1}^{n-1}c(k)|\lambda_k|^{-2} \cdot
\frac{\beta_n(0)}{\beta_k(0)}d_k d_n \ \ \hbox{$(n \in \mathbb N$)}
$$
is a lower-triangular Hilbert-Schmidt operator. \

\medskip

\noindent {\it Case 2} : Let $s$ be a singular inner function with
continuous representing measure $\mu\equiv \mu_{s}\label{mus}$. \
Let $\mu_{\lambda}$ be the projection of $\mu$ onto the arc
$\{\zeta: \zeta \in \mathbb T, \ 0< \text{arg}\zeta \leq
\text{arg}\lambda\}$ and let
$$
s_{\lambda}(\zeta)\label{slambdazeta}:=\text{exp}\Bigl(-\int_{\mathbb
T}\frac{t+\zeta}{t-\zeta}d\mu_{\lambda}(t) \Bigr) \ \ \hbox{($\zeta
\in \mathbb D$)}.
$$
Then the map $V_s\label{Vs}: L^2(\mu)\to \mathcal H (s)$ defined by
\begin{equation}\label{13.4}
(V_s c)(\zeta)=\sqrt{2}\int_{\mathbb
T}c(\lambda)s_{\lambda}(\zeta)\frac{\lambda
d\mu(\lambda)}{\lambda-\zeta}\quad\hbox{($\zeta \in \mathbb D$)}
\end{equation}
is unitary, and $U_s$ is  mapped onto the operator
\begin{equation}\label{13.5}
V_s^*U_s V_s=(I-J_s)M_s,
\end{equation}
where $(M_s c)(\lambda):=\lambda c(\lambda)$ ($\lambda\in\mathbb T$)
is a multiplication operator and
$$
(J_s\label{Js} c)(\lambda)=2 \int _{\mathbb T}
e^{\mu(t)-\mu(\lambda)} c(t)d_{\mu_{\lambda}}(t)\ \ \hbox{($\lambda
\in \mathbb T$)}
$$
is a lower-triangular Hilbert-Schmidt operator. \

\medskip

\noindent {\it Case 3} : Let $\Delta$ be a singular inner function
with pure point representing measure $\mu\equiv
\mu_{\Delta}\label{mudelta}$. \  We enumerate the set $\{t\in
\mathbb T: \mu(\{t\})>0 \}$ as a sequence $\{t_k\}_{k \in \mathbb
N}$. \  Write $\mu_k:=\mu(\{t_k\}), \ k \geq 1$. \  Further, let
$\mu_{\Delta}$ be a measure on $\mathbb R_{+}=[0, \infty)$ such that
$d\mu_{\Delta}(\lambda)=\mu_{[\lambda]+1}d \lambda$ and define a
function $\Delta_{\lambda}\label{deltalambda}$ on the unit disk
$\mathbb D$ by the formula
$$
\Delta_0:=1, \quad
\Delta_{\lambda}(\zeta):=\text{exp}\Biggl\{-\sum_{k=1}^{[\lambda]}\mu_k
\frac{t_k+\zeta}{t_k-\zeta}-(\lambda-[\lambda])\mu_{[\lambda]+1}
\frac{t_{[\lambda]+1}+\zeta}{t_{[\lambda]+1}-\zeta}\Biggr \},
$$
where $[\lambda]$ is the integer part of $\lambda$  ($\lambda \in
\mathbb R_{+}$). \  Then the map $V_{\Delta}\label{Vdelta}:
L^2(\mu_{\Delta})\to \mathcal H (\Delta)$ defined by
\begin{equation}\label{13.6}
(V_{\Delta}c)(\zeta):=\sqrt{2} \int_{\mathbb
R_{+}}c(\lambda)\Delta_{\lambda}(\zeta)(1-\overline{t}_{[\lambda]+1}
\zeta)^{-1}d\mu_{\Delta}(\lambda) \ \hbox{($\zeta \in \mathbb D$)}
\end{equation}
is unitary and $U_\Delta$ is mapped onto the operator
\begin{equation}\label{13.7}
V_{\Delta}^*U_\Delta V_{\Delta}=(I-J_{\Delta})M_{\Delta},
\end{equation}
where $(M_{\Delta}c)(\lambda):= t_{[\lambda]+1} c(\lambda)$,
($\lambda\in \mathbb R_+$) is a multiplication operator and
$$
(J_{\Delta}\label{Jdelta} c)(\lambda) := 2 \int _{0}^{\lambda}c(t)
\frac{\Delta_{\lambda}(0)}{\Delta_{t}(0)} d\mu_{\Delta}(t) \ \
\hbox{($\lambda \in \mathbb R_+$)}
$$
is a lower-triangular Hilbert-Schmidt operator. \

\medskip

Collecting together the above three cases we get:

\medskip

\noindent {\bf Triangularization Theorem.} (\cite [p.123] {Ni}) Let
$\theta$ be an inner function with the canonical factorization
$\theta=B\cdot s \cdot \Delta$, where $B$ is a Blaschke product, and
$s$ and $\Delta$ are singular inner functions with representing
measures $\mu_{s}$ and $\mu_{\Delta}$ respectively, with $\mu_{s}$
continuous and $\mu_{\Delta}$ a pure point measure. \  Then the map
$V:\,L^2(\mu_B)\times L^2(\mu_s) \times L^2(\mu_{\Delta})\to
\mathcal H(\theta)$ defined by
\begin{equation}\label{12.26}
V:=\begin{pmatrix}V_B&0&0\\0&BV_s&0\\0&0&BsV_{\Delta}\end{pmatrix}
\end{equation}
is unitary, where $V_B, \mu_B, V_S, \mu_S, V_\Delta, \mu_\Delta$ are
defined in (\ref{13.2}) - (\ref{13.7}) and $U_\theta$ is mapped onto
the operator
\begin{equation}\label{12.27}
M:= V^*U_\theta V =
\begin{pmatrix}M_B&0&0\\0&M_{s}&0\\0&0&M_{\Delta}\end{pmatrix}+J,
\end{equation}
where $M_B, M_S, M_{\Delta}$ are defined in (\ref{13.3}),
(\ref{13.5}) and (\ref{13.7}) and
$$
J:=-\begin{pmatrix}J_BM_B&0&0\\0&J_sM_s&0\\0&0&J_{\Delta}M_\Delta\end{pmatrix}+A
$$
is a lower-triangular Hilbert-Schmidt operator, with $A^3=0$,
$\text{rank}A\leq 3$. \

\bigskip

Now we note that every compression of the shift operator is
completely non-unitary (cf. \cite[Proof of Theorem 3.3]{CHL2}).
Therefore if $M$ is given by {\rm (\ref{12.27})} then $M$ is an
absolutely continuous contraction. \ Thus if $\Phi \in
H^{\infty}_{M_n}$, then we can define $\Phi(M)$  as a
$H^\infty$-functional calculus (the so-called Sz.-Nagy-Foia\c s
functional calculus). \ Then we have:

\bigskip

\begin{theorem}\label{thm2.16}
 Let $\Phi \in H_{M_n}^{\infty}$ and let $\theta\in H^\infty$ be an inner
function. \  If we write
\begin{equation}\label{12.288}
M:= V^*U_\theta V \quad \hbox{and} \quad \mathcal V := V \otimes
I_n,
\end{equation}
where $V: L\equiv L^2(\mu_B) \times L^2(\mu_s) \times
L^2(\mu_{\Delta}) \to \mathcal H(\theta)$ is unitary as in
$(\ref{12.26})$, then
\begin{equation}\label{12.28}
{\mathcal V}^*(T_{\Phi})_{\Theta} \mathcal V = \Phi(M)\quad\hbox{\rm
($\Theta:=I_\theta$)}.
\end{equation}
\end{theorem}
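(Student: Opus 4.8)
The plan is to reduce the matrix statement to the scalar Sz.-Nagy-Foia\c s functional calculus by exploiting the diagonal-constant structure $\Theta=I_\theta$. First I would record the block decomposition. Since $\Theta=I_\theta$, we have $\mathcal H(\Theta)=H^2_{\mathbb C^n}\ominus\theta H^2_{\mathbb C^n}=\mathcal H(\theta)\otimes\mathbb C^n$ and $P_{\mathcal H(\Theta)}=P_{\mathcal H(\theta)}\otimes I_n$. Hence, writing $\Phi\equiv(\varphi_{ij})\in H^\infty_{M_n}$ (so each $\varphi_{ij}\in H^\infty$ and each $T_{\varphi_{ij}}$ is bounded, and no domain issues arise), the compression $(T_\Phi)_\Theta$, acting on $\mathcal H(\theta)\oplus\cdots\oplus\mathcal H(\theta)$ ($n$ copies), is exactly the $n\times n$ operator matrix whose $(i,j)$ block is $(T_{\varphi_{ij}})_\theta=P_{\mathcal H(\theta)}T_{\varphi_{ij}}|_{\mathcal H(\theta)}$.

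Next I would invoke the scalar identity $\varphi(U_\theta)=(T_\varphi)_\theta$ for $\varphi\in H^\infty$. As noted just before the statement, $U_\theta$ is a completely non-unitary---indeed absolutely continuous---contraction (cf. \cite[Proof of Theorem 3.3]{CHL2} together with the Triangularization Theorem), so the Sz.-Nagy-Foia\c s $H^\infty$-functional calculus is available for $U_\theta$, and it is the classical model-operator fact that $\varphi(U_\theta)=P_{\mathcal H(\theta)}T_\varphi|_{\mathcal H(\theta)}$ (cf. \cite{Ni}, \cite{SFBK}); for polynomials this is precisely (\ref{2.17}). Because $M=V^*U_\theta V$ with $V$ unitary as in (\ref{12.26})--(\ref{12.27}), the operator $M$ is unitarily equivalent to $U_\theta$, hence $\varphi(M)$ is defined by the same calculus and $\varphi(M)=V^*\varphi(U_\theta)V=V^*(T_\varphi)_\theta V$ for every $\varphi\in H^\infty$. (If one prefers to avoid quoting the $H^\infty$-model calculus directly, the same equality follows from (\ref{2.17}) by approximating $\varphi$ weak-$*$ by polynomials and using that $\varphi\mapsto(T_\varphi)_\theta$ and $\varphi\mapsto\varphi(M)$ are WOT-continuous on norm-bounded sets.)

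Finally I would assemble the pieces. With $\mathcal V=V\otimes I_n$ as in (\ref{12.288}), conjugating the operator matrix $(T_\Phi)_\Theta=\bigl((T_{\varphi_{ij}})_\theta\bigr)_{i,j=1}^n$ by $\mathcal V$ produces the operator matrix whose $(i,j)$ block is $V^*(T_{\varphi_{ij}})_\theta V=\varphi_{ij}(M)$; by the definition of the matrix functional calculus (the $H^\infty$-analogue of (\ref{2.15}), i.e.\ $\Phi(M)$ has $(i,j)$ block $\varphi_{ij}(M)$), this operator matrix is exactly $\Phi(M)$. Therefore $\mathcal V^*(T_\Phi)_\Theta\mathcal V=\Phi(M)$, which is (\ref{12.28}).

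The step I expect to be the main obstacle is the second one: pinning down that the $H^\infty$-functional calculus $\varphi\mapsto\varphi(U_\theta)$ coincides with compression by $T_\varphi$ for all of $H^\infty$ (not merely for polynomials or rational functions), and that passing to $\Phi\in H^\infty_{M_n}$ entrywise is legitimate, i.e.\ that $\Phi(M)$ as a matrix functional calculus genuinely has the blockwise form $(\varphi_{ij}(M))$. Both points are standard in Sz.-Nagy-Foia\c s theory, but they are where the argument actually rests; everything else is bookkeeping around the identification $\mathcal H(I_\theta)=\mathcal H(\theta)\otimes\mathbb C^n$ and the corresponding block structure of the compressions.
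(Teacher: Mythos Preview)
Your proposal is correct. Both your argument and the paper's rest on the same identification $\mathcal H(I_\theta)=\mathcal H(\theta)\otimes\mathbb C^n$ and the same underlying Sz.-Nagy--Foia\c s fact, but you slice the tensor in the orthogonal direction: the paper expands $\Phi(z)=\sum_{i\ge 0}A_iz^i$ along Taylor coefficients, writes $(T_\Phi)_\Theta=\sum_i U_\theta^i\otimes A_i$, and then conjugates by $\mathcal V=V\otimes I_n$ term by term via a direct inner-product computation; you instead expand $\Phi=(\varphi_{ij})$ along matrix entries, quote the scalar identity $\varphi(M)=V^*(T_\varphi)_\theta V$ once, and assemble the block matrix. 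Your organization isolates the analytic content (the scalar $H^\infty$-calculus identity) cleanly and leaves the rest as bookkeeping, whereas the paper's power-series computation is marginally more self-contained but obscures where the convergence issue actually lives. Neither approach is more general or more elementary; they are two equivalent ways of reading the same tensor decomposition.
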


\begin{remark}
$\Phi(M)$ is called a {\it matrix representation} for
$(T_{\Phi})_{\Theta}$.
\end{remark}

\begin{proof}[Proof of Theorem \ref{thm2.16}]
If $\Phi(z)\equiv \begin{pmatrix} \phi_{rs}(z)\end{pmatrix}_{1\le
r,s\le n} \in H^{\infty}_{M_n}$, we may write
$$
\Phi(z)=\sum_{i=0}^{\infty} A_i z^i \qquad (A_i \in M_n).
$$
We also write $\phi_{rs}(z):=\sum_{0}^\infty a_i^{(rs)} z^i$ and
then $A_i=\begin{pmatrix} a_i^{(rs)}\end{pmatrix}_{1\le r,s\le n}$.
\ We thus have
$$
(T_{\Phi})_{\Theta} =P_{\mathcal H (\Theta)}T_{\Phi}\vert_{\mathcal
H (\Theta)} =\sum_{i=0}^\infty \left(U_{\theta}^i\, \otimes\,
\begin{pmatrix} a_i^{(rs)}
                          \end{pmatrix}_{1\le r,s\le n}\right)
=\sum_{i=0}^{\infty} \left( U_{\theta}^i\, \otimes\, A_i\right).
$$
Let $\{\psi_j\}$ be an orthonormal basis for $\mathcal H (\theta)$
and put $e_j:=V^* \psi_j$. \  Then $\{e_j\}$ forms an orthonormal
basis for $L^2(\mu_B) \times L^2(\mu_s) \times L^2(\mu_{\Delta})$. \
Thus for each $f \in \mathbb C^n$, we have $\mathcal V (e_j \otimes
f)= \phi_j \otimes f$. \  It thus follows that
$$
\aligned \Bigl \langle (T_{\Phi})_{\Theta} (\psi_j \otimes f)\, , \
\psi_k \otimes g \Bigr \rangle &=\sum_{i=0}^{\infty} \Bigl \langle
(M^i \otimes A_i) (e_j \otimes f),
                           \ e_k \otimes g \Bigr \rangle \\
&=\Bigl \langle \Phi(M) (e_j \otimes f)\, , \ e_k \otimes g \Bigr
\rangle,
\endaligned
$$
which gives $ \mathcal V^*(T_{\Phi})_{\Theta}\mathcal V =\Phi(M). $
\end{proof}

\bigskip

We can now extend the representation (\ref{12.28}) to
$\overline{H^{\infty}_{M_n}} +H^{\infty}_{M_n}$ (where
$\overline{H^{\infty}_{M_n}}$ denotes the set of $n\times n$ matrix
functions whose entries belong to $\overline{H^\infty}:=\{g:
\overline g\in H^\infty\}$). \ Let $Q \in
\overline{H^{\infty}_{M_n}} +H^{\infty}_{M_n}$ be of the form
$Q=Q_-^*+Q_+$. \ If $\Theta:=I_\theta$ for an inner function
$\theta$, then we define
$$
(T_Q)_{\Theta}\label{TQT}:=P_{\mathcal H(\Theta)}T_{Q}|_{\mathcal
H(\Theta)}\,.
$$
Then
$$
\aligned
(T_Q)_{\Theta}&=(T_{Q_-^*})_{\Theta}+(T_{Q_+})_{\Theta}\\
&=(T_{Q_-})^*_{\Theta}+(T_{Q_+})_{\Theta}\, \quad(\hbox{by
(\ref{***})}).
\endaligned
$$
If $M:= V^*U_\theta V$, where $V: L\equiv L^2(\mu_B) \times
L^2(\mu_s) \times L^2(\mu_{\Delta}) \to \mathcal H(\theta)$ is
unitary as in $(\ref{12.26})$, we also define $Q(M)\label{QM}$ by
\begin{equation}\label{3.13}
Q(M):=\left(Q_-(M)\right)^*+Q_+(M),
\end{equation}
where $Q_{\pm}(M)$ is defined by the Sz.-Nagy-Foia\c s functional
calculus. \

\medskip

We then have:

\begin{lemma}\label{lem3.6}
Let $Q \in \overline{H^{\infty}_{M_n}} +H^{\infty}_{M_n}$ and
$\Theta:=I_\theta$ for an inner function $\theta$. \ Then
\begin{enumerate}
\item[(a)] $\mathcal V^*(T_Q)_{\Theta}\mathcal V=Q(M)$;
\item[(b)] $Q(M)^*=Q^*(M)$,
\end{enumerate}
where $\mathcal V$ and $M$ are given by {\rm (\ref{12.288})}. \
\end{lemma}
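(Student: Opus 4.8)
The plan is to bootstrap from the analytic case (Theorem \ref{thm2.16}) and the compression--adjoint identity (\ref{***}), exactly as set up in the discussion preceding the lemma.

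Write $Q=Q_-^*+Q_+$ with $Q_\pm\in H^\infty_{M_n}$. As already recorded just above the statement, linearity of the compression together with (\ref{***}) (applied to $A:=Q_-$) gives
\begin{equation*}
(T_Q)_{\Theta}=(T_{Q_-})^*_{\Theta}+(T_{Q_+})_{\Theta}.
\end{equation*}
First I would note that $\mathcal V=V\otimes I_n$ is unitary, being the tensor product of the unitary $V$ of (\ref{12.26}) with $I_n$; hence $X\mapsto \mathcal V^*X\mathcal V$ is a $*$-homomorphism and
\begin{equation*}
\mathcal V^*(T_Q)_{\Theta}\mathcal V
=\bigl(\mathcal V^*(T_{Q_-})_{\Theta}\mathcal V\bigr)^*+\mathcal V^*(T_{Q_+})_{\Theta}\mathcal V.
\end{equation*}
Applying Theorem \ref{thm2.16} to each of the analytic symbols $Q_-,Q_+$ yields $\mathcal V^*(T_{Q_\pm})_{\Theta}\mathcal V=Q_\pm(M)$, so that $\mathcal V^*(T_Q)_{\Theta}\mathcal V=(Q_-(M))^*+Q_+(M)$, which is $Q(M)$ by the definition (\ref{3.13}). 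This proves (a). One should also check that $Q(M)$ does not depend on the splitting $Q=Q_-^*+Q_+$, which is only unique up to a constant matrix moved between $Q_-^*$ and $Q_+$: such a shift changes $(Q_-(M))^*$ and $Q_+(M)$ by opposite constants, leaving $Q(M)$ intact.

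For (b) I would compute directly from (\ref{3.13}). On one hand $Q(M)^*=\bigl((Q_-(M))^*+Q_+(M)\bigr)^*=Q_-(M)+(Q_+(M))^*$. On the other hand $Q^*=(Q_-^*+Q_+)^*=Q_+^*+Q_-$, so the co-analytic part of $Q^*$ is $Q_+^*$ and its analytic part is $Q_-$; applying (\ref{3.13}) to $Q^*$ then gives $Q^*(M)=(Q_+(M))^*+Q_-(M)$. The two expressions coincide, which is (b).

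Every step here is routine once Theorem \ref{thm2.16} and (\ref{***}) are available; the only point requiring a word of care is the harmless non-uniqueness of the decomposition $Q=Q_-^*+Q_+$ (and likewise for $Q^*$), which is dispatched by the constant-shift observation above. I do not expect a genuine obstacle.
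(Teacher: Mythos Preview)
Your proof is correct and follows essentially the same route as the paper: split $Q=Q_-^*+Q_+$, apply Theorem \ref{thm2.16} to each analytic piece and use unitarity of $\mathcal V$ for (a), then unwind the definition (\ref{3.13}) on both sides for (b). Your extra remark on independence of the splitting is a nice touch the paper leaves implicit.
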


\begin{proof} \ It follows from Theorem \ref{thm2.16} that
$$
\mathcal V^*({T_Q})_{\Theta}\mathcal V=\mathcal
V^*\Bigl((T_{Q_-})_{\Theta}^*+(T_{Q_+})_{\Theta}\Bigr)\mathcal V
=(Q_-(M))^*+Q_+(M)=Q(M),
$$
which gives (a). \ By definition of $Q(M)$, we have
$$
\begin{aligned}
Q(M)^*
&=\Bigl(Q_-(M)^*+Q_+(M)\Bigr)^*=Q_-(M)+Q_+(M)^*\\
&=Q_-(M)+Q_+^*(M)=Q^*(M),
\end{aligned}
$$
which gives (b). \
\end{proof}

\medskip

We are tempted to guess that $Q(M)^*Q(M)=(Q^*Q)(M)$. \ But this is
not the case. \ To see this, let $\theta=z^3$ and let
$Q(z):=\left(\begin{smallmatrix} z^2&0\\0&z^2
\end{smallmatrix}\right)$. \  Then we have
$$
M=\begin{pmatrix}0&0&0\\1&0&0\\0&1&0\end{pmatrix}
$$
and
$$
Q(M)\equiv \begin{pmatrix}1&0\\
0&1\end{pmatrix}
\otimes M^2 =\begin{pmatrix}0&0&0&0&0&0\\0&0&0&0&0&0\\0&0&0&0&0&0\\0&0&0&0&0&0\\
1&0&0&0&0&0\\0&1&0&0&0&0\end{pmatrix}.
$$
Thus
$$
Q(M)^* Q(M)=\begin{pmatrix}1&0&0&0&0&0\\0&1&0&0&0&0\\0&0&0&0&0&0\\0&0&0&0&0&0\\
0&0&0&0&0&0\\0&0&0&0&0&0\end{pmatrix}\ \ \hbox{and}\ \
(Q^*Q)(M)=\begin{pmatrix}1&0&0&0&0&0\\0&1&0&0&0&0\\0&0&1&0&0&0\\0&0&0&1&0&0\\
0&0&0&0&1&0\\0&0&0&0&0&1\end{pmatrix}\,,
$$
which gives $Q^*(M)Q(M) \neq (Q^*Q)(M)$. \

\bigskip

In some sense, the following lemma shows that the operator induced
by $Q$ in (\ref{3.13}) is closed under constant matrix
multiplication.

\begin{lemma}\label{lem3.7}
Let $Q \in \overline{H^{\infty}_{M_n}} +H^{\infty}_{M_n}$ and and
$\Theta:=I_\theta$ for an inner function $\theta$. \ Then for any
constant matrix $\Lambda \in M_n$,
\medskip
\begin{enumerate}
\item[(a)] $(\Lambda Q)(M)=(\Lambda \otimes I) Q(M)$;
\item[(b)] $(Q \Lambda )(M)= Q(M)(\Lambda \otimes I)$,
\end{enumerate}
\medskip
where $M$ is given by {\rm (\ref{12.288})}. \
\end{lemma}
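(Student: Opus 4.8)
The plan is to reduce everything to the polynomial case and then pass to the limit via the Sz.-Nagy--Foia\c s functional calculus, exploiting the fact that multiplication by a constant matrix $\Lambda\in M_n$ acts only on the $\mathbb C^n$-tensor factor and therefore commutes with all the operator-theoretic machinery attached to $U_\theta$ and $M$. First I would write $Q=Q_-^*+Q_+$ with $Q_+=\sum_{i\ge 0}A_iz^i$ and $Q_-=\sum_{i\ge 0}B_iz^i$ (so that $Q_-^*=\sum_{i\ge 0}B_i^*\overline z^i$), where $A_i,B_i\in M_n$. By the definitions in (\ref{3.13}) and (\ref{2.15}), for the analytic part one has $Q_+(M)=\sum_i A_i\otimes M^i$ and $(\Lambda Q_+)(M)=\sum_i(\Lambda A_i)\otimes M^i=(\Lambda\otimes I)\sum_i A_i\otimes M^i=(\Lambda\otimes I)Q_+(M)$, using the elementary identity $(\Lambda A_i)\otimes M^i=(\Lambda\otimes I)(A_i\otimes M^i)$ in $M_n\otimes \mathcal B(\mathcal H(\theta))$; the convergence is in the strong (indeed $H^\infty$-functional-calculus) sense, and $\Lambda\otimes I$ is bounded, so the limit may be taken through. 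Similarly $(Q_+\Lambda)(M)=\sum_i(A_i\Lambda)\otimes M^i=Q_+(M)(\Lambda\otimes I)$.

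Next I would handle the co-analytic part. Here $\Lambda Q=(\Lambda Q_-^*)+(\Lambda Q_+)$, and $(\Lambda Q_-)^*=Q_-^*\Lambda^{\!*}$ is not what appears; rather one must be careful that $\Lambda Q_-^*=(\overline{\Lambda^*}\,\widetilde{\phantom{x}}\,)$-type bookkeeping is avoided by working directly with $(\Lambda Q_-^*)$'s associated co-analytic symbol. The clean way: note $\Lambda Q_-^* = \big((Q_-)(\Lambda^*)\big)^{*}$ since $(Q_-\Lambda^*)^* = \Lambda Q_-^*$. Hence, by (\ref{3.13}), $(\Lambda Q)(M)=\big((Q_-\Lambda^*)(M)\big)^*+(\Lambda Q_+)(M)$. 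Applying the analytic case just proved (part (b) with $\Lambda^*$ in place of $\Lambda$) gives $(Q_-\Lambda^*)(M)=Q_-(M)(\Lambda^*\otimes I)$, so $\big((Q_-\Lambda^*)(M)\big)^*=(\Lambda\otimes I)\,Q_-(M)^*$. Combining with the analytic computation, $(\Lambda Q)(M)=(\Lambda\otimes I)\big(Q_-(M)^*+Q_+(M)\big)=(\Lambda\otimes I)Q(M)$, which is (a). For (b), the mirror computation uses $Q\Lambda = (Q_-^*\Lambda)+(Q_+\Lambda)$ with $Q_-^*\Lambda=\big((\Lambda^*Q_-)(\cdot)\big)^{*}$; applying part (a)'s analytic case to $\Lambda^* Q_-$ yields $(\Lambda^*Q_-)(M)=(\Lambda^*\otimes I)Q_-(M)$, whence $(Q_-^*\Lambda)(M)=Q_-(M)^*(\Lambda\otimes I)$, and assembling gives $(Q\Lambda)(M)=Q(M)(\Lambda\otimes I)$.

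An alternative, perhaps cleaner, route avoids the adjoint juggling entirely: prove (a) and (b) simultaneously for $Q\in H^\infty_{M_n}$ by the polynomial-plus-limit argument above, then extend to $\overline{H^\infty_{M_n}}+H^\infty_{M_n}$ by linearity in the two summands, invoking Lemma \ref{lem3.6}(b) ($Q(M)^*=Q^*(M)$) together with the already-established identity $(\Lambda\otimes I)^*=\Lambda^*\otimes I$ to transfer the analytic statement to the co-analytic one. The only genuine point requiring care — and the main (if modest) obstacle — is verifying that the Sz.-Nagy--Foia\c s $H^\infty$-functional calculus is compatible with tensoring by $M_n$ in the way the formula $\Phi(M)=\sum A_i\otimes M^i$ suggests, i.e. that $(\Lambda Q_+)(M)=(\Lambda\otimes I)Q_+(M)$ holds for non-polynomial $Q_+$; this follows because the scalar functional calculus $\varphi\mapsto\varphi(M)$ is multiplicative and strongly continuous on bounded sets, and the matrix calculus is built entrywise from it, so left/right multiplication by the constant matrix $\Lambda$ passes through each entry and hence through the limit. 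With that compatibility in hand the rest is bookkeeping, so I would state it as a short remark and then present the polynomial computation followed by the limiting argument.
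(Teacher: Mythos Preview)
Your proposal is correct and follows essentially the same approach as the paper: for (a) the paper works with the bilateral coefficient expansion $Q(z)=\sum_j Q_jz^j$ and the tensor identity $(\Lambda Q_j)\otimes M^j=(\Lambda\otimes I)(Q_j\otimes M^j)$ (checked weakly via inner products), which is exactly your polynomial-plus-limit computation written slightly more compactly; and for (b) the paper derives it from (a) via Lemma~\ref{lem3.6}(b), precisely your ``alternative, cleaner route.''
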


\begin{proof} Let us write $Q(z):=\sum_{j=-\infty}^{\infty} Q_j z^j$. \  Since $\Lambda$ is a
constant matrix, $(\Lambda Q)(z)=\sum_{j=-\infty}^{\infty} \Lambda
Q_j z^j$. \ Thus for each $U, W \in \mathcal H(\Theta)$,
$$
\begin{aligned}
\Bigl\langle \bigl((\Lambda \otimes I)Q(M)\bigr)\, \mathcal V^*U, \
\mathcal V^*W \Bigr\rangle &
=\sum_{j=-\infty}^{\infty}\Bigl\langle\bigl( (\Lambda \otimes I)(Q_j \otimes M^j)\bigr)\,\mathcal V^* U, \mathcal V^*W \Bigr\rangle \\
& = \sum_{j=-\infty}^{\infty}\Bigl\langle (\Lambda Q_j \otimes M^j)\, \mathcal V^*U, \ \mathcal V^*W \Bigr\rangle \\
& =\Bigl\langle \bigl( (\Lambda Q)(M)\bigr)\, \mathcal V^*U, \
\mathcal V^*W \Bigr\rangle,
\end{aligned}
$$
where $\mathcal V$ is given by {\rm (\ref{12.288})}. \  This proves
(a). \ Observe that
$$
\aligned (Q \Lambda)(M)^*&=(Q \Lambda)^*(M) \qquad (\hbox{by  Lemma \ref{lem3.6}(b)})\\
&=(\Lambda^* Q^*)(M)\\
&=(\Lambda^* \otimes I)(Q^*(M)) \qquad (\hbox{by (a)})\\
&=(\Lambda \otimes I)^*(Q(M))^* \qquad (\hbox{by  Lemma \ref{lem3.6}(b)})\\
&=\Bigl(Q(M) (\Lambda \otimes I)\Bigr)^*,
\endaligned
$$
which gives (b). \
\end{proof}

\bigskip

%
%
%
%
%
%

\chapter{Abrahamse's Theorem for matrix-valued symbols}

\medskip

In 1970, P.R. Halmos posed the following problem, listed as Problem
5 in his lecture ``Ten problems in Hilbert space" \cite{Hal1},
\cite{Hal2}:
$$
\hbox{Is every subnormal Toeplitz operator either normal or
analytic\,?}
$$
A Toeplitz operator $T_\varphi$ is called {\it analytic} if
$\varphi\in H^\infty$. \ Any analytic Toeplitz operator is easily
seen to be subnormal: indeed, $T_\varphi h=P(\varphi h)=\varphi h
=M_\varphi h$ for $h\in H^2$, where $M_\varphi$ is the normal
operator of multiplication by $\varphi$ on $L^2$. \ The question is
natural because normal and analytic Toeplitz operators are fairly
well understood, and they are both subnormal. \ In 1984, Halmos'
Problem 5 was answered in the negative by C. Cowen and J. Long
\cite{CoL}. \ However,  Cowen and Long's construction does not
provide an intrinsic connection between subnormality and the theory
of Toeplitz operators. \ Until now researchers have been unable to
characterize subnormal Toeplitz operators in terms of their symbols.
\ In this sense, we reformulate Halmos' Problem 5:
$$
\hbox{Which Toeplitz operators are subnormal\,?}
$$
The most interesting partial answer to Halmos' Problem 5 was given
by M.B. Abrahamse \cite{Ab}, who gave a general sufficient condition
for the answer to Halmos' Problem 5 to be affirmative. \

\medskip

\noindent{\bf Abrahamse's Theorem} (\cite[Theorem]{Ab}). \quad Let
$\varphi\in L^\infty$ be such that $\varphi$ or $\overline \varphi$
is of bounded type. \ If
\begin{itemize}
\item[(i)] $T_\varphi$ is hyponormal;
\item[(ii)] $\hbox{\rm ker}\,[T_\varphi^*, T_\varphi]$ is invariant under $T_\varphi$,
\end{itemize}
then $T_\varphi$ is normal or analytic.

\medskip

Consequently, if $\varphi\in L^\infty$ is such that $\varphi$ or
$\overline \varphi$ is of bounded type, then every subnormal
Toeplitz operator must be either normal or analytic, since
$\ker\,[S^*,S]$ is invariant under $S$ for every subnormal operator
$S$. \ It is actually sufficient to assume that $S$ is
$2$-hyponormal. \ We say that a block Toeplitz operator $T_\Phi$ is
{\it analytic} if $\Phi\in H^\infty_{M_n}$. \ Evidently, any
analytic block Toeplitz operator with a normal symbol is subnormal
because the multiplication operator $M_\Phi$ is a normal extension
of $T_\Phi$. \ As a first inquiry in the above reformulation of
Halmos' Problem 5 the following question can be raised:
$$
\hbox{\it Is Abrahamse's Theorem valid for Toeplitz operators with
matrix-valued symbols}\,?
$$
In general, a straightforward matrix-valued version of Abrahamse's
Theorem is doomed to fail: for instance, if
$$
\Phi:=\begin{pmatrix} z+\overline z&0\\ 0&z\end{pmatrix},
$$
then clearly, both $\Phi$ and $\Phi^*$ are of bounded type and
$$
T_\Phi=\begin{pmatrix}U + U^*&0\\ 0&U
\end{pmatrix}\quad\hbox{(where $U$ is the shift on $H^2$) }
$$
is subnormal, but neither normal nor analytic. \

In this chapter we extend the above result to the case of
bounded type symbols: we shall say that $T_\Phi$ has a {\it bounded type symbol} if both $\Phi$ and
$\Phi^*$ are of bounded type. \

Recently, it was shown in
\cite{CHL1} that if $\Phi\in L^\infty_{M_n}$ is such that $\Phi$ and $\Phi^*$ are
of bounded type of the form
\begin{equation}\label{77}
\Phi_-=\theta B^*\quad\hbox{(coprime)}
\end{equation}
and if $T_\Phi$ is hyponormal and
$\hbox{\rm ker}\,[T_\varphi^*, T_\varphi]$ is invariant under $T_\varphi$,
then $T_\Phi$ is normal or analytic. \
However, the condition (\ref{77}) forces the inner part of the right coprime factorization
(\ref{2.9}) of $\Phi_-$ to be diagonal-constant. \
Also, it was shown in \cite{CHKL} that
if $\Phi$ is a matrix-valued {\it rational} function then the condition (\ref{77})
can be weakened to the condition that the inner part of the right coprime factorization
(\ref{2.9}) of $\Phi_-$ has a nonconstant diagonal-constant inner divisor. \
We note that in view of Lemma \ref{lem55.2-1}, those conditions of \cite{CHL1} and \cite{CHKL} are
special cases of the condition of ``having a tensored-scalar
singularity." \ Indeed, in this chapter, we will show that
for a bounded type symbols $\Phi\in L^\infty_{M_n}$,
if $\Phi$ has a tensored-scalar singularity then we get a full-fledged
matrix-valued version of Abrahamse's Theorem. \

\medskip

To proceed, we need the following result. \

\begin{lemma}\label{lem6.1}
Let $\Phi\in L^\infty_{M_n}$ be such that $\Phi^*$ is of bounded
type. \ If $T_\Phi$ is hyponormal, then there exists an inner
function $\theta$ such that $\hbox{ran}\, [T_\Phi^*,
T_\Phi]\subseteq \mathcal H (I_\theta)$.
\end{lemma}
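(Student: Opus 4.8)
\textbf{Proof plan for Lemma \ref{lem6.1}.}

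The plan is to exploit the fact that, by the hyponormality of $T_\Phi$ together with the identity (\ref{2.3}) written in block form, the self-commutator satisfies
$$
[T_\Phi^*, T_\Phi] = H_{\Phi^*}^* H_{\Phi^*} - H_{\Phi}^* H_{\Phi} = H_{\Phi_-^*}^* H_{\Phi_-^*} - H_{\Phi_+^*}^* H_{\Phi_+^*} \ge 0,
$$
so that $\Vert H_{\Phi_+^*} f\Vert \ge \Vert H_{\Phi_-^*} f\Vert$ for all $f\in H^2_{\mathbb C^n}$, and hence $\ker H_{\Phi_+^*} \subseteq \ker H_{\Phi_-^*}$. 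First I would invoke (\ref{gu3}): since $T_\Phi$ is hyponormal and $\Phi^*$ is of bounded type, $\Phi$ is also of bounded type, so both $\Phi_+$ and $\Phi_-$ (hence $\Phi_+^*$ and $\Phi_-^*$) are of bounded type and $H_{\Phi_\pm^*}$ are bounded. By Lemma \ref{gu2} (applied to $\Phi_+^*$, whose adjoint $\Phi_+$ is of bounded type — or directly via the Beurling-Lax-Halmos theorem together with the fact that $\ker H_{\Phi_-^*}$ is shift-invariant), we may write $\ker H_{\Phi_+^*} = \Theta_+ H^2_{\mathbb C^n}$ and $\ker H_{\Phi_-^*} = \Theta_- H^2_{\mathbb C^n}$ for square inner matrix functions $\Theta_\pm$, and the containment $\ker H_{\Phi_+^*}\subseteq \ker H_{\Phi_-^*}$ forces $\Theta_-$ to be a left inner divisor of $\Theta_+$ (cf. \cite[Corollary IX.2.2]{FF}).

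Next I would identify the range of $[T_\Phi^*, T_\Phi]$ with the orthogonal complement of $\ker [T_\Phi^*, T_\Phi]$ (the self-commutator is self-adjoint, so $\overline{\mathrm{ran}}\,[T_\Phi^*,T_\Phi] = (\ker[T_\Phi^*,T_\Phi])^\perp$). From the displayed formula, $f\in \ker[T_\Phi^*,T_\Phi]$ iff $\Vert H_{\Phi_+^*} f\Vert = \Vert H_{\Phi_-^*} f\Vert$; since $\Vert H_{\Phi_+^*} f\Vert \ge \Vert H_{\Phi_-^*} f\Vert$ pointwise is actually a consequence of an operator inequality $H_{\Phi_+^*}^* H_{\Phi_+^*} \ge H_{\Phi_-^*}^* H_{\Phi_-^*}$, equality of norms on a vector $f$ does \emph{not} immediately give $f\in\ker H_{\Phi_+^*}$; however, $\ker H_{\Phi_+^*} \subseteq \ker[T_\Phi^*,T_\Phi]$ certainly holds. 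Therefore
$$
\mathrm{ran}\,[T_\Phi^*, T_\Phi] \subseteq (\ker H_{\Phi_+^*})^\perp = \mathcal H(\Theta_+).
$$
Now I would use that $\Phi^*$ is of bounded type to write $\Phi_+ = \Theta_1 A^*$ (right coprime) in the sense of (\ref{2.6-1})–(\ref{2.9}); then $\ker H_{\Phi_+^*} = \Theta_1 H^2_{\mathbb C^n}$ by (\ref{RCD}), so $\Theta_+ = \Theta_1$. The issue is that $\Theta_1$ is a matrix inner function, not of the scalar form $I_\theta$ demanded in the statement.

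This is where the main obstacle lies, and the way around it is to pass to a scalar inner multiple. I would apply the Inner-Outer/scalar-multiple machinery: by the result cited after Definition \ref{def5.1} (cf. \cite[Proposition 5.6.1]{SFBK}), the matrix inner function $\Theta_1$ has a scalar inner multiple, i.e. there is a nonconstant inner function $\theta\in H^\infty$ and $\Delta_0\in H^\infty_{M_n}$ with $\Theta_1\Delta_0 = \Delta_0\Theta_1 = I_\theta$. Consequently $I_\theta = \Theta_1\Delta_0$ exhibits $\Theta_1$ as a left inner divisor of $I_\theta$, which gives the reverse containment $I_\theta H^2_{\mathbb C^n} = \Theta_1\Delta_0 H^2_{\mathbb C^n}\subseteq \Theta_1 H^2_{\mathbb C^n}$, hence $\mathcal H(\Theta_1)\subseteq \mathcal H(I_\theta)$. (Here one must check $\theta$ can be taken nonconstant: if $\Theta_1$ were a unitary constant then $H_{\Phi_+^*}=0$, i.e.\ $\Phi_+$ would be analytic in the trivial sense, but then $[T_\Phi^*,T_\Phi]$ has range $\{0\}\subseteq\mathcal H(I_z)$ and any nonconstant $\theta$ works, so this degenerate case causes no problem.) Combining the two displayed inclusions,
$$
\mathrm{ran}\,[T_\Phi^*, T_\Phi]\subseteq \mathcal H(\Theta_1)\subseteq \mathcal H(I_\theta),
$$
which is exactly the assertion. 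The only technical point requiring care is the passage through (\ref{RCD}) and the existence of the scalar inner multiple — both are quoted results in the excerpt — and keeping track of the (harmless) degenerate cases where $\Phi_+\in H^\infty_{M_n}$ up to constants.
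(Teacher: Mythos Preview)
Your argument is correct (apart from a sign slip in your first display: it should read $H_{\Phi_+^*}^*H_{\Phi_+^*}-H_{\Phi_-^*}^*H_{\Phi_-^*}$, consistent with the inequalities you draw from it; and you should say explicitly that $\Phi$ is normal via Lemma~\ref{gu} before using the commutator identity, since (\ref{2.3}) as written is the scalar version). However, your route is more elaborate than the paper's. You pass through the \emph{right coprime} factorization $\Phi_+=\Theta_1A^*$, which produces a genuinely matrix-valued inner $\Theta_1$, and then invoke the scalar inner multiple machinery from Definition~\ref{def5.1} to manufacture a scalar $\theta$ with $\mathcal H(\Theta_1)\subseteq\mathcal H(I_\theta)$. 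The paper instead uses the \emph{non-coprime} factorization (\ref{2.6}) directly: since $\Phi^*$ is of bounded type one writes $\Phi=\theta A^*$ with $\theta$ already a scalar inner (the l.c.m.\ of the entrywise inner parts), so $I_\theta H^2_{\mathbb C^n}\subseteq\ker H_{\Phi^*}\subseteq\ker[T_\Phi^*,T_\Phi]$ immediately, and one is done in two lines without needing (\ref{gu3}), the Beurling--Lax--Halmos identification of $\ker H_{\Phi_+^*}$, or the existence of scalar inner multiples. Your approach does buy something: it pins down the minimal such subspace as $\mathcal H(\Theta_1)$ before enlarging, which is sharper information---but for the bare existence statement the paper's shortcut is cleaner, and your worry about the degenerate case $\Theta_1$ unitary constant is unnecessary since the lemma does not require $\theta$ nonconstant.
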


\begin{proof}
If $\Phi^*$ is of bounded type then, in view of (\ref{2.6}),
$$
\Phi^*=A\Theta^*\ \ (\Theta\equiv I_\theta,\ A\in H^2_{M_n}).
$$
If $T_\Phi$ is hyponormal then by Lemma \ref{gu}, $\Phi$ is normal. \
Thus by
(\ref{1.3}), $[T_\Phi^*, T_\Phi]=H_{\Phi^*}^* H_{\Phi^*}-
H_\Phi^* H_\Phi$. \ Thus if $T_\Phi$ is hyponormal then
$\hbox{ker}\, H_{\Phi^*}\subseteq \hbox{ker}\, H_\Phi$, and hence
$\hbox{ker}\, H_{\Phi^*}\subseteq \hbox{ker}\, [T_\Phi^*, T_\Phi]$,
which gives $\Theta H^2_{\mathbb C^n}\subseteq \hbox{ker}\,
[T_\Phi^*, T_\Phi]$, or $\hbox{ran}\, [T_\Phi^*,
T_\Phi]\subseteq \mathcal H (\Theta)$.
\end{proof}

\medskip

The following lemma shows what tensored-scalar singularities do in the
passage from hyponormality to subnormality.

\medskip

\begin{lemma}\label{lem6.3} \
Let $\Phi\in L^\infty_{M_n}$ be such that $\Phi^*$ is of bounded
type. \ Suppose $T_\Phi$ is hyponormal; thus, in view of Lemma
\ref{lem6.1}, there exists an inner function $\Omega\equiv I_\omega\in
H^\infty_{M_n}$ {\rm (}$\omega$ a nonconstant scalar inner function{\rm )}
such that $\hbox{\rm ran}\,[T_\Phi^*,
T_\Phi]\subseteq \mathcal H(\Omega)$. \ If $\Phi^* \Omega$ has a
tensored-scalar singularity then $T_\Phi$ is normal.
\end{lemma}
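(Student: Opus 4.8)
The statement to prove is Lemma \ref{lem6.3}: if $\Phi^*$ is of bounded type, $T_\Phi$ is hyponormal (so $\Phi$ is normal by Lemma \ref{gu}), and $\hbox{ran}\,[T_\Phi^*, T_\Phi]\subseteq \mathcal H(\Omega)$ with $\Omega=I_\omega$, and if moreover $\Phi^*\Omega$ has a tensored-scalar singularity, then $T_\Phi$ is normal. The goal is to show $[T_\Phi^*,T_\Phi]=0$, equivalently $H_\Phi^*H_\Phi=H_{\Phi^*}^*H_{\Phi^*}$ (using normality and (\ref{1.3})), equivalently $\ker H_{\Phi^*}=\ker H_\Phi$ together with equality of the two forms. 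Since hyponormality already gives $\ker H_{\Phi^*}\subseteq \ker H_\Phi$, and since $\ker H_{\Phi^*}=\Theta H^2_{\mathbb C^n}$ where $\Phi^*=A\Theta^*$ (right coprime, $\Theta\equiv I_\omega\Theta_1$ after absorbing $\Omega$), the first task is to produce the reverse inclusion and then upgrade it to the norm identity.

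\emph{Step 1: reduce to a statement about the compressed Toeplitz operator.} Write $\Phi^*=A\Theta^*$ (right coprime). From $\hbox{ran}\,[T_\Phi^*,T_\Phi]\subseteq \mathcal H(\Omega)$ we get $\Omega H^2_{\mathbb C^n}\subseteq \ker[T_\Phi^*,T_\Phi]=\ker H_\Phi$ (using $[T_\Phi^*,T_\Phi]=H_{\Phi^*}^*H_{\Phi^*}-H_\Phi^*H_\Phi\ge0$ and $\ker H_{\Phi^*}=\Theta H^2_{\mathbb C^n}\supseteq \Omega H^2_{\mathbb C^n}$). The hypothesis that $\Phi^*\Omega$ has a tensored-scalar singularity means, by Lemma \ref{lem55.2-1}, that the inner part of the right coprime factorization of $\Phi^*\Omega$ has a nonconstant diagonal-constant inner divisor $I_\theta$; concretely $\ker H_{\Phi^*\Omega}\subseteq \theta H^2_{\mathbb C^n}$. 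The plan is to translate everything into facts about $G:=\Phi_-$ (or rather $\Phi_-^*=\Phi^*$ modulo $H^2$) restricted to the model space $\mathcal H(\Theta)$, using the machinery of Chapter 5: the compression $(T_A)_\Theta$ and Lemma \ref{23.2222}, which computes $\ker (T_A)_\Theta=\mathcal H(\Theta)\cap\Delta H^2_{\mathbb C^n}$ when $A^*\Theta=\Delta B^*$ with $\Delta,B$ right coprime.

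\emph{Step 2: exploit the tensored-scalar divisor to force $\ker H_\Phi=\ker H_{\Phi^*}$.} Here is the crux. By the hyponormality inequality, on the subspace $\mathcal H(\Theta)$ we have $\|H_{\Phi^*}f\|\ge\|H_\Phi f\|$ is not what we want; instead we use that $[T_\Phi^*,T_\Phi]$ is supported on $\mathcal H(\Omega)\subseteq\mathcal H(\Theta)$ and that $\Theta=I_\omega\Theta_1$. The idea is to use (\ref{1.5}) (the identity $H_\Phi^*H_\Phi-H_{\Theta\Phi}^*H_{\Theta\Phi}=H_\Phi^*H_{\Theta^*}H_{\Theta^*}^*H_\Phi$ for inner $\Theta$) together with the tensored-scalar hypothesis to peel off the scalar inner factor $I_\theta$ dividing the inner part of $\Phi^*\Omega$, and run an induction on the degree of $\theta$ exactly as in the scalar Abrahamse argument — each step showing that if $\theta$ nontrivially divides, then a nonzero vector in $\mathcal H(I_\theta)$ (an outer, invertible-in-$H^\infty$ function $\xi$, as in the proof of Theorem \ref{lem33.5}) is fixed by a contraction $T_{\widetilde K}^*$ coming from $K\in\mathcal E(\Phi)$ (which exists because $T_\Phi$ is hyponormal and $\Phi$ normal, via Lemma \ref{gu}), forcing $K$ to be isometric on that piece and hence $H_\Phi$ and $H_{\Phi^*}$ to agree there. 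Iterating over the l.c.m. defining the tensored-scalar singularity exhausts all of $\ker H_{\Phi^*}^\perp$, giving $\ker H_\Phi=\ker H_{\Phi^*}$ and then, because $K$ is a unitary-constant-valued partial isometry on the relevant space, $H_\Phi^*H_\Phi=H_{\Phi^*}^*H_{\Phi^*}$, i.e. $[T_\Phi^*,T_\Phi]=0$.

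\emph{The main obstacle.} The delicate point is Step 2: turning "$\Omega H^2\subseteq \ker H_\Phi$ plus a tensored-scalar divisor of the inner part of $\Phi^*\Omega$" into the full equality of self-commutator forms, rather than merely an inclusion of kernels. The scalar proof of Abrahamse uses that the relevant inner function is scalar so that division and the functional calculus $k(T_z)$ interact cleanly; in the matrix case the substitute is precisely that the divisor $I_\theta$ is diagonal-constant, so it commutes with everything and the argument of Theorem \ref{lem3.13} (using an invertible outer $\xi\in\mathcal H_{\widetilde\theta}$ and the contractivity of $\widetilde{UU'}^*$) can be transplanted. I would model the induction on the proof of Theorem \ref{lem3.13} and the $C_0$-functional-calculus remarks around Definition \ref{def5.1}, being careful that the hyponormality symbol $K\in\mathcal E(\Phi)$ restricts compatibly to the compressions $(T_\bullet)_{I_\theta}$; once $\xi$ is shown to be a fixed vector of a contraction whose symbol has $\|\cdot\|_\infty\le1$, one concludes $\widetilde{UU'}$ (or the analogous product) is a unitary constant, hence $H_\Phi^*H_\Phi=H_{\Phi^*}^*H_{\Phi^*}$, hence $[T_\Phi^*,T_\Phi]=0$ and $T_\Phi$ is normal.
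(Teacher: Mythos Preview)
Your plan has the right endgame (produce $K\in\mathcal E(\Phi)$ and show it must be a unitary constant via the fixed-vector argument from Theorem \ref{lem3.13}), but you are missing the one computational step that makes the argument work. The paper does not work with $H_\Phi$ and $H_{\Phi^*}$ on model spaces, nor does it run any induction. From $\hbox{ran}\,[T_\Phi^*,T_\Phi]\subseteq\mathcal H(\Omega)$ one has $[T_\Phi^*,T_\Phi]T_\Omega=0$, and a short Toeplitz--Hankel computation using (\ref{1.3}) and the normality of $\Phi$ gives
\[
0=T_\Omega^*[T_\Phi^*,T_\Phi]T_\Omega = H_{\Phi^*\Omega}^*H_{\Phi^*\Omega}-H_{\Phi\Omega}^*H_{\Phi\Omega}.
\]
This is exactly the hypothesis of Theorem \ref{lem3.13} for the pair $\Phi^*\Omega$, $\Phi\Omega$; since $\Phi^*\Omega$ has a tensored-scalar singularity, that theorem gives $\Phi^*\Omega-U\Phi\Omega\in H^\infty_{M_n}$ for a unitary constant $U$. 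Combining with $\Phi-K\Phi^*\in H^\infty_{M_n}$ yields $(I-T_{\widetilde{UK}}^*)H_{\Phi^*\Omega}=0$; the tensored-scalar singularity of $\Phi^*\Omega$ then places $\mathcal H(I_{\widetilde\zeta})$ inside $\hbox{cl ran}\,H_{\Phi^*\Omega}$, and the fixed-vector argument (exactly as after (\ref{39-9})) forces $UK=I$. Hence $K=U^*$ is a unitary constant and $[T_\Phi^*,T_\Phi]=H_{\Phi^*}^*(I-T_{\widetilde K}T_{\widetilde K}^*)H_{\Phi^*}=0$.

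Your Step 2 instead tries to ``peel off'' diagonal-constant factors and ``run an induction on the degree of $\theta$''. This cannot work as stated: $\theta$ is an arbitrary nonconstant inner function, not a finite Blaschke product, so there is no degree to induct on. Your Step 1 also contains an error: $\ker[T_\Phi^*,T_\Phi]=\ker H_\Phi$ is false in general (membership in the kernel of the self-commutator only says $\|H_{\Phi^*}f\|=\|H_\Phi f\|$, not $H_\Phi f=0$), and the inclusion $\ker H_{\Phi^*}\supseteq\Omega H^2_{\mathbb C^n}$ is not part of the hypotheses. The missing idea is precisely the conjugation by $T_\Omega$: it converts the inclusion $\Omega H^2_{\mathbb C^n}\subseteq\ker[T_\Phi^*,T_\Phi]$ into an \emph{equality} of Hankel forms for the twisted symbols $\Phi\Omega$ and $\Phi^*\Omega$, which is what Theorem \ref{lem3.13} consumes directly. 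Once you see that, no induction and none of the compression machinery from Chapter 5 is needed.
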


\begin{proof} \
By assumption, $\Omega H^2_{\mathbb C^n} \subseteq
\hbox{ker}[T_{\Phi}^*, T_{\Phi}]$. \
If $T_\Phi$ is hyponormal then by Lemma \ref{gu}, $\Phi$ is normal. \
Thus by
(\ref{1.3}), we have
$$
\aligned 0 &=T_{\Omega}^*[T_{\Phi}^*,
  T_{\Phi}]T_{\Omega}
       =T_{\Omega^*\Phi^*}T_{\Phi\Omega}-T_{\Omega^*\Phi}T_{\Phi^*\Omega}\\
&=\Bigl(T_{\Omega^*\Phi^*\Phi\Omega}-H_{\Phi\Omega}^*H_{\Phi\Omega}\Bigr)-
  \Bigl(T_{\Omega^*\Phi\Phi^*\Omega}-H_{\Phi^*\Omega}^*H_{\Phi^*\Omega}\Bigr)\\
&=H_{\Phi^*\Omega}^*H_{\Phi^*\Omega}-H_{\Phi\Omega}^*H_{\Phi\Omega}.
\endaligned
$$
Thus if $\Phi^*\Omega$ has a tensored-scalar singularity, then it follows
from Theorem \ref{lem3.13} that
\begin{equation}\label{37}
\Phi^*\Omega-U\Phi\Omega \in H^{\infty}_{M_n}\ \ \hbox{for some
unitary constant $U\in M_n$}.
\end{equation}
Since $T_{\Phi}$ is hyponormal, by Lemma \ref{gu}, there exists a
matrix function $K\in H^\infty_{M_n}$ such that $\Phi-K\Phi^*\in
H^\infty_{M_n}$. \ Thus it follows from (\ref{37}) that
$\Phi^*\Omega-UK\Phi^*\Omega\in H^{\infty}_{M_n}$, so that
$(I-T_{\widetilde{UK}}^*)H_{\Phi^*\Omega}=0$, which gives
\begin{equation}\label{37-1}
\hbox{ran}\,H_{\Phi^* \Omega}\subseteq \hbox{ker}\,
(I-T_{\widetilde{UK}}^*).
\end{equation}
On the other hand, since $\Phi^*\Omega$ has a tensored-scalar singularity, it
follows  that $\hbox{ker}\, H_{\Phi^*\Omega} \subseteq \zeta
H^2_{\mathbb C^n}$ ($\zeta$ nonconstant inner). \ Thus by Lemma
\ref{lem55.2-1}, we can write
$$
\Phi^*\Omega= B(\zeta \Delta)^* \quad(\hbox{right coprime}).
$$
We thus have
$$
\widetilde{\Phi^*\Omega}=\overline{\widetilde{\zeta}}
\widetilde{\Delta}^*\widetilde{B}.
$$
Since $I_\zeta$ and $B$ are right coprime, it follows from Theorem
\ref{lem33.9} that  $I_\zeta$ and $B$ are left coprime, so that
$I_{\widetilde{\zeta}}$ and $\widetilde{B}$ are right coprime. \ We
then have $\hbox{ker}\, H_{\Phi^*\Omega}^* =\hbox{ker}\,
H_{\widetilde{\Phi^*\Omega}} \subseteq \widetilde{\zeta}
H^2_{\mathbb C^n}$, which together with (\ref{37-1}) implies
$$
\mathcal H (I_{\widetilde{\zeta}}) \subseteq
\hbox{cl ran}\,H_{\Phi^* \Omega} \subseteq \hbox{ker}\,
(I-T_{\widetilde{UK}}^*).
$$
The same argument as the one used right after (\ref{39-9}) in
Theorem \ref{lem3.13} shows that $UK=I$, and hence $K=U^*$ is a
constant unitary. \ Therefore $[T_{\Phi}^*, T_{\Phi}]=
H_{\Phi^*}^*(I-T_{\widetilde{K}}T_{\widetilde{K}}^*)H_{\Phi^*}=0$. \
This completes the proof.
\end{proof}

\medskip

The main theorem of this chapter now follows.

\begin{theorem}\label{thm2.2} {\rm (Abrahamse's Theorem for matrix-valued symbols)} \
Let $\Phi\in L^\infty_{M_n}$ be such that $\Phi$ and $\Phi^*$ are of
bounded type. \ Assume $\Phi$ has a tensored-scalar singularity. \ If
\begin{itemize}
\item[(i)] $T_\Phi$ is hyponormal;
\item[(ii)] $\hbox{\rm ker}\,[T_\Phi^*, T_\Phi]$ is invariant under $T_\Phi$,
\end{itemize}
then $T_\Phi$ is normal. \ Hence, in particular, if $T_\Phi$ is
subnormal then $T_\Phi$ is normal.
\end{theorem}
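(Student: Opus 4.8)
The plan is to reduce the statement to Lemma~\ref{lem6.3}: hypothesis~(ii) will be used to promote the inner function produced by Lemma~\ref{lem6.1} (for which $\Phi^*\Omega$ is typically analytic, hence useless) to a smaller one $\Omega=I_\omega$ for which $\Phi^*\Omega$ still carries a tensored-scalar singularity. First I would dispose of the trivial case: if $[T_\Phi^*,T_\Phi]=0$ then $T_\Phi$ is normal, so assume $[T_\Phi^*,T_\Phi]\ne 0$. By Lemma~\ref{gu} the symbol $\Phi$ is normal, so by (\ref{1.3}) $[T_\Phi^*,T_\Phi]=H_{\Phi^*}^*H_{\Phi^*}-H_\Phi^*H_\Phi\ge 0$ and $M:=\hbox{ker}\,[T_\Phi^*,T_\Phi]\supseteq\hbox{ker}\,H_{\Phi^*}$. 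Writing right coprime factorizations $\Phi_+=\Theta_1A^*$ and $\Phi_-=\Theta_2B^*$, and using $H_{\Phi^*}=H_{\Phi_+^*}$, $H_\Phi=H_{\Phi_-^*}$ together with (\ref{RCD}), hyponormality gives $\Theta_1H^2_{\mathbb C^n}=\hbox{ker}\,H_{\Phi^*}\subseteq\hbox{ker}\,H_\Phi=\Theta_2H^2_{\mathbb C^n}$, so $\Theta_1=\Theta_2\Omega$ for some inner $\Omega\in H^\infty_{M_n}$; and by Lemma~\ref{lem55.2-1} the tensored-scalar singularity of $\Phi$ means that $\Theta_2$, hence also $\Theta_1$, has a nonconstant diagonal-constant inner divisor.

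The key step is to exploit (ii). Since $M$ is $T_\Phi$-invariant and contains $\Theta_1H^2_{\mathbb C^n}$, I would compute $T_\Phi$ on $\Theta_1H^\infty_{\mathbb C^n}$: using $\Phi=B\Theta_2^*+\Phi_+$ and $\Theta_2^*\Theta_1=\Omega$ one gets, for $g\in H^\infty_{\mathbb C^n}$,
$$
T_\Phi(\Theta_1g)=B\Omega g+\Phi_+\Theta_1g ,
$$
both summands being analytic. Iterating $T_\Phi$ on this set and using that $\Theta_1$ and $A$ are right coprime --- so that the $A$-part of $\Phi_+$ is, after passing to outer factors and closures, invertible along the zeros of the inner part of $\Theta_1$ (as in Corollary~\ref{cor2.9}) --- should force $M$ to contain $\Delta H^2_{\mathbb C^n}$ for a left inner divisor $\Delta$ of $\Theta_1$ with $\Delta H^2_{\mathbb C^n}\supsetneq\Theta_1H^2_{\mathbb C^n}$. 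Combining this enlargement with the diagonal-constant inner divisor found above, I expect to produce a nonconstant scalar inner function $\omega$ with $I_\omega H^2_{\mathbb C^n}\subseteq M$ and $\Theta_1=I_\omega\Theta_1'$, where $\Theta_1'$ still carries a nonconstant diagonal-constant inner divisor --- unless the enlargement already gives $M=H^2_{\mathbb C^n}$, in which case $T_\Phi$ is normal. This bookkeeping of which inner divisors survive the enlargement is the main obstacle; it is exactly here that the tensored-scalar hypothesis (rather than mere non-analyticity of $\Phi$) is indispensable, as the symbol $\left(\begin{smallmatrix}z+\overline z&0\\0&z\end{smallmatrix}\right)$ --- which satisfies (i) and (ii) but is neither normal nor analytic --- demonstrates.

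Granting this, the conclusion follows quickly. With $\Theta_1=I_\omega\Theta_1'$ we have $\Phi^*I_\omega=\Phi_-I_\omega+A(\Theta_1')^*$, whose Hankel operator is $H_{A(\Theta_1')^*}$; since $\Theta_1'$ and $A$ are right coprime, (\ref{RCD}) gives $\hbox{ker}\,H_{\Phi^*I_\omega}=\Theta_1'H^2_{\mathbb C^n}$, so $\Phi^*I_\omega$ has a tensored-scalar singularity by Lemma~\ref{lem55.2-1}. Moreover $\hbox{ran}\,[T_\Phi^*,T_\Phi]\subseteq\mathcal H(I_\omega)$ because $I_\omega H^2_{\mathbb C^n}\subseteq M=\hbox{ker}\,[T_\Phi^*,T_\Phi]$. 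Lemma~\ref{lem6.3} then yields that $T_\Phi$ is normal. Finally, if $T_\Phi$ is subnormal it is in particular $2$-hyponormal, so (i) holds, and $\hbox{ker}\,[T_\Phi^*,T_\Phi]$ is invariant under $T_\Phi$ (as recalled in the Preliminaries), so (ii) holds; hence $T_\Phi$ is normal.
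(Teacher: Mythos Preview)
Your framework --- reduce to Lemma~\ref{lem6.3} by producing a scalar inner $\omega$ with $I_\omega H^2_{\mathbb C^n}\subseteq\ker[T_\Phi^*,T_\Phi]$ and $\Phi^*I_\omega$ still carrying a tensored-scalar singularity --- is sound, and your closing paragraph is correct. But the middle paragraph, which you yourself flag as ``the main obstacle,'' is not an argument: it is a wish. You compute $T_\Phi(\Theta_1 g)=B\Omega g+\Phi_+\Theta_1 g$ and then assert that iterating and ``passing to outer factors'' will enlarge $M$ to contain $\Delta H^2_{\mathbb C^n}$ for a proper left inner divisor $\Delta$ of $\Theta_1$. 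This is not substantiated: $M=\ker[T_\Phi^*,T_\Phi]$ is not shift-invariant, so there is no Beurling--Lax--Halmos description of a ``largest $\Delta H^2$ inside $M$''; and even granting such a $\Delta$, you give no mechanism forcing $\Delta=I_\omega$ to be diagonal-constant, nor forcing the cofactor $\Theta_1'=\overline\omega\,\Theta_1$ to retain a nonconstant diagonal-constant divisor. Corollary~\ref{cor2.9} concerns coprime factorizations of $\Phi_\delta$ for finite Blaschke products and does not bridge this gap. Note also that, by the contrapositive of Lemma~\ref{lem6.3}, if $T_\Phi$ is \emph{not} normal then no $\omega$ with your two properties can exist; so what you are trying to construct is exactly equivalent to the conclusion, and the ``bookkeeping'' you defer is the whole proof.

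The paper's route is different in an essential way. It dualizes (ii) to the statement that $T_\Phi^*$ leaves $\operatorname{ran}[T_\Phi^*,T_\Phi]$ invariant, then runs a \emph{shrinking} rather than an enlarging argument: having $\operatorname{ran}[T_\Phi^*,T_\Phi]\subseteq\mathcal H(I_\omega)$ from Lemma~\ref{lem6.1} and assuming $T_\Phi$ not normal, it shows via an explicit computation with $\Omega^*\Phi^*h$ and the decomposition $B=\theta B_1+B_2$ in $\mathcal K_\Theta=\mathcal K_\theta\oplus\theta\mathcal K_{\Theta_1}$ that actually $\operatorname{ran}[T_\Phi^*,T_\Phi]\subseteq\mathcal H(I_{\omega\overline\theta})$, where $\theta$ is the diagonal-constant part of the inner factor of $\Phi_-$. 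Iterating yields $\omega\in\bigcap_p\theta^p H^2=\{0\}$ (Lemma~\ref{lem1.4}), a contradiction. The tensored-scalar hypothesis enters precisely to guarantee that each pass peels off a copy of $\theta$; no control over ``which divisors survive'' is needed, because the argument reaches the contradiction directly rather than constructing the good $\omega$ you seek.
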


\medskip

\begin{proof}
Suppose $T_\Phi$ is hyponormal and $\text{\rm ker}\,[T_{\Phi}^*,
T_{\Phi}]$ is invariant under $T_{\Phi}$. \ Let $\Phi$ and $\Phi^*$
be of bounded type. \ Thus in view of (\ref{2.9}), we may write
$$
\Phi_- = \Theta B^*\,\quad\hbox{\rm (right coprime)}.
$$
Suppose $\Phi$ has a tensored-scalar singularity, so that by Lemma
\ref{lem55.2-1}, $\Theta$ has a nonconstant diagonal-constant inner
function $I_\theta $. \ Thus $\Theta = \theta \Theta_1$ for some
inner matrix function $\Theta_1$. \ Since $T_\Phi$ is hyponormal we
may write, in view of (\ref{3.222}),
$$
\Phi_+=\Theta\Theta_2 A^*\quad\hbox{(right coprime)},
$$
where $\Theta_2$ is an inner matrix function. \ Since by Lemma \ref{gu}, $\Phi$ is
normal it follows from (\ref{1.3}) that
\begin{equation}\label{3.2.1}
[T_\Phi^*,T_\Phi]
=H_{A\Theta_2^*\Theta^*}^*H_{A\Theta_2^*\Theta^*} -
H_{B\Theta^*}^*H_{B\Theta^*},
\end{equation}
which implies $\Theta\Theta_2 H^2_{\mathbb C^n}\subseteq
\hbox{ker}\,[T_\Phi^*,T_\Phi]=
\bigl(\hbox{ran}\,[T_\Phi^*,T_\Phi]\bigr)^\perp$, so that
\begin{equation}\label{3.3-0}
\hbox{ran}\,[T_\Phi^*,T_\Phi]\subseteq \mathcal{H}
(\Theta\Theta_2)\,.
\end{equation}
We first suppose $\hbox{ran}\,[T_\Phi^*,T_\Phi] \subseteq \mathcal H
(\Theta_1\Theta_2)$. \ Since $\ker H_{\Phi^*\Theta_1\Theta_2}= \ker
H_{\overline\theta A}=\theta H^2_{\mathbb C^n}$, so that
$\Phi^*\Theta_1\Theta_2$ has a tensored-scalar singularity it follows from
Lemma \ref{lem6.3} with $\Omega\equiv \Theta_1\Theta_2$ that
$T_\Phi$ is normal. \ We thus suppose that
$\hbox{ran}\,[T_\Phi^*,T_\Phi]$ is not contained in $\mathcal H
(\Theta_1\Theta_2)$. \ In this case, we assume to the contrary that
$T_\Phi$ is not normal. \ In view of Lemma \ref{lem6.1}, there
exists a diagonal-constant inner function $\Omega\equiv I_\omega$
such that
\begin{equation}\label{3.4}
\hbox{ran}\,[T_\Phi^*,T_\Phi]\subseteq \mathcal{H}
(\Omega),\quad\hbox{or equivalently,}\quad \Omega^*
\left(\hbox{ran}\,[T_\Phi^*,T_\Phi]\right)\subseteq (H^2_{\mathbb
C^n})^\perp.
\end{equation}
By Lemma \ref{lem6.3}, $\Phi^*\Omega$ has no tensored-scalar singularity. \
But since $\Phi_+^*\Omega = (\overline\theta\omega)
A\Theta_2^*\Theta_1^*$, it follows from Lemma \ref{lem55.2-1} that
$\theta$ is an inner divisor of $\omega$. \ Let
$h\in\hbox{ran}\,[T_\Phi^*,T_\Phi]$ be an arbitrary vector. \ Since
by assumption, $\hbox{\rm ker}\,[T_\Phi^*, T_\Phi]$ is invariant for
$T_\Phi$, we have $T_\Phi^*
\left(\hbox{ran}\,[T_\Phi^*,T_\Phi]\right)\subseteq
\hbox{ran}\,[T_\Phi^*,T_\Phi]$, and hence, $T_{\Phi}^*
h\in\hbox{ran}\,[T_\Phi^*,T_\Phi]$. \ From (\ref{3.4}), $\Omega^*
\Phi^* h\in (H^2_{\mathbb C^n})^\perp$. \ Thus we have
$$
\Omega^*\Phi^*h
          =\Omega^*(\Phi_+^*+ \theta \Theta_1 B^*)h
          = \Phi_+^*(\Omega^*h) + \theta \Theta_1\Omega^* B^* h \in (H^2_{\mathbb C^n})^\perp.
$$
Since by (\ref{3.4}), $\Omega^* h\in (H^2_{\mathbb C^n})^\perp$, it
follows that
\begin{equation}\label{3.10}
\theta B^*\Omega^* h\in (H^2_{\mathbb C^n})^\perp.
\end{equation}
Since $\Phi_- = \Theta B^*\in zH^2_{M_n}$, we can choose $B\in
\mathcal{K}_\Theta$. \ Thus by Lemma \ref{lem22.55}
 we may write
\begin{equation}\label{15}
B=\theta B_1 + B_2,
\end{equation}
where $B_1 \in \mathcal{K}_{\Theta_1}$ and $B_2 \in
\mathcal{K}_\theta$. \ Then it follows from (\ref{3.10}) and
(\ref{15}) that
$$
B_1^* \Omega^* h+ (\overline{\omega}\theta) B_2^* h \in
(H^2_{\mathbb C^n})^\perp,\ \ \hbox{so that}\ \
(\overline{\omega}\theta) B_2^* h \in (H^2_{\mathbb C^n})^\perp.
$$
We thus have
\begin{equation}\label{17}
\bigl(\overline{z}\overline{\widetilde{\theta}}\widetilde{B}_2\bigr)\bigl(\widetilde{\omega}
\overline{\widetilde h}\bigr) \in H^2_{\mathbb C^n}, \quad\hbox{or
equivalently,}\quad\widetilde{\omega} \overline{\widetilde h} \in
\hbox{ker}\,H_{\overline{z}\overline{\widetilde{\theta}}\widetilde{B}_2}.
\end{equation}
We now claim that
\begin{equation}\label{299}
\widetilde{\omega} \overline{\widetilde h} \in
 z\widetilde{\theta} H^2_{\mathbb C^n}.
\end{equation}
To see this we first suppose that $\theta$ and $z$ are not coprime.
\ Then $z$ is an inner divisor of $\theta$.  \ Since $B$ and
$\Theta$ are right coprime, it follows from Theorem \ref{lem33.9}
that $B$ and $I_\theta$ are coprime, and hence  $B$ and
$I_{z\theta}$ are coprime.\ Thus by (\ref{15}), $B_2$ and
$I_{z\theta}$ are coprime. \ Therefore $\widetilde{B}_2$ and
$I_{z\widetilde{\theta}}$ are coprime, which implies that, by
(\ref{RCD}) together with (\ref{17}), $ \widetilde{\omega}
\overline{\widetilde h} \in \hbox{ker}\,H_{\overline{z}
\overline{\widetilde{\theta}}\widetilde{B}_2}=z\widetilde{\theta}H^2_{\mathbb
C^n}$, giving (\ref{299}).

Suppose now that $\theta$ and $z$ are coprime. \ Then by (\ref{RCD})
and (\ref{17}) we have
\begin{equation}\label{18}
\widetilde{\omega} \overline{\widetilde h} \in
\hbox{ker}\,H_{\overline{z}\overline{\widetilde{\theta}}\widetilde{B}_2}\subseteq
\hbox{ker}\,H_{\overline{\widetilde{\theta}}\widetilde{B}_2}=
\widetilde{\theta}H^2_{\mathbb C^n}.
\end{equation}
But since $\Omega^*h\in (H^2_{\mathbb C^n})^\perp$, and hence
$\widetilde{\omega} \overline{\widetilde h} \in z H^2_{\mathbb
C^n}$, we have
$$
\widetilde{\omega} \overline{\widetilde h} \in
\widetilde{\theta}H^2_{\mathbb C^n} \bigcap z H^2_{\mathbb C^n} =
z\widetilde{\theta} H^2_{\mathbb C^n},
$$
giving (\ref{299}). \ Now by (\ref{299}), we have
$$
(\overline{z}\overline{\widetilde{\theta}}\widetilde{\omega})
\overline{\widetilde h} \in H^2_{\mathbb C^n}, \ \ \hbox{so that} \
\ (\theta\overline{\omega}) h \in (H^2_{\mathbb C^n})^{\perp}.
$$
Therefore we can see that
$$
(\overline\theta\Omega)^* h \in (H^2_{\mathbb C^n})^\perp \
\hbox{for each} \ h \in\hbox{ran}\,[T_\Phi^*,T_\Phi],
$$
which implies that $ \hbox{ran}\,[T_\Phi^*,T_\Phi]\subseteq
\mathcal{H} ({\overline{\theta}\Omega})= \mathcal{H} ({I_{\omega
\overline{\theta}}})$. \ Thus we can repeat the above argument with
$\overline\theta\Omega$ in place of $\Omega$ in (\ref{3.4}). \ Then
induction on $p$ shows that
$$
\hbox{ran}\,[T_\Phi^*,T_\Phi]\subseteq  \mathcal{H} (I_{\omega
\overline{\theta}^p}) \ \hbox{for all} \ p\in \mathbb Z_+.
$$
In particular, $\omega \overline{\theta}^p \in H^2$, and hence
$\theta^p$ is an inner divisor of $\omega$ for all $p \in \mathbb
Z_+$. \ Thus it follows from Lemma \ref{lem1.4} that
$$
\omega H^2 \subseteq \bigcap_{p=1}^{\infty}{\theta}^p H^2=\{0\},
$$
a contradiction. \ Therefore $T_\Phi$ should be normal. \ This
proves the first assertion. \

The second assertion follows at once from the fact that if $T_\Phi$
is subnormal then $\hbox{\rm ker}\,[T_\Phi^*, T_\Phi]$ is invariant
for $T_\Phi$. This completes the proof of Theorem \ref{thm2.2}.
\end{proof}

\medskip

\begin{remark}
(a) We note that the assumption ``$\Phi$ has a tensored-scalar singularity"
is essential in Theorem \ref{thm2.2}. \ As we saw before, if
$\Phi:=\left(\begin{smallmatrix} \overline z+z&0\\
0&z\end{smallmatrix}\right)$, then $T_\Phi$ is neither normal nor
analytic. \ But since $\ker H_\Phi=\ker
H_{\left(\begin{smallmatrix} \overline z&0\\
0&0\end{smallmatrix}\right)} =\left(\begin{smallmatrix} z&0\\
0&1\end{smallmatrix}\right) H^2_{\mathbb C^n}$, it follows that
$\Theta\equiv \left(\begin{smallmatrix} z&0\\
0&1\end{smallmatrix}\right)$ does not have any nonconstant
diagonal-constant inner divisor, so that $\Phi$ does not have a
tensored-scalar singularity. \

(b) If $n=1$, then $\Theta\equiv \theta\in H^\infty$ is vacuously
diagonal-constant, so that Theorem \ref{thm2.2} reduces to the
original Abrahamse's Theorem. \hfill$\square$
\end{remark}

\bigskip

We will next give an example that illustrates Theorem \ref{thm2.2}.\
To do so we need some auxiliary observations. \ If $\Phi=\Phi_-^* +
\Phi_+\in L^{\infty}_{M_n}$ is such that $\Phi$ and $\Phi^*$ are of
bounded type and for which $T_\Phi$ is hyponormal, we will, in view
of (\ref{2.8}) and (\ref{3.222-1}), assume that
\begin{equation}\label{5-0}
\Phi_+=  A^* \Omega_1\Omega_2\quad \hbox{and} \quad \Phi_-=
B_\ell^*\Omega_2\ \hbox{(left coprime)},
\end{equation}
where $\Omega_1 \Omega_2=\Theta=I_\theta$. \

\medskip

\begin{lemma}\label{lem3.11} \
Let $\Phi=\Phi_-^*+ \Phi_+\in L^\infty_{M_n}$ be such that $\Phi$
and $\Phi^*$ are of bounded type. \ Then in view of (\ref{5-0}), we
may write
$$
\Phi_+=A^*\Theta_0\Theta_2\quad\hbox{and}\quad \Phi_- = B^*
\Theta_2\ \hbox{(left coprime)},
$$
where $\Theta_0\Theta_2=I_\theta $, with $\theta$ an inner function.
\ If  $T_\Phi$ is hyponormal and $\text{\rm ker}\,[T_{\Phi}^*,
T_{\Phi}]$ is invariant under $T_{\Phi}$\,, then
\begin{equation}\label{3.9}
\Theta_0 H^2_{\mathbb C^n}\ \subseteq\ \hbox{\rm ker}\,[T_{\Phi}^*,
T_{\Phi}].
\end{equation}
Moreover if $K\in \mathcal{C}(\Phi)$, then
\begin{equation}\label{3.15}
\hbox{\rm cl ran}\, H_{A\Theta_2^*}\subseteq \hbox{\rm
ker}\,(I-T_{\widetilde{K}}T_{\widetilde{K}}^*).
\end{equation}
\end{lemma}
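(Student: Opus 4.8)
\textbf{Proof plan for Lemma \ref{lem3.11}.}

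The plan is to establish the two claims separately, both by exploiting the identity $[T_\Phi^*,T_\Phi]=H_{\Phi^*}^*H_{\Phi^*}-H_\Phi^*H_\Phi$ (valid because hyponormality of $T_\Phi$ forces $\Phi$ to be normal by Lemma \ref{gu}), together with the Beurling--Lax--Halmos description of kernels of Hankel operators and the characterization (\ref{RCD}).

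For (\ref{3.9}): first I would compute $\ker H_{\Phi^*}$. From $\Phi_-=B^*\Theta_2$ and $\Phi_+=A^*\Theta_0\Theta_2$ (both left coprime), the co-analytic and analytic data of $\Phi^*$ are governed, after passing to $\widetilde{(\cdot)}$, by right coprime factorizations whose inner parts are $\widetilde\Theta_2$ and $\widetilde{\Theta_0\Theta_2}=\widetilde\Theta_2\widetilde\Theta_0$ respectively. Using (\ref{1.5}) and (\ref{RCD}) one identifies $\ker H_{\Phi^*}$ (equivalently $\ker H_{\Phi_+^*}$ intersected appropriately) so that $\Theta_0\Theta_2 H^2_{\mathbb C^n}\subseteq \ker H_{\Phi^*}$; more precisely, since $\Phi$ is normal, $\ker H_{\Phi^*}\subseteq\ker H_\Phi$, hence $\ker H_{\Phi^*}\subseteq\ker[T_\Phi^*,T_\Phi]$. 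Then the invariance hypothesis enters: $\ker[T_\Phi^*,T_\Phi]$ is invariant under $T_\Phi$, hence (being also automatically $S$-invariant in the relevant sense, or via $T_\Phi$-invariance pushing the kernel up) one upgrades the inclusion $\Theta_0\Theta_2H^2_{\mathbb C^n}\subseteq \ker[T_\Phi^*,T_\Phi]$ to the sharper $\Theta_0 H^2_{\mathbb C^n}\subseteq\ker[T_\Phi^*,T_\Phi]$. The cleanest route: $T_\Phi$-invariance of $\ker[T_\Phi^*,T_\Phi]$ combined with $T_{\Theta_2^*}$ acting on the containing space lets one strip off the $\Theta_2$ factor, exactly as in the analogous scalar step of Abrahamse's argument; I would model this on the passage leading to (\ref{3.3-0}) in the proof of Theorem \ref{thm2.2}.

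For (\ref{3.15}): suppose $K\in\mathcal C(\Phi)$, i.e. $\Phi-K\Phi^*\in H^\infty_{M_n}$. Taking co-analytic parts gives $\Phi_-^*-K\Phi_+^*\in H^2_{M_n}$, which translates via (\ref{1.4}) into $H_{\Phi_-^*}=T_{\widetilde K}^*H_{\Phi_+^*}$, i.e. $H_{B^*\Theta_2\cdots}$ factors through $H_{A^*\Theta_0\Theta_2\cdots}$; more usefully, applying $T_{\Theta_2}$ (or its adjoint) and using (\ref{1.4}) one reduces to a relation of the form $(I-T_{\widetilde K}T_{\widetilde K}^*)H_{A\Theta_2^*}=0$ after inserting the hyponormality inequality $H_{\Phi^*}^*H_{\Phi^*}\ge H_\Phi^*H_\Phi$ evaluated on $\mathrm{ran}$-type considerations. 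Concretely: from $\Phi-K\Phi^*\in H^\infty_{M_n}$ one gets $\|H_{\Phi^*}f\|\ge\|H_\Phi f\|$, and $[T_\Phi^*,T_\Phi]=H_{\Phi^*}^*(I-T_{\widetilde K}T_{\widetilde K}^*)H_{\Phi^*}$; combining with (\ref{3.9}) and the explicit form $\Phi^*=(\text{stuff})A\Theta_2^*\Theta_0^*$ localizes the defect operator $I-T_{\widetilde K}T_{\widetilde K}^*$ to kill $\mathrm{ran}\,H_{A\Theta_2^*}$, yielding (\ref{3.15}). This mirrors the derivation of (\ref{37-1}) and (\ref{39-9}) in earlier proofs.

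\textbf{Main obstacle.} The delicate point is the ``stripping'' step in (\ref{3.9}): going from $\Theta_0\Theta_2H^2_{\mathbb C^n}\subseteq\ker[T_\Phi^*,T_\Phi]$ to $\Theta_0H^2_{\mathbb C^n}\subseteq\ker[T_\Phi^*,T_\Phi]$ genuinely uses the $T_\Phi$-invariance hypothesis and not just hyponormality, and one must be careful that $\Theta_2$ (the common left-coprime inner part of $\Phi_+$ and $\Phi_-$) can be cancelled without losing information — this is where the left-coprimeness normalization in (\ref{5-0}) is doing real work. I expect the bookkeeping with $\widetilde{(\cdot)}$, left-vs-right coprimeness, and the interplay of (\ref{1.4})--(\ref{1.5}) to be the most error-prone part, but no step should require an essentially new idea beyond those already deployed in Theorem \ref{thm2.2} and Theorem \ref{lem3.13}.
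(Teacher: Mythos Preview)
Your plan for (\ref{3.15}) is essentially the paper's argument: from $K\in\mathcal C(\Phi)$ one has $[T_\Phi^*,T_\Phi]=H_{\Phi_+^*}^*(I-T_{\widetilde K}T_{\widetilde K}^*)H_{\Phi_+^*}$, and feeding (\ref{3.9}) into this via $H_{\Phi_+^*}T_{\Theta_0}=H_{A\Theta_2^*\Theta_0^*}T_{\Theta_0}=H_{A\Theta_2^*}$ gives the result. (Minor slip in your write-up: the inner factor one absorbs on the range side is $\Theta_0$, not $\Theta_2$.)

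The real gap is in (\ref{3.9}). The paper does \emph{not} prove this inclusion from scratch; it cites \cite[Theorem~3.5]{CHL2} (established there under the extra hypothesis that $\Theta_2$ is diagonal-constant) and simply remarks that a careful rereading of that proof shows only the diagonal-constancy of the product $\Theta_0\Theta_2=I_\theta$ is actually used. Your proposed mechanism---``$T_\Phi$-invariance combined with $T_{\Theta_2^*}$ lets one strip off the $\Theta_2$ factor''---is not an argument: applying $T_{\Theta_2^*}$ or iterating $T_\Phi$ on $\Theta_0\Theta_2 H^2_{\mathbb C^n}$ does not produce $\Theta_0 H^2_{\mathbb C^n}$, and there is no reason for $\ker[T_\Phi^*,T_\Phi]$ to be $T_{\Theta_2^*}$-invariant. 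Your reference to (\ref{3.3-0}) is also misplaced: that passage delivers only the easy inclusion (the full inner part of $\Phi_+$ lands in the kernel), which is exactly your starting point, not the upgrade you need. You correctly flagged this step as the main obstacle, but its resolution lies in the \cite{CHL2} argument and genuinely uses the left-coprimeness of $B$ and $\Theta_2$ together with the invariance hypothesis; a loose analogy with the scalar Abrahamse step will not close it.
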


\begin{proof}
The inclusion (\ref{3.9}) follows from a slight extension of
\cite[Theorem 3.5]{CHL2} , in which $\Theta_2$ is a
diagonal-constant inner function of the form
$\Theta_2=I_{\theta_2}$. \ However, a careful analysis of
\cite[Proof of Theorem 3.5, Step 1]{CHL2} shows that the proof does
not employ the diagonal-{\it constancy} of $\Theta_2$, but uses only
the diagonal- constancy of $\Theta_0\Theta_2$. \ Towards
(\ref{3.15}), we observe that if $K\in \mathcal{C}(\Phi)$, then by
(\ref{1.4}),
\begin{equation}\label{3.13-2}
[T_\Phi^*,
T_\Phi]=H_{\Phi_+^*}^*H_{\Phi_+^*}-H_{K\Phi_+^*}^*H_{K\Phi_+^*}
=H_{\Phi_+^*}^* (I- T_{\widetilde K}T_{\widetilde K}^*)
H_{\Phi_+^*}\,,
\end{equation}
so that $\hbox{ker}\,[T_\Phi^*, T_\Phi]=\hbox{ker}\,(I-
T_{\widetilde K}T_{\widetilde K}^*) H_{\Phi_+^*}$. \ Thus by
(\ref{3.9}),
$$
\{0\}=(I-T_{\widetilde{K}}T_{\widetilde{K}}^*)H_{A\Theta_2^*\Theta_0^*}(\Theta_0
H_{\mathbb C^n}^2) =
(I-T_{\widetilde{K}}T_{\widetilde{K}}^*)H_{A\Theta_2^*}(H_{\mathbb
C^n}^2),
$$
giving (\ref{3.15}).
\end{proof}

\medskip

In \cite{GHR}, the normality of block Toeplitz operator $T_\Phi$ was
also characterized in terms of the symbol $\Phi$ under a
``determinant" condition on the symbol $\Phi$.

\medskip

\noindent
\begin{lemma}\label{lem1.3} {\rm (Normality of Block Toeplitz Operators)
(\cite{GHR})} Let $\Phi\equiv \Phi_+ +\Phi_-^*$ be normal and
$\Phi_+^0:\label{phi+0}=\Phi_+-\Phi_+(0)$. If $\det\,\Phi_+\ne 0$,
then
\begin{equation}\label{1.3-1}
T_\Phi\ \hbox{is normal}\Longleftrightarrow \Phi_+^0=\Phi_-U\ \
\hbox{for some constant unitary matrix}\ U.
\end{equation}
\end{lemma}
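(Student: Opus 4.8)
The plan is to reduce the assertion to an identity between two Hankel operators and then to exploit Lemma \ref{gu} (the block version of Cowen's theorem) together with the hypothesis $\det\Phi_+\neq 0$. First I would observe that, since $\Phi$ is normal, (\ref{1.3}) gives $[T_\Phi^*,T_\Phi]=H_{\Phi^*}^*H_{\Phi^*}-H_\Phi^*H_\Phi$; moreover $H_\Phi=H_{\Phi_-^*}$ and $H_{\Phi^*}=H_{\Phi_+^*}=H_{(\Phi_+^0)^*}$, since analytic parts and constant matrices are annihilated by Hankel operators. Hence
$$
T_\Phi\ \hbox{is normal}\ \Longleftrightarrow\ H_{(\Phi_+^0)^*}^*H_{(\Phi_+^0)^*}=H_{\Phi_-^*}^*H_{\Phi_-^*},
$$
and the lemma becomes a statement about this equality.

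For the implication ``$\Longleftarrow$'' (which will not need $\det\Phi_+\neq0$), suppose $\Phi_+^0=\Phi_-U$ with $U$ a constant unitary; then $(\Phi_+^0)^*=U^*\Phi_-^*$, and (\ref{1.4}) applied with the constant factor $U^*\in H^\infty_{M_n}$ (so $\widetilde{U^*}=U$) gives $H_{(\Phi_+^0)^*}=T_U^*H_{\Phi_-^*}$. Consequently
$$
H_{(\Phi_+^0)^*}^*H_{(\Phi_+^0)^*}=H_{\Phi_-^*}^*\,T_UT_U^*\,H_{\Phi_-^*}=H_{\Phi_-^*}^*H_{\Phi_-^*},
$$
since $T_UT_U^*=T_{UU^*}=I$ for a constant matrix $U$; by the reduction above, $T_\Phi$ is normal.

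For ``$\Longrightarrow$'' I would argue as follows. If $T_\Phi$ is normal then $T_\Phi$ and $T_\Phi^*=T_{\Phi^*}$ are hyponormal with normal symbols, so Lemma \ref{gu} produces $K,K'\in H^\infty_{M_n}$ with $||K||_\infty\le1$, $||K'||_\infty\le1$, $\Phi-K\Phi^*\in H^\infty_{M_n}$ and $\Phi^*-K'\Phi\in H^\infty_{M_n}$. Substituting one relation into the other yields $(I-KK')\Phi\in H^\infty_{M_n}$ and $(I-K'K)\Phi^*\in H^\infty_{M_n}$, while comparing anti-analytic parts in the two defining relations gives $\Phi_-^*=P_{(H^2_{M_n})^\perp}\bigl(K(\Phi_+^0)^*\bigr)$ and $(\Phi_+^0)^*=P_{(H^2_{M_n})^\perp}\bigl(K'\Phi_-^*\bigr)$, hence $(I-KK')\Phi_-^*\in H^2_{M_n}$ and $(I-K'K)(\Phi_+^0)^*\in H^2_{M_n}$. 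The key point is then to use $\det\Phi_+\neq 0$: since $\det\Phi_+$ is a nonzero scalar $H^\infty$-function, $\Phi_+$ is invertible a.e.\ on $\mathbb T$ and $T_{\Phi_+}$ is injective, and inserting this into the identities above forces $I-KK'=0=I-K'K$. Therefore $K$ is invertible with $K^{-1}=K'$; combining $||K||_\infty\le1$ with $||K^{-1}||_\infty\le1$ forces $K(z)$ to be unitary a.e., and then $K^*=K^{-1}=K'\in H^\infty_{M_n}$ forces each entry of $K$ to lie in $H^\infty\cap\overline{H^\infty}=\mathbb C$, so $K\equiv U$ is a constant unitary. Finally, taking the anti-analytic part in $\Phi-U\Phi^*\in H^\infty_{M_n}$ with $U$ constant gives $\Phi_-^*=U(\Phi_+^0)^*$, i.e.\ $\Phi_+^0=\Phi_-U$, as claimed.

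I expect the main obstacle to be precisely the step ``$\det\Phi_+\neq 0\Rightarrow KK'=I$''. The cleanest route I envisage is to carry along the normality identity in the form $||H_{(\Phi_+^0)^*}f||=||H_{\Phi_-^*}f||$ for all $f\in H^2_{\mathbb C^n}$, which yields a partial isometry $V$ with $H_{(\Phi_+^0)^*}=VH_{\Phi_-^*}$, and then to show that $\det\Phi_+\neq0$ makes $\hbox{cl\,ran}\,H_{(\Phi_+^0)^*}$ large enough for $V$ to be implemented by a constant unitary matrix; since this identity is also available in \cite{GHR}, one may alternatively simply quote it from there.
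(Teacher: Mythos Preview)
The paper does not supply a proof of this lemma at all; it is quoted verbatim from \cite{GHR}. So there is nothing to compare against, and your final suggestion---to cite \cite{GHR}---is precisely what the paper does.

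Your attempted self-contained proof is mostly sound but has the gap you yourself flag. The implication ``$\Longleftarrow$'' is correct as written. For ``$\Longrightarrow$'', the derivation of $(I-KK')\Phi\in H^\infty_{M_n}$, $(I-K'K)\Phi^*\in H^\infty_{M_n}$, and hence $(I-K'K)(\Phi_+^0)^*\in H^2_{M_n}$, is fine; however, the sentence ``since $\det\Phi_+\neq0$ \dots\ inserting this into the identities above forces $I-KK'=0=I-K'K$'' is not justified. Injectivity of $T_{\Phi_+}$ says nothing about $(I-K'K)(\Phi_+^0)^*\in H^2_{M_n}$, which is a statement about $\Phi_+^*$, not $\Phi_+$. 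Concretely, already in the scalar case with $\Phi_+=1+z$ one has $\det\Phi_+\not\equiv 0$ and $\Phi_+^0=z$, yet $L=z$ satisfies $L\,\overline{\Phi_+^0}=1\in H^2$ with $L\neq 0$. Thus the bare algebraic relation together with $\det\Phi_+\neq0$ cannot force $L=0$; one must exploit the extra structure, namely that $L=I-K'K$ with both $K,K'$ arising from the Cowen--type criterion and $\|K\|_\infty,\|K'\|_\infty\le1$, combined with the isometric identity $\|H_{\Phi_+^*}f\|=\|H_{\Phi_-^*}f\|$ that normality gives. The partial-isometry route you sketch at the end is indeed the way this is done in \cite{GHR}, but as you note it still needs $\det\Phi_+\neq0$ to ensure that the partial isometry is implemented by a constant unitary rather than something weaker, and this step is nontrivial.
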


\medskip

We then have:

\medskip

\begin{example} \label{ex5.9}
Let $\varphi, \psi \in L^{\infty}$ be of bounded type and consider
$$
\Phi:=\begin{pmatrix} {\overline b_\alpha} & \varphi\\ \psi&
{\overline b_\alpha}\end{pmatrix}\quad\hbox{($\alpha\in\mathbb
D$)}\,,
$$
where $b_\alpha$ is a Blaschke factor of the form
$b_\alpha(z):=\frac{z-\alpha}{1-\overline \alpha z}$. \ In view of
(\ref{2.2}) we may write
$$
\varphi_-:=\theta_0\overline{a}\quad\hbox{and}\quad \psi_-:=\theta_1
\overline{b}\quad\hbox{(coprime)}.
$$
If $\theta_0(\alpha)=0$ and $\theta_1(\alpha)\ne 0$, then $T_\Phi$
is never subnormal.
\end{example}

\begin{proof} \
Write
$$
\Phi\equiv \left(\begin{matrix} \overline b_\alpha&\varphi\\
\psi&\overline b_\alpha\end{matrix}\right) \equiv \Phi_-^*+\Phi_+
=\begin{pmatrix} b_\alpha& \psi_-\\
\varphi_-& b_\alpha\end{pmatrix}^{\ast}+\begin{pmatrix} 0&\varphi_+\\
\psi_+& 0\end{pmatrix}
$$
and assume that $T_\Phi$ is subnormal. \ Since by Lemma \ref{gu}
$\Phi$ is normal, a straightforward calculation shows that
\begin{equation}\label{9999}
|\varphi|=|\psi|.
\end{equation}
Also there exists a matrix function
$K\equiv \left(\begin{smallmatrix} k_1&k_2\\
k_3&k_4\end{smallmatrix}\right) \in \mathcal{E}(\Phi)$, i.e.,
$||K||_\infty\le 1$ such that $\Phi-K\Phi^*\in H^\infty_{M_2}$. \
Thus since $\Phi_-^*-K\Phi_+^*\in H^2_{M_2}$, and hence
$$
\begin{pmatrix} \overline b_\alpha&\overline{\theta}_0 a\\
\overline{\theta}_1 b &\overline b_\alpha
\end{pmatrix}-\begin{pmatrix}
k_2\overline{\varphi_+}&k_1\overline{\psi_+}\\k_4
\overline{\varphi_+}&k_3\overline{\psi_+}\end{pmatrix} \in
H^2_{M_2},
$$
we have
\begin{equation}\label{6.7-77}
\begin{cases}
\overline b_\alpha-k_2 \overline{\varphi_+}\in H^2,\quad
\overline{\theta}_1 b-k_4\overline{\varphi_+}\in H^2\\
\overline b_\alpha-k_3 \overline{\psi_+}\in H^2,\quad
\overline{\theta}_0 a-k_1\overline{\psi_+}\in H^2,
\end{cases}
\end{equation}
which via Cowen's Theorem gives that the following Toeplitz
operators are all hyponormal:
\begin{equation}\label{6.7-55}
T_{\overline b_\alpha+\varphi_+},\ \ T_{\overline{\theta}_1 b +
\varphi_+},\ \ T_{\overline b_\alpha+\psi_+},\ \
T_{\overline{\theta}_0 a + \psi_+}.
\end{equation}
Note that $\varphi_+\psi_+$ is not identically zero, so that
$\hbox{det}\,\Phi_+$ is not. Put
$$
\theta_0=b_{\alpha}^m\theta_0^{\prime}\quad (m\ge 1;\
\theta_0^{\prime}(\alpha)\ne 0).
$$
Note $a(\alpha)\ne 0$ because $\theta_0(\alpha)=0$ and $\theta_0$
and $a$ are coprime. \ Then a straightforward calculation  together
with (\ref{3.15}) shows that
\begin{equation}\label{6.7-p}
m=1.
\end{equation}
Thus we may write, by a direct calculation,
$$
\Phi_-=\begin{pmatrix} \theta_0^\prime & a \\ b_\alpha b &
\theta_1\end{pmatrix}^*
           \begin{pmatrix} \theta_0 &0\\ 0& b_\alpha \theta_1\end{pmatrix} \equiv
                B^*\Omega_2\ \ \hbox{(left coprime).}
$$
But since $\Omega_2\equiv \begin{pmatrix} \theta_0 &0\\
0& b_\alpha \theta_1\end{pmatrix}$ has a diagonal inner divisor
$I_{b_\alpha}$, it follows from Theorem \ref{thm2.2} that $T_\Phi$
is normal. Since $\hbox{det}\,\Phi_+\ne 0$, it follows form Lemma
\ref{lem1.3} that $\Phi_+^0=\Phi_-U$ for some
constant unitary matrix $U\equiv \left(\begin{smallmatrix} c_1&c_2\\
c_3&c_4\end{smallmatrix}\right)$. \ We observe
$$
\begin{aligned}
\Phi_+^0=\Phi_-U
      &\Longleftrightarrow \begin{pmatrix} 0& b_\alpha \theta_1\theta_3\overline d\\
                            \theta_0\theta_2\overline c&0\end{pmatrix}
                              = \begin{pmatrix} b_\alpha &\theta_1\overline b\\ \theta_0\overline a& b_\alpha\end{pmatrix}
                                  \begin{pmatrix} c_1&c_2\\ c_3&c_4\end{pmatrix}\\
      &\Longrightarrow
         \begin{cases} 0=c_1 b_\alpha+c_3\theta_1\overline b\\
                       0=c_4 b_\alpha+c_2\theta_0\overline a
         \end{cases}\\
      &\Longrightarrow
        \begin{cases} c_1=0,\ \theta_1\overline b=0\ \hbox{(i.e., $\theta_1=1$)}\\
                      c_4\ne 0,\ \theta_0=b_\alpha
        \end{cases}\\
      &\Longrightarrow U=\begin{pmatrix} 0&c_2\\ c_3&c_4\end{pmatrix}\quad(c_4\ne 0),
\end{aligned}
$$
which contradicts the fact that $U$ is unitary. \ Therefore $T_\Phi$
is never subnormal.
\end{proof}

\bigskip

%
%
%
%
%
%

\chapter{A subnormal Toeplitz completion}

\bigskip

In this chapter
we consider a subnormal ``Toeplitz" completion problem. \

Given a partially specified operator matrix with some known entries,
the problem of finding suitable operators to complete the given
partial operator matrix so that the resulting matrix satisfies
certain given properties is called a {\it completion problem}. \ A
{\it subnormal completion} of a partial operator matrix is a
particular specification of the unspecified entries resulting in a
subnormal operator. \ A {\it partial block Toeplitz matrix} is
simply an $n\times n$ matrix some of whose entries are specified
Toeplitz operators and whose remaining entries are unspecified. \ A
{\it subnormal Toeplitz completion} of a partial block Toeplitz
matrix is a subnormal completion whose unspecified entries must be
Toeplitz operators. \

We now consider:

\begin{problem}\label{probA} \
Let $b_\lambda$ be a Blaschke factor of the form
$b_\lambda(z):=\frac{z-\lambda}{1-\overline \lambda z}$ {\rm
(}$\lambda\in \mathbb D${\rm )}. \ Complete the unspecified Toeplitz
entries of the partial block Toeplitz matrix
$$
A:=\begin{pmatrix} T_{\overline b_\alpha} & ?\\ ?& T_{\overline
b_\beta}\end{pmatrix}\quad\hbox{{\rm (}$\alpha,\beta\in\mathbb
D${\rm )}}
$$
to make $A$ subnormal.
\end{problem}

\medskip

Recently, in \cite{CHL2}, we have considered Problem \ref{probA} for the cases
$\alpha=\beta=0$. \ The solution given in \cite[Theorem 5.1]{CHL2}
relies upon very intricate and long computations using the symbol
involved. \ However our solution in this chapter provides a
shorter and more insightful proof by employing the results of the
previous chapter. \

We now give an answer to Problem \ref{probA}:

\medskip

\begin{theorem} \label{thm4.2}
Let $\varphi, \psi \in L^{\infty}$ and consider
$$
A:=\begin{pmatrix} T_{\overline b_\alpha} & T_\varphi\\ T_\psi&
T_{\overline b_\beta}\end{pmatrix}\quad\hbox{{\rm
(}$\alpha,\beta\in\mathbb D${\rm )}}\,.
$$
The following statements are equivalent.
\begin{itemize}
\item[(a)] $A$ is normal;
\item[(b)] $A$ is subnormal;
\item[(c)] $A$ is $2$-hyponormal,
\end{itemize}
except in the following special case:
\begin{equation}\label{6.6-0}
\hbox{$\varphi_-=b_\alpha \theta_0^{\prime} \overline a$ and
$\psi_-=b_\alpha \theta_1^{\prime} \overline b$ {\rm (}coprime{\rm
)}\  with $(ab)(\alpha)=(\theta_0^{\prime}
\theta_1^{\prime})(\alpha)\ne 0$\,.}
\end{equation}
However, unless only one of $\theta_0^\prime$ and $\theta_1^\prime$
is constant, the exceptional case (\ref{6.6-0}) implies that if $A$
is subnormal then either $A$ is normal or $A-\beta$ is quasinormal
for some $\beta\in\mathbb C$.
\end{theorem}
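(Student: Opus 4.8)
The strategy is to reduce everything to the previous chapter's Abrahamse-type theorem (Theorem \ref{thm2.2}) and the normality criterion for block Toeplitz operators (Lemma \ref{lem1.3}). First I would dispose of the easy implications: (a)$\Rightarrow$(b)$\Rightarrow$(c) are automatic since a normal operator is subnormal and a subnormal operator is $2$-hyponormal. So the content is (c)$\Rightarrow$(a), under the proviso that the exceptional case (\ref{6.6-0}) is excluded. Writing $A=T_\Phi$ with
$$
\Phi=\begin{pmatrix} \overline b_\alpha & \varphi\\ \psi & \overline b_\beta\end{pmatrix},
$$
I would first observe that $2$-hyponormality of $T_\Phi$ forces hyponormality, so by Lemma \ref{gu} $\Phi$ is normal, and by Lemma \ref{gu3} (since $\Phi^*$ is patently of bounded type once $\varphi,\psi$ are taken of bounded type — which the hyponormality argument of Abrahamse, \cite[Lemma 6]{Ab}, guarantees entrywise) both $\Phi$ and $\Phi^*$ are of bounded type. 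Normality of $\Phi$ gives $|\varphi|=|\psi|$ together with the relations coupling $\varphi_+,\psi_+,\varphi_-,\psi_-$ exactly as in the proof of Example \ref{ex5.9}.

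Next I would extract the Douglas--Shapiro--Shields data. Using (\ref{2.2}) write $\varphi_-=\theta_0\overline a$, $\psi_-=\theta_1\overline b$ (coprime), and assemble a left coprime factorization $\Phi_-=B^*\Omega_2$ as in (\ref{5-0}); the inner part $\Omega_2$ will be (up to the coprime reduction) governed by $b_\alpha,b_\beta,\theta_0,\theta_1$. The key dichotomy is whether $\Omega_2$ has a nonconstant diagonal-constant inner divisor, equivalently (Lemma \ref{lem55.2-1}) whether $\Phi$ has a tensored-scalar singularity. In the generic situation it does — for instance the factor $b_\alpha$ (or $b_\beta$) appears in more than one ``slot'' of $\Omega_2$, or $\theta_0,\theta_1$ share a common inner factor with the Blaschke data — so Theorem \ref{thm2.2} applies and yields that $T_\Phi$ is normal, giving (a). One then checks, via Lemma \ref{lem1.3} (using $\det\Phi_+\ne 0$, which holds because $\varphi_+\psi_+\not\equiv 0$ in the non-degenerate regime), that normality is actually attained and is consistent. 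The exceptional configuration (\ref{6.6-0}) is precisely the case where this tensored-scalar divisor fails to exist: $\varphi_-=b_\alpha\theta_0'\overline a$, $\psi_-=b_\alpha\theta_1'\overline b$ with $(ab)(\alpha)=(\theta_0'\theta_1')(\alpha)\ne 0$, where the single $b_\alpha$ occupies ``different'' positions and cannot be pulled out diagonally.

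For the final assertion — in the exceptional case, provided not only one of $\theta_0',\theta_1'$ is constant, subnormality forces $A$ normal or $A-\beta$ quasinormal for some $\beta\in\mathbb C$ — I would argue as follows. Assume $T_\Phi$ subnormal but not normal. Run the argument of Example \ref{ex5.9}: from $K\in\mathcal E(\Phi)$ and the four hyponormal Toeplitz operators in (\ref{6.7-55}) one derives, via (\ref{3.15}) applied to the left coprime factorization, constraints on the orders of the Blaschke factors at $\alpha$; combined with the hypothesis that at least one of $\theta_0',\theta_1'$ is nonconstant, these constraints either (i) reinstate a diagonal-constant inner divisor of $\Omega_2$ — contradicting that we are in case (\ref{6.6-0}) or forcing normality through Theorem \ref{thm2.2} — or (ii) pin $\Phi$ down to the special shape $\Phi=\begin{pmatrix}\overline b_\alpha & \varphi\\ \psi & \overline b_\alpha\end{pmatrix}$ with $\varphi,\psi$ forced into a rigid form for which $T_\Phi-\beta$, after the scalar translation removing the constant term of the symbol, is quasinormal (i.e.\ $T_\Phi^*T_\Phi$ commutes with $T_\Phi$). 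Recognizing quasinormality concretely amounts to showing $[T_\Phi^*T_\Phi,T_\Phi]=0$, which one verifies by feeding the rigid symbol form back into (\ref{1.3}) and (\ref{2.3}); the surviving self-commutator becomes a rank-one-type expression compatible only with quasinormality.

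\textbf{Main obstacle.} The delicate point is the bookkeeping in the exceptional case: tracking exactly how the factor $b_\alpha$ distributes across the entries of $\Omega_2$ after the left coprime reduction, and showing that the only way to avoid a diagonal-constant inner divisor while retaining $2$-hyponormality (resp.\ subnormality) is the quasinormal (or normal) configuration. This requires a careful case analysis on which of $\theta_0',\theta_1'$ is/are constant and a precise use of the order estimate (\ref{6.7-p})-type computation from Example \ref{ex5.9}, now without the simplifying assumption $\alpha=\beta$. The rest — the equivalences (a)$\Leftrightarrow$(b)$\Leftrightarrow$(c) away from (\ref{6.6-0}) — follows cleanly from Theorem \ref{thm2.2} and Lemma \ref{lem1.3} once the symbol is normalized.
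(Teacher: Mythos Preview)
Your overall strategy for the non-exceptional case is correct and matches the paper: reduce to Theorem~\ref{thm2.2} by exhibiting a nonconstant diagonal-constant inner divisor of the inner part of $\Phi_-$. However, several concrete steps are missing or misread.

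First, you treat $\alpha=\beta$ as an extra assumption to be removed (``now without the simplifying assumption $\alpha=\beta$''). In fact the paper derives $\alpha=\beta$ immediately from the normality of $\Phi$ (a direct calculation on $\Phi^*\Phi=\Phi\Phi^*$); this is not optional. Second, you do not carry out the crucial order analysis: writing $\theta_0=b_\alpha^m\theta_0'$ and $\theta_1=b_\alpha^n\theta_1'$ with $\theta_i'(\alpha)\ne 0$, the paper shows (via the Example~\ref{ex5.9} computation and \eqref{3.15}) that only $m=n=0$ or $m=n=1$ are possible, and then splits $m=n=1$ further according to whether $(ab)(\alpha)=(\theta_0'\theta_1')(\alpha)$. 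In Case~A ($m=n=0$) and Case~B-1 ($m=n=1$, $(ab)(\alpha)\ne(\theta_0'\theta_1')(\alpha)$) one computes $\ker H_\Phi$ explicitly and sees $I_{b_\alpha}$ as a diagonal-constant divisor, so Theorem~\ref{thm2.2} applies. Your proposal gestures at this but does not isolate the trichotomy, which is where \eqref{6.6-0} emerges as the residual case.

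The substantive gap is your handling of Case~B-2 (the exceptional case). You propose to ``verify quasinormality by feeding the rigid symbol form back into \eqref{1.3}'', but this is not how the argument works, and you also misread the hypothesis: ``unless only one of $\theta_0',\theta_1'$ is constant'' excludes the case where \emph{exactly one} is constant, so under the hypothesis either both are constant or both are nonconstant. The paper's actual mechanism is: (i) from \eqref{3.15} and an explicit computation of $\hbox{ran}\,H_{A\Theta_2^*}$ on a test vector, derive scalar equations \eqref{6.7-10-10}--\eqref{6.7-10-11} for the entries of $K$; (ii) use these, together with \eqref{6.7-11-11}, to force $\theta_2=\theta_3=1$; (iii) show by a Maximum Modulus / nontangential-limit argument that if both $\theta_0',\theta_1'$ were nonconstant one reaches a contradiction, hence both are constant, i.e.\ $\theta_0=\theta_1=b_\alpha$; (iv) with this in hand, exhibit $\Theta_0$ with $\dim\mathcal H(\Theta_0)=1$ and conclude via \eqref{3.9} that $[T_\Phi^*,T_\Phi]$ has rank at most one; (v) apply Morrel's theorem (a hyponormal operator with rank-one self-commutator and $\ker[T^*,T]$ invariant for $T$ is quasinormal after a scalar translation) to obtain that $A-\beta$ is quasinormal. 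The rank-one reduction and the invocation of Morrel's theorem are the missing ideas in your sketch; without them the quasinormality conclusion does not follow.
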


\begin{proof} \
Clearly (a) $\Rightarrow$ (b) and (b) $\Rightarrow$ (c). \

(c) $\Rightarrow$ (a): \ Write
$$
\Phi\equiv \left(\begin{matrix} \overline b_\alpha&\varphi\\
\psi&\overline b_\beta\end{matrix}\right) \equiv \Phi_-^*+\Phi_+
=\begin{pmatrix} b_\alpha& \psi_-\\
\varphi_-& b_\beta\end{pmatrix}^{\ast}+\begin{pmatrix} 0&\varphi_+\\
\psi_+& 0\end{pmatrix}
$$
and assume that $T_\Phi$ is $2$-hyponormal. \ Since $\hbox{ker}\,
[T^*, T]$ is invariant under $T$ for every $2$-hyponormal operator
$T \in \mathcal{B}(\mathcal{H})$ (cf. \cite{CuL2}), we note that
Theorem \ref{thm2.2} holds for $2$-hyponormal operators $T_{\Phi}$
under the same assumption on the symbol. \ If $T_\Phi$ is hyponormal
then by Lemma \ref{gu} there exists a matrix function
$K\equiv \left(\begin{smallmatrix} k_1&k_2\\
k_3&k_4\end{smallmatrix}\right) \in H^\infty_{M_2}$ such that
$||K||_\infty\le 1$ and $\Phi-K\Phi^*\in H^\infty_{M_2}$, i.e.,
\begin{equation}\label{6.6-1}
\begin{pmatrix} \overline b_\alpha&\overline{\varphi_-}\\
\overline{\psi_-}& \overline b_\beta\end{pmatrix}\, -\, \begin{pmatrix} k_1&k_2\\
k_3&k_4\end{pmatrix}\, \left(\begin{matrix} 0&\overline {\psi_+}\\
\overline {\varphi_+}&0\end{matrix}\right) \in H^2_{M_2},
\end{equation}
which implies that
\begin{equation}\label{6.6-1-1}
H_{\overline b_\alpha}=H_{k_2\overline {\varphi_+}}=H_{\overline
{\varphi_+}}T_{k_2} \quad\hbox{and}\quad H_{\overline
b_\beta}=H_{k_3\overline{\psi_+}}=H_{\overline {\psi_+}}T_{k_3}.
\end{equation}
If $\overline{\varphi_+}$ is not of bounded type then
$\hbox{ker}\,H_{\overline{\varphi_+}}=\{0\}$, so that $k_2=0$, a
contradiction; and if $\overline{\psi_+}$ is not of  bounded type
then $\hbox{ker}\,H_{\overline{\psi_+}}=\{0\}$, so that $k_3=0$, a
contradiction. \ Thus since $\overline{\varphi_+}$ and
$\overline{\psi_+}$ are of bounded type, it follows that $\Phi^*$ of
bounded type. \ Since $T_{\Phi}$ is hyponormal, it follows from
(\ref{gu3}) that $\Phi$ is also of bounded type. \ Thus we can write
$$
\varphi_-\equiv \theta_0\overline{a}\quad\hbox{and}\quad \psi_-
\equiv \theta_1 \overline{b}\quad\hbox{(coprime)}.
$$
Since $\Phi$ is normal, i.e., $\Phi\Phi^*=\Phi^*\Phi$, a
straightforward calculation shows that $\alpha=\beta$. \ Thus by
(\ref{6.6-1}), we have
\begin{equation}\label{6.7-77-1}
\begin{cases}
\overline b_\alpha-k_2 \overline{\varphi_+}\in H^2,\quad
\overline{\theta}_0 a-k_1\overline{\psi_+}\in H^2\\
\overline b_\alpha-k_3 \overline{\psi_+}\in H^2,\quad

\overline{\theta}_1 b-k_4\overline{\varphi_+}\in H^2\, ,
\end{cases}
\end{equation}
which implies that the following Toeplitz operators are all
hyponormal (via Cowen's Theorem): \
\begin{equation}\label{6.7-55-1}
T_{\overline b_\alpha+\varphi_+},\ \ T_{\overline{\theta}_1 b +
\varphi_+},\ \ T_{\overline b_\alpha+\psi_+},\ \
T_{\overline{\theta}_0 a + \psi_+}.
\end{equation}
Put
$$
\theta_0=b_{\alpha}^m\theta_0^{\prime}\quad\hbox{and}\quad
\theta_1=b_{\alpha}^n \theta_1^{\prime}\qquad (m,n\ge 0;\
\theta_0^{\prime}(\alpha)\ne 0,\ \theta_1^{\prime}(\alpha)\ne 0).
$$
By Example \ref{ex5.9}, if $m\ne 0$ and $n=0$ then we get a
contradiction. \ Also a similar argument to Example \ref{ex5.9}
shows that
$$
\hbox{either $m=n=0$ or $m=n=1$}.
$$
Thus we have to consider the case $m=n=0$ and the case $m=n=1$. \

\medskip

\noindent  {\bf Case A} ($m=n=0$):  A straightforward calculation
shows that $\ker H_\Phi=\Theta H^2_{\mathbb C^2}$, where
$$
\Theta\equiv \begin{pmatrix} b_\alpha\theta_1&0\\
0&b_\alpha\theta_0\end{pmatrix}.
$$
Since $\Theta$ has a diagonal-constant inner divisor $I_{b_\alpha}$,
it follows from Lemma \ref{lem55.2-1} and Theorem \ref{thm2.2} that
$T_\Phi$ is normal. \

\bigskip

%
%

\noindent {\bf Case B-1} ($m=n=1$; $(ab)(\alpha)\ne
(\theta_0^\prime\theta_1^\prime)(\alpha)$): A straightforward
calculation shows that $\ker H_\Phi=\Theta H^2_{\mathbb C^2}$, where
$$
\Theta\equiv \begin{pmatrix} b_\alpha\theta_1^\prime&0\\
0&b_\alpha\theta_0^\prime\end{pmatrix}.
$$
Since $\Theta$ has a diagonal-constant inner divisor $I_{b_\alpha}$,
it follows from Lemma \ref{lem55.2-1} and Theorem \ref{thm2.2} that
$T_\Phi$ is normal. \

\bigskip

%
%

\noindent {\bf Case B-2} ($m=n=1$; $(ab)(\alpha)=(\theta_0^{\prime}
\theta_1^{\prime})(\alpha)$): A straightforward calculation shows
that

$$
\hbox{ker} H_{\widetilde\Phi} =\widetilde \Theta_2 H^2_{\mathbb C^2}
,
$$
where
$$
\Theta_2:= \nu  \begin{pmatrix}
            \theta_0 & -\overline \gamma\theta_1 \\
               {\gamma}\theta_0^\prime & \theta_1^\prime \end{pmatrix}
                \quad  ({\gamma}=-\frac{b(\alpha)}{\theta_0^{\prime}(\alpha)}
                    =-\frac{\theta_1^{\prime}(\alpha)}{a(\alpha)}; \ \nu:= \frac{1}{\sqrt{|\gamma|^2+1}} )
$$
Thus we can write
$$
\widetilde\Phi_- =
\begin{pmatrix}
\widetilde b_\alpha & \widetilde\theta_0 \overline{\widetilde a}\\
\widetilde\theta_1 \overline{\widetilde b} & \widetilde b_\alpha
\end{pmatrix}
=\widetilde \Theta_2 \widetilde B^*\quad \hbox{(right coprime)},
$$
so that
\begin{equation}\label{6.8-90}
\Phi_-=
\begin{pmatrix}
b_\alpha &\theta_1 \overline b\\ \theta_0\overline a &b_\alpha
\end{pmatrix} =B^*\Theta_2\quad\hbox{(left coprime)}\,.
\end{equation}
By  a scalar-valued version of (\ref{3.222-1}) and (\ref{6.7-55-1}),
we can see that $\varphi_+=\theta_1 \theta_3\overline{d}$ and
$\psi_+=\theta_0\theta_2\overline{c}$ for some inner functions
$\theta_2,\theta_3$, where $d\in\mathcal{H} ({z\theta_1 \theta_3})$
and $c\in \mathcal H ({z\theta_0\theta_2})$. \ Thus,  in particular,
$c(\alpha)\ne 0$ and $d(\alpha)\ne 0$. \ Let
$$
\theta_2=b_\alpha^q \theta_2^\prime\ \ \hbox{and}\ \
\theta_3=b_\alpha^p \theta_3^\prime\quad(\hbox{where}\
\theta_2^\prime(\alpha)\ne 0\ne \theta_3^\prime(\alpha)).
$$
If we write $\Phi_+=\Theta_0\Theta_2 A^*$
($\Theta_0\Theta_2=I_\theta$ for an inner function $\theta$), then a
straightforward calculation shows that
$$
A= \begin{pmatrix} 0&\theta_1^\prime\theta_3^\prime c\\
            \theta_0^\prime\theta_2^\prime d &0 \end{pmatrix}.
$$
We thus have
\begin{equation}\label{6.8-899}
H_{A\Theta_2^*} \begin{pmatrix} 1\\0\end{pmatrix}
      =\nu \begin{pmatrix}-{\gamma} H_{\overline b_{\alpha}}(\theta_3^{\prime}c)\\
           H_{\overline b_{\alpha}}(\theta_2^{\prime}d)\end{pmatrix}.
\end{equation}
It was known that $\hbox{ran}\, H_{\overline{b}_{\alpha}}=\mathcal H
({b_{\overline{\alpha}}})=\bigvee \{\delta_1\}$ (where $\delta_1:=
\frac{\sqrt{1-|\alpha|^2}}{1-{\alpha}z}$). \ It thus follows from
(\ref{6.8-899}) that
\begin{equation}\label{6.7-81}
\begin{pmatrix} \delta_1\\ \beta \delta_1  \end{pmatrix} \in\hbox{cl ran}\, H_{A\Theta_2^*}\subseteq
\hbox{ker}\,(I-T_{\widetilde{K}}T_{\widetilde{K}}^*),
\end{equation}
where $\beta\ne 0$. \ We observe that if $k\in H^2$, then
\begin{equation}\label{6.7-80}
T_{k(\overline{z})}\delta_1=k(\alpha)\delta_1:
\end{equation}
indeed, if $k\in H^2$ and $n\geq 0$, then
$$
\langle k(\overline{z})\delta_1, \ z^n \rangle=\langle \delta_1, \
\overline{k(\overline{z})}z^n \rangle = \overline{\langle
\widetilde{k}z^n, \ \delta_1
\rangle}=\sqrt{1-|\alpha|^2}\overline{\widetilde{k(\overline{\alpha})}
      \overline{\alpha}^n}=\sqrt{1-|\alpha|^2}k(\alpha)\alpha^n\,,
$$
so that
$$
T_{k(\overline{z})}\delta_1=P(k(\overline{z})\delta_1)
=\sqrt{1-|\alpha|^2}k(\alpha)\sum_{n=0}^{\infty}\alpha^nz^n=k(\alpha)\frac{\sqrt{1-|\alpha|^2}}{1-\alpha
z}=k(\alpha)\delta_1\,,
$$
which proves (\ref{6.7-80}). \ Thus a straightforward calculation
together with (\ref{6.7-81}) shows that
\begin{equation}\label{6.7-10-10}
\alpha_1k_1+\alpha_2k_3=1 \quad \hbox{and} \quad
\alpha_1k_2+\alpha_2k_4=\overline{\beta}\,,
\end{equation}
where $\alpha_1=\overline{k_1(\alpha)+\beta k_2(\alpha)}$ and
$\alpha_2=\overline{k_3(\alpha)+\beta k_4(\alpha)}$. \ We also have,
from (\ref{6.7-81}),
\begin{equation}\label{6.7-8-1}
     \left|\left|\begin{pmatrix} \delta_1\\ \beta\delta_1\end{pmatrix}\right|\right|_2
       = \left|\left|T_{\widetilde{K}}^*\begin{pmatrix} \delta_1\\ \beta\delta_1\end{pmatrix}\right|\right|_2
           =\left|\left| \begin{pmatrix} (k_1(\alpha)+\beta k_2(\alpha))\delta_1\\ (k_3(\alpha)
             +\beta k_4(\alpha))\delta_1\end{pmatrix}\right|\right|_2
           =\left|\left| \begin{pmatrix}  \overline{\alpha_1}\delta_1\\ \overline{\alpha_2}\delta_1 \end{pmatrix}\right|\right|_2\,,
\end{equation}
which implies
\begin{equation}\label{6.7-10-11}
1+|\beta|^2= |\alpha_1|^2+|\alpha_2|^2.
\end{equation}
From (\ref{6.7-77-1}), we can see that
\begin{equation}\label{6.7-11-11}
k_1=\theta_2k_1^{\prime}, \
k_2=\theta_1^{\prime}\theta_3k_2^{\prime}, \ k_3=\theta_0^{\prime}
\theta_2k_3^{\prime}, \ k_4=\theta_3k_4^{\prime},
\end{equation}
where $k_i^{\prime} \in H^{\infty}$ for $i=1,\cdots,4$. \ Thus by
(\ref{6.7-10-10}) and (\ref{6.7-11-11}), we can see that $\theta_2$
and $\theta_3$ are both constant. \ Without loss of generality, we
may assume that $\theta_2=\theta_3=1$. \ We next claim that
\begin{equation}\label{6.7-11-4}
\theta_0=\theta_1=b_\alpha,\ \ \hbox{i.e.,}\ \
\hbox{$\theta_0^\prime$ and $\theta_1^\prime$ are both constant.}
\end{equation}
By our assumption, if $\theta_0^\prime$ or $\theta_1^\prime$ is
constant then
         both $\theta_0^\prime$ and $\theta_1^\prime$ are constant. \
First of all, suppose that both $\theta_0^\prime$ and
$\theta_1^\prime$ have nonconstant Blaschke factors. \ Thus there
exist $v,w\in\mathbb D$ such that $\theta_0'(v)=0=\theta_1'(w)$. \
But since $k_3=\theta_0'k_3'$ and $k_2=\theta_1'k_2'$, it follows
from (\ref{6.7-10-10}) that
\begin{equation}\label{6.7-11-5}
k_1(v)=\frac{1}{\alpha_1}\quad\hbox{and}\quad
k_4(w)=\frac{\overline\beta}{\alpha_2}
\end{equation}
(where we note that $\alpha_1\ne 0$ and $\alpha_2\ne 0$). \ Observe
that $|k_1(v)|=1=|k_4(w)|$: indeed, if $|k_1(v)|<1$, then
$|\alpha_1|>1$, so that by (\ref{6.7-10-11}), $|\alpha_2|<|\beta|$,
which implies $|k_4(w)|>1$, which contradicts the fact
$||K||_\infty\le 1$. If instead $|k_4(w)|<1$, we similarly get a
contradiction. \ Since $||k_1||_\infty\le 1$ and $||k_4||_\infty\le
1$, it follows from the Maximum Modulus Theorem that $k_1$ and $k_4$
are both constant, i.e., \
\begin{equation}\label{6.7-11-6}
k_1=\frac{1}{\alpha_1}\quad\hbox{and}\quad
k_4=\frac{\overline\beta}{\alpha_2}.
\end{equation}
Then from (\ref{6.7-10-10}), we should have $k_2=k_3=0$, which leads
to a contradiction, using (\ref{6.7-77-1}). \

Next, we assume that $\theta_0^\prime$ has a nonconstant Blaschke
factor and $\theta_1'$ has a nonconstant singular inner factor. \
Since $\theta_0'$ has a nonconstant Blaschke factor,
$$
\exists \ w\in\mathbb D\ \hbox{such that}\ \theta_0'(w)=0,\ \hbox{so
that by (\ref{6.7-11-11})},\ k_3(w)=0.
$$
Thus by (\ref{6.7-10-10}), $k_1(w)=\frac{1}{\alpha_1}$. \ But since
$|k_1(w)|<1$ (otherwise $k_1$ would be constant, so that $k_3\equiv
0$, a contradiction from (\ref{6.7-77-1})), it follows that
$1<|\alpha_1|$. \ Thus by (\ref{6.7-10-11}),
\begin{equation}\label{6.8-0}
|\alpha_2|<|\beta|.
\end{equation}
On the other hand, since $\theta_1'$ has a nonconstant singular
inner factor, we can see that there exists $\delta\in [0,2\pi)$ such
that $\theta_1'$ has nontangential limit $0$ at $e^{i\delta}$ (cf.
\cite[Theorem II.6.2]{Ga}). \ Thus  by (\ref{6.7-11-11}), $k_2$ has
nontangential limit $0$ at $e^{i\delta}$ and in turn, by
 (\ref{6.7-10-10}),  $k_4$ has nontangential limit $\frac{\overline\beta}{\alpha_2}$ at $e^{i\delta}$. \
But since $||k_4||_\infty\le 1$, it follows that
$\left|\frac{\overline\beta}{\alpha_2}\right|\le 1$, i.e.,
$|\beta|\le |\alpha_2|$, which contradicts (\ref{6.8-0}). \

Next, if both $\theta_0^\prime$ and $\theta_1^\prime$ have
nonconstant singular factors then a similar argument gives a
contradiction. \ This proves (\ref{6.7-11-4}). \


Now, in view of (\ref{6.8-90}) and (\ref{6.7-11-4}), we may write
$$
\Phi_-=\begin{pmatrix} b_\alpha & b_\alpha \overline b\\ b_\alpha
\overline a& b_\alpha\end{pmatrix}=B^*\Theta_2
      \ \ \hbox{(left coprime),}
\ \ \hbox{where} \
\Theta_2:=\nu\begin{pmatrix} b_\alpha &-\overline{\gamma}b_\alpha\\
\gamma&1\end{pmatrix}.
$$
We can also write
$$
\Phi_+ = \begin{pmatrix} 0& b_\alpha\overline d\\ b_\alpha \overline
c&0\end{pmatrix}
    = A^*\Theta_0\Theta_2 =A^* \left( \nu \begin{pmatrix}1&\overline{\gamma}b_\alpha\\ -{\gamma}&b_\alpha\end{pmatrix}\right)
              \left(\nu \begin{pmatrix} b_\alpha&-\overline{\gamma}b_\alpha\\ \gamma &1\end{pmatrix}\right)\,,
$$
where $\Theta_0:=\nu \begin{pmatrix} 1&\overline{\gamma}b_\alpha\\
-{\gamma}&b_\alpha\end{pmatrix}$ and $\Theta_0\Theta_2= b_\alpha
I_2$. \ Then by (\ref{3.9}),
\begin{equation}\label{6.7-17}
\Theta_0 H^2_{\mathbb C^2} \subseteq \hbox{ker}\, [T_\Phi^*,
T_\Phi]\,,\quad\hbox{so that}\quad \hbox{ran}\,[T_\Phi^*,
T_\Phi]\subseteq \mathcal{H} ({\Theta_0})\,.
\end{equation}
Since $\hbox{dim}\, \mathcal{H} ({\Theta_0})=1$, it follows that
$$
\hbox{ran}\,[T_\Phi^*, T_\Phi]=\{0\} \ \ \hbox{or}\ \
\hbox{ran}\,[T_\Phi^*, T_\Phi]=\mathcal{H} ({\Theta_0}).
$$
If $\hbox{ran}\,[T_\Phi^*, T_\Phi]=\{0\}$, then evidently $T_\Phi$
is normal. \ Suppose $\hbox{ran}\,[T_\Phi^*, T_\Phi]=\mathcal{H}
({\Theta_0})$. \ We recall a well-known result of B. Morrel
(\cite{Mor}; \cite[p.162]{Con}). If $T\in\mathcal{B(H)}$ satisfies
the following properties: (i) $T$ is hyponormal; (ii) $[T^*,T]$ is
rank-one; and (iii) $\hbox{ker}\,[T^*,T]$ is invariant for $T$, then
$T-\beta$ is quasinormal for some $\beta\in\mathbb C$, i.e.,
$T-\beta$ commutes with $(T-\beta)^*(T-\beta)$. \ Since $T_\Phi$
satisfies the above three properties, we can conclude that
$T_{\Phi-\beta}$ is quasinormal for some $\beta\in\mathbb C$. \
\end{proof}

On the other hand, due to a technical problem, we omitted a detailed
proof for the case B-2 from the proof of \cite[Theorem 5.1]{CHL2}. \
The proof of the case B-2 (with $\alpha=0$) in the proof of Theorem
\ref{thm4.2} above provides the portion of the proof that did not
appear in \cite{CHL2}. \ In particular, Theorem \ref{thm4.2}
incorporates  an extension of a corrected version of \cite[Theorem
5.1]{CHL2}, in which the exceptional case (\ref{6.6-0}) was
inadvertently omitted. \

\begin{corollary} \label{cor6.2}
If
$$
T_\Phi:=\begin{pmatrix} T_{\overline b_\alpha} & T_\varphi\\ T_\psi&
T_{\overline b_\beta}\end{pmatrix}\quad\hbox{{\rm
(}$\alpha,\beta\in\mathbb D$; $\varphi,\psi\in L^\infty${\rm )}}\,,
$$
then $T_\Phi$ is subnormal if and only if $\alpha=\beta$ and one of
the following holds:
\begin{itemize}
\item[(a)] $\varphi=e^{i\theta} b_\alpha + \zeta$\quad and\quad $\psi=e^{i\omega}\varphi$\quad
\hbox{\rm ($\zeta\in\mathbb C$; $\theta,\omega\in [0,2\pi)$);}
\item[(b)] $\varphi=\mu\, \overline{b_\alpha} + e^{i\theta}\sqrt{1+|\mu|^2}\,b_\alpha + \zeta$\quad and\quad
$\psi=e^{i\,(\pi-2\,{\rm arg}\,\mu)}\varphi$ \ {\rm
(}$\mu,\zeta\in\mathbb C$, $|\mu|\ne 0,1$, $\theta\in [0,2\pi)${\rm
)},
\end{itemize}
except the following special case:
\begin{equation}\label{6.6-0-1}
\hbox{$\varphi_-=b_\alpha \theta_0^{\prime} \overline a$ and
$\psi_-=b_\alpha \theta_1^{\prime} \overline b$ {\rm (}coprime{\rm
)}\  with $(ab)(\alpha)=(\theta_0^{\prime}
\theta_1^{\prime})(\alpha)\ne 0$\,.}
\end{equation}
\end{corollary}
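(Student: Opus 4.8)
The plan is to deduce Corollary \ref{cor6.2} directly from Theorem \ref{thm4.2} by unwinding the structure of subnormal $2\times 2$ block Toeplitz matrices $T_\Phi$ with $\Phi=\left(\begin{smallmatrix}\overline b_\alpha&\varphi\\\psi&\overline b_\beta\end{smallmatrix}\right)$. By Theorem \ref{thm4.2}, except in the exceptional case (\ref{6.6-0-1}) (which is exactly (\ref{6.6-0})), subnormality of $T_\Phi$ is equivalent to normality of $T_\Phi$; so outside the exceptional case it suffices to characterize normality. First I would record that normality of $T_\Phi$ forces, via the Brown--Halmos theorem applied entrywise together with $\Phi\Phi^*=\Phi^*\Phi$, that $\alpha=\beta$ (this computation already appears inside the proof of Theorem \ref{thm4.2}, Case analysis). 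So assume $\alpha=\beta$.

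Next I would translate the normality condition into the stated forms (a) and (b). Since $T_\Phi$ normal implies $T_\Phi$ hyponormal, Lemma \ref{gu} gives that $\Phi$ is normal and $\mathcal E(\Phi)\ne\emptyset$, and the argument in the proof of Theorem \ref{thm4.2} shows $\Phi$ and $\Phi^*$ are of bounded type. Writing $\varphi_-=\theta_0\overline a$, $\psi_-=\theta_1\overline b$ (coprime) and using the computations in Cases A and B-1 of that proof, the only possibilities for normality (outside the exceptional case) are $\theta_0=\theta_1=b_\alpha$ or $\theta_0,\theta_1$ such that $\Theta$ has the diagonal-constant divisor $I_{b_\alpha}$; then Lemma \ref{lem1.3} (normality of block Toeplitz operators) gives $\Phi_+^0=\Phi_-U$ for a constant unitary $U=\left(\begin{smallmatrix}c_1&c_2\\c_3&c_4\end{smallmatrix}\right)$. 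I would then solve this matrix equation componentwise: the off-diagonal part yields $\varphi_+$ and $\psi_+$ as explicit combinations of $b_\alpha$ and the entries of $U$, and the diagonal equations $0=c_1 b_\alpha+c_3\psi_-$, $0=c_4 b_\alpha+c_2\varphi_-$ pin down $\psi_-$ and $\varphi_-$ to be scalar multiples of $\overline{b_\alpha}$ or $0$. Enforcing unitarity of $U$ (so $|c_1|^2+|c_2|^2=|c_3|^2+|c_4|^2=1$, $\overline c_1 c_3+\overline c_2 c_4=0$) then sorts the solutions into exactly two families: either $c_2=c_3=0$, giving $\varphi=e^{i\theta}b_\alpha+\zeta$ and $\psi=e^{i\omega}\varphi$ (case (a), analytic-perturbation case where $\varphi_-=\psi_-=0$); or both off-diagonal entries of $U$ are nonzero, forcing $\varphi_-=\mu\overline{b_\alpha}$ with $|\mu|\ne 0$, and a bookkeeping of moduli and arguments gives the coefficient $\sqrt{1+|\mu|^2}$ on $b_\alpha$ and the relation $\psi=e^{i(\pi-2\arg\mu)}\varphi$ (case (b); the constraint $|\mu|\ne 1$ is precisely what keeps us out of the exceptional case). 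Conversely, for any $\varphi,\psi$ of the forms (a) or (b) with $\alpha=\beta$, a direct check that $\Phi$ is normal and that $\Phi_+^0=\Phi_-U$ holds with a unitary $U$ shows, via Lemma \ref{lem1.3}, that $T_\Phi$ is normal, hence subnormal.

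The main obstacle I anticipate is the bookkeeping in the second family: carefully extracting from $\Phi_+^0=\Phi_-U$ plus unitarity of $U$ the exact normalization $e^{i\theta}\sqrt{1+|\mu|^2}$ and the precise phase $e^{i(\pi-2\arg\mu)}$ relating $\psi$ to $\varphi$, while simultaneously confirming that the degenerate sub-cases (one of $\theta_0^\prime,\theta_1^\prime$ nonconstant, or $|\mu|=1$) are exactly the ones excluded by the exceptional hypothesis (\ref{6.6-0-1}) — these are handled by the argument in Case B-2 of the proof of Theorem \ref{thm4.2}, which already establishes $\theta_0=\theta_1=b_\alpha$ and $\theta_2=\theta_3=1$ under the non-exceptional assumption. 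A secondary point to be careful about is that in the exceptional case (\ref{6.6-0-1}) we only get ``$A$ normal or $A-\beta$ quasinormal'' from Theorem \ref{thm4.2}, which is why the corollary's equivalence is stated modulo that exceptional case; I would simply quote Theorem \ref{thm4.2} verbatim there rather than re-deriving it.
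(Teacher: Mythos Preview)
Your approach is essentially the paper's: invoke Theorem~\ref{thm4.2} to reduce subnormality to normality (outside the exceptional case), then apply Lemma~\ref{lem1.3} to get $\Phi_+^0=\Phi_-U$ with $U$ unitary and read off the two families componentwise. The paper organizes this according to Cases~A ($m=n=0$) and B-1 ($m=n=1$, non-exceptional) from the proof of Theorem~\ref{thm4.2} rather than by which entries of $U$ vanish, but the content is the same.

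One genuine slip: in the family leading to~(a) you write ``$c_2=c_3=0$'', but that case is impossible --- plugging $c_2=c_3=0$ into your own diagonal equations $0=c_1 b_\alpha+c_3\psi_-$ and $0=c_4 b_\alpha+c_2\varphi_-$ forces $c_1=c_4=0$ as well, so $U=0$. What actually happens in Case~A is that $c_1=c_4=0$ (the \emph{diagonal} of $U$ vanishes) and $\varphi_-=\psi_-=0$, whence unitarity gives $|c_2|=|c_3|=1$ and the off-diagonal equations yield $\varphi_+^0=c_2 b_\alpha$, $\psi_+^0=c_3 b_\alpha$. A secondary point: in Case~B-1 the paper derives $\nu=-\overline\mu$ and the modulus $\sqrt{1+|\mu|^2}$ from the Hankel self-commutator identity $H_{\Phi^*}^*H_{\Phi^*}=H_\Phi^*H_\Phi$ (equation~(\ref{6.17})) and then invokes $|\varphi|=|\psi|$ for the final phase relation, rather than extracting everything from $\Phi_+^0=\Phi_-U$; your unitarity-of-$U$ route also yields $\nu=-\overline\mu$ (from row orthogonality) and the correct moduli, but you will still need $|\varphi|=|\psi|$ at the end to match the constant terms and tie down $\psi=e^{i(\pi-2\arg\mu)}\varphi$.
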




\begin{proof}
We use the notations of the proof of Theorem \ref{thm4.2}. \ From
the viewpoint of the proof of Theorem \ref{thm4.2}, we should
consider Case A and Case B-1. \

\medskip

\noindent  {\bf Case A} ($m=n=0$): Since $T_\Phi$ is normal and
$\hbox{det}\,\Phi_+\ne 0$, it follows form Lemma \ref{lem1.3} that
$\Phi_+^0=\Phi_-U$ for some
constant unitary matrix $U\equiv \left(\begin{smallmatrix} c_1&c_2\\
c_3&c_4\end{smallmatrix}\right)$. We observe
\begin{equation}\label{6.7.81}
\begin{aligned}
\Phi_+^0=\Phi_-U
      &\Longleftrightarrow \begin{pmatrix} 0& \theta_1\theta_3\overline d\\
                            \theta_0\theta_2\overline c&0\end{pmatrix}
                              = \begin{pmatrix} b_\alpha &\theta_1\overline b\\ \theta_0\overline a& b_\alpha\end{pmatrix}
                                  \begin{pmatrix} c_1&c_2\\ c_3&c_4\end{pmatrix}\\
      &\Longleftrightarrow
         \begin{cases} 0=c_1 b_\alpha+c_3\theta_1\overline b\\
                       0=c_4 b_\alpha+c_2\theta_0\overline a\\
                       \theta_1\theta_3\overline d=c_2 b_\alpha + c_4\theta_1\overline b\\
                       \theta_0\theta_2\overline c=c_3 b_\alpha + c_1\theta_0\overline a
         \end{cases}\\
      &\Longrightarrow
        \begin{cases} c_1=0,\ \theta_1\overline b=0\\
                      c_4=0,\ \theta_0 \overline a=0\\
                      \theta_1\theta_3 \overline d =c_2 b_\alpha\\
                      \theta_0\theta_2 \overline c= c_3 b_\alpha\,.
         \end{cases}
\end{aligned}
\end{equation}
Since $U$ is unitary we have $c_2=e^{i\omega_1}$ and
$c_3=e^{i\omega_2}$ ($\omega_1,\omega_2\in [0,2\pi)$). Thus we have
$$
\varphi=e^{i\omega_1} b_\alpha + \beta_1\quad\hbox{and}\quad
\psi=e^{i\omega_2} b_\alpha +\beta_2.
$$
But since $|\varphi|=|\psi|$, it follows that
$$
\varphi=e^{i\theta} b_\alpha + \zeta\quad\hbox{and}\quad
\psi=e^{i\omega}\varphi
          \quad\hbox{($\theta,\omega\in [0,2\pi$, $\zeta\in\mathbb C$))}.
$$

\medskip

\noindent {\bf Case B-1} ($m=n=1$; $(ab)(\alpha)\ne
(\theta_0^\prime\theta_1^\prime)(\alpha)$): \ Since $T_\Phi$ is
normal, it follows from the same argument as in (\ref{6.7.81}), we
can see that
$$
\theta_2=\theta_3=1\quad\hbox{and}\quad
\theta_0^\prime=\theta_1^\prime =1,
$$
in other words,
$$
\varphi_+=m_1 b_\alpha +\beta_1,\ \ \psi_+=m_2 b_\alpha +\beta_2, \
\ \varphi_-=\nu b_\alpha,\ \ \psi_-=\mu b_\alpha
$$
($m_1,m_2,\beta_1,\beta_2,\mu,\nu\in\mathbb C$). \ Thus we can write
$$
\Phi_+=\begin{pmatrix}0&\varphi_+\\ \psi_+&0
\end{pmatrix}\quad\hbox{and}\quad
\Phi_-^*=\begin{pmatrix}\overline{b}_\alpha & \mu\overline b_\alpha\\
\nu \overline b_\alpha & \overline b_\alpha
\end{pmatrix}\quad (\mu\ne 0\ne \nu).
$$
Since $T_\Phi$ is normal we have
$$
\begin{pmatrix}
H_{\overline{\varphi_+}}^*H_{\overline{\varphi_+}}&0\\0&H_{\overline{\psi_+}}^*H_{\overline{\psi_+}}
\end{pmatrix}=\begin{pmatrix}
(1+|\nu|^2)H_{\overline b_\alpha}&(\mu + \overline{\nu})H_{\overline b_\alpha}\\
(\overline{\mu}+\nu)H_{\overline b_\alpha} & (1+|\mu|^2)H_{\overline
b_\alpha}\end{pmatrix},
$$
which implies that
\begin{equation}\label{6.17}
\begin{cases}
\nu=-\overline{\mu}\\
H_{\overline{\varphi_+}}^*H_{\overline{\varphi_+}}=(1+|\nu|^2)H_{\overline b_\alpha}\\
H_{\overline{\psi_+}}^*H_{\overline{\psi_+}}=(1+|\mu|^2)H_{\overline
b_\alpha}.
\end{cases}
\end{equation}
By the case assumption, $1\ne |ab|=|\mu\nu|=|\mu|^2$, i.e.,
$|\mu|\ne 1$. We thus have
$$
\varphi_+=e^{i\theta_1}\sqrt{1+|\mu|^2}\,b_\alpha  +
\beta_1\quad\hbox{and}\quad
\psi_+=e^{i\theta_2}\sqrt{1+|\mu|^2}\,b_\alpha + \beta_2,
$$
($\beta_1,\beta_2\in\mathbb C;\ \theta_1,\theta_2\in [0,2\pi)$). \
Since $|\varphi|=|\psi|$, a straightforward calculation shows that
\begin{equation}\label{6.18}
\varphi=\mu\, \overline b_\alpha + e^{i
\theta}\sqrt{1+|\mu|^2}\,b_\alpha + \zeta \quad\hbox{and}\quad
\psi=e^{i\,(\pi-2\,{\rm arg}\,\mu)}\varphi,
\end{equation}
where $\mu\ne 0,\ |\mu|\ne 1,\ \zeta\in\mathbb C$, and $\theta\in
[0,2\pi)$. \
\end{proof}

%
%
%
%
%
%

\chapter{Hyponormal Toeplitz pairs}

\bigskip

In this chapter, we consider (jointly) hyponormal
Toeplitz pairs with matrix-valued bounded type symbols. \
In \cite{CuL1}, the authors studied hyponormality of
pairs of Toeplitz operators (called Toeplitz pairs) when both
symbols are trigonometric polynomials. \
The core of the main result
of \cite{CuL1} is that the  hyponormality of $\mathbf{T}\equiv
(T_\varphi, T_\psi)$ ($\varphi, \psi$ trigonometric polynomials)
forces that the co-analytic parts of $\varphi$ and $\psi$
necessarily coincide up to a constant multiple, i.e.,
\begin{equation}\label{5.1*}
\varphi-\beta\psi\in H^2\ \ \hbox{for some $\beta\in\mathbb{C}$}.
\end{equation}
In \cite{HL4}, (\ref{5.1*}) was extended for Toeplitz pairs whose symbols are
rational functions with some constraint. \
As a result, the following question arises at once: Does (\ref{5.1*})
still hold for Toeplitz pairs whose symbols are {\it matrix-valued}
trigonometric polynomials or rational functions? \
This chapter is concerned with this question. \ More generally, we give
a characterization of hyponormal
 Toeplitz pairs with matrix-valued bounded type symbols  by using the theory established
in the previous chapters. \ Consequently, we will show that
(\ref{5.1*}) is
still true for matrix-valued trigonometric polynomials under some
invertibility and commutativity assumptions on the Fourier
coefficients of the symbols (those assumptions always hold vacuously
for scalar-valued cases). \
Moreover, we give a characterization of the
(joint) hyponormality of Toeplitz pairs with bounded type symbols,
consider the
self-commutators of the Toeplitz pairs with matrix-valued rational
symbols, and derive rank formulae for them. \

\medskip

We first observe that if $\mathbf {T}=(T_\varphi, T_\psi)$ then the
self-commutator of $\mathbf{T}$ can be expressed as:
\begin{equation}\label{5.2}
[\mathbf{T}^*, \mathbf{T}]=
\begin{pmatrix}
\hbox{$[T_{\varphi}^*, T_{\varphi}]$}&
\hbox{$[T_{\psi}^*, T_{\varphi}]$}\\
\hbox{$[T_{\varphi}^*, T_{\psi}]$}& \hbox{$[T_{\psi}^*, T_{\psi}]$}
\end{pmatrix}
=\begin{pmatrix} H_{\overline{\varphi_+}}^*
H_{\overline{\varphi_+}}-H_{\overline{\varphi_-}}^*
H_{\overline{\varphi_-}} &H_{\overline{\varphi_+}}^*
H_{\overline{\psi_+}}
       -H_{\overline{\psi_-}}^* H_{\overline{\varphi_-}}\\
H_{\overline{\psi_+}}^*
H_{\overline{\varphi_+}}-H_{\overline{\varphi_-}}^*
H_{\overline{\psi_-}} &H_{\overline{\psi_+}}^*
H_{\overline{\psi_+}}-H_{\overline{\psi_-}}^* H_{\overline{\psi_-}}
\end{pmatrix}.
\end{equation}
The hyponormality of Toeplitz pairs is also related to the kernels
of Hankel operators involved with the analytic and co-analytic parts
of the symbol. \  Indeed it was shown in \cite [Lemma 6.2] {Gu2}
that if neither $\varphi$ nor $\psi$ is analytic and if
$(T_{\varphi}, T_{\psi})$ is hyponormal, then
\begin{equation}\label{5.3-3}
\text{\rm ker}\, H_{\overline{\varphi_+}}\subseteq \text{\rm ker}\,
H_{\overline{\psi_-}}\quad\text{and}\quad \text{\rm ker}\,
H_{\overline{\psi_+}}\subseteq \text{\rm ker}\,
H_{\overline{\varphi_-}}\,.
\end{equation}

Tuples (or pairs) of Toeplitz operators will be called {\it Toeplitz
tuples} (or {\it Toeplitz} pairs). \

On a first perusal, one might be tempted to guess that (\ref{5.1*})
still holds for Toeplitz pairs whose symbols are matrix-valued
trigonometric polynomials. \  However this is not the case. \  To
see this we take
$$
\Phi:=\begin{pmatrix} z^{-1}+2z&0\\
0&0\end{pmatrix}\quad\hbox{and}\quad \Psi:=\begin{pmatrix} 0&0\\
0&z^{-1}+2z\end{pmatrix}.
$$
Then a straightforward calculation shows that if $\mathbf
T=(T_{\Phi},T_{\Psi})$ then $[\mathbf T^*, \mathbf T]\ge 0$, i.e.,
$\mathbf T$ is hyponormal, but evidently, $\Phi_- \ne \Lambda
\Psi_-$ for any constant matrix $\Lambda\in M_2$. \  However, we
note that
$$
\Phi_-=\begin{pmatrix} z&0\\ 0&0\end{pmatrix} =\begin{pmatrix} z&0\\
0&z\end{pmatrix}\,\begin{pmatrix} 1&0\\ 0&0\end{pmatrix}^*
$$
and by Theorem \ref{lem33.9},
$$
\hbox{$\Theta\equiv \begin{pmatrix} z&0\\ 0&z\end{pmatrix}$ and
$A\equiv \begin{pmatrix} 1&0\\ 0&0\end{pmatrix}$ are not right
coprime.}
$$
As we may expect, if the condition ``$\Theta$ and $A$ are right
coprime" is assumed then we might get a matrix-valued version of
(\ref{5.1*}). \ As we will see in the sequel, a corollary to the
main result of this chapter follows the spirit of (\ref{5.1*})
(Corollary \ref{cor5.23}). \ The main theorem of this chapter
(Theorem \ref{thm5.22-1}) gives a complete characterization of the
hyponormality of Toeplitz pairs with bounded type symbols. \
Roughly speaking, this characterization says that the hyponormality
of a Toeplitz pair can be determined by the hyponormality of a {\it
single} Toeplitz operator. \

\medskip

To proceed, we consider some basic facts.

\medskip

Recall (cf. p.\pageref{gu}) that for each $\Phi\in L^\infty_{M_n}$,
if we put
$$
\mathcal{E}(\Phi)\label{Ebigphi2}:=\Bigl\{K\in H^\infty_{M_n}:\
||K||_\infty \le 1\ \ \hbox{and}\ \ \Phi-K \Phi^*\in
H^\infty_{M_n}\Bigr\}. \
$$
then $T_\Phi$ is hyponormal if and only if $\Phi$ is normal and
$\mathcal{E}(\Phi)$ is nonempty.

\medskip

If $\Phi \in L^\infty_{M_n}$, then by (\ref{1.3}),
$$
[T_\Phi^*, T_\Phi]= H_{\Phi^*}^* H_{\Phi^*} - H_{\Phi}^*H_\Phi
        +T_{\Phi^*\Phi-\Phi\Phi^*}\,.
$$
Since the normality of $\Phi$ is a necessary condition for the
hyponormality of $T_\Phi$, the positivity of $H_{\Phi^*}^*
H_{\Phi^*} - H_{\Phi}^*H_\Phi$ is an essential condition for the
hyponormality of $T_\Phi$. \  Thus, we isolate this property as a
new notion, weaker than hyponormality. \  The reader will notice at
once that this notion is meaningful for non-scalar symbols. \

\bigskip

\begin{definition}\label{def8.111}
For $\Phi, \Psi \in L^\infty_{M_n}$, let
$$
[T_\Phi, T_\Psi]_p\label{pseudo}:= H_{\Psi^*}^* H_{\Phi} -
H_{\Phi^*}^*H_\Psi.
$$
Then $[T_\Phi^*, T_\Phi]_p$ is called the {\it
pseudo-selfcommutator} of $T_\Phi$. \ Also $T_{\Phi}$ is said to be
{\it pseudo-hyponormal} if $[T_\Phi^*, T_\Phi]_p$ is positive
semidefinite. \
\end{definition}

\medskip

As in the case of hyponormality of scalar Toeplitz operators, we can
see that the pseudo-hyponormality of $T_\Phi$ is independent of the
constant matrix term $\Phi(0)$. \  Thus whenever we consider the
pseudo-hyponormality of $T_\Phi$ we may assume, without loss of
generality, that $\Phi(0)=0$. \ Observe that if $\Phi\in
L^\infty_{M_n}$ then
$$
[T_\Phi^*, T_\Phi]= [T_\Phi^*, T_\Phi]_p +
T_{\Phi^*\Phi-\Phi\Phi^*}.
$$
Thus $T_\Phi$ is hyponormal if and only if $T_\Phi$ is
pseudo-hyponormal and $\Phi$ is normal; also (via Theorem 3.3 of
\cite{GHR}) $T_\Phi$ is pseudo-hyponormal if and only if
$\mathcal{E}(\Phi)\ne \emptyset$. \

\medskip

Recall that for $\Phi\equiv  \Phi_-^* + \Phi_+ \in
L^{\infty}_{M_n}$, we write
$$
\mathcal C(\Phi)\label{Cphi}:=\Bigl\{K \in H^{\infty}_{M_n}:\
\Phi-K\Phi^*\in H^{\infty}_{M_n}\Bigr\}.
$$
Thus if $\Phi\in L^\infty_{M_n}$, then
$$
K\in \mathcal{E}(\Phi)\ \Longleftrightarrow\ K\in\mathcal{C}(\Phi)\
\hbox{and}\ ||K||_\infty\le 1.
$$
We note (cf. \cite{GHR}) that if $T_\Phi$ is pseudo-hyponormal, then
$||\Phi_-||_2\le ||\Phi_+||_2\label{2norm}$: indeed, if $K\in
\mathcal{E}(\Phi)$, then $\Phi_-^*-K\Phi_+^*\in H^2_{M_n}$, so that
$$
||\Phi_-||_2=||\Phi_-^*||_2\le ||K\Phi_+^*||_2\le ||K||_\infty
||\Phi_+^*||_2 \le ||\Phi_+^*||_2=||\Phi_+||_2.
$$

\medskip

In view of (\ref{3.222}) in Chapter 6, whenever we study the
pseudo-hyponormality of Toeplitz operators with symbol $\Phi$ such
that $\Phi$ and $\Phi^*$ are of bounded type, we may assume that the
symbol $\Phi\equiv \Phi_-^* + \Phi_+\in L^{\infty}_{M_n}$ is of the
form
\begin{equation}\label{bts-1}
\Phi_+= \Theta_0 \Theta_1 A^*\quad \hbox{and}\quad \Phi_-=\Theta_0
B^*\quad \hbox{(right coprime)}.
\end{equation}

\medskip

We next consider hyponormality of Toeplitz operators with bounded
type symbols. \ To do so, we use an interpolation problem developed
in Chapter 6.

\begin{proposition}{\rm(Pull-back symbols)} \label{pro3.13}
Let $\Phi\in L^\infty_{M_n}$ be such that $\Phi$ and $\Phi^*$ are of
bounded type. \ In view of {\rm (\ref{bts-1})}, we may write
$$
\Phi_+=\Theta_0\Theta_1 A^*\quad\hbox{and}\quad \Phi_-=\Theta_0 B^*
\ \ \hbox{\rm (right coprime)}.
$$
Suppose $\Theta_1 A^* = A_1^*\Theta$ (where $A_1$ and $\Theta$ are
left coprime). \ Then the following hold:
\begin{itemize}
\item[(a)] If $I_{\omega}$ is an inner divisor of $\Theta_1$,
put $\Phi^{1, \omega}:=\Phi_-^*+P_{H_0^2}(\overline\omega\Phi_+) \
\hbox{\rm (cf. p.\pageref{qw})}$. Then $T_{\Phi}$ is
pseudo-hyponormal if and only if $T_{\Phi^{1, \omega}}$ is
pseudo-hyponormal;
\item[(b)] Put $\Upsilon:=\Phi_-^*+P_{H^2}(\Theta_0 A_1^*).$ \ Then
$T_{\Phi}$ is pseudo-hyponormal if and only if $T_{\Upsilon}$ is
pseudo-hyponormal.
\end{itemize}
\end{proposition}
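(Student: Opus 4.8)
\textbf{Proof proposal for Proposition \ref{pro3.13}.}

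The plan is to reduce both equivalences to the characterization $T_\Phi$ pseudo-hyponormal $\Longleftrightarrow \mathcal{E}(\Phi)\neq\emptyset$ (via Theorem 3.3 of \cite{GHR}, as recalled in the text), and then to track how the set $\mathcal{E}(\cdot)$ transforms under the two passages $\Phi\mapsto\Phi^{1,\omega}$ and $\Phi\mapsto\Upsilon$. Since pseudo-hyponormality is insensitive to the constant term, we may assume $\Phi(0)=0$, so that in particular $A,B\in\mathcal K_{\Theta_0\Theta_1}$, $\mathcal K_{\Theta_0}$ respectively by Lemma \ref{lem2.4}. For part (a), the key observation is that the interpolation sets $\mathcal C(\Phi)$ and $\mathcal C(\Phi^{1,\omega})$ are related by Proposition \ref{pro3.13-1}(b): namely $I_\omega$ is an inner divisor of $\Theta$ and $\mathcal C(\Phi^{1,\omega})=\{\overline\omega K: K\in\mathcal C(\Phi)\}$. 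I would then observe that the correspondence $K\mapsto\overline\omega K$ preserves the $H^\infty$-norm bound $\|\cdot\|_\infty\le 1$: indeed, if $K=K'\Theta$ with $K'\in H^\infty_{M_n}$ (as guaranteed by Proposition \ref{pro3.13-1}(a)) and $\Theta=\Omega I_\omega$, then $\overline\omega K=K'\Omega$, and conversely $K=\omega\cdot(\overline\omega K)$ with $\omega$ inner; since multiplication by an inner scalar function is an isometry on $H^\infty$ in the relevant sense (it does not increase the sup-norm, and the reverse map $K'\Omega\mapsto K'\Omega I_\omega$ also does not), the bijection $\mathcal C(\Phi)\leftrightarrow\mathcal C(\Phi^{1,\omega})$ restricts to a bijection $\mathcal E(\Phi)\leftrightarrow\mathcal E(\Phi^{1,\omega})$. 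Hence one set is nonempty iff the other is, which is the claim.

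For part (b), the passage $\Phi\mapsto\Upsilon:=\Phi_-^*+P_{H^2}(\Theta_0 A_1^*)$ replaces the analytic part $\Phi_+=\Theta_0\Theta_1 A^*=\Theta_0 A_1^*\Theta$ (using $\Theta_1 A^*=A_1^*\Theta$) by the "stripped" analytic part $\Theta_0 A_1^*$, i.e.\ it peels off the right inner factor $\Theta$ from the co-analytic side of $\Phi_+^*$. The heart of the argument is that Hankel operators are insensitive to such a right inner-factor change up to an isometry: since $H_{\Phi_+^*}=H_{\Theta^* A_1\Theta_0^*}=T_{\widetilde\Theta}^*H_{A_1\Theta_0^*}$ by (\ref{1.4}), and since $\widetilde\Theta$ inner makes $T_{\widetilde\Theta}^*$ a coisometry whose restriction to the relevant range is isometric, the operators $H_{\Phi_+^*}$ and $H_{\Upsilon_+^*}=H_{A_1\Theta_0^*}$ have the same kernel-modulo-isometry behaviour; more precisely $H_{\Phi_+^*}^*H_{\Phi_+^*}$ and $H_{\Upsilon_+^*}^*H_{\Upsilon_+^*}$ differ by conjugation-type identities. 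Then I would invoke the formula $\mathcal E(\Phi)\neq\emptyset\Longleftrightarrow$ (by (\ref{21}) and the analysis of $\mathcal C(\Phi)$ in Chapter 6) the solvability of $\Phi_-^*-K\Phi_+^*\in H^2_{M_n}$ with $\|K\|_\infty\le1$; substituting $\Phi_+=\Theta_0 A_1^*\Theta$ and using that $K$ must have the form $K'\Theta$ (Proposition \ref{pro3.13-1}(a)) shows $\Phi_-^*-K'\Theta_0 A_1^*\in H^2_{M_n}$, which is exactly the condition $K'\in\mathcal C(\Upsilon)$ — and the norm bound $\|K'\|_\infty=\|K'\Theta\|_\infty=\|K\|_\infty$ is preserved because $\Theta$ is inner. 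This gives the bijection $\mathcal E(\Phi)\leftrightarrow\mathcal E(\Upsilon)$ and hence the equivalence.

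The main obstacle I anticipate is the careful bookkeeping of the norm-preservation under right multiplication by inner matrix functions: while $\|K\Theta\|_\infty\le\|K\|_\infty$ is immediate, the reverse inequality $\|K\|_\infty\le\|K\Theta\|_\infty$ requires that $K$ actually be recoverable from $K\Theta$, which uses pointwise that $\Theta(\zeta)$ is an isometry a.e.\ on $\mathbb T$ — and this in turn relies on $\Theta$ being square inner (so that $\Theta^*\Theta=I$ a.e.), which holds here since $\det\Theta\neq 0$ by Lemma \ref{lem1.7}/Lemma \ref{lem0.3} given left coprimeness with a nondegenerate factor. A secondary subtlety is verifying that $\Upsilon$ and $\Upsilon^*$ are again of bounded type and that the right-coprime factorization hypotheses propagate, so that (\ref{21}) and the Chapter 6 interpolation machinery genuinely apply to $\Upsilon$; this should follow from the coprimeness of $A_1$ and $\Theta$ together with Theorem \ref{lem33.9} and Corollary \ref{cor22.99}, but it needs to be stated explicitly. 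Once these norm and bounded-type bookkeeping points are pinned down, both equivalences fall out of the $\mathcal E(\cdot)\neq\emptyset$ criterion essentially formally.
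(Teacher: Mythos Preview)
Your approach is essentially the same as the paper's: both parts reduce to Proposition~\ref{pro3.13-1} combined with the criterion $T_\Phi$ pseudo-hyponormal $\Longleftrightarrow \mathcal E(\Phi)\neq\emptyset$. The paper is simply terser --- for (a) it notes in one line that $\|K\|_\infty=\|\overline\omega K\|_\infty$ (immediate since $\omega$ is scalar inner, $|\omega|=1$ a.e.), and for (b) it substitutes $K=K'\Theta$ into $\Phi_-^*-K\Phi_+^*\in H^2_{M_n}$ to obtain $\Phi_-^*-K'A_1\Theta_0^*\in H^2_{M_n}$, which is the $\mathcal E(\Upsilon)$ condition with $\|K'\|_\infty=\|K'\Theta\|_\infty$; your anticipated obstacles (norm preservation under right multiplication by $\Theta$, bounded-type propagation for $\Upsilon$) are non-issues, the first because $\Theta$ is square inner hence unitary a.e.\ on $\mathbb T$, the second because the $\mathcal E(\cdot)\neq\emptyset$ criterion requires no bounded-type hypothesis.
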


\begin{proof} \
(a) By Proposition \ref{pro3.13-1}(b), we have
$$\mathcal{C}(\Phi^{1, \omega})=\bigl\{\overline\omega K : K \in
\mathcal C( \Phi)\bigr\}.
$$
Thus the result follows at once from the observation that
$||K||_\infty=||\overline\omega K||_\infty$. \

(b) By Proposition \ref{pro3.13-1}(a), we have
$$
\aligned T_{\Phi}\ \hbox{is pseudo-hyponormal} &\Longleftrightarrow
\Phi_-^* - K^{\prime}\Theta \Phi_+^* \in H_{M_n}^2 \ \ (K^{\prime}
\in H^2 \ \hbox{and} \ ||K^{\prime}||_{\infty} \leq 1)\\
&\Longleftrightarrow \Phi_-^* - K^{\prime}A_1 \Theta_0^* \in
H_{M_n}^2\\
&\Longleftrightarrow \Phi_-^* -
K^{\prime}\bigl(P_{H^2_{M_n}}(\Theta_0
A_1^*)\bigr)^* \in H_{M_n}^2\\
&\Longleftrightarrow  T_{\Upsilon}\ \hbox{is pseudo-hyponormal}.
\endaligned
$$
\end{proof}

\medskip

For an operator $S\in \mathcal{B(H)}$,
$S^\sharp\label{Ssharp}\in\mathcal{B(H)}$ is called the
Moore-Penrose inverse of $S$ if
$$
SS^\sharp S=S,\ \ S^\sharp SS^\sharp=S^\sharp,\ \ (S^\sharp S)^*=
S^\sharp S,\ \ \hbox{and}\ \ (SS^\sharp)^*=SS^\sharp.
$$
It is known (\cite[Theorem 8.7.2]{Har}) that if an operator $S$ on a
Hilbert space has closed range then $S$ has a Moore-Penrose inverse.
\  Moreover, the Moore-Penrose inverse is unique whenever it exists.
\ On the other hand, it is well-known that if
$$
S:=\begin{pmatrix} A&B\\ B^*&C\end{pmatrix}\quad\hbox{on}\
\mathcal{H}_1\oplus \mathcal{H}_2,
$$
(where the $\mathcal H_j$ are Hilbert spaces, $A\in
\mathcal{B}(\mathcal H_1)$, $C\in\mathcal{B}(\mathcal H_2)$, and
$B\in\mathcal{B}(\mathcal H_2, \mathcal H_1)$), then
\begin{equation}\label{2.19}
S\ge 0\ \Longleftrightarrow\ A\ge 0,\ C\ge 0,\ \hbox{and}\
B=A^{\frac{1}{2}}DC^{\frac{1}{2}}\ \hbox{for some contraction}\ D;
\end{equation}
moreover, in (\cite [Lemma 1.2]{CuL1} and \cite [Lemma 2.1] {Gu2})
it was shown that if $A\ge 0$, $C\ge 0$ and $\hbox{ran}\,A$ is
closed then
\begin{equation}\label{2.20}
S\ge 0\ \Longleftrightarrow\ B^*A^\sharp B\le C\ \hbox{and}\
\hbox{ran}\, B\subseteq \hbox{ran}\, A,
\end{equation}
or equivalently (\cite [Lemma 1.4] {CMX}),
\begin{equation}\label{2.21}
|\langle Bg, f\rangle)|^2\le \langle Af,f\rangle \langle Cg,
g\rangle\quad\hbox{for all}\ f\in\mathcal{H}_1,\ g\in\mathcal{H}_2
\end{equation}
and furthermore, if both $A$ and $C$ are of finite rank then
\begin{equation}\label{2.22}
\hbox{rank}\, S=\hbox{rank}\, A+\hbox{rank}\,(C-B^*A^\sharp B).
\end{equation}
In fact, if $A\ge 0$ and $\hbox{ran}\,A$ is closed then we can write
$$
A=\begin{pmatrix} A_0&0\\ 0&0\end{pmatrix}:\
\begin{pmatrix}
\hbox{ran}\,A\\
\hbox{ker}\,A \end{pmatrix} \to
\begin{pmatrix}
\hbox{ran}\,A\\
\hbox{ker}\,A\end{pmatrix},
$$
so that the Moore-Penrose inverse of $A$ is given by
\begin{equation}\label{2.23}
A^\sharp=\begin{pmatrix} (A_0)^{-1}&0\\ 0&0\end{pmatrix}.
\end{equation}

\bigskip

We introduce a notion which will help simplify our arguments.

\begin{definition}
Let $\Phi, \Psi\in L^\infty_{M_n}$. \ For a Toeplitz pair
$\mathbf{T} \equiv (T_{\Phi}, T_{\Psi})$, the {\it
pseudo-commutator} of $\mathbf{T}$ is defined by (cf. Definition
\ref{def8.111})
$$
\begin{aligned}
\hbox{$[\mathbf{T}^*, \mathbf{T}]_p\label{T^*TP}$} &:=
\begin{pmatrix}
\hbox{$[T_\Phi^*, T_\Phi]_p$} & \hbox{$[T_\Psi^*, T_\Phi]_p$}\\
\hbox{$[T_\Phi^*, T_\Psi]_p$} & \hbox{$[T_\Psi^*, T_\Psi]_p$}
\end{pmatrix}\\
&=
\begin{pmatrix} H_{\Phi_+^*}^* H_{\Phi_+^*}-H_{\Phi_-^*}^* H_{\Phi_-^*}
     & H_{\Phi_+^*}^* H_{\Psi_+^*}-H_{\Psi_-^*}^* H_{\Phi_-^*}\\
           H_{\Psi_+^*}^* H_{\Phi_+^*}-H_{\Phi_-^*}^* H_{\Psi_-^*}
     & H_{\Psi_+^*}^* H_{\Psi_+^*}-H_{\Psi_-^*}^* H_{\Psi_-^*}
\end{pmatrix}\,.\\
\end{aligned}
$$
Then $\mathbf{T}\equiv (T_{\Phi}, T_{\Psi})$ is said to be {\it
pseudo-(jointly) hyponormal} if $[\mathbf{T}^*, \mathbf{T}]_p\ge 0$.
\ Evidently, if $\mathbf{T}\equiv (T_{\Phi}, T_{\Psi})$ is
pseudo-hyponormal then $T_\Phi$ and $T_\Psi$ are pseudo-hyponormal.
\end{definition}

\bigskip

We begin with:

\begin{lemma}\label{lem5.1}
Let $\Phi, \Psi\in L^\infty_{M_n}$. \  If
$\mathbf{T}\equiv(T_{\Phi}, T_{\Psi})$ is hyponormal then  $\Phi$
and $\Psi$ commute.
\end{lemma}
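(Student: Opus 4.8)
The plan is as follows. By definition, joint hyponormality of $\mathbf T\equiv(T_\Phi,T_\Psi)$ means $[\mathbf T^*,\mathbf T]\ge 0$ on $H^2_{\mathbb C^n}\oplus H^2_{\mathbb C^n}$; compressing to each coordinate shows that $T_\Phi$ and $T_\Psi$ are individually hyponormal, so by Lemma \ref{gu} both $\Phi$ and $\Psi$ are normal (a.e.\ on $\mathbb T$). The goal is then to upgrade this to $\Phi\Psi=\Psi\Phi$.

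First I would record the algebraic identities for the entries of $[\mathbf T^*,\mathbf T]$. Using $T_\Phi^*=T_{\Phi^*}$ from (\ref{1.2}) and the commutation relation $T_{\Phi\Psi}-T_\Phi T_\Psi=H_{\Phi^*}^*H_\Psi$ from (\ref{1.3}), a direct computation gives
\[
[T_\Psi^*,T_\Phi]=T_{\Psi^*\Phi-\Phi\Psi^*}+H_{\Phi^*}^*H_{\Psi^*}-H_\Psi^*H_\Phi ,
\]
and in particular $[T_\Phi^*,T_\Phi]=T_{\Phi^*\Phi-\Phi\Phi^*}+H_{\Phi^*}^*H_{\Phi^*}-H_\Phi^*H_\Phi$. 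Since $\Phi$ and $\Psi$ are normal, the ``diagonal'' Toeplitz corrections $T_{\Phi^*\Phi-\Phi\Phi^*}$ and $T_{\Psi^*\Psi-\Psi\Psi^*}$ vanish, so the diagonal blocks of $[\mathbf T^*,\mathbf T]$ are purely Hankel, while the off-diagonal block carries the single Toeplitz term $T_{\Psi^*\Phi-\Phi\Psi^*}$ (up to decaying Hankel pieces).

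Next comes the heart of the argument: a shift-tail estimate. Fix $f,g\in H^2_{\mathbb C^n}$ and test positivity of $[\mathbf T^*,\mathbf T]$ at the vectors $(S^Nf,S^Ng)$, where $S=T_{zI}$. From (\ref{1.4}) one has $H_\Lambda S=S^*H_\Lambda$, hence $H_\Lambda S^N=S^{*N}H_\Lambda\to 0$ strongly for each $\Lambda\in\{\Phi,\Phi^*,\Psi,\Psi^*\}$; consequently all Hankel contributions — the diagonal terms $\|H_\Lambda S^Nf\|^2$ as well as the cross terms, the latter controlled by Cauchy--Schwarz — tend to $0$ as $N\to\infty$. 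On the other hand $\langle T_{\Psi^*\Phi-\Phi\Psi^*}S^Ng,\,S^Nf\rangle=\langle(\Psi^*\Phi-\Phi\Psi^*)g,\,f\rangle$ is independent of $N$, since the factors $z^N$ cancel in the inner product after dropping the harmless projection. Passing to the limit in $\langle[\mathbf T^*,\mathbf T](S^Nf,S^Ng),(S^Nf,S^Ng)\rangle\ge 0$ therefore leaves $2\,\mathrm{Re}\,\langle(\Psi^*\Phi-\Phi\Psi^*)g,f\rangle\ge 0$. Replacing $f$ by $-f$, then by $\pm if$, forces $\langle(\Psi^*\Phi-\Phi\Psi^*)g,f\rangle=0$ for all $f,g\in H^2_{\mathbb C^n}$; choosing $f,g$ among the vectors $z^a e_i$ ($a\ge 0$, $e_i$ a standard basis vector) exhausts all Fourier coefficients of every entry, so $\Psi^*\Phi=\Phi\Psi^*$ a.e.

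Finally, since $\Psi$ (hence $\Psi^*$) is normal pointwise a.e.\ and $\Phi$ commutes with $\Psi^*$, the Fuglede property for normal matrices — elementary: $\Phi$ then commutes with every spectral projection of $\Psi^*$, hence with $\Psi$ — yields $\Phi\Psi=\Psi\Phi$ a.e., which is the assertion. The main technical point to be careful about is the bookkeeping in the limit step, namely confirming that the Toeplitz term is genuinely $N$-independent while every Hankel term decays, together with the (elementary but essential) use of normality to kill the diagonal Toeplitz corrections. Note that bounded type plays no role in this argument, so the lemma holds for arbitrary $\Phi,\Psi\in L^\infty_{M_n}$.
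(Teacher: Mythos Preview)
Your proposal is correct and follows essentially the same route as the paper: use hyponormality of each coordinate to get normality of $\Phi$ and $\Psi$, then test positivity of $[\mathbf T^*,\mathbf T]$ at the shifted vectors $(S^Nf,S^Ng)$, observe that all Hankel contributions vanish as $N\to\infty$ while the Toeplitz term $T_{\Psi^*\Phi-\Phi\Psi^*}$ is shift-invariant, conclude $\Psi^*\Phi=\Phi\Psi^*$, and finish with Fuglede. The only cosmetic difference is that the paper invokes the block-positivity inequality (\ref{2.21}) to isolate the off-diagonal term and obtain $|\langle T_{\Psi^*\Phi-\Phi\Psi^*}y,x\rangle|^2\le 0$ directly, whereas you test the full quadratic form and then use the $f\mapsto \pm f,\ \pm if$ trick; both extract the same information from the same limit.
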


\begin{proof} Suppose that $\mathbf{T}$ is hyponormal. \  Then $T_{\Phi}$ and
$T_{\Psi}$ are hyponormal, and hence $\Phi$ and $\Psi$ are normal. \
Thus by (\ref{1.3}) we have
\begin{equation*}
\begin{split}
 [\mathbf{T}^*, \mathbf{T}]
 &= \hbox{$\begin{pmatrix} \hbox{$[T_{\Phi}^*, T_{\Phi}]$}& \hbox{$[T_{\Psi}^*, T_{\Phi}]$}\\
       \hbox{$[T_{\Phi}^*, T_{\Psi}]$}& \hbox{$[T_{\Psi}^*, T_{\Psi}]$} \end{pmatrix}$}\\
 &=\hbox{$\begin{pmatrix} H_{\Phi_+^*}^* H_{\Phi_+^*}\hskip-.1cm-\hskip-.1cm H_{\Phi_-^*}^* H_{\Phi_-^*}
     &H_{\Phi_+^*}^* H_{\Psi_+^*}\hskip-.1cm - \hskip-.1cm H_{\Psi_-^*}^* H_{\Phi_-^*}
         \hskip-.1cm +\hskip-.1cm T_{\Psi^* \Phi-\Phi \Psi^*}\\
     H_{\Psi_+^*}^* H_{\Phi_+^*}-H_{\Phi_-^*}^* H_{\Psi_-^*}+T_{\Phi^*
     \Psi-\Psi \Phi^*} & H_{\Psi_+^*}^* H_{\Psi_+^*}-H_{\Psi_-^*}^* H_{\Psi_-^*} \end{pmatrix}$}.
\end{split}
\end{equation*}
But since $\mathbf{T}$ is hyponormal it follows from (\ref{2.21})
that for any $m \geq 0, \  x, y \in H^2_{\mathbb C^{n}},$
\begin{equation}\label{5.6}
\aligned &\Bigl \vert \Bigl\langle (H_{\Phi_+^*}^*
H_{\Psi_+^*}-H_{\Psi_-^*}^* H_{\Phi_-^*}+T_{\Psi^* \Phi-\Phi
\Psi^*})\,I_{z^m} y,\ I_{z^m} x \Bigr \rangle  \Bigl\vert^2\\
&\leq \Bigl\langle (H_{\Phi_+^*}^* H_{\Phi_+^*}-H_{\Phi_-^*}^*
H_{\Phi_-^*})(I_{z^m} x), \ I_{z^m} x \Bigr\rangle\\
&\qquad\qquad \cdot \Bigl\langle (H_{\Psi_+^*}^*
H_{\Psi_+^*}-H_{\Psi_-^*}^* H_{\Psi_-^*})(I_{z^m} y), \  I_{z^m} y
\Bigr\rangle.
\endaligned
\end{equation}
Note that
\begin{equation}\label{5.7}
\Bigl\langle T_{\Psi^* \Phi-\Phi \Psi^*}(I_{z^m} y),\ I_{z^m} x)
\Bigr \rangle =\Bigl\langle T_{I_{z^m}}^* T_{\Psi^* \Phi-\Phi
\Psi^*} T_{I_{z^m}} y,\ x \Bigr \rangle =\Bigl\langle T_{\Psi^*
\Phi-\Phi \Psi^*}y,\ x  \Bigr \rangle\,.
\end{equation}
But since
$$
\lim_{m\to \infty} H_C (I_{z^m\omega})=0\quad\hbox{for any $C\in
L^\infty_{M_n}$ and $\omega\in H^2_{\mathbb C^n}$},
$$
if we take the limits on $m$ in (\ref{5.6}) and (\ref{5.7}) then we
have
$$
\Bigl\langle T_{\Psi^* \Phi-\Phi \Psi^*}y,\ x \Bigr \rangle
=\text{lim}_{m \rightarrow \infty}\Bigl\langle (T_{\Psi^* \Phi-\Phi
\Psi^*})\, I_{z^m} y,\ I_{z^m} x \Bigr \rangle =0,
$$
which implies that $\Psi^* \Phi = \Phi \Psi^*$. \  Since $\Psi$ is
normal it follows from the Fuglede-Putnam Theorem that
$\Phi\Psi=\Psi\Phi$.
\end{proof}

\medskip

We thus have:

\begin{corollary}\label{cor5.2} Let $\Phi, \Psi\in L^\infty_{M_n}$
and let $\mathbf{T}:=(T_{\Phi}, T_{\Psi})$. \  Then the following
are equivalent:
\begin{itemize}
\item[(i)] $\mathbf{T}$ is hyponormal;
\item[(ii)] $\mathbf{T}$ is pseudo-hyponormal,
$\Phi$ and $\Psi$ are normal, and $\Psi \Phi=\Phi \Psi$.
\end{itemize}
\end{corollary}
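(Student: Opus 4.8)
\textbf{Proof proposal for Corollary \ref{cor5.2}.}

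The plan is to deduce the equivalence from Lemma \ref{lem5.1} together with the decomposition of the self-commutator already recorded at the start of the chapter. First I would observe that the only difference between $[\mathbf{T}^*,\mathbf{T}]$ and the pseudo-commutator $[\mathbf{T}^*,\mathbf{T}]_p$ lies in the off-diagonal Toeplitz correction terms $T_{\Psi^*\Phi-\Phi\Psi^*}$ and $T_{\Phi^*\Psi-\Psi\Phi^*}$ and the diagonal correction terms $T_{\Phi^*\Phi-\Phi\Phi^*}$ and $T_{\Psi^*\Psi-\Psi\Psi^*}$, all of which come from (\ref{1.3}). Concretely, writing
$$
[\mathbf{T}^*,\mathbf{T}]=[\mathbf{T}^*,\mathbf{T}]_p+
\begin{pmatrix} T_{\Phi^*\Phi-\Phi\Phi^*} & T_{\Psi^*\Phi-\Phi\Psi^*}\\ T_{\Phi^*\Psi-\Psi\Phi^*} & T_{\Psi^*\Psi-\Psi\Psi^*}\end{pmatrix},
$$
one sees immediately that if $\Phi$ and $\Psi$ are normal and commute, then by the Fuglede--Putnam theorem $\Psi^*\Phi=\Phi\Psi^*$ and $\Phi^*\Psi=\Psi\Phi^*$, so the correction matrix vanishes and $[\mathbf{T}^*,\mathbf{T}]=[\mathbf{T}^*,\mathbf{T}]_p$. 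This gives (ii) $\Rightarrow$ (i) at once: positivity of the pseudo-commutator becomes positivity of the self-commutator, which is exactly joint hyponormality.

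For the converse (i) $\Rightarrow$ (ii), suppose $\mathbf{T}$ is hyponormal. Then each coordinate $T_\Phi$, $T_\Psi$ is hyponormal, hence by Lemma \ref{gu} (the Gu--Hendricks--Rutherford characterization) both $\Phi$ and $\Psi$ are normal. By Lemma \ref{lem5.1}, $\Phi\Psi=\Psi\Phi$. As in the first paragraph, normality plus commutativity plus Fuglede--Putnam forces the correction matrix to be zero, so $[\mathbf{T}^*,\mathbf{T}]_p=[\mathbf{T}^*,\mathbf{T}]\ge 0$; that is, $\mathbf{T}$ is pseudo-hyponormal. This establishes all three conditions in (ii).

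I do not anticipate a genuine obstacle here: the substantive content has already been extracted into Lemma \ref{lem5.1}, whose proof is the delicate part (taking limits in $m$ to kill the Hankel terms and isolate the Toeplitz term). The one point requiring a little care is the direction of the Fuglede--Putnam application: one needs $\Psi$ (or $\Phi$) normal to pass from $\Psi^*\Phi=\Phi\Psi^*$ to $\Psi\Phi=\Phi\Psi$, and conversely from $\Phi\Psi=\Psi\Phi$ and normality of $\Phi$ back to $\Phi^*\Psi=\Psi\Phi^*$ and $\Phi\Psi^*=\Psi^*\Phi$; since both $\Phi$ and $\Psi$ are normal in both directions of the argument, this causes no trouble. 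The proof is therefore short and essentially bookkeeping on top of the two cited lemmas.
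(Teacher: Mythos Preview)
Your proof is correct and is exactly the argument the paper has in mind: the paper's own proof is the single line ``Immediate from Lemma \ref{lem5.1},'' and you have simply unpacked that immediacy by writing out the correction-matrix decomposition and invoking Lemma \ref{gu} and Fuglede--Putnam. There is no difference in approach.
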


\begin{proof} Immediate from Lemma \ref{lem5.1}.
\end{proof}

\begin{lemma}\label{lem5.3}
Let $\Phi\in H^\infty_{M_n}$ and $\Psi\in L^\infty_{M_n}$. \ If
$$
\Phi\equiv \Phi_+=A^*\Theta\quad \hbox{\rm (left coprime)}
$$
then
\begin{equation}\label{5.8}
\mathbf{T}=(T_{\Phi}, T_{\Psi})\ \text{is pseudo-hyponormal}
\Longleftrightarrow T_{\Psi^{1,\Theta}}\ \text{is
pseudo-hyponormal}\,,
\end{equation}
where $\Psi^{1,\Theta}:=\Psi_-^*+P_{H^2_0}(\Psi_+\Theta^*)$ {\rm
(cf. p.\pageref{HTheta})}.
\end{lemma}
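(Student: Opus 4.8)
The statement to prove is Lemma~\ref{lem5.3}: for $\Phi\equiv\Phi_+=A^*\Theta$ (left coprime) and $\Psi\in L^\infty_{M_n}$, the pair $\mathbf{T}=(T_\Phi,T_\Psi)$ is pseudo-hyponormal if and only if $T_{\Psi^{1,\Theta}}$ is pseudo-hyponormal, where $\Psi^{1,\Theta}:=\Psi_-^*+P_{H^2_0}(\Psi_+\Theta^*)$. The natural approach is to write out the $2\times 2$ block pseudo-commutator $[\mathbf{T}^*,\mathbf{T}]_p$ and apply the positivity criterion \eqref{2.20}. Since $\Phi=\Phi_+\in H^\infty_{M_n}$, we have $\Phi_-=0$, so by \eqref{1.3} and \eqref{1.4} the $(1,1)$-block is $A$, where $A:=H_{\Phi_+^*}^*H_{\Phi_+^*}=H_{A\Theta^*}^*H_{A\Theta^*}$; the $(2,2)$-block is $C:=H_{\Psi_+^*}^*H_{\Psi_+^*}-H_{\Psi_-^*}^*H_{\Psi_-^*}$; and the off-diagonal block is $B:=H_{\Phi_+^*}^*H_{\Psi_+^*}=H_{A\Theta^*}^*H_{\Psi_+^*}$.

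First I would compute $\operatorname{ran} A$ and $\operatorname{ker} A$. Since $A_1:=A$ and $\Theta$ are left coprime, $\widetilde{A}$ and $\widetilde{\Theta}$ are right coprime, so by \eqref{RCD}, $\ker H_{A\Theta^*}=\ker H_{\widetilde{\widetilde{A}}\,\overline{\widetilde{\widetilde{\Theta}}}}$... more directly: $\ker H_{\Phi_+^*}=\ker H_{A\Theta^*}$ and one identifies $\operatorname{cl\,ran} H_{A\Theta^*}^* = (\ker H_{A\Theta^*})^\perp$. The key computation is that $\operatorname{cl\,ran} A = \operatorname{cl\,ran} H_{A\Theta^*}^* = \mathcal H(\widetilde\Theta)$ (using right coprimeness of $\widetilde A,\widetilde\Theta$ via \eqref{RCD} to get $\ker H_{\Theta^* A}=\widetilde\Theta H^2_{\mathbb C^n}$, hence after applying $\widetilde{\,\cdot\,}$ the range statement). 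In particular $\operatorname{ran} A$ is closed (it is a wandering-type subspace complement, or: $A$ restricted to $\mathcal H(\widetilde\Theta)$ is bounded below — this needs care if $\Theta$ is not rational, but $H_{A\Theta^*}^*H_{A\Theta^*}$ being a selfadjoint operator with the claimed range requires an argument, perhaps citing the structure of $(T_A)_\Theta$ from Lemma~\ref{BMOmn} and Theorem~\ref{thm82.7}). Then $\operatorname{ran} B\subseteq\operatorname{ran} A$ translates, via \eqref{2.20}, into a condition forcing the ``Moore--Penrose-adjusted'' Schur complement $C-B^*A^\sharp B$ to be exactly the pseudo-selfcommutator of $T_{\Psi^{1,\Theta}}$.

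The heart of the proof is the identity $C-B^*A^\sharp B = [T_{\Psi^{1,\Theta}}^*,T_{\Psi^{1,\Theta}}]_p$ together with the fact that $\operatorname{ran} B\subseteq\operatorname{ran} A$ holds automatically (or is equivalent to pseudo-hyponormality of the relevant piece). Concretely: using \eqref{1.4}, $H_{\Psi_+^*}=H_{(\Psi_+\Theta^*)\Theta}=H_{\Psi_+\Theta^*}T_\Theta$ — no wait, the correct factorization is via $H_{\Psi\Theta^*}$ and the relation $H_{\Psi^*}^*$ acting on $\mathcal H(\widetilde\Theta)$. I would show $P_{\operatorname{ran} A}H_{\Psi_+^*} = H_{P_{H^2_0}(\Psi_+\Theta^*)^{\,*}\cdot}$-type operator (this is where $\Psi^{1,\Theta}$ enters: $P_{H^2_0}(\Psi_+\Theta^*)$ is precisely the ``compression'' of $\Psi_+$ through $\Theta$). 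Then $B^*A^\sharp B$ captures the $\mathcal H(\widetilde\Theta)$-component and $C-B^*A^\sharp B$ is what remains, which by a direct Hankel-algebra computation using \eqref{1.2}--\eqref{1.5} equals $H_{(\Psi^{1,\Theta})_+^*}^*H_{(\Psi^{1,\Theta})_+^*}-H_{(\Psi^{1,\Theta})_-^*}^*H_{(\Psi^{1,\Theta})_-^*}$. Since $(\Psi^{1,\Theta})_-=\Psi_-$ and $(\Psi^{1,\Theta})_+ = P_{H^2_0}(\Psi_+\Theta^*)$, this is exactly $[T_{\Psi^{1,\Theta}}^*,T_{\Psi^{1,\Theta}}]_p$.

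Finally, I would assemble: by \eqref{2.20}, $[\mathbf{T}^*,\mathbf{T}]_p\ge 0$ iff $A\ge0$ (automatic), $C\ge0$... (not automatic, but implied), $\operatorname{ran} B\subseteq\operatorname{ran} A$ (to be shown automatic given the left-coprime structure — this is the step I expect to be the main obstacle, since it requires showing every vector in $\operatorname{ran} H_{A\Theta^*}^* H_{\Psi_+^*}$ already lies in $\mathcal H(\widetilde\Theta)$, i.e. a range inclusion that may genuinely need the coprimeness and a limiting/Beurling--Lax--Halmos argument), and $B^*A^\sharp B\le C$, which by the identity above says $[T_{\Psi^{1,\Theta}}^*,T_{\Psi^{1,\Theta}}]_p\ge0$, i.e. $T_{\Psi^{1,\Theta}}$ is pseudo-hyponormal. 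Conversely, pseudo-hyponormality of $T_{\Psi^{1,\Theta}}$ gives back $C-B^*A^\sharp B\ge0$ and the range inclusion (the latter needs to be re-derived, likely from the fact that $\Psi^{1,\Theta}$ pseudo-hyponormal forces $\ker H_{(\Psi^{1,\Theta})_+^*}\subseteq\ker H_{\Psi_-^*}$, analogous to \eqref{5.3-3}), closing the loop via \eqref{2.20} in the reverse direction. The main obstacle, as flagged, is the clean verification of $\operatorname{ran} B\subseteq\operatorname{ran} A$ and the precise Moore--Penrose bookkeeping when $\Theta$ is not a finite Blaschke–Potapov product, so that $\operatorname{ran} A$ is infinite-dimensional — one must confirm $\operatorname{ran} A$ is closed, which I would get from the representation $\operatorname{cl\,ran} A = \mathcal H(\widetilde\Theta)$ combined with a lower bound $\|H_{A\Theta^*} f\|\gtrsim\|f\|$ on $\mathcal H(\widetilde\Theta)$ coming from left-coprimeness of $A$ and $\Theta$ (via $\det A\ne 0$ on $\mathbb D$, Lemma~\ref{lem1.7}, and Theorem~\ref{thm82.7}).
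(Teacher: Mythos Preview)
Your overall strategy (write the $2\times2$ pseudo-commutator and apply the Schur-complement criterion \eqref{2.20}) is the right one, but you are missing the key simplification that makes the argument go through without the closed-range obstacle you correctly flag. The paper's proof does \emph{not} apply \eqref{2.20} directly to the pair $(T_\Phi,T_\Psi)$. Instead it first uses the inner-product criterion \eqref{2.21}: rewriting \eqref{5.9} as
\[
|\langle H_{\Phi_+^*}x,\ H_{\Psi_+^*}y\rangle|^2 \le \|H_{\Phi_+^*}x\|^2\,\langle [T_\Psi^*,T_\Psi]_p\,y,\ y\rangle,
\]
one sees that for fixed $y$ this depends only on which vectors $v=H_{\Phi_+^*}x$ appear, i.e.\ on $\hbox{cl\,ran}\,H_{\Phi_+^*}$. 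Since $A,\Theta$ are left coprime, $\widetilde A,\widetilde\Theta$ are right coprime, and by \eqref{RCD} one gets $\hbox{cl\,ran}\,H_{\Phi_+^*}=\mathcal H(\widetilde\Theta)=\hbox{cl\,ran}\,H_{\Theta^*}$. Hence the positivity of $[\mathbf T^*,\mathbf T]_p$ is \emph{equivalent} to the positivity of $[\mathbf T_\Theta^*,\mathbf T_\Theta]_p$ with $\mathbf T_\Theta:=(T_\Theta,T_\Psi)$.

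Only now does the paper invoke \eqref{2.20}, with $(1,1)$-block $H_{\Theta^*}^*H_{\Theta^*}$. This operator is a projection (onto $\mathcal H(\Theta)$), so its range is trivially closed, its Moore--Penrose inverse is itself, and the range inclusion $\operatorname{ran}B\subseteq\operatorname{ran}A$ is automatic since $\operatorname{ran}H_{\Theta^*}^*=\mathcal H(\Theta)$. The Schur complement is then
\[
[T_\Psi^*,T_\Psi]_p - H_{\Psi_+^*}^*H_{\Theta^*}H_{\Theta^*}^*H_{\Psi_+^*},
\]
which by a single application of \eqref{1.5} (with $\Phi=\Psi_+^*$) collapses to $H_{\Theta\Psi_+^*}^*H_{\Theta\Psi_+^*}-H_{\Psi_-^*}^*H_{\Psi_-^*}=[T_{\Psi^{1,\Theta}}^*,T_{\Psi^{1,\Theta}}]_p$, since $H_{(\Psi^{1,\Theta})_+^*}=H_{\Theta\Psi_+^*}$ and $(\Psi^{1,\Theta})_-=\Psi_-$.

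The replacement $\Phi\rightsquigarrow\Theta$ is what you are missing. Without it, your worry that $H_{\Phi_+^*}^*H_{\Phi_+^*}$ need not have closed range is legitimate: $H_{\Phi_+^*}=H_{\Theta^*}T_A$, and composing the partial isometry $H_{\Theta^*}$ with $T_A$ does not in general yield closed range (your proposed lower bound via Lemma~\ref{lem1.7} and Theorem~\ref{thm82.7} gives only $\det A\ne 0$, not the quantitative bound needed). The paper's two-step route---first \eqref{2.21} to trade $\Phi$ for $\Theta$, then \eqref{2.20}---bypasses this entirely.
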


\begin{proof} \
Since $\Phi\in H^\infty_{M_n}$, $\mathbf{T}=(T_{\Phi}, T_{\Psi})$ is
pseudo-hyponormal if and only if
$$
[\mathbf{T}^*, \mathbf{T}]_p=\begin{pmatrix} H_{\Phi_+^*}^*
H_{\Phi_+^*}
&H_{\Phi_+^*}^* H_{\Psi_+^*}\\
H_{\Psi_+^*}^* H_{\Phi_+^*}& [T_{\Psi}^*, \ T_{\Psi}]_p
\end{pmatrix}\ge 0,
$$
or equivalently, by (\ref{2.21}),
\begin{equation}\label{5.9}
\Bigl \vert \Bigl \langle H_{\Psi_+^*}^*H_{\Phi_+^*}x, \ y \Bigr
\rangle \Bigr \vert^2 \leq  \Bigl \langle H_{\Phi_+^*}^*H_{\Phi_+^*}
x, \ x \Bigr \rangle \Bigl \langle [T_{\Psi}^*, \ T_{\Psi}]_p y ,\ y
\Bigr \rangle\qquad \hbox{(all $x,y \in H^2_{\mathbb{C}^n}$)}.
\end{equation}
Since by assumption, $\widetilde{A}$ and $\widetilde{\Theta}$ are
right coprime, it follows that
$$
\text{cl\,ran}\, H_{\Phi_+^*}=\mathcal H
(\widetilde{\Theta})=\text{cl ran}\, H_{\Theta^*}.
$$
Therefore, the inequality (\ref{5.9}) becomes
$$
\Bigl \vert \Bigl \langle H_{\Theta^*}x, \ H_{\Psi_+^*}y \Bigr
\rangle \Bigr \vert \leq \Bigl \langle H_{\Theta^*}x, \
H_{\Theta^*}x\Bigr \rangle \Bigl \langle [T_{\Psi}^*, \ T_{\Psi}]_p
y ,\ y \Bigr \rangle \qquad \hbox{(all $x,y \in
H^2_{\mathbb{C}^n}$)}.
$$
We thus have that $[\mathbf{T}^*, \mathbf{T}]_p \geq 0$ if and only
if $[\mathbf{T}_{\Theta}^*, \mathbf{T}_{\Theta}]_p \geq 0$, where
$\mathbf{T}_{\Theta}:=(T_{\Theta}, T_{\Psi}). \ $ Observe that
\begin{equation}\label{5.10}
[\mathbf{T}_{\Theta}^*, \mathbf{T}_{\Theta}]_p =\begin{pmatrix}
H_{\Theta^*}^* H_{\Theta^*}
&H_{\Theta^*}^* H_{\Psi_+^*}\\
H_{\Psi_+^*}^* H_{\Theta^*}& [T_{\Psi}^*, \ T_{\Psi}]_p
\end{pmatrix}.
\end{equation}
By (\ref{1.5}) we can see that $H_{\Theta^*} H_{\Theta^*}^*$ is a
projection on $\mathcal H (\widetilde\Theta)$. \ Therefore, it
follows from (\ref{2.20}) that
$$
[\mathbf{T}_{\Theta}^*, \mathbf{T}_{\Theta}]_p \geq 0
\Longleftrightarrow [T_{\Psi}^*, \ T_{\Psi}]_p -H_{\Psi_+^*}^*
H_{\Theta^*}H_{\Theta^*}^* H_{\Psi_+^*} \geq 0
$$
Observe that by (\ref{1.5}),
\begin{equation*}
[T_{\Psi}^*, \ T_{\Psi}]_p
 - H_{\Psi_+^*}^* H_{\Theta^*}H_{\Theta^*}^* H_{\Psi_+^*}
=[T_{\Psi^{1, \Theta}}^*, \ T_{\Psi^{1, \Theta}}]_p\,.
\end{equation*}
Thus the inequality (\ref{5.9}) holds if and only if
$T_{\Psi^{1,\Theta}}$ is pseudo-hyponormal. \  This proves the
lemma. \
\end{proof}

\begin{remark}
In \cite [Problem 5.4] {CuL1}, the following problem was formulated:
{\sl For $n>1$, find a block Toeplitz operator $R$, not all of whose
diagonals are constant, for which $(U^n, R)$ is hyponormal, where
$U$ is the unilateral shift on $H^2$.} \ In fact, if $(U^n,R)$ is a
hyponormal pair then by (\ref{2.19}),
$$
[R^*, U^n]=[U^{n*}, U^n]^{\frac{1}{2}}D\,[R^*,R]^{\frac{1}{2}}\ \
\hbox{for some contraction}\ D.
$$
But since $P_n:=[U^{n*},U^n]$ is the orthogonal projection of rank
$n$ and $P_nU^n=0$, it follows that
$$
U^{n*}R-RU^{n*}=[R^*,R]^{\frac{1}{2}} D^*P_n\ \Longrightarrow\
U^{n*}RU^n-R=[R^*,R]^{\frac{1}{2}}D^*P_nU^n=0,
$$
which implies that $R$ is a block Toeplitz operator $T_\Phi$ with
symbol $\Phi\in L^\infty_{M_n}$. \  Thus Lemma \ref{lem5.3} together
with Corollary \ref{cor5.2} gives an answer to this problem: since
$U^n\cong T_{I_z}$, it follows that
$$
(U^n,R)\ \hbox{is hyponormal}\ \Longleftrightarrow\ R\cong T_\Phi\
\hbox{($\Phi$ is normal)}\ \hbox{and}\ T_{\Phi^{1, I_z}}\ \hbox{is
pseudo-hyponormal,}
$$
where $\cong\label{cong}$ denotes unitary equivalence. \  For
example, if $R=T_\Phi$ with
$$
\Phi=\begin{pmatrix} z&0\\0&2z\end{pmatrix},
$$
then $(U^2,R)\cong (T_{zI_2}, T_\Phi)$ is a hyponormal pair, but
$T_\Phi$ is not unitarily equivalent to any Toeplitz operator
(indeed, the essential spectrum of $T_\Phi$ is $\mathbb T\cup
2\,\mathbb T$, which is not connected). \hfill$\square$
\end{remark}

\begin{lemma} \label{lem5.4} Let $\Phi, \Psi \in L^\infty_{M_n}$. \
If $\mathbf T=(T_{\Phi}, T_{\Psi})$ is pseudo-hyponormal and
$\Theta\in H^\infty_{M_n}$ is an inner matrix function then $\mathbf
T_{\Theta}:=(T_{\Phi_{\Theta}}, T_{\Psi_{\Theta}})$ is
pseudo-hyponormal. \
\end{lemma}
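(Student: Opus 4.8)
The plan is to realize the pseudo-commutator $[\mathbf{T}_\Theta^*,\mathbf{T}_\Theta]_p$ as a conjugate of $[\mathbf{T}^*,\mathbf{T}]_p$ by a bounded operator, so that positivity is automatically preserved. First I would record the analytic and co-analytic parts of the sliced symbols: directly from the definition $\Phi_\Theta\equiv\Phi_{\Theta,\Theta}$ (cf.\ p.\pageref{qw}) one has $(\Phi_\Theta)_-^*=P_{(H^2_{M_n})^\perp}(\Phi_-^*\Theta)$ and $(\Phi_\Theta)_+=P_{H^2_0}(\Theta^*\Phi_+)$, and likewise for $\Psi_\Theta$.

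The one computational ingredient is the pair of identities
\[
H_{(\Phi_\Theta)_-^*}=H_{\Phi_-^*}T_\Theta,\qquad H_{(\Phi_\Theta)_+^*}=H_{\Phi_+^*}T_\Theta,
\]
together with their $\Psi$-analogues. For the first, write $\Phi_-^*\Theta=(\Phi_\Theta)_-^*+P_{H^2_{M_n}}(\Phi_-^*\Theta)$; the second summand is analytic, hence invisible to the Hankel operator, so $H_{(\Phi_\Theta)_-^*}=H_{\Phi_-^*\Theta}=H_{\Phi_-^*}T_\Theta$ by (\ref{1.4}). For the second, taking adjoints in $\Theta^*\Phi_+=P_{H^2_{M_n}}(\Theta^*\Phi_+)+P_{(H^2_{M_n})^\perp}(\Theta^*\Phi_+)$ gives $\Phi_+^*\Theta=(\Phi_\Theta)_+^*+\bigl[P_{(H^2_{M_n})^\perp}(\Theta^*\Phi_+)\bigr]^*$, and the bracketed term lies in $zH^2_{M_n}$ and so is again analytic; hence $H_{(\Phi_\Theta)_+^*}=H_{\Phi_+^*\Theta}=H_{\Phi_+^*}T_\Theta$, again by (\ref{1.4}). (Here all the Hankel operators are bounded on $H^2_{\mathbb C^n}$ for the $BMO$ reasons recalled in the Preliminaries, and the operators attached to $\Phi_\Theta,\Psi_\Theta$ are understood exactly as in Lemma \ref{BMOmn}.)

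Substituting these identities into the explicit matrix formula for the pseudo-commutator, each of the four blocks of $[\mathbf{T}_\Theta^*,\mathbf{T}_\Theta]_p$ acquires the form $T_\Theta^*(\,\cdot\,)T_\Theta$, e.g.
\[
H_{(\Phi_\Theta)_+^*}^*H_{(\Psi_\Theta)_+^*}-H_{(\Psi_\Theta)_-^*}^*H_{(\Phi_\Theta)_-^*}
=T_\Theta^*\bigl(H_{\Phi_+^*}^*H_{\Psi_+^*}-H_{\Psi_-^*}^*H_{\Phi_-^*}\bigr)T_\Theta,
\]
so that
\[
[\mathbf{T}_\Theta^*,\mathbf{T}_\Theta]_p=
\begin{pmatrix}T_\Theta^*&0\\0&T_\Theta^*\end{pmatrix}
[\mathbf{T}^*,\mathbf{T}]_p
\begin{pmatrix}T_\Theta&0\\0&T_\Theta\end{pmatrix}.
\]
Since $\mathbf{T}$ is pseudo-hyponormal we have $[\mathbf{T}^*,\mathbf{T}]_p\ge 0$, and conjugation by a bounded operator preserves positivity; therefore $[\mathbf{T}_\Theta^*,\mathbf{T}_\Theta]_p\ge 0$, i.e.\ $\mathbf{T}_\Theta$ is pseudo-hyponormal. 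Taking $\Phi=\Psi$ recovers in passing the single-operator fact that $T_{\Phi_\Theta}$ is pseudo-hyponormal whenever $T_\Phi$ is.

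The main obstacle is purely the bookkeeping of the second paragraph: one must keep the sidedness of the $\Theta$-multiplications straight in the definition of $\Phi_{\Theta,\Theta}$ and verify that the terms discarded from the symbols are genuinely analytic, so that the intertwining relation (\ref{1.4}) applies without residue. Once the four identities $H_{(\Phi_\Theta)_\pm^*}=H_{\Phi_\pm^*}T_\Theta$ and $H_{(\Psi_\Theta)_\pm^*}=H_{\Psi_\pm^*}T_\Theta$ are in place, the positivity conclusion is immediate.
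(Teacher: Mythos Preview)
Your proof is correct and follows essentially the same route as the paper: both arguments establish the identities $H_{(\Phi_\Theta)_\pm^*}=H_{\Phi_\pm^*}T_\Theta$ (and their $\Psi$-analogues) via (\ref{1.4}), substitute them blockwise to obtain the conjugation formula $[\mathbf{T}_\Theta^*,\mathbf{T}_\Theta]_p=\bigl(\begin{smallmatrix}T_\Theta&0\\0&T_\Theta\end{smallmatrix}\bigr)^*[\mathbf{T}^*,\mathbf{T}]_p\bigl(\begin{smallmatrix}T_\Theta&0\\0&T_\Theta\end{smallmatrix}\bigr)$, and conclude that positivity is preserved under conjugation. Your exposition is slightly more explicit about why the discarded terms are analytic, but the idea is the same.
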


\begin{proof} Suppose that $\mathbf T=(T_{\Phi}, T_{\Psi})$ is
pseudo-hyponormal. \  We then have
\begin{equation*}
\begin{split}
[(T_{\Phi_{\Theta}})^*, T_{\Psi_{\Theta}}]_p
&=H_{[P_{H_0^2}(\Theta^*\Psi_+)]^*}^*
H_{[P_{H_0^2}(\Theta^*\Phi_+)]^*}
   -H_{[P_{(H^2)^\perp}(\Phi_-^*\Theta)]}^* H_{[P_{(H^2)^\perp}(\Psi_-^*\Theta)]}\\
&=(H_{\Psi_+^*}T_{\Theta})^*H_{\Phi_+^*}T_{\Theta}-
(H_{\Phi_-^*}T_{\Theta})^*H_{\Psi_-^*}T_{\Theta}\\
&=T_{\Theta}^*(H_{\Psi_+^*}^*H_{\Phi_+^*}-
H_{\Phi_-^*}^*H_{\Psi_-^*})T_{\Theta}\\
&=T_{\Theta}^*\,[T_\Phi^*, T_\Psi]_p \,T_{\Theta},
\end{split}
\end{equation*}
Therefore, we have
$$
\begin{aligned}
\hbox{$[(\mathbf T_{\Theta})^*, \mathbf T_{\Theta}]_p$}
  &=  \begin{pmatrix} \hbox{$[(T_{\Phi_{\Theta}})^*, T_{\Phi_{\Theta}}]_p$}&
     \hbox{$[(T_{\Psi_{\Theta}})^*, T_{\Phi_{\Theta}}]_p$}\\
     \hbox{$[(T_{\Phi_{\Theta}})^*, T_{\Psi_{\Theta}}]_p$}&
     \hbox{$[(T_{\Psi_{\Theta}})^*, T_{\Psi_{\Theta}}]_p$}\end{pmatrix}\\
  &= \begin{pmatrix} T_{\Theta} & 0\\0&T_{\Theta}\end{pmatrix}^*
     [\mathbf {T^*}, \mathbf T ]_p
     \begin{pmatrix} T_{\Theta} & 0\\0&T_{\Theta}\end{pmatrix},
\end{aligned}
$$
which gives the result. \
\end{proof}

\begin{lemma}\label{lem5.5} Let $\mathbf T\equiv (T_{\Phi}, T_{\Psi})$ and $\mathbf
S\equiv (T_{\Phi-\Lambda \Psi}, T_{\Psi})$, where $\Lambda\in M_n$
is a constant normal matrix commuting with $\Psi_-$ and $\Phi_-$. \
Then
\begin{equation}\label{5.11}
\mathbf T\ \text{is pseudo-hyponormal} \Longleftrightarrow \mathbf
S\ \text{is pseudo-hyponormal}.
\end{equation}
\end{lemma}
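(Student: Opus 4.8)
The plan is to realize the transition from $\mathbf{T}$ to $\mathbf{S}$ at the level of pseudo-commutators as a congruence by a bounded invertible operator on $H^2_{\mathbb C^n}\oplus H^2_{\mathbb C^n}$; since congruence by a bounded invertible operator preserves positivity, (\ref{5.11}) follows at once. First I would fix notation: write $a_\pm := H_{\Phi_\pm^*}$, $b_\pm := H_{\Psi_\pm^*}$ and $D := T_\Lambda$ (multiplication by the constant matrix $\Lambda$ on $H^2_{\mathbb C^n}$), so that $D^*=T_{\Lambda^*}$ and $D$ is normal because $\Lambda$ is. Since the pseudo-commutator depends only on the Hankel operators of the analytic and co-analytic parts (and adding a constant to $\Phi_\pm$ or $\Psi_\pm$ does not change those Hankel operators), we may assume $\Phi(0)=\Psi(0)=0$. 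In this notation $[\mathbf{T}^*,\mathbf{T}]_p=\left(\begin{smallmatrix} P & Q\\ Q^* & R\end{smallmatrix}\right)$ on $H^2_{\mathbb C^n}\oplus H^2_{\mathbb C^n}$, with $P:=a_+^*a_+-a_-^*a_-$, $Q:=a_+^*b_+-b_-^*a_-$, and $R:=b_+^*b_+-b_-^*b_-$.

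The key preliminary facts come from the commutativity hypothesis. Since $\Lambda$ is a fixed normal matrix commuting a.e. on $\mathbb T$ with $\Phi_-$ and $\Psi_-$, the Fuglede--Putnam theorem, applied pointwise, shows that $\Lambda$ and $\Lambda^*$ also commute with $\Phi_-^*$ and $\Psi_-^*$. Since $\Lambda$ and $\Lambda^*$ are (constant, hence) analytic, the intertwining relations of (\ref{1.4}) say that multiplying a symbol on the left or the right by $\Lambda$ (resp. $\Lambda^*$) corresponds to pre- or post-composing the associated Hankel operator with $D$ (resp. $D^*$); combined with the pointwise commutativity just noted, this yields that $D$ and $D^*$ commute with each of $a_-,a_-^*,b_-,b_-^*$. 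Next I would compute the two parts of $\Phi-\Lambda\Psi$: because $\Lambda$ is constant it commutes with $P_n$ and $P_n^\perp$, so $(\Phi-\Lambda\Psi)_+=\Phi_+-\Lambda\Psi_+$ and $(\Phi-\Lambda\Psi)_-=\Phi_--\Psi_-\Lambda^*=\Phi_--\Lambda^*\Psi_-$ (the last equality again by Fuglede--Putnam). Applying (\ref{1.4}) once more, the Hankel operators entering $[\mathbf{S}^*,\mathbf{S}]_p$ are $a_+':=H_{(\Phi-\Lambda\Psi)_+^*}=a_+-b_+D^*$ and $a_-':=H_{(\Phi-\Lambda\Psi)_-^*}=a_--Db_-=a_--b_-D$, while the $\Psi$-entries of $[\mathbf{S}^*,\mathbf{S}]_p$ are unchanged.

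Finally I would substitute $a_\pm'$ into the block matrix for $[\mathbf{S}^*,\mathbf{S}]_p$ and simplify, using the commutation relations above together with $DD^*=D^*D$. A direct (if slightly tedious) computation shows that the $(2,2)$-entry is still $R$, the $(1,2)$-entry becomes $Q-DR$, and the $(1,1)$-entry collapses to $P-QD^*-DQ^*+DRD^*$; in other words
$$[\mathbf{S}^*,\mathbf{S}]_p=V^*\,[\mathbf{T}^*,\mathbf{T}]_p\,V,\qquad V:=\begin{pmatrix} I & 0\\ -D^* & I\end{pmatrix}.$$
Since $V$ is boundedly invertible, with $V^{-1}=\left(\begin{smallmatrix} I & 0\\ D^* & I\end{smallmatrix}\right)$, congruence by $V$ preserves (semi)positivity, so $[\mathbf{S}^*,\mathbf{S}]_p\ge 0$ if and only if $[\mathbf{T}^*,\mathbf{T}]_p\ge 0$, which is exactly (\ref{5.11}). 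The main obstacle is the bookkeeping in this last step: one must keep careful track of whether $\Lambda$ or $\Lambda^*$ appears, and on which side, and it is precisely the assumptions that $\Lambda$ is normal and commutes with the co-analytic parts $\Phi_-,\Psi_-$ that make the cross terms of the $(1,1)$-entry assemble into $-QD^*-DQ^*$ and the quadratic terms into $DRD^*$; without normality, or if $\Lambda$ commuted only with the analytic parts, the identity would fail.
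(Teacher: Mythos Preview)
Your proof is correct and is essentially the same as the paper's: the paper likewise sets $\mathcal T:=\bigl(\begin{smallmatrix}I&-T_\Lambda\\0&I\end{smallmatrix}\bigr)[\mathbf T^*,\mathbf T]_p\bigl(\begin{smallmatrix}I&0\\-T_\Lambda^*&I\end{smallmatrix}\bigr)$ and verifies by direct computation (using (\ref{1.4}) and the Fuglede--Putnam commutation relation (\ref{3.12-1})) that $\mathcal T=[\mathbf S^*,\mathbf S]_p$, which is exactly your congruence $V^*[\mathbf T^*,\mathbf T]_pV$ with $V=\bigl(\begin{smallmatrix}I&0\\-D^*&I\end{smallmatrix}\bigr)$. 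Your block-by-block verification of the identity is more explicit than the paper's sketch, but the idea and the invertible conjugating matrix are identical.
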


\begin{proof} Put
$$
\mathcal {T} := \begin{pmatrix}I & -T_\Lambda
\\0&I\end{pmatrix}[\mathbf T^*, \mathbf T]_p
\begin{pmatrix}I&0\\-T_\Lambda^*& I\end{pmatrix}\quad\hbox{and}
\quad \mathcal S:=[\mathbf S^*, \ \mathbf S]_p.
$$
Then
\begin{equation*}
\mathcal S =\begin{pmatrix} H_{(\Phi-\Lambda
\Psi)^*}^*H_{(\Phi-\Lambda \Psi)^*}-H_{(\Phi-\Lambda
\Psi)}^*H_{(\Phi-\Lambda \Psi)}&H_{(\Phi-\Lambda
\Psi)^*}^*H_{\Psi^*}-H_{\Psi}^*H_{(\Phi-\Lambda \Psi)}\\
H_{\Psi^*}^*H_{(\Phi-\Lambda \Psi)^*}-H_{(\Phi-\Lambda
\Psi)}^*H_{\Psi}&H_{\Psi^*}^*H_{\Psi^*}-H_{\Psi}^*H_{\Psi}
\end{pmatrix}.
\end{equation*}
Note that if $\Phi\in L^\infty_{M_n}$ and $\Lambda$ is a constant
matrix such that $\Phi_- \Lambda=\Lambda \Phi_-$, then by the
Fuglede-Putnam Theorem,
\begin{equation}\label{3.12-1}
H_{\Phi}\, T_\Lambda = H_{\Phi\Lambda}=H_{\Lambda\Phi}=T_\Lambda\,
H_{\Phi}.
\end{equation}
Then by a straightforward calculation together with (\ref{3.12-1})
and the assumption that $\Lambda\Lambda^*=\Lambda^*\Lambda$ and
$\Lambda$ commutes with $\Psi_-$ and $\Phi_-$, we can show that
$\mathcal S= \mathcal T$. \  This proves the lemma. \
\end{proof}

\begin{lemma}\label{lem5.6}
Let $\mathbf T:=(T_\Phi, T_\Psi)$ be a pseudo-hyponormal Toeplitz
pair with bounded type symbols $\Phi,\Psi\in L^\infty_{M_n}$. \ If
$\Phi$ and $\Psi$ are not analytic and if $\Psi_+=\Theta A^*$
\hbox{\rm (right coprime)}, then
\begin{equation}\label{5.12}
H_{\Psi_-^*}^* H_{\Phi_-^*}T_{\Theta}=0. \
\end{equation}
\end{lemma}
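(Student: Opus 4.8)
The plan is to read off the conclusion from the positivity of the pseudo-commutator $[\mathbf{T}^*,\mathbf{T}]_p$, combined with the fact that the right coprime factorization $\Psi_+=\Theta A^*$ forces $\hbox{ran}\,T_\Theta$ to lie inside $\hbox{ker}\,H_{\Psi_+^*}$. Since $\Psi$ is of bounded type (and, by our standing convention, so is $\Psi^*$), the factorization $\Psi_+=\Theta A^*$ is available; here $\Theta\in H^\infty_{M_n}$ is inner, so $\Theta H^2_{\mathbb C^n}$ is closed and coincides with $\hbox{ran}\,T_\Theta$. Relation (\ref{RCD}) then gives $\hbox{ker}\,H_{\Psi_+^*}=\Theta H^2_{\mathbb C^n}$, whence the first key step:
$$
H_{\Psi_+^*}T_\Theta=0.
$$

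Next I would invoke the pseudo-hyponormality of $\mathbf{T}$: writing $[\mathbf{T}^*,\mathbf{T}]_p=\left(\begin{smallmatrix}A&B\\ B^*&C\end{smallmatrix}\right)$ with $A=H_{\Phi_+^*}^*H_{\Phi_+^*}-H_{\Phi_-^*}^*H_{\Phi_-^*}$, $B=H_{\Phi_+^*}^*H_{\Psi_+^*}-H_{\Psi_-^*}^*H_{\Phi_-^*}$, and $C=H_{\Psi_+^*}^*H_{\Psi_+^*}-H_{\Psi_-^*}^*H_{\Psi_-^*}$ (all four Hankel operators being bounded because $\Phi,\Psi\in L^\infty_{M_n}$), the positivity test (\ref{2.21}) yields $|\langle Bg,f\rangle|^2\le\langle Af,f\rangle\,\langle Cg,g\rangle$ for all $f,g\in H^2_{\mathbb C^n}$. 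Specializing $g:=T_\Theta h=\Theta h$ for an arbitrary $h\in H^2_{\mathbb C^n}$ and using $H_{\Psi_+^*}T_\Theta=0$, we obtain on one hand $Bg=-H_{\Psi_-^*}^*H_{\Phi_-^*}T_\Theta h$, and on the other hand $\langle Cg,g\rangle=\|H_{\Psi_+^*}T_\Theta h\|^2-\|H_{\Psi_-^*}T_\Theta h\|^2=-\|H_{\Psi_-^*}T_\Theta h\|^2\le 0$.

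Finally, since $[\mathbf{T}^*,\mathbf{T}]_p\ge 0$ its $(1,1)$-compression $A=[T_\Phi^*,T_\Phi]_p$ is positive, so $\langle Af,f\rangle\ge 0$; hence the right-hand side of the inequality above is nonpositive, which forces $\langle Bg,f\rangle=0$ for every $f\in H^2_{\mathbb C^n}$, i.e. $H_{\Psi_-^*}^*H_{\Phi_-^*}T_\Theta h=0$. As $h$ was arbitrary, $H_{\Psi_-^*}^*H_{\Phi_-^*}T_\Theta=0$, which is the assertion. I do not expect a genuine obstacle: the whole argument is a short piece of functional-analytic bookkeeping once the kernel identification $H_{\Psi_+^*}T_\Theta=0$ is in place; the only points needing care are the boundedness of the Hankel operators involved (so that (\ref{2.21}) applies verbatim) and the observation that the hypothesis ``$\Phi$ and $\Psi$ are not analytic'' is in fact not needed for this particular identity — if $\Phi$ or $\Psi$ were analytic the relevant Hankel operator would vanish and the identity would be trivial.
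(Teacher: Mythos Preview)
Your argument is correct and is precisely the positivity-test computation via (\ref{2.21}) that the paper invokes (the paper gives no further detail). Your observation that the non-analyticity hypothesis is not actually used for the identity itself is also accurate.
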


\begin{proof} \
This follows from the positivity test for $[\mathbf T^*, \mathbf
T]_p$ via (\ref{2.21}).
\end{proof}

We now have

\begin{corollary}\label{cor55.6}
Let $\mathbf T:=(T_\Phi, T_\Psi)$ be a pseudo-hyponormal Toeplitz
pair with bounded type symbols $\Phi, \Psi \in L^\infty_{M_n}$,
which  are not analytic. \ If $\Psi$ has a tensored-scalar singularity then
\begin{equation}\label{5.16}
\hbox{\rm ker}\, H_{\Psi_+^*} \subseteq \hbox{\rm ker}\,
H_{\Phi_-^*}\,.
\end{equation}
\end{corollary}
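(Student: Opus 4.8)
The plan is to deduce \eqref{5.16} from Lemma \ref{lem5.6} by feeding in the tensored-scalar singularity hypothesis through Lemma \ref{lem55.2-1}. First I would write the right coprime factorizations of the analytic parts: since $\Phi$ and $\Psi$ are bounded type and not analytic, $\Phi_+=\Theta_\Phi A^*$ and $\Psi_+=\Theta A^*$ (right coprime), so that $\hbox{ker}\,H_{\Psi_+^*}=\Theta H^2_{\mathbb C^n}$ by \eqref{RCD}. Thus \eqref{5.16} is equivalent to $\Theta H^2_{\mathbb C^n}\subseteq \hbox{ker}\,H_{\Phi_-^*}$, i.e. to $H_{\Phi_-^*}T_\Theta=0$. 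By Lemma \ref{lem5.6} (applied with the roles of $\Phi,\Psi$ arranged so that $\Psi_+=\Theta A^*$), pseudo-hyponormality of $\mathbf T$ already gives $H_{\Psi_-^*}^*H_{\Phi_-^*}T_\Theta=0$, so it remains to upgrade this to $H_{\Phi_-^*}T_\Theta=0$ itself. That upgrade is exactly where the tensored-scalar singularity of $\Psi$ enters: it forces $\Theta$ to have a large (full-range) diagonal-constant inner divisor, which lets me peel off the ``$H_{\Psi_-^*}^*$'' in front.

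Concretely, since $\Psi$ has a tensored-scalar singularity, Lemma \ref{lem55.2-1} gives a nonconstant scalar inner function $\theta$ with $\ker H_\Psi=\Theta H^2_{\mathbb C^n}\subseteq \theta H^2_{\mathbb C^n}$, hence $I_\theta$ is a left inner divisor of $\Theta$; write $\Theta=I_\theta\Theta_1$. Then $\mathop{\rm ran}T_\Theta\subseteq \mathop{\rm ran}T_{I_\theta}=\theta H^2_{\mathbb C^n}$, and I want to show $H_{\Phi_-^*}$ annihilates this. From $H_{\Psi_-^*}^*H_{\Phi_-^*}T_\Theta=0$ I get $\hbox{ran}\,H_{\Phi_-^*}T_\Theta\subseteq (\hbox{cl\,ran}\,H_{\Psi_-^*})^\perp=\ker H_{\Psi_-^*}^*=\ker H_{\widetilde{\Psi_-^*}}$. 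Now I would invoke the factorization/coprimeness machinery: writing $\Psi_-=\Theta_2 B^*$ (right coprime) so $\widetilde{\Psi_-^*}=\widetilde{\Theta_2}^*\,\widetilde{B}\cdot(\ \cdot\ )$, and using that $\Psi_-\in zH^2_{M_n}$, the kernel of $H_{\Psi_-^*}^*$ is a shift-invariant subspace of the form $\Delta H^2_{\mathbb C^n}$ for some inner $\Delta$. The key point is that since $H_{\Phi_-^*}T_\Theta$ has range inside $\Delta H^2_{\mathbb C^n}$ \emph{and} inside $\mathcal H(I_z)$-type constraints coming from $\Phi_-\in zH^2$, I can iterate: replacing $\Theta$ by $\theta^{-1}\Theta=\Theta_1$ and repeating the argument (exactly in the spirit of the induction at the end of the proof of Theorem \ref{thm2.2}), one shows $\hbox{ran}\,H_{\Phi_-^*}T_\Theta\subseteq \bigcap_{p\ge 1}\zeta^p H^2_{\mathbb C^n}$ for a nonconstant inner $\zeta$, which is $\{0\}$ by Lemma \ref{lem1.4}.

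The main obstacle, I expect, is carrying out this iteration cleanly: one must verify at each stage that the relevant coprimeness survives (so that \eqref{RCD} and Theorem \ref{lem33.9} keep applying after dividing off $I_\theta$), and that the intersection argument indeed produces a strictly increasing tower of inner divisors, so that Lemma \ref{lem1.4} can be applied. A cleaner alternative, which I would try first, is to avoid the induction entirely: observe that $\ker H_{\Psi_+^*}=\Theta H^2_{\mathbb C^n}$ and, because $\mathbf T$ is pseudo-hyponormal, the $2\times 2$ positivity of $[\mathbf T^*,\mathbf T]_p$ forces (via \eqref{2.20} applied to the diagonal blocks) that $\ker(H_{\Psi_+^*}^*H_{\Psi_+^*}-H_{\Psi_-^*}^*H_{\Psi_-^*})$, i.e. $\ker H_{\Psi_+^*}$, lies in $\ker H_{\Phi_-^*}$ once we know $\Phi_-$ and $\Psi_-$ share the tensored-scalar structure; but pinning down that shared structure is precisely what uses the hypothesis ``$\Psi$ has a tensored-scalar singularity'' together with Lemma \ref{lem5.6}. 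Either way, the conclusion $H_{\Phi_-^*}T_\Theta=0$ gives $\Theta H^2_{\mathbb C^n}\subseteq \ker H_{\Phi_-^*}$, i.e. $\ker H_{\Psi_+^*}\subseteq \ker H_{\Phi_-^*}$, which is \eqref{5.16}.
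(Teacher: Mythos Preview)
You have the right starting point and the right target: Lemma~\ref{lem5.6} gives
$H_{\Psi_-^*}^*H_{\Phi_-^*}T_\Theta=0$ with $\Theta$ the inner part of the right coprime
factorization of $\Psi_+$, and you want to upgrade this to $H_{\Phi_-^*}T_\Theta=0$.
But your bridge is the wrong one, and it contains a slip. First the slip: you write
``$\ker H_\Psi=\Theta H^2_{\mathbb C^n}$,'' with $\Theta$ the inner part of $\Psi_+$; that is
false. The kernel $\ker H_\Psi=\ker H_{\Psi_-^*}$ is governed by the inner part of the
\emph{co-analytic} piece $\Psi_-$, not of $\Psi_+$. (It is true that, via pseudo-hyponormality,
the inner part of $\Psi_-$ left-divides $\Theta$, so your conclusion that $I_\theta$ divides
$\Theta$ is salvageable, but not by the reasoning you gave.)

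More importantly, the ``iterate in the spirit of Theorem~\ref{thm2.2}'' step does not go
through. Lemma~\ref{lem5.6} is a statement about $T_\Theta$ for the \emph{full} inner part
$\Theta$ of $\Psi_+$; there is no mechanism that, after peeling off one factor $I_\theta$,
produces a new identity for $T_{\Theta_1}$. Your range-in-kernel observation
$\hbox{ran}\,H_{\Phi_-^*}T_\Theta\subseteq\ker H_{\Psi_-^*}^*$ is correct but does not feed back
into another application of Lemma~\ref{lem5.6}, so there is no genuine induction. The
``cleaner alternative'' via \eqref{2.20} is also not a proof: positivity of
$[\mathbf T^*,\mathbf T]_p$ does not by itself force
$\ker H_{\Psi_+^*}\subseteq\ker H_{\Phi_-^*}$ without exactly the extra input you are trying to supply.

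The missing idea is Theorem~\ref{lem33.5}. Use \eqref{1.4} to rewrite
$H_{\Phi_-^*}T_\Theta=H_{\Phi_-^*\Theta}$, so that \eqref{5.12} becomes a product of two
Hankel operators,
\[
H_{\Psi_-^*}^*\,H_{\Phi_-^*\Theta}=H_{\widetilde{\Psi_-^*}}\,H_{\Phi_-^*\Theta}=0.
\]
Since $\Psi$ has a tensored-scalar singularity, so does $\Psi_-^*$ and hence
$\widetilde{\Psi_-^*}$ (Proposition~\ref{pro55.10}); and $H_{\widetilde{\Psi_-^*}}=H_{\Psi_-^*}^*\neq 0$
because $\Psi$ is not analytic. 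Theorem~\ref{lem33.5} then forces $H_{\Phi_-^*\Theta}=0$, i.e.
$\Theta H^2_{\mathbb C^n}\subseteq\ker H_{\Phi_-^*}$, which is \eqref{5.16}. That is the
paper's one-line argument; your iteration is neither needed nor, as written, sound.
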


\begin{proof} \
In view of (\ref{bts-1}), we may write
$$
\Phi_+ = \Theta_0 \Theta_1 A^*, \ \ \Phi_- =\Theta_0 B^*,  \ \
\Psi_+ = \Theta_2\Theta_3 C^* , \ \ \Psi_- = \Theta_2 D^* \quad
\hbox{\rm (right coprime)}.
$$
Since $\Psi$ has a tensored-scalar singularity, it follows from (\ref{5.12}),
(\ref{1.4}), and Theorem \ref{lem33.5} that
$H_{\Phi_-^*\Theta_2\Theta_3}=0$, and hence $\ker
H_{\Psi_+^*}=\Theta_2\Theta_3 H^2_{\mathbb C^n} \subseteq \ker
H_{\Phi_-^*}$.
\end{proof}

\medskip

\begin{lemma} \label{lem5.12} \
Let $\mathbf T:=(T_\Phi, T_\Psi)$ be a pseudo-hyponormal Toeplitz
pair with bounded type symbols $\Phi, \Psi \in L^\infty_{M_n}$ of
the form
\begin{equation}\label{5.26-1}
\Phi_+ = \Theta_0\Theta_1 A^*, \ \ \Phi_- =\Theta_0 B^* ,\ \ \Psi_+
= \Theta_2\Theta_3 C^*, \ \ \Psi_- =\Theta_2 D^* \quad(\hbox{right
coprime}).
\end{equation}
If $\Phi^*$ has a tensored-scalar singularity and
$\Theta_0^*\Theta_2=I_{\theta}$ for some inner function $\theta$,
then $I_{\theta}$ is an inner divisor of $\Theta_3$. \
\end{lemma}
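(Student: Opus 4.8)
The plan is to convert the pseudo-hyponormality of the pair $\mathbf T=(T_\Phi,T_\Psi)$ into a vanishing product of Hankel operators, feed that product into the Chapter 5 machinery on tensored-scalar singularities (Theorem \ref{lem33.5}, Proposition \ref{pro55.10}, Lemma \ref{lem55.2-1}), and then read off the divisibility $I_\theta\mid\Theta_3$ from the right coprime factorization of $\Psi_+$.

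First I would record the structural consequences of the hypotheses. If $\theta$ is constant there is nothing to prove, so assume $\theta$ nonconstant. Since $\Theta_0$ is inner, $\Theta_0^*\Theta_2=I_\theta$ forces $\Theta_2=\Theta_0I_\theta=\theta\Theta_0$, so $I_\theta$ is a left inner divisor of $\Theta_2$; because $\Psi_-=\Theta_2D^*$ is right coprime, (\ref{RCD}) gives $\ker H_\Psi=\ker H_{\Psi_-^*}=\Theta_2H^2_{\mathbb C^n}\subseteq\theta H^2_{\mathbb C^n}$, so by Lemma \ref{lem55.2-1} the symbol $\Psi$ has a tensored-scalar singularity with respect to $\theta$. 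Similarly, from $\Phi_+=\Theta_0\Theta_1A^*$ (right coprime) and (\ref{RCD}) we get $\ker H_{\Phi^*}=\ker H_{\Phi_+^*}=\Theta_0\Theta_1H^2_{\mathbb C^n}$, so the hypothesis that $\Phi^*$ has a tensored-scalar singularity (taken with respect to the $\theta$ of interest) says $I_\theta$ divides $\Theta_0\Theta_1$; write $\Theta_0\Theta_1=\theta\,\Xi$ with $\Xi\in H^\infty_{M_n}$ inner, so that $\Xi=\overline\theta\Theta_0\Theta_1$. We may also assume $\Phi$ and $\Psi$ are not analytic, since otherwise the Hankel operators entering $[\mathbf T^*,\mathbf T]_p$ degenerate and the claim is immediate.

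Next I would extract the key vanishing relation. The positivity of $[\mathbf T^*,\mathbf T]_p$ is unchanged under interchanging the two coordinates, so Lemma \ref{lem5.6} applied to the pair $(T_\Psi,T_\Phi)$, whose second symbol has right coprime factorization $\Phi_+=(\Theta_0\Theta_1)A^*$, yields $H_{\Phi_-^*}^*H_{\Psi_-^*}T_{\Theta_0\Theta_1}=0$. Using (\ref{1.2}), (\ref{1.4}) and the computations $\Psi_-^*=\overline\theta\,D\Theta_0^*$ and $\Psi_-^*\Theta_0\Theta_1=\overline\theta\,D\Theta_1=D\Theta_0^*\Xi$ (a direct multiplication together with $\Xi=\overline\theta\Theta_0\Theta_1$), this becomes $H_{\widetilde{\Phi_-^*}}\,H_{D\Theta_0^*\Xi}=0$. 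By Proposition \ref{pro55.10} applied to the tensored-scalar singularity data recorded above, one of the two symbols $\widetilde{\Phi_-^*}$, $D\Theta_0^*\Xi$ carries a tensored-scalar singularity, so Theorem \ref{lem33.5} applies; since $\Phi$ is not analytic, $H_{\widetilde{\Phi_-^*}}=H_{\Phi_-^*}^*\neq 0$, and therefore $H_{D\Theta_0^*\Xi}=0$, i.e.\ $D\Theta_0^*\Xi\in H^2_{M_n}$.

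Finally I would translate this analyticity back into a divisibility statement about $\Theta_3$, using the right coprime factorization $\Psi_+=\Theta_2\Theta_3C^*=\theta\Theta_0\Theta_3C^*$, the decomposition of $\mathcal K_{\Theta}$ from Lemma \ref{lem22.55}, and Lemma \ref{lem2.4}, to isolate the inner factor $\Theta_3$ and conclude $\Theta_3H^2_{\mathbb C^n}\subseteq\theta H^2_{\mathbb C^n}$, i.e.\ $I_\theta\mid\Theta_3$. I expect this last step to be the main obstacle: it requires careful bookkeeping of how the four inner factors $\Theta_0,\Theta_1,\Theta_2=\theta\Theta_0,\Theta_3$ and the scalar $\theta$ interleave — in particular, verifying that the tensored-scalar singularity of $\Phi^*$ is being invoked with respect to the correct $\theta$ so that the hypotheses of Theorem \ref{lem33.5} are genuinely met (a matrix inner divisor of $I_\theta$ need not be diagonal-constant, so the presence of a tensored-scalar singularity on the relevant symbol is not automatic), and extracting precisely $I_\theta\mid\Theta_3$ rather than the weaker $I_\theta\mid\Theta_0\Theta_3$. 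I anticipate needing the coprimeness results of Chapter 4 (Theorem \ref{lem33.9}, Corollary \ref{cor22.99}, Lemma \ref{lem544444.2-1}) at this stage to decouple $\Theta_0$ from the scalar factor $\theta$.
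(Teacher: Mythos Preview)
Your approach has two genuine gaps.

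First, the invocation of Theorem \ref{lem33.5} is not justified. You need either $\widetilde{\Phi_-^*}$ or $\Psi_-^*\Theta_0\Theta_1=\overline\theta D\Theta_1$ to have a tensored-scalar singularity. But the hypothesis ``$\Phi^*$ has a tensored-scalar singularity'' means $\ker H_{\Phi^*}=\ker H_{\Phi_+^*}=\Theta_0\Theta_1H^2_{\mathbb C^n}$ sits inside some $\theta'H^2_{\mathbb C^n}$; this is a statement about $\Theta_0\Theta_1$, not about $\Theta_0$ alone. So it does not give $\Phi_-^*$ (equivalently $\widetilde{\Phi_-^*}$, via Proposition \ref{pro55.10}) a tensored-scalar singularity. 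Nor do you verify one for $\overline\theta D\Theta_1$. Your parenthetical ``taken with respect to the $\theta$ of interest'' is an over-reading of the hypothesis; nothing in the statement forces the singularity of $\Phi^*$ to be with respect to this particular $\theta$.

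Second, and more seriously, even if the Hankel vanishing $H_{\overline\theta D\Theta_1}=0$ were established, it gives $D\Theta_1\in\theta H^2_{M_n}$; using that $D$ and $I_\theta$ are coprime (Theorem \ref{lem33.9}) and (\ref{RCD}), this yields $I_\theta\mid\Theta_1$---not $I_\theta\mid\Theta_3$. Your derived relation involves only $\Psi_-$ data (namely $D$ and $\Theta_2$) together with the inner part of $\Phi_+$; the factor $\Theta_3$ never appears. The final paragraph's plan to ``translate this analyticity back'' via Lemma \ref{lem22.55} and Lemma \ref{lem2.4} cannot work, because there is simply no $\Theta_3$ in the relation to translate.

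The paper closes this gap by a reduction you are missing: after establishing $I_\theta\mid\Theta_1$ (via Corollary \ref{cor55.6}), it compresses both symbols by $\Theta_0$ (Lemma \ref{lem5.4}), which makes $\Phi^{(1)}_-=0$. Then Lemma \ref{lem5.3} converts pseudo-hyponormality of the pair $(T_{\Phi^{(1)}},T_{\Psi^{(1)}})$ with analytic first coordinate into pseudo-hyponormality of the \emph{single} operator $T_{(\Psi^{(1)})^{1,\Theta_1}}$. For this operator the co-analytic part is still $\theta D_1^*$ (coprime), while the analytic part has inner factor dividing $\Theta_3$; the necessary condition (\ref{bts-1}) for single-operator pseudo-hyponormality then forces $I_\theta$ to divide that inner factor, hence $\Theta_3$. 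This compression-then-single-operator-reduction is the missing mechanism that ties $\Theta_3$ into the argument.
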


\begin{proof} By assumption, $\Theta_2=\theta\Theta_0$. \
Thus we may write
$$
\Phi_+ = \Theta_0 \Theta_1 A^*, \ \ \Phi_- =\Theta_0 B^*, \ \ \Psi_+
= \theta\Theta_0 \Theta_3 C^*, \ \ \Psi_- = \theta \Theta_0 D^*
\quad(\hbox{right coprime}).
$$
By Corollary \ref{cor55.6}, we have $\Theta_0\Theta_1 H^2_{\mathbb
C^n} \subseteq \theta\Theta_0 H^2_{\mathbb C^n}$. \  It thus follows
from \cite [Corollary IX.2.2] {FF} that $\Theta_1= \theta \Delta$
for some square inner matrix function $\Delta$. \ Thus we can write
$$
\Phi_+ = \theta\Theta_0 \Delta A^*, \ \ \Phi_- =\Theta_0 B^*, \ \
\Psi_+ = \theta\Theta_0 \Theta_3 C^*, \ \ \Psi_- = \theta\Theta_0
D^*\quad(\hbox{right coprime}).
$$
Put
$$
\Phi^{(1)}:=\Phi_{\Theta_0}, \ \ \Psi^{(1)}:=\Phi_{\Theta_0}.
$$
Then by Lemma \ref{lem5.4}, $(T_{\Phi^{(1)}},  T_{\Psi^{(1)}})$ is
pseudo-hyponormal. \  By Proposition \ref{pro2.6} we get the
following right coprime factorizations:
$$
\Phi^{(1)}_+ = {\theta} \Delta A_1^*, \ \ \Phi^{(1)}_- =0, \ \
\Psi^{(1)}_+ = {\theta}\Theta_3 C_1^*, \ \ \Psi^{(1)}_- = {\theta}
D_1^*\,,
$$
where $A_1:=P_{\mathcal K_{{\theta} \Delta_1}}A$, $C_1:=P_{\mathcal
K_{{\theta}\Theta_3}}C$, $D_1:=P_{\mathcal K_{{\theta}}}D$. \  It
thus follows from Lemma \ref{lem5.3} that $T_{(\Psi^{(1)})^{1,
\Theta_1}}$ is pseudo-hyponormal. \ Observe that
$$
\left({(\Psi^{(1)})^{1, \Theta_1}}\right)_+
=P_{H_0^2}(\Psi^{(1)}_+\Delta^* I_{\theta}^*) =P_{H_0^2}(\Theta_3
(\Delta C_1)^*) =\Theta_3\bigl(P_{\mathcal K_{\Theta_3}}(\Delta
C_1)\bigr)^*,
$$
where the last equality follows from  Lemma \ref{lem2.4}. \ Thus we
can write
$$
\left({(\Psi^{(1)})^{1, \Theta_1}}\right)_+ =\Theta_3^{\prime} B^*
\qquad(\hbox{right coprime}),
$$
where $\Theta_3^{\prime}$ is a left inner divisor of $\Theta_3$. \
Since $T_{(\Psi^{(1)})^{1, \Theta_1}}$ is pseudo-hyponormal, it
follows from (\ref{bts-1}) that $I_{\theta}$ is a (left) inner
divisor of $\Theta_3^{\prime}$ and hence is a inner divisor of
$\Theta_3$. \
\end{proof}

\medskip

\begin{lemma} \label{cor5.16} \ Let $\mathbf{T}\equiv(T_{\Phi}, T_{\Psi})$ be a pseudo-hyponormal
Toeplitz pair with bounded type symbols $\Phi, \Psi \in
L^\infty_{M_n}$ of the form
$$
\Phi_+ = \theta \theta_1 A^*, \ \ \Phi_- =\theta_0 B^*, \ \ \Psi_+ =
\theta \theta_3 C^*, \ \ \Psi_- = \theta_2 D^*\quad \hbox{\rm
(coprime)},
$$
where $\theta:=\hbox{\rm l.c.m.}(\theta_0, \theta_2)$. If we let
$\delta:=\hbox{\rm g.c.d.}(\theta_1, \theta_3)$, then
$$
\mathbf{T}   \ \hbox{ pseudo-hyponormal} \Longleftrightarrow
 \mathbf{T}_{\Delta}:=(T_{\Phi^{1, \delta}}, T_{\Psi^{1,
\delta}}) \ \hbox{ pseudo-hyponormal}.
$$
\end{lemma}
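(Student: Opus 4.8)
The plan is to reduce the pseudo-hyponormality of $\mathbf{T}$ step by step, using the pull-back machinery from Chapters 5 and 6. First I would set $\Omega\equiv I_\delta$ and observe that since $\delta=\hbox{g.c.d.}(\theta_1,\theta_3)$, both $I_\delta$ divides $I_{\theta_1}$ and $I_\delta$ divides $I_{\theta_3}$; write $\theta_1=\delta\theta_1'$ and $\theta_3=\delta\theta_3'$. The goal reduces to showing that passing from $\Phi_+=\theta\theta_1 A^*$, $\Phi_-=\theta_0 B^*$ to $\Phi^{1,\delta}$ (which has $(\Phi^{1,\delta})_+=P_{H^2_0}(\overline\delta\Phi_+)$ and $(\Phi^{1,\delta})_-=\Phi_-$), and similarly for $\Psi$, preserves pseudo-hyponormality in both directions. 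The forward direction ($\mathbf{T}$ pseudo-hyponormal $\Rightarrow$ $\mathbf{T}_\Delta$ pseudo-hyponormal) I would handle via Lemma \ref{lem5.4}: taking $\Theta\equiv I_\delta$, that lemma gives that $(T_{\Phi_{I_\delta}}, T_{\Psi_{I_\delta}})$ is pseudo-hyponormal whenever $(T_\Phi,T_\Psi)$ is; I then need to check that $\Phi_{I_\delta}$ and $\Phi^{1,\delta}$ agree in the relevant sense. Since $\delta$ is scalar-inner and $\Phi_-=\theta_0 B^*$ with $\delta$ and $\theta_0$ coprime (this needs the hypothesis that $\theta=\hbox{l.c.m.}(\theta_0,\theta_2)$ and that $\delta$ divides only the ``extra'' inner factors $\theta_1,\theta_3$, not $\theta_0,\theta_2$), the co-analytic parts are unchanged under $P_{(H^2)^\perp}(\Phi_-^*I_\delta)$ up to the coprime-ness bookkeeping, so $\Phi_{I_\delta}$ coincides with $\Phi^{1,\delta}$ after absorbing a unitary constant. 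The same applies to $\Psi$.

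For the converse direction I would invoke Proposition \ref{pro3.13}(a) twice, once for each coordinate, but since pseudo-hyponormality of a pair is genuinely a joint condition, the cleaner route is to mimic the proof of Lemma \ref{lem5.3}/Lemma \ref{lem5.4} directly at the level of pseudo-commutators. Concretely, one computes
$$
[(\mathbf{T}_\Delta)^*,\mathbf{T}_\Delta]_p=
\begin{pmatrix} T_{I_\delta} & 0\\ 0 & T_{I_\delta}\end{pmatrix}^*
[\mathbf{T}^*,\mathbf{T}]_p
\begin{pmatrix} T_{I_\delta} & 0\\ 0 & T_{I_\delta}\end{pmatrix}
$$
exactly as in Lemma \ref{lem5.4}, which gives $\Rightarrow$; and for $\Leftarrow$ one uses that $T_{I_\delta}$ has closed range $\delta H^2_{\mathbb{C}^n}$ and that, because $I_\delta$ and the inner parts $\Theta_0,\Theta_2$ of $\Phi_-,\Psi_-$ are coprime, the compression above is ``faithful'' on the part of $H^2_{\mathbb{C}^n}$ that matters for positivity — i.e. the ranges of the Hankel operators $H_{\Phi_-^*}$, $H_{\Psi_-^*}$, $H_{\Phi_+^*}$, $H_{\Psi_+^*}$ all lie in $\delta H^2_{\mathbb{C}^n}$ after the appropriate adjustment. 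To make this precise I would combine (\ref{RCD}) (to identify kernels of Hankel operators with inner-function shifted subspaces) with (\ref{1.5}) (to see that $H_{I_\delta^*}H_{I_\delta^*}^*$ is the projection onto $\mathcal{H}(\widetilde{I_\delta})$), just as in the proof of Lemma \ref{lem5.3}. This reduces the positivity test for $[\mathbf{T}^*,\mathbf{T}]_p$ on all of $H^2_{\mathbb{C}^n}\oplus H^2_{\mathbb{C}^n}$ to the test for $[(\mathbf{T}_\Delta)^*,\mathbf{T}_\Delta]_p$, the discrepancy being supported on $\mathcal{H}(I_\delta)\oplus\mathcal{H}(I_\delta)$ where everything vanishes because $\delta$ divides the analytic-part inner factors $\theta\theta_1,\theta\theta_3$ and the co-analytic parts are coprime to $\delta$.

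The main obstacle I expect is the careful bookkeeping of which inner functions are coprime to $\delta$: the statement only makes sense if $\delta=\hbox{g.c.d.}(\theta_1,\theta_3)$ is coprime to $\theta_0$ and $\theta_2$ (otherwise $\Phi^{1,\delta}$ would not merely ``peel off'' an analytic inner factor). I would first record this coprime-ness as a preliminary observation — it follows because in the coprime factorizations $\Phi_+=\theta\theta_1 A^*$, $\Phi_-=\theta_0 B^*$ the functions $\theta\theta_1$ and $B$ are coprime and $\theta_0$ and $A$ are coprime, together with $\theta=\hbox{l.c.m.}(\theta_0,\theta_2)$; so any common inner divisor of $\delta$ and $\theta_0$ (or $\theta_2$) would already be a common divisor of $\theta_1$ with $\theta_0$, contradicting that $\theta\theta_1$ absorbs all of $\theta_0$'s multiplicity into $\theta$. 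Once that coprime-ness is in hand, Proposition \ref{pro2.6}(c) tells us exactly what $(\Phi^{1,\delta})_+$, $(\Psi^{1,\delta})_+$ look like as coprime factorizations, and the two reductions above close the argument. The second, more technical point will be verifying that $\Phi^{1,\delta}$, $\Psi^{1,\delta}$ still have bounded type symbols with the claimed coprime forms so that the conclusion is correctly stated; this is immediate from Lemma \ref{lem2.4} and Proposition \ref{pro2.6}.
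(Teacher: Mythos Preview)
There is a genuine gap: you have conflated the two operations $\Phi\mapsto\Phi_{I_\delta}$ and $\Phi\mapsto\Phi^{1,\delta}$. By the definitions on p.\pageref{notation}, $\Phi_{I_\delta}=P_{(H^2)^\perp}(\Phi_-^*\,\delta)+P_{H^2_0}(\overline\delta\,\Phi_+)$ modifies \emph{both} parts, whereas $\Phi^{1,\delta}=\Phi_-^*+P_{H^2_0}(\overline\delta\,\Phi_+)$ leaves the co-analytic part untouched. Your displayed identity
\[
[(\mathbf T_\Delta)^*,\mathbf T_\Delta]_p
=\begin{pmatrix}T_{I_\delta}&0\\0&T_{I_\delta}\end{pmatrix}^*
[\mathbf T^*,\mathbf T]_p
\begin{pmatrix}T_{I_\delta}&0\\0&T_{I_\delta}\end{pmatrix}
\]
is precisely the content of Lemma~\ref{lem5.4} for $\bigl(T_{\Phi_{I_\delta}},T_{\Psi_{I_\delta}}\bigr)$, \emph{not} for $\mathbf T_\Delta=\bigl(T_{\Phi^{1,\delta}},T_{\Psi^{1,\delta}}\bigr)$. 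A short computation with (\ref{1.5}) shows the two pseudo-self-commutators differ by blocks of the form $H_{\Phi_-^*}^*P_{\mathcal H(I_{\widetilde\delta})}H_{\Phi_-^*}$, which do not vanish in general. Your attempted rescue---that the co-analytic parts are ``unchanged'' because $\delta$ is coprime to $\theta_0,\theta_2$---fails on two counts: first, nothing in the hypotheses forces $\delta=\hbox{g.c.d.}(\theta_1,\theta_3)$ to be coprime to $\theta_0$ (take $\theta_0=\theta_2=\theta_1=\theta_3=z$ with all factorizations coprime); second, even when they are coprime, $P_{(H^2)^\perp}(\overline{\theta_0}\,\delta B)=\overline{\theta_0}\,P_{\mathcal K_{\theta_0}}(\delta B)\neq\overline{\theta_0}B$ in general.

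The paper's proof goes by a different route: it adapts the scalar argument of \cite[Proof of Theorem~1]{HL4}, where the pseudo-self-commutator of the pair is expressed through the compressions $(T_A)_{I_{\theta\theta_1}}$, $(T_C)_{I_{\theta\theta_3}}$ on the model spaces, and the equivalence is obtained by showing these compressions are invertible. In the scalar case that invertibility is elementary; the matrix-valued replacement is exactly Theorem~\ref{thm82.7}, which guarantees a (possibly unbounded) linear inverse of $(T_A)_{I_\theta}$ whenever $A$ and $I_\theta$ are coprime. That is the tool you are missing---the transfer of positivity between $[\mathbf T^*,\mathbf T]_p$ and $[(\mathbf T_\Delta)^*,\mathbf T_\Delta]_p$ runs through these model-space intertwinings, not through conjugation by $T_{I_\delta}$.
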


\begin{proof}
This follows from a slight variation of \cite[Proof of Theorem
1]{HL4} for the matrix-valued case by using Theorem \ref{thm82.7}.
\end{proof}

\medskip

\begin{lemma}\label{lem5.9} Suppose $\theta$ is a finite Blaschke product of degree $n$. \
Let $\Phi,\Psi\in L^\infty_{M_n}$ be such that
$\mathbf{T}:=(T_{\Phi}, T_{\Psi})$ and
$\mathbf{T}':=(T_{\Phi\circ\theta}, T_{\Psi\circ\theta}). \ $ Then
\begin{equation}\label{5.25}
[\mathbf T'^*, \mathbf T'] \cong  \bigoplus _n [\mathbf T^*, \mathbf
T],
\end{equation}
and
\begin{equation}\label{5.25222}[\mathbf T'^*, \mathbf T']_p \cong
\bigoplus _n [\mathbf T^*, \mathbf T]_p\,,
\end{equation}
where $n=\hbox{\rm deg}\,(\theta)$ and \ $\cong$ means unitary
equivalence. \  In particular, $\mathbf{T}$ is {\rm (}pseudo{\rm )}
hyponormal if and only if $\mathbf{T}'$ is {\rm (}pseudo{\rm )}
hyponormal.
\end{lemma}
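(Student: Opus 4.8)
The plan is to build one explicit unitary that turns ``composition with $\theta$'' into ``tensoring with $I_n$,'' where $n=\deg(\theta)$. Starting from the wandering--subspace decomposition used in the proof of Lemma \ref{lem1.4}, namely $H^2=\bigoplus_{j\ge 0}\theta^j\mathcal H(\theta)$ for the nonconstant inner function $\theta$, I would first record the companion identities
$$(H^2)^\perp=\bigoplus_{j\le -1}\theta^j\mathcal H(\theta)\qquad\hbox{and}\qquad L^2=\bigoplus_{j\in\mathbb Z}\theta^j\mathcal H(\theta).$$
The $L^2$ identity holds because the closed span $\mathcal L$ of all the $\theta^j\mathcal H(\theta)$ reduces the unitary $M_\theta$ and contains $H^2$, while its orthocomplement sits inside $(H^2)^\perp$ and is trivial: any $g$ there satisfies $\theta^j g\in(H^2)^\perp$ for all $j\in\mathbb Z$, which forces $g=0$ via Lemma \ref{lem1.4} (using $\bigcap_{j\ge 1}\theta^jH^2=\{0\}$); the $(H^2)^\perp$ identity is then $L^2\ominus H^2$. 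Since $\theta$ is a finite Blaschke product, $\dim\mathcal H(\theta)=\deg(\theta)=n$; fix an orthonormal basis $e_1,\dots,e_n$ of $\mathcal H(\theta)$.

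Next I would define a unitary $V\colon L^2\otimes\mathbb C^n\to L^2$ by $V(z^j\otimes\epsilon_k):=\theta^j e_k$ on orthonormal bases; by the decompositions above it carries $H^2\otimes\mathbb C^n$ onto $H^2$ and $(H^2)^\perp\otimes\mathbb C^n$ onto $(H^2)^\perp$. One checks immediately that $V(M_{z^l}\otimes I_n)V^*=M_{\theta^l}$ for every $l\in\mathbb Z$, hence $V(M_g\otimes I_n)V^*=M_{g\circ\theta}$ for trigonometric polynomials $g$, and then for all $g\in L^\infty$ by passing to Fej\'er means (uniformly bounded, a.e.\ convergent, and $\theta$ preserves normalized Lebesgue measure, so the relevant multiplication operators converge strongly). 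Tensoring with the identity on the matrix coordinates produces a unitary $V^{(n)}\colon L^2_{\mathbb C^n}\otimes\mathbb C^n\to L^2_{\mathbb C^n}$ restricting to a unitary $\mathcal V\colon H^2_{\mathbb C^n}\otimes\mathbb C^n\to H^2_{\mathbb C^n}$, and satisfying $V^{(n)}(M_G\otimes I_n)(V^{(n)})^*=M_{G\circ\theta}$ and $V^{(n)}(P_n\otimes I_n)(V^{(n)})^*=P_n$ for $G\in L^\infty_{M_n}$; composing these gives
$$\mathcal V(T_G\otimes I_n)\mathcal V^*=T_{G\circ\theta}\qquad(G\in L^\infty_{M_n}).$$
(One first notes $\Phi\circ\theta,\Psi\circ\theta\in L^\infty_{M_n}$, so all operators involved are bounded.)

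Taking $G=\Phi$ and $G=\Psi$ and conjugating the $2\times2$ operator matrix $[\mathbf T'^*,\mathbf T']$ by $\mathcal V\oplus\mathcal V$, using $\mathcal V T^*\mathcal V^*=(\mathcal V T\mathcal V^*)^*$, yields
$$[\mathbf T'^*,\mathbf T']\cong[\mathbf T^*,\mathbf T]\otimes I_n\cong\bigoplus_n[\mathbf T^*,\mathbf T],$$
which is (\ref{5.25}). For the pseudo-commutator I would avoid intertwining Hankel operators: using (\ref{1.3}) entrywise one writes $[\mathbf T^*,\mathbf T]=[\mathbf T^*,\mathbf T]_p+\mathfrak T$, where $\mathfrak T$ is the $2\times2$ block matrix of Toeplitz operators with symbols $\Phi^*\Phi-\Phi\Phi^*,\ \Psi^*\Phi-\Phi\Psi^*,\ \Phi^*\Psi-\Psi\Phi^*,\ \Psi^*\Psi-\Psi\Psi^*$; the same identity for $\mathbf T'$ has Toeplitz part $\mathfrak T'$ with these symbols composed with $\theta$, since $(\Phi\circ\theta)^*=\Phi^*\circ\theta$ on $\mathbb T$ and composition with $\theta$ is multiplicative. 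The displayed intertwining gives $\mathfrak T'\cong\mathfrak T\otimes I_n$, whence
$$[\mathbf T'^*,\mathbf T']_p=[\mathbf T'^*,\mathbf T']-\mathfrak T'\cong\bigl([\mathbf T^*,\mathbf T]-\mathfrak T\bigr)\otimes I_n=[\mathbf T^*,\mathbf T]_p\otimes I_n\cong\bigoplus_n[\mathbf T^*,\mathbf T]_p,$$
which is (\ref{5.25222}). The final ``(pseudo) hyponormal'' equivalences then follow because positivity is preserved under unitary equivalence and under direct sums. The one genuinely delicate step is the density argument extending $V(M_g\otimes I_n)V^*=M_{g\circ\theta}$ from trigonometric polynomials to all $g\in L^\infty$ --- equivalently, recognizing $V$ as the canonical unitary in the direct-integral (disintegration) description of $L^2(\mathbb T)$ over the subalgebra $\{g\circ\theta:g\in L^\infty\}$; the rest is bookkeeping with (\ref{1.3}) and tensor factorizations.
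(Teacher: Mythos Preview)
Your proof is correct and follows essentially the same route as the paper: the paper cites Cowen's theorem \cite[Theorem~1]{Co1} to obtain a unitary $V$ with $T_{\varphi\circ\theta}=V^*(\bigoplus_n T_\varphi)V$, sets $\mathcal V=V\otimes I_n$, and then handles the pseudo-commutator by the identical subtraction $[T_{\Psi\circ\theta}^*,T_{\Phi\circ\theta}]_p=[T_{\Psi\circ\theta}^*,T_{\Phi\circ\theta}]-T_{(\Psi^*\Phi-\Phi\Psi^*)\circ\theta}$. The only difference is that you reconstruct Cowen's unitary from scratch via the wandering-subspace decomposition $L^2=\bigoplus_{j\in\mathbb Z}\theta^j\mathcal H(\theta)$ and a Fej\'er-mean density argument, whereas the paper simply quotes it; your version is thus self-contained but longer.
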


{\it Remark.} \ The unitary operator given in Lemma \ref{lem5.9}
depends only on the inner function $\theta$. \

\begin{proof}[Proof of Lemma \ref{lem5.9}] \ Let $\Phi=\left(\varphi_{ij}\right)\in
L^{\infty}_{M_n}$ be arbitrary. \ Then by a well-known fact due to
C. Cowen \cite [Theorem 1] {Co1}, there exists a unitary operator
$V$ such that
$$
T_{\varphi_{ij}\circ\theta}=V^*\left(\bigoplus_nT_{\varphi_{ij}}\right)V\,
 \quad(\hbox{for}\ i, j=1,2,\cdots, n),
$$
where $n=\hbox{\rm deg}\,(\theta)$. \ Put $\mathcal V := V \otimes
I_n$. \ Then
$$
[T_{\Psi\circ\theta}^*,\ T_{\Phi\circ\theta}] =T_{\Psi\circ\theta}^*
T_{\Phi\circ\theta}-T_{\Phi\circ\theta}T_{\Psi\circ\theta}^*
=\mathcal V^*\left(\bigoplus_n [T_{\Psi}^*, T_{\Phi}]\right)\mathcal
V.
$$
Therefore we have
$$
\aligned~[\mathbf T'^*, \mathbf T'] &=\begin{pmatrix}
\hbox{$[T_{\Phi\circ\theta}^*, T_{\Phi\circ\theta}]$}&
\hbox{$[T_{\Psi\circ\theta}^*, T_{\Phi\circ\theta}]$}\\
\hbox{$[T_{\Phi\circ\theta}^*, T_{\Psi\circ\theta}]$}&
\hbox{$[T_{\Psi\circ\theta}^*, T_{\Psi\circ\theta}]$}
\end{pmatrix}\\
&=\begin{pmatrix}\mathcal V&0\\0&\mathcal V
\end{pmatrix}^*\begin{pmatrix}
\hbox{$\bigoplus_n[T_{\Phi}^*, T_{\Phi}]$}&
\hbox{$\bigoplus_n[T_{\Psi}^*, T_{\Phi}]$}\\
\hbox{$\bigoplus_n[T_{\Phi}^*, T_{\Psi}]$}&
\hbox{$\bigoplus_n[T_{\Psi}^*, T_{\Psi}]$}
\end{pmatrix}\begin{pmatrix}\mathcal V&0\\0&\mathcal V
\end{pmatrix}\\
&\cong \bigoplus_n [\mathbf T^*, \mathbf T]\,,
\endaligned
$$
giving (\ref{5.25}). \  We also observe
$$
\aligned ~ [T_{\Psi\circ\theta}^*,\ T_{\Phi\circ\theta}]_p
&=[T_{\Psi\circ\theta}^*,\
T_{\Phi\circ\theta}]-T_{(\Psi^*\Phi-\Phi\Psi^*)(\theta)}\\
&=\mathcal V^*\left(\bigoplus_n \Bigl[[T_{\Psi}^*,
T_{\Phi}]-T_{(\Psi^*\Phi-\Phi\Psi^*)}\Bigr]\right)\mathcal V\\
&\cong \bigoplus_n [T_{\Psi}^*, T_{\Phi}]_p\,,
\endaligned
$$
giving (\ref{5.25222}). \ The remaining assertions are evident from
(\ref{5.25}) and (\ref{5.25222}). \
\end{proof}

\medskip

\begin{lemma}\label{lem5.18} Let
$\mathbf T=(T_{\Phi}, T_{\Psi})$ be a pseudo-hyponormal Toeplitz
pair with bounded type symbols $\Phi, \Psi \in L^\infty$ of the form
\begin{equation}
\Phi_+ = \theta^{p+1} A^*, \ \ \Phi_- =\theta B^* ,\ \ \Psi_+ =
\theta^{q+1} C^*, \ \ \Psi_- =\theta D^* \quad(\hbox{\rm right
coprime}),
\end{equation}
where the $\theta_i$ is inner. \ If $pq=0$, then $p=q=0$.
\end{lemma}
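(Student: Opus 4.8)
Looking at Lemma \ref{lem5.18}, I need to prove that if $\mathbf{T} = (T_\Phi, T_\Psi)$ is a pseudo-hyponormal Toeplitz pair with these specific factorizations and $pq = 0$, then $p = q = 0$.

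\textbf{Plan of proof.}

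The plan is to argue by contradiction, assuming $pq = 0$ but not both $p$ and $q$ are zero; by symmetry (swapping the roles of $\Phi$ and $\Psi$, which is harmless since pseudo-hyponormality of a pair is symmetric in the obvious sense once one tracks the off-diagonal entry), I may assume $q = 0$ and $p \geq 1$. So
$$
\Phi_+ = \theta^{p+1} A^*, \quad \Phi_- = \theta B^*, \quad \Psi_+ = \theta C^*, \quad \Psi_- = \theta D^* \qquad (\text{right coprime}),
$$
with $p \geq 1$. The first step is to record what pseudo-hyponormality of the pair forces. Since $\mathbf{T}$ is pseudo-hyponormal, the diagonal entries $T_\Phi$ and $T_\Psi$ are pseudo-hyponormal, and by Corollary \ref{cor55.6} (noting that $\Psi$ has a tensored-scalar singularity, since $\Psi_+ = \theta C^*$ is coprime with $\theta$ nonconstant — if $\theta$ were constant the whole statement is vacuous) we get $\ker H_{\Psi_+^*} \subseteq \ker H_{\Phi_-^*}$, i.e. $\theta H^2 \subseteq \theta H^2$, which is automatic and gives nothing new. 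The more useful route is Lemma \ref{lem5.6}: since $\Phi, \Psi$ are not analytic (both have nonzero co-analytic parts $\theta B^*$, $\theta D^*$ — here I should check $B, D$ are not zero; $\Phi_- = \theta B^*$ right coprime with $\theta$ nonconstant forces $B \neq 0$, similarly $D \neq 0$) and $\Psi_+ = \theta C^*$ (right coprime), equation (\ref{5.12}) gives $H_{\Psi_-^*}^* H_{\Phi_-^*} T_\theta = 0$, that is $H_{\overline{\widetilde\theta}\,\widetilde D^*\widetilde\theta \cdots}$-type identity; more precisely $H_{D^*\overline\theta\, \cdot}\,$ applied appropriately. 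Let me instead apply Lemma \ref{lem5.6} with the roles reversed: since $\Phi_+ = \theta^{p+1} A^*$ is right coprime, $(T_\Psi, T_\Phi)$ being pseudo-hyponormal (the pair is pseudo-hyponormal, hence so is the "transposed" pair) yields $H_{\Phi_-^*}^* H_{\Psi_-^*} T_{\theta^{p+1}} = 0$.

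The key step is then to combine these with an inductive or direct rank/divisibility argument analogous to the one in Lemma \ref{lem5.12}. I would pass to compressed symbols: put $\Phi^{(1)} := \Phi_{I_\theta}$ and $\Psi^{(1)} := \Psi_{I_\theta}$ (using $\Phi_\Theta$ notation from the Notations section with $\Theta = I_\theta$). By Lemma \ref{lem5.4}, $(T_{\Phi^{(1)}}, T_{\Psi^{(1)}})$ is again pseudo-hyponormal, and Proposition \ref{pro2.6} gives right coprime factorizations $\Phi^{(1)}_+ = \theta^p A_1^*$, $\Phi^{(1)}_- = 0$, $\Psi^{(1)}_+ = C_1^*$ (since $q=0$, the analytic part loses its only $\theta$ factor), $\Psi^{(1)}_- = D_1^*$ where these are suitable projections. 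Now $T_{\Psi^{(1)}}$ is pseudo-hyponormal with $\Psi^{(1)}_+$ having \emph{no} $\theta$-factor in its inner part while $\Psi^{(1)}_-$ also has no $\theta$-factor; meanwhile the off-diagonal relation, via Lemma \ref{lem5.3} applied to the analytic-type part, should force $\theta^p$ to divide something of degree bounded independent of $p$ — producing a contradiction with Lemma \ref{lem1.4} ($\bigcap \theta^k H^2 = \{0\}$) when $p \geq 1$. Essentially, the pseudo-hyponormality inequality (\ref{2.21}) applied to the pair forces $\mathrm{ran}\, H_{\Phi_-^*}^* H_{\Psi_-^*} \subseteq \mathrm{ran}\,(H_{\Phi_+^*}^* H_{\Phi_+^*})^{1/2}$-type containment, and tracking the inner parts through Theorem \ref{lem33.5} and (\ref{RCD}) shows the $\theta$-power on the analytic side of $\Phi$ must be absorbed, contradicting $p \geq 1$.

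\textbf{Main obstacle.} The hard part will be the bookkeeping in the compression/reduction step: correctly identifying the right coprime factorizations of the compressed symbols $\Phi^{(1)}, \Psi^{(1)}$ via Proposition \ref{pro2.6}(a)–(c), and then showing that the surviving off-diagonal pseudo-commutator relation genuinely forces a divisibility $\theta^p \mid \text{(fixed-degree inner function)}$. One must be careful that the reduction does not simply decrease $p$ to $p-1$ while also decreasing the "budget", which would make the induction vacuous; the point is that $q = 0$ breaks the symmetry so that $\Psi$'s data cannot supply enough $\theta$-factors to match $\theta^{p+1}$ on $\Phi$'s analytic part, and this mismatch is what is exploited. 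I would model the argument closely on the proof of Lemma \ref{lem5.12}, which handles exactly this kind of "tensored-scalar singularity forces a divisibility" mechanism, and on the scalar prototype in \cite{HL4}; once the factorizations are set up correctly, invoking Theorem \ref{lem33.5} (to kill a product of Hankel operators), (\ref{RCD}), and Lemma \ref{lem1.4} closes the argument. A cleaner alternative worth trying first: apply Lemma \ref{lem5.9}-type reduction only if $\theta$ is a finite Blaschke product, but since $\theta$ here is a general inner function, the compression approach via Lemmas \ref{lem5.3}, \ref{lem5.4}, and \ref{lem5.12} is the robust route.
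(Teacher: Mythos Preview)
Your approach has a genuine gap: the compression step you propose annihilates $\Psi$ entirely, so no off-diagonal information survives to exploit. Concretely, with $q=0$ we have $\Psi_+=\theta C^*$ and $\Psi_-=\theta D^*$, hence
\[
\Psi_{\theta}\;=\;P_{(H^2)^\perp}(\Psi_-^*\,\theta)\;+\;P_{H_0^2}(\overline\theta\,\Psi_+)
\;=\;P_{(H^2)^\perp}(D)\;+\;P_{H_0^2}(C^*)\;=\;0+0\;=\;0,
\]
since $D\in H^2$ and $C^*\in \overline{H^2}$. So $(T_{\Phi^{(1)}},T_{\Psi^{(1)}})=(T_{\Phi^{(1)}},0)$, and pseudo-hyponormality of this pair is just pseudo-hyponormality of the analytic $T_{\Phi^{(1)}}$ --- no contradiction. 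The same collapse hits your other tools: Lemma~\ref{lem5.6} with $\Theta=\theta$ (inner part of $\Psi_+$) gives $H_{\Psi_-^*}^*H_{\Phi_-^*}T_\theta=0$, but $H_{\Phi_-^*}T_\theta=H_{\Phi_-^*\theta}=H_{B}=0$ already; reversing roles gives $H_{\Psi_-^*}T_{\theta^{p+1}}=H_{\theta^p D}=0$; and Corollary~\ref{cor55.6} yields only $\theta^{p+1}H^2\subseteq\theta H^2$ and $\theta H^2\subseteq\theta H^2$. Every structural relation you list is vacuous for these particular factorizations, which is exactly why the divisibility/Lemma~\ref{lem1.4} endgame never gets off the ground.

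The paper's proof uses a completely different mechanism: it invokes Lemma~\ref{lem5.5} to pass to $(T_{\Phi-\beta\Psi},T_\Psi)$ for real $\beta$, so in particular $T_{\Phi-\beta\Psi}$ is pseudo-hyponormal. Writing $\Phi-\beta\Psi=\overline\theta(B-\beta D)+\theta^{p+1}\bigl(A-\overline\beta\,\theta^{p}C\bigr)^*$ and applying the pull-back Proposition~\ref{pro3.13} (reducing the analytic inner part from $\theta^{p+1}$ down to $\theta$) gives a pseudo-hyponormal $T_{\Upsilon_\beta}$ with
\[
(\Upsilon_\beta)_-=\theta(B-\beta D)^*,\qquad (\Upsilon_\beta)_+=\theta\bigl(P_{\mathcal K_{\theta}}A\bigr)^*,
\]
the key point being that $P_{\mathcal K_{\theta}}(\overline\beta\,\theta^{p}C)=0$ for $p\ge 1$, so the analytic part is \emph{independent of $\beta$}. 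Then the basic norm inequality $\|(\Upsilon_\beta)_-\|_2\le\|(\Upsilon_\beta)_+\|_2$ fails for $|\beta|$ large (since $D\ne 0$ by coprimeness), giving the contradiction. The missing idea in your proposal is precisely this linear-combination-plus-norm argument; the compression and Hankel-kernel machinery cannot see the obstruction here.
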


\begin{proof} \
Without loss of generality we may assume that $q=0$. \ Suppose that
$p\neq 0$. \ Since $\mathbf T$ is pseudo-hyponormal, it follows from
Lemma \ref{lem5.5} that $(T_{\Phi-\beta\Psi}, T_{\Psi})$ is
pseudo-hyponormal for all $\beta \in \mathbb R$. \  In particular
$T_{\Phi-\beta\Psi}$ is pseudo-hyponormal for all $\beta\in \mathbb
R$. \ Observe that
$$
\Phi-\beta \Psi=\overline\theta (B-\beta D) + \theta^{p+1} (A -
\overline{\beta} C \theta^{p})^*.
$$
Thus by Proposition \ref{pro3.13}, we can see that
$T_{\Upsilon_\beta}$ is pseudo-hyponormal, where
$$
\Upsilon_\beta= \overline\theta (B-\beta D) + \theta (P_{{\mathcal
K}_{I_{\theta}}}(A- \overline{\beta} \theta^{p}C))^* =
\overline\theta (B-\beta D) + \theta \bigl(P_{\mathcal
K_{I_\theta}}A\bigr)^*,
$$
which gives a contradiction because
$||(\Upsilon_\beta)_-||_2>||(\Upsilon_\beta)_+||_2$ if $|\beta|$ is
sufficiently large.
\end{proof}

\medskip

\begin{lemma} \label{thm5.19}
Let $\mathbf T:=(T_\Phi, T_\Psi)$ be a pseudo-hyponormal Toeplitz
pair with bounded type symbols $\Phi,\Psi\in L^\infty_{M_n}$. \
Suppose the inner parts of right coprime factorizations of $\Phi_+$
and $\Psi_+$ commute. \ If $\Phi$ and $\Psi$ have a common tensored-scalar
pole, then this pole has the same order.
\end{lemma}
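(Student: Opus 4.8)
The plan is to reduce the matrix-valued statement to a situation controlled by the scalar machinery of Chapter 6 together with the tensored-scalar results of Chapter 5. First I would set up notation: by hypothesis $\Phi$ and $\Psi$ are bounded type with a common tensored-scalar pole, say at $\alpha\in\mathbb D$, so by Lemma \ref{lem55.2-1} the inner parts of the right coprime factorizations of $\Phi_-$ and $\Psi_-$ each contain a nonconstant power of $I_{b_\alpha}$. Write, in the normalized form (\ref{bts-1}),
$$
\Phi_+ = \Theta_0\Theta_1 A^*,\quad \Phi_- = \Theta_0 B^*,\quad
\Psi_+ = \Theta_2\Theta_3 C^*,\quad \Psi_- = \Theta_2 D^*
\qquad(\hbox{right coprime}),
$$
and let $b_\alpha^p$ (resp. $b_\alpha^q$) be the exact power of $b_\alpha$ dividing the characteristic scalar inner function of $\Theta_0$ (resp. $\Theta_2$); these $p,q$ are the orders of the tensored-scalar pole of $\Phi$ and of $\Psi$ by definition. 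The goal is to prove $p=q$. By symmetry, suppose for contradiction that $p<q$ (say), so $b_\alpha^p$ properly divides $b_\alpha^q$.

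The key steps, in order, are as follows. \emph{Step 1: extract the scalar inner factor.} Since $\Theta_0$ and $\Theta_2$ share the tensored-scalar factor $I_{b_\alpha^p}$, put $\theta:=b_\alpha$ and pass to the compressions $\Phi^{(1)}:=\Phi_{I_{b_\alpha^p}}$, $\Psi^{(1)}:=\Psi_{I_{b_\alpha^p}}$ of the symbols along $I_{b_\alpha^p}$; by Lemma \ref{lem5.4} the pair $(T_{\Phi^{(1)}},T_{\Psi^{(1)}})$ is again pseudo-hyponormal, and Proposition \ref{pro2.6} (applied entrywise in the two coordinates) records exactly which scalar powers of $b_\alpha$ remain in the co-analytic parts. \emph{Step 2: push the analytic inner parts across.} Using the hypothesis that the inner parts of the right coprime factorizations of $\Phi_+$ and $\Psi_+$ commute, and Lemma \ref{lem5.12} / Corollary \ref{cor55.6}, I would show that the kernel inclusions force $I_{b_\alpha^p}$ to divide $\Theta_3$ (this is precisely the content of Lemma \ref{lem5.12}, whose hypothesis $\Theta_0^*\Theta_2=I_\theta$ is the $p<q$, commuting-inner-parts situation after the reduction in Step 1). \emph{Step 3: apply Lemma \ref{lem5.18}.} After Steps 1--2 the pair has, up to the compression, the normalized form
$$
\Phi^{(1)}_+ = \theta^{p'+1}\widehat A^{\,*},\ \ \Phi^{(1)}_- = \theta\widehat B^{\,*},\ \
\Psi^{(1)}_+ = \theta^{q'+1}\widehat C^{\,*},\ \ \Psi^{(1)}_- = \theta\widehat D^{\,*}
\qquad(\hbox{right coprime})
$$
with exactly one of the exponents $p',q'$ equal to $0$ (this is where the strict inequality $p<q$ is turned into a vanishing-order statement, again via Proposition \ref{pro2.6} and Lemma \ref{lem55.2-1}). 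Lemma \ref{lem5.18} then forces $p'=q'=0$, i.e. the extra scalar power that distinguished $p$ from $q$ cannot be present — a contradiction. Hence $p=q$. Finally, Lemma \ref{lem5.9} (pull-back along $\omega$ with $\omega$ built from $b_\alpha$, or simply the direct-sum decomposition it provides) shows the argument is insensitive to passing between $b_\alpha$ and its powers, so the conclusion transfers back to $\Phi,\Psi$ themselves.

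The main obstacle I expect is Step 2: making the \emph{commuting inner parts} hypothesis do real work. Lemma \ref{lem5.12} is stated for the genuinely one-sided situation $\Theta_0^*\Theta_2 = I_\theta$ (scalar), so before invoking it I must verify that after the reduction of Step 1 the common tensored-scalar factor can indeed be factored out on both sides \emph{simultaneously} and that the leftover inner factor relating $\Theta_0$ and $\Theta_2$ is again scalar-valued — this uses Theorem \ref{lem33.9} to move between left and right coprimeness of $I_\theta$ against an $H^\infty_{M_n}$-function, and Corollary \ref{cor22.99}/Lemma \ref{lem22.88} to manage the commuting products. If the leftover factor fails to be scalar, one has to first absorb its non-scalar part into $\Theta_1,\Theta_3$ using Lemma \ref{lem22.55}, which is routine but must be done carefully so that right coprimeness is preserved at each stage. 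Once Step 2 is in place, Steps 1 and 3 are essentially bookkeeping on the already-established lemmas.
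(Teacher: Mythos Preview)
Your outline has a real gap, and Step~3 does not go through as written.

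First, the compression in Step~1 is too weak. Compressing only by $I_{b_\alpha^p}$ leaves the non-scalar inner factors of $\Theta_0,\Theta_1,\Theta_2,\Theta_3$ in place, so after Step~1 the inner parts are still genuine matrix inner functions and the displayed ``normalized form'' of Step~3 (with scalar powers $\theta^{p'+1},\theta,\theta^{q'+1},\theta$) is simply not reached. The paper fixes this by first using Theorem~\ref{lem5.17} and Lemma~\ref{lem5.9} to reduce to $\alpha=0$, writing $\Theta_i=z^{m_i}\Delta_i$ with each $\Delta_i$ coprime to $I_z$, and then compressing by $\Delta:=z^{p-1}\Delta_0\Delta_1\Delta_2\Delta_3$. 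The commutativity hypothesis on $\Theta_0\Theta_1$ and $\Theta_2\Theta_3$ is used precisely here, to compute $\hbox{left-g.c.d.}(\Delta,\Theta_2\Theta_3)=z^{p-1}\Delta_2\Delta_3$ so that Proposition~\ref{pro2.6}(c) yields purely scalar coprime factorizations $\Phi^{(1)}_+=z^{q+1}A_1^*$, $\Phi^{(1)}_-=zB_1^*$, $\Psi^{(1)}_+=z^{r+s-p+1}C_1^*$, $\Psi^{(1)}_-=z^{r-p+1}D_1^*$ (paper notation: the pole orders are $p$ and $r$, with $p<r$ assumed for contradiction).

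Second, even after this correct reduction, Lemma~\ref{lem5.18} is not applicable: it requires $\Phi_-$ and $\Psi_-$ to share the \emph{same} first-power inner part $\theta$, whereas here $\Phi^{(1)}_-$ has inner part $z$ and $\Psi^{(1)}_-$ has inner part $z^{r-p+1}$ with $r-p\ge 1$. The paper's endgame is different and more delicate. Lemma~\ref{lem5.12} gives $0<r-p\le s$; then Lemmas~\ref{lem5.3}, \ref{lem5.4}, \ref{lem5.5} and Proposition~\ref{pro3.13} combine to force $s=q$ and to produce a one-parameter family $\Phi^{(\gamma)}$ (a linear combination and pull-back) with $T_{\Phi^{(\gamma)}}$ pseudo-hyponormal and both $(\Phi^{(\gamma)})_\pm$ having inner part $z^{r-p+1}$. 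The contradiction then comes from the Chapter~6 interpolation machinery, specifically Theorem~\ref{thm32.378}: the linear system it provides for $K_\gamma\in\mathcal E(\Phi^{(\gamma)})$, evaluated along $\gamma=\zeta+i\zeta$ with $\zeta\to\infty$ and combined with the uniform bound $\|K_\gamma\|_\infty\le 1$, forces $B_1=0$, contradicting $\det B_1\ne 0$. So the decisive missing ingredient is Theorem~\ref{thm32.378}, not Lemma~\ref{lem5.18}.
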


\begin{proof}
In view of (\ref{bts-1}), we may write
$$
\Phi_+ = \Theta_0\Theta_1 A^*, \ \ \Phi_- =\Theta_0 B^* ,\ \ \Psi_+
= \Theta_2\Theta_3 C^*, \ \ \Psi_- =\Theta_2 D^* \quad(\hbox{right
coprime}).
$$
Assume that $\Theta_0\Theta_1$ and $\Theta_2\Theta_3$ commute. \
Also suppose $\Phi$ and $\Psi$ have a common tensored-scalar pole of order
$p$ and $r$, respectively, at $\alpha$ . \ Then we can write
$$
\Theta_0=b_{\alpha}^p \Delta_0, \ \ \Theta_1=b_{\alpha}^q \Delta_1,
\ \ \Theta_2=b_{\alpha}^r \Delta_2, \ \ \Theta_3=b_{\alpha}^s
\Delta_3 \ \  \left(b_\alpha(z):=\frac{z-\alpha}{1-\overline\alpha
z}\right),
$$
where $p,r \geq 1, q,s \geq 0$, $\Delta_i$ and $I_{b_{\alpha}}$ are
coprime for $i=0,1,2,3$.  \  By Theorem \ref{lem5.17} and Lemma
\ref{lem5.9}, we may assume that $\alpha=0$. \ Assume to the
contrary that $r\neq p$. \  Then
without loss of generality we may assume that $p < r$. \ Let
$$
\Delta^{\prime}:=\prod_{i=0}^3 \Delta_i \quad \hbox{and} \quad
\Delta:= z^{p-1} \Delta^{\prime},
$$
and write $\Phi^{(1)}:=\Phi_{\Delta}$ and
$\Psi^{(1)}:=\Psi_{\Delta}$. \ It then follows from Lemma
\ref{lem5.4} that $(T_{\Phi^{(1)}}, T_{\Psi^{(1)}})$ is
pseudo-hyponormal. \ Since by assumption, $\Theta_0\Theta_1$ and
$\Theta_2\Theta_3$ commute, it follows that
$$
\Theta_2\Theta_3={z}^{r+s}\Delta_2\Delta_3 \quad \hbox{and} \quad
\Delta={z}^{p-1}\Delta_2\Delta_3\Delta_0\Delta_1,
$$
so that $\hbox{left-g.c.d.} (\Delta,
\Theta_2\Theta_3)={z}^{p-1}\Delta_2\Delta_3$. \ Thus we have
$$
\bigl(\hbox{left-g.c.d.} (\Delta,
\Theta_2\Theta_3)\bigr)^*\Theta_2\Theta_3=I_{z}^{r+s-p+1}\equiv
I_{\theta_1}.
$$
By Proposition \ref{pro2.6} (c), we can write
$$
\Psi^{(1)}_+={z}^{r+s-p+1}C_1^* \qquad (\hbox{coprime}),
$$
where $C_1=P_{\mathcal K_{{\theta_1}}}(C\Delta_0\Delta_1)$. \
Similarly we also get the following coprime factorizations:
$$
\Phi^{(1)}_+ = {z}^{q+1}A_1^*, \ \ \Phi^{(1)}_-={z}B_1^*, \ \
\Psi^{(1)}_-={z}^{r-p+1} D_1^*
$$
for some $A_1,B_1,D_1\in H^2_{M_n}$, in particular $B_1 \in M_n$ is
invertible a.e. on $\mathbb T$. \ Applying Lemma \ref{lem5.12} with
$\theta=z^{r-p}$ and $\theta_3=z^s$, we have $0<r-p\leq s$. \ Then
by using Proposition \ref{pro3.13} and Lemmas \ref{lem5.3},
\ref{lem5.4}, and \ref{lem5.5}, we can show that $s \geq q$ and
$T_{\Phi^{(\gamma)}}$ is pseudo-hyponormal, where
$$
\Phi^{(\gamma)}_+ ={z}^{r-p+1}A_\gamma^*\quad\hbox{and}\quad
\Phi^{(\gamma)}_- = {z}^{r-p+1}\bigl(D_1+\gamma
{z}^{r-p}B_1\bigr)^*,
$$
with
$$
A_{\gamma}:=P_{\mathcal K_{{z}^{r-p+1}}}(C_1+\overline{\gamma}
{z}^{r+s-p-q}A_1).
$$
Since $T_{\Phi^{(\gamma)}}$ is pseudo-hyponormal it follows that
$s=q$. \ Thus we have
$$
A_\gamma = P_{\mathcal K_{{z}^{r-p+1}}}C_1 + P_{\mathcal
K_{{z}^{r-p+1}}} (\overline{\gamma} {z}^{r-p}A_1).
$$
Put
$$
P_{\mathcal K_{{z}^{r-p+1}}}C_1=\sum_{i=0}^{r-p}{z}^i C_1^{(i)}
\quad\hbox{and}\quad D_1=\sum_{i=0}^{r-p} {z}^i D_1^{(i)},
$$
where $C_1^{(i)}, D_1^{(i)}\in M_n$ for each $i=0,\cdots,r-p$. \
Then we have
$$
\aligned &\Phi^{(\gamma)}_+ ={z}^{r-p+1}\Biggl( \sum_{i=0}^{r-p-1}
{z}^i C_1^{(i)} + z^{r-p} \bigl(C_1^{(r-p)}
            +\overline{\gamma}P_{\mathcal K_{z}}A_1\bigr) \Biggr)^*\equiv z^{r-p+1}C^{\prime};\\
&\Phi^{(\gamma)}_-= z^{r-p+1}\Biggl ( \sum_{i=0}^{r-p-1} z^i
D_1^{(i)} + z^{r-p} \bigl(D_1^{(r-p)}
            +\gamma B_1\bigr) \Biggr)^*\equiv z^{r-p+1}D^{\prime}.
\endaligned
$$
Since $T_{\Phi^{(\gamma)}}$ is pseudo-hyponormal, there exists a
matrix function $K_{\gamma} \in \mathcal E(\Phi^{(\gamma)})$. \
Write
$$
K_{\gamma}(z)=\sum_{i=0}^{\infty}  z^i K_\gamma^{(i)}.
$$
Since $I_z$ and $C_1^{(0)}$, $D_1^{(0)}$ are coprime,  $I_z$ and
$C^{\prime}$, $D^{\prime}$ are coprime. \ Thus, it follows from
Theorem \ref{thm32.378} (with $\theta=z$) that
$$
\begin{pmatrix} D_1^{(0)}\\\vdots\\D_1^{(r-p-1)}\\D_1^{(r-p)}+\gamma B_1\end{pmatrix}
   =\begin{pmatrix}K_{\gamma}^{(0)}&0&\cdots &0\\K_{\gamma}^{(1)}
      &K_{\gamma}^{(0)}&\cdots&0\\ \vdots&\vdots&\ddots&\vdots\\
        K_{\gamma}^{(r-p)}& K_{\gamma}^{(r-p-1)}&\cdots & K_{\gamma}^{(0)}
\end{pmatrix}
\begin{pmatrix}C_1^{(0)}\\ \vdots \\C_1^{(r-p-1)}\\ C_1^{(r-p)}+
\overline{\gamma}A_1(0)\end{pmatrix}.
$$
Thus for each $\gamma\in \mathbb C$,
\begin{equation}\label{5.33-1}
\gamma B_1- \overline{\gamma}K_{\gamma}^{(0)}A_1(0)
=\sum_{j=0}^{r-p-1}K^{(r-p-j)}_{\gamma} C_1^{(j)}
      +K_{\gamma}^{(0)} C_1^{(r-p)}-D_1^{(r-p)}.
\end{equation}
If we put $\gamma:=\zeta+i\,\zeta$ ($\zeta\in \mathbb R$), then
(\ref{5.33-1}) can be written as
\begin{equation}\label{5.34-1}
\begin{aligned}
\zeta\,(B_1-K_{\gamma}^{(0)}A_1(0)) &+i\,\zeta\,(B_1+K_{\gamma}^{(0)}A_1(0))\\
&=\sum_{j=0}^{r-p-1}K^{(r-p-j)}_{\gamma} C_1^{(j)}
      +K_{\gamma}^{(0)} C_1^{(r-p)}-D_1^{(r-p)}.
\end{aligned}
\end{equation}
But since $||K_\gamma||_\infty\le 1$ (and consequently $\sup_\gamma
||\hbox{right-hand side of (\ref{5.34-1})}||<\infty$), letting
$\zeta\to\infty$ on both sides of (\ref{5.34-1}) gives
$$
B_1=K_{\gamma}^{(0)}A_1(0)=-K_{\gamma}^{(0)}A_1(0)\,,
$$
which implies that $B_1=0$. \  This contradicts the fact that
$\det\,B_1\ne 0$. \ This completes the proof. \ \end{proof}

\begin{theorem} \label{thm5.19-2} \
Let $\mathbf T:=(T_\Phi, T_\Psi)$ be a pseudo-hyponormal Toeplitz
pair with bounded type symbols $\Phi,\Psi\in L^\infty_{M_n}$. \
Suppose all inner parts of right coprime factorizations of
$\Phi_\pm$ and $\Psi_\pm$ are diagonal-constant. \ If $\Phi$ and
$\Psi$ have a common tensored-scalar pole then the inner parts of $\Phi_-$
and $\Psi_-$ coincide.
\end{theorem}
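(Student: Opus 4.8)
The plan is to reduce the statement to the single-pole situation handled by Lemma \ref{thm5.19} and then extract equality of the co-analytic inner parts from the diagonal-constancy hypothesis. First I would use the hypothesis that all inner parts of the right coprime factorizations of $\Phi_\pm$ and $\Psi_\pm$ are diagonal-constant: writing $\Phi_- = \Theta_0 B^*$ and $\Psi_- = \Theta_2 D^*$ (right coprime), this means $\Theta_0 = I_{\theta_0}$ and $\Theta_2 = I_{\theta_2}$ for scalar inner functions $\theta_0, \theta_2$. Likewise the inner parts of $\Phi_+$, $\Psi_+$ are $I_{\theta_0\theta_1}$, $I_{\theta_2\theta_3}$ for scalar inner $\theta_1,\theta_3$; in particular the inner parts of $\Phi_+$ and $\Psi_+$ are diagonal and hence automatically commute, so the commutation hypothesis of Lemma \ref{thm5.19} is satisfied. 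The goal is then exactly to show $\theta_0 = \theta_2$ (up to a unimodular constant), since diagonal-constancy turns ``inner parts coincide'' into ``these two scalar inner functions are equal.''

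Next I would reduce to the case where $\theta_0$ and $\theta_2$ have a common nonconstant inner divisor, i.e.\ where $\Phi$ and $\Psi$ do share a tensored-scalar singularity in the sense of Lemma \ref{lem55.2-1}; if they are coprime the claim requires showing both are constant, which I would handle by a separate (easier) argument using pseudo-hyponormality and the $2$-norm inequality $\|(\,\cdot\,)_-\|_2 \le \|(\,\cdot\,)_+\|_2$ from p.\pageref{2norm}, combined with Corollary \ref{cor55.6}. Assuming a common nonconstant divisor, factor $\theta_0 = \omega_0 \theta_0'$, $\theta_2 = \omega_0 \theta_2'$ with $\omega_0 = \mathrm{g.c.d.}(\theta_0,\theta_2)$ and, by a further localization via Theorem \ref{lem5.17} and Lemma \ref{lem5.9} (composition with a Blaschke factor, which preserves pseudo-hyponormality and the structure of all the coprime factorizations), it suffices to treat a single common pole at a point, say $0$. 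Then Lemma \ref{thm5.19} gives that this common tensored-scalar pole of $\Phi$ and $\Psi$ has the \emph{same order}; that is, the $b_\alpha$-multiplicities of $\theta_0$ and $\theta_2$ at the common pole agree. Iterating over all points in the (finite, after reduction, or handled by the infimum-of-measures machinery of Chapter 3 in the singular case) support of the common divisor, one concludes $\theta_0' = \theta_2'$ forces no residual common factor, hence $\theta_0 = \theta_2$.

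More concretely, I would run the argument as follows: apply Lemma \ref{thm5.19} directly to obtain that for every point $\alpha$ which is a common tensored-scalar pole, the orders match; apply Lemma \ref{lem5.12} (with the roles of $\Phi$ and $\Phi^*$ interchanged as needed, using that $\Phi^*$ has a tensored-scalar singularity whenever $\Phi$ does in this diagonal-constant setting, by Proposition \ref{pro55.10}) to control how $\theta_1, \theta_3$ must absorb any mismatch, and then combine: if $\theta_0$ and $\theta_2$ differ, there is a point or a singular-measure atom where the multiplicities disagree, contradicting the ``same order'' conclusion. The diagonal-constancy of the $\pm$ inner parts is what makes all of this purely scalar bookkeeping: $\Theta_i H^2_{\mathbb C^n} \subseteq \Theta_j H^2_{\mathbb C^n}$ becomes $\theta_i H^2 \subseteq \theta_j H^2$, i.e.\ divisibility of scalar inner functions, so ``coincidence of inner parts'' is literally equality of $\theta_0$ and $\theta_2$ up to a unimodular constant.

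The main obstacle I anticipate is the passage from ``same order at each common pole'' to ``the inner parts coincide globally,'' because $\theta_0$ and $\theta_2$ are general inner functions, not finite Blaschke products; their singular parts must be shown to have equal singular measures, not merely equal point masses. Here I expect to need the infimum-of-measures theory of Chapter 3 (Theorems \ref{thminfmeasure8} and \ref{thmcoprime3}): the greatest common inner divisor of $\theta_0$ and $\theta_2$ corresponds to $\inf(\mu_0,\mu_2)$ of the associated singular measures, and a residual coprime remainder after dividing out the common part would give, via the same pseudo-hyponormality/2-norm contradiction used in Lemma \ref{thm5.19}, an impossible strict inequality. So the real work is to upgrade Lemma \ref{thm5.19} from ``order at a point'' to the measure-theoretic statement, which I would do by reducing (through Lemma \ref{lem5.12} and the pull-back lemmas \ref{pro3.13}, \ref{lem5.3}, \ref{lem5.4}, \ref{lem5.5}) to a situation where the common part has been stripped off, and then deriving a contradiction from $\|(\Upsilon)_-\|_2 > \|(\Upsilon)_+\|_2$ exactly as in the proof of Lemma \ref{lem5.18}.
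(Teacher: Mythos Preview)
Your setup is correct: diagonal-constancy reduces everything to scalar inner functions $\theta_0,\theta_2$, and the goal is $\theta_0=\theta_2$. But the argument you sketch has a real gap at the passage from ``same order at the common pole'' to ``$\theta_0=\theta_2$ globally.'' The hypothesis gives only \emph{one} common tensored-scalar pole, and Lemma \ref{thm5.19} only equates the orders there. Your plan to ``iterate over all common poles'' does not address the actual obstruction: $\theta_0$ could have a zero (or singular mass) at a point where $\theta_2$ has none. There is nothing to iterate over at such a point, and no 2-norm inequality of the Lemma \ref{lem5.18} type immediately rules this out. The measure-theoretic machinery of Chapter~3 is a red herring here --- it classifies coprimeness, but it does not by itself produce the contradiction you need.

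What the paper actually does, after using Theorem \ref{lem5.17}, Lemma \ref{lem5.9}, and Lemma \ref{thm5.19} to place the common pole at $0$ with equal order $p$, is assume $\delta_0\neq\delta_2$ (the leftover factors with $\delta_i(0)\neq0$) and then, via a chain of reductions (Lemmas \ref{lem544444.2-1}, \ref{lem5.4}, \ref{lem5.12}, \ref{lem5.18}, Proposition \ref{pro2.6}), arrive at an auxiliary pseudo-hyponormal pair $(T_{\Phi'},T_{\Psi'})$ in a deliberately asymmetric form: $\Phi'_-=zB_1^*$, $\Phi'_+=z\omega A_1^*$, $\Psi'_-=z\omega D_1^*$, $\Psi'_+=z\omega^pC_1^*$ with $\omega=b_\beta$, $\beta\neq0$, $p\ge2$. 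The decisive step --- which your proposal does not anticipate --- is an eigenvalue argument: choose $\gamma_0$ so that $(\omega^{p-1}A_1-\overline{\gamma_0}C_1)(0)$ is singular (take $\overline{\gamma_0}$ an eigenvalue of $\omega^{p-1}(0)C_1(0)^{-1}A_1(0)$). By Theorem \ref{lem33.9} this forces $\omega^{p-1}A_1-\overline{\gamma_0}C_1$ and $I_z$ to be non-coprime, so the inner part of $(\Phi'-\gamma_0\Psi')_+$ drops a nontrivial factor of $I_z$. Feeding this into Corollary \ref{cor55.6} yields $\omega^p\Omega_2 H^2_{\mathbb C^n}\subseteq z\omega H^2_{\mathbb C^n}$ with $\Omega_2$ a proper factor of $I_z$, which forces $I_z$ to divide $\Omega_2$, a contradiction. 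This eigenvalue/non-coprimeness trick is the engine of the proof, and it is what your outline is missing.
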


\begin{proof} \
By assumption, write
$$
\Phi_+ = \theta_0\theta_1 A^*, \ \ \Phi_- =\theta_0 B^* ,\ \ \Psi_+
= \theta_2\theta_3 C^*, \ \ \Psi_- =\theta_2 D^* \quad(\hbox{right
coprime}),
$$
where the $\theta_i$ are inner and $A,B,C,D\in H^2_{M_n}$. \ Suppose
$\Phi$ and $\Psi$ have a common tensored-scalar pole. \ We want to show that
$\theta_0=\theta_2$. \ By Theorem \ref{lem5.17} and Lemmas
\ref{lem5.9} and \ref{thm5.19}, we may write,  without loss of
generality,
$$
\theta_0={z}^p \delta_0, \ \ \theta_1={z}^q \delta_1, \ \
\theta_2={z}^p \delta_2, \ \ \theta_3={z}^s \delta_3 ,
$$
where $p \geq 1, q,s \geq 0$, $\delta_i(0)\ne 0$ for $i=0,1,2,3$.
Assume to the contrary that $\theta_0\ne \theta_2$. \ Then
$\delta_0\ne\delta_2$. \ Now if we combine Lemmas
\ref{lem544444.2-1}, \ref{lem5.4}, \ref{lem5.12} \ref{lem5.18} and
Proposition \ref{pro2.6}, then we can see that $\mathbf
T:=(T_{\Phi^\prime}, T_{\Psi^\prime})$ is pseudo-hyponormal, under
the following coprime factorizations:
$$
\Phi_+^\prime=z\omega A_1^*,\ \ \Phi_-^\prime=z B_1^*,\ \
\Psi_+^\prime= z\omega^p C_1^*,\ \ \Psi_-^\prime=z\omega D_1^*\ \
(p\ge 2),
$$
where $\omega=b_\beta$ for some nonzero $\beta$. \ Let
$\gamma\in\mathbb C$. \ By Lemma \ref{lem5.5},
$(T_{\Phi^\prime-\gamma \Psi^\prime}, T_{\Psi^\prime})$ is
pseudo-hyponormal. \ Observe that
$$
(\Phi^\prime-\gamma
\Psi^\prime)_+=z\omega^p\Bigl(\omega^{p-1}A_1-\overline\gamma
C_1\Bigr)^*.
$$
On the other hand, we can choose $\gamma_0\in\mathbb C$ such that
$\bigl(\omega^{p-1}A_1-\overline{\gamma_0}C_1\bigr)(0)$ is not
invertible (in fact, $\overline{\gamma_0}$ is an eigenvalue of
$\omega^{p-1}(0)C_1(0)^{-1}A_1(0)$). \ Then by Theorem
\ref{lem33.9}, $\omega^{p-1}A_1-\overline{\gamma_0}C_1$ and $I_z$
are not coprime. \ Thus there exists a nonconstant inner divisor
$\Omega_1$ of $I_z$ (say, $I_z=\Omega_1\Omega_2$) such that
$$
(\Phi^\prime-\gamma_0 \Psi^\prime)_+ =
\omega^p\Omega_2\Bigl(\Omega_1^*
\bigl(\omega^{p-1}A_1-\overline{\gamma_0} C_1\bigr)\Bigr)^*.
$$
We thus have
$$
\omega^p\Omega_2 H^2_{\mathbb C^n}\subseteq \ker
H_{(\Phi^\prime-\gamma_0 \Psi^\prime)_+^*}.
$$
Hence, by Corollary \ref{cor55.6},
$$
\omega^p\Omega_2 H^2_{\mathbb C^n}\subseteq \ker
H_{(\Phi^\prime-\gamma_0 \Psi^\prime)_+^*} \subseteq \ker
H_{(\Psi_-^\prime)^*}=z\omega H^2_{\mathbb C^n},
$$
which implies that $I_z$ is an inner divisor of $\Omega_2$, and
hence $\Omega_1$ is a constant unitary, a contradiction. \ This
completes the proof. \
\end{proof}

\medskip

\begin{corollary} \label{cor5.19} \
Let $\mathbf T:=(T_\Phi, T_\Psi)$ be a hyponormal Toeplitz pair with
bounded type symbols
 $\Phi,\Psi\in L^\infty_{M_n}$. \
Suppose all inner parts of right coprime factorizations of
$\Phi_\pm$ and $\Psi_\pm$ are diagonal-constant. \ If $\hbox{\rm
det}\,\Phi_-^*$ and $\hbox{\rm det}\,\Psi_-^*$ have a common pole
then the inner parts of $\Phi_-$ and $\Psi_-$ coincide.
\end{corollary}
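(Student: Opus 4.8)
The plan is to deduce Corollary~\ref{cor5.19} from Theorem~\ref{thm5.19-2}; the only substantive step is to translate the hypothesis ``$\det\Phi_-^*$ and $\det\Psi_-^*$ have a common pole'' into the statement that $\Phi$ and $\Psi$ have a common tensored-scalar pole, after which Theorem~\ref{thm5.19-2} applies verbatim.

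First I would record that the hypotheses of Theorem~\ref{thm5.19-2} are all in force: since $\mathbf T$ is hyponormal it is pseudo-hyponormal by Corollary~\ref{cor5.2}, the symbols are of bounded type, and the diagonal-constancy assumption is the same in both statements. By that assumption and (\ref{bts-1}) we may write the right coprime factorizations
$$
\Phi_-=I_{\theta_0}B^*\qquad\text{and}\qquad \Psi_-=I_{\theta_2}D^*,
$$
with $\theta_0,\theta_2$ scalar inner functions and $B,D\in H^2_{M_n}$ (and with the inner parts of $\Phi_+,\Psi_+$ diagonal-constant as well). Because the inner part is now a scalar times the identity, $\Phi_-^*=\overline{\theta_0}\,B$, so $\det\Phi_-^*=\overline{\theta_0}^{\,n}\det B$, which read as a function on $\mathbb D$ is the meromorphic function $\det B/\theta_0^{\,n}$. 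Since $I_{\theta_0}$ and $B$ are right coprime, Theorem~\ref{lem33.9} gives that $\theta_0$ and $\det B$ are coprime; in particular they have no common zero in $\mathbb D$, so no cancellation occurs and the poles of $\det\Phi_-^*$ in $\mathbb D$ are exactly the zeros of $\theta_0$. The identical computation shows the poles of $\det\Psi_-^*$ in $\mathbb D$ are exactly the zeros of $\theta_2$.

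Next, suppose $\alpha\in\mathbb D$ is a common pole of $\det\Phi_-^*$ and $\det\Psi_-^*$. By the previous paragraph $\theta_0(\alpha)=0=\theta_2(\alpha)$, so the Blaschke factor $b_\alpha$ divides both $\theta_0$ and $\theta_2$. Since $\Phi_+\in H^2_{M_n}$ we have $H_\Phi=H_{\Phi_-^*}$, and applying (\ref{RCD}) to $\Phi_-=I_{\theta_0}B^*$ gives $\ker H_\Phi=\ker H_{\Phi_-^*}=\theta_0 H^2_{\mathbb C^n}\subseteq b_\alpha H^2_{\mathbb C^n}$; likewise $\ker H_\Psi\subseteq b_\alpha H^2_{\mathbb C^n}$. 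Thus $\Phi$ and $\Psi$ have a common tensored-scalar pole at $\alpha$, and Theorem~\ref{thm5.19-2} yields that the inner parts of $\Phi_-$ and $\Psi_-$ coincide, which is the assertion.

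The step I expect to require the most care is the pole-counting in the second paragraph: one must fix the interpretation that ``pole of $\det\Phi_-^*$'' means a genuine pole in the open disk (a singular inner factor of $\theta_0$ contributes only a boundary singularity, not a pole), and one must invoke the coprimeness of $\theta_0$ and $\det B$ --- i.e.\ Theorem~\ref{lem33.9} --- precisely to guarantee that a zero of $\det B$ cannot mask a zero of $\theta_0$ at the alleged common pole; one should also keep track of the convention on the constant term of $\Phi_-$ so that the factorization (\ref{bts-1}) is legitimate. Everything else is bookkeeping together with the routine passage from hyponormality to pseudo-hyponormality.
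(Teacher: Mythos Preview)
Your proof is correct and follows essentially the same route as the paper: write $\det\Phi_-^*=\det B/\theta_0^{\,n}$, infer that a common pole $\alpha$ forces $\theta_0(\alpha)=\theta_2(\alpha)=0$, and invoke Theorem~\ref{thm5.19-2}. Your appeal to Theorem~\ref{lem33.9} (coprimeness of $\theta_0$ and $\det B$) is extra care but not strictly needed here, since only the implication ``pole of $\det B/\theta_0^{\,n}$ at $\alpha$ $\Rightarrow$ $\theta_0(\alpha)=0$'' is required, and that follows immediately from the analyticity of $\det B$ on $\mathbb D$; the paper omits this step for that reason.
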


\begin{proof} \ Write
$$
\Phi_-=\theta_0 B^*\quad\hbox{and}\quad \Psi_-=\theta_2
D^*\quad\hbox{(right coprime)}.
$$
Suppose $\hbox{\rm det}\,\Phi_-^*$ and $\hbox{\rm det}\,\Psi_-^*$
have a common pole. \ Since
$$
\hbox{det}\,\Phi_-^* =\frac{\hbox{det}\,B}{\theta_0^n}
\quad\hbox{and}\quad
\hbox{det}\,\Psi_-^*=\frac{\hbox{det}\,D}{\theta_2^n},
$$
there exists $\alpha \in \mathbb D$ such that
$\theta_0(\alpha)^n=\theta_2(\alpha)^n=0$, so that
$\theta_0(\alpha)=\theta_2(\alpha)=0$. \ Thus $\Phi_-^*$ and
$\Psi_-^*$ have a common tensored-scalar pole and hence it follows from
Theorem \ref{thm5.19-2}. \
\end{proof}

\begin{corollary}\label{cor5.20}
Let $\mathbf T:=(T_\Phi, T_\Psi)$ be a hyponormal Toeplitz pair with
matrix-valued trigonometric polynomial symbols whose outer
coefficients are invertible. \  Then
$$
\hbox{\rm deg}\,(\Phi_-)=\hbox{\rm deg}\,(\Psi_-)\,.
$$
\end{corollary}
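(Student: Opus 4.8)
The plan is to deduce Corollary \ref{cor5.20} directly from Corollary \ref{cor5.19} by showing that matrix-valued trigonometric polynomials with invertible outer coefficients automatically satisfy all the hypotheses of that corollary. First I would observe that a matrix-valued trigonometric polynomial $\Phi$ has $\Phi_-$ of the form $\theta_0 B^*$ (right coprime) with $\theta_0$ a finite Blaschke product of the form $z^{\deg(\Phi_-)}$: indeed $\Phi_-=\sum_{j=1}^N A_{-j}^* z^j$ is a polynomial, so $\overline{\Phi_-}$ is antianalytic-polynomial, hence rational, so by (\ref{2.4}) and the matrix analogue we may write $\Phi_-=\theta_0 B^*$ with $\theta_0$ a finite Blaschke product. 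Since $\Phi_-\in zH^2_{M_n}$ is a polynomial with $\deg\Phi_-=N$ (here $N$ is the degree of the lowest outer coefficient), the inner part $\theta_0$ must be $z^N$ up to a unimodular constant: the right-coprimeness forces $B$ to be a matrix polynomial with $B(0)$ related to the outer coefficient $A_{-N}$, which is invertible by hypothesis, and the only inner function dividing $z^N B^*$ on the left that makes the quotient analytic is $z^N$ itself. Thus $\theta_0=z^N$ is scalar, hence vacuously diagonal-constant, and similarly every inner part appearing in the right coprime factorizations of $\Phi_\pm$ and $\Psi_\pm$ is a power of $z$, hence diagonal-constant.

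Next I would verify that $\det\Phi_-^*$ and $\det\Psi_-^*$ have a common pole, which is the remaining hypothesis of Corollary \ref{cor5.19}. Since $\Phi_-=z^N B^*$ with $B(0)=A_{-N}^*$ (up to a constant) invertible, we have $\det\Phi_-^* = (\det B)/z^{nN}$, and $\det B$ does not vanish at $0$ because $B(0)$ is invertible; hence $\det\Phi_-^*$ has a pole exactly at $0$ of order $nN$. Likewise $\det\Psi_-^*$ has a pole at $0$. Therefore $0$ is a common pole, and Corollary \ref{cor5.19} applies: the inner parts of $\Phi_-$ and $\Psi_-$ coincide. But the inner part of $\Phi_-$ is $z^{\deg(\Phi_-)}$ and the inner part of $\Psi_-$ is $z^{\deg(\Psi_-)}$ (using the definition (\ref{2.18}) of degree and (\ref{dettheta})), so coincidence of these inner functions forces $\deg(\Phi_-)=\deg(\Psi_-)$.

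One subtlety to handle carefully: one must first dispose of the degenerate cases where $\Phi_-=0$ or $\Psi_-=0$. If, say, $\Phi_-=0$, then $\deg(\Phi_-)=0$, and I would argue that pseudo-hyponormality of $\mathbf T$ together with the inequality $\|\Psi_-\|_2\le\|\Psi_+\|_2$ from the text, combined with the structure of trigonometric polynomials, forces $\Psi_-=0$ as well — more directly, one can invoke Lemma \ref{lem5.6}: if $\Phi$ is analytic then $H_{\Phi_-^*}=0$, and the positivity test on $[\mathbf T^*,\mathbf T]_p$ via (\ref{2.21}) forces $H_{\Psi_-^*}=0$, i.e. $\Psi_-$ is analytic, hence (being a polynomial in $z$ with no constant term issues) $\Psi_-=0$. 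So both degrees are $0$ and the identity holds trivially. When both $\Phi_-$ and $\Psi_-$ are nonzero, the argument of the previous two paragraphs applies.

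The main obstacle I anticipate is the bookkeeping in the first paragraph: precisely justifying that the inner part $\theta_0$ of the right coprime factorization of $\Phi_-$ equals $z^{\deg(\Phi_-)}$ (scalar) for a trigonometric polynomial with invertible outer coefficient $A_{-N}$. The point is that $\Phi_- = \sum_{j=1}^{N} A_{-j}^* z^j = z^N\bigl(\sum_{j=1}^N A_{-j}^* z^{j-N}\bigr)^{*\!*}$ — more carefully, writing $\Phi_- = \Theta_0 B^*$ (right coprime) with $\ker H_{\Phi_-^{*}} = \Theta_0 H^2_{\mathbb C^n}$, one checks $z^N H^2_{\mathbb C^n}\subseteq \ker H_{\Phi_-^*}$ since $\Phi_-^* z^N f$ is analytic for any $f\in H^2_{\mathbb C^n}$ (as $\Phi_-^*$ has only powers $\overline z^1,\dots,\overline z^N$), hence $I_{z^N}$ is a left multiple of $\Theta_0$ in the sense that $\Theta_0$ right-divides $z^N$; and the invertibility of the leading coefficient $A_{-N}$ prevents any further scalar cancellation, forcing $\Theta_0 = I_{z^N}$ up to a unitary constant. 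Once this structural fact is nailed down, everything else is a direct application of the already-established Corollary \ref{cor5.19}.
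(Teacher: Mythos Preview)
Your argument is correct in spirit and reaches the right conclusion, but the paper takes a more direct route. The paper simply observes that for $\Phi(z)=\sum_{j=-m}^{N}A_jz^j$ and $\Psi(z)=\sum_{j=-l}^{M}B_jz^j$ with invertible outer coefficients, both $\Phi$ and $\Psi$ have a tensored-scalar pole at $z=0$, and since the inner parts of $\Phi_+$ and $\Psi_+$ are powers of $z$ (hence commute), Lemma~\ref{thm5.19} applies immediately to give $m=l$. That is the entire proof.

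Your detour through Corollary~\ref{cor5.19} works, but it sits one level higher in the chain of results (Corollary~\ref{cor5.19} is deduced from Theorem~\ref{thm5.19-2}, which in turn rests on Lemma~\ref{thm5.19}), and it forces you to verify more hypotheses: that \emph{all four} inner parts of $\Phi_\pm,\Psi_\pm$ are diagonal-constant, and that the determinants share a pole. The paper's route only needs the inner parts of $\Phi_+,\Psi_+$ to commute and the common tensored-scalar pole at $0$, both of which are immediate here.

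Two minor points. First, your degenerate-case discussion is unnecessary: the hypothesis that the co-analytic outer coefficients are invertible presupposes $m,l\ge 1$, and your proposed argument that $\Phi_-=0$ forces $\Psi_-=0$ via Lemma~\ref{lem5.6} does not actually go through as stated (that lemma gives $H_{\Psi_-^*}^*H_{\Phi_-^*}T_\Theta=0$, which is vacuous when $H_{\Phi_-^*}=0$). Second, watch the bookkeeping: the inner part of $\Phi_-$ is $I_{z^m}$, so by (\ref{dettheta}) one has $\deg(\Phi_-)=nm$, not $m$; your sentence ``the inner part of $\Phi_-$ is $z^{\deg(\Phi_-)}$'' conflates the exponent with the degree. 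This does not affect the conclusion, since $nm=nl$ iff $m=l$.
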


\begin{proof} Let $\Phi(z)=\sum_{j=-m}^N A_j z^j$ and $\Psi(z)=
\sum_{j=-l}^M B_j z^j$. \  Then $\Phi$ and $\Psi$ have a common tensored-scalar pole
at $z=0$. \  Thus by Theorem \ref{thm5.19}, we have $m=l$, i.e.,
$\hbox{\rm deg}\,(\Phi_-)=\hbox{\rm deg}\,(\Psi_-)$. \
\end{proof}

The following theorem gives a characterization of hyponormality for
Toeplitz pairs with bounded type symbols. \

\begin{theorem}\label{thm5.22-1}
{\rm (Hyponormality of Toeplitz Pairs with Bounded Type Symbols)} \
Let $\mathbf{T}\equiv (T_{\Phi}, T_{\Psi})$ be a Toeplitz pair with
bounded type symbols $\Phi, \Psi \in L^\infty_{M_n}$ of the form
\begin{equation}\label{5.35}
\Phi_+ = \theta_0 \theta_1 A^*, \ \ \Phi_- =\theta_0 B^*, \ \ \Psi_+
= \theta_2 \theta_3 C^*, \ \ \Psi_- = \theta_2 D^*\quad\hbox{\rm
(coprime)} ,
\end{equation}
where the $\theta_i$ are inner functions. \ Assume that
\begin{itemize}
\item[(a)] $\theta_0$ or $\theta_2$ is a finite Blaschke product;
\item[(b)] $\theta_0$ and $\theta_2$ have a common Blaschke factor $b_\alpha$ such that
$B(\alpha)$ and $D(\alpha)$ are diagonal-constant.
\end{itemize}
If the pair $\mathbf T$ is hyponormal then
\begin{equation}\label{5.35-1}
\Phi-\Lambda \Psi\in H^2_{M_n}\quad\hbox{for some}\ \Lambda\in M_n.
\end{equation}
Moreover, $\mathbf T$ is hyponormal if and only if
\begin{enumerate}
\item[(i)] $\Phi$ and $\Psi$ are normal and $\Phi\Psi=\Psi\Phi$;
\item[(ii)] $\Phi_-=\Lambda^*\Psi_-$;
\item[(iii)] $T_{\Psi^{1, \Omega}}$ is pseudo-hyponormal with
$\Omega:=\theta_0\theta_1\theta_3\overline{\theta}\Delta^*$\,,
\end{enumerate}
where $\theta:=\hbox{\rm g.c.d.} (\theta_1, \theta_3)$ and
$\Delta:=\hbox{\rm left-g.c.d.}\, \bigl(I_{\theta_0\theta}, \
\overline\theta (\theta_3A- \theta_1 C\Lambda^*)\bigr)$.
\end{theorem}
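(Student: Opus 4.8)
The plan is to reduce the hyponormality of the pair $\mathbf T$ to a single pseudo-hyponormality condition by peeling off common inner factors and applying the Fuglede--Putnam-type rigidity already developed in the chapter. First I would use Corollary \ref{cor5.2} to split the problem: $\mathbf T$ hyponormal is equivalent to $\mathbf T$ pseudo-hyponormal together with normality of $\Phi,\Psi$ and commutativity $\Phi\Psi=\Psi\Phi$. This immediately gives conditions (i), so the real work is to show (for the ``only if'' direction) that pseudo-hyponormality forces $\Phi_-=\Lambda^*\Psi_-$ and condition (iii), and conversely that (i)--(iii) suffice.

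For the forward direction, I would argue as follows. Assumption (a) says $\theta_0$ or $\theta_2$ is a finite Blaschke product, and assumption (b) gives a common Blaschke factor $b_\alpha$ dividing both $\theta_0$ and $\theta_2$ with $B(\alpha),D(\alpha)$ diagonal-constant; this is exactly the setup in which $\Phi^*$ and $\Psi^*$ have a common tensored-scalar pole at $\alpha$ (via Lemma \ref{lem55.2-1}, since $b_\alpha$ dividing $\theta_0$ means $I_{b_\alpha}$ divides the inner part of the right coprime factorization of $\Phi_-$). The diagonal-constancy of $B(\alpha)$ and $D(\alpha)$ is the technical hypothesis that lets me pass, after compressing by the relevant inner factors as in Lemma \ref{lem5.4} and Proposition \ref{pro2.6}, to a pair of pseudo-hyponormal Toeplitz operators whose co-analytic symbols have the same tensored-scalar pole order --- this is precisely Lemma \ref{thm5.19} (note the commutativity of inner parts needed there follows from $\Phi\Psi=\Psi\Phi$, which we already have via Corollary \ref{cor5.2}, together with Theorem \ref{lem33.9}). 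Once the pole orders match, Theorem \ref{thm5.19-2} (applied after reducing to the diagonal-constant situation using the hypothesis on $B(\alpha),D(\alpha)$) identifies the inner parts of $\Phi_-$ and $\Psi_-$, i.e. $\theta_0=\theta_2$ up to a constant. Then applying Lemma \ref{lem5.6} / Corollary \ref{cor55.6} to the pseudo-hyponormal pair with $\Psi_+=\Theta A^*$ (right coprime) and using that the kernels of the relevant Hankel operators nest, I can run the positivity test (\ref{2.21}) for $[\mathbf T^*,\mathbf T]_p$ on suitable vectors and invoke Theorem \ref{lem33.5} (tensored-scalar singularity kills products of Hankel operators) to conclude $H_{\Phi_-^*}^*H_{\Psi_-^*}$ is tied down; combined with the equality of inner parts this yields $\Phi_-=\Lambda^*\Psi_-$ for a constant matrix $\Lambda$, giving (ii) and hence (\ref{5.35-1}).

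For the characterization itself, I would proceed in the ``if'' direction: assuming (i)--(iii), use Lemma \ref{lem5.5} with the normal matrix $\Lambda$ (which commutes with $\Phi_-$ and $\Psi_-$ by (ii), since $\Phi_-=\Lambda^*\Psi_-$ forces the commutation up to checking $\Lambda$ can be taken normal --- here one shifts to make $\Lambda$ the coefficient in a normal position) to replace $\mathbf T$ by $\mathbf S=(T_{\Phi-\Lambda\Psi},T_\Psi)$, which is pseudo-hyponormal iff $\mathbf T$ is. Now $\Phi-\Lambda\Psi$ has trivial co-analytic part by (ii), i.e. it is analytic, so $T_{\Phi-\Lambda\Psi}\in H^\infty_{M_n}$ and Lemma \ref{lem5.3} applies: the pseudo-hyponormality of $\mathbf S$ is equivalent to the pseudo-hyponormality of $T_{\Psi^{1,\Omega}}$ where $\Omega$ is the inner part coming from the left coprime factorization of the analytic symbol $\Phi-\Lambda\Psi$ --- and one computes, using $\theta=\mathrm{g.c.d.}(\theta_1,\theta_3)$ and $\Delta=\mathrm{left\text{-}g.c.d.}(I_{\theta_0\theta},\overline\theta(\theta_3A-\theta_1C\Lambda^*))$, that $\Omega=\theta_0\theta_1\theta_3\overline\theta\Delta^*$, matching (iii). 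Putting this together with Corollary \ref{cor5.2} gives the equivalence.

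The main obstacle I anticipate is the bookkeeping in the forward direction: carefully tracking how the common Blaschke factor $b_\alpha$ and the diagonal-constancy of $B(\alpha),D(\alpha)$ survive the sequence of compressions (Lemma \ref{lem5.4}), pull-backs (Proposition \ref{pro2.6}, Proposition \ref{pro3.13}), and the order-matching argument (Lemma \ref{thm5.19}, Theorem \ref{thm5.19-2}), so that all the hypotheses of those lemmas are genuinely met at each stage --- in particular verifying the inner parts of the analytic parts commute and that the finite-Blaschke-product hypothesis (a) is used where degree/finite-rank arguments (e.g. (\ref{2.22}) and Corollary \ref{cor2.7}) are invoked. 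A secondary subtlety is justifying that the constant $\Lambda$ obtained in (\ref{5.35-1}) can be arranged to be normal and to commute with $\Phi_-,\Psi_-$, which is needed to legitimately apply Lemma \ref{lem5.5} in the converse direction; this should follow from (ii) and the normality of $\Phi,\Psi$ but needs to be spelled out.
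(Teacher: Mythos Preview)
Your overall architecture---split via Corollary \ref{cor5.2}, identify $\theta_0=\theta_2$ via Theorem \ref{thm5.19-2}, and for the converse direction pass via Lemma \ref{lem5.5} to the analytic symbol $\Phi-\Lambda\Psi$ and then invoke Lemma \ref{lem5.3}---matches the paper. But the heart of the forward direction, namely how you obtain $\Phi_-=\Lambda^*\Psi_-$, is a genuine gap.

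The paper does \emph{not} extract $\Lambda$ from Hankel-operator identities or from Theorem \ref{lem33.5}; that theorem only tells you that $H_\Psi H_\Phi=0$ forces one factor to vanish, and nothing in the positivity test (\ref{2.21}) produces a constant intertwiner between $\Phi_-$ and $\Psi_-$. Instead the paper constructs $\Lambda$ explicitly: once $\theta_0=\theta_2$, coprimeness forces $B(\alpha)$ and $D(\alpha)$ to be invertible, and one sets $\Lambda:=B(\alpha)D(\alpha)^{-1}$. The diagonal-constant hypothesis on $B(\alpha),D(\alpha)$ makes $\Lambda$ a \emph{scalar} matrix, so it is automatically normal and commutes with everything---this dissolves the ``secondary subtlety'' you flag, and more importantly it is exactly what legitimizes the use of Lemma \ref{lem5.5}. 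By construction $(B-\Lambda D)(\alpha)=0$, so the inner part of $(\Phi-\Lambda\Psi)_-$ is a \emph{proper} divisor $\delta_1$ of $\theta_0$ (one $b_\alpha$ has been cancelled). If $\Phi_-\ne\Lambda^*\Psi_-$ then $\delta_1$ is nonconstant; applying Lemma \ref{lem5.5} to pass to the pair $(T_{\Phi-\Lambda\Psi},T_\Psi)$ and then Theorem \ref{thm5.19-2} once more forces the inner parts to coincide again, i.e.\ $\theta_0=\delta_1$, contradicting $\theta_0=b_\alpha\delta_1$. Hypothesis (a) is what guarantees that the shrunken inner part $\delta_1$ still contains a Blaschke factor (so the common tensored-scalar pole needed for Theorem \ref{thm5.19-2} persists); you correctly suspected (a) plays a role, but it enters here rather than through rank formulae.

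A smaller omission: before computing $\Omega$ you should first reduce, via Lemma \ref{cor5.16}, to the case $\gcd(\theta_1,\theta_3)=1$; the paper then checks that $\omega_3 A-\omega_1 C\Lambda^*$ is coprime with $I_{\omega_1\omega_3}$ (using Corollary \ref{lem2.3}) so that $\Phi-\Lambda\Psi=(\theta_0\theta\omega_1\omega_3\Delta^*)\,[\Delta^*(\omega_3A-\omega_1 C\Lambda^*)]^*$ is genuinely a left coprime factorization, which is what Lemma \ref{lem5.3} requires to identify $\Omega=\theta_0\theta_1\theta_3\overline\theta\Delta^*$.
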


\begin{proof}
Suppose $\mathbf T$ is hyponormal. \ Then by Theorem
\ref{thm5.19-2}, $\theta_0=\theta_2$. \ In view of Lemma
\ref{cor5.16} we may assume that $\theta_1$ and $\theta_3$ are
coprime. \ By our assumption, we note that $B(\alpha)$ and
$D(\alpha)$ are invertible. \ Put $\Lambda:=B(\alpha)D(\alpha)^{-1}$
(note that $\Lambda$ is diagonal-constant by assumption). \ Then
$(B-\Lambda D)(\alpha)=0$, so that
\begin{equation}\label{PhiPsi}
B-\Lambda D= b_\alpha B_{1} \quad \hbox{and} \quad \theta_0=b_\alpha
\delta_1\quad (B_{1}\in H^2_{M_n}; \ \delta_1 \ \hbox{ inner}).
\end{equation}
We now claim that
\begin{equation}\label{5.36}
\Phi_-=\Lambda^*\Psi_-.
\end{equation}
Assume to the contrary that $\Phi_-\ne \Lambda^*\Psi_-$. \ Then it
follows from (\ref{PhiPsi}) that
$$
\aligned \Phi -\Lambda\Psi &= \theta_0 \theta_1 \theta_3
(\overline{\theta_3} A^*-
\overline{\theta_1}\Lambda C^*) + \overline{\theta_0} (B -\Lambda D)\\
&= \theta_0 \theta_1 \theta_3 (\overline{\theta_3} A^*-
\overline{\theta_1}\Lambda C^*) + \overline{\delta}_1B_{1}.
\endaligned
$$
Suppose that $\Phi-\Lambda\Psi\notin H^2_{M_n}$. \ Then $\delta_1$
is nonconstant. \ Since, by Lemma \ref{lem5.5}, $(T_{ \Phi -\Lambda
\Psi}, T_{\Psi})$ is pseudo-hyponormal, applying Theorem
\ref{thm5.19-2} with $(T_{ \Phi -\Lambda \Psi}, T_{\Psi})$ in place
of $(T_{\Phi}, T_{\Psi})$ gives $\theta_0=\delta_1$, which is a
contradiction.  \ This proves (\ref{5.36}) and hence (\ref{5.35-1}).
\

Towards the second assertion, let $\theta:=\hbox{g.c.d.}(\theta_1,
\theta_3)$. \ Then we can write
$$
\theta_1=\theta \omega_1 \ \hbox{and} \ \theta_3=\theta \omega_3
\quad \hbox{(for some coprime inner functions $\omega_1$ and
$\omega_3$)}.
$$
We thus have
$$
\Phi_{\Lambda}:=\Phi-\Lambda \Psi=\theta_0\theta \omega_1 \omega_3
(\overline{\omega_3} A^*- \overline{\omega_1} \Lambda C^*) \in
H^2_{M_n}.
$$
We claim that
\begin{equation}\label{coprimeomega}
\omega_3 A - \omega_1 C\Lambda^* \ \hbox{and} \  \omega_1 \omega_3
\ \hbox{are coprime}.
\end{equation}
Assume, to the contrary, that $\omega_3 A - \omega_1 C\Lambda^*$ and
$\omega_1$ are not coprime. \ Then there exists a nonconstant inner
matrix function $\Upsilon \in H^{\infty}_{M_n}$ such that
$$
\omega_3 A - \omega_1 C\Lambda^*=\Upsilon A^{\prime} \quad
\hbox{and} \quad I_{\omega_1}=\Upsilon \Omega^{\prime}.
$$
Thus we have
\begin{equation}\label{upsilon}
\omega_3 A = \Upsilon \Omega^{\prime} C\Lambda^*+\Upsilon
A^{\prime}=\Upsilon(\Omega^{\prime} C\Lambda^*+A^{\prime}).
\end{equation}
Since $A$ and $I_{\theta_1}$ are coprime, it follows that $A$ and
$\Upsilon$ are coprime and hence $\widetilde{A}$ and
$\widetilde{\Upsilon}$ are coprime. \ It thus follows from
(\ref{upsilon}) that
$$
\aligned \Upsilon^*\omega_3 A \in H^2_{M_n} &\Longrightarrow
H_{\widetilde{A}\widetilde{\omega}_3\widetilde{\Upsilon}^*}=0\\
&\Longrightarrow \widetilde{I_{\omega_3}}H^2_{\mathbb C^n} \subseteq
\ker H_{\widetilde{A}\widetilde{\Upsilon}^*}=
\widetilde{\Upsilon}H^2_{\mathbb C^n},
\endaligned
$$
which implies that $\widetilde{\Upsilon}$ is a  inner divisor of
$\widetilde{I_{\omega_3}}$, and hence ${\Upsilon}$ is an inner
divisor of ${I_{\omega_3}}$, so that  ${\Upsilon}$ is a nonconstant
common inner divisor of $I_{\omega_1}$ and $I_{\omega_3}$, a
contradiction. \ This proves (\ref{coprimeomega}). \ Let
$$
\Delta:=\hbox{\rm left-g.c.d.}\, \bigl(I_{\theta_0\theta}, \
\omega_3 A - \omega_1 C\Lambda^*\bigr).
$$
Then we may write
$$
\Phi_\Lambda= (\Delta^*\theta_0\theta\omega_1\omega_3)
\Bigl[\Delta^*({\omega_3} A - {\omega_1}C\Lambda^*) \Bigr]^*
\quad\hbox{(left coprime)}.
$$
It follows from Corollary \ref{lem2.3} (with $A:=\Delta^*(\omega_3 A
- \omega_1 C\Lambda^*)$, $\theta:=\omega_1\omega_3$  and
$B:=\theta_0\theta\Delta^*$) that $\Delta^*(\omega_3 A - \omega_1
C\Lambda^*)$ and $\theta_0\theta\omega_1\omega_3\Delta^*$ are left
coprime, and by Lemma \ref{lem5.3}, $T_{\Psi^{1,\Omega}}$ is
pseudo-hyponormal, where $\Omega:=\theta_0 \theta \omega_1 \omega_3
\Delta^* =\theta_0\theta_1\theta_3\overline\theta\Delta^*$. \  The
converse is obtained by reversing the above argument. \  This
completes the proof. \
\end{proof}

\medskip

\begin{corollary}\label{thm5.22}
{\rm (Hyponormality of Rational Toeplitz Pairs)} Let
$\mathbf{T}\equiv (T_{\Phi}, T_{\Psi})$ be a Toeplitz pair with
rational symbols $\Phi, \Psi \in L^\infty_{M_n}$ of the form
\begin{equation}\label{5.35-35}
\Phi_+ = \theta_0 \theta_1 A^*, \ \ \Phi_- =\theta_0 B^*, \ \ \Psi_+
= \theta_2 \theta_3 C^*, \ \ \Psi_- = \theta_2 D^*\quad\hbox{\rm
(coprime)}.
\end{equation}
Assume that $\theta_0$ and $\theta_2$ are not coprime. \ Assume also
that $B(\gamma_0)$ and $D(\gamma_0)$ are diagonal-constant for some
$\gamma_0\in\mathcal{Z}(\theta_0)$. \ If the pair $\mathbf T$ is
hyponormal then
\begin{equation}\label{5.35-2}
\Phi-\Lambda \Psi\in H^2_{M_n}\quad\hbox{for some}\ \Lambda\in M_n.
\end{equation}
Moreover, the pair $\mathbf T$ is hyponormal if and only if
\medskip

\begin{enumerate}
\item[(i)] $\Phi$ and $\Psi$ are normal and $\Phi\Psi=\Psi\Phi$;
\item[(ii)] $\Phi_-=\Lambda^*\Psi_-$ (with $\Lambda:=B(\gamma_0)D(\gamma_0)^{-1}$);
\item[(iii)] $T_{\Psi^{1, \Omega}}$ is pseudo-hyponormal with
$\Omega:=\theta_0\theta_1\theta_3\overline{\theta}\Delta^*$\,,
\end{enumerate}
\medskip
\noindent where $\theta:=\hbox{\rm g.c.d.}\, (\theta_1, \theta_3)$
and $\Delta:=\hbox{\rm left-g.c.d.}\, \bigl(I_{\theta_0\theta}, \
\overline\theta (\theta_3A- \theta_1 C\Lambda^*)\bigr)$.
\end{corollary}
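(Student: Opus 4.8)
The plan is to derive this corollary directly from Theorem \ref{thm5.22-1}, by checking that a rational Toeplitz pair with the stated hypotheses automatically satisfies all the hypotheses of that theorem. First I would recall that for rational $\Phi,\Psi\in L^\infty_{M_n}$, the inner functions $\theta_0,\theta_1,\theta_2,\theta_3$ in the coprime factorizations (\ref{5.35-35}) can be chosen to be finite Blaschke products (this was noted in (\ref{2.4}) and in the discussion following Remark \ref{DSS}). In particular $\theta_0$ is a finite Blaschke product, so hypothesis (a) of Theorem \ref{thm5.22-1} holds. Next, since $\theta_0$ and $\theta_2$ are finite Blaschke products that are not coprime, the scalar criterion (\ref{Blaco}) gives $\mathcal{Z}(\theta_0)\cap\mathcal{Z}(\theta_2)\neq\emptyset$; picking $\alpha$ in this intersection produces a common Blaschke factor $b_\alpha$ of $\theta_0$ and $\theta_2$.

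The one subtlety is the point at which the ``diagonal-constant'' condition on $B$ and $D$ is imposed: the hypothesis of the corollary places it at some $\gamma_0\in\mathcal{Z}(\theta_0)$, while Theorem \ref{thm5.22-1}(b) wants it at a common zero $\alpha$ of $\theta_0$ and $\theta_2$. So the plan is to show that one may take $\alpha=\gamma_0$, i.e.\ that $\gamma_0$ is in fact a common zero. I would argue as follows: if $\mathbf T$ is hyponormal, then $\mathbf T$ is pseudo-hyponormal with $\Phi,\Psi$ normal and commuting (Corollary \ref{cor5.2}), and by Theorem \ref{thm5.19-2} applied to the rational (hence diagonal-constant-on-zeros, after localizing) situation — or more directly by the common tensored-scalar pole argument used there — one gets $\theta_0=\theta_2$; in particular every zero of $\theta_0$, including $\gamma_0$, is a zero of $\theta_2$. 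Thus $b_{\gamma_0}$ is a common Blaschke factor and, by hypothesis, $B(\gamma_0)$ and $D(\gamma_0)$ are diagonal-constant, so hypothesis (b) of Theorem \ref{thm5.22-1} is met with $\alpha=\gamma_0$. Since $B(\gamma_0)$ and $D(\gamma_0)$ are diagonal-constant and hence invertible (they are nonzero because $\theta_0$ and $B$ are coprime, resp.\ $\theta_2$ and $D$ are coprime), the matrix $\Lambda:=B(\gamma_0)D(\gamma_0)^{-1}$ is well-defined and diagonal-constant, matching the $\Lambda$ produced in the proof of Theorem \ref{thm5.22-1}.

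With hypotheses (a) and (b) verified, Theorem \ref{thm5.22-1} applies verbatim: hyponormality of $\mathbf T$ yields $\Phi-\Lambda\Psi\in H^2_{M_n}$, giving (\ref{5.35-2}), and the equivalence of hyponormality with conditions (i)--(iii) is exactly the ``Moreover'' clause of Theorem \ref{thm5.22-1}, with $\theta:=\hbox{g.c.d.}(\theta_1,\theta_3)$ and $\Delta:=\hbox{left-g.c.d.}(I_{\theta_0\theta},\ \overline\theta(\theta_3 A-\theta_1 C\Lambda^*))$ as stated. One should also note that for rational symbols the factorizations (\ref{5.35-35}) are automatically of the ``coprime'' type (\ref{2.2-R}) whenever $\Theta_i$ is of the form $I_{\theta_i}$, so the formulation is consistent.

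The main obstacle I anticipate is the step identifying $\gamma_0$ as a common zero of $\theta_0$ and $\theta_2$ — i.e.\ justifying that one may take $\alpha=\gamma_0$. The cleanest route is to invoke Theorem \ref{thm5.19-2} to get $\theta_0=\theta_2$ under the pseudo-hyponormality half of the hypothesis, but that theorem is stated for symbols all of whose inner parts are diagonal-constant, whereas here we only know the matrices $B(\gamma_0),D(\gamma_0)$ are diagonal-constant. So the careful version of the argument must either (i) localize at $\gamma_0$ first — compressing by the appropriate power of $b_{\gamma_0}$ via Lemma \ref{lem5.4} and Proposition \ref{pro2.6} so that the relevant inner parts become powers of $b_{\gamma_0}I_n$, which are diagonal-constant — and then apply Theorem \ref{thm5.19-2} (or Lemma \ref{thm5.19}) to conclude the common pole has equal order, forcing $\gamma_0\in\mathcal Z(\theta_2)$; or (ii) observe that if $\gamma_0\notin\mathcal Z(\theta_2)$ then $\Psi$ is analytic at $\gamma_0$ while $\Phi$ has a genuine pole there, and run the argument from the proof of Example \ref{ex5.9} / Theorem \ref{thm5.19-2} to reach a contradiction with pseudo-hyponormality (via $\|\Phi_-\|_2\le\|\Phi_+\|_2$-type estimates after a pull-back). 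I would go with route (i), as it reuses the machinery already assembled in the chapter. Once this identification is in hand, the rest is a direct citation of Theorem \ref{thm5.22-1}.
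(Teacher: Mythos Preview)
Your approach is correct and matches the paper's: the corollary is obtained by verifying the hypotheses of Theorem~\ref{thm5.22-1} in the rational setting. Your identification of the only non-immediate point---that $\gamma_0$ must be shown to lie in $\mathcal{Z}(\theta_2)$ as well---is apt, and your fix via Theorem~\ref{thm5.19-2} is exactly right.

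However, your anticipated obstacle is not a genuine one. You worry that Theorem~\ref{thm5.19-2} requires ``all inner parts of right coprime factorizations of $\Phi_\pm$ and $\Psi_\pm$ are diagonal-constant,'' and that this might fail here. But in the factorizations (\ref{5.35-35}) the inner parts \emph{are} $I_{\theta_0\theta_1}$, $I_{\theta_0}$, $I_{\theta_2\theta_3}$, $I_{\theta_2}$---all of the form $I_\theta$ for scalar inner $\theta$, hence automatically diagonal-constant. The hypothesis on $B(\gamma_0)$ and $D(\gamma_0)$ being diagonal-constant is a separate condition (used later to make $\Lambda$ a scalar multiple of the identity so that Lemma~\ref{lem5.5} applies), and plays no role in invoking Theorem~\ref{thm5.19-2}. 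So there is no need for your localization route~(i) or the contradiction route~(ii): since $\theta_0$ and $\theta_2$ are not coprime (hence share a zero, giving a common tensored-scalar pole), Theorem~\ref{thm5.19-2} applies directly and yields $\theta_0=\theta_2$, whence $\gamma_0\in\mathcal{Z}(\theta_0)=\mathcal{Z}(\theta_2)$ and hypothesis~(b) of Theorem~\ref{thm5.22-1} is met with $\alpha=\gamma_0$.
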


\begin{proof} \
This follows from Theorem \ref{thm5.22-1}. \
\end{proof}

\medskip

For the following result, we recall the notion of outer coefficient,
defined on page \pageref{outer}.

\begin{corollary}\label{cor5.23} {\rm (Hyponormality of
Polynomial Toeplitz Pairs)} Let $\Phi, \Psi\in L^\infty_{M_n}$ be
matrix-valued trigonometric polynomials of the form
\begin{equation}\label{5.36-1}
\Phi(z):=\sum_{j=-m}^N A_j z^j\quad\hbox{and}\quad
\Psi(z):=\sum_{j=-\ell}^M B_j z^j
\end{equation}
satisfying
\begin{itemize}
\item[(i)] the outer coefficients $A_{-m},A_N, B_{-\ell}$ and $B_M$ are invertible;
\item[(ii)] the ``co-analytic" outer coefficients $A_{-m}$ and $B_{-\ell}$ are diagonal-constant.
\end{itemize}
If $\mathbf T\equiv (T_\Phi, T_\Psi)$ is hyponormal then
\begin{equation}\label{5.37}
\Phi-\Lambda\Psi\in H^2_{M_n}\quad\hbox{for some constant matrix}\
\Lambda\in M_n.
\end{equation}
\end{corollary}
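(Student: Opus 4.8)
The plan is to deduce Corollary \ref{cor5.23} from Corollary \ref{thm5.22} by verifying that matrix-valued trigonometric polynomials of the form (\ref{5.36-1}), under hypotheses (i) and (ii), satisfy all the hypotheses of Corollary \ref{thm5.22}. First I would observe that a trigonometric polynomial is a (very special) rational function, so (\ref{5.35-35}) applies: we may write $\Phi_+=\theta_0\theta_1 A^*$, $\Phi_-=\theta_0 B^*$, $\Psi_+=\theta_2\theta_3 C^*$, $\Psi_-=\theta_2 D^*$ (coprime) with all $\theta_i$ finite Blaschke products. The key point is that after translating by the constant matrix terms $A_0$ and $B_0$ (which does not affect hyponormality), the only pole of $\Phi_-^*=\overline{\Phi_-}$ is at $z=0$ with order exactly $m$, and similarly the only pole of $\Psi_-^*$ is at $z=0$ with order exactly $\ell$. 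Concretely $\Phi_-(z)=A_{-m}^*\overline{z}^m+\cdots+A_{-1}^*\overline z$, so in the coprime factorization $\Phi_-=\theta_0 B^*$ we get $\theta_0=z^m$ (up to a unimodular constant) and $B(z)=z^m A_{-m}+z^{m-1}A_{-m+1}+\cdots+zA_{-1}$; the coprimeness forces $B(0)\ne 0$, which is automatic here, and since $A_{-m}$ is invertible one checks $\det B\ne 0$. Likewise $\theta_2=z^\ell$ and $D(z)=z^\ell B_{-\ell}+\cdots+zB_{-1}$.

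Next I would check the hypotheses of Corollary \ref{thm5.22}. Since $\theta_0=z^m$ and $\theta_2=z^\ell$ with $m,\ell\ge 1$ (the non-analytic case; if $m=0$ or $\ell=0$ then $\Phi$ or $\Psi$ is analytic and (\ref{5.37}) is trivial), $\theta_0$ and $\theta_2$ share the common zero $z=0$, so they are not coprime — this gives hypothesis (a) of Corollary \ref{thm5.22}, and both are finite Blaschke products. For the remaining hypothesis, take $\gamma_0:=0\in\mathcal Z(\theta_0)$; then $B(0)$ is the ``$z$-coefficient'' block, namely $B(z)$ has $B(0)$ equal to the coefficient of $z^0$, which from $B(z)=z^m A_{-m}+\cdots+zA_{-1}$ is \emph{zero} unless $m=1$. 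This is the one subtlety: the identification of $\theta_0$ and $B$ must be normalized so that $B(0)\ne 0$, i.e. $\theta_0$ absorbs exactly the vanishing factor; with $m\ge 1$ the correct reading is $\theta_0=z^m$ and $B(z)=A_{-m}+zA_{-m+1}+\cdots+z^{m-1}A_{-1}$ (reversing the order via $\overline z^m=z^{-m}$ on $\mathbb T$), so that $B(0)=A_{-m}$, which is invertible by (i) and diagonal-constant by (ii). Similarly $D(0)=B_{-\ell}$ is invertible and diagonal-constant. Thus hypothesis (b) of Corollary \ref{thm5.22} holds with $\gamma_0=0$, $B(\gamma_0)=A_{-m}$, $D(\gamma_0)=B_{-\ell}$ diagonal-constant.

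Having verified all hypotheses, Corollary \ref{thm5.22} applies directly: hyponormality of $\mathbf T=(T_\Phi,T_\Psi)$ yields $\Phi-\Lambda\Psi\in H^2_{M_n}$ for $\Lambda=A_{-m}B_{-\ell}^{-1}$ (a constant matrix), which is exactly (\ref{5.37}). I would close by remarking that this recovers, for $n=1$, the classical scalar conclusion (\ref{5.1*}) of \cite{CuL1}, since in the scalar case conditions (i) and (ii) are vacuous. The step I expect to require the most care is the bookkeeping in the previous paragraph: correctly normalizing the coprime factorization $\Phi_-=\theta_0 B^*$ so that $\theta_0=z^m$ with $B(0)=A_{-m}$, and checking that $A_{-m}$ invertible forces the coprimeness of $z^m$ and $B$ (equivalently, $\det B(0)\ne 0$), so that the hypotheses of Corollary \ref{thm5.22} are literally met rather than only morally met. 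Everything else is a direct citation.
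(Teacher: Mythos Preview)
Your proposal is correct and follows essentially the same approach as the paper's own proof: both verify that under hypotheses (i) and (ii) the trigonometric-polynomial setting places you squarely inside the hypotheses of Corollary~\ref{thm5.22} with $\theta_0=z^m$, $\theta_2=z^\ell$, $\gamma_0=0$, $B(0)=A_{-m}$, $D(0)=B_{-\ell}$, and then simply cite that corollary. Your self-corrected normalization $B(z)=A_{-m}+A_{-m+1}z+\cdots+A_{-1}z^{m-1}$ (so $B(0)=A_{-m}$) is exactly right, and your closing remark about the scalar case matches the paper's own remark.
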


\noindent{\it Remark.} \ If $\Phi$ and $\Psi$ are scalar-valued
trigonometric polynomials then all assumptions of the above result
are trivially satisfied, and hence the above result reduces to
(\ref{5.1*}). \

\begin{proof}[Proof of Corollary \ref{cor5.23}]
This follows from Corollary \ref{thm5.22} together with the
following observations: If $\Phi$ and $\Psi$ are matrix-valued
trigonometric polynomials of the form (\ref{5.36-1}), then under the
notation of Corollary \ref{thm5.22} we have
\medskip

\begin{itemize}
\item[(i)] $\theta_i=z^{m_i}$ for some $m_i\in\mathbb N$,
               and hence $\mathcal{Z}(\theta_i)=\{0\}$;
\item[(ii)] $A(0)=A_N$, $B(0)=A_{-m}$, $C(0)=B_M$ and $D(0)=B_{-\ell}$;
\item[(iii)] if $A_N$, $A_{-m}$, $B_M$ and $B_{-\ell}$ are invertible then
the representation (\ref{5.36-1}) are coprime factorizations;
\item[(iv)] $B(0)$ and $D(0)$ are diagonal-constant.
\end{itemize}
\end{proof}

\bigskip

We next consider the self-commutators of the Toeplitz pairs with
matrix-valued rational symbols and derive rank formulae for their
self-commutators. \

\medskip

Let $\Phi, \Psi \in L_{M_n}^\infty$ be matrix-valued rational
functions of the form
\begin{equation}\label{5.39}
\Phi_+ = \theta_0 \theta_1 A^*, \ \ \Phi_- =\theta_0 B^*, \ \ \Psi_+
= \theta_2 \theta_3 C^*, \ \ \Psi_- = \theta_2 D^*\quad \hbox{\rm
(coprime)}.
\end{equation}
By Corollary \ref{cor5.19}, if the pair $(T_\Phi, T_\Psi)$ is
hyponormal and if $\theta_0$ and $\theta_2$ are not coprime then
$\theta_0=\theta_2$. \  The following question arises at once. \

\bigskip

\begin{question}\label{qA}
Let $\mathbf {T}\equiv(T_{\Phi}, T_{\Psi})$ be hyponormal, where
$\Phi$ and $\Psi$ are given in {\rm (}\ref{5.39}{\rm )}. \  Does it
follow that $\theta_1=\theta_3$?
\end{question}

The answer to Question \ref{qA} is negative even for scalar-valued
symbols, as shown by the following example. \

\begin{example} \label{ex5.27}
Let $\varphi=\overline{\varphi_-}+\varphi_+\in L^\infty$ and
$\psi=\overline{\psi_-}+\psi_+\in L^{\infty}$ be of the form
$$
\varphi_+=4z, \ \ \varphi_-=z, \ \ \psi_+=2zb, \ \ \psi_-=z\quad
\left(\hbox{\rm where}\
b(z):=\frac{z+\frac{1}{2}}{1+\frac{1}{2}z}\right). \
$$
Under the notation of Corollary \ref{thm5.22}, $\varphi_-=\psi_- $
and we have
$$
a\theta_3-\overline{\lambda}c\theta_1 =4b-2.
$$
Since $(4b-2)(0)=0$, we have $\delta=z$ and
$$
P_{H_0^2}(\theta \delta \overline{\theta}_1 c)=P_{H_0^2}(z\cdot
2)=2z.
$$
Thus we have $\upsilon=\overline z +2z$, and $T_{\upsilon}$ is
hyponormal.  \  Therefore by Corollary \ref{thm5.22}, $\mathbf
{T}:=(T_{\phi}, T_{\psi})$ is hyponormal. \  Note that $\theta_1=1
\neq b=\theta_3$. \hfill$\square$
\end{example}

\medskip

However, if the symbols are matrix-valued rational function then the
answer to Question \ref{qA} is indeed affirmative under some
assumptions on the symbols. \

\begin{theorem} \label{thm5.28} \
 Let
$\mathbf{T}\equiv (T_{\Phi}, T_{\Psi})$ be a hyponormal Toeplitz
pair with rational symbols $\Phi, \Psi \in L^\infty_{M_n}$ of the
form
\begin{equation}\label{5.35-35-35}
\Phi_+ = \theta_0 \theta_1 A^*, \ \ \Phi_- =\theta_0 B^*, \ \ \Psi_+
= \theta_2 \theta_3 C^*, \ \ \Psi_- = \theta_2 D^*\quad\hbox{\rm
(coprime)}.
\end{equation}
Assume that
\medskip
\begin{itemize}
\item[(i)] $\mathcal Z(\theta_0)=\mathcal Z(\theta_1)$;
\item[(ii)] $\mathcal Z(\theta_0) \cap \mathcal Z(\theta_2)\neq \emptyset$;
\item[(iii)] $B(\gamma_0)$ and $D(\gamma_0)$ are diagonal-constant for some
$\gamma_0\in\mathcal{Z}(\theta_0)$.
\end{itemize}
\medskip
Then $\theta_0=\theta_2$ and $\theta_1=\theta_3$.
\end{theorem}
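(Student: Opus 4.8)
The plan is to combine Theorem \ref{thm5.22-1} (applied to the rational setting, i.e.\ Corollary \ref{thm5.22}) with Theorem \ref{thm5.19-2} and the local-coprime-ness criterion for rational inner functions (Lemma \ref{asdlem2.1}). First I would deduce $\theta_0=\theta_2$: since $\mathcal Z(\theta_0)\cap\mathcal Z(\theta_2)\neq\emptyset$, the functions $\theta_0$ and $\theta_2$ are not coprime, so $\Phi$ and $\Psi$ have a common tensored-scalar pole; all inner parts in (\ref{5.35-35-35}) are finite Blaschke-Potapov products and, by hypothesis (iii), $B(\gamma_0)$ and $D(\gamma_0)$ are diagonal-constant (hence, by Theorem \ref{lem33.9} and coprime-ness, invertible), so the hypotheses of Corollary \ref{thm5.22} are met and we obtain $\theta_0=\theta_2$ together with $\Phi_-=\Lambda^*\Psi_-$ for $\Lambda=B(\gamma_0)D(\gamma_0)^{-1}$, where $\Lambda$ is diagonal-constant. (Alternatively one may invoke Theorem \ref{thm5.19-2} directly after reducing to the diagonal-constant case, but Corollary \ref{thm5.22} already packages this.)

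Next I would prove $\theta_1=\theta_3$. By Lemma \ref{cor5.16} we may, after replacing $\mathbf T$ by $\mathbf T_\Delta=(T_{\Phi^{1,\delta}},T_{\Psi^{1,\delta}})$ with $\delta=\hbox{g.c.d.}(\theta_1,\theta_3)$, assume that $\theta_1$ and $\theta_3$ are coprime; the goal then becomes showing $\theta_1=\theta_3=1$. Since $\theta_0=\theta_2$ and $\Phi_-=\Lambda^*\Psi_-$ with $\Lambda$ diagonal-constant, the pair $(T_{\Phi-\Lambda\Psi},T_\Psi)$ is pseudo-hyponormal by Lemma \ref{lem5.5}, and
$$
(\Phi-\Lambda\Psi)_+ = \theta_0\theta_1\theta_3\bigl(\overline{\theta_3}A-\overline{\theta_1}\Lambda C\bigr)^* \equiv \theta_0\theta_1\theta_3\,E^*,
$$
where (by the argument establishing (\ref{coprimeomega}) in the proof of Theorem \ref{thm5.22-1}, now with $\theta=1$) $E=\theta_3A-\theta_1C\Lambda^*$ and $\theta_1\theta_3$ are coprime. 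Using hypothesis (i), $\mathcal Z(\theta_0)=\mathcal Z(\theta_1)$: if $\theta_1$ were nonconstant, pick $\gamma\in\mathcal Z(\theta_1)=\mathcal Z(\theta_0)$; then $\gamma\in\mathcal Z(\theta_0\theta_1\theta_3)$ but, because $\theta_1$ and $\theta_3$ are coprime, $\gamma\notin\mathcal Z(\theta_3)$, while coprime-ness of $E$ and $\theta_1\theta_3$ forces $E(\gamma)$ invertible (Lemma \ref{asdlem2.1} / Theorem \ref{lem33.9}). I then run the coprime-divisor argument of Theorem \ref{thm5.19-2}: apply Corollary \ref{cor55.6} to the pseudo-hyponormal pair $(T_{\Phi-\Lambda\Psi},T_\Psi)$ (noting $\Psi$ has a tensored-scalar singularity at $\gamma$ since $\gamma\in\mathcal Z(\theta_0)$) to conclude $\ker H_{(\Phi-\Lambda\Psi)_+^*}\subseteq\ker H_{\Psi_-^*}=\theta_0H^2_{\mathbb C^n}$, i.e.\ $\theta_0\theta_1\theta_3 H^2_{\mathbb C^n}\subseteq\theta_0 H^2_{\mathbb C^n}$, which is automatic and gives nothing — so instead I would perturb: for generic $\beta$, $(T_{\Phi-\Lambda\Psi-\beta\Psi},T_\Psi)$ is pseudo-hyponormal, and choosing $\beta$ so that a suitable local factor of $(\Phi-\Lambda\Psi-\beta\Psi)_+$ at $\gamma$ becomes non-invertible (an eigenvalue choice, exactly as in the last paragraph of the proof of Theorem \ref{thm5.19-2}) produces a nonconstant inner divisor $\Omega_1$ of $I_{b_\gamma}$ with $b_\gamma\theta_1\theta_3\Omega_2 H^2_{\mathbb C^n}\subseteq\ker H_{\cdots}\subseteq\theta_0 H^2_{\mathbb C^n}$; since $\gamma$ is a simple zero after the $\hbox{g.c.d.}$ reduction this forces $\Omega_1$ constant, a contradiction. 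Hence $\theta_1$ is constant, and symmetrically (using that $\theta_0=\theta_2$ so $\mathcal Z(\theta_2)=\mathcal Z(\theta_0)$ and the same perturbation applied with the roles of $A,C$ exchanged) $\theta_3$ is constant, giving $\theta_1=\theta_3$.

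The main obstacle I anticipate is the bookkeeping in the second paragraph: making precise the perturbation/eigenvalue step that converts ``$E(\gamma)$ invertible" into a genuine contradiction, and in particular carrying out the $\hbox{g.c.d.}$-reduction (via Lemma \ref{cor5.16} and Proposition \ref{pro2.6}) carefully enough that the common zero $\gamma$ becomes a \emph{simple} pole of the reduced co-analytic parts, so that any nonconstant inner divisor of $I_{b_\gamma}$ is excluded. This is the same mechanism used in the proof of Theorem \ref{thm5.19-2}, so I would model the argument closely on that proof, invoking Lemma \ref{lem5.17} and Lemma \ref{lem5.9} to reduce $\gamma$ to $0$ and Theorem \ref{thm82.7} (through Lemma \ref{cor5.16}) to justify the divisions. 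Once $\theta_1=\theta_3$ is in hand, the statement $\theta_0=\theta_2$ \emph{and} $\theta_1=\theta_3$ follows, completing the proof.
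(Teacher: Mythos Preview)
Your first paragraph is fine and matches the paper: $\theta_0=\theta_2$ follows from Corollary \ref{thm5.22} (via Corollary \ref{cor5.19}), and the reduction through Lemma \ref{cor5.16} to coprime $\theta_1,\theta_3$ is also the paper's starting point.

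The second paragraph has a genuine gap. Write $E_\beta := \theta_3 A - \theta_1(\Lambda^*+\bar\beta)C$, so that $(\Phi-(\Lambda+\beta)\Psi)_+ = \theta_0\theta_1\theta_3\,E_\beta^*$. At your chosen point $\gamma\in\mathcal Z(\theta_1)=\mathcal Z(\theta_0)$ one has $E_\beta(\gamma)=\theta_3(\gamma)A(\gamma)$ for \emph{every} $\beta$, because $\theta_1(\gamma)=0$ annihilates the $\beta$-dependent term; since $\theta_1,\theta_3$ are coprime and $I_{\theta_0\theta_1},A$ are coprime, this value is invertible. Hence no choice of $\beta$ makes the local factor at $\gamma$ singular, and the eigenvalue trick cannot be executed there. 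This is precisely the opposite of the configuration in the proof of Theorem \ref{thm5.19-2}, where the working point $0$ is \emph{not} a zero of the inner function $\omega=b_\beta$ (with $\beta\neq 0$), so both matrix coefficients survive and a genuine eigenvalue choice exists. (A side issue: Corollary \ref{cor55.6} in the direction you invoke needs the first coordinate to have a tensored-scalar singularity, and $(\Phi-\Lambda\Psi)_-=0$, so the $\beta=0$ step is not justified that way either; but you already discard it, so the real obstruction is the one above.)

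The paper exploits the \emph{same} local computation $E(\gamma)=\theta_3(\gamma)A(\gamma)$ invertible, but structurally rather than perturbatively. With $\Delta:=\hbox{left-g.c.d.}\bigl(I_{\theta_0},\,\theta_3 A-\theta_1 C\Lambda^*\bigr)$ from Corollary \ref{thm5.22}(iii), the invertibility of $E$ at every $\gamma\in\mathcal Z(\theta_0)=\mathcal Z(\theta_1)$ forces, via Theorem \ref{lem33.9}, $\det\Delta$ to be coprime with $\delta:=\gcd(\theta_0,\theta_1)$; hypothesis (i) gives $\mathcal Z(\delta)=\mathcal Z(\theta_0)$, while $\Delta\mid I_{\theta_0}$ gives $\mathcal Z(\det\Delta)\subseteq\mathcal Z(\theta_0)$, so $\det\Delta$ has no zeros and $\Delta$ is a unitary constant. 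Then $\Omega=\theta_0\theta_1\theta_3$ and $H_{(\Psi^{1,\Omega})_+^*}=H_{\theta_1 C}=0$, so pseudo-hyponormality of $T_{\Psi^{1,\Omega}}$ would force $H^2_{\mathbb C^n}\subseteq\theta_0 H^2_{\mathbb C^n}$, the desired contradiction. In short, hypothesis (i) is used to kill the divisor $\Delta$, not to manufacture a singular perturbation.
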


\begin{remark}
Note that $\mathcal Z(\theta_0) \ne \mathcal Z(\theta_1)$ and
$\mathcal Z(\theta_2)\ne \mathcal Z(\theta_3)$ in Example
\ref{ex5.27}.
\end{remark}

\begin{proof}[Proof of Theorem \ref{thm5.28}]
By Corollary \ref{cor5.19}, we have $\theta_0=\theta_2$. \ In view
of Corollary \ref{cor5.16}, we may write
$$
\Phi_+ = \theta_0 \theta_1 A^*, \ \ \Phi_- =\theta_0 B^*, \ \ \Psi_+
= \theta_0 \theta_3 C^*, \ \ \Psi_- = \theta_0 D^*\ \ \hbox{\rm
(coprime)} ,
$$
where the $\theta_1$ and $\theta_3$ are coprime. \  By Corollary
\ref{thm5.22}, it follows that $T_{\Psi^{1, \Omega}}$ is
pseudo-hyponormal with $\Omega:=\theta_0\theta_1\theta_3\Delta^*$,
where $\Delta:=\hbox{\rm left-g.c.d.}\, \bigl\{ I_{\theta_0}, \
(\theta_3 A- \theta_1 C\Lambda^*)\bigr\}$ (with
$\Lambda:=B(\gamma_0)D(\gamma_0)^{-1}$).  \ Thus we can write
\begin{equation}\label{999-345}
I_{\theta_0} =\Delta \Theta_0^{\prime} \quad \hbox{and} \quad
\theta_3A-\theta_1 C\Lambda^*=\Delta A_1\quad (\Theta_0^{\prime},
A_1\in H^\infty_{M_n}).
\end{equation}
Suppose $\mathcal Z(\theta_0)=\mathcal Z(\theta_1)$. \ Then we can
write
$$
\theta_0=\prod_{k=1}^{N}b_{\alpha_k}^{p_k} \quad \hbox{and} \quad
\theta_1=\prod_{k=1}^{N}b_{\alpha_k}^{q_k} \quad\left(\alpha_k \in
\mathbb D, \ p_k,  q_k \in \mathbb Z_+\right).
$$
Clearly,
$$
\delta\equiv \hbox{g.c.d.}(\theta_0,
\theta_1)=\prod_{k=1}^{N}b_{\alpha_k}^{r_k}
\quad\bigl(r_k:=\hbox{min}\, (p_k, q_k)\bigr).
$$
Thus, we may write
\begin{equation}\label{999-3451}
\theta_0=\delta \delta_0 \quad \hbox{and} \quad \theta_1=\delta
\delta_1 \quad (\delta_i\ \hbox{is a finite Blaschke product for} \
i=0,1).
\end{equation}
Since $\theta_1$ and $\theta_3$ are coprime, it follows from
(\ref{Blaco}) that $\mathcal Z(\theta_1)\cap \mathcal
Z(\theta_3)=\emptyset$, and hence $\mathcal Z(\theta_0)\cap \mathcal
Z(\theta_3)=\emptyset$, so that $\theta_0$ and $\theta_3$ are
coprime. \ It follows from (\ref{999-345}) and (\ref{999-3451}) that
$$
I_{\theta_0}= I_{\delta \delta_0} =\Delta \Theta_0^{\prime} \quad
\hbox{and} \quad \theta_3A-\delta \delta_1 C\Lambda^*=\Delta A_1.
$$
By Corollary \ref{lem2.3} and Theorem \ref{lem33.9}, we can easily
see that
\begin{equation}\label{0109}
\hbox{$I_\delta$ and $\Delta$ are coprime}.
\end{equation}
By Theorem \ref{lem33.9}, $I_{\delta^{\prime}} \equiv D(\Delta)$ and
$I_\delta$ are coprime. \ Thus, $\delta^{\prime}$ and $\delta$ are
coprime, so that $\mathcal Z(\delta^{\prime})\cap \mathcal Z(\delta)
=\emptyset$, and hence $\mathcal Z(\delta^{\prime})\cap \mathcal
Z(\theta_0) =\emptyset$. \ Since $\delta^{\prime}$ is a inner
divisor of $\theta_0$, $\mathcal Z(\delta^{\prime}) \subseteq
\mathcal Z(\theta_0)$ and hence $\mathcal
Z(\delta^{\prime})=\emptyset$. \ Thus, $\delta^{\prime}$ is a
constant, so that $\Delta$ is a constant unitary. \ We now want to
show that $\theta_3=1$. \ Assume to the contrary that $\theta_3 \neq
1$. \ Observe that
\begin{equation}\label{5.40}
H_{({\Psi^{1, \Omega}})_+^*} =H_{[P_{H_0^2 (\Psi_+ \Omega^*)}]^*}
=H_{\Omega \Psi_+^* } =H_{\theta_1 C}=0
\quad(\Omega:=\theta_0\theta_1\theta_3)\,.
\end{equation}
It thus follows that
$$
\theta_0 H_{\mathbb C_n}^2 = \hbox{ker}H_{{(\Psi^{1,\Omega})}_-^*}
\supseteq \hbox{ker}H_{{(\Psi^{1,\Omega})}_+^*}= H_{\mathbb C^n}^2,
$$
a contradiction. \  Therefore we must have $\theta_3=1$. \ This
completes the proof.  \
\end{proof}

\begin{corollary} \label{cor5.28-1}
Let $\Phi,\Psi\in L^\infty_{M_n}$ be matrix-valued trigonometric
polynomials such that the outer coefficients of $\Phi$ and $\Psi$
are invertible and the co-analytic outer coefficients of $\Phi$ and
$\Psi$ are diagonal-constant. \  If $\mathbf T:=(T_\Phi, T_\Psi)$ is
hyponormal then
$$
\hbox{\rm deg}\,(\Phi_+)=\hbox{\rm deg}\,(\Psi_+).
$$
\end{corollary}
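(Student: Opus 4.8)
The plan is to deduce Corollary \ref{cor5.28-1} from Theorem \ref{thm5.28} by checking that matrix-valued trigonometric polynomials with the stated invertibility and diagonal-constancy hypotheses satisfy all the hypotheses of that theorem. First I would record, as in the proof of Corollary \ref{cor5.23}, the explicit form of the coprime factorizations: writing $\Phi(z)=\sum_{j=-m}^N A_jz^j$ and $\Psi(z)=\sum_{j=-\ell}^M B_jz^j$, the poles of $\Phi$ and $\Psi$ occur only at $z=0$, so in the factorizations
$$
\Phi_+=\theta_0\theta_1 A^*,\quad \Phi_-=\theta_0 B^*,\quad \Psi_+=\theta_2\theta_3 C^*,\quad \Psi_-=\theta_2 D^*
$$
we may take $\theta_0=z^m$, $\theta_1=z^{N+m}$ (up to the usual normalization of the analytic part; more precisely $\theta_0\theta_1=z^{N+m}$, so $\theta_1=z^N$), $\theta_2=z^\ell$, $\theta_3=z^M$. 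The key numerical reductions are $\mathcal Z(\theta_0)=\mathcal Z(\theta_1)=\{0\}$ (hypothesis (i) of Theorem \ref{thm5.28}) and $\mathcal Z(\theta_0)\cap\mathcal Z(\theta_2)=\{0\}\ne\emptyset$ (hypothesis (ii)). Here I would also point out why the representations are genuinely coprime: invertibility of the outer coefficients $A_N,A_{-m},B_M,B_{-\ell}$ forces $A(0)=A_N$, $B(0)=A_{-m}$, $C(0)=B_M$, $D(0)=B_{-\ell}$ to be invertible, and invertibility of $A(\alpha)$ (resp. $B(\alpha)$) at each zero $\alpha$ of $\theta_1$ (resp. $\theta_0$) is exactly the coprimeness criterion of Theorem \ref{lem33.9}; since the only such zero is $0$, invertibility there suffices.

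Next I would verify hypothesis (iii) of Theorem \ref{thm5.28}: there must exist $\gamma_0\in\mathcal Z(\theta_0)$ with $B(\gamma_0)$ and $D(\gamma_0)$ diagonal-constant. Taking $\gamma_0=0$, we have $B(0)=A_{-m}$ and $D(0)=B_{-\ell}$, which are diagonal-constant by assumption (ii) of the corollary. With all three hypotheses of Theorem \ref{thm5.28} in hand (and $\mathbf T$ hyponormal by assumption), the theorem yields $\theta_0=\theta_2$ and $\theta_1=\theta_3$, i.e. $z^m=z^\ell$ and $z^N=z^M$, hence $m=\ell$ and $N=M$. Finally I would translate $\theta_1=\theta_3$ back into degrees: $\hbox{deg}\,(\Phi_+)$ is the degree of the inner part of the right coprime factorization of $\Phi_+$, which by construction is $\hbox{deg}\,(\theta_0\theta_1)=N+m$, and likewise $\hbox{deg}\,(\Psi_+)=M+\ell$. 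Since $N=M$ and $m=\ell$ we obtain $\hbox{deg}\,(\Phi_+)=\hbox{deg}\,(\Psi_+)$, which is the assertion. (One should be a little careful about whether $\hbox{deg}\,(\Phi_+)$ is meant as $\hbox{deg}\,(\theta_0\theta_1)$ or as $\hbox{deg}\,(\theta_1)$; in either reading the conclusion follows, since we get equality of both $\theta_0$-degrees and $\theta_1$-degrees.)

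The routine part is the bookkeeping of exponents; the only genuine subtlety is making sure the trigonometric-polynomial hypotheses really do supply hypothesis (iii) of Theorem \ref{thm5.28} at a \emph{common} zero of $\theta_0$ and $\theta_2$. Here the point is that for polynomials every pole sits at the origin, so $\gamma_0=0$ is automatically a common zero, and the co-analytic outer coefficients $A_{-m}$ and $B_{-\ell}$ are precisely $B(0)$ and $D(0)$; thus the diagonal-constancy hypothesis on the corollary's co-analytic outer coefficients is exactly what Theorem \ref{thm5.28}(iii) demands. Once this identification is made the proof is essentially a one-line appeal to Theorem \ref{thm5.28}, in the same spirit as the deduction of Corollary \ref{cor5.23} from Corollary \ref{thm5.22}. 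I expect the write-up to be short: state the four observations (i)–(iv) as in the proof of Corollary \ref{cor5.23}, invoke Theorem \ref{thm5.28} to get $\theta_0=\theta_2$, $\theta_1=\theta_3$, and read off the degree equality.
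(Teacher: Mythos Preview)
Your approach is exactly the paper's: verify that trigonometric polynomials with the stated hypotheses satisfy conditions (i)--(iii) of Theorem~\ref{thm5.28} (all $\theta_i$ are powers of $z$, the common zero is $\gamma_0=0$, and $B(0),D(0)$ are the co-analytic outer coefficients), then invoke that theorem. One small bookkeeping correction: for $\Phi_+=\sum_{j=0}^N A_jz^j$ with $A_N$ invertible, the inner part of the coprime factorization is $z^N$, not $z^{N+m}$, so $\theta_0\theta_1=z^N$ and hence $\theta_1=z^{N-m}$; with this fix (which you half-suspected) the verification of $\mathcal Z(\theta_0)=\mathcal Z(\theta_1)=\{0\}$ and the final degree count go through unchanged.
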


\begin{proof}
Immediate from Theorem \ref{thm5.28}.
\end{proof}

\bigskip

If the matrix-valued rational symbols $\Phi$ and $\Psi$ have the
same co-analytic and analytic degrees we get a general necessary
condition for the hyponormality of the pair $\mathbf T:=(T_{\Phi},
T_{\Psi})$. \  This plays an important role in getting a rank
formula for the self-commutator $[\mathbf T^*,\mathbf T]$. \

\begin{theorem} \label{thm5.29}   \ Let
$\mathbf{T}\equiv (T_{\Phi}, T_{\Psi})$ be a hyponormal Toeplitz
pair with rational symbols $\Phi, \Psi \in L^\infty_{M_n}$ of the
form
\begin{equation}\label{5.35A}
\Phi_+ = \theta_0 \theta_1 A^*, \ \ \Phi_- =\theta_0 B^*, \ \ \Psi_+
= \theta_2 \theta_1 C^*, \ \ \Psi_- = \theta_2 D^*\ \ \hbox{\rm
(coprime)}.
\end{equation}
Suppose

\begin{itemize}
\item[(i)] $\mathcal Z(\theta_0) \cap \mathcal Z(\theta_2)\neq \emptyset$;
\item[(ii)] \ $B(\gamma_0)$ and $D(\gamma_0)$ are diagonal-constant.
\end{itemize}
Then $\Phi-\Lambda \Psi \in \mathcal K_{z\theta_1}$ for some
$\Lambda\in M_n$.
\end{theorem}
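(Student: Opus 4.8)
The plan is to reduce Theorem~\ref{thm5.29} to the machinery already assembled in Theorem~\ref{thm5.22-1} (and its rational companion Corollary~\ref{thm5.22}) together with Theorem~\ref{thm5.28}. First I would observe that the hypotheses here are exactly those under which Corollary~\ref{thm5.22} applies: $\theta_0$ and $\theta_2$ are not coprime (by (i) and (\ref{Blaco})), and $B(\gamma_0), D(\gamma_0)$ are diagonal-constant at a common zero $\gamma_0\in\mathcal Z(\theta_0)\cap\mathcal Z(\theta_2)$. Hence Corollary~\ref{thm5.22} gives both $\theta_0=\theta_2$ and, taking $\Lambda:=B(\gamma_0)D(\gamma_0)^{-1}$, the coincidence of co-analytic parts up to the constant left factor: $\Phi_-=\Lambda^*\Psi_-$, equivalently $\Phi_\Lambda:=\Phi-\Lambda\Psi$ has $(\Phi_\Lambda)_-=0$, i.e.\ $\Phi-\Lambda\Psi\in H^2_{M_n}$. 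So the only substantive content left is to upgrade ``$\in H^2_{M_n}$'' to ``$\in\mathcal K_{z\theta_1}$'', i.e.\ to control the \emph{analytic} degree of $\Phi-\Lambda\Psi$.

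Next I would bring in Theorem~\ref{thm5.28}. Its hypotheses (i)--(iii) are: $\mathcal Z(\theta_0)=\mathcal Z(\theta_1)$, $\mathcal Z(\theta_0)\cap\mathcal Z(\theta_2)\ne\emptyset$, and $B(\gamma_0),D(\gamma_0)$ diagonal-constant. Under the standing assumption of the present theorem that both analytic parts carry the \emph{same} inner factor $\theta_1$ in (\ref{5.35A}), the situation is somewhat different from Theorem~\ref{thm5.28}; I would therefore first pass to the reduction used in its proof: by Corollary~\ref{cor5.16} (with $\theta=\mathrm{g.c.d.}(\theta_1,\theta_1)=\theta_1$) one may replace $(T_\Phi,T_\Psi)$ by $(T_{\Phi^{1,\theta_1}},T_{\Psi^{1,\theta_1}})$ without affecting pseudo-hyponormality, after which the analytic parts have trivial extra inner factor. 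Then, writing $\Phi_\Lambda=\Phi-\Lambda\Psi$ and using $\Phi_+=\theta_0\theta_1 A^*$, $\Psi_+=\theta_0\theta_1 C^*$ (since $\theta_0=\theta_2$), we get
$$
\Phi_\Lambda=\Phi_+-\Lambda\Psi_+=\theta_0\theta_1\,(A-\overline\Lambda^{\,t}C)^*\quad\text{(schematically)},
$$
so $(\Phi_\Lambda)_+=\theta_0\theta_1(A-C\Lambda^*)^*$, manifestly in $H^2_{M_n}$. The point is to show its degree is at most $\deg(z\theta_1)$, i.e.\ that $\Phi_\Lambda\in\mathcal K_{z\theta_1}$. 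I would argue, as in the closing paragraph of the proof of Theorem~\ref{thm5.28}, that pseudo-hyponormality of $(T_{\Phi_\Lambda},T_\Psi)$ (Lemma~\ref{lem5.5}) together with Corollary~\ref{cor55.6} forces $\ker H_{(\Phi_\Lambda)_+^*}\subseteq\ker H_{\Psi_-^*}=\theta_0 H^2_{\mathbb C^n}$; combined with $\ker H_{(\Phi_\Lambda)_+^*}\supseteq\theta_0\theta_1 H^2_{\mathbb C^n}$ this pins the inner part of $(\Phi_\Lambda)_+$ between $\theta_0$ and $\theta_0\theta_1$, and the hypothesis $\mathcal Z(\theta_0)=\mathcal Z(\theta_1)$ of Theorem~\ref{thm5.28}-type (or, in the present statement, the equality of analytic factors) then yields that $\theta_1$-worth of the analytic part cancels, leaving $(\Phi_\Lambda)_+\in\mathcal K_{z\theta_1}$.

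Putting the pieces together: $(\Phi_\Lambda)_-=0\in\mathcal K_{z\theta_1}$ trivially, and $(\Phi_\Lambda)_+\in\mathcal K_{z\theta_1}$ by the degree estimate, whence $\Phi-\Lambda\Psi\in\mathcal K_{z\theta_1}$, as claimed. The main obstacle I anticipate is the bookkeeping in the degree estimate for $(\Phi_\Lambda)_+$: one must carefully track how the common analytic inner factor $\theta_1$ interacts with the left-g.c.d.\ $\Delta$ appearing in Corollary~\ref{thm5.22}(iii) and verify, using Theorem~\ref{lem33.9} and Corollary~\ref{lem2.3} (coprimeness of $I_\delta$ and $\Delta$, as in (\ref{0109})), that no residual inner factor survives beyond $z\theta_1$ — i.e.\ that the ``$z$'' slack is exactly what the constant-term normalization $\Phi(0)=\Psi(0)=0$ accounts for and nothing more. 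A secondary subtlety is checking that the reduction via Corollary~\ref{cor5.16} is legitimate here even though $\mathbf T$ is assumed hyponormal rather than merely pseudo-hyponormal; this is handled by Corollary~\ref{cor5.2}, which lets one move freely between hyponormality of the pair and pseudo-hyponormality plus normality and commutativity, all of which are preserved under the pull-back constructions of Proposition~\ref{pro3.13}.
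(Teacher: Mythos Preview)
Your opening is correct: Corollary~\ref{thm5.22} applies, giving $\theta_0=\theta_2$ and, with $\Lambda:=B(\gamma_0)D(\gamma_0)^{-1}$ diagonal-constant, $\Phi_-=\Lambda^*\Psi_-$; hence $\Phi_\Lambda:=\Phi-\Lambda\Psi\in H^2_{M_n}$ and $(\Phi_\Lambda)_+=\theta_0\theta_1(A-C\Lambda^*)^*$. The gap is in the degree estimate. You cannot invoke Corollary~\ref{cor55.6} on $(T_{\Phi_\Lambda},T_\Psi)$: its hypotheses require both symbols to be non-analytic, but $\Phi_\Lambda$ is analytic (so it has no tensored-scalar singularity), and the conclusion it offers is in any case $\ker H_{\Psi_+^*}\subseteq\ker H_{(\Phi_\Lambda)_-^*}$, which is vacuous here. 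More seriously, even granting $\ker H_{(\Phi_\Lambda)_+^*}\subseteq\theta_0 H^2_{\mathbb C^n}$, you could \emph{not} conclude $(\Phi_\Lambda)_+\in\mathcal K_{z\theta_1}$: the latter is equivalent to $A-C\Lambda^*\in\theta_0 H^2_{M_n}$, and the right-coprime inner part of $\theta_0\theta_1(A-C\Lambda^*)^*$ containing $I_{\theta_0}$ does not force this, since an inner left factor of $A-C\Lambda^*$ may cancel against $I_{\theta_1}$ rather than $I_{\theta_0}$. Already in the scalar case, with $\theta_0=z$, $\theta_1=b_\alpha$ ($\alpha\ne 0$), and $A-C\Lambda^*=b_\alpha g$, $g(0)\ne 0$, one has $(\Phi_\Lambda)_+=z\overline g$ with right-coprime inner part exactly $z=\theta_0$, yet $z\overline g\notin\mathcal K_{zb_\alpha}$. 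You also import $\mathcal Z(\theta_0)=\mathcal Z(\theta_1)$ from Theorem~\ref{thm5.28}, which is not assumed here.

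The paper instead stays with condition~(iii) of Corollary~\ref{thm5.22}. Since $\theta_3=\theta_1$ one has $\theta=\theta_1$, $\Omega=\theta_0\theta_1\Delta^*$, and $\Delta$ is the left-g.c.d.\ of $I_{\theta_0}$ and $A_0-C_0\Lambda^*$ (with $A_0:=P_{\mathcal K_{\theta_0}}A$, $C_0:=P_{\mathcal K_{\theta_0}}C$); the pseudo-hyponormality of $T_{\Psi^{1,\Omega}}$, whose co-analytic part is still $\theta_0 D^*$ (coprime), forces $\Delta=I_{\theta_0}$. Because $\Delta$ is by construction a \emph{left inner divisor} of $A_0-C_0\Lambda^*$, this gives $A_0-C_0\Lambda^*\in\theta_0 H^2_{M_n}\cap\mathcal K_{\theta_0}=\{0\}$, whence $\Phi_+-\Lambda\Psi_+=\theta_1\bigl(\overline{\theta_0}(A-C\Lambda^*)\bigr)^*$ with $\overline{\theta_0}(A-C\Lambda^*)\in H^2_{M_n}$, and Lemma~\ref{lem2.4}(a) yields $\Phi_+-\Lambda\Psi_+\in\mathcal K_{z\theta_1}$. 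The essential point you are missing is that the \emph{left}-g.c.d.\ $\Delta$ in Corollary~\ref{thm5.22}(iii) directly encodes the needed divisibility of $A-C\Lambda^*$, whereas the right-coprime inner part of $(\Phi_\Lambda)_+$ does not.
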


\begin{proof}
By Corollary \ref{cor5.19}, we have $\theta_0=\theta_2$. \ It
follows from Corollary \ref{thm5.22} that $T_{\Psi^{1,\Omega}}$ is
pseudo-hyponormal with
$$
\Omega=\theta_0\theta_1\Delta^*\quad \hbox{(with
$\Delta:=\hbox{left-g.c.d.}\,(I_{\theta_0},\, A_0- C_0
\Lambda^*)$)}\,,
$$
where $A_0 := P_{\mathcal K_{\theta_0}}A$ and $C_0 := P_{\mathcal
K_{\theta_0 }}C$. \ Then we can easily see that
\begin{equation}\label{5.41}
\Delta=I_{\theta_0}.
\end{equation}
Also since $\Delta$ is a left inner divisor of $A_0- C_0\Lambda^*$,
it follows that $ A_0- C_0\Lambda^* \in \theta_0 H_{M_n}^2. $ \ By
Lemma \ref{lem2.4}, $C_0\Lambda^* \in \mathcal K_{\theta_0}$ and
hence $A_0- C_0\Lambda^* \in \mathcal K_{\theta_0}$. \  Therefore,
$$
A_0- C_0\Lambda^* \in \theta_0 H_{M_n}^2 \bigcap \mathcal
K_{\theta_0}=\{0\},
$$
which implies $A_0=C_0\Lambda^*$. \  Put $A_1:=A-A_0$, and
$C_1:=C-C_0$. \ Then we may write
$$
\Phi_+-\Lambda \Psi_+
=\theta_0\theta_1(A_0^*+A_1^*)- \theta_0\theta_1 \Lambda(C_0^*+C_1^*)\\
=\theta_1(A_2- C_2 \Lambda^*)^*,
$$
(where $A_2:=\overline{\theta_0}A_1, C_2:=\overline{\theta_1}C_1\in
H^2_{M_n}$) which implies $\Phi_+-\Lambda \Psi_+ \in \mathcal
K_{z\theta_1}$ by Lemma \ref{lem2.4}. \
It thus follows from Corollary \ref{thm5.22} that
$$
\Phi-\Lambda \Psi =\Phi_-^*-\Lambda \Psi_-^* + \Phi_+ -\Lambda
\Psi_+ = \Phi_+ -\Lambda \Psi_+ \in \mathcal K_{z\theta_1}\,,
$$
which proves the theorem. \
\end{proof}


We now obtain the rank formula for self-commutators. \

\begin{theorem} \label{thm5.34}
Let $\mathbf T\equiv(T_\Phi, T_\Psi)$ be a hyponormal Toeplitz pair
with rational symbols $\Phi, \Psi \in L^\infty_{M_n}$ of the form
$$
\Phi_+ = \theta_0 \theta_1 A^*, \ \ \Phi_- =\theta_0 B^*, \ \ \Psi_+
= \theta_2 \theta_1 C^*, \ \ \Psi_- = \theta_2 D^*\quad\hbox{\rm
(coprime)}.
$$
If $\theta_0$ and $\theta_2$ are not coprime and $B(\gamma_0)$ and
$D(\gamma_0)$ are diagonal-constant for some
$\gamma_0\in\mathcal{Z}(\theta_0)$, then
\begin{equation}\label{5.43}
\begin{aligned}
~ [\mathbf {T}^*, \mathbf T] &=\mathcal
W\left(\bigl(I-K(M)^*K(M)\bigr)\otimes
\begin{pmatrix} 1&1\\ 1&1\end{pmatrix}\right)\mathcal W^*\bigoplus 0\\
&=\left([T_{\Phi}^*, T_{\Phi}]\otimes \begin{pmatrix} 1&1\\
1&1\end{pmatrix}\right)\bigoplus 0
\end{aligned}
\end{equation}
with
$$
\mathcal W\label{mathcalW}: = \begin{pmatrix}
(T_A)_{\theta_0\theta_1 }^* W &0\\0& (T_C)_{\theta_0\theta_1 }^*
W\end{pmatrix}\,,
$$
where $K\in\mathcal C(\Phi)$ is a polynomial {\rm (}cf. {\rm
(}\ref{2.12}{\rm )}{\rm )}, and $W$ and $M$ are given by {\rm
(\ref{2.13})} and {\rm (\ref{2.14})} with $\theta=\theta_0\theta_1$.
\  In particular,\label{rankT^*T}
\begin{equation}\label{5.44}
\hbox{\rm rank}\,[\mathbf {T}^*, \mathbf T]=\hbox{\rm
rank}\,[T_{\Phi}^*, T_{\Phi}] =\hbox{\rm
rank}\,\Bigl(I-K^*(M)K(M)\Bigr).
\end{equation}
\end{theorem}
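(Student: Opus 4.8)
The plan is to combine the pull-back machinery from Proposition~\ref{pro3.13} and Lemmas~\ref{lem5.3}--\ref{lem5.5} with the rank/positivity identities (\ref{2.20})--(\ref{2.22}) and the Hermite--Fej\'er representation (\ref{2.17}). First I would invoke Corollary~\ref{cor5.19} to get $\theta_0=\theta_2$, and then Theorem~\ref{thm5.28} (whose hypotheses (i)--(iii) are exactly those assumed here, once we note $\mathcal Z(\theta_0)=\mathcal Z(\theta_1)$ follows from the conclusion or can be arranged by the reductions) to get $\theta_1=\theta_3$; this justifies writing both symbols over the common inner function $\theta\equiv\theta_0\theta_1$, so that $\Phi_+=\theta A^*$ and $\Psi_+=\theta C^*$ with $A,C$ coprime to $\theta$. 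By Theorem~\ref{thm5.29}, $\Phi-\Lambda\Psi\in\mathcal K_{z\theta_1}$; after subtracting this analytic-type correction and using Lemma~\ref{lem5.5} (which is legitimate since $\Lambda=B(\gamma_0)D(\gamma_0)^{-1}$ is diagonal-constant, hence commutes with $\Phi_-,\Psi_-$ via $\Phi_-=\Lambda^*\Psi_-$ from Corollary~\ref{thm5.22}(ii)), the pseudo-commutator of $\mathbf T$ is unitarily/congruently equivalent to that of the pair $(T_{\Phi}, T_{\Psi})$ with $\Psi_+ = \Lambda^{-1}\Phi_+$ up to a coprime-analytic adjustment --- i.e. the two symbols have proportional co-analytic parts and, after the reduction, proportional analytic parts as well.

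\textbf{Key steps in order.} (1) Reduce, via Corollaries~\ref{cor5.19} and~\ref{thm5.22} and Theorem~\ref{thm5.28}, to $\theta_0=\theta_2$, $\theta_1=\theta_3$, $\Phi_-=\Lambda^*\Psi_-$, and $\Phi-\Lambda\Psi\in\mathcal K_{z\theta_1}\subseteq\mathcal K_{z\theta}$. (2) Write $[\mathbf T^*,\mathbf T]$ using (\ref{5.2}); since $\Phi,\Psi$ are normal and commute (Lemma~\ref{lem5.1}) the diagonal entries are $H_{\Phi_+^*}^*H_{\Phi_+^*}-H_{\Phi_-^*}^*H_{\Phi_-^*}=[T_\Phi^*,T_\Phi]$ and similarly for $\Psi$, and the off-diagonal entry is $H_{\Phi_+^*}^*H_{\Psi_+^*}-H_{\Psi_-^*}^*H_{\Phi_-^*}$. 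Using $\Psi_+=\theta C^*$, $\Phi_+=\theta A^*$ with $A$ coprime to $\theta$, compute $\operatorname{cl\,ran}H_{\Phi_+^*}=\operatorname{cl\,ran}H_{\Psi_+^*}=\mathcal H(\widetilde\Theta)$ (where $\Theta=I_\theta$), and with $K\in\mathcal C(\Phi)$ (existing since $T_\Phi$ hyponormal) one has, by (\ref{3.13-2})-style identities, $[T_\Phi^*,T_\Phi]=H_{\Phi_+^*}^*(I-T_{\widetilde K}T_{\widetilde K}^*)H_{\Phi_+^*}$ and, because $\Psi_+^*=\Lambda^{-1}\Phi_+^*$ modulo $H^2_{M_n}$ after the reduction, $H_{\Psi_+^*}=(\text{constant})\,H_{\Phi_+^*}$ on the relevant subspace; hence every block of $[\mathbf T^*,\mathbf T]$ is a scalar/constant multiple of the single operator $[T_\Phi^*,T_\Phi]$, giving the $\left(\begin{smallmatrix}1&1\\1&1\end{smallmatrix}\right)$ Kronecker structure. (3) Conjugate by $\mathcal W=\operatorname{diag}((T_A)_{\theta}^*W,\,(T_C)_{\theta}^*W)$: since $A,C$ are coprime to $\theta$, Corollary~\ref{cor2.7} makes $(T_A)_\theta,(T_C)_\theta$ invertible on $\mathcal H(\theta)$, and the identity $H_{\Phi_+^*}|_{\mathcal H(\theta)}=(\text{unitary})\circ(T_A)_\theta$ transports $H_{\Phi_+^*}^*(I-T_{\widetilde K}T_{\widetilde K}^*)H_{\Phi_+^*}$ to $I-K(M)^*K(M)$ via (\ref{2.17})/(\ref{12.28}), the finite-dimensional Hermite--Fej\'er representation with $\theta=\theta_0\theta_1$. (4) The orthogonal complement of $\mathcal H(\theta)\oplus\mathcal H(\theta)$ contributes the $\bigoplus 0$ summand, since $\ker H_{\Phi_+^*}=\theta H^2_{\mathbb C^n}$ kills everything. (5) The rank formula (\ref{5.44}) is then immediate from (\ref{5.43}): the rank of a Kronecker product with the rank-one $\left(\begin{smallmatrix}1&1\\1&1\end{smallmatrix}\right)$ equals the rank of the other factor, and $\mathcal W$ is injective on the relevant subspace.

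\textbf{Main obstacle.} The delicate point is Step~(2)--(3): verifying precisely that the off-diagonal block $H_{\Phi_+^*}^*H_{\Psi_+^*}-H_{\Psi_-^*}^*H_{\Phi_-^*}$ collapses to the same constant multiple of $[T_\Phi^*,T_\Phi]$ that the diagonal blocks do. One must use $\Phi_-=\Lambda^*\Psi_-$ to turn $H_{\Psi_-^*}^*H_{\Phi_-^*}$ into $\Lambda$-conjugated $H_{\Phi_-^*}^*H_{\Phi_-^*}$ via (\ref{3.12-1}), and simultaneously use the reduction $\Psi_+^*\equiv\Lambda^{-1}\Phi_+^*\pmod{H^2_{M_n}}$ to handle the analytic-Hankel term; one then checks the constant $\Lambda$ is the \emph{same} on both sides --- this is where the diagonal-constancy of $B(\gamma_0),D(\gamma_0)$ (hence of $\Lambda$) is essential, because it lets $\Lambda$ pass through $H_{\Phi_-^*}$ and $H_{\Phi_+^*}$ cleanly. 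I would organize this as a short lemma extracting the identity $[\mathbf T^*,\mathbf T]=\big([T_\Phi^*,T_\Phi]\otimes\left(\begin{smallmatrix}1&1\\1&1\end{smallmatrix}\right)\big)\oplus 0$ first, and only afterwards plug in the Hermite--Fej\'er calculus to identify $[T_\Phi^*,T_\Phi]$ with $\mathcal W(I-K(M)^*K(M))\mathcal W^*$ restricted appropriately; the rest is bookkeeping with (\ref{2.22}).
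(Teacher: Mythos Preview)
Your overall architecture is sound---reduce to $\theta_0=\theta_2$ via Corollary~\ref{cor5.19}, use $\Phi_-=\Lambda^*\Psi_-$ from Corollary~\ref{thm5.22}(ii), compress to $\mathcal H(I_\theta)$ with $\theta=\theta_0\theta_1$, and pass to Hermite--Fej\'er matrices---but Step~(2) contains a genuine error. First a minor point: you do not need Theorem~\ref{thm5.28}; the hypothesis already has $\Psi_+=\theta_2\theta_1 C^*$ with the \emph{same} $\theta_1$, and in any case the extra assumption $\mathcal Z(\theta_0)=\mathcal Z(\theta_1)$ of Theorem~\ref{thm5.28} is not available here.

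The real gap is the assertion ``$\Psi_+^*\equiv\Lambda^{-1}\Phi_+^*\pmod{H^2_{M_n}}$, hence $H_{\Psi_+^*}=(\hbox{constant})\,H_{\Phi_+^*}$.'' Theorem~\ref{thm5.29} gives $\Phi_+-\Lambda\Psi_+\in\mathcal K_{z\theta_1}\subseteq H^2_{M_n}$; taking adjoints yields $\Phi_+^*-\Psi_+^*\Lambda^*\in\overline{H^2_{M_n}}$, \emph{not} $H^2_{M_n}$, so the Hankel operator of this difference is nonzero. Concretely $H_{\Phi_+^*}=H_{\overline\theta I}T_A$ and $H_{\Psi_+^*}=H_{\overline\theta I}T_C$ by (\ref{1.4}), and $A$ and $C$ are not related by a constant right factor. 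Thus the four blocks of $[\mathbf T^*,\mathbf T]$ are \emph{not} scalar multiples of a single operator; the $(2,2)$-block is $[T_\Psi^*,T_\Psi]$, and the off-diagonal block mixes $T_A^*$ on the left with $T_C$ on the right. The Kronecker structure does not come from the Hankel operators being proportional.

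What the paper does instead is introduce a second interpolant $K_1\in\mathcal C(\Psi)$ and express each block, restricted to $\mathcal H(I_\theta)$, in the form $(T_A)_\theta^*W\bigl[I-K(M)^*K(M)\bigr]W^*(T_A)_\theta$, $(T_A)_\theta^*W\bigl[I-(\Lambda^{-1}K)^*(M)(\Lambda K_1)(M)\bigr]W^*(T_C)_\theta$, and $(T_C)_\theta^*W\bigl[I-K_1(M)^*K_1(M)\bigr]W^*(T_C)_\theta$. The relation $\Phi-\Lambda\Psi\in\mathcal K_{z\theta_1}$ is then exploited at the level of interpolation sets: it gives $\mathcal C(\Phi^{1,\theta_1})=\mathcal C((\Lambda\Psi)^{1,\theta_1})$, whence by Proposition~\ref{pro3.13-1} one has $\mathcal C(\Phi)=\mathcal C(\Lambda\Psi)$ and may choose $K_1=\Lambda^*\Lambda^{-1}K$. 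Lemma~\ref{lem3.7} then makes all three middle pieces equal to $I-K(M)^*K(M)$, and the block-diagonal $\mathcal W=\hbox{diag}\bigl((T_A)_\theta^*W,(T_C)_\theta^*W\bigr)$ factors the whole thing as in (\ref{5.43}). Your Step~(3) should absorb this mechanism; the ``constant-multiple Hankel'' shortcut in Step~(2) does not work, and without it the $\begin{pmatrix}1&1\\1&1\end{pmatrix}$ structure has no source.
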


\begin{remark}
We note that the formula (\ref{5.43}) does not say that the
hyponormality of $T_\Phi$ implies the hyponormality of the pair
$\mathbf T$. \ Indeed, (\ref{5.43}) is true for the selfcommutators
only when the pair $\mathbf T$ is already hyponormal (via Corollary
\ref{thm5.22}).
\end{remark}

\begin{proof}[Proof of Theorem \ref{thm5.34}] \ By
Corollary \ref{cor5.19}, we have $\theta_0=\theta_2$. \  Let
$\Lambda\equiv B(\gamma_0) D(\gamma_0)^{-1}$. \ Since $\mathbf
T\equiv(T_{\Phi}, T_{\Psi})$ is hyponormal, it follows from
Corollary \ref{thm5.22} that $\Phi_-=\Lambda^* \Psi_-$. \ Since
$T_{\Phi}$ and $T_{\Psi}$ are hyponormal we can find functions $K
\in \mathcal C(\Phi)$ and $K_1 \in \mathcal C(\Psi)$. \  We thus
have $H_{\Phi_-^*}=H_{K\Phi_+^*}$ and
$H_{\Psi_-^*}=H_{K_1\Psi_+^*}$. \ A straightforward calculation with
Lemma \ref{lem3.6} shows that
$$
\begin{aligned}
\left(H_{\Phi_+^*}^*H_{\Psi_+^*}- H_{\Psi_-^*}^*
H_{\Phi_-^*}\right)|_{\mathcal H(I_\theta)}
  & =(T_A)_{\theta }^* \,W\Bigl(I-(\Lambda^{-1}
K)^*(M)(\Lambda K_1)(M)\Bigr)W^*(T_C)_{\theta };\\
\left(H_{\Phi_+^*}^*H_{\Phi_+^*}-H_{\Phi_-^*}^*H_{\Phi_-^*}\right)|_{\mathcal
H(I_\theta)}
 &=(T_A)_{\theta }^* \, W\Bigl(I - K^*(M)K(M)\Bigr)W^*(T_A)_{\theta };\\
\left(H_{\Psi_+^*}^*H_{\Psi_+^*}-H_{\Psi_-^*}^*H_{\Psi_-^*}\right)|_{\mathcal
H(I_\theta)} &=(T_C)_{\theta }^* \,
W\Bigl(I-K_1^*(M)K_1(M)\Bigr)W^*(T_C)_{\theta },
\end{aligned}
$$
where $W$ and $M$ are given by {\rm (\ref{2.13})} and {\rm
(\ref{2.14})} with $\theta=\theta_0\theta_1$ and $I$ is the identity
on $\mathbb C^{n\times d}$. \ But since $\hbox{ran}\,[\mathbf T^*,
\mathbf T] \subseteq \mathcal{H}(I_\theta)\oplus
\mathcal{H}(I_\theta)$, we have
\begin{equation}\label{5.45}
\begin{aligned}
~[\mathbf {T}^*, \mathbf T] &=[\mathbf {T}^*, \mathbf
T]\,\vert_{\mathcal{H}(I_\theta)
         \oplus \mathcal{H}(I_\theta)}\bigoplus 0\\
&= \small{
\begin{pmatrix}
\left(H_{\Phi_+^*}^*H_{\Phi_+^*}-H_{\Phi_-^*}^*H_{\Phi_-^*}\right)|_{\mathcal
H(I_\theta)}&
\left(H_{\Phi_+^*}^*H_{\Psi_+^*}-H_{\Psi_-^*}^*H_{\Phi_-^*}\right)|_{\mathcal
H(I_\theta)}\\
\left(H_{\Psi_+^*}^*H_{\Phi_+^*}-H_{\Phi_-^*}^*H_{\Psi_-^*}\right)|_{\mathcal
H(I_\theta)}&
\left(H_{\Psi_+^*}^*H_{\Psi_+^*}-H_{\Psi_-^*}^*H_{\Psi_-^*}\right)|_{\mathcal
H(I_\theta)}
\end{pmatrix}
}
\bigoplus 0\\
&=\mathcal W \small{
\begin{pmatrix} I-K(M)^*K(M)&I-(\Lambda^{-1}
K)^*(M)(\Lambda K_1)(M)\\I-(\Lambda K_1)^*(M)(\Lambda ^{-1}
K)(M)&I-K_1^*(M)K_1(M)\end{pmatrix} } \mathcal W^* \bigoplus 0,
\end{aligned}
\end{equation}
where
$$
\mathcal W: = \begin{pmatrix} (T_A)_{\theta }^* \, W &0\\0&
(T_C)_{\theta }^* \, W\end{pmatrix}.
$$
On the other hand, since $\mathbf T$ is hyponormal it follows from
Theorem \ref{thm5.29} that $ \Phi-\Lambda \Psi \in \mathcal
K_{z\theta_1}$. \ Thus we can see that
$$
\mathcal C(\Phi^{1, \theta_1 })=\mathcal C(\Lambda \Psi^{1, \theta_1
})=\mathcal C((\Lambda \Psi)^{1, \theta_1 }).
$$
It thus follows from Proposition \ref{pro3.13-1} that
\begin{equation}\label{5.49}
\mathcal C(\Phi)=\mathcal C(\Lambda \Psi),
\end{equation}
which implies that
\begin{equation}\label{5.50}
\mathcal C(\Psi)=\{\Lambda^* \Lambda^{-1}K : K \in \mathcal
C(\Phi)\}.
\end{equation}
Thus we can choose $K_1\in\mathcal{C}(\Psi)$ such that
$$
K_1=\Lambda^*\Lambda^{-1}K.
$$
In what follows, we recall that $K$ is a polynomial and $K(M)$ is
defined by (\ref{2.15}). \ Therefore, it follows from Lemma
\ref{lem3.7} that
\begin{equation}\label{5.51}
\begin{aligned}
(\Lambda^{-1} K)^*(M)(\Lambda K_1)(M)
&=(\Lambda^{-1} K)^*(M)(\Lambda^* K)(M)\\
&=K^*(M)(\Lambda^{*-1} \otimes I)(\Lambda^* \otimes I)K(M)\\
&=K^*(M)K(M)
\end{aligned}
\end{equation}
and similarly,
\begin{equation}\label{5.52}
(\Lambda K_1)^*(M)(\Lambda^{-1} K)(M)=K^*(M) K(M).
\end{equation}
Also, by (\ref{5.50}) and Lemma \ref{lem3.7}, we have
\begin{equation}\label{5.53}
\aligned
K_1^*(M)K_1(M)&=(\Lambda^*\Lambda^{-1}K)^*(M)(\Lambda^*\Lambda^{-1}K)(M)\\
&=K^*(M)((\Lambda^{*-1}\Lambda) \otimes I)(\Lambda^* \Lambda^{-1}\otimes I)K(M)\\
&=K^*(M)K(M).
\endaligned
\end{equation}
Thus, by (\ref{5.45}), (\ref{5.51}), (\ref{5.52}), and (\ref{5.53}),
we have
$$
\aligned~ [\mathbf {T}^*, \mathbf T] &=\mathcal W\begin{pmatrix}
I-K(M)^*K(M)&I-K(M)^*K(M)\\I-K(M)^*K(M)&I-K(M)^*K(M)\end{pmatrix}
\mathcal W^* \bigoplus 0\\
&=\mathcal W\left(\bigl(I-K(M)^*K(M)\bigr)\otimes
\begin{pmatrix} 1&1\\ 1&1\end{pmatrix}
\right)\mathcal W\bigoplus 0\\
&=\left([T_{\Phi}^*, T_{\Phi}]\otimes \begin{pmatrix} 1&1\\
1&1\end{pmatrix}\right)\bigoplus 0\,
\endaligned
$$
which proves (\ref{5.43}). \

On the other hand, since $A(\alpha)$ is invertible for each zero
$\alpha$ of $\theta_0\theta_1$, it follows from Corollary
\ref{cor2.7} that $(T_A)_{\theta_0\theta_1 }$ and
$(T_C)_{\theta_0\theta_1 }$ are invertible. \  Therefore $\mathcal
W$ is invertible, so that the rank formula (\ref{5.44}) is obtained
from (\ref{5.43}). \
\end{proof}

On the other hand, Theorem \ref{thm5.34} can be extended to Toeplitz
$m$-tuples. \ To see this we observe:

\begin{lemma} \label{lem5.36} Let $\Phi_i \in L^\infty_{M_n}  \ (i=1,2,\hdots,m)$
and let $\sigma$ be a permutation on $\{1,2,\hdots, m\}$. \  Then $
\mathbf T:=(T_{\Phi_1}, \hdots, T_{\Phi_m})$ is hyponormal if and
only if $\mathbf T_{\sigma}\label{Tsigma}:=(T_{\Phi_{\sigma(1)}},
\hdots, T_{\Phi_{\sigma(m)}})$ is hyponormal. \  Furthermore, we
have
$$
\hbox{\rm rank}\,[\mathbf T^*, \mathbf T]=\hbox{\rm rank}\, [\mathbf
T_{\sigma}^*, \mathbf T_{\sigma}].
$$
\end{lemma}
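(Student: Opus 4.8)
\textbf{Proof proposal for Lemma \ref{lem5.36}.}

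The plan is to reduce the $m$-tuple statement to the elementary fact that permuting the coordinates of a tuple of operators corresponds to conjugating the self-commutator matrix by a permutation matrix (built from $n\times n$ identity blocks acting on $H^2_{\mathbb C^n}\oplus\cdots\oplus H^2_{\mathbb C^n}$, $m$ copies). Concretely, if $P_\sigma$ denotes the block permutation matrix on $\bigoplus_{i=1}^m H^2_{\mathbb C^n}$ sending the $i$-th summand to the $\sigma^{-1}(i)$-th summand, then a direct inspection of the definition of $[\mathbf T^*,\mathbf T]$ on page \pageref{bfT^*bfT} shows that
$$
[\mathbf T_\sigma^*,\mathbf T_\sigma] = P_\sigma^*\,[\mathbf T^*,\mathbf T]\,P_\sigma .
$$
Indeed the $(i,j)$ block of $[\mathbf T_\sigma^*,\mathbf T_\sigma]$ is $[T_{\Phi_{\sigma(j)}}^*,T_{\Phi_{\sigma(i)}}]$, which is exactly the $(\sigma(i),\sigma(j))$ block of $[\mathbf T^*,\mathbf T]$; this is precisely the effect of conjugating by $P_\sigma$.

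From this identity both assertions follow at once. Since $P_\sigma$ is unitary, $[\mathbf T^*,\mathbf T]\ge 0$ if and only if $P_\sigma^*[\mathbf T^*,\mathbf T]P_\sigma\ge 0$, i.e. $\mathbf T$ is jointly hyponormal if and only if $\mathbf T_\sigma$ is jointly hyponormal; and unitary conjugation preserves rank, so $\hbox{rank}\,[\mathbf T^*,\mathbf T]=\hbox{rank}\,[\mathbf T_\sigma^*,\mathbf T_\sigma]$. Because every permutation is a product of transpositions it would in fact suffice to check the transposition case, but the general block-conjugation computation is no harder to write down directly.

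I do not expect any genuine obstacle here; the only thing requiring care is bookkeeping with the indices of the block permutation (whether one uses $\sigma$ or $\sigma^{-1}$ in the definition of $P_\sigma$, and making sure the rows and columns are permuted consistently so that the conjugation formula comes out with $P_\sigma$ on the right and $P_\sigma^*$ on the left). Once the convention is fixed, the verification of the block identity is a one-line matching of entries, and the hyponormality and rank consequences are immediate from unitary invariance. The proof should therefore be short: fix notation for $P_\sigma$, state and verify the conjugation identity by comparing $(i,j)$ blocks, and then read off the two conclusions.
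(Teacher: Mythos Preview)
Your proposal is correct and is exactly the argument the paper has in mind: the paper's proof consists of the single word ``Obvious,'' and your conjugation-by-a-block-permutation-matrix computation is precisely the elementary justification being left to the reader. Your bookkeeping remark about $\sigma$ versus $\sigma^{-1}$ is the only place care is needed, and you have handled it correctly.
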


\begin{proof} Obvious.
\end{proof}

\begin{lemma}\label{lem5.37} Let $\Phi_i \in L^\infty_{M_n} \  (i=1,2,\hdots,m)$. \  Then the
m-tuple $ \mathbf T:=(T_{\Phi_1}, \hdots, T_{\Phi_m})$ is hyponormal
if and only if every sub-tuple of $\mathbf T$ is hyponormal. \
\end{lemma}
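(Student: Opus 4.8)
The plan is to reduce this to an elementary fact about positive block operator matrices, together with the observation that the self-commutator of a sub-tuple sits inside the self-commutator of the full tuple as a compression to a coordinate subspace. First I would recall that, by the definition given in Chapter~2, the $m$-tuple $\mathbf T=(T_{\Phi_1},\dots,T_{\Phi_m})$ is jointly hyponormal precisely when the operator
$$
[\mathbf T^*,\mathbf T]=\bigl([T_{\Phi_j}^*,T_{\Phi_i}]\bigr)_{i,j=1}^m
$$
is positive semidefinite on $\mathcal H^{(m)}:=\bigoplus_{i=1}^m H^2_{\mathbb C^n}$; no commutativity of the coordinates is required. For a nonempty subset $S=\{i_1<\cdots<i_k\}\subseteq\{1,\dots,m\}$ write $\mathbf T_S:=(T_{\Phi_{i_1}},\dots,T_{\Phi_{i_k}})$ for the corresponding sub-tuple, and let $P_S\colon\mathcal H^{(m)}\to\mathcal H^{(m)}$ be the orthogonal projection onto $\bigoplus_{j\in S}H^2_{\mathbb C^n}$.

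The one piece of bookkeeping is to identify $\hbox{ran}\,P_S$ with $\mathcal H^{(k)}=\bigoplus_{r=1}^k H^2_{\mathbb C^n}$ via the order-preserving bijection $r\mapsto i_r$, and then to check that
$$
[\mathbf T_S^*,\mathbf T_S]=P_S\,[\mathbf T^*,\mathbf T]\,P_S\big|_{\hbox{ran}\,P_S}.
$$
This is immediate from the block formulas above, since the $(r,s)$ entry of the left-hand side is $[T_{\Phi_{i_s}}^*,T_{\Phi_{i_r}}]$, which is exactly the $(i_r,i_s)$ entry of $[\mathbf T^*,\mathbf T]$, i.e.\ the corresponding principal sub-matrix.

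Granting this identity, the ``only if'' direction follows at once: if $[\mathbf T^*,\mathbf T]\ge 0$ then each compression $P_S[\mathbf T^*,\mathbf T]P_S|_{\hbox{ran}\,P_S}$ is positive, hence $[\mathbf T_S^*,\mathbf T_S]\ge 0$ and every sub-tuple $\mathbf T_S$ is hyponormal. The ``if'' direction is trivial, because $\mathbf T$ is itself a sub-tuple of $\mathbf T$ (take $S=\{1,\dots,m\}$), so if all sub-tuples are hyponormal then in particular $\mathbf T$ is. The argument has no genuinely hard step; the only point demanding care is the indexing in the displayed compression identity — matching the block entries of $[\mathbf T_S^*,\mathbf T_S]$ with the right principal sub-matrix of $[\mathbf T^*,\mathbf T]$ under the order-preserving identification of $S$ with $\{1,\dots,k\}$. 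Once that is pinned down, the lemma is a one-line consequence of the stability of positive semidefiniteness under compression, exactly in the spirit of the ``Obvious'' proof of Lemma~\ref{lem5.36}.
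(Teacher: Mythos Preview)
Your proof is correct and follows essentially the same approach as the paper: both arguments observe that the self-commutator of a sub-tuple is a principal block submatrix (equivalently, a compression) of $[\mathbf T^*,\mathbf T]$, so positivity is inherited, while the converse is trivial since $\mathbf T$ is its own sub-tuple. The only cosmetic difference is that the paper first invokes Lemma~\ref{lem5.36} to reduce to initial sub-tuples $(T_{\Phi_1},\dots,T_{\Phi_{m_0}})$ before reading off the top-left block, whereas you handle arbitrary sub-tuples directly via the projection $P_S$; your packaging is slightly more direct but the content is the same.
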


{\it Remark.} \ A tuple $\mathbf T$ is considered to be its own
sub-tuple.

\begin{proof}[Proof of Lemma \ref{lem5.37}] \ In view of Lemma \ref{lem5.36}, it suffices to show that if $\mathbf T$
is hyponormal then for every $m_0 \leq m$, the sub-tuple $\mathbf
T_{m_0}:=(T_{\Phi_1}, \hdots, T_{\Phi_{m_0}})$ is hyponormal. \  But
this is obvious since
$$
[\mathbf T^*,  \mathbf T]=\begin{pmatrix}
[\mathbf T_{\Phi_{m_0}}^*, \mathbf T_{\Phi_{m_0}}]& \ast\\
\ast&\ast\end{pmatrix}.
$$
\end{proof}

\begin{corollary}\label{cor5.45} For each $i=1,2,\hdots,m$, suppose
$\Phi_i=(\Phi_{i})_-^*+(\Phi_{i})_+ \in L^\infty_{M_n}$ is a
matrix-valued rational function of the form
$$
(\Phi_{i})_+ = \theta_i \delta A_i^*\quad\hbox{and}\quad
(\Phi_{i})_- =\theta_i B_i^* \quad\hbox{\rm (coprime)}.
$$
Assume that there exists $j_0$ \hbox{\rm ($1\le j_0\le m$)} such
that $\theta_{j_0}$ and $\theta_{i}$ are not coprime for each
$i=1,2,\hdots,m$. \  If $B_i(\gamma_0)$ is diagonal-constant for
some $\gamma_0\in\mathcal{Z}(\theta_i)$ and for each
$i=1,2,\hdots,m$, then the following statements are equivalent:
\smallskip
\begin{enumerate}
\item[(a)] The m-tuple $\mathbf T:=(T_{\Phi_1}, T_{\Phi_2}, \cdots T_{\Phi_m})$ is hyponormal;
\item[(b)] Every subpair of $\mathbf T$ is hyponormal\,.
\end{enumerate}
\smallskip
\noindent Moreover, if $\mathbf T$ is hyponormal, then
$$
\aligned  ~ [\mathbf {T}^*, \mathbf T]_p &= \left([T_{\Phi_1}^*,
T_{\Phi_1}]_p\otimes
\left(\begin{smallmatrix}1&1&\cdots&1\\1&1&\cdots&1\\
\vdots&\vdots & & \vdots\\1&1&\cdots&1\\
\end{smallmatrix}\right)\right) \bigoplus \ 0\\
&=\mathcal W^*\left(\bigl(I-K(M)^*K(M)\bigr) \otimes \left(\begin{smallmatrix}1&1&\cdots&1\\1&1&\cdots&1\\
\vdots&\vdots & & \vdots\\1&1&\cdots&1\\
\end{smallmatrix}\right)\right)\mathcal
W\bigoplus\ 0,
\endaligned
$$
where $K \in \mathcal C(\Phi_1)$ is a polynomial \hbox{\rm (cf.
(\ref{2.12}))}, $W$ and $M$ are given by {\rm (\ref{2.13})} and {\rm
(\ref{2.14})} with $\theta=\theta_1\delta$, and $\mathcal W: =
\hbox{\rm diag}\bigl((T_{A_i})_{\theta_i\delta }^* W\bigr)$. \  In
particular,
$$
\hbox{\rm rank}\,[\mathbf {T}^*, \mathbf T]_p =\hbox{\rm rank}\,
[T_{\Phi_1}^*, T_{\Phi_1}]_p =\hbox{\rm rank}\,(I-K^*(M)K(M)).
$$
\end{corollary}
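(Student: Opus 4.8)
\textbf{Proof proposal for Corollary \ref{cor5.45}.}

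The plan is to bootstrap everything from the pairwise theory already developed, using Lemma \ref{lem5.37} as the backbone. First I would record that the equivalence (a) $\Leftrightarrow$ (b) is immediate: (a) $\Rightarrow$ (b) is one direction of Lemma \ref{lem5.37} (every sub-tuple of a hyponormal tuple is hyponormal, in particular every subpair), while (b) $\Rightarrow$ (a) requires checking that pairwise hyponormality of the Toeplitz tuple with these special bounded type symbols already forces joint hyponormality. The hard part is this converse, and the idea is that under the hypotheses of the corollary the positivity of the full block matrix $[\mathbf T^*,\mathbf T]_p$ is governed entirely by a \emph{single} scalar-type positivity condition, via the rank-one pattern that emerged in Theorem \ref{thm5.34}. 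Concretely, I would apply Theorem \ref{thm5.34} (and its proof) to each subpair $(T_{\Phi_i}, T_{\Phi_j})$: since $\theta_{j_0}$ and each $\theta_i$ share a Blaschke factor and the $B_i(\gamma_0)$ are diagonal-constant, Corollary \ref{cor5.19} gives $\theta_i = \theta_{j_0}$ for all $i$ once we know the relevant pairs are hyponormal — but here I must be careful, because a priori the $\theta_i$ need not all agree; I would instead argue that Corollary \ref{thm5.22}, applied to the subpair $(T_{\Phi_i}, T_{\Phi_{j_0}})$, yields $\Phi_i - \Lambda_i \Phi_{j_0}\in H^2_{M_n}$ and then $\theta_i=\theta_{j_0}$, so \emph{all} the $\theta_i$ coincide, call the common value $\theta_1$ (reindexing so $j_0=1$). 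This reduces the problem to the case $(\Phi_i)_+=\theta_1\delta A_i^*$, $(\Phi_i)_-=\theta_1 B_i^*$ with a common inner part $\theta_1$.

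Next I would compute $[\mathbf T^*,\mathbf T]_p$ blockwise on $\mathcal H(I_{\theta_1\delta})^{\oplus m}$, exactly as in the proof of Theorem \ref{thm5.34}: since $\Phi_i-\Lambda_i^*\Phi_1 \in H^2_{M_n}$ we may take, via Proposition \ref{pro3.13-1} and (\ref{5.49})--(\ref{5.50}), matrix functions $K_i\in\mathcal C(\Phi_i)$ of the form $K_i = \Lambda_i^*\Lambda_1^{-1}K$ for a single polynomial $K\in\mathcal C(\Phi_1)$. Then the $(i,j)$ block of $[\mathbf T^*,\mathbf T]_p$, restricted to $\mathcal H(I_{\theta_1\delta})$, equals $(T_{A_i})_{\theta_1\delta}^* W\bigl(I - K(M)^*K(M)\bigr) W^*(T_{A_j})_{\theta_1\delta}$, using Lemma \ref{lem3.7} to absorb the constant factors $(\Lambda_i^{-1}\otimes I)$, $(\Lambda_j^*\otimes I)$ into $K(M)$ and collapse each block to the \emph{same} core $I-K(M)^*K(M)$ — this is the mechanism that produces the all-ones matrix pattern $\left(\begin{smallmatrix}1&\cdots&1\\ \vdots&&\vdots\\ 1&\cdots&1\end{smallmatrix}\right)$. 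Writing $\mathcal W:=\mathrm{diag}\bigl((T_{A_i})_{\theta_i\delta}^*W\bigr)$ and noting $\mathrm{ran}\,[\mathbf T^*,\mathbf T]_p\subseteq \mathcal H(I_{\theta_1\delta})^{\oplus m}$, I obtain the displayed factorization
$$
[\mathbf T^*,\mathbf T]_p = \mathcal W^*\left(\bigl(I-K(M)^*K(M)\bigr)\otimes J_m\right)\mathcal W \bigoplus 0,
$$
where $J_m$ is the $m\times m$ all-ones matrix, and the first equality $[\mathbf T^*,\mathbf T]_p = [T_{\Phi_1}^*,T_{\Phi_1}]_p\otimes J_m \oplus 0$ follows by reading off the $i=j=1$ block. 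This factorization also finishes (b) $\Rightarrow$ (a): if the subpair $(T_{\Phi_1},T_{\Phi_i})$ is hyponormal for all $i$, then in particular $T_{\Phi_1}$ is pseudo-hyponormal, so $I-K(M)^*K(M)\ge 0$, hence $\bigl(I-K(M)^*K(M)\bigr)\otimes J_m\ge 0$ (as $J_m\ge 0$), hence $[\mathbf T^*,\mathbf T]_p\ge 0$; combined with normality and commutativity of the $\Phi_i$ (which come from the hyponormality of each subpair via Lemma \ref{lem5.1}) and Corollary \ref{cor5.2}, we conclude $\mathbf T$ is hyponormal.

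Finally, for the rank formula I would invoke Corollary \ref{cor2.7}: since $A_i(\alpha)$ is invertible at every zero $\alpha$ of $\theta_i\delta=\theta_1\delta$, each compression $(T_{A_i})_{\theta_1\delta}$ is invertible, so $\mathcal W$ is invertible on $\mathcal H(I_{\theta_1\delta})^{\oplus m}$; therefore
$$
\mathrm{rank}\,[\mathbf T^*,\mathbf T]_p = \mathrm{rank}\Bigl(\bigl(I-K(M)^*K(M)\bigr)\otimes J_m\Bigr) = \mathrm{rank}\,\bigl(I-K(M)^*K(M)\bigr) = \mathrm{rank}\,[T_{\Phi_1}^*,T_{\Phi_1}]_p,
$$
the middle equality because $\mathrm{rank}(X\otimes J_m)=\mathrm{rank}(X)\cdot\mathrm{rank}(J_m)=\mathrm{rank}(X)$ for $m\ge 1$, and the last equality from the $i=j=1$ block together with Theorem \ref{thm5.34}'s identification of $[T_{\Phi_1}^*,T_{\Phi_1}]$ with $W(I-K(M)^*K(M))W^*$ up to invertible compressions. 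I expect the main obstacle to be the bookkeeping in the first paragraph — ensuring that pairwise hyponormality really does force \emph{all} inner parts $\theta_i$ to coincide with $\theta_{j_0}$ (rather than just pairwise agreement with $\theta_{j_0}$, which is in fact enough once $j_0$ is fixed), and verifying that the single polynomial $K$ can be chosen uniformly so that the tensor-with-$J_m$ structure genuinely appears; everything after that is a direct transcription of the computations in Theorem \ref{thm5.34} with the pair replaced by the $m$-tuple.
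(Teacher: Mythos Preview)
Your proposal is correct and follows essentially the same route as the paper: apply Theorem \ref{thm5.34} to each subpair to obtain the tensor-with-all-ones structure, assemble the full block matrix, and read off positivity and rank. You are in fact more careful than the paper on one point---the paper's proof simply asserts that Theorem \ref{thm5.34} applies to \emph{every} subpair $(T_{\Phi_i},T_{\Phi_j})$, whereas the hypothesis only guarantees that $\theta_{j_0}$ and $\theta_i$ are not coprime; your intermediate step (using the pairs $(T_{\Phi_i},T_{\Phi_{j_0}})$ and Corollary \ref{cor5.19} to force all $\theta_i$ to coincide, after which every pair satisfies the hypotheses) fills a gap the paper leaves implicit.
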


\begin{proof} (a) $\Rightarrow$ (b): Immediate from Lemma \ref{lem5.37}. \

(b) $\Rightarrow$ (a): We follow the idea in \cite [Corollary 2.11]
{CuL1}. \  Since every subpair $\mathbf T_{ij}=(T_{\Phi_i},
T_{\Phi_j})$ of $\mathbf T$ is hyponormal for each $i,j$, it follows
from Theorem \ref{thm5.34} that
$$
[\mathbf {T}_{ij}^*, \mathbf T_{ij}]_p=\Bigl([T_{\Phi_i}^*,
T_{\Phi_i}]\otimes
\begin{pmatrix} 1&1\\ 1&1\end{pmatrix}\Bigr)\bigoplus 0 \quad(1\leq i,j\leq m).
$$
Therefore
$$
\aligned~ [\mathbf T^*,  \mathbf T]_p&=\begin{pmatrix}
\hbox{$[T_{\Phi_1}^*, T_{\Phi_1}]_p$}& \hbox{$[T_{\Phi_2} ^*,
T_{\Phi_1}]_p$}
& \hdots & \hbox{$[T_{\Phi_m}^*,T_{\Phi_1}]_p$}\\
\hbox{$[T_{\Phi_1}^*, T_{\Phi_2}]_p$}& \hbox{$[T_{\Phi_2} ^*,
T_{\Phi_2}]_p$}
& \hdots &\hbox{$[T_{\Phi_m}^*,T_{\Phi_2}]_p$}\\
\vdots & \vdots & \ddots & \vdots\\
\hbox{$[T_{\Phi_1}^*,T_{\Phi_m}]_p$} &
\hbox{$[T_{\Phi_2}^*,T_{\Phi_m}]_p$} & \hdots &
\hbox{$[T_{\Phi_m}^*,T_{\Phi_m}]_p$}
\end{pmatrix}\\
&=\left([T_{\Phi_1}^*, T_{\Phi_1}]_p\otimes
\left(\begin{smallmatrix}1&1&\cdots&1\\1&1&\cdots&1\\
\vdots&\vdots & & \vdots\\1&1&\cdots&1\\
\end{smallmatrix}\right)\right) \bigoplus \ 0\\
&=\mathcal W^*\left(\bigl(I-K(M)^*K(M)\bigr) \otimes
\left(\begin{smallmatrix}1&1&\cdots&1\\1&1&\cdots&1\\
\vdots&\vdots & & \vdots\\1&1&\cdots&1\\
\end{smallmatrix}\right)\right)\mathcal
W\bigoplus\ 0,
\endaligned
$$
where $\mathcal W: = \hbox{diag}\bigl((T_{A_i})_{\theta_i\delta }^*
W\bigr)$. \ The rank formula is obvious. \  This completes the
proof. \
\end{proof}

%
%
%
%
%
%

\newpage

\chapter{Concluding remarks}

\bigskip

In this paper we have tried to answer a number of questions
involving matrix functions of bounded type; there are, however,
still many questions that we were unable to answer. \ These
questions have to do with whether properties involving matrix
rational functions can be transmitted to the case of matrix
functions of bounded type. \ Concretely, this means that if we know
property $X$ in the case when $\theta$ is a finite Blaschke product
in the decomposition (\ref{2.6}) of a matrix function $\Phi$ whose
adjoint is of bounded type, then property $X$ is still true for the
cases of any inner function $\theta$. \ Consequently, the main
problem lies on the cases where $\theta$ is a singular inner
function. \ Below we pose some questions involving matrix functions
of bounded type.


\bigskip

\noindent {\bf 1. Mutual singularity of two finite positive Borel
measures.} \ In Chapter 3, we have considered coprime singular inner
functions, to understand well functions of bounded type; this is
helpful when considering coprime factorizations for functions of
bounded type. \ Here is a relevant question: \ Let $\mu$ and
$\lambda$ be finite positive Borel measures on $\mathbb T$. \ For $x
\in \mathbb T$ and $r>0$, write
$$
B(x,r):= \Bigl\{ xe^{i\theta}: 0 < |\theta| <r\Bigr\}.
$$
For $x \in \hbox{supp}(\lambda)$, let
$$
Q_r(\mu/\lambda)(x)\label{qrmu}:=\frac{\mu(B(x,r))}{\lambda(B(x,r))}.
$$
Now we may define {\it a derivative} of $\mu$ with respect to
$\lambda$ at $x$ to be
$$
D(\mu)_\lambda (x)\label{dmulam}:=\lim_{r \to 0}Q_r(\lambda/\mu)(x)
$$
at those points $x\in \hbox{supp}(\lambda)$ at which this limit
exists. \ In this case,  we may ask:

\medskip

\begin{problem}\label{proAA} \
Let $\mu$ and $\lambda$ be finite positive Borel measures on
$\mathbb T$. \  Does it follow that $\mu \perp \lambda$ if and only
if $D(\mu)_\lambda(x)=0$ a.e.\hbox{\rm [$\lambda$]}\,?
\end{problem}
If $\lambda$ is the Lebesgue measure on $\mathbb T$, then the answer
to Problem \ref{proAA} is affirmative (cf. \cite[Theorem 7.14]{Ru}).

We also have:

\begin{problem}\label{AAA}
Let $\theta_1,\theta_2\in H^\infty$ be singular inner functions with
singular measures $\mu_1$ and $\mu_2$, respectively. \ Are the
following statements equivalent?
\begin{itemize}
\item[(a)] $\theta_1$ and $\theta_2$  are not coprime;
\item[(b)] There exists $x \in S\equiv \hbox{\rm supp}\,(\mu_1)\cap \hbox{\rm supp}\,(\mu_2)$
and $0<m <1$ such that
$$
 m \leq \left\{\frac{\mu_1(B(x,r))}{\mu_2(B(x,r))}: r \neq 0\right\} ,
$$
where $B(x,r)=\{ xe^{i\theta}: 0 < |\theta| <r\}$;
\item[(c)] $D(\mu_1)_{\mu_2}\neq 0 \ [\mu_2]$.
\end{itemize}
\end{problem}



\bigskip

\noindent {\bf 2. Coprime factorizations for compositions.} \ In
Theorem \ref{lem5.17}, we have shown that if $\Phi \in
H_{M_n}^{\infty}$ is such that $\Phi^*$ is of bounded type of the
form $\Phi=\Theta A^*$ (right coprime), then for a Blaschke factor
$\theta$, we have
\begin{equation}\label{cropB}
\Phi\circ \theta=\bigl(\Theta \circ \theta\bigr)\bigl(A \circ
\theta\bigr)^*\quad\hbox{\rm(right coprime)}.
\end{equation}
However we were unable to decide whether (\ref{cropB}) holds for any
inner function $\theta$. \

\begin{problem}\label{pro5.1}
Let $\Theta\in H^\infty_{M_n}$ be inner and $A\in H^\infty_{M_n}$. \
If $\Theta$ and $A$ are right coprime, does it follow that
$\Theta\circ \theta$ and $A\circ \theta$ are right coprime for any
inner function $\theta\in H^\infty$?
\end{problem}

\bigskip

\noindent {\bf 3. Invertibility of the compressions.} \ Let $A\in
H^2_{M_n}$ and $\Theta\equiv I_\theta\in H^\infty_{M_n}$ be a
rational function, i.e., $\theta$ is a finite Blaschke product. \ In
Corollary \ref{cor2.7}, we have shown that if $A$ and $\Theta$ are
coprime then $(T_A)_\Theta$, the compression of $T_A$ to $\mathcal
H(\Theta)$, is invertible. \ However we were unable to decide when
$(T_A)_\Theta$ is invertible.

\begin{problem}\label{pro5.2}
Give a necessary and sufficient condition for $(T_A)_\Theta$ to be
invertible if $\Theta\equiv I_\theta$ for an {\it arbitrary} inner
function $\theta$.
\end{problem}


\bigskip

\noindent {\bf 4. An interpolation problem for matrix functions of
bounded type.} \ In Chapter 6, we have considered an interpolation
problem: For a matrix function $\Phi\in L^\infty_{M_n}$, when does
there exist a bounded analytic matrix function $K\in H^\infty_{M_n}$
satisfying $\Phi-K\Phi^*\in H^\infty_{M_n}$? \ In particular, we are
interested in the cases when $\Phi$ and $\Phi^*$ are of bounded
type. \ As we have discussed in Chapter 6, $\ker
H_{\Phi_+^*}\subseteq \ker H_{\Phi_-^*}$ is a necessary condition
for the existence of a solution. \ If $\Phi$ is a matrix-valued
rational function then using the Kronecker Lemma we can show that
this condition is also sufficient for the existence of a solution
(cf. \cite[Proposition 3.9]{CHL2}). \ Moreover, in this case, the
solution $K$ is given by a polynomial via the classical
Hermite-Fej\' er Interpolation Problem. \ However we were unable to
determine whether the condition
 $\ker H_{\Phi_+^*}\subseteq \ker H_{\Phi_-^*}$
 is sufficient for the existence of a solution when
 $\Phi\in L^\infty_{M_n}$ is such that $\Phi$ and $\Phi^*$ are of bounded type. \

\begin{problem}\label{pro5.3}
Let $\Phi\in L^\infty_{M_n}$ be such that $\Phi$ and $\Phi^*$ are of
bounded type. \ If  $\ker H_{\Phi_+^*}\subseteq \ker H_{\Phi_-^*}$,
does there exist a solution $K\in H^\infty_{M_n}$ satisfying
$\Phi-K\Phi^*\in H^\infty_{M_n}$?
\end{problem}


\bigskip

\noindent {\bf 5. Square-hyponormal Toeplitz operators.} \ In
Theorem \ref{thm2.2}, we have shown that if $\Phi\in L^\infty_{M_n}$
is such that $\Phi$ and $\Phi^*$ are of bounded type, and if $\Phi$
has a tensored-scalar singularity, then the subnormality and the normality of
$T_\Phi$ coincide. \ Also in \cite[Theorem 4.5]{CHL2}, it was shown
that if $\Phi\in L^\infty_{M_n}$ is a matrix-valued rational
function whose inner part of the coprime factorization of its
co-analytic part is diagonal-constant, and  if $T_\Phi$ and
$T_\Phi^2$ are hyponormal, then $T_\Phi$ is either normal or
analytic. \ However we were unable to decide whether this result
still holds for matrix-valued bounded type symbols. \

\begin{problem}\label{pro5.4}
Let $\Phi\in L^\infty_{M_n}$ be such that $\Phi$ and $\Phi^*$ are of
bounded type of the form
$$
\Phi_-=\theta B^* \quad \hbox{\rm (coprime)},
$$
where $\theta$ is an inner function in $H^\infty$. \ If $T_\Phi$ and
$T_\Phi^2$ are hyponormal, does it follow that $T_\Phi$ is either
normal or analytic?
\end{problem}


\vskip 1cm

%
%
%
%
%
%

\bibliographystyle{amsalpha}

%
%
%
%
%
%

\chapter*{List of Symbols}




\halign{#\qquad\vrule\qquad&#\cr

\vtop{\hsize=3.5in\parindent=0pt\halign{#\hfil\hskip
0.375in&\hfil#\cr

$\mathcal{B(H,K)}$               \hskip 4cm   &\pageref{B(H,K)}\cr
$\mathcal{B(H)}$                              &\pageref{B(H)}\cr
$[A,B]$                                       &\pageref{[A,B]}\cr
$[T^*,T]$                                     &\pageref{[T^*,T]}\cr
$\hbox{cl}\,M$                                &\pageref{clM}\cr
$M^\perp$                                     &\pageref{Mperp}\cr
$\hbox{ker}\,T$                               &\pageref{ker}\cr
$\hbox{ran}\,T$                               &\pageref{ran}\cr
$\mathbb T$                                   &\pageref{T}\cr
$H^2(\mathbb T)$                              &\pageref{H^2T}\cr
$L^2(\mathbb T)$                              &\pageref{L^2T}\cr
$L^\infty$                                    &\pageref{L^infty}\cr
$H^\infty$                                    &\pageref{H^infty}\cr
$T_\varphi$                                   &\pageref{Tvarphi}\cr
$H_\varphi$                                   &\pageref{Hvarphi}\cr
$P$                                           &\pageref{P}\cr
$P^\perp$                                     &\pageref{Pperp}\cr
$J$                                           &\pageref{J}\cr $BMO$
&\pageref{BMO}\cr $L^2_{\mathcal X}$
&\pageref{L2X}\cr $H^2_{\mathcal X}$
&\pageref{H2X}\cr $M_n$
&\pageref{M_n}\cr $M_{n\times r}$
&\pageref{M_nr}\cr $L^\infty_{\mathcal X}$
&\pageref{LiX}\cr $H^\infty_{\mathcal X}$
&\pageref{HiX}\cr $\Phi_+$
&\pageref{Phi_+}\cr $\Phi_-$
&\pageref{Phi_-}\cr $\langle A,B\rangle$
&\pageref{AB}\cr $T_\Phi$
&\pageref{T_Phi}\cr $H_\Phi$
&\pageref{H_Phi}\cr $P_n$
&\pageref{P_n}\cr $\widetilde\Phi$
&\pageref{widetildePhi}\cr $\Theta$
&\pageref{Theta}\cr $||A||_\infty$
&\pageref{A_infty}\cr $||A||_2$
&\pageref{A_2}\cr }}&

\vtop{\hsize=3.5in\parindent=0pt\halign{#\hfil\hskip
0.375in&\hfil#\cr

$\hbox{tr}\,(\cdot)$       \hskip 4cm         &\pageref{tr}\cr $S$
&\pageref{S}\cr $[\mathbf T^*,\mathbf T]$
&\pageref{bfT^*bfT}\cr $\mathcal{E}(\varphi)$
&\pageref{Ephi}\cr $\mathcal{E}(\Phi)$
&\pageref{Ebigphi}\cr $I_\theta$
&\pageref{Itheta}\cr $H_0^2$
&\pageref{h20}\cr $\mathcal{H}(\Theta)$
&\pageref{HTheta}\cr $\mathcal{H}_{\Theta}$
&\pageref{H_Theta}\cr $\mathcal{K}_{\Theta}$
&\pageref{K_Theta}\cr $\Phi_{\Delta_1,\Delta_2}$
&\pageref{PhiLow}\cr $\Phi^{\Delta_1,\Delta_2}$
&\pageref{Phiupper}\cr $\Phi_{\Delta}$
&\pageref{Phi_delta}\cr $\Phi^{\Delta}$
&\pageref{Phi^delta}\cr $U_\theta$
&\pageref{U_theta}\cr $\mathcal{Z}(\theta)$
&\pageref{ztheta}\cr $b_\lambda(z)$
&\pageref{blambdaz}\cr $\hbox{left-g.c.d.}$
&\pageref{lgcd}\cr $\hbox{left-l.c.m.}$
&\pageref{llcm}\cr $\hbox{right-g.c.d.}$
&\pageref{rgcd}\cr $\hbox{right-l.c.m.}$
&\pageref{rlcm}\cr $\Theta_d$
&\pageref{thetad}\cr $\Theta_m$
&\pageref{thetam}\cr $\hbox{g.c.d.}$
&\pageref{GCD}\cr $\hbox{l.c.m.}$
&\pageref{LCM}\cr $P_{\mathcal{H}_{\Theta}}$
&\pageref{PHtheta}\cr $P_{\mathcal{K}_{\Theta}}$
&\pageref{PKTheta}\cr $D$
&\pageref{Dz}\cr $\mathcal{P}_E$
&\pageref{pe}\cr $D(\Delta)$
&\pageref{ddelta}\cr $G_\mu$
&\pageref{gmu}\cr $\mathcal{R}(f;z_0)$
&\pageref{rfz0}\cr $\hbox{deg}\,(\Phi)$
&\pageref{degphi}\cr $\hbox{det}\,\Theta$
&\pageref{dettheta}\cr $\overline{(BMO)_{M_n}}$
&\pageref{bmomn}\cr


\hbox{\phantom{$M$, $M(n)$, $M(\gamma)$,
$M(n)(\gamma)$}}&\hbox{\phantom{03}}\cr}}\cr}


\newpage

\hfil\break

\vskip 2cm


\halign{#\qquad\vrule\qquad&#\cr

\vtop{\hsize=3.5in\parindent=0pt\halign{#\hfil\hskip
0.375in&\hfil#\cr

$(T_A)_\Theta$               \hskip 4cm       &\pageref{TAT}\cr $W$
&\pageref{W}\cr $M$
&\pageref{M}\cr $(T_P)_\Theta$
&\pageref{TPTheta}\cr $\mathcal{C}(\Phi)$
&\pageref{cphi}\cr $d_j$
&\pageref{dj}\cr $\delta_j$
&\pageref{deltaj}\cr $\mu_B$
&\pageref{mub}\cr $V_B$
&\pageref{VB}\cr $M_B$
&\pageref{MB}\cr $J_B$
&\pageref{JB}\cr $s_\lambda(\zeta)$
&\pageref{slambdazeta}\cr $\mu_s$
&\pageref{mus}\cr $\mu_\Delta$
&\pageref{mudelta}\cr $J_s$
&\pageref{Js}\cr $V_S$
&\pageref{Vs}\cr }}\\&

\vtop{\hsize=3.5in\parindent=0pt\halign{#\hfil\hskip
0.375in&\hfil#\cr

$\Delta_\lambda$        \hskip 4cm
&\pageref{deltalambda}\cr $V_\Delta$
&\pageref{Vdelta}\cr $J_\Delta$
&\pageref{Jdelta}\cr $\mathcal{V}$
&\pageref{12.288}\cr $(T_{Q_\pm})_\Theta$
&\pageref{TQT}\cr $Q(M)$
&\pageref{QM}\cr $\Phi^0_+$
&\pageref{phi+0}\cr $[T_\Phi, T_\Psi]_p$
&\pageref{pseudo}\cr
$S^\sharp$                                    &\pageref{Ssharp}\cr
$[\mathbf T^*,\mathbf{T}]_p$                  &\pageref{T^*TP}\cr
$\cong$                                       &\pageref{cong}\cr
$\mathcal{W}$                                 &\pageref{mathcalW}\cr
$\hbox{rank}\,[\mathbf T^*,\mathbf T]$        &\pageref{rankT^*T}\cr
$\mathbf T_\sigma$                            &\pageref{Tsigma}\cr
$Q_r(\mu/\lambda)$                            &\pageref{qrmu}\cr
$D(\mu)_\lambda$                              &\pageref{dmulam}\cr


\hbox{\phantom{$M$, $M(n)$, $M(\gamma)$,
$M(n)(\gamma)$}}&\hbox{\phantom{03}}\cr}}\cr}


\end{document}